\newtheorem{cor}[subsubsection]{Corollary}
\newtheorem{lem}[subsubsection]{Lemma}
\newtheorem{prop}[subsubsection]{Proposition}
\newtheorem{propconstr}[subsubsection]{Proposition-Construction}
\newtheorem{thmconstr}[subsubsection]{Theorem-Construction}
\newtheorem{conj}[subsubsection]{Conjecture}
\newtheorem{thm}[subsubsection]{Theorem}
\newtheorem{defn}[subsubsection]{Definition}
\theoremstyle{remark}
\newtheorem{rem}[subsubsection]{Remark}
\theoremstyle{definition}
\theoremstyle{remark}
\newcommand{\thmref}[1]{Theorem~\ref{#1}}
\newcommand{\secref}[1]{Sect.~\ref{#1}}
\newcommand{\lemref}[1]{Lemma~\ref{#1}}
\newcommand{\propref}[1]{Proposition~\ref{#1}}
\newcommand{\corref}[1]{Corollary~\ref{#1}}
\newcommand{\conjref}[1]{Conjecture~\ref{#1}}
\newcommand{\remref}[1]{Remark~\ref{#1}}
\numberwithin{equation}{section}
\newcommand{\nc}{\newcommand}
\nc{\renc}{\renewcommand}
\nc{\ssec}{\subsection}
\nc{\sssec}{\subsubsection}
\nc{\on}{\operatorname}
\nc{\ips}{{\iota_P^{(S)}}}
\nc{\ipms}{{\iota_{P^-}^{(S)}}}
\nc{\sfpps}{{\sfp_P^{(S)}}}
\nc{\sfppms}{{\sfp_{P^-}^{(S)}}}
\nc\ol{\overline}
\nc\ul{\underline}
\nc\wt{\widetilde}
\nc\tboxtimes{\wt{\boxtimes}}
\nc\tstar{\wt{\star}}
\nc{\alp}{\alpha}
\nc{\ZZ}{{\mathbb Z}}
\nc{\NN}{{\mathbb N}}
\nc{\OO}{{\mathbb O}}
\renc{\SS}{{\mathbb S}}
\nc{\DD}{{\mathbb D}}
\nc{\GG}{{\mathbb G}}
\nc{\Fq}{{\mathbb F}_q}
\nc{\Fqb}{\ol{\mathbb F}_q}
\nc{\Ql}{{\mathbb Q}_\ell}
\nc{\Qlb}{{\ol{\mathbb Q}_\ell}}
\nc{\id}{\text{id}}
\nc\X{\mathcal X}
\nc{\red}{\on{red}}
\nc{\Ho}{\on{Ho}}
\nc{\Hom}{\on{Hom}}
\nc{\coef}{\on{coef}}
\nc{\Lie}{\on{Lie}}
\nc{\Loc}{\on{Loc}}
\nc{\Pic}{\on{Pic}}
\nc{\Bun}{\on{Bun}}
\nc{\IC}{\on{IC}}
\nc{\Aut}{\on{Aut}}
\nc{\rk}{\on{rk}}
\nc{\Sh}{\on{Sh}}
\nc{\Perv}{\on{Perv}}
\nc{\pos}{{\on{pos}}}
\nc{\Conv}{\on{Conv}}
\nc{\Sph}{\on{Sph}}
\nc{\Sym}{\on{Sym}}
\nc{\BunBb}{\overline{\Bun}_B}
\nc{\BunNb}{\overline{\Bun}_N}
\nc{\BunTb}{\overline{\Bun}_T}
\nc{\BunBbm}{\overline{\Bun}_{B^-}}
\nc{\BunBbel}{\overline{\Bun}_{B,el}}
\nc{\BunBbmel}{\overline{\Bun}_{B^-,el}}
\nc{\Buno}{\overset{o}{\Bun}}
\nc{\BunPb}{{\overline{\Bun}_P}}
\nc{\BunBM}{\Bun_{B(M)}}
\nc{\BunBMb}{\overline{\Bun}_{B(M)}}
\nc{\BunPbw}{{\widetilde{\Bun}_P}}
\nc{\BunBP}{\widetilde{\Bun}_{B,P}}
\nc{\GUb}{\overline{G/U}}
\nc{\GUPb}{\overline{G/U(P)}}
\nc{\Hhom}{\underline{\on{Hom}}}
\nc\syminfty{\on{Sym}^{\infty}}
\nc\lal{\ol{\lambda}}
\nc\xl{\ol{x}}
\nc\thl{\ol{\theta}}
\nc\nul{\ol{\nu}}
\nc\mul{\ol{\mu}}
\nc{\oX}{\overset{o}{X}{}}
\nc{\hl}{\overset{\leftarrow}h{}}
\nc{\hr}{\overset{\rightarrow}h{}}
\nc{\M}{{\mathcal M}}
\nc{\N}{{\mathcal N}}
\nc{\F}{{\mathcal F}}
\nc{\D}{{\mathcal D}}
\nc{\Q}{{\mathcal Q}}
\nc{\Y}{{\mathcal Y}}
\nc{\G}{{\mathcal G}}
\nc{\E}{{\mathcal E}}
\nc{\CalC}{{\mathcal C}}
\nc\Dh{\widehat{\D}}
\nc{\C}{{\mathcal C}}
\nc{\K}{{\mathcal K}}
\renewcommand{\H}{{\mathcal H}}
\nc{\T}{{\mathcal T}}
\nc{\V}{{\mathcal V}}
\renc{\P}{{\mathcal P}}
\nc{\A}{{\mathcal A}}
\nc{\B}{{\mathcal B}}
\nc{\U}{{\mathcal U}}
\nc{\Gr}{{\on{Gr}}}
\nc{\frn}{{\check{\mathfrak u}(P)}}
\nc{\fC}{\mathfrak C}
\nc{\fT}{\mathfrak T}
\nc{\p}{\mathfrak p}
\nc{\q}{\mathfrak q}
\nc\f{{\mathfrak f}}
\nc{\qo}{{\mathfrak q}}
\nc{\po}{{\mathfrak p}}
\nc{\s}{{\mathfrak s}}
\nc\w{\text{w}}
\renewcommand{\mod}{{\on{-mod}}}
\nc\Spec{\on{Spec}}
\nc\Proj{\on{Proj}}
\nc\Mod{\on{Mod}}
\nc{\tw}{\widetilde{\mathfrak t}}
\nc{\pw}{\widetilde{\mathfrak p}}
\nc{\qw}{\widetilde{\mathfrak q}}
\nc{\jw}{\widetilde j}
\nc{\grb}{\overline{\Gr}}
\nc{\I}{\mathcal I}
\nc{\lambdach}{{\check\lambda}}
\nc{\Lambdach}{{\check\Lambda}{}}
\nc{\much}{{\check\mu}}
\nc{\omegach}{{\check\omega}}
\nc{\nuch}{{\check\nu}}
\nc{\etach}{{\check\eta}}
\nc{\alphach}{{\check\alpha}}
\nc{\oblvtach}{{\check\oblvta}}
\nc{\rhoch}{{\check\rho}}
\nc{\ch}{{\check h}}
\nc{\Hb}{\overline{\H}}
\nc{\BA}{{\mathbb{A}}}
\nc{\BC}{{\mathbb{C}}}
\nc{\BE}{{\mathbb{E}}}
\nc{\BF}{{\mathbb{F}}}
\nc{\BG}{{\mathbb{G}}}
\nc{\BM}{{\mathbb{M}}}
\nc{\BO}{{\mathbb{O}}}
\nc{\BD}{{\mathbb{D}}}
\nc{\BN}{{\mathbb{N}}}
\nc{\BP}{{\mathbb{P}}}
\nc{\BQ}{{\mathbb{Q}}}
\nc{\BR}{{\mathbb{R}}}
\nc{\BZ}{{\mathbb{Z}}}
\nc{\BS}{{\mathbb{S}}}
\nc{\Deep}{{\bf{deep}}}
\nc{\deep}{deep}
\nc{\CA}{{\mathcal{A}}}
\nc{\CB}{{\mathcal{B}}}
\nc{\CE}{{\mathcal{E}}}
\nc{\CF}{{\mathcal{F}}}
\nc{\CH}{{\mathcal{H}}}
\nc{\CL}{{\mathcal{L}}}
\nc{\CC}{{\mathcal{C}}}
\nc{\CG}{{\mathcal{G}}}
\nc{\CalD}{{\mathcal{D}}}
\nc{\CM}{{\mathcal{M}}}
\nc{\CN}{{\mathcal{N}}}
\nc{\CK}{{\mathcal{K}}}
\nc{\CO}{{\mathcal{O}}}
\nc{\CP}{{\mathcal{P}}}
\nc{\CQ}{{\mathcal{Q}}}
\nc{\CR}{{\mathcal{R}}}
\nc{\CS}{{\mathcal{S}}}
\nc{\CT}{{\mathcal{T}}}
\nc{\CU}{{\mathcal{U}}}
\nc{\CV}{{\mathcal{V}}}
\nc{\CW}{{\mathcal{W}}}
\nc{\CX}{{\mathcal{X}}}
\nc{\CY}{{\mathcal{Y}}}
\nc{\CZ}{{\mathcal{Z}}}
\nc{\CI}{{\mathcal{I}}}
\nc{\csM}{{\check{\mathcal A}}{}}
\nc{\oM}{{\overset{\circ}{\mathcal M}}{}}
\nc{\obM}{{\overset{\circ}{\mathbf M}}{}}
\nc{\oCA}{{\overset{\circ}{\mathcal A}}{}}
\nc{\obA}{{\overset{\circ}{\mathbf A}}{}}
\nc{\ooM}{{\overset{\circ}{M}}{}}
\nc{\osM}{{\overset{\circ}{\mathsf M}}{}}
\nc{\vM}{{\overset{\bullet}{\mathcal M}}{}}
\nc{\nM}{{\underset{\bullet}{\mathcal M}}{}}
\nc{\oD}{{\overset{\circ}{\mathcal D}}{}}
\nc{\obD}{{\overset{\circ}{\mathbf D}}{}}
\nc{\oA}{{\overset{\circ}{\mathbb A}}{}}
\nc{\op}{{\overset{\bullet}{\mathbf p}}{}}
\nc{\cp}{{\overset{\circ}{\mathbf p}}{}}
\nc{\oU}{{\overset{\bullet}{\mathcal U}}{}}
\nc{\oZ}{{\overset{\circ}{\mathcal Z}}{}}
\nc{\ofZ}{{\overset{\circ}{\mathfrak Z}}{}}
\nc{\oF}{{\overset{\circ}{\fF}}}
\nc{\fa}{{\mathfrak{a}}}
\nc{\fb}{{\mathfrak{b}}}
\nc{\fd}{{\mathfrak{d}}}
\nc{\ff}{{\mathfrak{f}}}
\nc{\fg}{{\mathfrak{g}}}
\nc{\fgl}{{\mathfrak{gl}}}
\nc{\fh}{{\mathfrak{h}}}
\nc{\fj}{{\mathfrak{j}}}
\nc{\fl}{{\mathfrak{l}}}
\nc{\fm}{{\mathfrak{m}}}
\nc{\fn}{{\mathfrak{n}}}
\nc{\fu}{{\mathfrak{u}}}
\nc{\fp}{{\mathfrak{p}}}
\nc{\fr}{{\mathfrak{r}}}
\nc{\fs}{{\mathfrak{s}}}
\nc{\ft}{{\mathfrak{t}}}
\nc{\fz}{{\mathfrak{z}}}
\nc{\fsl}{{\mathfrak{sl}}}
\nc{\hsl}{{\widehat{\mathfrak{sl}}}}
\nc{\hgl}{{\widehat{\mathfrak{gl}}}}
\nc{\hg}{{\widehat{\mathfrak{g}}}}
\nc{\chg}{{\widehat{\mathfrak{g}}}{}^\vee}
\nc{\hn}{{\widehat{\mathfrak{n}}}}
\nc{\chn}{{\widehat{\mathfrak{n}}}{}^\vee}
\nc{\fA}{{\mathfrak{A}}}
\nc{\fB}{{\mathfrak{B}}}
\nc{\fD}{{\mathfrak{D}}}
\nc{\fE}{{\mathfrak{E}}}
\nc{\fF}{{\mathfrak{F}}}
\nc{\fG}{{\mathfrak{G}}}
\nc{\fK}{{\mathfrak{K}}}
\nc{\fL}{{\mathfrak{L}}}
\nc{\fM}{{\mathfrak{M}}}
\nc{\fN}{{\mathfrak{N}}}
\nc{\fP}{{\mathfrak{P}}}
\nc{\fU}{{\mathfrak{U}}}
\nc{\fV}{{\mathfrak{V}}}
\nc{\fZ}{{\mathfrak{Z}}}
\nc{\ba}{{\mathbf{a}}}
\nc{\bb}{{\mathbf{b}}}
\nc{\bc}{{\mathbf{c}}}
\nc{\bd}{{\mathbf{d}}}
\nc{\bbf}{{\mathbf{f}}}
\nc{\be}{{\mathbf{e}}}
\nc{\bi}{{\mathbf{i}}}
\nc{\bj}{{\mathbf{j}}}
\nc{\bm}{{\mathbf{m}}}
\nc{\bn}{{\mathbf{n}}}
\nc{\bo}{{\mathbf{o}}}
\nc{\bp}{{\mathbf{p}}}
\nc{\bq}{{\mathbf{q}}}
\nc{\bu}{{\mathbf{u}}}
\nc{\bv}{{\mathbf{v}}}
\nc{\bx}{{\mathbf{x}}}
\nc{\bs}{{\mathbf{s}}}
\nc{\by}{{\mathbf{y}}}
\nc{\bw}{{\mathbf{w}}}
\nc{\bA}{{\mathbf{A}}}
\nc{\bK}{{\mathbf{K}}}
\nc{\bB}{{\mathbf{B}}}
\nc{\bC}{{\mathbf{C}}}
\nc{\bG}{{\mathbf{G}}}
\nc{\bD}{{\mathbf{D}}}
\nc{\bE}{{\mathbf{E}}}
\nc{\bH}{{\mathbf{H}}}
\nc{\bM}{{\mathbf{M}}}
\nc{\bN}{{\mathbf{N}}}
\nc{\bO}{{\mathbf{O}}}
\nc{\bQ}{{\mathbf{Q}}}
\nc{\bV}{{\mathbf{V}}}
\nc{\bW}{{\mathbf{W}}}
\nc{\bX}{{\mathbf{X}}}
\nc{\bZ}{{\mathbf{Z}}}
\nc{\bS}{{\mathbf{S}}}
\nc{\sA}{{\mathsf{A}}}
\nc{\sB}{{\mathsf{B}}}
\nc{\sC}{{\mathsf{C}}}
\nc{\sD}{{\mathsf{D}}}
\nc{\sF}{{\mathsf{F}}}
\nc{\sG}{{\mathsf{G}}}
\nc{\sH}{{\mathsf{H}}}
\nc{\sK}{{\mathsf{K}}}
\nc{\sM}{{\mathsf{M}}}
\nc{\sO}{{\mathsf{O}}}
\nc{\sW}{{\mathsf{W}}}
\nc{\sQ}{{\mathsf{Q}}}
\nc{\sP}{{\mathsf{P}}}
\nc{\sR}{{\mathsf{R}}}
\nc{\sZ}{{\mathsf{Z}}}
\nc{\sfp}{{\mathsf{p}}}
\nc{\sfq}{{\mathsf{q}}}
\nc{\sr}{{\mathsf{r}}}
\nc{\bk}{{\mathsf{k}}}
\nc{\sg}{{\mathsf{g}}}
\nc{\sff}{{\mathsf{f}}}
\nc{\sfb}{{\mathsf{b}}}
\nc{\sfc}{{\mathsf{c}}}
\nc{\sfe}{{\mathsf{e}}}
\nc{\sd}{{\mathsf{d}}}
\nc{\BK}{{\bar{K}}}
\nc{\tA}{{\widetilde{\mathbf{A}}}}
\nc{\tB}{{\widetilde{\mathcal{B}}}}
\nc{\tg}{{\widetilde{\mathfrak{g}}}}
\nc{\tG}{{\widetilde{G}}}
\nc{\TM}{{\widetilde{\mathbb{M}}}{}}
\nc{\tO}{{\widetilde{\mathsf{O}}}{}}
\nc{\tU}{{\widetilde{\mathfrak{U}}}{}}
\nc{\TZ}{{\tilde{Z}}}
\nc{\tx}{{\tilde{x}}}
\nc{\tbv}{{\tilde{\bv}}}
\nc{\tfP}{{\widetilde{\mathfrak{P}}}{}}
\nc{\tz}{{\tilde{\zeta}}}
\nc{\tmu}{{\tilde{\mu}}}
\nc{\urho}{\underline{\rho}}
\nc{\uB}{\underline{B}}
\nc{\uC}{{\underline{\mathbb{C}}}}
\nc{\ui}{\underline{i}}
\nc{\uj}{\underline{j}}
\nc{\ofP}{{\overline{\mathfrak{P}}}}
\nc{\oB}{{\overline{\mathcal{B}}}}
\nc{\og}{{\overline{\mathfrak{g}}}}
\nc{\oI}{{\overline{I}}}
\nc{\eps}{\varepsilon}
\nc{\hrho}{{\hat{\rho}}}
\nc{\one}{{\mathbf{1}}}
\nc{\two}{{\mathbf{t}}}
\nc{\Rep}{{\mathop{\operatorname{\rm Rep}}}}
\nc{\Tot}{{\mathop{\operatorname{\rm Tot}}}}
\nc{\Ker}{{\mathop{\operatorname{\rm Ker}}}}
\nc{\im}{{\mathop{\operatorname{\rm Im}}}}
\nc{\Hilb}{{\mathop{\operatorname{\rm Hilb}}}}
\nc{\End}{{\mathop{\operatorname{\rm End}}}}
\nc{\Ext}{{\mathop{\operatorname{\rm Ext}}}}
\nc{\CHom}{{\mathop{\operatorname{{\mathcal{H}}\it om}}}}
\nc{\CEnd}{{\mathop{\operatorname{{\mathcal{E}}\it nd}}}}
\nc{\GL}{{\mathop{\operatorname{\rm GL}}}}
\nc{\gr}{{\mathop{\operatorname{\rm gr}}}}
\nc{\HN}{{\mathop{\operatorname{\rm HN}}}}
\nc{\Id}{{\mathop{\operatorname{\rm Id}}}}
\nc{\de}{{\mathop{\operatorname{\rm def}}}}
\nc{\length}{{\mathop{\operatorname{\rm length}}}}
\nc{\supp}{{\mathop{\operatorname{\rm supp}}}}
\nc{\Cliff}{{\mathsf{Cliff}}}
\nc{\Fl}{\on{Fl}}
\nc{\Fib}{{\mathsf{Fib}}}
\nc{\Coh}{{\on{Coh}}}
\nc{\QCoh}{{\on{QCoh}}}
\nc{\IndCoh}{{\on{IndCoh}}}
\nc{\FCoh}{{\mathsf{FCoh}}}
\nc{\reg}{{\text{\rm reg}}}
\nc{\cplus}{{\mathbf{C}_+}}
\nc{\cminus}{{\mathbf{C}_-}}
\nc{\cthree}{{\mathbf{C}_\bullet}}
\nc{\Qbar}{{\bar{Q}}}
\nc\Eis{\on{Eis}}
\nc\Eisb{\ol\Eis{}}
\nc\Eisr{\on{Eis}^{rat}{}}
\nc\wh{\widehat}
\nc{\Def}{\on{Def_{\check{\fb}}(E)}}
\nc{\barZ}{\overline{Z}{}}
\nc{\barbarZ}{\overline{\barZ}{}}
\nc{\barpi}{\overline\pi}
\nc{\barbarpi}{\overline\barpi}
\nc{\barpip}{\overline\pi{}^+}
\nc{\barpim}{\overline\pi{}^-}
\nc{\fq}{\mathfrak q}
\nc{\fqb}{\ol{\sfq}{}}
\nc{\fpb}{\ol{\sfp}{}}
\nc{\fpr}{{\sfp^{rat}}{}}
\nc{\fqr}{{\sfq^{rat}}{}}
\nc{\hattimes}{\wh\otimes}
\nc{\bh}{{\bar{h}}}
\nc{\bOmega}{{\overline{\Omega(\check \fn)}}}
\nc{\seq}[1]{\stackrel{#1}{\sim}}
\nc{\cT}{{\check{T}}}
\nc{\cG}{{\check{G}}}
\nc{\cM}{{\check{M}}}
\nc{\cB}{{\check{B}}}
\nc{\ct}{{\check{\mathfrak t}}}
\nc{\cg}{{\check{\fg}}}
\nc{\cb}{{\check{\fb}}}
\nc{\cn}{{\check{\fn}}}
\nc{\cLambda}{{\check\Lambda}}
\nc{\cla}{{\check\lambda}}
\nc{\cmu}{{\check\mu}}
\nc{\cnu}{{\check\nu}}
\nc{\ceta}{{\check\eta}}
\nc{\DefbE}{{\on{Def}_{\cB}(E_\cT)}}
\nc{\imathb}{{\ol{\imath}}}
\nc{\rlr}{\overset{\longrightarrow}{\underset{\longrightarrow}\longleftarrow}}
\nc{\oBun}{\overset{\circ}\Bun}
\nc{\LocSys}{\on{LocSys}}
\nc{\BunBbb}{\ol{\ol{Bun}}_B}
\nc{\BunBr}{\Bun_B^{rat}}
\nc{\BunBrsg}{\Bun_B^{rat,\on{s.g.}}}
\nc{\BunBrp}{\Bun_B^{rat,polar}}
\nc{\BunBrpbg}{\Bun_B^{rat,polar,\on{b.g.}}}
\nc{\BunBrpsg}{\Bun_B^{rat,polar,\on{s.g.}}}
\nc{\BunTrp}{\Bun_T^{rat,polar}}
\nc{\BunTrpbg}{\Bun_T^{rat,polar,\on{b.g.}}}
\nc{\BunTrpsg}{\Bun_T^{rat,polar,\on{s.g.}}}
\nc{\BunNr}{\Bun_N^{rat}}
\nc{\BunNre}{\Bun_N^{enh,rat}}
\nc{\BunTr}{\Bun_T^{rat}}
\nc{\Vect}{\on{Vect}}
\nc{\Whit}{\on{Whit}}
\nc{\CTb}{\ol{\on{CT}}}
\nc{\Ran}{\on{Ran}}
\nc{\CTr}{\on{CT}^{rat}{}}
\nc\jmathr{\jmath^{rat}{}}
\nc{\ux}{\underline{x}}
\nc{\clambda}{{\check\lambda}}
\nc{\calpha}{{\check\alpha}}
\nc{\ind}{{\mathbf{ind}}}
\nc{\oblv}{{\mathbf{oblv}}}
\nc{\free}{{\mathbf{free}}}
\nc{\ox}{{\overline{x}}}
\nc{\cLa}{\check{\Lambda}}
\nc{\StinftyCat}{\on{DGCat}}
\nc{\inftyCat}{\infty\on{-Cat}}
\nc{\inftygroup}{\infty\on{-Grpd}}
\nc{\Dmod}{\on{D-mod}}
\nc{\CMaps}{{\mathcal Maps}}
\nc{\Maps}{\on{Maps}}
\nc{\affSch}{\on{Sch}^{\on{aff}}}
\nc{\dr}{{\on{dR}}}
\nc{\oCF}{\overset{\circ}\CF}
\nc{\oCY}{\overset{\circ}\CY}
\nc{\opi}{\overset{\circ}\pi}
\nc{\leqG}{\underset{G}\leq}
\nc{\leqM}{\underset{M}\leq}
\nc{\leqGad}{\underset{G_{ad}}\leq}
\nc{\leqMad}{\underset{M_{ad}}\leq}
\nc{\Tr}{\on{Tr}}
\nc{\Frob}{\on{Frob}}
\nc{\DGCat}{\on{DGCat}}
\nc{\tDGCat}{\on{Morita}(\DGCat)}
\nc{\ev}{\on{ev}}
\nc{\mmod}{\on{-}\mathbf{mod}}
\nc{\sotimes}{\overset{!}\otimes}
\nc{\Shv}{\on{Shv}}
\nc{\Spc}{\on{Spc}}
\nc{\LS}{\on{LS}}
\nc{\Res}{\on{Res}}
\nc{\bDelta}{{\mathbf{\Delta}}}
\nc{\bMaps}{{\mathbf{Maps}}}
\nc{\cD}{\mathcal D}
\nc{\ocD}{\overset{\circ}\cD}
\nc{\ppart}{(\!(u)\!)}
\nc{\qqart}{[\![u]\!]}
\nc{\oCU}{\overset{\circ}{\CU}}
\nc{\Ind}{\on{Ind}}
\nc{\coInd}{\on{coInd}}
\begin{document}


\title[A toy model for shtuka]{A toy model for the Drinfeld-Lafforgue shtuka construction}

\author{D.~Gaitsgory, D.~Kazhdan, N.~Rozenblyum and Y.~Varshavsky}

\address
{\newline
D.G.\footnote{Corresponding author. Full address: Department of Mathematics, Harvard University, 1 Oxford str, Cambridge MA 02138, USA}: 
Department of Mathematics, Harvard University; \newline
D.K.: Einstein School of Mathematics, Hebrew University of Jerusalem; \newline
N.R.: Department of Mathematics, The University of Chicago; \newline
Y.V.: Einstein School of Mathematics, Hebrew University of Jerusalem.} 

\date{\today}

\begin{abstract}
The goal of this paper is to provide a categorical framework that leads to the definition of shtukas \`a la Drinfeld and of
excursion operators \`a la V.~Lafforgue.
We take as the point of departure the Hecke action of $\Rep(\cG)$
on the category $\Shv(\Bun_G)$ of sheaves on $\Bun_G$, and also the endofunctor of the latter category, given by
the action of the geometric Frobenius.
The shtuka construction will be obtained by applying (various versions of)
categorical trace.
\end{abstract}

\maketitle

\tableofcontents

\section*{Introduction}

Our goal is to provide
a categorical framework that leads to the definition of shtukas \`a la Drinfeld and of excursion operators \`a la
V.~Lafforgue. We will capture the main ingredients of V.~Lafforgue's construction, which are:

\medskip

\noindent--The action of the algebra of functions on the stack of arithmetic local systems on the space of automorphic functions;

\medskip

\noindent--The ``S=T" identity.

\bigskip

However, all of this will be performed in a toy setting: the key technical(?) difficulty in V.~Lafforgue's work is that the sheaf-theoretic
context he needs is that of $\ell$-adic sheaves on schemes over $\BF_q$. By contrast, we will work in the topological context
in the spirit of \cite{BN1}.

\medskip

The $\ell$-adic context, which gives rise to actual shtukas, will be considered in subsequent work.

\ssec{Hecke action}

The point of view taken in this paper is that the geometric ingredient that gives rise to the Drinfeld-Lafforgue construction
is the \emph{categorical} Hecke action. In this subsection we will specify what we mean by this.

%
%

%

\sssec{}

We will consider the following three geometric/sheaf-theoretic contexts:

\medskip

\noindent--$\ell$-adic sheaves on schemes over any ground field $k$;

\medskip

\noindent--Sheaves (in the classical topology) with coefficients in a commutative ring $\sfe$
on schemes over $\BC$;

\medskip

\noindent--D-modules on schemes over a ground field $k$ of characteristic $0$.

\medskip

For a scheme/stack $\CY$, let $\Shv(\CY)$ denote the corresponding category of sheaves; this is a DG category
over our field of coefficients (i.e., over $\BQ_\ell$, $\sfe$ and $k$, respectively).

\sssec{}

Let $X$ be an algebraic curve and $G$ a reductive group (over our ground field).
Let $\Bun_G$ denote the stack of principal $G$-bundles
on $X$.

\medskip

Let $\cG$ be the Langlands dual group of $G$, which is a reductive group over our ring of coefficients.

\medskip

The point of departure is the Hecke action on $\Shv(\Bun_G)$ of the \emph{symmetric monoidal category} $\Rep(\cG)$
\emph{integrated over} $X$. This is not a completely straightforward notion, and we refer the reader to \secref{sss:action intro}
below or \secref{sss:monoidal actions} for a detailed discussion.

\medskip

Here is what this action gives us.

\sssec{}

In the context of D-modules, (the rather non-trivial) result of V.~Drinfeld and the first-named author (recorded in
\cite[Corollary 4.5.5]{Ga1}) says that this action gives rise to an action
of the category $\QCoh(\LocSys_\cG(X))$ on $\Shv(\Bun_G)$, where $\LocSys_\cG$ is the stack of \emph{de Rham} local systems
on $X$ with respect to $\cG$.

\sssec{}

In the context of sheaves on the classical topology, Theorem 6.3.5 of \cite{NY} says that this action gives rise to an action
of $\QCoh(\LocSys_\cG(X))$ on $\Shv_{\on{Nilp}}(\Bun_G)$, where:

\medskip

\noindent--$\LocSys_\cG(X)$ is the stack of \emph{Betti} local systems on $X$ with respect to $\cG$;

\medskip

\noindent--$\Shv_{\on{Nilp}}(\Bun_G)\subset \Shv(\Bun_G)$ is the full subcategory consisting of sheaves with
\emph{nilpotent singular support}.

\medskip

For the reader's convenience, we will review the construction of this action in \secref{s:NY}.

\medskip

We should remark that unlike the D-module context, the action of the Betti
$\QCoh(\LocSys_\cG(X))$ on $\Shv_{\on{Nilp}}(\Bun_G)$
is obtained relatively easily from what we state in the present paper as \thmref{t:integral to LocSys}
(which in itself is not a difficult assertion either), combined with the key observation of \cite[Theorem 6.1.1]{NY} about the behavior
of singular support.

\sssec{}

Our main interest, however, is when the sheaf-theoretic context is that of $\ell$-adic sheaves. The first conceptual
difficulty in this case is that we do not have a direct analog of $\LocSys_\cG(X)$ as an algebro-geometric object (over $\BQ_l$),
so we cannot talk about an action of $\QCoh(\LocSys_\cG(X))$ on $\Shv(\Bun_G)$.

\medskip

Nevertheless, the situation is not as hopeless as it might seem, and we will discuss it in detail in a subsequent
publication.

\ssec{What is done in this paper?}  \label{ss:outline}

We will now outline the actual mathematical contents of the present paper. Each of the steps we perform
is a toy (more precisely, Betti) analog of what one wishes to be able to do in the context of $\ell$-adic sheaves.

\sssec{}  \label{sss:action intro}

Let $\CA$ be a symmetric monoidal DG category, and let $Y$ be a \emph{space}, i.e., $Y$ is an object of the category
$\Spc$, see \secref{sss:higher categories} (we can also think of $Y$ as a homotopy type).
To this data one can associate a new symmetric monoidal category $\CA^{\otimes Y}$,
see \secref{ss:A power Y}. Sometimes one also uses the notation\footnote{Yet another name for $\CA^{\otimes Y}$ is
``chiral (or factorization) homology of $\CA$ along $Y$". }
$$\underset{Y}\int\, \CA:=\CA^{\otimes Y}.$$
For example, if $Y$ is a finite set $I$, we have
$$\CA^{\otimes Y}=\CA^{\otimes I},$$
i.e., the usual $I$-fold tensor product of copies of $\CA$. For a general $Y$, the construction can be described
by a colimit procedure off the finite set case.

\medskip

For example, we show (see \thmref{t:integral to LocSys}) that for $\CA=\Rep(\sG)$ (here $\sG$ is an algebraic group),
under some conditions\footnote{Specifically, if $Y$ has finitely many connected components and the ring of coefficients $\sfe$ contains $\BQ$.},
we have a canonical equivalence
\begin{equation} \label{e:LocSys intro}
\Rep(\sG)^{\otimes Y}\simeq \QCoh(\LocSys_\sG(Y)),
\end{equation}
where $\LocSys_\sG(Y)$ is the (derived) stack classifying $G$-local systems on $Y$, i.e., its value on a test affine derived scheme $S$
is the space of symmetric monoidal right t-exact functors
$\Rep(\sG)\to \QCoh(S)$, parameterized by $Y$.

\medskip

Our point of departure is a DG category $\CM$, equipped with an action of $\CA^{\otimes Y}$ as a monoidal category.
We emphasize that this is \emph{not} the same as a family of monoidal actions of $\CA$ on $\CM$, parameterized by $Y$,
see Remark \ref{r:mon vs sym}.

\medskip

We give an explicit description of what the datum of such an action amounts to (see \propref{p:describe functors}). Namely, it is equivalent
to a \emph{compatible} family of actions, one for each finite set $I$, of $\CA^{\otimes I}$ on $\CM$, parameterized by points of $Y^I$.
This description is useful, because the Hecke action of $\Rep(\cG)$ on $\Shv(\Bun_G)$ is given in exactly such form, see \secref{ss:Hecke}.

\medskip

Similarly, we show that the datum of a functor from $\CA^{\otimes Y}$ to some DG category $\CC$ is equivalent
to a \emph{compatible} family of functors, one for each finite set $I$,
$$\CS_I:Y^I\times \CA^{\otimes I}\to \CC.$$
This description is useful as it will explain the connection between the universal shtuka and $I$-legged shtukas, see \secref{ss:I-legged}.

\sssec{} \label{sss:excursions intro}

Next, we provide the general framework for excursion operators. Let $\CA$ and $Y$ be as before, and let
$$\CS_Y:\CA^{\otimes Y}\to \CC$$
be a functor of DG categories.

\medskip

Consider the object
$$\CS_Y(\one_{\CA^{\otimes Y}})\in \CC.$$

It carries an action of the (commutative) algebra $\CEnd_{\CA^{\otimes Y}}(\one_{\CA^{\otimes Y}})$.
We give an explicit description of this action, in the spirit of \cite[Sect. 9]{Laf}.

\medskip

First, we give an explicit description of the algebra
$\CEnd_{\CA^{\otimes Y}}(\one_{\CA^{\otimes Y}})$ as a colimit, in the case  when $Y$ is connected, and $\CA$
has \emph{an affine diagonal}, see \corref{c:endo}.
Choose a base point $y\in Y$. The index category in the colimit in question is that of pairs $(I,\gamma^I)$, where
$I$ is a finite set, and $\gamma^I$ is an $I$-tuple of loops in $Y$ based at $y$. (Note that this category is \emph{sifted},
so the colimit in the category of commutative algebras is the same as the colimit of underlying associative algebras
and is also the same as the colimit of the underlying vector spaces.)
The terms of the colimit are described as follows. The term corresponding to a finite set $I$
is given by
\begin{equation} \label{e:I term intro}
\CHom_{\CA}(\one_\CA,\on{mult}_{I_+}\circ \on{mult}_{I_+}^R(\one_\CA)),
\end{equation}
where:

\medskip

\noindent--$I_+=I\sqcup \{*\}$;

\medskip

\noindent--For a finite set $J$, we denote by $\on{mult}_J$ the tensor product map $\CA^{\otimes J}\to \CA$;

\medskip

\noindent--$\on{mult}^R_J$ denotes the right adjoint functor of $\on{mult}_J$.

\medskip

The algebra structure on \eqref{e:I term intro} comes from the right-lax symmetric monoidal structure on $\on{mult}^R_J$,
obtained by adjunction from the symmetric monoidal structure on $\on{mult}_J$ .

\sssec{}  \label{sss:excursions intro bis}

Next, given a functor $\CS_Y:\CA^{\otimes Y}\to \CC$, we show how each term \eqref{e:I term intro} acts on $\CS_Y(\one_{\CA^{\otimes Y}})$,
see \thmref{t:excurs}.

\medskip

Namely, given $\xi\in \CHom_{\CA}(\one_\CA,\on{mult}_{I_+}\circ \on{mult}_{I_+}^R(\one_\CA))$, the corresponding
endomorphism of $\CS_Y(\one_{\CA^{\otimes Y}})$ is the \emph{excursion operator}:
$$
\CD
\CS_Y(\one_{\CA^{\otimes Y}}) @>>{\sim}> \CS_{\{*\}}(y,\one_\CA) @>{\xi}>>
\CS_{\{*\}}(y,\on{mult}_{I_+}\circ (\on{mult}_{I_+})^R(\one_\CA))  \\
& & & & @VV{\sim}V   \\
& & & & \CS_{I_+}(y^{I_+},(\on{mult}_{I_+})^R(\one_\CA)) \\
& & & & @VV{\on{mon}_{\gamma^{I_+}}}V  \\
& & & & \CS_{I_+}(y^{I_+},(\on{mult}_{I_+})^R(\one_\CA))\\
& & & & @VV{\sim}V   \\
\CS_Y(\one_{\CA^{\otimes Y}})  @<{\sim}<<  \CS_{\{*\}}(y,\one_\CA) @<{\on{counit}}<<
\CS_{\{*\}}(y,\on{mult}_{I_+}\circ (\on{mult}_{I_+})^R(\one_\CA)),
\endCD
$$
where

\medskip

\noindent--$\gamma^{I_+}=(\gamma^I,\gamma_{\on{triv}})$;

\medskip

\noindent--For a finite set $J$, and a $J$-tuple $\gamma^J$ of loops in $Y$ based at $y$,
viewed as a loop into $Y^J$ based at $y^J$, we denote by $\on{mon}_{\gamma^J}$ the
corresponding automorphism of $\CS_J(y^J,-)$.

\medskip

In the particular case of $\CA=\Rep(\sG)$, from the colimit expression of \secref{sss:excursions intro}
and \eqref{e:LocSys intro}, we obtain an explicit description of the algebra
$$\Gamma(\LocSys_\sG(Y),\CO_{\LocSys_\sG(Y)})$$
in terms excursion operators. This recovers the analogs of the formulas from \cite[Sect. 10 and Proposition 11.7]{Laf}.

\sssec{}

We now come to the next main ingredient of this paper, namely the notion of \emph{categorical trace}.
First, we recall that given a symmetric monoidal category $\bO$ and a dualizable object $\bo\in \bO$
equipped with an endomorphism $F$, we can assign to this data a point
$$\Tr(F,\bo)\in \End _\bO(\one_\bO),$$
called the trace of $F$ on $\bo$, see \secref{sss:basic trace}.

\medskip

Suppose now that $\bO$ is actually a symmetric monoidal 2-category (i.e., we have not necessarily invertible 2-morphisms).
Let us be given a pair of dualizable objects $\bo_1,\bo_2\in \bO$, each equipped with an endomorphism $F_i$, $i=1,2$.
Assume in addition that we are given a 1-morphism $t:\bo_1\to \bo_2$ that admits a \emph{right adjoint}. Finally, suppose
that $t$ intertwines $F_1$ and $F_2$, \emph{up to a not necessarily invertible 2-morphism} $\alpha$, i.e.,
$$t\circ F_1 \overset{\alpha}\to F_2\circ t.$$

We show that to this data there corresponds a 2-morphism
$$\Tr(t,\alpha):\Tr(F_1,\bo_1)\to \Tr(F_2,\bo_2).$$

In fact, the 2-morphism $\Tr(t,\alpha)$ can be explicitly described as
\begin{equation} \label{e:map between Tr}
\Tr(F_1,\bo_1)\to \Tr(t^R\circ t\circ F_1,\bo_1)\simeq \Tr(t\circ F_1\circ t^R,\bo_2) \to
\Tr(F_2\circ t \circ t^R,\bo_2)\to  \Tr(F_2,\bo_2),
\end{equation}
where:

\medskip

\noindent--$t^R$ denotes the right adjoint of $t$;

\medskip

\noindent--The first arrow is induced by the unit of the adjunction $\on{id}_{\bo_1}\to t^R\circ t$;

\medskip

\noindent--The second arrow is given by the cyclicity property of trace;

\medskip

\noindent--The third arrow is induced by $\alpha$;

\medskip

\noindent--The fourth arrow is induced by the counit of the adjunction $t\circ t^R\to \on{id}_{\bo_2}$.

\sssec{}

We apply the above formalism in the following two main examples: $\bO=\DGCat$ and $\bO=\tDGCat$
(see right below for the definition of the latter).

\medskip

In the case of $\bO=\DGCat$, to a (dualizable) DG category $\CC$ equipped with an endofunctor $F$, we attach
$$\Tr(F,\CC)\in \Vect,$$
and to a pair of such, equipped with a functor $T:\CC_1\to \CC_2$ (that admits a continuous right adjoint)
and a natural transformation
$$\alpha:T\circ F_1\to F_2\circ T,$$
we attach a map in $\Vect$
\begin{equation} \label{e:categ trace intro}
\Tr(T,\alpha):\Tr(F_1,\CC_1)\to \Tr(F_2,\CC_2).
\end{equation}

\sssec{}

The case of $\bO=\tDGCat$ is obviously richer. By definition, the objects of $\bO=\tDGCat$ are 2-DG categories of the
form $\CR\mmod$, where $\CR$ is a monoidal DG category. Further, 1-morphisms in $\tDGCat$ are by definition
given by bi-module categories\footnote{We note that every object of $\tDGCat$ is dualizable: the dual of $\CR\mmod$
is $\CR^{\on{rev}}\mmod$, where $\CR^{\on{rev}}$ is obtained from $\CR$ by reversing the monoidal structure.}.

\medskip

To a 2-DG category $\fC$ equipped with an endofunctor
$\fF$ we now attach a \emph{DG category} $\Tr(\fF,\fC)$. To a pair of such, equipped with
a functor $\fT:\fC_1\to \fC_2$ (that admits a right adjoint in $\tDGCat$)
and a natural transformation
$$\alpha:\fT\circ \fF_1\to \fF_2\circ \fT,$$
we attach a functor between DG categories
\begin{equation} \label{e:2-categ trace intro}
\Tr(\fT,\alpha):\Tr(\fF_1,\fC_1)\to \Tr(\fF_2,\fC_2).
\end{equation}

\medskip

For $\fC=\CR\mmod$ and $\fF$ given by
$$\CQ\in (\CR\otimes \CR^{\on{rev}})\mmod,$$
the resulting DG category $\Tr(\fF,\fC)$ identifies with
$$\on{HH}_\bullet(\CR,\CQ):=\CR\underset{\CR\otimes \CR^{\on{rev}}}\otimes \CQ,$$
this is the \emph{category} of Hochschild chains on $\CR$ with coefficients in $\CQ$.

\sssec{}  \label{sss:two notions intro}

In \thmref{t:two notions of trace} we establish a basic compatibility between the categorical
and the 2-categorical trace constructions:

\medskip

Namely, let $\CR$ be a symmetric monoidal DG category; assume that $\CR$ is rigid.
Let $\CM$ be a module category over $\CR$. We can view $\CM$ as a 1-morphism
\begin{equation} \label{e:M as 1-mor}
\DGCat\to \CR\mmod.
\end{equation}
Assume that $\CM$ is dualizable as a DG category. This condition is equivalent requiring that
the above 1-morphism admit a right adjoint.

\medskip

Let $F_\CR$ be a symmetric monoidal endofunctor of $\CR$. Let $F_\CM$ be an endofunctor of $\CM$,
endowed with a \emph{datum of compatibility with $F_\CR$}; i.e., we have a datum of commutativity
of the diagram
$$
\CD
\CR \otimes \CM  @>>>  \CM  \\
@V{F_\CR\otimes F_\CM}VV  @VV{F_\CM}V   \\
\CR \otimes \CM  @>>>  \CM,
\endCD
$$
along with higher compatibilities. We can view this compatibility structure as a data of ``$\alpha$" for the
1-morphism \eqref{e:M as 1-mor}. Hence, by \eqref{e:2-categ trace intro}, to this data there corresponds
a map in $\DGCat$
$$\Vect\to \Tr(F_\CR,\CR\mmod).$$

In other words, we obtain an object
$$\on{cl}(\CM,F_\CM)\in \Tr(F_\CR,\CR\mmod)\simeq  \on{HH}_\bullet(\CR,F_\CR).$$

Note that since $\CR$ was \emph{symmetric} monoidal, and $F_\CR$ was also \emph{symmetric} monoidal,
the category $\on{HH}_\bullet(\CR,F_\CR)$ acquires a symmetric monoidal structure. Let
$\one_{\on{HH}_\bullet(\CR,F_\CR)}$ denote the unit object in $\on{HH}_\bullet(\CR,F_\CR)$.

\medskip

Our \thmref{t:two notions of trace} says that there exists a canonical isomorphism
\begin{equation} \label{e:two notions of trace trace intro}
\CHom_{\on{HH}_\bullet(\CR,F_\CR)}(\one_{\on{HH}_\bullet(\CR,F_\CR)},\on{cl}(\CM,F_\CM))\simeq
\Tr(F_\CM,\CM).
\end{equation}

Isomorphism \eqref{e:two notions of trace trace intro} shows that $\on{cl}(\CM,F_\CM)$ lifts
$\Tr(F_\CM,\CM)$ to an object of $\on{HH}_\bullet(\CR,F_\CR)$; this justifies the notation:
$$\Tr^{\on{enh}}_\CR(F_\CM,\CM):=\on{cl}(\CM,F_\CM).$$

So \thmref{t:two notions of trace} says that
$$\CHom_{\on{HH}_\bullet(\CR,F_\CR)}(\one_{\on{HH}_\bullet(\CR,F_\CR)},\Tr^{\on{enh}}_\CR(F_\CM,\CM))\simeq
\Tr(F_\CM,\CM).$$

\sssec{}

We now come to the central construction in this paper, which
is the prototype of the shtuka construction.

\medskip

Let us be given a symmetric monoidal category $\CA$ and a space $Y$. Let now $\phi$ be an endomorphism of $Y$,
which induces a symmetric monoidal endofunctor, denoted $\CA^{\otimes \phi}$, of $\CA^{\otimes Y}$. Let $Y/\phi$ be the
(homotopy) quotient of $Y$ by $\phi$.

\medskip

First, we note that there exists a canonical equivalence
\begin{equation} \label{e:Hochsch intro}
\on{HH}_\bullet(\CA^{\otimes Y},\CA^{\otimes \phi})\simeq \CA^{\otimes Y/\phi}.
\end{equation}

Next, let us be given an action of $\CA^{\otimes Y}$ on a (dualizable) DG category $\CM$.
Let $F_\CM$ be an endofunctor of $\CM$ compatible with $\CA^{\otimes \phi}$.

\medskip

Let
$$\on{Sht}_{\CM,F_\CM,\on{univ}}\in \CA^{\otimes Y/\phi}$$ be the object that corresponds under \eqref{e:Hochsch intro} to
$$\Tr^{\on{enh}}_{\CA^{\otimes Y}}(F_\CM,\CM)\in \on{HH}_\bullet(\CA^{\otimes Y},\CA^{\otimes \phi}).$$

\sssec{}  \label{sss:shtuka intro}

In \secref{s:sht}, we show how the object $\on{Sht}_{\CM,F_\CM,\on{univ}}$ encodes (i) the $I$-legged shtuka
construction\footnote{For us shtukas are algebraic objects. What we call ``shtukas" is more commonly called the ``cohomology of
sheaves arising by geometric Satake on (geometric) shtukas".  See \secref{sss:geometric shtukas} for an explanation of why this corresponds to cohomology of sheaves on the (geometric) moduli space of shtukas.}, (ii) partial Frobeniuses, (iii) excursion operators, and (iv) the ``S=T" relation.
In more detail:

\medskip

Using the rigidity of $\CA$ and hence of $\CA^{\otimes Y/\phi}$, we interpret the datum of $\on{Sht}_{\CM,F_\CM,\on{univ}}$
as a compatible family of functors
$$\on{Sht}_{\CM,F_\CM,Y/\phi,I}:(Y/\phi)^I\times \CA^{\otimes I}\to \Vect.$$

\medskip

Let
$$\on{Sht}_{\CM,F_\CM,Y,I}:Y^I\times \CA^{\otimes I}\to \Vect,$$
be the precomposition of $\on{Sht}_{\CM,F_\CM,Y/\phi,I}$ with the tautological projection $Y^I\to (Y/\phi)^I$.

\medskip

\noindent(i) In \propref{p:identify I-legged}, we show that the value of the functor
$\on{Sht}_{\CM,F_\CM,Y,I}$ on
$$\ul{y}\in Y^I,r\in \CA^{\otimes I},$$
identifies with
$$\Tr(H_{r_{\ul{y}}}\circ F_\CM,\CM),$$
where:

\medskip

\noindent--$r_{\ul{y}}$ denotes the object of $\CA^{\otimes Y}$ equal to the image of $r$ under $\CA^{\otimes I}\overset{\ul{y}}\longrightarrow \CA^{\otimes Y}$;

\medskip

\noindent--For $r'\in \CA^{\otimes Y}$, we let $H_{r'}$ denotes the endofunctor of $\CM$ given by the action of $r'$.

\medskip

This should be seen as a direct analog of the $I$-legged shtuka construction.

\medskip

\noindent(ii) The fact that $\on{Sht}_{\CM,F_\CM,Y,I}$ comes from $\on{Sht}_{\CM,F_\CM,Y/\phi,I}$ means that the former is equivariant under each of the endomorphisms
$\phi_i$ of $Y^I$, that acts as $\phi$ along the factor of $Y^I$ corresponding to $i\in I$ and as the identity along the other factors.
Following V.~Drinfeld, L.~Lafforgue and V.~Lafforgue, we call these endomorphisms ``partial Frobeniuses". We show (see \propref{p:partial Frob})
that, in terms of the isomorphism
$$\on{Sht}_{\CM,F_\CM,Y,I}(\ul{y}\in Y^I,r\in \CA^{\otimes I}) \simeq \Tr(H_{r_{\ul{y}}}\circ F_\CM,\CM),$$
the formula for the action of partial Frobeniuses translates
to the construction from \cite[Sect. 3]{Laf}. (We note that a salient feature of this formula is
cyclicity property of the trace construction.)

\medskip

\noindent(iii) By \thmref{t:two notions of trace} mentioned above, we have a canonical isomorphism
$$\Tr(F_\CM,\CM)\simeq  \CHom_{\CA^{\otimes Y/\phi}}(\one_{\CA^{\otimes Y/\phi}},\on{Sht}_{\CM,F_\CM,\on{univ}}) \simeq \on{Sht}_{\CM,F_\CM,Y/\phi,\emptyset}.$$

In particular, $\Tr(F_\CM,\CM)$ acquires an action of the algebra $\CEnd_{\CA^{\otimes Y/\phi}}(\one_{\CA^{\otimes Y/\phi}})$. In \propref{p:excursion shtukas}
we describe this action explicitly in terms of the \emph{excursion operators}, which are direct analogs of those in \cite[Sect. 9]{Laf}.

\medskip

\medskip

\noindent(iv) Arguably, the key conceptual and computational place in V.~Lafforgue's paper is the ``S=T" relation, which appears
as \cite[Proposition 6.2]{Laf}. In \thmref{t:S=T} we state and prove an analog of this result in our abstract context.  It states the equality
of two particular endomorphisms (one called $S$ and the other $T$) of $\Tr(F_\CM,\CM)$ corresponding to the data of $(y_0,a)$,
where $y_0$ is a $\phi$-fixed point on $Y$, and $a$ is a compact object in $\CA$.

\medskip

The $T$ operator is given by \eqref{e:categ trace intro}, for the endofunctor $H_{a_{y_0}}\circ F_\CM$ of $\CM$.

\medskip

The $S$ operator is an explicit excursion operator corresponding to $I=\{*\}$, the tautological loop based at the image $\bar{y}_0$
of $y_0$ under $Y\to Y/\phi$ (here we use the fact that $y_0$ is fixed by $\phi$), and a canonical map
$$\xi_a:\one_\CA \to \on{mult}\circ \on{mult}^R(\one_\CA)$$
attached to $a$, see \eqref{e:xi r}.

\ssec{Organization of the paper}

We will now briefly describe the structure of the paper.

\sssec{}

In \secref{s:int} we study the operation
$$Y,\CA\mapsto \CA^{\otimes Y},$$
where $Y$ is a space and $\CA$ is a symmetric monoidal category.

\medskip

The main results of this section are:

\medskip

\noindent--Presentation of $\CA^{\otimes Y}$ as a colimit with terms $\CA^{\otimes I}$ for finite sets $I$, as a symmetric
monoidal category/monoidal category/DG category (\thmref{t:integral});

\medskip

\noindent--Description of functors $\CA^{\otimes Y}\to \CC$ as compatible families of functors $\CA^{\otimes I}\to \CC\otimes \LS(Y^I)$,
where $\LS(-)$ denotes the category of (topological) local systems on a given space (\propref{p:describe functors});

\medskip

\noindent--For a group $\sG$ and a space $Y$ (with finitely many connected components), an equivalence
$$\Rep(\sG)^{\otimes Y}\to \QCoh(\LocSys_\sG(Y)),$$
where $\LocSys_\sG(Y)$ is the derived algebraic stack, classifying (topological) $\sG$-local systems on $Y$ (\thmref{t:integral to LocSys}),
and a generalization of this statement when $\Rep(\sG)=\QCoh(\on{pt}/\sG)$ is replaced by $\QCoh$ of a more general algebraic stack.

\sssec{}

The main theme of \secref{s:excurs} is excursion operators. The main results of this section are:

\medskip

\noindent--In the case when $Y$ is a connected pointed space, a presentation of $\CA^{\otimes Y}$ as a (sifted) colimit with terms
$$\CA^{\otimes \Sigma(I_+)},$$
where $I_+=I\sqcup *$ is a pointed finite set, and $\Sigma(I_+)\in \Spc$ denotes the suspension of $I_+$,
i.e., $\Sigma(I_+)\simeq \underset{I}\vee\, S^1$ (\corref{c:express connected});

\medskip

\noindent--Assuming that $\CA$ has an affine diagonal, a description of $\CEnd_{\CA^{\otimes Y}}(\one_{\CA^{\otimes Y}})$ as a colimit
with terms $\CEnd_{\CA^{\otimes \Sigma(I_+)}}(\one_{\CA^{\otimes \Sigma(I_+)}})$ (\corref{c:endo});

\medskip

\noindent--For $\CA$ rigid, an identification of
$\CEnd_{\CA^{\otimes \Sigma(J)}}(\one_{\CA^{\otimes \Sigma(J)}})$ with $\CHom_\CA(\one_\CA,\on{mult}_J\circ \on{mult}_J^R(\one_\CA))$
(\corref{c:endo term});

\medskip

\noindent--For a functor $\CS_Y:\CA^{\otimes Y}\to \CC$, an expression for the action of the terms
$$\CHom_\CA(\one_\CA,\on{mult}_J\circ \on{mult}_J^R(\one_\CA))\to \CEnd_{\CA^{\otimes \Sigma(J)}}(\one_{\CA^{\otimes \Sigma(J)}})\to
\CEnd_{\CA^{\otimes Y}}(\one_{\CA^{\otimes Y}})$$
on $\CS_Y(\one_{\CA^{\otimes Y}})$ in terms of the excursion operators (\thmref{t:excurs}).

\sssec{}

In \secref{s:Frob} we study the operation of categorical trace. The main results of this section are:

\medskip

\noindent--Construction of the categorical trace (\secref{ss:categ trace});

\medskip

\noindent--Construction of the 2-categorical trace (\secref{ss:2-categ trace});

\medskip

\noindent--The relationship between the two for rigid symmetric monoidal categories (\thmref{t:two notions of trace}).

\sssec{}

In \secref{s:wild} we study several generalizations and elaborations of \thmref{t:two notions of trace}. The main results are:

\medskip

\noindent--We connect the Chern character of a compact object of a rigid symmetric monoidal category with the excursion
operator (\corref{c:chern as excurs});

\medskip

\noindent--We formulate and prove a version of \thmref{t:two notions of trace} ``with observables" (\thmref{t:two notions of trace obs});

\medskip

\noindent--We introduce a mechanism that stands behind the Drinfeld-Lafforgue-Lafforgue definition of partial Frobeniuses
(\lemref{l:cyclicity and observables}).

\sssec{}

In \secref{s:sht} we combine the material of the previous sections to obtain our toy model 
for the shtuka construction. The material
in this section has been described already in \secref{sss:shtuka intro}.

\sssec{}

In the main body of this paper we study actions of categories of the form $\CA^{\otimes Y}$, where $\CA$ is a symmetric monoidal category,
and $Y$ is an object of $\Spc$. A key example is furnished by \cite{NY}: when working over the ground field $\BC$, we have an
action of $\Rep(\cG)^{\otimes X^{\on{top,sing}}}$ on $\Shv_{\on{Nilp}}(\Bun_G)$, where $X^{\on{top,alg}}$ is the object of $\Spc$ corresponding
to our curve $X$, and $\Shv(-)$ denotes the category of sheaves in the classical topology. (We recall that according to \thmref{t:integral to LocSys},
the category $\Rep(\cG)^{\otimes X^{\on{top,sing}}}$ identifies with $\QCoh(\LocSys_{\cG,\on{Betti}})$.)

\medskip

In the Appendix, Sects. \ref{s:sheaves}-\ref{s:ULA} we investigate the possibility of extending the construction of \cite{NY} to other
sheaf-theoretic contexts.

\sssec{}

In \secref{s:sheaves} we introduce a list of sheaf-theoretic contexts that we will consider. This list includes sheaves in the classical
topology (for schemes/stacks over $\BC$), D-modules (for schemes/stacks over a ground field of characteristic zero), as well as \'etale
sheaves (over any ground field).

\medskip

We recall the notion of singular support of an object $\CF\in \Shv(\CY)$, where $\CY$ is a scheme or algebraic stack.

\medskip

The main result of \secref{s:sheaves} is \thmref{t:vert sing supp}, which says that given an algebraic stack $\CY$
and a conical subset $\CN\subset T^*(\CY)$, for a scheme $X$, the functor of external tensor product defines an equivalence
$$\Shv_\CN(\CY)\otimes \Shv_{\on{lisse}}(X)\to \Shv_{\CN'}(\CY\times X), \quad \CN':=\CN \times \{\text{zero-section}\},$$
under the assumption that $X$ is proper\footnote{This assumption
is not necessary if $\Shv(-)$ is sheaves in the classical topology.}. In the above formula $\Shv_{\on{lisse}}(X)\subset \Shv(X)$
is the subcategory of lisse objects, see \secref{sss:defn lisse A}.

\sssec{}

In \secref{s:NY} we generalize the construction of \cite{NY} about the action of $\Rep(\cG)^{\otimes X^{\on{top,sing}}}$
on $\Shv_{\on{Nilp}}(\Bun_G)$.

\medskip

First, we provide details for the general pattern of Hecke action: we
show that in any sheaf-theoretic context, there is a compatible family of actions of $\Rep(\cG)^{\otimes I}$
on $\Shv(\Bun_G\times X^I)$ as $I$ ranges over the category of finite sets.

\medskip

Next, we show that the Hecke action gives rise to a compatible family of monoidal functors
\begin{equation} \label{e:Hecke nilp intro}
\Rep(\cG)^{\otimes I}\to \End(\Shv_{\on{Nilp}}(\Bun_G))\otimes \Shv_{\on{lisse}}(X), \quad I\in \on{fSet}.
\end{equation}

\medskip

Finally, if the sheaf-theoretic context is that of sheaves in the classical topology, we show that
the functors \eqref{e:Hecke nilp intro} assemble to an action of $\LocSys_\cG(X)$
on $\Shv_{\on{Nilp}}(\Bun_G)$.

\sssec{}

In \secref{s:Dmod} we specialize to the case when our sheaf-theoretic context is that of D-modules.
We formulate several conjectures pertaining to integrated actions in this context.

\medskip

First off, for a symmetric monoidal category $\CA$ and a scheme $X$, we \emph{define} an action of $\CA^{\otimes X}$
on a DG category $\CM$, to be the same as a compatible family of monoidal functors
$$\CA^{\otimes I}\to \End(\CM)\otimes \Dmod(X^I), \quad I\in \on{fSet}.$$

This is equivalent to having an action on $\CM$ of the symmetric monoidal category $\on{Fact}(\CA)_{\Ran(X)}$, defined as
in \cite[Sect. 2.5]{Ga3}.

\medskip

A key example of such an action is when $\CM=\Dmod(\Bun_G)$ and $\CA=\Rep(\cG)$.

\medskip

Let $\LocSys_\cG(X)$ be the stack of de Rham $\cG$-local systems on $X$, defined as in \cite[Sect. 10.1]{AG}. According to
\cite[Sect. 4.3]{Ga1}, we have a canonically defined symmetric monoidal functor
$$\Rep(\cG)^{\otimes X}\to \QCoh(\LocSys_\cG(X)),$$
which admits a fully faithful right adjoint. Hence, among categories equipped with an action of $\Rep(\cG)^{\otimes X}$ there
is a full subcategory formed by those categories on which this action comes from an action of $\QCoh(\LocSys_\cG(X))$.

\medskip

A result that we mentioned earlier on says that $\Dmod(\Bun_G)$ belongs to the above subcategory. However, the
proof of this theorem heavily relied on ``non-geometric" constructions, specifically on the interaction between $\Dmod(\Bun_G)$
and representation theory of the affine Kac-Moody Lie algebra corresponding to $\fg$ at the critical level. Given that, one would
have liked to have a more geometric proof of this result.

\medskip

Unfortunately, we \emph{do not} have a conjecture as to when an action of $\Rep(\cG)^{\otimes X}$ factors via
$\QCoh(\LocSys_\cG(X))$.

\medskip

Next, we introduce what it means for an action of $\CA^{\otimes X}$ on $\CM$ to be lisse. This means that for any $r\in \CA^{\otimes I}$
and $m\in \CM$, the resulting object of $\CM\otimes \Dmod(X^I)$ belongs to the full subcategory
$$\CM\otimes \Dmod_{\on{lisse}}(X^I)\subset \CM\otimes \Dmod(X^I),$$
where $\Dmod_{\on{lisse}}(-)\subset \Dmod(-)$ is the full subcategory of lisse D-modules (ind-completion of $\CO$-coherent D-modules).
For any $\CM$, one can consider its maximal full subcategory $\CM^{\on{lisse}}\subset \CM$, on which the action is lisse.

\medskip

Take $\CM=\Dmod(\Bun_G)$. \thmref{t:preserve nilp} (due to \cite{NY}) says that we have an inclusion
$$\Dmod_{\on{Nilp}}(\Bun_G)\subset (\Dmod(\Bun_G))^{\on{lisse}}.$$

In \conjref{c:lisse nilp} we propose that this inclusion is an equality.

\medskip

Finally, we propose \conjref{c:lisse as supp} that describes lisse actions in terms of set-theoretic support. Namely, let $\CM$
be a category equipped with an action of $\Rep(\cG)^{\otimes X}$ that factors via $\QCoh(\LocSys_\cG(X))$. Thus, given a compact
object of $\CM$, one can talk about its set-theoretic support, which is a Zariski-closed subset in $\LocSys_\cG(X)$.

\medskip

\conjref{c:lisse as supp} says that a compact object of $\CM$ belongs to $\CM^{\on{lisse}}$ if and only if its set-theoretic support
lies in the finite union of subsets \emph{induced} from irreducible local systems for Levi subgroups of $G$.

\sssec{}

In \secref{s:ULA} we discuss the abstract notion of ULA, which has been used in the definition of ULA actions in \secref{s:Dmod}.

\medskip

First, given a monoidal category $\CC$ and a module category $\CM$, we define what it means for an object $m\in \CM$
to be ULA with respect to $\CC$.

\medskip

The case of interest for us is $\CC=\Dmod(Y)$, with the monoidal structure given by the $\sotimes$ operation.

\medskip

We show that for $\CM:=\Dmod(Z)$ for a scheme $Z$ over $Y$ we recover the usual notion of what it means for
an object in $\Dmod(Z)$ to be ULA over $Y$.

\medskip

We also establish the following criterion for ULA-ness: we recall that the datum of an $\Dmod(Y)$-module
category is equivalent to that of \emph{sheaf of categories} over the de Rham prestack $Y_{\on{dR}}$ of $Y$. In particular, we can consider
the value of this sheaf of categories on $Y$ itself and obtain a $\QCoh(Y)$-module category $\CM_Y$, which can be recovered from $\CM$
as
$$\CM_Y\simeq \QCoh(Y)\underset{\Dmod(Y)}\otimes \CM.$$

In the case of $\CM:=\Dmod(Z)$, we have:
$$\CM_Y\simeq \QCoh(Y\underset{Y_{\on{dR}}}\times Z_{\on{dR}});$$
when $Z\to Y$ is smooth, this is the (derived) category of modules over the sheaf of \emph{vertical} differential operators.

\medskip

The categories $\CM$ and $\CM_Y$ are related by an adjoint pair of functors
$$\ind:\CM_Y\rightleftarrows \CM:\oblv.$$

Our \thmref{t:ULA Dmod} says that an object $m\in \CM$ is ULA with respect to $\Dmod(Y)$ if and only if
its image $\oblv(m)\in \CM_Y$ is compact.

\ssec{Notation and conventions}

\sssec{Higher categories}  \label{sss:higher categories}

This paper will substantially use the language of $\infty$-categories\footnote{We will often omit the adjective ``infinity" and refer to
$\infty$-categories simply as ``categories"}, as developed in \cite{Lu1}.

\medskip

We let $\Spc$ denote the $\infty$-category of spaces.

\medskip

Given an $\infty$-category $\bC$, and a pair of objects $\bc_1,\bc_2\in \bC$, we let $\Maps_\bC(\bc_1,\bc_2)\in \Spc$
be the mapping space between them. Given a space $Y$, by a $Y$-family of maps $\bc_1\to \bc_2$ we will mean a map
$Y\to \Maps_\bC(\bc_1,\bc_2)$ in $\Spc$.

\medskip

Recall that given an $\infty$-category $\bC$ that contains filtered colimits, an object $\bc\in \bC$ is said to be compact
if the Yoneda functor $\Maps_\bC(\bc,-):\bC\to \Spc$ preserves filtered colimits. We let $\bC^c\subset \bC$ denote
the full subcategory spanned by compact objects.

\medskip

Given a functor $F:\bC_1\to \bC_2$ between $\infty$-categories, we will denote by $F^R$ (resp., $F^L$) its right (resp., left) adjoint,
provided that it exists.

\sssec{Higher algebra}

Throughout this paper we will be concerned with \emph{higher algebra} over a commutative ring of coefficients, denoted $\sfe$.
Although $\sfe$ is not necessarily a field, we will denote by $\Vect$ the stable $\infty$-category of chain complexes of $\sfe$-modules,
see, e.g., \cite[Example 2.1.4.8]{GaLu}.

\medskip

We will regard $\Vect$ as equipped with a symmetric monoidal structure (in the sense on $\infty$-categories), see, e.g., \cite[Sect. 3.1.4]{GaLu}.
Thus, we can talk about commutative/associative algebra objects in $\Vect$, see, e.g., \cite[Sect. 3.1.3]{GaLu}.

\medskip

Whenever we talk about algebraic geometry over $\sfe$, we will mean \emph{derived algebraic geometry}, built off derived affine schemes,
the latter being by definition the category opposite to that of commutative algebras in $\Vect$, connective with respect to the natural
t-structure.

\medskip

We will denote by $\DGCat$ the $\infty$-category of (presentable) cocomplete stable $\infty$-categories, \emph{equipped with a module
structure over $\Vect$ with respect to the symmetric monoidal structure on the $\infty$-category of cocomplete stable $\infty$-categories
given by the Lurie tensor product}, see \cite[Sect. 4.8.1]{Lu2}. We will refer to objects of $\DGCat$ as ``DG categories".
We emphasize that as 1-morphisms in $\DGCat$, only colimit-preserving functors
are allowed.

\medskip

For a given DG category $\CC$, and a pair of objects $c_1,c_2\in \CC$, we have a well-defined ``inner Hom" object
$\CHom_\CC(c_1,c_2)\in \Vect$, characterized by the requirement that
$$\Maps_{\Vect}(V,\CHom_\CC(c_1,c_2))\simeq \Maps_\CC(V\otimes c_1,c_2), \quad V\in \Vect.$$

\medskip

The category $\DGCat$ itself carries a symmetric monoidal structure, given by Lurie tensor product over $\Vect$. In particular, we can talk
about the $\infty$-category of associative/commutative algebras in $\DGCat$, which we denote by $\DGCat^{\on{Mon}}$ (resp., $\DGCat^{\on{SymMon}}$),
and refer to as monoidal (resp., symmetric monoidal) DG categories.

\medskip

Unless specified otherwise, all monoidal/symmetric monoidal DG categories will be assumed unital. Given a monoidal/symmetric monoidal DG category
$\CA$, we will denote by $\one_\CA$ its unit object.

\sssec{Rigidity} \label{sss:rigid}

In multiple places in this paper we use the notion of \emph{rigidity} for a monoidal DG category $\CA$.
We refer the reader to \cite[Sect. 9.1.1]{GR1} for the general definition.

\medskip

That said, in most of the cases of interest, the DG category in question will be compactly generated. In this case,
according to \cite[Lemma 9.1.5]{GR1}, the condition of being rigid is equivalent to the fact that its classes of
compact objects and objects that are both left and right dualizable, coincide.

\sssec{Prestacks}

In Appendices \ref{s:sheaves}-\ref{s:NY} we will deal with classical (i.e., non-derived) algebro-geometric objects over a ground
field $k$ (which has nothing to do with our ring of coefficients $\sfe$).

\medskip

We will denote by $\affSch$ the category of affine schemes (of finite type (!)) over $k$.

\medskip

By a prestack (technically, prestack locally of finite type), we will mean an arbitrary functor
$$(\affSch)^{\on{op}}\to \Spc.$$

\ssec{Acknowledgements}

We are grateful to A.~Beilinson, V.~Drinfeld, V.~Lafforgue, J.~Lurie and P.~Scholze for valuable discussions,
which informed our thinking about the subject.

\medskip

The research of D.G. was supported by NSF grant DMS-1707662 and by Gelfand Chair at IHES.
The research of Y.V. was supported by the ISF grant 822/17.

\medskip

Part of the project was carried out while D.G, N.R and Y.V were at MSRI and were supported by NSF grant DMS-1440140.

\medskip

The project have received funding from ERC under grant agreement No 669655.

\section{Symmetric monoidal categories integrated over a space}  \label{s:int}

Let $\CA$ be a symmetric monoidal category, and let $Y$ be an object of $\Spc$.
The goal of this section is to give an explicit description of the category
$$\CA^{\otimes Y},$$
(sometimes also denoted $\underset{Y}\int\, \CA$), which can be thought of as
\emph{factorization homology} of $\CA$ along $Y$.

\medskip

We will describe $\CA^{\otimes Y}$
as a colimit as a (i) symmetric monoidal category, (ii) just monoidal category, (iii) plain
DG category (each time the colimit will be taken within the corresponding ambient category, i.e.,
inside the category of symmetric monoidal categories, monoidal categories or DG categories,
respectively).

\medskip

In particular, we will give an explicit description of what it takes for $\CA^{\otimes Y}$ to act (as a monoidal
category) on a DG category $\CM$, and what it takes to map out of $\CA^{\otimes Y}$ as a plain DG category.
Both descriptions will be formulated in terms of functors out of $\CA^{\otimes I}$ parameterized by points of $Y^I$,
for $I$ ranging over the category $\on{fSet}$ of finite sets.

\ssec{The integral}

\sssec{}

Let $\bC$ be a category with colimits. For an object $\bc\in \bC$ and $Y\in \Spc$ define the object
$$\underset{Y}\int\, \bc:= \underset{Y}{\on{colim}}\, \bc\in \bC.$$
%
%

I.e., we take the colimit along the index category $Y$ of the constant functor $Y\to \bC$ with value $\bc$.

\sssec{}

Tautologically, for $\bc,\bc'\in \bC$, we have
\begin{equation} \label{e:univ ppty int}
\Maps_\bC(\underset{Y}\int\, \bc,\bc')\simeq \Maps_{\Spc}(Y,\Maps_\bC(\bc,\bc')).
\end{equation}

\medskip

This shows that the assignment
\begin{equation} \label{e:integral functor two var}
\bc,Y\mapsto \underset{Y}\int\, \bc
\end{equation}
is a functor
$$\bC\times \Spc\to \bC.$$

\medskip

Moreover, the functor \eqref{e:integral functor two var} preserves colimits in each variable.

\sssec{}

For a fixed $\bc$, the functor
\begin{equation} \label{e:integral functor}
Y\mapsto \underset{Y}\int\, \bc, \quad \on{Spc}\to \bC
\end{equation}
can be characterized as the unique colimit-preserving functor whose
value on $\{*\}\in \Spc$ is $\bc$.

\sssec{}

For example for $Y$ a discrete set $I$, we have
\begin{equation} \label{e:int finite set}
\underset{I}\int\, \bc\simeq \underset{i\in I}\sqcup\, \bc.
\end{equation}

\sssec{}

Let us recall the notion of \emph{left Kan extension}. Let $i:\bD\to \bD'$ be a functor; let $\bC$ be a category with colimits,
and let $F:\bD\to \bC$ be a functor. The left Kan extension of $F$ along $i$ is a functor
$$\on{LKE}_i(F):\bD'\to \bC$$ with the
following universal property
$$\Maps_{\on{Funct}(\bD,\bC)}(F,G\circ i)\simeq \Maps_{\on{Funct}(\bD',\bC)}(\on{LKE}_i(F),G), \quad G\in \on{Funct}(\bD',\bC).$$

One can show that the value of $\on{LKE}_i(F)$ on an object $\bd'\in \bD'$ is given by
\begin{equation} \label{e:value LKE}
\underset{\bd\in \bD_{/\bd'}}{\on{colim}}\, F(\bd).
\end{equation}

Here and elsewhere, the notation $\bD_{/\bd'}$ is the slice category, i.e., the category of
\begin{equation} \label{e:slice}
\{\bd\in \bD,i(\bd)\to \bd'\}.
\end{equation}

\sssec{}

Tautologically, we can rewrite the functor \eqref{e:integral functor} as follows. Let $\{*\}$ be the point category, and let $*$ denote its single object.
By a slight abuse of notation we will also denote by $\{*\}$ the corresponding object of $\Spc$, i.e., the point space.

\medskip

Then \eqref{e:integral functor} is the left Kan extension along
\begin{equation} \label{e:incl point}
\{*\}\hookrightarrow \on{Spc},\quad *\mapsto \{*\}
\end{equation}
of the functor
$$\{*\}\to \bC, \quad *\mapsto \bc.$$

\sssec{}  \label{sss:LKE fin}

Let $\on{fSet}$ denote the category of finite sets, equipped with the embedding
\begin{equation} \label{e:finite sets to spaces}
\on{fSet}\hookrightarrow \on{Spc}.
\end{equation}

By transitivity of the procedure of left Kan extension with respect to the composition
$$\{*\}\hookrightarrow \on{fSet}\hookrightarrow \on{Spc},$$
we obtain that \eqref{e:integral functor} identifies with the left Kan extension along \eqref{e:finite sets to spaces}
of its restriction to $\on{fSet}$,
while the latter is given by \eqref{e:int finite set}.

\ssec{The tensor product $\CA^{\otimes Y}$ as a colimit}  \label{ss:A power Y}

\sssec{}

We take $\bC$ to be $\DGCat^{\on{SymMon}}$, so an object of $\bC$ is a symmetric monoidal DG category $\CA$.
We wish to give an explicit description of the resulting category
\begin{equation} \label{e:integral of category}
\CA^{\otimes Y}:=\underset{Y}\int\, \CA
\end{equation}
as a colimit, in three different contexts: (i) as a symmetric monoidal category, (ii) as a monoidal category,
(iii) as a plain DG category.

\sssec{}

For a category $\bD$, let $\on{TwArr}(\bD)$ be the corresponding twisted arrows category. I.e., its objects are 1-morphisms in $\bD$
$$\bd_s\to \bd_t,$$
and morphisms
$$(\bd^0_s\to \bd^0_t)\to (\bd^1_s\to \bd^1_t)$$
are commutative diagrams
$$
\CD
\bd^0_s  @>>>  \bd^0_t \\
@VVV   @AAA  \\
\bd^1_s  @>>>  \bd^1_t.
\endCD
$$

In practice we will take $\bD$ to be the category $\on{fSet}$ of finite sets.

\sssec{}  \label{sss:three cases}

Let $\CA$ be a symmetric monoidal category, and let $\bC$ be one of the categories: $\DGCat^{\on{SymMon}}$, $\DGCat^{\on{Mon}}$, $\DGCat$.
For a given $Y$, consider the functor
\begin{equation} \label{e:functor from TwArr}
\on{TwArr}(\on{fSet})\to \bC, \quad (I\to J)\mapsto \underset{\Maps(J,Y)}\int\, \CA^{\otimes I}.
\end{equation}

We emphasize that in the above formula $\CA^{\otimes I}$ is understood in the usual sense, i.e., the $I$-fold tensor product of $\CA$ with itself,
which is again a symmetric monoidal category, but viewed as an object in $\bC$ using the tautological forgetful functor
$$\DGCat^{\on{SymMon}}\to \bC,$$
while $\underset{\Maps(J,Y)}\int$ is taken \emph{within $\bC$}, i.e., the result of this operation looks different as a category,
depending on which $\bC$ we choose.

\medskip

We claim:

\begin{thm} \label{t:integral}
In each of the three cases in \secref{sss:three cases}, the image of $\CA^{\otimes Y}$ under the forgetful functor
$$\DGCat^{\on{SymMon}}\to \bC$$
identifies canonically with the colimit in $\bC$ of the functor \eqref{e:functor from TwArr} along $\on{TwArr}(\on{fSet})$.
\end{thm}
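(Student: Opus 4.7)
My approach is to tackle all three cases uniformly by combining the left Kan extension description of $\CA^{\otimes Y}$ from finite sets (\secref{sss:LKE fin}) with a cofinality argument for the twisted arrow indexing, and by checking cases (ii) and (iii) via mapping spaces. The first observation is that the right-hand side of the theorem can be rewritten more simply: since $\underset{\Maps(J,Y)}\int \CA^{\otimes I}$ is by definition a colimit in $\bC$ over the space $Y^J$ of the constant diagram with value $\CA^{\otimes I}$, and colimits commute with colimits, the iterated colimit reduces to a single colimit indexed by the fiber-product category
$$\on{TwArr}(\on{fSet}) \times_{\on{fSet}} \on{fSet}_{/Y},$$
whose objects are factorizations $(I \to J \to Y)$ of a map of finite sets followed by a map from a finite set to $Y$, with associated value $\CA^{\otimes I}$.

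For case (i), $\bC = \DGCat^{\on{SymMon}}$, we have from \secref{sss:LKE fin} that $\CA^{\otimes Y} \simeq \underset{(I,\, I \to Y) \in \on{fSet}_{/Y}}{\on{colim}}\, \CA^{\otimes I}$, so the theorem reduces to showing that the natural projection
$$\on{TwArr}(\on{fSet}) \times_{\on{fSet}} \on{fSet}_{/Y} \to \on{fSet}_{/Y}, \quad (I \to J \to Y) \mapsto (I,\, I \to Y)$$
is cofinal. This follows by exhibiting, for each $(I,\, y : I \to Y) \in \on{fSet}_{/Y}$, the factorization with $J = I$ and $I \to J$ the identity as an initial object of the corresponding slice category, rendering it contractible.

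For cases (ii) $\bC = \DGCat^{\on{Mon}}$ and (iii) $\bC = \DGCat$, the forgetful functor from $\DGCat^{\on{SymMon}}$ does not preserve colimits, so the argument above does not directly transfer. I would verify the formula by testing on mapping spaces: for an arbitrary $\CC \in \bC$, establish that
$$\Maps_{\bC}(\CA^{\otimes Y},\, \CC) \simeq \underset{(I \to J) \in \on{TwArr}(\on{fSet})^{\on{op}}}{\lim}\, \Maps_{\Spc}\bigl(Y^J,\, \Maps_\bC(\CA^{\otimes I},\, \CC)\bigr),$$
with the compatibility data on the right packaged precisely by $\on{TwArr}(\on{fSet})$. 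The main obstacle I expect is case (iii): here $\CA^{\otimes I}$ is the Lurie tensor product, which is \emph{not} the coproduct in $\DGCat$, so one must show that the twisted arrow indexing recovers the symmetric monoidal content of $\CA^{\otimes Y}$ from purely $\DGCat$-theoretic data. This should reduce to identifying the diagram $(I \to J) \mapsto \CA^{\otimes I}$ as an operadic bar resolution of $\CA$, with $\on{TwArr}(\on{fSet})$ encoding the combinatorics that recover the symmetric monoidal structure from its underlying $\DGCat$-diagram.
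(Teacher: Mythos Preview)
Your proposal has a genuine gap in cases (ii) and (iii), and the argument for case (i) contains an error.

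\medskip

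\textbf{Cases (ii) and (iii): the missing idea.} You correctly observe that the forgetful functor $\DGCat^{\on{SymMon}}\to \bC$ does not preserve arbitrary colimits, but you then retreat to vague mapping-space and ``operadic bar resolution'' language without an actual argument. The key point you are missing is that these forgetful functors \emph{do} preserve \emph{sifted} colimits (this is a general fact about $\oblv_{\CP}:\CP\on{-alg}(\bO)\to\bO$ for any operad $\CP$; see Remark~\ref{r:forget mon}). Since $\on{fSet}$ has finite coproducts and the embedding $\on{fSet}\hookrightarrow\Spc$ preserves them, the slice categories $\on{fSet}_{/Y}$ are sifted (\corref{c:over sifted}). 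This means the left Kan extension description of $\CA^{\otimes Y}$ along $\on{fSet}\hookrightarrow\Spc$ survives the forgetful functor to $\bC$ (\propref{p:LKE commutes}), and the same siftedness argument shows that the twisted-arrow colimit, as a functor of $Y$, is also the left Kan extension of its restriction to $\on{fSet}$. This reduces all three cases uniformly to checking the identity for $Y\in\on{fSet}$, which is a Yoneda-type statement (\propref{p:Yoneda}) proved directly via \lemref{l:twisted}. No operadic bar resolution is needed.

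\medskip

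\textbf{Case (i): the cofinality error.} Your claimed initial object in the slice under $(I,y)$ is not initial. A morphism in your category $\CE$ from $((I\xrightarrow{\on{id}}I),y)$ to $((I'\to J'),w)$ requires, in addition to a map $g:I\to I'$, a map $b:J'\to I$ \emph{splitting} the composite $I\xrightarrow{g}I'\to J'$, together with a path $w\simeq y\circ b$. Such a splitting need not exist: take $I=\emptyset$ and $J'=\{*\}$; then $b:\{*\}\to\emptyset$ does not exist. So the slice contains objects admitting no map from your candidate. The cofinality you want may still hold, but not by this argument; and even once established it only handles case (i), leaving the real content of the theorem (the other two cases) untouched.
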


\begin{rem}
The assertion of \thmref{t:integral} with the same proof applies more generally, where instead of $\DGCat$ we consider a symmetric
monoidal category $\bO$ satisfying the conditions of Remark \ref{r:forget mon} below. So instead of $\DGCat^{\on{SymMon}}$
we will have $\on{ComAlg}(\bO)$ and $\bC$ can be any of $\on{ComAlg}(\bO)$, $\on{AssocAlg}(\bO)$ or $\bO$.
\end{rem}

\ssec{Proof of \thmref{t:integral}}

\sssec{Plan of the proof}

We will show that both sides, viewed as functors $\Spc\to \bC$, are left Kan extensions of their respective
restrictions along \eqref{e:finite sets to spaces}. Then we will show that there restrictions are canonically
isomorphic.

\sssec{}

The first two steps will use the following general assertion. Recall that a category $A$ is said to be \emph{sifted} if
the diagonal functor $A\to A\times A$ is cofinal.

\begin{lem} \label{l:coprod sifted}
If $A$ has coproducts, then it is sifted.
\end{lem}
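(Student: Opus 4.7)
The plan is to verify the definition of ``sifted'' directly: we must show that the diagonal functor $\Delta \colon A \to A \times A$ is cofinal. By Lurie's criterion (HTT 4.1.3.1), this is equivalent to showing that for every object $(a_1, a_2) \in A \times A$, the fiber product
\[
A \underset{A \times A}\times (A \times A)_{(a_1, a_2)/}
\]
is a weakly contractible space.

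First, I would unwind what this comma $\infty$-category looks like. An object consists of an object $a \in A$ together with a morphism $(a_1, a_2) \to \Delta(a) = (a, a)$ in $A \times A$, that is, a pair of morphisms $a_1 \to a$ and $a_2 \to a$ in $A$. Morphisms between two such cocones $(a, a_1 \to a, a_2 \to a)$ and $(a', a_1 \to a', a_2 \to a')$ are morphisms $a \to a'$ in $A$ rendering the evident triangles commutative (up to coherent homotopy). In other words, this is the $\infty$-category of cocones under the diagram $\{a_1, a_2\}$ in $A$.

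Second, I would observe that the hypothesis that $A$ admits binary coproducts furnishes an initial object in this comma category, namely the coproduct $a_1 \sqcup a_2$ together with its two canonical structure maps $a_i \to a_1 \sqcup a_2$. The universal property of the coproduct says precisely that for any other cocone $(b, a_1 \to b, a_2 \to b)$, the space of morphisms of cocones from $a_1 \sqcup a_2$ to $b$ is contractible.

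Finally, I would conclude by invoking the standard fact that any $\infty$-category possessing an initial object is weakly contractible (HTT 4.4.4.5), whence the required comma category is weakly contractible and $\Delta$ is cofinal. There is no real obstacle here: the content of the lemma is simply the recognition that ``cocones under a pair'' is exactly the comma category controlling cofinality of $\Delta$, after which the coproduct does all the work. The only mild point to track is matching Lurie's orientation of slice/comma categories to the colimit-preservation form of cofinality being used implicitly throughout the paper.
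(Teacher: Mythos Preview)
Your proof is correct and follows essentially the same approach as the paper: invoke the cofinality criterion for the diagonal, identify the relevant comma category $A_{(a_1,a_2)/}$, and observe that the coproduct $a_1\sqcup a_2$ furnishes an initial object, whence contractibility. The paper's version is simply terser.
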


\begin{proof}
We need to show that for any $a',a''\in A$, the category
$$A_{(a',a'')_{/}}:=(a\in A, a'\to a  \leftarrow a'')$$
is contractible\footnote{In our terminology ``contractible" is what in \cite{Lu1} is called ``weakly contractible".}.
Now, the fact that $A$ has coproducts means that $A_{(a',a'')_{/}}$ has an initial object.
\end{proof}

\begin{cor}  \label{c:over sifted}
Let $\bD$ have finite coproducts, and let $i:\bD\to \bD'$ be a functor
that preserves finite coproducts. Then for any $\bd'\in \bD$, the slice category
$\bD_{/\bd'}$ \emph{(}see Equation \eqref{e:slice}\emph{)} is sifted.
\end{cor}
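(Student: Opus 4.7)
The plan is to reduce Corollary \ref{c:over sifted} directly to Lemma \ref{l:coprod sifted} by showing that the hypotheses force $\bD_{/\bd'}$ to admit (binary) coproducts. The setup of the lemma makes clear that only binary coproducts are actually used in its proof (the argument only involves the slice $A_{(a',a'')_{/}}$ of spans under a pair of objects), so it suffices to exhibit a coproduct in $\bD_{/\bd'}$ for any two objects.

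Concretely, given two objects of $\bD_{/\bd'}$, presented as pairs
\[
(\bd_1,\ f_1:i(\bd_1)\to \bd')\quad\text{and}\quad (\bd_2,\ f_2:i(\bd_2)\to \bd'),
\]
I would form $\bd_1\sqcup \bd_2\in \bD$ using that $\bD$ has finite coproducts, then use the hypothesis that $i$ preserves finite coproducts to obtain a canonical equivalence $i(\bd_1\sqcup \bd_2)\simeq i(\bd_1)\sqcup i(\bd_2)$. The universal property of the coproduct in $\bD'$ then assembles $f_1$ and $f_2$ into a single map $i(\bd_1\sqcup \bd_2)\to \bd'$, giving an object of $\bD_{/\bd'}$ together with the structure maps from $(\bd_j,f_j)$.

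To finish, I would verify that this object really represents the coproduct in $\bD_{/\bd'}$, i.e.\ that for any $(\be,g:i(\be)\to \bd')$ the mapping space $\Maps_{\bD_{/\bd'}}((\bd_1\sqcup \bd_2,\ldots),(\be,g))$ is the product of $\Maps_{\bD_{/\bd'}}((\bd_j,f_j),(\be,g))$. This is a formal unwinding using (a) the fact that mapping spaces in a slice category are computed as fibers of mapping spaces in the underlying category over the relevant map to $\bd'$, and (b) the universal property of $\bd_1\sqcup \bd_2$ in $\bD$ combined with the preservation property of $i$. Once coproducts in $\bD_{/\bd'}$ are in hand, Lemma \ref{l:coprod sifted} applies and yields that $\bD_{/\bd'}$ is sifted.

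The main potential subtlety, rather than an obstacle, is the $\infty$-categorical bookkeeping for the slice category: one must check that the coproduct built above satisfies the universal property at the level of mapping spaces (not just on homotopy categories), and that preservation of coproducts by $i$ should be understood in the $\infty$-categorical sense (preservation of the colimit diagram, not merely of the underlying object). Given this, the argument is essentially formal and the corollary follows immediately.
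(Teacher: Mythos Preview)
Your proposal is correct and is exactly the argument the paper has in mind: the corollary is stated without proof precisely because it follows immediately from Lemma~\ref{l:coprod sifted} once one observes that the slice $\bD_{/\bd'}$ inherits finite coproducts from $\bD$ via the coproduct-preserving functor $i$. There is nothing more to add.
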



Here is an application of \corref{c:over sifted} that we will use repeatedly:

\begin{prop} \label{p:LKE commutes}
Let $i:\bD\to \bD'$ be as in \corref{c:over sifted}. Let $\bC_1$ be a category that admits colimits, and let
$\Phi:\bC_1\to \bC_2$ be a functor that preserves sifted colimits. Then for $F:\bD\to \bC_1$, the natural
transformation
$$\on{LKE}_i(\Phi\circ F)\to \Phi\circ \on{LKE}_i(F),$$
obtained by the universal property of $\on{LKE}_i(-)$, is an isomorphism.
\end{prop}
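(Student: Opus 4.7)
The plan is to verify the statement pointwise at every object $\bd' \in \bD'$, using the explicit colimit formula for left Kan extensions given in equation \eqref{e:value LKE}. The hypotheses are tailored so that the two requirements—siftedness of the indexing diagram and preservation of sifted colimits by $\Phi$—line up perfectly.

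First, I would fix $\bd' \in \bD'$. Applying the pointwise formula \eqref{e:value LKE}, the value of the left-hand side is
$$\on{LKE}_i(\Phi\circ F)(\bd')\simeq \underset{\bd\in \bD_{/\bd'}}{\on{colim}}\, \Phi(F(\bd)),$$
and the value of the right-hand side is
$$\Phi(\on{LKE}_i(F)(\bd'))\simeq \Phi\left(\underset{\bd\in \bD_{/\bd'}}{\on{colim}}\, F(\bd)\right).$$
Unwinding the construction of the natural transformation via the universal property of $\on{LKE}_i(-)$, one sees it restricts at $\bd'$ to the tautological comparison map from a colimit of $\Phi\circ F$ to $\Phi$ applied to the colimit of $F$, induced by the cocone structure.

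Second, I would invoke \corref{c:over sifted}: since $\bD$ has finite coproducts and $i$ preserves them, the slice category $\bD_{/\bd'}$ is sifted. Then, since $\Phi$ preserves sifted colimits by assumption, the comparison map
$$\underset{\bd\in \bD_{/\bd'}}{\on{colim}}\, \Phi(F(\bd))\to \Phi\left(\underset{\bd\in \bD_{/\bd'}}{\on{colim}}\, F(\bd)\right)$$
is an isomorphism. Since $\bd'$ was arbitrary, the natural transformation $\on{LKE}_i(\Phi\circ F)\to \Phi\circ \on{LKE}_i(F)$ is a pointwise isomorphism, hence an isomorphism of functors.

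There is no real obstacle here; the only piece of bookkeeping is verifying that the map produced by the universal property of $\on{LKE}_i(-)$ agrees, under the pointwise identifications above, with the canonical comparison map from the colimit of a composite to the composite of the colimit. This is a purely formal check: both maps are characterized by the same compatibility with the cocone on $F$ indexed by $\bD_{/\bd'}$. The content of the proposition is therefore entirely concentrated in \corref{c:over sifted}, which ensures that the indexing categories appearing in \eqref{e:value LKE} are sifted.
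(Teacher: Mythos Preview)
Your proof is correct and follows exactly the same approach as the paper, which simply says the proposition ``follows by \corref{c:over sifted} from the formula \eqref{e:value LKE} for the values of $\on{LKE}_i(-)$.'' Your write-up is a careful expansion of that one-line argument.
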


\begin{proof}
Follows by \corref{c:over sifted} from the formula \eqref{e:value LKE} for the values of $\on{LKE}_i(-)$.
\end{proof}

We now proceed to the proof of \thmref{t:integral}.

\sssec{Step 1}  \label{sss:operad}

Let us show that the functor
$$\Spc \overset{Y\mapsto \CA^{\otimes Y}}\longrightarrow \DGCat^{\on{SymMon}}\to \bC$$
is the left Kan extension of its restriction along \eqref{e:finite sets to spaces}.

\medskip

We apply \propref{p:LKE commutes} to the functor $\on{fSet}\to \Spc$, which obviously satisfies the assumption
of \corref{c:over sifted}. 

\medskip

Since the functor $Y\mapsto \CA^{\otimes Y}$ is the left Kan extension of its restriction to $\on{fSet}$,
it remains to show that the forgetful functor
$\DGCat^{\on{SymMon}}\to \bC$ preserves sifted colimits. This follows from the fact that both forgetful functors
\begin{equation} \label{e:forget mon}
\DGCat^{\on{SymMon}}\to \DGCat \text{ and } \DGCat^{\on{Mon}}\to \DGCat
\end{equation}
preserve sifted colimits and are conservative.

\begin{rem} \label{r:forget mon}

The fact that the functors \eqref{e:forget mon} preserve sifted colimits and are conservative is a consequence
of the following:

\medskip

Let $\bO$ be a symmetric monoidal category which admits sifted colimits and such that the tensor product functor commutes with sifted colimits.
Then for an operad $\CP$, the forgetful functor
$$\oblv_{\CP}:\CP\on{-alg}(\bO)\to \bO$$
preserves sifted colimits and is conservative, see \cite[Prop. 3.2.3.1 and Lemma 3.3.2.6]{Lu2}.

\end{rem}

\sssec{Step 2}  \label{sss:f step 2}

Let us now show that the functor $\Spc\to \bC$ that sends $Y$ to
the colimit (in $\bC$) of the functor \eqref{e:functor from TwArr} along $\on{TwArr}(\on{fSet})$
is the left Kan extension of its restriction along \eqref{e:finite sets to spaces}.

\medskip

It suffices to show that for each $I$ and $J$, the functor that sends $Y$ to $\underset{\Maps(J,Y)}\int\, \CA^{\otimes I}$
is the left Kan extension of its restriction along \eqref{e:finite sets to spaces}. Since the functor \eqref{e:integral functor two var}
commutes with sifted colimits in the space variable, it suffices to show that the functor that sends $Y$ to $\Maps(J,Y)$
is the left Kan extension of its restriction along \eqref{e:finite sets to spaces}.

\medskip



Since the functor $\Maps(J,-)$ preserves sifted colimits, this follows again from
\propref{p:LKE commutes} applied to $\bC_1=\bC_2=\Spc$, using the fact that the identity
functor on $\Spc$ is the left Kan extension of its restriction along \eqref{e:finite sets to spaces}.

\sssec{Step 3}

It remains to show that the restrictions of both sides in \thmref{t:integral} to $\on{fSet}$ are
canonically isomorphic.

\medskip

However, this follows from the next version of the Yoneda lemma (which we prove below for the sake of completeness):

\begin{prop} \label{p:Yoneda}
Let $\bC$ be a category with colimits and let $\Phi:\bD\to \bC$ be a functor. Then for $\bd\in \bD$
there is a canonical isomorphism
$$\Phi(\bd) \simeq \underset{(\bd_s\to \bd_t)\in \on{TwArr}(\bD)}{\on{colim}}\, \underset{\Maps(\bd_t,\bd)}\int\, \Phi(\bd_s).$$
\end{prop}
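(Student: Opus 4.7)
The plan is to recognize the right-hand side as a coend computing $\Phi$ against the representable presheaf $\Maps_{\bD}(-, \bd)$, and then to invoke the co-Yoneda (density) lemma. Equivalently, we will verify the universal property characterizing $\Phi(\bd)$ by computing maps out of the right-hand side into an arbitrary object $\bc' \in \bC$ and comparing with $\Maps_{\bC}(\Phi(\bd), \bc')$.

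Fix $\bc' \in \bC$. Since $\Maps_{\bC}(-, \bc')$ converts colimits in $\bC$ into limits in $\Spc$, and by the defining adjunction \eqref{e:univ ppty int} of the tensoring $\underset{Y}\int \bc$, we obtain
$$
\Maps_{\bC}\!\left(\underset{(\bd_s \to \bd_t) \in \on{TwArr}(\bD)}{\on{colim}}\, \underset{\Maps(\bd_t, \bd)}\int\, \Phi(\bd_s),\ \bc'\right) \simeq \lim_{(\bd_s \to \bd_t) \in \on{TwArr}(\bD)^{\on{op}}} \Maps_{\Spc}\bigl(\Maps_{\bD}(\bd_t, \bd),\ \Maps_{\bC}(\Phi(\bd_s), \bc')\bigr).
$$
The integrand defines a functor $K : \bD^{\on{op}} \times \bD \to \Spc$, contravariant in $\bd_s$ and covariant in $\bd_t$, matching the variance of $\on{TwArr}(\bD)^{\on{op}}$. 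The standard $\infty$-categorical identification of ends with limits over twisted arrow categories then identifies the above with the end
$$
\int_{\bd' \in \bD} \Maps_{\Spc}\bigl(\Maps_{\bD}(\bd', \bd),\ \Maps_{\bC}(\Phi(\bd'), \bc')\bigr),
$$
which by the end formula for natural transformations is the mapping space in $\on{Funct}(\bD^{\on{op}}, \Spc)$ from the representable presheaf $h_{\bd} = \Maps_{\bD}(-, \bd)$ to the presheaf $\bd' \mapsto \Maps_{\bC}(\Phi(\bd'), \bc')$.

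By the Yoneda lemma, this mapping space is canonically $\Maps_{\bC}(\Phi(\bd), \bc')$. Since the whole chain of identifications is natural in $\bc'$, Yoneda for $\bC$ produces the claimed canonical isomorphism. The main technical point is the appeal to the $\infty$-categorical formula expressing ends as limits over the twisted arrow category; once this input is in place, the rest is a formal application of Yoneda.
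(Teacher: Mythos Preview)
Your proof is correct and follows essentially the same route as the paper: compute maps out into an arbitrary $\bc'$, use \eqref{e:univ ppty int} to rewrite as a limit over $\on{TwArr}(\bD)^{\on{op}}$ of mapping spaces, identify this limit with the natural transformation space in $\on{Funct}(\bD^{\on{op}},\Spc)$ from $\Maps_\bD(-,\bd)$ to $\Maps_\bC(\Phi(-),\bc')$, and apply Yoneda. The paper packages the step ``limit over $\on{TwArr}^{\on{op}}$ equals natural transformations'' as \lemref{l:twisted}, which is exactly your ``end formula for natural transformations''; otherwise the arguments coincide.
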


\qed[\thmref{t:integral}]

\sssec{}

For the proof of \propref{p:Yoneda} we note the following general feature of the twisted arrows category:

\medskip

Let $F_1,F_2:\bD'\to \bE$ be a pair of functors. Consider the functor
$$\on{Tw}(F_1,F_2):\on{TwArr}(\bD')^{\on{op}}\to \Spc$$
that sends
$$(\bd'_s\to \bd'_t)\mapsto \Maps_\bE(F_1(\bd'_s),F_2(\bd'_t)).$$

\medskip

We have the following standard fact (see e.g. \cite[Prop. 5.1]{GHN}):

\begin{lem}  \label{l:twisted}
There exists a canonical isomorphism
$$\underset{\on{TwArr}(\bD')^{\on{op}}}{\on{lim}}\, \on{Tw}(F_1,F_2)\simeq \Maps_{\on{Funct}(\bD',\bE)}(F_1,F_2).$$
\end{lem}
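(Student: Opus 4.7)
The plan is to identify both sides of the stated equivalence as an end of a common bifunctor. Let
$$H : (\bD')^{\on{op}} \times \bD' \to \Spc, \quad H(\bd'_s, \bd'_t) := \Maps_\bE(F_1(\bd'_s), F_2(\bd'_t)),$$
which is contravariant in the first slot (via $F_1$ together with the contravariance of $\Maps_\bE$ in its first argument) and covariant in the second (via $F_2$). The proof will split into two standard steps: (i) the right-hand side $\Maps_{\on{Funct}(\bD',\bE)}(F_1,F_2)$ is an end of $H$; (ii) any such end is computed as a limit over the twisted arrow category.

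For step (i), the mapping space $\Maps_{\on{Funct}(\bD',\bE)}(F_1,F_2)$ is, essentially by definition, the end of $H$ taken on the diagonal: a natural transformation $F_1 \to F_2$ is a coherent family of points $\alpha_{\bd'} \in H(\bd',\bd')$ equipped with naturality data for every arrow of $\bD'$, which is precisely the universal property characterizing the end in the $\infty$-categorical setting.

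For step (ii), I would invoke the general fact that for any such bifunctor $H$, its end is computed as the limit of the pullback along the tautological source-and-target functor
$$(s,t) : \on{TwArr}(\bD')^{\on{op}} \to (\bD')^{\on{op}} \times \bD', \quad (\bd'_s \to \bd'_t) \mapsto (\bd'_s,\bd'_t).$$
This is the $\infty$-categorical analog of the classical Dubuc--Street formula, recorded in the form we need as \cite[Prop.~5.1]{GHN}. Unwinding definitions, the composite $H \circ (s,t)$ is tautologically the functor $\on{Tw}(F_1,F_2)$ of the statement, so combining (i) and (ii) yields the lemma. The main obstacle, were one to avoid the black-box citation, would be (ii): proving from scratch that the twisted arrow category computes ends in the $\infty$-categorical framework requires either a comparison of simplicial models via edgewise subdivision, or a direct cofinality argument showing that $(s,t)$ is ``final enough'' for the purposes of computing ends. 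Step (i), by contrast, is merely an unpacking of the $\infty$-categorical definition of a natural transformation as a section of the appropriate fibration, and presents no essential difficulty.
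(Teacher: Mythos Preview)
Your proposal is correct and matches the paper's approach: the paper does not give its own proof but simply cites \cite[Prop.~5.1]{GHN} as a standard fact, which is exactly the black-box you invoke in step (ii). Your surrounding explanation (identifying the mapping space as an end and then computing the end via the twisted arrow category) is the standard unpacking of that citation.
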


\sssec{}

To prove \propref{p:Yoneda}, we apply \lemref{l:twisted} to $\bD':=\bD^{\on{op}}$, $\bE=\Spc$ and
$$F_1:=\Maps_{\bD}(-,\bd),\,\, F_2:=\Maps_\bC(\Phi(-),\bc), \quad \bc\in \bC.$$

Note that we have a tautological identification
$$\on{TwArr}(\bD)\simeq \on{TwArr}(\bD^{\on{op}}).$$

Using \lemref{l:twisted} and \eqref{e:univ ppty int}, we obtain that
\begin{multline*}
\Maps_\bC\left(\left(\underset{(\bd_s\to \bd_t)\in \on{TwArr}(\bD)}{\on{colim}}\, \underset{\Maps(\bd_t,\bd)}\int\, \Phi(\bd_s)\right),\bc\right)\simeq \\
\simeq \underset{(\bd_s\to \bd_t)\in (\on{TwArr}(\bD))^{\on{op}}}{\on{lim}}\,
\Maps_\bC(\underset{\Maps(\bd_t,\bd)}\int\, \Phi(\bd_s),\bc) \simeq \\
\simeq \underset{(\bd_s\to \bd_t)\in (\on{TwArr}(\bD))^{\on{op}}}{\on{lim}}\,
\Maps_{\Spc}(\Maps(\bd_t,\bd),\Maps(\Phi(\bd_s),\bc))\simeq  \\
\simeq\Maps_{\on{Funct}(\bD^{\on{op}},\Spc)}\left(\Maps_{\bD}(-,\bd),\Maps_{\bC}(\Phi(-),\bc)\right),
\end{multline*}
which by the usual Yoneda lemma identifies with
$$\Maps_\bC(\Phi(\bd),\bc),$$
as desired.

\qed

\ssec{The category of local systems}

In this subsection we will describe the right adjoint to the functor
$$\CC\mapsto \underset{Y}\int\, \CC, \quad \CC\in \bC$$
for $\bC$ being $\DGCat^{\on{SymMon}}$, $\DGCat^{\on{Mon}}$, $\DGCat$.

\sssec{}  \label{sss:int and power}

For a category $\bC$ with limits, an object $\bc\in \bC$ and $Y\in \Spc$, set
$$\bc^Y:=\underset{Y}{\on{lim}}\, \bc,$$
i.e., the limit of the functor $Y\to \bC$ with constant value $\bc$. For $Y$ being a discrete set $I$, we obtain
$$\bc^I\simeq \underset{i\in I}\Pi\, \bc.$$

\medskip

Note that for $\bc,\bc'\in \bC$, we have
$$\Maps_\bC(\bc',\bc^Y)\simeq \Maps_{\Spc}(Y,\Maps_\bC(\bc',\bc)).$$

\medskip

The latter expression shows that the assignment
$$\bc,Y\mapsto \bc^Y$$
is a functor
$$\bC\times \Spc^{\on{op}}\to \bC,$$
moreover, this functor preserves limits in each variable.

\medskip

Furthermore, by \eqref{e:univ ppty int}, for a fixed $Y$, the functor
$$\bc\mapsto \bc^Y,\quad \bC\to \bC$$
is the right adjoint of the functor
$$\bc'\mapsto \underset{Y}\int\, \bc'.$$

\sssec{}

Take $\bC=\DGCat^{\on{SymMon}}$ and set
$$\LS(Y):=\Vect^Y\in  \DGCat^{\on{SymMon}}.$$

\medskip

Since the forgetful functors
$$\DGCat^{\on{SymMon}}\to \DGCat^{\on{Mon}}\to \DGCat$$
commute with limits (this is valid in the general context of algebras over an operad in \secref{sss:operad}, see \cite[Prop. 3.2.2.1]{Lu2}),
the image of $\LS(Y)$ under these forgetful functors identifies with
$\Vect^Y$, thought of taking values in $\DGCat^{\on{Mon}}$ (resp., $\DGCat$).

\begin{rem}

Let $\CY$ be a topological space\footnote{We assume that $\CY$ is sufficiently nice; i.e. a paracompact topological space homotopy equivalent to a
CW complex and of finite homotopical dimension (see \cite[Sect. 7.1]{Lu1})} weakly equivalent to the geometric realization of $Y$.
One can show that $\LS(Y)$ is equivalent to the full subcategory $\Shv_{\on{lisse}}(\CY)$ of the (unbounded) derived category $\Shv(\CY)$
of sheaves of $\sfe$-vector spaces on $\CY$ consisting of objects with locally constant cohomology sheaves.

\end{rem}

\sssec{}

We will now describe the category $\LS(Y)$ as a plain DG category more explicitly:

\begin{prop} \label{p:LS}  \hfill

\smallskip

\noindent{\em(a)} For a map $f:Y_1\to Y_2$, the restriction map $f^\dagger:\LS(Y_2)\to \LS(Y_1)$
admits a left adjoint.

\smallskip

\noindent{\em(a')} The functor
\begin{equation} \label{e:LS left}
\Spc\to \DGCat, \quad Y\mapsto \LS(Y), \quad (Y_1\overset{f}\to Y_2)\rightsquigarrow
\left(\LS(Y_1)\overset{(f^\dagger)^L}\longrightarrow \LS(Y_2)\right)
\end{equation}
is canonically isomorphic to the functor
$$Y\mapsto \underset{Y}\int\, \Vect.$$

\smallskip

\noindent{\em(b)} The category $\LS(Y)$ is dualizable.

\smallskip

\noindent{\em(b')} The functor \eqref{e:LS left} is canonically isomorphic to the functor
\begin{equation} \label{e:LS dual}
\Spc\to \DGCat, \quad Y\mapsto \LS(Y)^\vee, \quad (Y_1\overset{f}\to Y_2)\rightsquigarrow
\left(\LS(Y_1)^\vee\overset{(f^\dagger)^\vee}\longrightarrow \LS(Y_2)^\vee\right)
\end{equation}

\end{prop}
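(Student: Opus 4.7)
First, I identify $\LS(Y) = \Vect^Y$, viewed as a plain DG category, with the functor $\infty$-category $\on{Funct}(Y, \Vect)$: the forgetful $\DGCat^{\on{SymMon}} \to \DGCat$ preserves limits (cf.\ \secref{sss:operad}), so $\LS(Y)$ is the limit in $\DGCat$ of the constant $Y$-diagram with value $\Vect$, which is canonically $\on{Funct}(Y, \Vect)$ since $Y$ is an $\infty$-groupoid. Under this identification, (a) is immediate: $f^\dagger$ is precomposition along $f$, and admits a left adjoint $f_!$ given by pointwise left Kan extension, which is well-defined because $\Vect$ is cocomplete. For (b), the family $\{\sfe_y := y_!\,\sfe\}_{y \in Y}$ is a set of compact generators of $\LS(Y)$: compactness follows from $\Maps_{\LS(Y)}(\sfe_y, F) \simeq F(y)$ together with the fact that evaluation at a point preserves all colimits in $\on{Funct}(Y, \Vect)$, while generation is standard. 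Hence $\LS(Y)$ is compactly generated, therefore dualizable in $\DGCat$.

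For (a'), I observe that both $Y \mapsto \underset{Y}\int \Vect$ and $Y \mapsto \LS(Y)$ (endowed with the covariant functoriality $f \mapsto (f^\dagger)^L = f_!$) are functors $\Spc \to \DGCat$ sending a point to $\Vect$. The first is, by construction, characterized uniquely by being colimit-preserving with this value, so it suffices to establish the same universal property for the second. But $\on{Funct}(Y, \Vect)$ is the \emph{free cocompletion in $\DGCat$} of the $\infty$-groupoid $Y$: for any DG category $\CD$,
$$\Maps_{\DGCat}(\LS(Y), \CD) \simeq \Maps_\Spc(Y, \CD^\simeq),$$
where $\CD^\simeq$ denotes the underlying space of $\CD$. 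This matches the universal property of $\underset{Y}\int \Vect$, which by \eqref{e:univ ppty int} equals $\Maps_\Spc(Y, \Maps_{\DGCat}(\Vect, \CD)) = \Maps_\Spc(Y, \CD^\simeq)$. The identifications are natural in $f$ (both correspond to pushforward along $f$), yielding the desired isomorphism of functors $\Spc \to \DGCat$.

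For (b'), I appeal to the general principle that for a small $\infty$-category $\CK$, the dual in $\DGCat$ of $\on{Funct}(\CK, \Vect)$ is canonically $\on{Funct}(\CK^{\on{op}}, \Vect)$, with the duality arising from compact generation by representables (as in part (b)). Specializing to $\CK = Y$ a groupoid, the canonical identification $Y^{\on{op}} \simeq Y$ yields a natural self-duality $\LS(Y)^\vee \simeq \LS(Y)$. Under this identification, the transpose of the restriction functor $f^\dagger$ is left Kan extension along $f^{\on{op}} = f$, i.e.\ $f_! = (f^\dagger)^L$, as required. The main obstacle in this program is to upgrade this pointwise self-duality to a natural equivalence of functors $\Spc \to \DGCat$; this requires coherent choices of the duality data at the $\infty$-categorical level. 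An alternative, more explicit route is to construct the evaluation and coevaluation as
$$\Vect \xrightarrow{p_Y^\dagger} \LS(Y) \xrightarrow{\Delta_!} \LS(Y \times Y) \simeq \LS(Y) \otimes \LS(Y), \quad \LS(Y) \otimes \LS(Y) \simeq \LS(Y \times Y) \xrightarrow{\Delta^\dagger} \LS(Y) \xrightarrow{(p_Y)_!} \Vect,$$
where $\Delta: Y \to Y \times Y$ is the diagonal and $p_Y: Y \to \{*\}$; the zigzag identities then follow from base change along the fiber square $Y \simeq Y \times_{Y \times Y} Y$, and the required Künneth isomorphism $\LS(Y_1) \otimes \LS(Y_2) \simeq \LS(Y_1 \times Y_2)$ is immediate from (a') combined with the fact that $\Vect$ is the unit of the symmetric monoidal structure on $\DGCat$.
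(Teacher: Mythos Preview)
Your argument is essentially correct but proceeds differently from the paper. The paper dispatches all four parts in one stroke by invoking \lemref{l:lim colim} (quoted from \cite{GR1}) applied to the constant diagram $Y \to \DGCat$ with value $\Vect$: part (b) of that lemma identifies the colimit $\underset{Y}\int \Vect$ with the limit $\LS(Y)$; part (c) yields (a') and hence (a); part (d) yields (b') and hence (b). The advantage is that the coherence issues you flag in your treatment of (b')---upgrading a pointwise self-duality to a natural equivalence of functors $\Spc \to \DGCat$---are absorbed into the lemma, which is already formulated at the level of diagrams and the passage between their colimits, limits, adjoints, and duals in $\DGCat$.

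Your approach treats each part by hand: left Kan extension for (a), compact generation for (b), the free-cocompletion universal property of $\on{Funct}(Y,\Vect)$ for (a'), and the general duality $\on{Funct}(\CK,\Vect)^\vee \simeq \on{Funct}(\CK^{\on{op}},\Vect)$ specialized to groupoids for (b'). These are all valid, and your (a') argument is complete once one accepts the free-cocompletion statement (equivalently, that colimits in $\DGCat$ over a constant diagram indexed by a space agree with the corresponding functor category) as standard input. But your (b') remains incomplete, as you yourself note: the explicit unit/counit construction you sketch is a sound route, yet making it natural in $Y$ would essentially rederive the content of \lemref{l:lim colim}(d). The paper's approach is thus both shorter and more robust with respect to higher coherences, at the cost of relying on a black-box lemma; your approach is more concrete and makes the compact generators and duality data visible.
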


The proof of this proposition is given below.

\begin{cor} \label{c:LS}
The category $\underset{Y}\int\, \Vect$ is dualizable. The functor
$$\Spc^{\on{op}}\to \DGCat, \quad Y\mapsto (\underset{Y}\int\, \Vect)^\vee, \quad
(Y_1\to Y_2)\rightsquigarrow \left((\underset{Y_2}\int\, \Vect)^\vee\to (\underset{Y_1}\int\, \Vect)^\vee\right)$$
is canonically isomorphic to the functor $Y\mapsto \LS(Y)$.
\end{cor}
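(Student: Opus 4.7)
The plan is to derive this corollary directly from \propref{p:LS}, combining parts (a'), (b), and (b').

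Dualizability is immediate: by (a'), $\underset{Y}\int\, \Vect \simeq \LS(Y)$ in $\DGCat$, and by (b), $\LS(Y)$ is dualizable, so $\underset{Y}\int\, \Vect$ is dualizable as well.

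For the identification of the contravariant functor, I would compose two equivalences. Applying $(-)^\vee$ termwise to the equivalence of covariant functors from (a'), I obtain an equivalence of contravariant functors
$$\left(Y \mapsto (\underset{Y}\int\, \Vect)^\vee\right) \;\simeq\; \left(Y \mapsto \LS(Y)^\vee,\; f \mapsto ((f^\dagger)^L)^\vee\right),$$
where the functoriality on the left is precisely the one in the statement of the corollary. It then remains to identify the right-hand side with $\left(Y \mapsto \LS(Y),\; f \mapsto f^\dagger\right)$, interpreted as a contravariant functor via pullback. For this, start from (b'), which is an equivalence of \emph{covariant} functors matching $(f^\dagger)^L$ with $(f^\dagger)^\vee$, and pass to right adjoints pointwise. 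Under this equivalence, the right adjoint of $(f^\dagger)^L$, which is $f^\dagger$ by construction, is matched with the right adjoint of $(f^\dagger)^\vee$. The latter right adjoint is exactly $((f^\dagger)^L)^\vee$: indeed, dualizing the adjunction $(f^\dagger)^L \dashv f^\dagger$ between dualizable DG categories yields the adjunction $(f^\dagger)^\vee \dashv ((f^\dagger)^L)^\vee$, by the general principle that $F \dashv G$ implies $G^\vee \dashv F^\vee$ on duals. Composing the two equivalences produces the desired identification.

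The step that requires the most care is the ``pass to right adjoints'' operation at the level of the entire functor out of $\Spc$: one needs that a natural equivalence of functors $\Spc \to \DGCat$, whose components each admit a continuous right adjoint, induces canonically a natural equivalence of the corresponding right-adjoint functors $\Spc^{\on{op}} \to \DGCat$. This is a standard consequence of the $(\infty,2)$-categorical adjoint functor formalism for presentable DG categories, but it is the one place where the argument goes beyond formal manipulation of the statements of \propref{p:LS} and deserves explicit invocation.
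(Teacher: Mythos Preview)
Your argument is correct and uses the same ingredients as the paper's proof, namely parts (a'), (b), and (b') of \propref{p:LS}. The only difference is the order in which you assemble them. The paper's one-line proof (``combine (a') and (b')'') proceeds by first chaining (a') and (b') to obtain an equivalence of \emph{covariant} functors
\[
\bigl(Y\mapsto \underset{Y}\int\,\Vect\bigr)\;\simeq\;\bigl(Y\mapsto \LS(Y)^\vee,\; f\mapsto (f^\dagger)^\vee\bigr),
\]
and then dualizing once: since $((f^\dagger)^\vee)^\vee=f^\dagger$ and $\LS(Y)^{\vee\vee}\simeq \LS(Y)$, this immediately yields the desired contravariant equivalence. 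You instead dualize (a') first and then invoke (b') after passing to right adjoints. Both routes are valid; the paper's ordering is marginally cleaner in that it sidesteps the ``pass to right adjoints at the level of the whole diagram'' step you flagged, replacing it with a single application of the involution $(-)^\vee$ on dualizable DG categories.
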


\begin{proof}

Combine points (a') and (b') of \propref{p:LS}.

\end{proof}

Note that in particular, \propref{p:LS} and \corref{c:LS} imply that for an individual $Y$ we have the canonical equivalences
$$\LS(Y)\simeq \underset{Y}\int\, \Vect,\quad \LS(Y)^\vee\simeq \LS(Y),\quad (\underset{Y}\int\, \Vect)^\vee\simeq \underset{Y}\int\, \Vect.$$

\sssec{}

The proof of \propref{p:LS} is based on the next general lemma
(see \cite[Chapter 1, Propositions 2.5.7 and 6.3.4, and Lemma 2.6.4]{GR1}):

\begin{lem} \label{l:lim colim}
Let
\begin{equation} \label{e:index A}
a\mapsto \CC_a, \quad A\to \DGCat
\end{equation}
be a diagram of DG categories, such that for every arrow $a_1\to a_2$, the corresponding functor $$F_{a_1,a_2}:\CC_{a_1}\to \CC_{a_2}$$
admits a continuous right adjoint. Set
$$\CC:=\underset{a\in A}{\on{colim}}\, \CC_a.$$
Then:

\smallskip

\noindent{\em(a)} The tautological functors $\on{ins}_a:\CC_a\to \CC$ admit continuous right adjoints.

\smallskip

\noindent{\em(b)} Consider the functor
\begin{equation} \label{e:index A op}
A^{\on{op}}\to \DGCat, \quad a\mapsto \CC_a, \quad (a_1\to a_2)\rightsquigarrow (\CC_{a_2}\overset{F_{a_1,a_2}^R}\longrightarrow \CC_{a_1}),
\end{equation}
and set
$$\wt\CC:=\underset{a\in A^{\on{op}}}{\on{lim}}\, \CC_a.$$
Then the functor $\CC\to \wt\CC$, given by the system of functors
$$\on{ins}_a^R:\CC\to \CC_a$$
is an equivalence.

\smallskip

\noindent{\em(c)} Let $\phi:A'\to A$ be a functor of index categories. Composing with \eqref{e:index A} and \eqref{e:index A op}, respectively, we obtain the functors
$$A'\to \DGCat \text{ and } A'{}^{\on{op}}\to \DGCat.$$
Set
$$\CC':=\underset{a'\in A'}{\on{colim}}\, \CC_{\phi(a')} \text{ and } \wt\CC':=\underset{a'\in A'{}^{\on{op}}}{\on{lim}}\, \CC_{\phi(a')}.$$
Then under the identifications
$$\CC\simeq \wt\CC \text{ and } \CC'\simeq \wt\CC',$$
the tautological functor $\CC'\to \CC$ identifies with the left adjoint of the restriction functor $\wt\CC'\to \wt\CC$.

\smallskip

\noindent{\em(d)} Assume that all $\CC_a$ are dualizable. Consider the functor
\begin{equation} \label{e:index A dual}
A^{\on{op}}\to \DGCat, \quad a\mapsto \CC^\vee_a, \quad (a_1\to a_2)\rightsquigarrow (\CC^\vee_{a_2}\overset{F_{a_1,a_2}^\vee}\longrightarrow \CC^\vee_{a_1}).
\end{equation}
Then $\CC$ is dualizable, and the functor
$$\CC^\vee\to \underset{a\in A^{\on{op}}}{\on{lim}}\, \CC^\vee_a,$$
given by the system of functors $$\on{ins}_a^\vee:\CC^\vee\to \CC^\vee_a$$
is an equivalence.

\end{lem}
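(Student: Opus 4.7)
The plan is to leverage the fundamental adjointability feature of $\DGCat$: the operation $F \mapsto F^R$ exhibits an equivalence between the subcategory of $\DGCat$ whose morphisms are functors admitting continuous right adjoints and the opposite of the category of DG categories with continuous right adjoints as morphisms, under which colimits indexed by $A$ correspond to limits indexed by $A^{\on{op}}$. This is the stable presentable specialization of Lurie's duality between presentable $\infty$-categories with left-adjoint versus right-adjoint morphisms \cite[Sect.~5.5]{Lu2}. Granting this principle, the contents of (a), (c), (d) become formal consequences of the core content (b), namely the identification $\CC \simeq \wt\CC$.

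For (a) and (b), the comparison functor $\CC \to \wt\CC$ is provided by the above adjoint duality: it is the canonical map induced by factoring the colimit through its presentation as the limit of the right-adjoint diagram on $A^{\on{op}}$. Proving this map is an equivalence is the principal technical input; it is handled by reducing, via standard decomposition of arbitrary colimits into pushouts and filtered colimits, to cases where the statement can be verified by explicit computation with generators coming from the $\CC_a$. Part (a) then falls out: under $\CC \simeq \wt\CC$, the insertion $\on{ins}_a$ corresponds to the left adjoint of the $a$-th evaluation $\wt\CC \to \CC_a$, and this evaluation admits a continuous right adjoint because both limits and colimits in $\wt\CC$ are computed componentwise (with the colimit statement itself being a further instance of adjointability applied to the diagram defining $\wt\CC$).

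Part (c) is formal given (b): both the tautological $\CC' \to \CC$ and the left adjoint of the restriction $\wt\CC \to \wt\CC'$ are characterized, up to coherent homotopy, by compatibility with the insertions from each $\CC_{\phi(a')}$, so uniqueness of adjoints forces them to coincide under the equivalences of (b). For (d), dualization $\CC_a \mapsto \CC^\vee_a$ extends to an involution on the subcategory of dualizable DG categories with colimit-preserving morphisms admitting continuous right adjoints, and carries the original diagram to the $A^{\on{op}}$-indexed diagram \eqref{e:index A dual} with transitions $F^\vee_{a_1,a_2}$, which themselves admit continuous right adjoints. Applying (a)--(b) to this dual diagram simultaneously produces the dualizability of $\CC$ and the identification $\CC^\vee \simeq \underset{A^{\on{op}}}{\on{lim}}\,\CC^\vee_a$ via the system $\on{ins}_a^\vee$. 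The main obstacle throughout is the equivalence $\CC \simeq \wt\CC$ of part (b); once that is granted, the other parts are formal manipulations of adjoints.
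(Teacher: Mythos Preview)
The paper does not prove this lemma at all; it simply cites \cite[Chapter~1, Propositions~2.5.7 and~6.3.4, and Lemma~2.6.4]{GR1} and moves on. So there is no ``paper's own proof'' to compare against beyond the reference to Gaitsgory--Rozenblyum, whose argument is indeed the presentable-category duality $\Pr^L \simeq (\Pr^R)^{\on{op}}$ you invoke.

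Your overall strategy is correct and aligns with the cited source, but two points deserve correction. First, the sentence about ``reducing, via standard decomposition of arbitrary colimits into pushouts and filtered colimits'' is superfluous and slightly misleading: once you have the equivalence $\Pr^L \simeq (\Pr^R)^{\on{op}}$ (or its $\DGCat$-linear refinement), the identification $\CC \simeq \wt\CC$ is immediate, since a contravariant equivalence carries colimits to limits. No further reduction is needed, and framing it as ``the principal technical input'' to be handled by hand undercuts the clean argument you already gave in your first paragraph.

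Second, your treatment of (d) contains a genuine slip. You assert that the transitions $F_{a_1,a_2}^\vee:\CC_{a_2}^\vee\to\CC_{a_1}^\vee$ admit continuous \emph{right} adjoints, but dualization reverses the direction of adjunctions: from $F_{a_1,a_2}\dashv F_{a_1,a_2}^R$ one obtains $(F_{a_1,a_2}^R)^\vee\dashv F_{a_1,a_2}^\vee$, so $F_{a_1,a_2}^\vee$ acquires a continuous \emph{left} adjoint, not a right one. Consequently you cannot apply (b) to the $A^{\on{op}}$-diagram $a\mapsto\CC_a^\vee$ with transitions $F^\vee$. The fix is to apply (b) instead to the $A$-indexed diagram $a\mapsto\CC_a^\vee$ with transitions $(F^R_{a_1,a_2})^\vee$, whose continuous right adjoints are precisely the $F_{a_1,a_2}^\vee$; this yields $\underset{A}{\on{colim}}\,\CC_a^\vee \simeq \underset{A^{\on{op}}}{\on{lim}}\,\CC_a^\vee$, and tensoring with an arbitrary $\CD$ (using that $\otimes$ commutes with colimits) then gives $\on{Funct}_{\on{cont}}(\CC,\CD)\simeq(\underset{A^{\on{op}}}{\on{lim}}\,\CC_a^\vee)\otimes\CD$, establishing both the dualizability of $\CC$ and the claimed description of $\CC^\vee$.
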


\begin{proof}[Proof of \propref{p:LS}]

Recall that
$$\underset{Y}\int\, \Vect\simeq \underset{Y}{\on{colim}}\, \Vect \text{ and } \LS(Y) \simeq \underset{Y}{\on{lim}}\, \Vect.$$

The identification
$$\underset{Y}\int\,\Vect \simeq \LS(Y)$$
follows from \lemref{l:lim colim}(b) applied to the index category $Y$ and the constant family with value $\Vect$.

\medskip

Point (a') of \propref{p:LS} follows from \lemref{l:lim colim}(c), implying also
point (a).

\medskip

Point (b') of \propref{p:LS} follows from \lemref{l:lim colim}(d), implying also
point (b).

\end{proof}

\sssec{}

Let $\bC$ be one of the three categories from \thmref{t:integral}, and let $\CC$ be an object of $\bC$. I.e.,
$\CC$ is either a symmetric monoidal category, or a monoidal category or a plain DG category.

\medskip

Note that for
a symmetric monoidal category $\CA$, the tensor product $\CC\otimes \CA$ is again naturally an object of $\bC$
(this is true for $\DGCat$ replaced by an arbitrary symmetric monoidal category $\bO$ and any operad $\CP$
as in \secref{sss:operad}; this follows, e.g., from \cite[Prop. 3.2.4.3]{Lu2}).

\medskip

We claim that we have a natural map
\begin{equation} \label{e:ten LocSys}
\CC\otimes \LS(Y)\to \CC^Y.
\end{equation}

Indeed, by the definition of $\CC^Y$, the datum of such a map is equivalent to that of a family of maps $\CC\otimes \LS(Y)\to \CC$,
parameterized by points of $Y$. For $y\in Y$ we take the corresponding map to be
$$\CC\otimes \LS(Y)\overset{\on{Id}_\CC\otimes \on{ev}_y}\longrightarrow \CC\otimes \Vect\simeq \CC,$$
where
$$\on{ev}_y:\LS(Y)\to \LS(\{*\})\simeq \Vect$$
is the functor of pullback corresponding to $\{*\}\overset{y}\to Y$.

\medskip

We claim:

\begin{prop}  \label{p:LocSys as right adj}
The map \eqref{e:ten LocSys} is an isomorphism.
\end{prop}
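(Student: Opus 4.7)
The idea is to reduce to the case $\bC=\DGCat$ and then exploit Proposition \ref{p:LS}(a'), which realizes $\LS(Y)$ as the colimit $\int_Y \Vect$, together with Lemma \ref{l:lim colim}(b) applied to the constant diagram with value $\CC$.

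\emph{Reduction to $\DGCat$.} The forgetful functors
$$\DGCat^{\on{SymMon}} \to \DGCat^{\on{Mon}} \to \DGCat$$
are conservative (Remark \ref{r:forget mon}) and preserve limits (the general operadic statement cited in \secref{sss:operad}), so $\CC^Y$ is computed on underlying DG categories. Moreover, the tensor product of any $\CC\in\bC$ with the symmetric monoidal category $\LS(Y)$ has the same underlying DG category as in $\DGCat$, the algebra structure on $\CC\otimes\LS(Y)$ being induced from the factors. It therefore suffices to verify that \eqref{e:ten LocSys} is an equivalence in $\DGCat$.

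\emph{Chain of equivalences.} By Proposition \ref{p:LS}(a'), $\LS(Y)\simeq \int_Y\Vect$ in $\DGCat$. Since $\CC\otimes-$ preserves colimits (the Lurie tensor product on $\DGCat$ is cocontinuous in each variable), we obtain
$$\CC\otimes \LS(Y) \;\simeq\; \CC\otimes \int_Y\Vect \;\simeq\; \int_Y (\CC\otimes\Vect) \;\simeq\; \int_Y\CC \;=\; \underset{Y}{\on{colim}}\,\CC.$$
Now apply Lemma \ref{l:lim colim}(b) to the constant diagram $Y\to\DGCat$, $y\mapsto\CC$: all transition functors are identities, hence admit continuous right adjoints, and the lemma yields
$$\underset{Y}{\on{colim}}\,\CC \;\simeq\; \underset{Y}{\on{lim}}\,\CC \;=\; \CC^Y.$$

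\emph{Identification with \eqref{e:ten LocSys}.} Both the composite equivalence above and the canonical map \eqref{e:ten LocSys} are determined by their components under the projections $p_y:\CC^Y\to\CC$. For \eqref{e:ten LocSys} this component is $\on{Id}_\CC\otimes \ev_y$ by construction. For the composite built in the previous paragraph: under Lemma \ref{l:lim colim}(b) the projection $p_y$ corresponds to the right adjoint $\iota_y^R$ of the insertion $\iota_y:\CC\to\on{colim}_Y\CC$, and likewise $\ev_y:\LS(Y)\to\Vect$ corresponds to $(\iota_y^{\Vect})^R$. One then checks that under the canonical equivalence $\CC\otimes \on{colim}_Y\Vect\simeq \on{colim}_Y\CC$, the insertion $\on{Id}_\CC\otimes\iota_y^{\Vect}$ matches $\iota_y^\CC$; passing to right adjoints (legitimate since these insertions are all identities tensored with themselves, so taking adjoints commutes with $\on{Id}_\CC\otimes-$) gives the required compatibility.

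\emph{Main obstacle.} The subtle point is the last compatibility check: matching the abstract isomorphism with the explicit map \eqref{e:ten LocSys}. Rather than tracking adjoints by hand, a cleaner route is to recognize that both sides co-represent the same functor $\DGCat^{\on{op}}\to\Spc$, namely $\CD\mapsto \Maps_\Spc(Y,\Maps_\DGCat(\CD,\CC))$ (for $\CC^Y$ this is the definition of limit; for $\CC\otimes\LS(Y)$ it follows by combining self-duality of $\LS(Y)$ from Corollary \ref{c:LS} with the universal property of $\int_Y(-)$), and that the map \eqref{e:ten LocSys} implements the identity under this co-representation.
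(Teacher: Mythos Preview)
Your argument is correct, and it uses the same underlying ingredients as the paper (Proposition~\ref{p:LS}, Lemma~\ref{l:lim colim}, the self-duality of $\LS(Y)$), but the logical organization is different. The paper treats \eqref{e:ten LocSys} directly as a natural transformation of functors $\Spc^{\on{op}}\to\DGCat$, observes it is an isomorphism at $Y=\{*\}$, and then shows that both source and target send colimits in $Y$ to limits in $\DGCat$: for $\CC^Y$ this is tautological, and for $\CC\otimes\LS(Y)$ the paper rewrites it as $\on{Funct}(\int_Y\Vect,\CC)$ via \corref{c:LS}. This way the ``identification with the given map'' step never arises --- one is proving that a specific natural transformation is invertible, not constructing an abstract equivalence and then matching it up.

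Your route instead builds the chain $\CC\otimes\LS(Y)\simeq\int_Y\CC\simeq\CC^Y$ and then has to verify agreement with \eqref{e:ten LocSys}. Your verification via components (approach (1)) is essentially right, but the justification ``since these insertions are all identities tensored with themselves'' is not the correct reason: the insertions $\iota_y^{\Vect}:\Vect\to\LS(Y)$ are not identities. The actual point is that in $\DGCat$, tensoring with $\on{Id}_\CC$ preserves adjunctions between continuous functors (one tensors the unit and counit with $\on{Id}_\CC$), so $(\on{Id}_\CC\otimes\iota_y^{\Vect})^R\simeq\on{Id}_\CC\otimes(\iota_y^{\Vect})^R$. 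Your alternative ``co-representation'' route is not obviously correct as stated: $\Maps_{\DGCat}(\CD,\CC\otimes\LS(Y))$ does not identify with $\Maps_\Spc(Y,\Maps_{\DGCat}(\CD,\CC))$ simply from self-duality of $\LS(Y)$, because $\Maps_{\DGCat}$ is the maximal subgroupoid of the functor category and does not interact transparently with tensor products. I would drop that paragraph and keep the component check, with the corrected justification above.
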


The proof of \propref{p:LocSys as right adj} is given below.

\medskip

Using \secref{sss:int and power}, from \propref{p:LocSys as right adj}, we obtain:

\begin{cor} \label{c:LocSys as right adj}
For a given $Y$, the right adjoint of the functor
$$\CC'\mapsto \underset{Y}\int\,\CC'$$
is given by
$$\CC\mapsto \CC\otimes \LS(Y).$$
\end{cor}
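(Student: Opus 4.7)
The plan is to obtain the corollary by a direct transport-of-adjunction argument, combining the abstract observation of \secref{sss:int and power} with the identification of \propref{p:LocSys as right adj}.

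First, I would invoke the general statement from \secref{sss:int and power}: for any category $\bC$ with limits and any fixed $Y \in \Spc$, the functor
$$\bc \mapsto \bc^Y := \underset{Y}{\on{lim}}\, \bc$$
is right adjoint to $\bc' \mapsto \underset{Y}\int\, \bc'$. Applied to $\bC$ being each of $\DGCat^{\on{SymMon}}$, $\DGCat^{\on{Mon}}$, or $\DGCat$ in turn (so that $\CC$ is either a symmetric monoidal category, a monoidal category, or a plain DG category), this shows that the functor $\CC \mapsto \CC^Y$ is the right adjoint of $\CC' \mapsto \underset{Y}\int\, \CC'$ in each setting.

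Second, I would apply \propref{p:LocSys as right adj}, which asserts that the canonical map \eqref{e:ten LocSys}
$$\CC \otimes \LS(Y) \to \CC^Y$$
is an isomorphism. Since this map was constructed from the $Y$-family of evaluation functors $\on{ev}_y : \LS(Y) \to \Vect$, tensored with $\on{Id}_\CC$, it is manifestly natural in $\CC$. Consequently, the resulting isomorphism $\CC \otimes \LS(Y) \simeq \CC^Y$ lifts to an isomorphism of functors $(-) \otimes \LS(Y) \simeq (-)^Y$ from $\bC$ to $\bC$.

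Combining these two observations gives the corollary: transporting the adjunction $\underset{Y}\int(-) \dashv (-)^Y$ along the functorial isomorphism of the preceding paragraph identifies the right adjoint of $\underset{Y}\int(-)$ with $(-) \otimes \LS(Y)$. There is no real obstacle here beyond checking that the isomorphism of \propref{p:LocSys as right adj} is genuinely natural in $\CC$, which is immediate from the construction of \eqref{e:ten LocSys}. Thus the argument is essentially a two-line application of \propref{p:LocSys as right adj} together with the formal adjunction $\underset{Y}\int(-) \dashv (-)^Y$.
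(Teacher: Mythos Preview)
Your proposal is correct and matches the paper's approach exactly: the paper derives the corollary immediately by combining the abstract adjunction $\underset{Y}\int(-) \dashv (-)^Y$ from \secref{sss:int and power} with the identification $\CC \otimes \LS(Y) \simeq \CC^Y$ of \propref{p:LocSys as right adj}. Your additional remark about naturality in $\CC$ is a reasonable sanity check, but the paper treats this as immediate.
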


\begin{proof}[Proof of \propref{p:LocSys as right adj}]

Since the forgetful functor $\bC\to \DGCat$ commutes with the operation $$-\otimes \LS(Y),$$
it suffices to prove the assertion for $\bC=\DGCat$.

\medskip

The map in question is an isomorphism for $Y=\{*\}$. Since $Y\simeq \underset{Y}{\on{colim}}\, \{*\}$,
it suffices to show that (for a fixed $\CC$), the functor
$$\CC\mapsto \CC\otimes \LS(Y)$$
takes colimits in $Y$ to limits in $\DGCat$. However, this follows from \corref{c:LS}, as the functor in question can be rewritten
as
$$\on{Funct}(\underset{Y}\int\, \Vect,\CC).$$

\end{proof}

\ssec{Digression: the stack of $\sG$-local systems}  \label{ss:LocSys}

In this subsection we will assume that the ring $\sfe$ of coefficients is a field of characteristic $0$.

\sssec{}

Let $\sG$ be an affine group-scheme over $\sfe$. We let $\Rep(\sG)$ be the category of $\sG$-representations,
defined as
$$\Rep(\sG):=\QCoh(\on{pt}/\sG).$$

The assumption on $\sfe$ implies that the category $\Rep(\sG)$ is compactly generated; its compact objects are those
that are mapped to compact objects of $\Vect_\sfe$ under the forgetful functor
\begin{equation} \label{e:oblv G}
\oblv_\sG:\Rep(\sG)\to \Vect=\QCoh(\on{pt}).
\end{equation}

\sssec{}

For $Y\in \Spc$, let $\LocSys_\sG(Y)$ denote the (derived) Artin stack that classifies $\sG$-local systems on $Y$.
By definition, for a derived affine scheme $S$,
$$\Maps(S,\LocSys_\sG(Y))$$
is the space of right t-exact symmetric monoidal functors
$$\Rep(\sG)\to \QCoh(S)\otimes \on{LS}(Y),$$
(where the RHS is equipped with the tensor product t-structure, see \cite[Sect. C.4]{Lu3}).

\medskip

Consider the (symmetric monoidal) category $\QCoh(\LocSys_\sG(Y))$. We will now construct a (symmetric monoidal) functor
\begin{equation} \label{e:integral to LocSys}
\Rep(\sG)^{\otimes Y}\to \QCoh(\LocSys_\sG(Y)).
\end{equation}

\sssec{}  \label{sss:to LocSys}

First, we note that for $Y=\{*\}$, we have
$$\LocSys_\sG(\{*\})\simeq \on{pt}/\sG.$$

Indeed, $\Maps(S,\on{pt}/\sG)$ is the groupoid of $\sG$-torsors on $S$, and those can be described as
right t-exact symmetric monoidal functors
$$\Rep(\sG)\to \QCoh(S),$$
see \cite[Sect. 10.2]{AG}.

\medskip

Hence,
$$\QCoh(\LocSys_\sG(\{*\}))\simeq \QCoh(\on{pt}/\sG)=:\Rep(\sG).$$

\medskip

For any $Y$, we have a canonical map of spaces
\begin{multline*}
Y\simeq \Maps_{\Spc}(\{*\},Y) \to \Maps_{\on{PreStk}}(\LocSys_\sG(Y),\LocSys_\sG(\{*\}))\to  \\
\to \Maps_{\DGCat^{\on{SymMon}}}(\QCoh(\LocSys_\sG(\{*\})),\QCoh(\LocSys_\sG(Y)))\simeq \\
\simeq  \Maps_{\DGCat^{\on{SymMon}}}(\Rep(\sG),\QCoh(\LocSys_\sG(Y))).
\end{multline*}

Hence, by \eqref{e:univ ppty int}, we obtain the desired map
$$\Rep(\sG)^{\otimes Y}:=\underset{Y}\int\, \Rep(\sG)\to \QCoh(\LocSys_\sG(Y)).$$

\sssec{}

We claim:

\begin{thm}  \label{t:integral to LocSys}
Assume that $Y$ has finitely many connected components.
Then the functor \eqref{e:integral to LocSys} is an equivalence.
\end{thm}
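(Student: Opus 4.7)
The plan is to exhibit both $\Rep(\sG)^{\otimes Y}$ and $\QCoh(\LocSys_\sG(Y))$ as colimit/limit presentations indexed by the sifted slice category $\on{fSet}_{/Y}$ with common vertices $\Rep(\sG)^{\otimes I} \simeq \QCoh((B\sG)^I)$, and to compare the two using the adjoint formalism of \lemref{l:lim colim}. As a first step, reduce to $Y$ connected and nonempty: both sides are multiplicative under finite disjoint unions --- on the left because $\underset{Y_1 \sqcup Y_2}\int \Rep(\sG)$ is a coproduct in $\DGCat^{\on{SymMon}}$ (hence a tensor product), and on the right because $\LocSys_\sG(Y_1 \sqcup Y_2) \simeq \LocSys_\sG(Y_1) \times \LocSys_\sG(Y_2)$ combined with the K\"unneth formula $\QCoh(X_1 \times X_2) \simeq \QCoh(X_1) \otimes \QCoh(X_2)$, which is available in characteristic zero since $\Rep(\sG)$ is compactly generated and hence dualizable. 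The hypothesis that $\pi_0(Y)$ is finite is precisely what allows finitely many iterations of K\"unneth.

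For $Y$ connected, the tautological identification $Y \simeq \underset{(I,f) \in \on{fSet}_{/Y}}{\on{colim}}\, I$ in $\Spc$, combined with \thmref{t:integral} and the sifted-colimit argument of \corref{c:over sifted}, yields
$$\Rep(\sG)^{\otimes Y} \simeq \underset{(I,f) \in \on{fSet}_{/Y}}{\on{colim}}\, \Rep(\sG)^{\otimes I},$$
with transitions for $(I,f) \to (I',f')$ the canonical symmetric monoidal functors $\Rep(\sG)^{\otimes I} \to \Rep(\sG)^{\otimes I'}$ (insertion of units for unmatched factors and multiplication along the matched ones). On the other hand, under K\"unneth $\QCoh((B\sG)^I) \simeq \Rep(\sG)^{\otimes I}$, and for each $(I,f) \in \on{fSet}_{/Y}$ the structure morphism $\LocSys_\sG(Y) \to (B\sG)^I$ induces a pullback functor $\Rep(\sG)^{\otimes I} \to \QCoh(\LocSys_\sG(Y))$ which agrees at the $(I,f)$-vertex with the comparison functor constructed in \secref{sss:to LocSys}.

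To conclude, I would apply \lemref{l:lim colim}(b) to the colimit above. The canonical transitions admit continuous right adjoints --- the basic input being the right adjoint to $\Vect \to \Rep(\sG)$, namely the derived $\sG$-invariants $\CHom_{\Rep(\sG)}(\one_{\Rep(\sG)}, -)$, which is continuous because $B\sG \to \on{pt}$ is quasi-compact and quasi-separated --- so the colimit is equivalent to the limit of the opposite diagram of right adjoints. Under K\"unneth these right adjoints are precisely the pushforward functors $\QCoh((B\sG)^{I'}) \to \QCoh((B\sG)^I)$ along the projections $(B\sG)^{I'} \to (B\sG)^I$, and the comparison map becomes the assertion that $\QCoh(\LocSys_\sG(Y))$ coincides with this pushforward limit, i.e., descent for the defining presentation $\LocSys_\sG(Y) \simeq \underset{(I,f)}{\on{lim}}\, (B\sG)^I$. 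The main obstacle is this descent step --- matching the right adjoints of insertion-of-units, under K\"unneth, with the pushforwards along projections via a Beck--Chevalley check --- together with the verification of K\"unneth in the full generality of affine $\sG$ in characteristic zero; both use the characteristic-zero hypothesis in an essential way.
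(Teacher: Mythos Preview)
Your reduction to connected $Y$ and the identification $\Rep(\sG)^{\otimes Y}\simeq \underset{(I,f)\in \on{fSet}_{/Y}}{\on{colim}}\,\Rep(\sG)^{\otimes I}$ are correct, and applying \lemref{l:lim colim} to rewrite this as a limit along right adjoints is legitimate. The gap is at the step you call ``descent.'' What you need is
\[
\QCoh\Bigl(\underset{(I,f)}{\on{lim}}\,(B\sG)^I\Bigr)\;\simeq\;\underset{(I,f)}{\on{lim}}\,\QCoh((B\sG)^I)\quad\text{(pushforward transitions)},
\]
but this is \emph{not} a descent statement in any standard sense: descent compares $\QCoh$ of a \emph{colimit} of prestacks with a limit along \emph{pullback} transitions. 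Your statement asks $\QCoh$ to send an inverse limit of prestacks to a limit with pushforwards, and there is no general mechanism for this. In fact, since by \lemref{l:lim colim} the right-hand side is already identified with $\Rep(\sG)^{\otimes Y}$, the displayed isomorphism \emph{is} the statement of the theorem --- the lim/colim maneuver has rephrased the problem without reducing it. A Beck--Chevalley argument cannot close the gap either, because the transition maps $(B\sG)^{I'}\to(B\sG)^I$ in your diagram include projections such as $B\sG\to\on{pt}$, which are not affine (for nontrivial $\sG$), so there is no monadicity over a common base to exploit.

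The paper's proof proceeds via a different presentation that engineers exactly the missing affineness. For $Y$ pointed and connected, \propref{p:express connected} writes $Y$ as a sifted colimit (in $\Spc_{\on{Ptd}}$) of bouquets $\underset{I}\vee\,S^1$, so that $\LocSys_\sG(Y)$ becomes a sifted inverse limit of the stacks $\bMaps(\underset{I}\vee\,S^1,B\sG)$. The key point is that each evaluation map $\bMaps(\underset{I}\vee\,S^1,B\sG)\to B\sG$ is \emph{affine} (\lemref{l:quasi-affine 1}, using that $B\sG$ has affine diagonal), so one is taking a sifted inverse limit of prestacks all affine over the fixed base $B\sG$. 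For such limits, $\QCoh$ \emph{does} commute with the limit (\lemref{l:quasi-affine 2}): affineness reduces everything to modules over commutative algebras in $\QCoh(B\sG)$, and then one uses that $A\mapsto A\mod$ sends sifted colimits of algebras to colimits of module categories (\corref{c:categ of modules}). Your index category $\on{fSet}_{/Y}$ cannot play this role precisely because it forces non-affine transitions into the diagram.
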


\begin{rem}
The assertion of \thmref{t:integral to LocSys} would fail for $Y$ which is the infinite disjoint union of copies of $\{*\}$,
and $\sG=\BG_a$.
\end{rem}

\sssec{}

We will actually prove a generalization of \thmref{t:integral to LocSys}. Let $\CZ$ be a prestack satisfying the following conditions:

\begin{itemize}

\item The diagonal map $\CZ\to \CZ\times \CZ$ is affine;

\item The category $\QCoh(\CZ)$ is dualizable;


\end{itemize}

Let us call such prestacks \emph{quasi-passable}, cf. \cite[Chapter 3, Sect. 3.5.1]{GR1}.

\medskip

For $Y\in \Spc$
consider the prestack $\bMaps(Y,\CZ)$ defined as
$$\Maps_{\on{PreStk}}(S, \bMaps(Y,\CZ))=\Maps_{\Spc}(Y,\Maps_{\on{PreStk}}(S,\CZ)), \quad S\in \affSch.$$

\medskip

As in \secref{sss:to LocSys}, we have a canonically defined symmetric monoidal functor
\begin{equation} \label{e:integral to Maps}
\QCoh(\CZ)^{\otimes Y}\to \QCoh(\bMaps(Y,\CZ)).
\end{equation}

We will prove:

\begin{thm}  \label{t:integral to Maps}
Assume that $Y$ has finitely many connected components and that $\CZ$ is quasi-passable. Then the prestack $\bMaps(Y, \CZ)$ is quasi-passable and
the functor \eqref{e:integral to Maps} is an equivalence.
\end{thm}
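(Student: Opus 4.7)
The plan is to establish both parts of the theorem by induction on the complexity of $Y$, exploiting that the LHS of \eqref{e:integral to Maps} converts colimits in $\Spc$ to colimits in $\DGCat^{\on{SymMon}}$ by construction (it is the left Kan extension along $\{*\}\hookrightarrow \Spc$ of the functor with value $\QCoh(\CZ)$), and showing that the RHS does the same when $\CZ$ is quasi-passable and $Y$ has finitely many connected components. Since the two functors tautologically agree at $Y=\{*\}$ and the comparison map of \eqref{e:integral to Maps} is the canonical natural transformation, once both preserve the requisite colimits the result follows.

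The base case is $Y = I$ a finite set, where $\bMaps(I, \CZ) = \CZ^I$. Quasi-passability is preserved by finite products: affineness of the diagonal is stable under products, and $\QCoh(\CZ^I)$ is dualizable as an iterated Lurie tensor product of dualizables. The equivalence \eqref{e:integral to Maps} in this case is the K\"unneth-type identification $\QCoh(\CZ)^{\otimes I} \overset{\sim}{\to} \QCoh(\CZ^I)$ given by external tensor product, which is standard for quasi-passable prestacks (cf.\ \cite[Chapter 3]{GR1}).

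The inductive step treats homotopy pushouts $Y = Y_1 \sqcup_{Y_0} Y_2$ in $\Spc$. Such a pushout yields the pullback $\bMaps(Y, \CZ) = \bMaps(Y_1, \CZ) \times_{\bMaps(Y_0, \CZ)} \bMaps(Y_2, \CZ)$ of prestacks, and the functor \eqref{e:integral to Maps} respects these presentations. Assuming the theorem for the three pieces, both sides for $Y$ are computed as the relative tensor product $\QCoh(\bMaps(Y_1, \CZ)) \otimes_{\QCoh(\bMaps(Y_0, \CZ))} \QCoh(\bMaps(Y_2, \CZ))$: for the LHS by the colimit description of $\CA^{\otimes Y}$ in $\DGCat^{\on{SymMon}}$, and for the RHS by the descent property of $\QCoh$ on pullbacks of quasi-passable prestacks. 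Preservation of quasi-passability under the pullback reduces to affineness of the diagonal (clear from affineness of the diagonals of the factors) and dualizability of $\QCoh$ (which follows from the tensor-product description). Iterating yields the theorem for every finite CW complex.

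For a general $Y$ with finitely many connected components, present each component as a filtered colimit of its finite CW subcomplexes. The LHS converts this filtered colimit in $\Spc$ to a filtered colimit in $\DGCat^{\on{SymMon}}$ tautologically. For the RHS, $\bMaps(-, \CZ)$ converts filtered colimits in $\Spc$ to cofiltered limits of prestacks whose transition maps inherit sufficient niceness from quasi-passability, and $\QCoh$ of such cofiltered limits is the corresponding filtered colimit of the $\QCoh$s of the terms. The finitely-many-components hypothesis is essential exactly here: an infinite discrete $Y$ would produce an infinite product $\CZ^Y$, and $\QCoh$ of this infinite product is \emph{not} the filtered colimit of finite sub-product $\QCoh$s, as the remark following the theorem statement flags in the case $\sG=\BG_a$. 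The principal obstacle is the descent step at the inductive pushout: verifying simultaneously that $\QCoh$ converts the relevant pullbacks to relative tensor products and that quasi-passability is preserved at each stage. Both rely on careful invocation of the passability machinery of \cite[Chapter 3]{GR1}, and this is where the bulk of the technical work lies.
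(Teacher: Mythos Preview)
Your inductive step has a genuine gap. You assert that for a pushout $Y = Y_1 \sqcup_{Y_0} Y_2$ with the theorem known for the pieces, the right-hand side is computed as
\[
\QCoh(\bMaps(Y_1,\CZ)) \underset{\QCoh(\bMaps(Y_0,\CZ))}{\otimes} \QCoh(\bMaps(Y_2,\CZ)),
\]
appealing to ``the descent property of $\QCoh$ on pullbacks of quasi-passable prestacks.'' No such general descent property exists: the base-change equivalence $\QCoh(\CZ_1 \times_{\CZ_0} \CZ_2) \simeq \QCoh(\CZ_1) \otimes_{\QCoh(\CZ_0)} \QCoh(\CZ_2)$ for quasi-passable $\CZ_i$ is only available when one of the legs is \emph{affine} (this is \corref{c:quasi-passable pullback} in the paper). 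Your inductive hypothesis---that each $\bMaps(Y_i,\CZ)$ is quasi-passable and \eqref{e:integral to Maps} is an equivalence---does not supply affineness of either restriction map $\bMaps(Y_i,\CZ) \to \bMaps(Y_0,\CZ)$. The same issue recurs at the filtered-colimit stage: converting a cofiltered limit of prestacks into a filtered colimit on $\QCoh$ again requires the transition maps to be affine over a fixed base (this is \lemref{l:quasi-affine 2}), and you have not arranged this. Finally, your argument for dualizability of $\QCoh$ of the pullback is circular, since it presupposes the tensor-product description you are trying to establish.

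The paper repairs exactly this by strengthening the inductive statement: after reducing to connected $Y$, it proves that the evaluation map $\on{ev}_y:\bMaps(Y,\CZ)\to\CZ$ is \emph{affine} (\thmref{t:integral to Maps connected}). This single extra piece of structure makes everything go through: a connected pointed $(Y,y)$ is written as a \emph{sifted} colimit of wedges $\vee_I S^1$ (\propref{p:express connected}); for each wedge the relevant pullback square has the diagonal $\CZ \to \CZ^{I_+}$ as one leg, which is affine by quasi-passability, so \corref{c:quasi-passable pullback} applies; and the passage to the sifted limit uses \lemref{l:quasi-affine 2}, whose hypothesis (affine over $\CZ$) is now available. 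If you want to rescue your cell-attachment scheme, you must likewise carry affineness of $\on{ev}_y$ through the induction---but once you do that, you are essentially reproducing the paper's argument.
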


\thmref{t:integral to LocSys} is a particular case of \thmref{t:integral to Maps} for $\CZ=\on{pt}/\sG$; note that in this case
$$\bMaps(Y,\on{pt}/\sG)\simeq \LocSys_\sG(Y).$$

\ssec{Proof of \thmref{t:integral to Maps}}

\sssec{}

We will first establish a few basic facts about quasi-passable prestacks.

\begin{lem}\label{l:quasi-passable affine}
Suppose that $f: \CZ_1 \to \CZ_2$ is an affine morphism, where $\CZ_2$ is quasi-passable.  Then $\CZ_1$
is quasi-passable.
\end{lem}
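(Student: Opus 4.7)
The plan is to verify both defining clauses of quasi-passability for $\CZ_1$: affineness of the diagonal and dualizability of $\QCoh(\CZ_1)$.

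For the diagonal, I factor $\Delta_{\CZ_1}: \CZ_1 \to \CZ_1 \times \CZ_1$ as
\[
\CZ_1 \xrightarrow{\Delta_f} \CZ_1 \times_{\CZ_2} \CZ_1 \longrightarrow \CZ_1 \times \CZ_1.
\]
The relative diagonal $\Delta_f$ is a closed immersion, hence affine, because the affine morphism $f$ is in particular separated. The second map fits into a Cartesian square with $\Delta_{\CZ_2}: \CZ_2 \to \CZ_2 \times \CZ_2$ along $f \times f: \CZ_1 \times \CZ_1 \to \CZ_2 \times \CZ_2$, so it is affine as a base change of $\Delta_{\CZ_2}$, which is affine by the hypothesis that $\CZ_2$ is quasi-passable. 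Since affineness is stable under composition, $\Delta_{\CZ_1}$ is affine.

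For dualizability of $\QCoh(\CZ_1)$, the affineness of $f$ implies that $f_*$ is continuous and conservative, so the adjunction $(f^*, f_*)$ is monadic by Barr--Beck--Lurie. Hence $\QCoh(\CZ_1)$ is identified, as a $\QCoh(\CZ_2)$-module category, with the category of modules over the commutative algebra $\CA := f_*\CO_{\CZ_1}$ in $\QCoh(\CZ_2)$. To conclude that $\CA\mmod(\QCoh(\CZ_2))$ is dualizable as a plain DG category, I would appeal to the closure properties of quasi-passable prestacks developed in \cite[Chapter 3]{GR1}: when $\CR = \QCoh(\CZ)$ for $\CZ$ with affine diagonal and $\QCoh(\CZ)$ dualizable, the category of modules over any commutative algebra in $\CR$ is again dualizable as a DG category.

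The main obstacle is the dualizability step. Unlike affineness of the diagonal, which follows from standard stability properties of affine morphisms, dualizability of DG categories is delicate and does not automatically propagate to categories of modules over algebras. The affine-diagonal hypothesis on $\CZ_2$ is essential here: it equips $\QCoh(\CZ_2)$ with enough additional structure --- close to a form of self-duality compatible with its symmetric monoidal product --- for the passage to $\CA\mmod(\QCoh(\CZ_2))$ to preserve dualizability.
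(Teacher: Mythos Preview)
Your argument for affineness of the diagonal is correct and essentially equivalent to the paper's (the paper uses a 2-out-of-3 argument: both $\CZ_1 \to \CZ_2 \to \CZ_2 \times \CZ_2$ and $\CZ_1 \times \CZ_1 \to \CZ_2 \times \CZ_2$ are affine, hence so is $\Delta_{\CZ_1}$).

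The dualizability step, however, has a real gap. You correctly reduce to showing that $\CA\text{-mod}(\QCoh(\CZ_2))$ is dualizable, but then you appeal vaguely to \cite[Chapter 3]{GR1} and to the affine-diagonal hypothesis on $\CZ_2$. Neither is right. The statement you need --- that for any dualizable DG category $\bC$ and any colimit-preserving monad $M$ on $\bC$, the category $M\text{-mod}(\bC)$ is dualizable --- is not in \cite{GR1}; the paper states and proves it as a separate proposition immediately following the lemma. More importantly, your intuition that ``the affine-diagonal hypothesis on $\CZ_2$ is essential here'' for dualizability is mistaken: only dualizability of $\QCoh(\CZ_2)$ is used in this step, and the affine diagonal plays no role.

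The paper's proof of the needed proposition is genuinely non-trivial. It first notes the compactly generated case is easy, then for general dualizable $\bC$ writes $M\text{-mod}(\bC) \simeq M\text{-mod}(\bA) \otimes_{\bA} \bC$ with $\bA = \on{Funct}(\bC,\bC)$, observes that $M \mapsto M\text{-mod}(\bC)$ preserves sifted colimits, and reduces to the case of a free monad. For a free monad one then invokes Lurie's result that any dualizable DG category is a retract of a compactly generated one, and checks that the retract structure passes to the module categories. You would need to supply this argument, or an equivalent one, to complete the proof.
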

\begin{proof}
By the standard argument, $\CZ_1$ has affine diagonal; namely, by assumption, the maps $\CZ_1 \to \CZ_2 \to
\CZ_2 \times \CZ_2$ and $\CZ_1 \times \CZ_1 \to \CZ_2 \times \CZ_2$ are affine and by the
2-out-of-3 propery of affine morphisms, we obtain that $\CZ_1 \to \CZ_1 \times \CZ_1$ is affine, as
desired.

\medskip

It remains to show that $\QCoh(\CZ_1)$
is dualizable.  By \cite[Chapter 3, Proposition 3.3.3]{GR1}, we have
$$ \QCoh(\CZ_1) \simeq f_*(\mathcal{O}_{Z_1})\on{-mod}(\QCoh(\CZ_2)) .$$
The lemma follows from the following general fact (applied to the monad given by tensoring with
$f_*(\mathcal{O}_{Z_1})$).
\end{proof}

\begin{lem}
Let $\bC$ be a dualizable DG category and let $M: \bC \to \bC$ be a monad such that the underlying endo-functor of
$\bC$ preserves colimits.  Then the category $M\mod(\bC)$ is dualizable.
\end{lem}

\begin{proof}

It is easy to see that $M^\vee\mod(\bC^\vee)$ provides a dual for $M\mod(\bC)$.

\end{proof}

%
%
%
%
%

\sssec{}

As a consequence, we have following persistence property of quasi-passable prestacks:

\begin{cor}\label{c:quasi-passable pullback}
Suppose that we have a pullback diagram
$$
\xymatrix{\CZ \ar[r]\ar[d] & \CZ_2 \ar[d]^f \\
\CZ_1 \ar[r] & \CZ_0}
$$
such that $\CZ_0, \CZ_1$ and $\CZ_2$ are quasi-passable and the map $f:\CZ_2 \to \CZ_0$ is affine.  Then
$\CZ$ is quasi-passable and the functor
$$ \QCoh(\CZ_1) \underset{\QCoh(\CZ_0)}{\otimes} \QCoh(\CZ_2) \to \QCoh(\CZ) $$
is an equivalence.
\end{cor}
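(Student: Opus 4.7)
The plan is to deduce both assertions from Lemma \ref{l:quasi-passable affine} together with the basic fact that if $f:\CZ_2\to \CZ_0$ is affine, then $\QCoh(\CZ_2)$ is monadic over $\QCoh(\CZ_0)$, namely $\QCoh(\CZ_2)\simeq A\mmod(\QCoh(\CZ_0))$ for $A=f_*(\CO_{\CZ_2})$ (as in the proof of Lemma \ref{l:quasi-passable affine}). (Presumably the displayed tensor product in the statement is meant to be over $\QCoh(\CZ_0)$ rather than $\QCoh(\CZ)$.)

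First, I would dispose of quasi-passability. The projection $p:\CZ\to \CZ_1$ is the base change of $f$, so it is affine. Since $\CZ_1$ is quasi-passable by hypothesis, Lemma \ref{l:quasi-passable affine} immediately gives that $\CZ$ is quasi-passable. No further work is needed here.

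Next, I would establish the equivalence. Let $g:\CZ_1\to \CZ_0$ denote the other leg of the square, and set $A:=f_*(\CO_{\CZ_2})\in \on{AssocAlg}(\QCoh(\CZ_0))$ and $B:=p_*(\CO_{\CZ})\in \on{AssocAlg}(\QCoh(\CZ_1))$. By the affine-morphism calculation recalled above, we have
$$\QCoh(\CZ_2)\simeq A\mmod(\QCoh(\CZ_0)), \quad \QCoh(\CZ)\simeq B\mmod(\QCoh(\CZ_1)).$$
The key ingredient is base change: since $f$ (and hence $p$) is affine and $\CZ_0,\CZ_1$ are quasi-passable, one has $B\simeq g^*(A)$ as algebras in $\QCoh(\CZ_1)$, where $\QCoh(\CZ_1)$ is viewed as a $\QCoh(\CZ_0)$-module via $g^*$. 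Granting this, Lurie's theorem \cite[Theorem 4.8.4.6]{Lu2} (applied exactly as in the proof of the Proposition preceding this Corollary) yields
$$B\mmod(\QCoh(\CZ_1))\simeq A\mmod(\QCoh(\CZ_0))\underset{\QCoh(\CZ_0)}\otimes \QCoh(\CZ_1),$$
which, combined with the two monadicity identifications, is precisely the desired equivalence
$$\QCoh(\CZ_1)\underset{\QCoh(\CZ_0)}\otimes \QCoh(\CZ_2)\to \QCoh(\CZ).$$

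The only substantive point is the base-change identification $B\simeq g^*(A)$; this is where the quasi-passability assumption on $\CZ_0$ (and hence the projection-formula-type statement for the symmetric monoidal pullback $g^*$) is really used. The rest is a formal consequence of monadicity for affine morphisms and of the Barr--Beck--Lurie style description of module categories.
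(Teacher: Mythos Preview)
Your proposal is correct and follows essentially the same approach as the paper: both arguments first note that $\CZ\to\CZ_1$ is affine by base change so that Lemma~\ref{l:quasi-passable affine} gives quasi-passability, and then use the monadicity $\QCoh(\CZ_2)\simeq f_*(\CO_{\CZ_2})\mmod(\QCoh(\CZ_0))$, base change to identify $p_*(\CO_\CZ)$ with $g^*(f_*(\CO_{\CZ_2}))$, and the tensor-product description of module categories (your citation of \cite[Theorem~4.8.4.6]{Lu2}) to obtain the equivalence. You also correctly flagged the typo in the displayed tensor product.
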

\begin{proof}
By base change, the map $\CZ \to \CZ_1$ is affine and therefore $\CZ$ is quasi-passable by \lemref{l:quasi-passable affine}.  Moreover, by base change and \cite[Chapter 3, Proposition 3.3.3]{GR1}, we have
$$ \QCoh(\CZ) \simeq f_*(\mathcal{O}_{\CZ_2})\on{-mod}(\QCoh(\CZ_1)) \simeq f_*(\mathcal{O}_{\CZ_2})\on{-mod}(\QCoh(\CZ_0)) \underset{\QCoh(\CZ_0)}{\otimes} \QCoh(\CZ_1)  \simeq$$
$$ \simeq \QCoh(\CZ_1) \underset{\QCoh(\CZ_0)}{\otimes} \QCoh(\CZ_2) ,$$
as desired.
\end{proof}

\sssec{}
We are now ready to prove the \thmref{t:integral to Maps}.  We first reduce to the case that $Y$ is connected.
Indeed, by \cite[Chapter 3, Theorem 3.1.7]{GR1} if $\CX_1$ and $\CX_2$ are quasi-passable prestacks, so is $\CX_1 \times \CX_2$ and the
functor
\begin{equation}\label{e:quasi-pass tensor prod}
\QCoh(\CX_1) \otimes \QCoh(\CX_2) \to \QCoh(\CX_1 \times \CX_2)
\end{equation}
is an equivalence.

\medskip

In particular, if $Y = Y_1 \sqcup Y_2$, we have
$$ \bMaps(Y_1 \sqcup Y_2, \CZ) \simeq \bMaps(Y_1, \CZ) \times \bMaps(Y_2, \CZ).$$
Therefore, if \thmref{t:integral to Maps} is true for $Y_1$ and $Y_2$, it is true for $Y$ (using the fact that the coproduct in $\DGCat^{\on{SymMon}}$ is given by tensor product).

\sssec{}
We can thus assume without loss of generality that $Y$ is connected. In this case, we will prove a slightly more precise version of the theorem (which implies \thmref{t:integral to Maps} by \lemref{l:quasi-passable affine}):

\begin{thm}\label{t:integral to Maps connected}
Let $(Y,y)$ be a pointed connected space and $\CZ$ a quasi-passable prestack.  Then
the evaluation map
$$ ev_y: \bMaps(Y, \CZ) \to \CZ $$
is affine and the functor \eqref{e:integral to Maps} is an equivalence.
\end{thm}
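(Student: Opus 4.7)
The plan is to reduce, by induction on the cell structure of a pointed connected CW complex, to the base case $Y = S^1$, using \corref{c:quasi-passable pullback} and the tensor product formula \eqref{e:quasi-pass tensor prod} at each step.

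For the base case $Y = S^1 \simeq * \sqcup_{S^0} *$: since $\bMaps(-, \CZ)$ sends colimits in $\Spc$ to limits of prestacks,
\[
\bMaps(S^1, \CZ) \simeq \CZ \underset{\CZ \times \CZ}{\times} \CZ,
\]
both arrows $\CZ \to \CZ \times \CZ$ being the diagonal of $\CZ$. As $\CZ$ is quasi-passable, this diagonal is affine, hence the evaluation $\bMaps(S^1, \CZ) \to \CZ$ (its base change) is affine; so $\bMaps(S^1, \CZ)$ is quasi-passable by \lemref{l:quasi-passable affine}. Combining \corref{c:quasi-passable pullback} with \eqref{e:quasi-pass tensor prod},
\[
\QCoh(\bMaps(S^1, \CZ)) \simeq \QCoh(\CZ) \underset{\QCoh(\CZ) \otimes \QCoh(\CZ)}{\otimes} \QCoh(\CZ) \simeq \QCoh(\CZ)^{\otimes S^1},
\]
the second equivalence applying $\CC \mapsto \CC^{\otimes(-)}$ (which preserves colimits in the space variable and sends coproducts in $\DGCat^{\on{SymMon}}$ to tensor products) to the same pushout presentation of $S^1$.

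For the inductive step, an arbitrary pointed connected CW complex is a filtered colimit of finite pointed connected subcomplexes, each built from the basepoint by cell attachments $Y'' \simeq Y' \sqcup_{S^{n-1}} *$, with $n \geq 1$. Proceeding by induction — first on dimension via $S^n \simeq * \sqcup_{S^{n-1}} *$ to establish the theorem for all spheres, then on number of cells — in the pullback
\[
\bMaps(Y'', \CZ) \simeq \bMaps(Y', \CZ) \underset{\bMaps(S^{n-1}, \CZ)}{\times} \CZ,
\]
the constant-maps section $\CZ \to \bMaps(S^{n-1}, \CZ)$ is affine, being a section of the (inductively) affine evaluation whose target has affine diagonal by (inductive) quasi-passability. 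Base change and composition with the affine evaluation on $\bMaps(Y', \CZ)$ yield affineness for $Y''$, and \corref{c:quasi-passable pullback} combined with the pushout presentation of $\QCoh(\CZ)^{\otimes Y''}$ gives the category equivalence. The filtered colimit over subcomplexes is handled by \lemref{l:lim colim}(b), all transition maps being affine over $\CZ$.

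The main obstacle is maintaining quasi-passability of each intermediate $\bMaps(-, \CZ)$ throughout the induction via \lemref{l:quasi-passable affine}, so that \corref{c:quasi-passable pullback} remains applicable at every stage; this forces us to track the affineness of both evaluations and constant-map sections in parallel. Once this propagation is in place, the pullback computation on the prestack side and the pushout/tensor-product computation on the integral side are matched directly by the natural functor \eqref{e:integral to Maps}.
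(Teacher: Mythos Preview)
Your argument is essentially correct, but it takes a different route from the paper. The paper invokes \propref{p:express connected} to present an arbitrary pointed connected $(Y,y)$ as a \emph{sifted} colimit of bouquets $\underset{I}\vee\, S^1$, so that the only base case needed is \lemref{l:quasi-affine 1} (a single pullback square, exactly your $S^1$ computation with one extra tensor factor), and the passage to the colimit is handled in one stroke by \lemref{l:quasi-affine 2}. Your approach instead climbs the CW filtration: first spheres by suspension, then finite complexes by cell attachment, then a filtered colimit. This is more elementary in that it avoids the free-monoid presentation of $\Omega(Y,y)$, at the cost of more bookkeeping (tracking both affineness of evaluations and of constant-map sections through two nested inductions, plus the implicit reduction to a CW model with a single $0$-cell so that all intermediate stages remain connected).

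One correction: at the filtered-colimit step you cite \lemref{l:lim colim}(b), but that lemma only identifies a colimit of categories with a limit along right adjoints; it does not by itself give $\QCoh(\underset{\alpha}\lim\, W_\alpha)\simeq \underset{\alpha}{\on{colim}}\,\QCoh(W_\alpha)$. What you actually need is the content of \lemref{l:quasi-affine 2} (equivalently \lemref{l:alg colimit} applied over $\QCoh(\CZ)$): since each $\bMaps(Y_\alpha,\CZ)\to\CZ$ is affine, write $W_\alpha=\Spec_\CZ A_\alpha$, observe that the limit is $\Spec_\CZ(\underset{\alpha}{\on{colim}}\, A_\alpha)$, and then use that $R\mapsto R\mod(\QCoh(\CZ))$ commutes with filtered (indeed sifted) colimits. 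With that citation fixed, your proof goes through.
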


\sssec{}
By \propref{p:express connected} below, we can write $(Y,y)$ as a sifted colimit
$$(Y,y) = \underset{\alpha\in A}{\on{colim}} (Y_\alpha,y )$$
in $\Spc_{\on{Ptd}}$ (and therefore $\Spc$)
of objects of the form
$$(Y_\alpha, y) = \left(\{*\}\simeq \{*\}\underset{\{*\}}\sqcup\, \{*\}\to \{*\}\underset{I\sqcup \{*\}}\sqcup\, \{*\} \simeq
\underset{I}\vee\, S^1 \right), \quad I\in \on{fSet}.$$

This gives an isomorphism of prestacks
$$ \bMaps(Y, \CZ) \simeq \underset{\alpha\in A}{\on{lim}}\  \bMaps(Y_\alpha, \CZ).$$

\thmref{t:integral to Maps connected} now follows from the following two assertions:

\begin{lem} \label{l:quasi-affine 1}
\thmref{t:integral to Maps connected} is true for $Y=\underset{I}\vee\, S^1$, $I\in \on{fSet}$.
\end{lem}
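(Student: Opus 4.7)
The plan is to exploit the homotopy pushout presentation
\[
\underset{I}\vee\, S^1 \;\simeq\; \{*\} \underset{I_+}\sqcup\, \{*\}, \qquad I_+ := I \sqcup \{*\},
\]
in $\Spc$, already implicit in \propref{p:express connected}, which turns both sides of the claim into mirror images of a single quasi-passable pullback diagram. I will treat the case $I$ nonempty; the case $I = \emptyset$ reduces to $Y = \{*\}$ and is trivial.

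First I would pass the pushout through $\bMaps(-,\CZ)$, which sends colimits of spaces to limits of prestacks, obtaining
\[
\bMaps(Y,\CZ) \;\simeq\; \CZ \underset{\CZ^{I_+}}\times \CZ,
\]
where both structure maps $\CZ \to \CZ^{I_+}$ are the iterated diagonal $\Delta$ and the evaluation at $y$ is projection onto either factor. Since $\CZ$ is quasi-passable, the diagonal $\CZ \to \CZ \times \CZ$ is affine; by writing the iterated diagonal $\CZ \to \CZ^{I_+}$ as a composition of base changes of this map along projections $\CZ^{n+1} \to \CZ^n$, I conclude that $\Delta : \CZ \to \CZ^{I_+}$ is affine. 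Hence $\on{ev}_y$, being a base change of $\Delta$ along $\Delta$, is itself affine. Moreover, iterating \cite[Chapter 3, Theorem 3.1.7]{GR1} shows that $\CZ^{I_+}$ is quasi-passable.

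For the categorical equivalence I would apply \corref{c:quasi-passable pullback} to the Cartesian square displayed above; combined with the fact that $\CZ^{I_+}$ is quasi-passable and $\Delta$ is affine, this gives
\[
\QCoh(\CZ) \underset{\QCoh(\CZ^{I_+})}\otimes \QCoh(\CZ) \;\xrightarrow{\sim}\; \QCoh(\bMaps(Y,\CZ)).
\]
On the other hand, since $Y \mapsto \QCoh(\CZ)^{\otimes Y}$ is colimit-preserving as a functor $\Spc \to \DGCat^{\on{SymMon}}$ and coproducts in $\DGCat^{\on{SymMon}}$ are tensor products, the same pushout presentation of $Y$ yields
\[
\QCoh(\CZ)^{\otimes Y} \;\simeq\; \QCoh(\CZ) \underset{\QCoh(\CZ)^{\otimes I_+}}\otimes \QCoh(\CZ);
\]
iterated application of the equivalence \eqref{e:quasi-pass tensor prod} identifies $\QCoh(\CZ)^{\otimes I_+}$ with $\QCoh(\CZ^{I_+})$, and composing the two displays delivers the desired equivalence.

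The main bookkeeping step I expect to require care is confirming that the equivalence obtained by this chain of identifications actually agrees with the canonical functor \eqref{e:integral to Maps} constructed in \secref{sss:to LocSys}; this amounts to tracing the universal property of $\bMaps(-,\CZ)$ and the construction of \eqref{e:integral to Maps} against the pushout presentation of $Y$, verifying compatibility on the two copies of $\{*\}$ and on $I_+$. Once this naturality is in hand, the substantive content of the lemma is just the combination of \corref{c:quasi-passable pullback}, the affineness of the iterated diagonal, and the tensor product equivalence \eqref{e:quasi-pass tensor prod}; no deeper input is needed.
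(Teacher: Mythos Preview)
Your proposal is correct and follows essentially the same route as the paper: the paper also uses the pushout presentation $\underset{I}\vee\, S^1 \simeq \{*\}\underset{I\sqcup\{*\}}\sqcup\{*\}$, obtains the corresponding pullback square of prestacks, notes that $\CZ^{I\sqcup\{*\}}$ is quasi-passable and the iterated diagonal is affine, and then invokes \corref{c:quasi-passable pullback} together with the fact that pushouts in $\DGCat^{\on{SymMon}}$ are tensor products. Your write-up is more explicit about the intermediate identifications and the naturality check, but there is no substantive difference in strategy.
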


\begin{lem} \label{l:quasi-affine 2}
Let $a\mapsto \CW_a$ be a sifted diagram of prestacks, affine over some base prestack $\CZ$; set
$$\CW:=\underset{a}{\on{lim}}\, \CW_a.$$
Then the map $\CW \to \CZ$ is affine and
the induced functor
$$\underset{a}{\on{colim}}\, \QCoh(\CW_a)\to \QCoh(\CW)$$
is an equivalence.
\end{lem}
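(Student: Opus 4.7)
The plan is to reduce to the case where the base $\CZ$ is an affine scheme $S$, and then describe affine morphisms over $S$ via their coordinate algebras in $\QCoh(S)$; siftedness of the index category plays the essential role at two points. Both assertions (affineness of $\CW \to \CZ$ and the $\QCoh$ equivalence) can be checked after pulling back along an arbitrary $S \to \CZ$ with $S \in \affSch$ --- the first by the definition of an affine morphism, the second because affine pushforward satisfies base change (\cite[Chapter 3, Proposition 3.3.3]{GR1}) and the tensor product $- \underset{\QCoh(\CZ)}{\otimes} \QCoh(S)$ in $\DGCat$ commutes with colimits --- so we may assume $\CZ = S \in \affSch$ from the start.

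With the base affine, write $\CW_a = \Spec_S(A_a)$ for connective $A_a \in \on{ComAlg}(\QCoh(S))$, so that $\QCoh(\CW_a) \simeq A_a\mod(\QCoh(S))$ by \cite[Chapter 3, Proposition 3.3.3]{GR1}. I would then identify the prestack limit $\CW$ with the relative spectrum $\Spec_S(A)$ of the colimit $A := \underset{a}{\on{colim}}\, A_a$ taken in $\on{ComAlg}(\QCoh(S))$; on the functor of points, this is immediate since $\on{Maps}$ in $\on{ComAlg}(\QCoh(S))$ sends sifted colimits in the first variable to limits. Siftedness enters at precisely this point: by \cite[Prop.~3.2.3.1]{Lu2}, invoked in Remark~\ref{r:forget mon}, the forgetful functor $\on{ComAlg}(\QCoh(S)) \to \QCoh(S)$ preserves sifted colimits, and so the underlying $\QCoh(S)$-object of $A$ is the colimit $\on{colim}_a A_a$ computed already in $\QCoh(S)$. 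In particular $A$ remains connective (since $\QCoh(S)^{\leq 0}$ is stable under sifted colimits), yielding the asserted affineness of $\CW \to S$.

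It then remains to show that the comparison functor
$$\underset{a}{\on{colim}}\, A_a\mod(\QCoh(S)) \to A\mod(\QCoh(S))$$
is an equivalence in $\DGCat$; this is the main obstacle. My plan is to deduce it from the principle, analogous to \corref{c:categ of modules}, that the formation of module categories over (commutative) algebras in $\QCoh(S)$ preserves sifted colimits of algebras. To reduce to the associative case already in hand, I would use that the forgetful functor $\on{ComAlg}(\QCoh(S)) \to \on{AssocAlg}(\QCoh(S))$ again preserves sifted colimits (one more application of \cite[Prop.~3.2.3.1]{Lu2}), so the underlying associative algebra of $A$ coincides with the sifted colimit of the $A_a$ in $\on{AssocAlg}(\QCoh(S))$; the cited preservation result for associative algebras then gives the desired equivalence of $\DGCat$-colimits.
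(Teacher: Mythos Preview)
Your core approach coincides with the paper's: identify $\QCoh(\CW_a)$ as modules over an algebra object, then invoke the principle (the paper cites \lemref{l:alg colimit}, you invoke \corref{c:categ of modules}) that forming module categories commutes with sifted colimits of algebras, together with the fact that the forgetful functor $\on{ComAlg} \to \on{AssocAlg}$ preserves sifted colimits. The paper, however, does \emph{not} reduce to an affine base for the $\QCoh$-equivalence: it works directly with the pushforward algebras $\CA_a, \CA \in \on{ComAlg}(\QCoh(\CZ))$ and applies \cite[Chapter 3, Prop.~3.3.3]{GR1} over $\CZ$ itself.

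Your reduction step for the $\QCoh$-equivalence is where there is a gap. You correctly observe that $-\underset{\QCoh(\CZ)}{\otimes}\QCoh(S)$ sends the comparison functor to its affine-base analogue, but to conclude that the original map is an equivalence from knowing each base-changed version is, you would need the family of functors $-\underset{\QCoh(\CZ)}{\otimes}\QCoh(S)$ (for $S \to \CZ$) to be jointly conservative on the relevant $\QCoh(\CZ)$-module categories, and for a general prestack $\CZ$ this is not automatic. The simplest fix is to drop the reduction entirely: once affineness of $\CW \to \CZ$ is established (your argument for that part is fine and matches the paper's), every remaining step you wrote goes through verbatim with $S$ replaced by $\CZ$. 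Your attention to connectivity of the colimit algebra is a detail the paper leaves implicit.
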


\sssec{Proof of \lemref{l:quasi-affine 1}}
Since $\underset{I}\vee\,  S^1 \simeq \{*\}\underset{I\sqcup \{*\}}\sqcup\, \{*\}$, we have a pullback diagram of prestacks
$$
\xymatrix{
\bMaps(\underset{I}\vee\,  S^1, \CZ) \ar[r]\ar[d] & \CZ \ar[d] \\
\CZ \ar[r] & \CZ^{I\sqcup \{*\}}
}$$
Since $\CZ$ is quasi-passable, so is $\CZ^{I\sqcup \{*\}}$ and the diagonal map $\CZ \to \CZ^{I\sqcup \{*\}}$
is affine.  The result then follows by \corref{c:quasi-passable pullback} (using the fact the pushouts in $\DGCat^{\on{SymMon}}$ are given by tensor products).

\qed

\sssec{Proof of \lemref{l:quasi-affine 2}}
The assertion that $\CW \to \CZ$ is affine follows from the fact a limit of affine schemes is
an affine scheme and limits commute with pullbacks.

\medskip

Now, let $\CA_a$ (resp., $\CA$) be the object of $\on{ComAlg}(\QCoh(\CZ))$ equal to the image of the structure
sheaf on $\CW_\alpha$ (resp., $\CW$). Then (e.g., by \cite[Chapter 3, Prop. 3.3.3]{GR1}), direct image induces
an equivalence
$$\QCoh(\CW_a)\simeq \CA_a\mod(\QCoh(\CZ)) \text{ and } \QCoh(\CW)\simeq \CA\mod(\QCoh(\CZ)).$$

Now the assertion of the lemma follows from \lemref{l:alg colimit} below, combined with the fact that the forgetful functor
$$\on{ComAlg}\to \on{AssocAlg}$$
commutes with sifted colimits.

\qed

\ssec{Functors out of $\CA^{\otimes Y}$ as diagrams parameterized by finite sets}

In this subsection we will derive some corollaries of \thmref{t:integral}.

\sssec{}

Let $\bC$ be one of the three categories from \thmref{t:integral}, and let $\CC$ be an object of $\bC$. I.e.,
$\CC$ can be a symmetric monoidal DG category, or a monoidal DG category, or a plain DG category.

\medskip

Consider the following two functors
$$\on{fSet}\to \bC,$$
\begin{equation} \label{e:functor 1}
I\mapsto \CA^{\otimes I};
\end{equation}
and
\begin{equation} \label{e:functor 2}
I\mapsto \CC\otimes \on{LS}(Y^I).
\end{equation}

We claim:

\begin{prop} \label{p:describe functors}
The space of maps $\CA^{\otimes Y}\to \CC$ within $\bC$ (i.e., functors $\CA^{\otimes Y}\to \CC$ that preserve the corresponding structure)
is canonically isomorphic to the space of natural transformations from \eqref{e:functor 1} to \eqref{e:functor 2}.
\end{prop}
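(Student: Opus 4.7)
\medskip

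The plan is to compare both sides with the same limit over $\on{TwArr}(\on{fSet})^{\on{op}}$. Applying \thmref{t:integral} in the ambient category $\bC$, we may write
$$\CA^{\otimes Y}\simeq \underset{(I\to J)\in \on{TwArr}(\on{fSet})}{\on{colim}}\ \underset{\Maps(J,Y)}\int\, \CA^{\otimes I},$$
so that mapping out of $\CA^{\otimes Y}$ becomes
$$\Maps_\bC(\CA^{\otimes Y},\CC)\simeq \underset{(I\to J)\in \on{TwArr}(\on{fSet})^{\on{op}}}{\on{lim}}\, \Maps_\bC\bigl(\underset{Y^J}\int\, \CA^{\otimes I},\CC\bigr),$$
using the identification $\Maps(J,Y)\simeq Y^J$.

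\medskip

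Next, for each $(I\to J)$, I would invoke \corref{c:LocSys as right adj} (i.e.\ the adjunction in $\bC$ between $\underset{Y^J}\int(-)$ and $(-)\otimes \LS(Y^J)$) to rewrite each term of the limit as
$$\Maps_\bC\bigl(\underset{Y^J}\int\, \CA^{\otimes I},\CC\bigr)\simeq \Maps_\bC\bigl(\CA^{\otimes I},\CC\otimes \LS(Y^J)\bigr).$$
The transition maps for an arrow $(I^0\to J^0)\to (I^1\to J^1)$ in $\on{TwArr}(\on{fSet})$ — which, recall, consist of $I^1\to I^0$ and $J^0\to J^1$ — correspond, on one hand, to the structure maps among the $\underset{Y^J}\int\, \CA^{\otimes I}$ coming from functoriality of $\CA^{\otimes(-)}$ and of $Y^{(-)}$, and on the other hand (after adjunction) to precomposition with $\CA^{\otimes I^1}\to \CA^{\otimes I^0}$ and postcomposition with $\CC\otimes \LS(Y^{J^0})\to \CC\otimes \LS(Y^{J^1})$. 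This is precisely the functoriality built into the twisted-arrows limit formula for natural transformations.

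\medskip

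Finally, I would apply \lemref{l:twisted} with $\bD':=\on{fSet}$, $\bE:=\bC$, $F_1(I):=\CA^{\otimes I}$ and $F_2(I):=\CC\otimes \LS(Y^I)$: it identifies the space of natural transformations $F_1\to F_2$ with
$$\underset{(I\to J)\in \on{TwArr}(\on{fSet})^{\on{op}}}{\on{lim}}\, \Maps_\bC\bigl(\CA^{\otimes I},\CC\otimes \LS(Y^J)\bigr),$$
which is exactly the limit obtained in the previous step. Chaining the three equivalences yields the desired identification.

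\medskip

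The main obstacle is not any single step — each is a formal consequence of a result established earlier — but rather verifying that the three equivalences are compatibly natural in the twisted-arrows variable, so that one genuinely gets the same limit diagram on both sides. In particular, one needs the adjunction of \corref{c:LocSys as right adj} to hold with naturality in $Y$ (for the variable $J$) and the identification $\CA^{\otimes I}\simeq \underset{I}\int\, \CA$ (in the sense of \eqref{e:int finite set}, applied inside $\bC$) to hold with naturality in $I$; both are built into the functorialities used to assemble the diagram \eqref{e:functor from TwArr}.
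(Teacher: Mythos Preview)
Your proof is correct and follows essentially the same approach as the paper: both combine \thmref{t:integral}, \corref{c:LocSys as right adj}, and \lemref{l:twisted} to identify both sides with the same limit over $\on{TwArr}(\on{fSet})^{\on{op}}$; you simply run the chain of equivalences in the reverse order (starting from $\Maps_\bC(\CA^{\otimes Y},\CC)$ rather than from the space of natural transformations). One minor slip: with the paper's convention, a morphism $(I^0\to J^0)\to (I^1\to J^1)$ in $\on{TwArr}(\on{fSet})$ consists of maps $I^0\to I^1$ and $J^1\to J^0$, not the directions you stated --- but this is cosmetic and does not affect the argument.
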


\begin{proof}

Recall (see \lemref{l:twisted}) that given a pair of functors
$$\Phi_1,\Phi_2:\bD\to \bC,$$
the space of natural transformations $\Phi_1\to \Phi_2$ identifies with the limit
$$\underset{(\bd_s\to \bd_t)\in \on{TwArr}(\bD)^{\on{op}}}{\on{lim}}\, \Maps_\bC(\Phi_1(\bd_s),\Phi_2(\bd_t)).$$

Hence, we obtain that the space of natural transformations from \eqref{e:functor 1} to \eqref{e:functor 2}
identifies canonically with
$$\underset{(I\to J)\in \on{TwArr}(\on{fSet})^{\on{op}}}{\on{lim}}\, \Maps_\bC(\CA^{\otimes I},\CC\otimes \on{LS}(Y^J)).$$

Applying \corref{c:LocSys as right adj}, we rewrite this as
$$\underset{(I\to J)\in \on{TwArr}(\on{fSet})^{\on{op}}}{\on{lim}}\,
\Maps_{\bC}(\underset{Y^J}\int\, \CA^{\otimes I},\CC).$$

Now the assertion follows from \thmref{t:integral}, which says that
$$\underset{(I\to J)\in \on{TwArr}(\on{fSet})}{\on{colim}}\,
\underset{Y^J}\int\, \CA^{\otimes I} \simeq \CA^{\otimes Y}.$$

\end{proof}

\sssec{}  \label{sss:monoidal actions}

As a first application of \propref{p:describe functors}, we will describe what it takes to have an action
of $\CA^{\otimes Y}$ on some DG category $\CM$.

\medskip

We take $\bC=\DGCat^{\on{Mon}}$. Set $\CC:=\End(\CM)$; this is a monoidal category. An action of
$\CA^{\otimes Y}$ on $\CM$ is the same as a map of \emph{monoidal categories}
$$\CA^{\otimes Y}\to \End(\CM).$$

According to \propref{p:describe functors}, the latter amounts to a system of monoidal functors
\begin{equation} \label{e:action I}
\CA^{\otimes I}\to \End(\CM)\otimes \LS(Y^I), \quad I\in \on{fSet},
\end{equation}
compatible in the sense that they make the following diagrams commute
\begin{equation} \label{e:compat I J}
\CD
\CA^{\otimes I}   @>>>   \End(\CM)\otimes \LS(Y^I)  \\
@VVV   @VVV   \\
\CA^{\otimes J}   @>>>   \End(\CM)\otimes \LS(Y^J)
\endCD
\end{equation}
for every $I\to J$, along with a system of higher compatibilities.

\begin{rem} \label{r:mon vs sym}
Let us note the difference between the notion of action of $\CA^{\otimes Y}$ on $\CM$, and that of action
of $\underset{Y}\int\, \CA$ on $\CM$, where $\underset{Y}\int\,-$ is understood \emph{within} $\DGCat^{\on{Mon}}$:

\medskip

The former contains the data of the homomorphisms \eqref{e:action I} for all $I$ with the compatibilities expressed
by diagrams \eqref{e:compat I J}. The latter contains just the data of homomorphism
$$\CA\to \End(\CM)\otimes \LS(Y),$$
i.e., the case $I=\{*\}$.

\end{rem}

\sssec{Example of $Y=S^1$}

Let us elaborate further on Remark \ref{r:mon vs sym} in the case $Y=S^1$. Note that by identifying
$$S^1=\{*\}\underset{\{*\}\sqcup \{*\}}\sqcup \{*\},$$
we obtain that $\LS(S^1)$ identifies with the category of vector spaces equipped with an automorphism.

\medskip

The monoidal category $\End(\CM)\otimes \LS(S^1)$ is that of endofunctors of $\CM$ equipped with an automorphism.
Hence, an action of $\underset{S^1}\int\, \CA$ (with $\int$ understood as taking place inside $\DGCat^{\on{Mon}}$)
on $\CM$ is an assignment
$$a\in \CA \mapsto (H_a,\on{mon}_a),$$
where $H_a$ is an endofunctor of $\CM$ equipped with an endomorphism $\on{mon}_a$. These data must be compatible in the sense that for
$m\in \CM$ the diagram

\smallskip

\begin{equation} \label{e:tensor compat}
\CD
H_{a_1}\circ H_{a_2}(m)  @>{H_{a_1}(\on{mon}_{a_2}|_{H_{a_2}(m)})}>> H_{a_1}\circ H_{a_2}(m)
@>{\on{mon}_{a_1}|_{H_{a_1}(H_{a_2}(m))}}>> H_{a_1}\circ H_{a_2}(m)  \\
@V{\sim}VV & & @V{\sim}VV  \\
H_{a_1\otimes a_2}(m)  & @>{\on{mon}_{a_1\otimes a_2}|_{H_{a_1\otimes a_2}(m)}}>>  & H_{a_1\otimes a_2}(m)
\endCD
\end{equation}
should commute, along with higher compatibilities. In the case when $\CM$ and $\CA$ are
derived categories of abelian categories and the action is t-exact, the commutativity of the diagrams
\eqref{e:tensor compat} for objects in the heart is sufficient to ensure the higher compatibilities.

\medskip

By contrast, an action of $\CA^{\otimes S^1}$ on $\CM$ requires additional compatibilities. The first of these is that under the identification
$$H_{a_1}\circ H_{a_2}(m) \simeq H_{a_1\otimes a_2}(m) \simeq H_{a_2\otimes a_1}(m) \simeq H_{a_2}\circ H_{a_1}(m),$$
the automorphism
$$\on{mon}_{a_1}|_{H_{a_1}(H_{a_2}(m))}$$
of the LHS should correspond to the automorphism
$$H_{a_2}(\on{mon}_{a_1}|_{H_{a_1}(m)})$$
of the RHS. Again, in the situation arising from abelian categories, this condition is sufficient for the higher compatibilities.

\ssec{Objects in and functors out of $\CA^{\otimes Y}$}

\sssec{}  \label{sss:functors out of integral}

Let us now take $\bC=\DGCat$. For a target DG category $\CC$, \propref{p:describe functors} describes what it takes
to construct a functor
$$\CS_Y:\CA^{\otimes Y}\to \CC.$$

Namely, such a datum is equivalent to a natural transformation between the functors
\begin{equation} \label{e:functor one functor}
I\mapsto \CA^{\otimes I}
\end{equation}
and
\begin{equation} \label{e:functor two functor}
I\mapsto \CC\otimes \LS(Y^I).
\end{equation}

\medskip

In other words, this is a data of functors
\begin{equation} \label{e:functor I}
\CS_I:\CA^{\otimes I}\to \CC \otimes \LS(Y^I), \quad I\in \on{fSet},
\end{equation}
that make the following diagrams commute
\begin{equation} \label{e:compat I J functor}
\CD
\CA^{\otimes I}   @>{\CS_I}>>   \CC \otimes \LS(Y^I)  \\
@VVV   @VVV   \\
\CA^{\otimes J}   @>{\CS_J}>>   \CC \otimes \LS(Y^J)
\endCD
\end{equation}
for every $I\to J$, along with a system of higher compatibilities.

\sssec{}

We now make the following observation (which is a special case of \propref{p:colimit of rigid cats} below
applied to the constant diagram $Y \to \DGCat^{\on{SymMon}}$).

\medskip

Recall the notion of a \emph{rigid} monoidal category, see \secref{sss:rigid}.

\begin{prop} \label{p:rigidity}
Let $\CA$ be a rigid monoidal DG category.  Then $\CA^{\otimes Y}$ is also rigid for any $Y \in \Spc$.
Moreover, if $\CA$ is compactly generated so is $\CA^{\otimes Y}$.
\end{prop}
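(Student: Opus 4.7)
The plan is to view $\CA^{\otimes Y}$ as a colimit in $\DGCat^{\on{SymMon}}$ and invoke the referenced general result on colimits of rigid (resp.\ compactly generated) symmetric monoidal DG categories. By the very definition of the integral operation in \secref{ss:A power Y}, we have
\[
\CA^{\otimes Y} \;\simeq\; \underset{Y}{\on{colim}}\, \CA,
\]
where the colimit is taken in $\DGCat^{\on{SymMon}}$ along the constant diagram with value $\CA$. This immediately identifies the proposition as a special case of \propref{p:colimit of rigid cats}, provided we can feed that proposition with rigid/compactly generated input.

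First I would verify the hypotheses at each vertex of the diagram: since the diagram is constant with value $\CA$, there is literally only one input to check, namely that $\CA$ is rigid (given) and, in the second clause, compactly generated (given). No conditions on transition maps are needed because all transition maps are identities.

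Next I would simply cite \propref{p:colimit of rigid cats} to conclude that the colimit $\underset{Y}{\on{colim}}\, \CA$ in $\DGCat^{\on{SymMon}}$ is rigid, and, under the compact generation hypothesis on $\CA$, that it is compactly generated. Combining with the identification above yields both assertions of the proposition.

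The main obstacle is thus not in this corollary at all, but is packaged inside \propref{p:colimit of rigid cats}: one has to know that rigidity is preserved under colimits in $\DGCat^{\on{SymMon}}$, which uses the description of such colimits in terms of \lemref{l:lim colim} together with the fact that, in the constant-diagram case, the colimit can be computed via a sifted colimit after reducing to $\on{fSet}$-indexed pieces $\CA^{\otimes I}$ (via \thmref{t:integral}); one then uses that finite tensor powers of a rigid (resp.\ compactly generated) symmetric monoidal DG category inherit the same property, and that the structural results on compact and dualizable objects (in the sense of \secref{sss:rigid}) are compatible with the requisite sifted colimits. Since all of this is presumably established in \propref{p:colimit of rigid cats}, the proof of \propref{p:rigidity} itself reduces to a one-line invocation.
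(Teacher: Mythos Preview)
Your proposal is correct and matches the paper's approach exactly: the paper also deduces \propref{p:rigidity} as the special case of \propref{p:colimit of rigid cats} applied to the constant diagram $Y\to \DGCat^{\on{SymMon}}$ with value $\CA$. The only minor quibble is phrasing: \propref{p:colimit of rigid cats} does impose a hypothesis on transition maps (they must admit continuous right adjoints), but since the transition maps in a constant diagram are identities this is trivially satisfied, as you implicitly note.
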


%
%
%
%
%
%
%
%
%
%
%

Applying \cite[Chapter 1, Sect. 9.2.1]{GR1}, we obtain:

\begin{cor}  \label{c:rigidity}
Assume that $\CA$ is rigid. Then the functors
$$\CA^{\otimes Y}\otimes \CA^{\otimes Y}\overset{\on{mult}}\longrightarrow \CA^{\otimes Y} \overset{\CHom(\one_{\CA^{\otimes Y}},-)}\longrightarrow \Vect$$
and
$$\Vect \overset{\one_{\CA^{\otimes Y}}}\longrightarrow \CA^{\otimes Y}\overset{\on{mult}^R}\longrightarrow \CA^{\otimes Y}\otimes \CA^{\otimes Y},$$
where the first functor in the first line is given by the monoidal operation, and the second functor in the
second line is its right adjoint, define an identification
$$(\CA^{\otimes Y})^\vee\simeq \CA^{\otimes Y}.$$
\end{cor}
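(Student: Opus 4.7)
The plan is to deduce the corollary as a direct application of \propref{p:rigidity} together with the general self-duality statement for rigid monoidal DG categories from \cite[Chapter 1, Sect. 9.2.1]{GR1}. Concretely, the cited general fact asserts that whenever $\CR$ is a rigid monoidal DG category, the two functors
$$\CR\otimes \CR\overset{\on{mult}}\longrightarrow \CR\overset{\CHom_\CR(\one_\CR,-)}\longrightarrow \Vect
\quad\text{and}\quad
\Vect\overset{\one_\CR}\longrightarrow \CR\overset{\on{mult}^R}\longrightarrow \CR\otimes \CR$$
serve as the counit and unit of a self-duality $\CR^\vee\simeq \CR$ in $\DGCat$.

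The first step is to note that, by \propref{p:rigidity}, the DG category $\CA^{\otimes Y}$ is rigid whenever $\CA$ is rigid. This is the only non-formal input; once rigidity is available, the corollary reduces to invoking the general self-duality statement recalled above with $\CR=\CA^{\otimes Y}$. In particular, one does not need to unwind the integral description of $\CA^{\otimes Y}$ at all for this step — the rigidity of $\CA^{\otimes Y}$ is a black box coming from \propref{p:rigidity}.

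The second step is to verify that the specific unit/counit identified in the statement of the corollary really are the canonical ones associated to the self-duality. Here $\on{mult}^R$ exists and is continuous precisely because $\CA^{\otimes Y}$ is rigid (so $\on{mult}$ is the tensor product of morphisms in a rigid monoidal DG category, whose right adjoint is automatically continuous); similarly $\CHom_{\CA^{\otimes Y}}(\one_{\CA^{\otimes Y}},-)$ is continuous. Both are then seen to match the general recipe of \cite[Chapter 1, Sect. 9.2.1]{GR1}.

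There is essentially no obstacle: the entire corollary is a formal consequence, and the only real content lies in \propref{p:rigidity}, whose proof has already been reduced in the paper to \propref{p:colimit of rigid cats} applied to the constant $Y$-diagram with value $\CA$.
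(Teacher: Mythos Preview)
Your proposal is correct and follows exactly the same approach as the paper: the corollary is stated immediately after \propref{p:rigidity} with the phrase ``Applying \cite[Chapter 1, Sect. 9.2.1]{GR1}, we obtain,'' so the entire content is precisely to combine rigidity of $\CA^{\otimes Y}$ (from \propref{p:rigidity}) with the general self-duality of rigid monoidal DG categories from \cite{GR1}.
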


\sssec{}  \label{sss:objects in integral}

Taking $\CC=\Vect$, and applying \corref{c:rigidity}, we obtain that
the category
$$\on{Funct}(\CA^{\otimes Y},\Vect)\simeq (\CA^{\otimes Y})^\vee$$
is canonically equivalent to $\CA^{\otimes Y}$ itself via the map $r\mapsto\CHom_{\CA^{\otimes Y}}(\one_{\CA^{\otimes Y}},r\otimes -)$.

\medskip

Note that in the description of \secref{sss:functors out of integral}, for
$$\CS_{\on{univ}}\in \CA^{\otimes Y},$$
the corresponding functor
$$\CS_Y:\CA^{\otimes Y}\to \Vect$$
is given in terms of \eqref{e:functor I} by
the family of functors
\begin{equation} \label{e:SI}
\CS_I:\CA^{\otimes I}\to \on{LS}(Y^I)
\end{equation}
that are described as follows:

\medskip

The datum of \eqref{e:SI} is equivalent to the map from $\Maps(I,Y)\simeq Y^I$ to space of functors
\begin{equation} \label{e:SI adj}
\CA^{\otimes I}\to \Vect.
\end{equation}

For a point $\ul{y}\in Y^I\simeq \Maps(I,Y)$ and $r\in \CA^{\otimes I}$, denote by $r_{\ul{y}} $ the corresponding object of $\CA^{\otimes Y}$.
By unwinding the definitions, we obtain that \eqref{e:SI adj} sends
$$r\mapsto \CHom_{\CA^{\otimes Y}}(\one_{\CA^{\otimes Y}},r_{\ul{y}}\otimes \CS_{\on{univ}}).$$

\sssec{}  \label{sss:objects in integral vac}

In particular, under the correspondence of \secref{sss:objects in integral}, the object
$$\CS_\emptyset \in \on{Funct}(\CA^{\otimes \emptyset},\on{LS}(Y^{\emptyset}))\simeq \on{Funct}(\Vect,\Vect)\simeq \Vect$$
identifies with
$$\CHom(\one_{\CA^{\otimes Y}},\CS_{\on{univ}}).$$

\section{Excursions} \label{s:excurs}

The goal of this section is to give a (more explicit) description of the category $\CA^{\otimes Y}$ in the case
when $Y$ is a pointed connected space, and, more substantially, of the endomorphism algebra of its unit object.
The latter will be described in terms of what we will call, following V.~Lafforgue, ``excursion operators".

\ssec{Description of $\CA^{\otimes Y}$ via the fundamental group}

\sssec{}

Let $\BE_1(\Spc)$ denote the category of monoid objects in $\Spc$
(a.k.a. $\BE_1$-objects in $\Spc$ with respect to the Cartesian monoidal structure).
I.e., these are objects of $\Spc$ equipped with a binary operation and a unit that satisfy the axioms
of monoid up to coherent homotopy.

\medskip

We have an adjoint pair (see \cite[Example 3.1.3.6, Lemma 3.2.2.6 and Prop. 3.2.3.1]{Lu2})
\begin{equation} \label{e:free E1}
\free_{\BE_1}:\Spc\rightleftarrows \BE_1(\Spc):\oblv_{\BE_1},
\end{equation}
where $\oblv_{\BE_1}$ preserves sifted colimits and is conservative. 
Explicitly, $$\free_{\BE_1}(Y)=\underset{n}\sqcup\, Y^n.$$

\sssec{}

Recall also that the category $\BE_1(\Spc)$ is connected by a pair of mutually adjoint functors
to the category of pointed spaces
$$\on{Spc}_{\on{Ptd}}:=\Spc_{\{*\}/},$$
$$B:\BE_1(\Spc)\rightleftarrows \on{Spc}_{\on{Ptd}}:\Omega.$$

This adjunction factors as
$$\BE_1(\Spc)\rightleftarrows
\on{Spc}_{\on{cnctd,Ptd}} \rightleftarrows \on{Spc}_{\on{Ptd}},$$
where $\on{Spc}_{\on{cnctd,Ptd}}\subset \on{Spc}_{\on{Ptd}}$ is the full subcategory of connected pointed spaces.

\medskip

The functor $\Omega|_{\on{Spc}_{\on{cnctd,Ptd}}}$ is fully faithful, and its essential image consists of
group-like objects in $\BE_1(\Spc)$, i.e., those objects $Y$, for which $\pi_0(Y)$, equipped with
a monoid structure induced by that on $Y$, is actually a group (see \cite[Theorem 5.2.6.10]{Lu2}
for an $\infty$-categorical account of this basic fact, which is originally due to Stasheff \cite{St}).

\sssec{}

Let $\on{FFM}$ denote the \emph{full} subcategory of $\BE_1(\Spc)$ equal to the essential
image of the functor $\free_{\BE_1}$ applied to the full subcategory
$$\on{fSet}\subset \Spc.$$

Note that the category $\on{FFM}$ is discrete (a.k.a. ordinary). Its objects are the usual
free finitely generated monoids, and morphisms are morphisms between such.

\sssec{}

The following assertion is probably well-known, but we will supply a proof for completeness:

\begin{prop} \label{p:express connected}
For $Y\in \on{Spc}_{\on{cnctd,Ptd}}$, the slice category $\on{FFM}_{/\Omega(Y,y)}$ is sifted,
and the natural map
\begin{equation} \label{e:spaces as colim}
\underset{H\in \on{FFM}_{/\Omega(Y,y)}}{\on{colim}}\, B(H)\to Y
\end{equation}
is an isomorphism, where the colimit is taken in the category of pointed spaces.
\end{prop}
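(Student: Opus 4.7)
The plan is to proceed in three steps: verify siftedness, reduce to an equivalence inside $\BE_1(\Spc)$ via the $B \dashv \Omega$ adjunction, and then appeal to a density principle for $\BE_1$-algebras.

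For siftedness of $\on{FFM}_{/\Omega(Y,y)}$: since $\free_{\BE_1}$ is a left adjoint, $\free_{\BE_1}(I) \sqcup \free_{\BE_1}(J) \simeq \free_{\BE_1}(I \sqcup J)$ in $\BE_1(\Spc)$, so $\on{FFM}$ is closed under finite coproducts in $\BE_1(\Spc)$. These coproducts lift to the slice: given maps $\free_{\BE_1}(I_j) \to \Omega(Y,y)$ corresponding via the free-forgetful adjunction to maps of spaces $I_j \to \oblv_{\BE_1}(\Omega(Y,y))$, their amalgamation yields a map $\free_{\BE_1}(I_1 \sqcup I_2) \to \Omega(Y,y)$ which realises the coproduct in $\on{FFM}_{/\Omega(Y,y)}$. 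Siftedness then follows from \lemref{l:coprod sifted}.

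For the isomorphism, I first reduce via the adjunction $B \dashv \Omega$. The functor $B$, as a left adjoint, preserves colimits; moreover, since $(Y,y)$ is connected pointed, $\Omega(Y,y)$ is group-like and lies in the essential image of the fully faithful functor $\Omega|_{\on{Spc}_{\on{cnctd,Ptd}}}$, so the counit $B \Omega(Y,y) \to (Y,y)$ is an equivalence. It thus suffices to exhibit the companion equivalence
\begin{equation*}
\underset{H \in \on{FFM}_{/\Omega(Y,y)}}{\on{colim}}\, H \;\xrightarrow{\ \sim\ }\; \Omega(Y,y)
\end{equation*}
in $\BE_1(\Spc)$.

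The latter is a density assertion: for every $M \in \BE_1(\Spc)$, the canonical map $\underset{\on{FFM}_{/M}}{\on{colim}}\, H \to M$ in $\BE_1(\Spc)$ is an equivalence. I would prove it using the bar resolution $M \simeq |(\free_{\BE_1}\circ \oblv_{\BE_1})^{\bullet+1}(M)|$, whose levels are free $\BE_1$-algebras on spaces; writing each such space as the canonical sifted colimit of finite sets from its slice in $\on{fSet}$ (as in \secref{sss:LKE fin}) and using that $\free_{\BE_1}$ preserves colimits, one obtains $M$ as a sifted colimit of objects of $\on{FFM}$. The main obstacle is identifying this double colimit with the canonical slice diagram $\on{FFM}_{/M}$ via a cofinality argument; I would handle it by working at the level of underlying spaces, using that $\oblv_{\BE_1}$ is conservative and preserves sifted colimits (part of \eqref{e:free E1}), so that it suffices to verify the resulting equivalence after $\oblv_{\BE_1}$, which reduces to the density of $\on{fSet}$ in $\Spc$ combined with the description of maps $\free_{\BE_1}(I) \to M$ as maps of spaces $I \to \oblv_{\BE_1}(M)$.
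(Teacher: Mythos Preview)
Your siftedness argument and the reduction to $\BE_1(\Spc)$ via $B$ are correct and match the paper exactly (the paper packages this as \propref{p:monoid as colim}). The bar-resolution paragraph is an unnecessary detour, as you seem to realize yourself when you switch to the $\oblv_{\BE_1}$ argument.

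The final step, however, has a genuine gap. After applying $\oblv_{\BE_1}$ you are looking at the colimit
\[
\underset{H\in \on{FFM}_{/M}}{\on{colim}}\, \oblv_{\BE_1}(H)\;\longrightarrow\;\oblv_{\BE_1}(M)
\]
in $\Spc$, and you assert this ``reduces to the density of $\on{fSet}$ in $\Spc$.'' But it does not: the index category is $\on{FFM}_{/M}$, whose morphisms are arbitrary monoid maps between free monoids (not maps of generating sets), and the terms are $\oblv_{\BE_1}(\free_{\BE_1}(I))=\bigsqcup_n I^n$, not $I$. Neither the indexing category nor the diagram matches the density diagram $\on{fSet}_{/\oblv_{\BE_1}(M)}\to\Spc$, $I\mapsto I$. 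The adjunction you invoke identifies the \emph{objects} of $\on{FFM}_{/M}$ with those of $\on{fSet}_{/\oblv_{\BE_1}(M)}$, but not the morphisms, so no cofinality is available for free.

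The paper closes this gap by reorganizing the same ingredients. Rather than applying $\oblv_{\BE_1}$ and then trying to match the $\on{fSet}$-density diagram, it applies the restricted Yoneda embedding $j:\BE_1(\Spc)\to\on{PShv}(\on{FFM})$. One shows $j$ is conservative and preserves sifted colimits precisely by your method: compose with $\free_{\BE_1}^*:\on{PShv}(\on{FFM})\to\on{PShv}(\on{fSet})$ and use that $\oblv_{\BE_1}$ is conservative and sifted-colimit-preserving together with \lemref{l:Y2} (density of $\on{fSet}$ in $\Spc$). Once $j$ has these properties, the map $\underset{H\in\on{FFM}_{/M}}{\on{colim}}\, j(H)\to j(M)$ in $\on{PShv}(\on{FFM})$ is an isomorphism by the co-Yoneda lemma, since $\on{FFM}_{/M}$ is exactly the category of elements of $j(M)$. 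So your ingredients are right; the point is to test against $\on{PShv}(\on{FFM})$ rather than $\Spc$, so that the final step becomes a tautology rather than an unproved cofinality claim.
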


As an immediate corollary we obtain:
\begin{cor} \label{c:express connected}
For $Y\in \on{Spc}_{\on{cnctd,Ptd}}$, we have a canonical identification
$$\CA^{\otimes Y}\simeq \underset{H\in \on{FFM}_{/\Omega(Y,y)}}{\on{colim}}\, \CA^{\otimes B(H)}.$$
\end{cor}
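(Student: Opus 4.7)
The plan is to simply apply the colimit-preserving functor $Y \mapsto \CA^{\otimes Y}$ from $\Spc$ to $\DGCat^{\on{SymMon}}$ to the colimit presentation of $Y$ provided by \propref{p:express connected}.

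First, I would recall from \secref{sss:LKE fin} that the functor
$$\Spc \to \DGCat^{\on{SymMon}}, \quad Y \mapsto \CA^{\otimes Y}$$
is characterized as the unique colimit-preserving extension of $\{*\} \mapsto \CA$ along $\{*\} \hookrightarrow \Spc$. In particular, it commutes with arbitrary colimits indexed by objects of $\Spc$, so it suffices to exhibit $Y$ as the claimed colimit in $\Spc$ itself.

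The (very minor) wrinkle is that \propref{p:express connected} presents $Y$ as a colimit in the category of pointed spaces $\Spc_{\on{Ptd}} = \Spc_{\{*\}/}$, whereas what we need for the application of $\CA^{\otimes(-)}$ is a colimit in unpointed $\Spc$. To bridge this gap, I would invoke the fact that $\on{FFM}_{/\Omega(Y,y)}$ is sifted (also supplied by \propref{p:express connected}) and hence connected; the forgetful functor $\Spc_{\on{Ptd}} \to \Spc$, being the projection from an undercategory, preserves connected colimits, so the isomorphism \eqref{e:spaces as colim} holds in $\Spc$ as well.

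Applying $\CA^{\otimes (-)}$ to this isomorphism yields the desired identification. The only conceptual step is the pointed-vs-unpointed colimit comparison, which is handled by siftedness; everything else is a direct invocation of the universal property of $\CA^{\otimes (-)}$, so I do not anticipate a substantive obstacle.
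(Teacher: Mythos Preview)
Your proposal is correct and follows essentially the same route as the paper's proof: apply the colimit-preserving functor $Y\mapsto \CA^{\otimes Y}$ to the presentation of \propref{p:express connected}, after verifying that the colimit there (taken in $\Spc_{\on{Ptd}}$) agrees with the colimit in $\Spc$. One small correction: the forgetful functor $\Spc_{\on{Ptd}}\to \Spc$ preserves colimits indexed by \emph{weakly contractible} categories, not merely connected ones (cofinality of $I\hookrightarrow I^{\triangleleft}$ requires $I$ contractible); fortunately sifted categories are contractible (\cite[Prop.~5.5.8.7]{Lu1}), which is exactly what the paper invokes, so your argument goes through with this adjustment.
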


\begin{proof}
Since the functor
$$Y\mapsto \CA^{\otimes Y}$$
preserves colimits, we only need to show that the isomorphism \eqref{e:spaces as colim} remains valid if the colimit is understood
as taking place in $\Spc$ rather than $\Spc_{\on{Ptd}}$. However, this is always the case any time the index category
is contractible, which in our case it is: indeed, it is sifted, and any sifted category is contractible (see \cite[Prop. 5.5.8.7]{Lu1}).
\end{proof}

In \secref{ss:endo} we will use \corref{c:express connected} to give an explicit description of
the algebra $\End_{\CA^{\otimes Y}}(\one_{\CA^{\otimes Y}})$.

\ssec{Proof of \propref{p:express connected}}

\sssec{}

We will prove the following:

\begin{prop}  \label{p:monoid as colim}
For every $H_0\in \BE_1(\Spc)$, the slice category
$\on{FFM}_{/H_0}$ is sifted, and the map
\begin{equation} \label{e:monoid as colim}
\underset{H\in \on{FFM}_{/H_0}}{\on{colim}}\, H\to H_0
\end{equation}
is an isomorphism.
\end{prop}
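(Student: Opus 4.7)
For siftedness of $\on{FFM}_{/H_0}$, I would apply \corref{c:over sifted} to the inclusion $\iota: \on{FFM} \hookrightarrow \BE_1(\Spc)$. The category $\on{FFM}$ has finite coproducts, since $\on{free}_{\BE_1}(I) \sqcup \on{free}_{\BE_1}(J) \simeq \on{free}_{\BE_1}(I \sqcup J)$ (with $\on{free}_{\BE_1}$ being a left adjoint), and $\iota$ preserves them for the same reason. Hence \corref{c:over sifted} yields the claim.

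\medskip

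For the equivalence \eqref{e:monoid as colim}, my plan is to identify $\BE_1(\Spc)$ with the sifted cocompletion $\on{PSh}_\Sigma(\on{FFM})$ of $\on{FFM}$, via \cite[Prop. 5.5.8.22]{Lu1}. Under this identification, every object is tautologically the colimit of the canonical diagram $\on{FFM}_{/H_0} \to \on{FFM}$ by the universal property of the sifted cocompletion, which is precisely the content of \eqref{e:monoid as colim}. Applying the criterion requires verifying that (i) every object of $\on{FFM}$ is compact projective in $\BE_1(\Spc)$, and (ii) $\on{FFM}$ generates $\BE_1(\Spc)$ under sifted colimits. Point (i) follows from the adjunction formula $\Maps_{\BE_1(\Spc)}(\on{free}_{\BE_1}(I), H) \simeq \on{oblv}_{\BE_1}(H)^I$: the functor $\on{oblv}_{\BE_1}$ preserves sifted colimits by \cite[Prop. 3.2.3.1]{Lu2}, and finite products in $\Spc$ commute with sifted colimits. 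For (ii), I would invoke monadicity of the adjunction \eqref{e:free E1}: the standard bar resolution writes $H_0$ as the simplicial (hence sifted) colimit $|\on{free}_{\BE_1}(T^\bullet\on{oblv}_{\BE_1}(H_0))|$ with $T := \on{oblv}_{\BE_1}\circ \on{free}_{\BE_1}$, presenting $H_0$ as a sifted colimit of free $\BE_1$-monoids on arbitrary spaces; writing each such space as a sifted colimit of finite sets (since $\Spc$ is the sifted cocompletion of $\on{fSet}$) and using that $\on{free}_{\BE_1}$ preserves sifted colimits as a left adjoint completes the generation argument.

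\medskip

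The principal technical obstacle is establishing compact projectivity of the free $\BE_1$-monoids rigorously, which depends on the standard but delicate fact that finite products of spaces commute with sifted colimits. Once this is in hand, both assertions of the proposition follow from formal properties of sifted cocompletion.
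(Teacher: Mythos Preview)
Your proof is correct, and for siftedness it is identical to the paper's (both invoke \corref{c:over sifted} after observing that $\free_{\BE_1}$, being a left adjoint, sends $I\sqcup J$ to the coproduct in $\BE_1(\Spc)$). For the colimit statement, however, the paper takes a lighter route: rather than establishing the full equivalence $\BE_1(\Spc)\simeq\on{PSh}_\Sigma(\on{FFM})$, it only shows that the restricted Yoneda embedding $j:\BE_1(\Spc)\to\on{PShv}(\on{FFM})$ is conservative and preserves sifted colimits (\lemref{l:Y1}(b)), and then invokes the co-Yoneda lemma in $\on{PShv}(\on{FFM})$ to conclude. Conservativity is nearly free (it follows from conservativity of $\oblv_{\BE_1}$), so the paper avoids your generation step entirely; the sifted-colimit preservation of $j$ is essentially your compact-projectivity verification, proved by factoring $\free_{\BE_1}^*\circ j\simeq j_{\Spc}\circ\oblv_{\BE_1}$ and reducing to the analogous statement for $\Spc$ over $\on{fSet}$. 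Your approach buys the stronger structural identification of $\BE_1(\Spc)$ as a projectively generated category (a Lawvere-theory style description), at the cost of the extra bar-resolution argument; the paper's approach is more economical for the purpose at hand.
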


\propref{p:monoid as colim} implies \propref{p:express connected}: take
$H_0=\Omega(Y,y)$ for $Y\in \on{Spc}_{\on{cnctd,Ptd}}$, and apply the functor $B$ to \eqref{e:monoid as colim}.

\begin{rem}
Instead of monoids in \propref{p:monoid as colim}, we can just as well consider semi-groups; i.e. $\BE_1$ spaces
which are not required to have a unit element.  Indeed, by \cite[Corollary 5.4.3.6]{Lu2}, the forgetful functor
from grouplike $\BE_1$-algebras to $\BE_1$-semi-groups is fully faithful.  A slight adavantage of this is that
there are fewer maps between free semi-groups and therefore we obtain a slightly smaller indexing category.
\end{rem}

\sssec{}

The rest of this subsection is devoted to the proof of \propref{p:monoid as colim}. We claim:

\begin{lem} \label{l:Y1} \hfill

\smallskip

\noindent{\em(a)} The category $\on{FFM}$ has finite coproducts and the functor
$\on{FFM}\to \BE_1(\Spc)$ preserves finite coproducts.

\smallskip

\noindent{\em(b)} The Yoneda embedding $j:\BE_1(\Spc)\to \on{PShv}(\on{FFM})$
is conservative and preserves sifted colimits.
\end{lem}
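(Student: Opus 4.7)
\smallskip

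The plan is to prove both parts by exploiting the adjunction \eqref{e:free E1} together with the explicit description of the mapping spaces out of a free finitely generated monoid.

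\smallskip

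For part (a), I would argue that $\free_{\BE_1}:\Spc \to \BE_1(\Spc)$, being a left adjoint, preserves all colimits; in particular, for finite sets $I,J$, it sends the coproduct $I \sqcup J$ (which is a finite coproduct in $\on{fSet} \subset \Spc$) to a coproduct in $\BE_1(\Spc)$. Since every object of $\on{FFM}$ is by definition of the form $\free_{\BE_1}(I)$ for $I \in \on{fSet}$, this shows that $\on{FFM}$ admits finite coproducts, computed by
$$\free_{\BE_1}(I) \sqcup \free_{\BE_1}(J) \simeq \free_{\BE_1}(I \sqcup J),$$
and that the inclusion $\on{FFM} \hookrightarrow \BE_1(\Spc)$ preserves them. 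The initial object (empty coproduct) is $\free_{\BE_1}(\emptyset)$, i.e., the trivial monoid.

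\smallskip

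For part (b), the key identity comes from the adjunction \eqref{e:free E1}: for $I \in \on{fSet}$ and $H_0 \in \BE_1(\Spc)$,
$$\Maps_{\BE_1(\Spc)}(\free_{\BE_1}(I), H_0) \simeq \Maps_{\Spc}(I, \oblv_{\BE_1}(H_0)) \simeq \oblv_{\BE_1}(H_0)^I.$$
This computes the restricted Yoneda presheaf $j(H_0) \in \on{PShv}(\on{FFM})$ levelwise in terms of the underlying space $\oblv_{\BE_1}(H_0)$. Conservativity of $j$ now follows by testing on the generator $\free_{\BE_1}(\{*\}) \in \on{FFM}$: the map $j(f)$ being an equivalence forces $\oblv_{\BE_1}(f)$ to be an equivalence, and $\oblv_{\BE_1}$ is conservative by the property of \eqref{e:free E1} recalled in the text.

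\smallskip

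For preservation of sifted colimits, I would note that for each fixed $H = \free_{\BE_1}(I) \in \on{FFM}$ the functor $\Maps_{\BE_1(\Spc)}(H,-)$ factors as the composition
$$\BE_1(\Spc) \xrightarrow{\oblv_{\BE_1}} \Spc \xrightarrow{(-)^I} \Spc.$$
The first functor preserves sifted colimits by the property of \eqref{e:free E1}; the second preserves sifted colimits because $I$ is finite and sifted colimits in $\Spc$ commute with finite products (the defining property of siftedness, cf.\ \lemref{l:coprod sifted} and the surrounding discussion). Since equivalences and colimits in the presheaf category $\on{PShv}(\on{FFM})$ are computed pointwise on $\on{FFM}$, this gives the desired preservation of sifted colimits by $j$. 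I do not foresee a substantive obstacle here; the only point requiring a little care is the reduction of the problem on $\on{PShv}(\on{FFM})$ to a levelwise statement, which is formal.
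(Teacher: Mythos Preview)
Your proposal is correct and follows essentially the same approach as the paper. The only difference is organizational: the paper routes the argument through an auxiliary lemma (\lemref{l:Y2}) about the Yoneda embedding $\Spc\to \on{PShv}(\on{fSet})$ by precomposing with the pullback $\free_{\BE_1}^*:\on{PShv}(\on{FFM})\to \on{PShv}(\on{fSet})$, whereas you compute the components $\Maps_{\BE_1(\Spc)}(\free_{\BE_1}(I),-)\simeq (-)^I\circ \oblv_{\BE_1}$ directly and check conservativity and sifted-colimit preservation pointwise; the underlying content (the adjunction \eqref{e:free E1}, conservativity and sifted-colimit preservation of $\oblv_{\BE_1}$, and commutation of finite products with sifted colimits in $\Spc$) is identical.
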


In point (b) of the lemma, for a small category $\bC$ (in our case $\bC=\on{FFM}$),
we are using the notation $\on{PShv}(\bC)$ to denote all functors $\bC^{\on{op}}\to \on{Spc}$.

\sssec{}

Let us show how \lemref{l:Y1} implies \propref{p:monoid as colim}:

\medskip

By \lemref{l:Y1} and \corref{c:over sifted}, the category $\on{FFM}_{/H_0}$ is sifted.
Next, since $j$ is conservative it suffices to show that the map
$$j\left(\underset{H\in \on{FFM}_{/H_0}}{\on{colim}}\, H\right)\to j(H_0)$$
is an isomorphism.

\medskip

Since $j$ preserves sifted colimits, we have to show that the map
$$\underset{H\in \on{FFM}_{/H_0}}{\on{colim}}\, j(H)\to j(H_0)$$
is an isomorphism. But this follows from the Yoneda Lemma.

\qed[\propref{p:monoid as colim}]

\sssec{}

We now proceed to the proof of \lemref{l:Y1}. First, we state its analog for spaces:

\begin{lem} \label{l:Y2} The Yoneda embedding $j:\Spc\to \on{PShv}(\on{fSet})$
is conservative and preserves sifted colimits.
\end{lem}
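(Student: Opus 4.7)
The plan is to reduce both assertions to the defining properties of finite sets inside $\Spc$ and of sifted index categories.

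First I would unwind the definition of $j$: for $Y\in\Spc$ and $I\in\on{fSet}$ (viewed as a discrete space via the inclusion $\on{fSet}\hookrightarrow\Spc$), we have $j(Y)(I)=\Maps_{\Spc}(I,Y)\simeq Y^I$, the $|I|$-fold Cartesian power of $Y$ in $\Spc$.

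For conservativity, suppose $f:Y_1\to Y_2$ induces an equivalence $j(f):j(Y_1)\to j(Y_2)$ in $\on{PShv}(\on{fSet})$. Equivalences of presheaves are detected objectwise, so $f^I:Y_1^I\to Y_2^I$ is an equivalence for every $I\in\on{fSet}$. Evaluating at the one-point set $I=\{*\}$ gives $f^{\{*\}}\simeq f$, so $f$ itself is an equivalence.

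For preservation of sifted colimits, note that colimits in $\on{PShv}(\on{fSet})$ are computed pointwise, so it suffices to show that for each $I\in\on{fSet}$, the functor $Y\mapsto Y^I$ from $\Spc$ to $\Spc$ preserves sifted colimits. This is the core point, and essentially a restatement of the definition of ``sifted'': a small category $A$ is sifted precisely when the diagonal $A\to A\times A$ is cofinal, which (by Lurie, HTT 5.5.8.11) is equivalent to the statement that for any two functors $F_1,F_2:A\to\Spc$, the natural map
\[
\underset{a\in A}{\on{colim}}\,(F_1(a)\times F_2(a))\ \longrightarrow\ \Bigl(\underset{a\in A}{\on{colim}}\,F_1(a)\Bigr)\times\Bigl(\underset{a\in A}{\on{colim}}\,F_2(a)\Bigr)
\]
is an equivalence. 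Applied iteratively with $F_1=F_2=F$ (and for the case $I=\emptyset$, using that a sifted category is weakly contractible, so $\underset{A}{\on{colim}}\,\{*\}\simeq\{*\}$, cf.\ \cite[Prop.~5.5.8.7]{Lu1}), this yields $(\underset{a}{\on{colim}}\,F(a))^I\simeq\underset{a}{\on{colim}}\,F(a)^I$ for every finite set $I$, which is exactly what is needed.

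I do not expect a substantive obstacle: the whole statement is essentially a reformulation of the definition of sifted categories and of the fact that the one-point set generates $\Spc$ under colimits and detects equivalences. The only thing to be mildly careful about is the case $I=\emptyset$, which is where weak contractibility of sifted categories enters rather than the ``diagonal cofinal'' property.
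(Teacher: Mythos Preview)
Your proof is correct and follows essentially the same approach as the paper: conservativity via evaluation at $\{*\}$ (the paper phrases this as $j$ having a left inverse given by restriction along $\{*\}\hookrightarrow\on{fSet}$), and preservation of sifted colimits via the pointwise reduction to $\Maps_{\Spc}(I,-)\simeq(-)^I$ commuting with sifted colimits. The paper simply declares the latter ``standard'' (referring back to \secref{sss:f step 2}), whereas you spell out the argument from the definition of siftedness, including the $I=\emptyset$ case.
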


\begin{proof}

To prove that $j$ preserves sifted colimits, its suffices to show that for every $I\in \on{fSet}$ the map
$\Maps_{\Spc}(I,-)$ preserves sifted colimits. But this is standard (and has been used already in \secref{sss:f step 2}).

\medskip

The functor $j$ is conservative, since it has a left inverse
$$\on{PShv}(\on{fSet})\to \on{PShv}(\{*\})=\Spc,$$ induced by the inclusion $\{*\}\to \on{fSet}$.

\end{proof}

Let us now prove \lemref{l:Y1}:

\begin{proof}

Point (a) follows from the fact that $\free_{\BE_1}$ is a left adjoint. We now prove point (b).

\medskip

Consider the functor $\free_{\BE_1}:\on{fSet}\to \on{FFM}$, and the corresponding pullback functor
$$\free_{\BE_1}^*:\on{PShv}(\on{FFM})\to \on{PShv}(\on{fSet}).$$
The functor $\free_{\BE_1}^*$ preserves all colimits (tautologically), and is conservative, since $\free_{\BE_1}$
is essentially surjective.

\medskip

Hence, it suffices to show that the composition:
$$\free_{\BE_1}^*\circ j:\BE_1(\Spc)\to \on{PShv}(\on{fSet})$$
is conservative and preserves sifted colimits. Since $\free_{\BE_1}$ is the left adjoint of $\oblv_{\BE_1}$,
the above composition factors as
$$j\circ \oblv_{\BE_1}:\BE_1(\Spc)\to \Spc\to \on{PShv}(\on{fSet}).$$

Since $\oblv_{\BE_1}$ is conservative and preserves sifted colimits (see \cite[Lemma 3.2.2.6 and Prop. 3.2.3.1]{Lu2}), the assertion follows from
\lemref{l:Y2}.

\end{proof}

\ssec{Digression: affine functors}

\sssec{}

Let $F:\CC_0\to \CC$ be a map monoidal categories, which admits a continuous right adjoint
as a map of plain DG categories. The functor $F^R$ has a natural right-lax monoidal structure;
hence the object
$$F^R(\one_\CC)\in \CC_0$$
has a natural structure of associative algebra in $\CC^0$ (see \cite[Cor. 7.3.2.7]{Lu2}).

\medskip

Moreover, the functor $F^R$ naturally upgrades to a functor
\begin{equation} \label{e:affine functor}
(F^R)^{\on{enh}}:\CC\to F^R(\one_\CC)\mod(\CC_0).
\end{equation}

\begin{defn}
We shall say that $F$ is \emph{affine} if \eqref{e:affine functor} is an equivalence.
\end{defn}

For example, if $f:\CZ_1\to \CZ_2$ is a quasi-affine map between prestacks, the functor
$$f^*:\QCoh(\CZ_2)\to \QCoh(\CZ_1)$$
is affine, see \cite[Chapter 3, Prop. 3.3.3]{GR1}.

\sssec{}

Here is a handy criterion of affineness (which follows immediately from the Barr-Beck-Lurie theorem):

\begin{lem} \label{l:crit aff}
A functor $F$ is affine if and only if its right adjoint $F^R$ has the following properties:

\smallskip

\noindent{\em(i)} It is continuous;

 \smallskip

\noindent{\em(ii)} It is conservative;

 \smallskip

\noindent{\em(iii)} The right-lax compatibility of $F^R$ with the action of $\CC_0$ is strict.

\end{lem}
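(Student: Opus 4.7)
The plan is to apply the Barr--Beck--Lurie monadicity theorem to the adjunction $F\dashv F^R$ and then identify the resulting monad on $\CC_0$ with the monad of tensoring with the algebra $F^R(\one_\CC)$. The ``only if'' direction is essentially formal: if $(F^R)^{\on{enh}}$ is an equivalence, then $F^R$ factors as $\CC\overset{\sim}\to F^R(\one_\CC)\mmod(\CC_0)\to \CC_0$, where the second arrow is the forgetful functor. This forgetful functor is manifestly continuous and conservative (limits/colimits of modules are computed on the underlying object, and a morphism of modules is an equivalence iff it is on underlying objects), and it is strictly $\CC_0$-linear. Hence (i), (ii) and (iii) all hold for $F^R$.

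For the converse, assume (i), (ii) and (iii). Conditions (i) and (ii) are exactly the hypotheses of Barr--Beck--Lurie in the presentable setting, since preservation of $F^R$-split geometric realizations is automatic from continuity. Thus the canonical comparison functor
\[
\CC\to T\mmod(\CC_0),\qquad T:=F^R\circ F,
\]
is an equivalence. Next, I would unwind the right-lax $\CC_0$-linear structure on $F^R$ induced by adjunction from the strict $\CC_0$-linear structure on $F$: on $(c_0,c)\in\CC_0\times\CC$ it is the map
\[
c_0\otimes F^R(c)\;\longrightarrow\; F^RF(c_0\otimes F^R(c))\simeq F^R(F(c_0)\otimes FF^R(c))\;\longrightarrow\; F^R(F(c_0)\otimes c),
\]
built from the unit of adjunction, the monoidality of $F$, and the counit. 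Condition (iii) is the statement that this transformation is an equivalence; specialising to $c=\one_\CC$ produces a natural equivalence of endofunctors
\[
T(c_0)=F^RF(c_0)\;\simeq\; F^R(\one_\CC)\otimes c_0.
\]

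The main (and only nontrivial) step is to promote this natural equivalence to an isomorphism of \emph{monads}, where the target endofunctor $F^R(\one_\CC)\otimes(-)$ is equipped with the monad structure coming from the associative algebra structure on $F^R(\one_\CC)\in\CC_0$ induced by the right-lax monoidal structure on $F^R$. This is a compatibility check: the multiplication on the monad $T$ arises from the counit $F\circ F^R\to \on{id}_\CC$ inserted in the middle factor, while the algebra multiplication on $F^R(\one_\CC)$ arises from the same counit via the right-lax monoidal structure on $F^R$, and the identification provided by (iii) intertwines the two, together with the analogous check for the units. Granting this, we obtain
\[
T\mmod(\CC_0)\simeq F^R(\one_\CC)\mmod(\CC_0),
\]
which combined with the monadicity equivalence above shows that $(F^R)^{\on{enh}}$ is an equivalence, as desired. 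The bookkeeping of lax vs.\ strict structures in this final coherence step is the principal obstacle, but it is of the standard type handled e.g.\ by \cite[Cor.~7.3.2.7]{Lu2} together with the $\infty$-categorical monoidal Barr--Beck formalism.
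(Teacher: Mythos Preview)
Your proposal is correct and is exactly the approach the paper intends: the paper's proof consists of the single parenthetical remark that the lemma ``follows immediately from the Barr--Beck--Lurie theorem,'' and you have supplied precisely the standard unpacking of that remark (monadicity from (i)--(ii), then identification of the monad $F^R F$ with $F^R(\one_\CC)\otimes(-)$ using (iii)). One cosmetic point: since $\CC_0$ is only assumed monoidal, the specialization at $c=\one_\CC$ yields $T(c_0)\simeq c_0\otimes F^R(\one_\CC)$ rather than $F^R(\one_\CC)\otimes c_0$; this is harmless but worth writing consistently.
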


\sssec{}

Let $\CA$ be a symmetric monoidal category.

\begin{defn}
We shall say that $\CA$ \emph{has an affine diagonal} if the tensor
product functor
$$\CA\otimes \CA\to \CA$$
is affine.
\end{defn}

\medskip

For example (assuming that $\sfe$ contains $\BQ$), for an algebraic group $\sG$, the category
$$\Rep(\sG)\simeq \QCoh(\on{pt}/\sG)$$
has affine diagonal, because the morphism
$$\on{pt}/\sG\to \on{pt}/\sG\times \on{pt}/\sG$$
is affine (here we are using the equivalence \eqref{e:quasi-pass tensor prod}).

\begin{rem} \label{r:rigid vs affine diagonal}
It is easy to see that a symmetric monoidal category $\CA$ is rigid (see \secref{sss:rigid} for what this means)
if and only if it has an affine diagonal and its unit object is compact. This follows immediately from \lemref{l:crit aff} above.
\end{rem}

\sssec{}

We can use the characterization of rigid symmetric monoidal categories in \remref{r:rigid vs affine diagonal} to prove that rigid symmetric monoidal
categories are stable under colimits:

\begin{prop}\label{p:colimit of rigid cats}
Let $I \to \DGCat^{\on{SymMon}}$ be a diagram of symmetric monoidal DG categories such that for each $i\in I$,
the corresponding category $\CC_i$ is rigid and for each morphism $i\to j \in I$, the corresponding functor
$\CC_i \to \CC_j$ admits a continuous right adjoint.  Then the colimit $\underset{i\in I}{\on{colim}}\  \CC_i$
is rigid.  Moreover, if each $\CC_i$ is compactly generated, so is $\underset{i\in I}{\on{colim}}\  \CC_i$.
\end{prop}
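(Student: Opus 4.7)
The plan is to use the criterion from \remref{r:rigid vs affine diagonal}: $\CC:=\underset{i\in I}{\on{colim}}\ \CC_i$ is rigid precisely when $\one_\CC$ is compact and the multiplication $m:\CC\otimes\CC\to\CC$ is affine in the sense of \lemref{l:crit aff}, i.e.\ $m^R$ is continuous, conservative, and strictly compatible with the $(\CC\otimes\CC)$-action. Compactness of $\one_\CC$ is immediate from \lemref{l:lim colim}(a): each insertion $\on{ins}_i:\CC_i\to\CC$ admits a continuous right adjoint and so preserves compactness, whence $\one_\CC\simeq\on{ins}_i(\one_{\CC_i})$ is compact because $\one_{\CC_i}$ is compact by rigidity of $\CC_i$.

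For the affineness of $m$ I would reduce the general case to two separate subcases: finite tensor products and sifted colimits. Any colimit in $\DGCat^{\on{SymMon}}$ can be resolved via a bar construction as a sifted (in fact simplicial) colimit of finite coproducts --- which in $\DGCat^{\on{SymMon}}$ are tensor products --- of the original $\CC_i$, and the transition functors in this resolution inherit continuous right adjoints from those in the original diagram. The finite tensor product case is a direct check: if $\CC_1,\CC_2$ are rigid, then $\one_{\CC_1\otimes\CC_2}\simeq\one_{\CC_1}\boxtimes\one_{\CC_2}$ is compact (exterior tensor of compacts is compact), and the multiplication on $\CC_1\otimes\CC_2$ factors as a swap-of-middle-factors equivalence composed with the exterior tensor $m_1\boxtimes m_2$ of two affine functors, whose right adjoint $m_1^R\boxtimes m_2^R$ manifestly inherits continuity, conservativity and strict compatibility from its factors.

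For sifted $I$, the cofinality of the diagonal $I\to I\times I$ combined with the fact that $-\otimes-$ on $\DGCat$ preserves colimits in each variable yields $\CC\otimes\CC\simeq\underset{i\in I}{\on{colim}}\ \CC_i\otimes\CC_i$, and the multiplication $m$ is induced from the natural transformation $\{m_i\}_{i\in I}$. Applying \lemref{l:lim colim}(b) I would identify
$$\CC\simeq\underset{i\in I^{\on{op}}}{\on{lim}}\ \CC_i \quad\text{and}\quad \CC\otimes\CC\simeq\underset{i\in I^{\on{op}}}{\on{lim}}\ \CC_i\otimes\CC_i$$
via the right-adjoint transition functors; under these equivalences $m^R$ becomes the componentwise functor $(m_i^R)_{i\in I^{\on{op}}}$. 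Because the transition right adjoints in each limit are themselves continuous (\lemref{l:lim colim}(a)), colimits in the limit categories are computed levelwise, and so continuity, conservativity, and strict compatibility of $m^R$ all follow from the corresponding properties of each individual $m_i^R$.

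The compact generation claim will follow because, by \lemref{l:lim colim}(b), the family $\{\on{ins}_i^R\}_i$ is jointly conservative and each $\on{ins}_i$ preserves compactness (having a continuous right adjoint); hence $\bigcup_i\on{ins}_i(\CC_i^c)$ is a compact generating set for $\CC$. The main obstacle will be the verification of strict compatibility of $m^R$ with the $(\CC\otimes\CC)$-module structure in the sifted step: continuity and conservativity are essentially formal consequences of the levelwise formula, whereas strict compatibility requires tracking the monoidal coherence data carefully through the limit presentation.
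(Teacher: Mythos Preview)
Your outline is correct and uses the same coproduct/sifted decomposition as the paper; the compactness of $\one_\CC$ and the compact-generation claim are handled identically. The substantive difference is in the sifted step. You verify the three conditions of \lemref{l:crit aff} for $m^R$ directly via the limit presentation of \lemref{l:lim colim}(b), whereas the paper takes a more structural route: it uses a general colimit identity (\propref{p:colimit computation}) to rewrite $\CC\simeq\underset{i}{\on{colim}}\,(\CC\otimes\CC)\underset{\CC_i\otimes\CC_i}{\otimes}\CC_i$ in $\DGCat^{\on{SymMon}}_{(\CC\otimes\CC)/}$, observes that each term is affine over $\CC\otimes\CC$ (base change of the affine $m_i$), and then invokes that $A\mapsto A\mod(\CC\otimes\CC)$ is a fully faithful colimit-preserving functor $\on{ComAlg}(\CC\otimes\CC)\to\DGCat^{\on{SymMon}}_{(\CC\otimes\CC)/}$, so a colimit of affines is affine.

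Your direct approach can be made to work, but the ``main obstacle'' you flag needs a sharper resolution than tracking coherence through the limit: the evaluation functors $\on{ev}_i$ are only lax monoidal, so the levelwise identity $\on{ev}_i^{\CC\otimes\CC}\circ m^R\simeq m_i^R\circ\on{ev}_i^\CC$ does not by itself reduce strict $(\CC\otimes\CC)$-linearity of $m^R$ to strict $(\CC_i\otimes\CC_i)$-linearity of $m_i^R$. A clean fix is to argue from the colimit side instead: since each $\CC_i\otimes\CC_i$ is rigid and $m^R$ is continuous, $m^R$ is automatically strictly $(\CC_i\otimes\CC_i)$-linear by \cite[Chapter 1, Lemma 9.3.6]{GR1}; this gives the compatibility for every $a$ in the image of $\on{ins}_i^{\CC\otimes\CC}$, and since such objects generate $\CC\otimes\CC$ under colimits while both sides of the comparison are continuous in $a$, the claim follows. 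The paper's argument sidesteps this manoeuvre by never leaving the world of $(\CC\otimes\CC)$-algebras.
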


In the proof of \propref{p:colimit of rigid cats}, we will use the following basic fact about colimits:

\begin{prop}\label{p:colimit computation}
Let $\bC$ be a category that admits all colimits.  Suppose that we have a diagram $F: I \times [1] \to \bC$,
where $I$ is a contractible category:
$$
\xymatrix{
\ldots\ar[r] & X_i \ar[r]\ar[d] & X_j\ar[r]\ar[d] & \ldots \\
\ldots\ar[r] & Y_i \ar[r] & Y_j \ar[r] & \ldots
}
$$
Let $X = \on{colim} X_i$. Then the natural map
$$\underset{i\in I}{\on{colim}}\, Y_i \to \underset{i\in I}{\on{colim}}\, (X \underset{X_i}{\sqcup} Y_i)$$
is an isomorphism.
\end{prop}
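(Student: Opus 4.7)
The plan is to compute the right-hand side by interchanging the two colimits involved---the outer colimit over $I$ and the inner pushout defining each $X \sqcup_{X_i} Y_i$---and then to invoke contractibility of $I$ to identify the result with $\on{colim}_i Y_i$.

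First, I would present each pushout $X \sqcup_{X_i} Y_i$ as a colimit over the span category $\Lambda = (\bullet \leftarrow \bullet \to \bullet)$ of the diagram $D_i = (X \leftarrow X_i \to Y_i)$. As $i$ varies, the $D_i$ assemble into a single functor $D \colon I \times \Lambda \to \bC$ whose restrictions to the three vertices of $\Lambda$ are, respectively, the constant $I$-diagram with value $X$, the diagram $X_\bullet$, and the diagram $Y_\bullet$. Applying the standard commutation-of-colimits (Fubini) formula, one obtains
$$\underset{i\in I}{\on{colim}}\,(X\sqcup_{X_i} Y_i) \;\simeq\; \underset{\Lambda}{\on{colim}}\,\underset{i\in I}{\on{colim}}\, D.$$
The inner colimit at the apex of $\Lambda$ is $\on{colim}_i X_i = X$ by definition, and at the right vertex it is $\on{colim}_i Y_i$.

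The only substantive point is the computation of the inner colimit at the left vertex, i.e.\ the colimit over $I$ of the constant diagram with value $X$. In a cocomplete $\infty$-category such a colimit is the copower $X \otimes |I|$ of $X$ with the classifying space of $I$; the hypothesis that $I$ is contractible then gives $|I| \simeq \ast$ and hence $\on{colim}_i X \simeq X$, with the induced map from the apex $\on{colim}_i X_i = X$ being the identity. The outer pushout therefore collapses to that of $X \xleftarrow{\on{id}} X \to \on{colim}_i Y_i$, which is just $\on{colim}_i Y_i$, and unwinding the structural maps confirms that under this identification the natural map from the statement is the identity. The main (and only genuine) obstacle is thus isolating where contractibility of $I$ is used---namely, to trivialize the copower $X\otimes|I|$---since without this hypothesis the conclusion would fail; the remaining steps are purely formal.
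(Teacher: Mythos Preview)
Your proof is correct, and it takes a somewhat different route from the paper's. The paper argues via cofinality and left Kan extension: it first identifies $\on{colim}_i Y_i$ with $\on{colim}_{I\times[1]} F$ using cofinality of $I\times\{1\}\hookrightarrow I\times[1]$, then left-Kan-extends $F$ along the map $I\times[1]\to{}^{\triangleleft}I$ to the left cone on $I$ (getting a diagram with $X$ at the cone point and the $Z_i=X\sqcup_{X_i}Y_i$ elsewhere), and finally uses that $I\hookrightarrow{}^{\triangleleft}I$ is cofinal when $I$ is contractible to conclude $\on{colim} F'\simeq\on{colim}_i Z_i$.

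Your argument via Fubini over $I\times\Lambda$ is more direct: it avoids the Kan extension step entirely and isolates the role of contractibility in a single, transparent place (the colimit of the constant diagram at $X$). The paper's approach, by contrast, packages the same content through cofinality of $I\hookrightarrow{}^{\triangleleft}I$; the two uses of contractibility are equivalent, but yours is arguably the more elementary phrasing. Both arguments ultimately rest on the same idea, and neither gains real generality over the other.
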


\begin{proof}

Since push-outs commute with colimits, we have
$$\underset{i\in I}{\on{colim}}\, (X \underset{X_i}{\sqcup} Y_i) \simeq
(\underset{i\in I}{\on{colim}}\, X) \underset{\underset{i\in I}{\on{colim}}\, X_i}\sqcup\, (\underset{i\in I}{\on{colim}}\, Y_i ).$$

Now the assertion follows from the fact that
$$\underset{i\in I}{\on{colim}}\, X\simeq X,$$
since $I$ is contractible, so
$$(\underset{i\in I}{\on{colim}}\, X) \underset{\underset{i\in I}{\on{colim}}\, X_i}\sqcup\, (\underset{i\in I}{\on{colim}}\, Y_i )\simeq
X \underset{X}\sqcup\, (\underset{i\in I}{\on{colim}}\, Y_i )\simeq (\underset{i\in I}{\on{colim}}\, Y_i ).$$

\end{proof}

%
%
%

\begin{proof}[Proof of \propref{p:colimit of rigid cats}]
Since all colimits decompose as coproducts and sifted colimits, we can treat each case separately.
Suppose $\CC_0$ and $\CC_1$ are rigid symmetric monoidal DG categories.
Clearly, $\CC_0 \otimes \CC_1$ (which is the coproduct in $\DGCat^{\on{SymMon}}$) is also rigid.
Moreover, by \cite[Chapter 1, Proposition 7.4.2]{GR1} it is compactly generated if $\CC_0$ and $\CC_1$ are.

\medskip

Now, suppose that $I$ is sifted.  In this case, the underlying DG category of $\CC: = \underset{i\in I}{\on{colim}}\  \CC_i$
is the corresponding colimit in $\DGCat$.  In particular, for every $i\in I$, the functor
$$\on{ins}_i:\CC_i \to \CC$$ admits a continuous right adjoint and therefore preserves compact objects.  It follows,
that $\one_{\CC} \simeq \on{ins}_i(\one_{\CC_i})$ (for any $i\in I$) is compact.  Moreover,
if each $\CC_i$ is compactly generated, so is $\CC$.

\medskip

It remains to show that $\CC$ has affine diagonal.  Note that since $I$ is sifted, we have
$$ \underset{i\in I}{\on{colim}}\ (\CC_i \otimes \CC_i) \simeq \CC \otimes \CC.$$
Therefore, since pushouts in $\DGCat^{\on{SymMon}}$ are given by tensor products, we have by \propref{p:colimit computation},
$$ \CC \simeq \underset{i \in I}{\on{colim}}\ (\CC \otimes \CC) \underset{\CC_i \otimes \CC_i}{\otimes} \CC_i .$$

\medskip

For each $i\in I$, the tensor product functor $\CC_i \otimes \CC_i \to \CC_i$ is affine by assumption.  Therefore,
so is the functor
$$ \CC \otimes \CC \to (\CC \otimes \CC) \underset{\CC_i \otimes \CC_i}{\otimes} \CC_i .$$
The proposition now follows from the following general assertion.
\end{proof}

\begin{prop} \label{p:over C}
Let $\CC$ be a symmetric monoidal DG category.  The functor
$$ \on{ComAlg}(\CC) \to \DGCat^{\on{SymMon}}_{\CC/} \simeq \CC\on{-ComAlg}(\DGCat) $$
given by $A \mapsto A\on{-mod}(\CC)$ is fully faithful and commutes with colimits.
\end{prop}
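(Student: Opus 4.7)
The plan is to exhibit $L: A\mapsto A\on{-mod}(\CC)$ as a left adjoint, verify that the unit of the resulting adjunction is an isomorphism (giving full faithfulness), and conclude preservation of colimits formally.

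To construct the right adjoint, given a $\CC$-linear symmetric monoidal DG category $\CD$ with structure functor $\eta_\CD: \CC\to \CD$, I note that $\eta_\CD$ is a colimit-preserving functor between presentable stable $\infty$-categories and therefore admits a right adjoint $\eta_\CD^R$ by the adjoint functor theorem. Since $\eta_\CD$ is symmetric monoidal, $\eta_\CD^R$ inherits a right-lax symmetric monoidal structure, and so $R(\CD):=\eta_\CD^R(\one_\CD)$ carries a canonical structure of commutative algebra in $\CC$. To establish the adjunction $L\dashv R$, I would unwind the universal property of $A\on{-mod}(\CC)$: a morphism $F: A\on{-mod}(\CC)\to \CD$ in $\CC\on{-ComAlg}(\DGCat)$ must send the unit $A$ to $\one_\CD$, and together with $\CC$-linearity this datum is captured by the induced commutative algebra map $\eta_\CD(A)\to \one_\CD$ in $\CD$ (coming from the multiplication $\eta(A)=A\otimes A\to A=\one$ internal to $A\on{-mod}(\CC)$), which by the $(\eta_\CD\dashv \eta_\CD^R)$ adjunction corresponds to an algebra map $A\to R(\CD)$ in $\on{ComAlg}(\CC)$.

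For full faithfulness, I would compute the unit of the adjunction directly. When $\CD=L(A)$, the structure functor $\CC\to A\on{-mod}(\CC)$ is $X\mapsto A\otimes X$, whose right adjoint is the forgetful functor $\oblv_A$; hence $R(L(A))=\oblv_A(A)=A$ as an object of $\CC$. The right-lax symmetric monoidal structure on $\oblv_A$ sends $M,N\in A\on{-mod}(\CC)$ to the canonical map $M\otimes_\CC N\to M\otimes_A N$, and evaluated on $M=N=A$ this recovers exactly the multiplication $A\otimes A\to A$, so the induced algebra structure on $R(L(A))$ agrees with the original one on $A$. Thus the unit $A\to R(L(A))$ is an isomorphism of commutative algebras, $L$ is fully faithful, and as a left adjoint $L$ automatically commutes with all colimits.

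The subtlest step is the appeal to the universal property in the second paragraph: the assertion that a $\CC$-linear symmetric monoidal functor out of $A\on{-mod}(\CC)$ is freely determined by an algebra map $\eta(A)\to \one$ in the target. This is a non-trivial point, but it follows from the general theory of modules over commutative algebras in symmetric monoidal $\infty$-categories, as developed in \cite[Sect.~4.5]{Lu2}, together with the standard identification of $A\on{-mod}(\CC)$ as an algebra object of $\CC\on{-mod}(\DGCat)$. Once this universal property is in hand, the rest of the argument is formal bookkeeping.
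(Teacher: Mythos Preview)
Your proposal is correct and follows essentially the same approach as the paper: exhibit a right adjoint and verify that the unit is an isomorphism. The paper describes the right adjoint as $\CD \mapsto \ul\Hom_\CD(\one_\CD,\one_\CD)$ (the $\CC$-relative internal endomorphism algebra of the unit), which coincides with your $\eta_\CD^R(\one_\CD)$ since $\Hom_\CC(c,\eta_\CD^R(\one_\CD))\simeq \Hom_\CD(\one_\CD\otimes c,\one_\CD)\simeq \Hom_\CC(c,\ul\Hom_\CD(\one_\CD,\one_\CD))$; the unit computation $R(L(A))\simeq A$ is then identical in both treatments.
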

\begin{proof}
The functor admits a right adjoint given by
$$(\CC\to \CalD)\mapsto \ul\Hom_{\CalD}(\one_\CalD,\one_\CalD),$$
where $\ul\Hom_{\CalD}(-,-)$ denotes internal Hom relative to the right action of $\CC$ on $\CalD$, i.e.,
$$\Hom_\CC(c,\ul\Hom_{\CalD}(d_1,d_2))=\Hom_{\CalD}(d_1\otimes c,d_2).$$
The fully faithfulness of the functor in question follows from the isomorphism
$$ \ul\Hom_{A\on{-mod}(\CC)}(\one_{A\on{-mod}(\CC)}, \one_{A\on{-mod}(\CC)}) \simeq A .$$
\end{proof}

\ssec{Endomorphisms of the unit object in $\CA^{\otimes Y}$ as a colimit} \label{ss:endo}

\sssec{}

Let us be in the situation of \lemref{l:lim colim}. Assume that $A$ has an initial object, denoted $a_0$.
Fix two objects $\bc'_{a_0},\bc''_{a_0}\in \CC_{a_0}$ with $\bc'_{a_0}$ compact. Set
$$\bc':=\on{ins}_{a_0}(\bc'_{a_0}),\,\,  \bc'':=\on{ins}_{a_0}(\bc''_{a_0}).$$

\medskip

We can form an $A$-family of objects of $\Vect$
$$a\mapsto \CHom_{\CC_a}(F_{a_0,a}(\bc'_{a_0}),F_{a_0,a}(\bc''_{a_0})),$$
and we have a natural map
\begin{equation} \label{e:colim Hom}
\underset{a}{\on{colim}}\, \CHom_{\CC_a}(F_{a_0,a}(\bc'_{a_0}),F_{a_0,a}(\bc''_{a_0}))\to \CHom_\CC(\bc',\bc'').
\end{equation}

\sssec{}

It is not difficult to show that when the index category $A$ is \emph{filtered}, then the map
\eqref{e:colim Hom} is an isomorphism (see \cite{Ro}). However, it is also easy to give an example when the map
\eqref{e:colim Hom} is a not an isomorphism.

\medskip

However, we claim:

\begin{thm} \label{t:endo}
Let $\CA$ have an affine diagonal. Then the map \eqref{e:colim Hom} is an isomorphism for the diagram of \corref{c:express connected}.
\end{thm}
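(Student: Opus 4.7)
The strategy is to reduce the claim to a comparison of monads on $\CA$, which becomes manageable under the affine-diagonal assumption. By the adjunctions $F_{H_0, H} \dashv F_{H_0, H}^R$ and $\on{ins}_{H_0} \dashv \on{ins}_{H_0}^R$, together with the compactness of $\bc'_{H_0}$ (so that $\CHom_\CA(\bc'_{H_0}, -)$ commutes with the sifted colimit), the theorem reduces to showing that the natural map in $\CA = \CA^{\otimes B(H_0)}$
$$\underset{H \in \on{FFM}_{/\Omega(Y,y)}}{\on{colim}}\, F_{H_0, H}^R\bigl(F_{H_0, H}(\bc''_{H_0})\bigr) \longrightarrow \on{ins}_{H_0}^R\bigl(\on{ins}_{H_0}(\bc''_{H_0})\bigr)$$
is an isomorphism.

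The main technical input I would need is that every transition functor $F_{H_0, H}: \CA \to \CA^{\otimes B(H)}$ is affine in the sense of the digression on affine functors. For $H = \free_{\BE_1}(I)$, the identification $B(H) \simeq \{*\} \sqcup_{I \sqcup \{*\}} \{*\}$ from the proof of \lemref{l:quasi-affine 1} presents $\CA^{\otimes B(H)} \simeq \CA \otimes_{\CA^{\otimes(I\sqcup\{*\})}} \CA$ as a pushout in $\DGCat^{\on{SymMon}}$. The affine-diagonal assumption on $\CA$, via induction on $|I|$, implies that the iterated multiplication $\CA^{\otimes(I\sqcup\{*\})} \to \CA$ is affine, and then I would argue that affineness is preserved by the tensor pushout (in the spirit of \corref{c:quasi-passable pullback}). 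Granting affineness, the projection formula (condition (iii) of \lemref{l:crit aff}) would give $F_{H_0, H}^R \circ F_{H_0, H} \simeq A_H \otimes -$ in $\CA$, where $A_H := F_{H_0, H}^R(\one_{\CA^{\otimes B(H)}})$ is the associated commutative algebra.

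Next I would identify $\on{ins}_{H_0}^R(\on{ins}_{H_0}(\bc''_{H_0}))$ with $(\colim_H A_H) \otimes \bc''_{H_0}$. Since tensor product preserves sifted colimits, this reduces to showing that the natural map of algebras $\colim_H A_H \to \on{ins}_{H_0}^R(\one_{\CA^{\otimes Y}})$ in $\CA$ is an isomorphism. Using \lemref{l:lim colim}(b) to present $\CA^{\otimes Y}$ as the limit of the $\CA^{\otimes B(H)}$ under the right-adjoint transitions, together with the affineness established above, the insertion $\on{ins}_{H_0}$ becomes a sifted colimit of the affine extensions $F_{H_0, H}$, and so its associated algebra is the colimit of the $A_H$. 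That the forgetful functor $\on{ComAlg}(\CA) \to \CA$ preserves sifted colimits (per \remref{r:forget mon}) then confirms the identification at the level of underlying objects in $\CA$.

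The principal obstacle will be the second step: verifying affineness of the $F_{H_0, H}$ assuming only affine diagonal (rather than full rigidity, which would require compactness of $\one_\CA$). Affineness must be tracked through two kinds of operations: tensor pushouts (to handle the wedge-of-circles structure $B(H) \simeq \underset{I}\vee\, S^1$) and the sifted colimit itself (to pass from the finite stages to $\CA^{\otimes Y}$). Both stability properties should follow, in the spirit of \propref{p:colimit of rigid cats}, by combining the Barr--Beck criterion of \lemref{l:crit aff} with the sifted-colimit-preserving property of the forgetful functors from algebras.
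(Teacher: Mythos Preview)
Your approach is correct and essentially the same as the paper's: first establish affineness of each transition functor $\CA \to \CA^{\otimes B(H)}$ (this is exactly \propref{p:bouquet}, proved by writing $B(\free_{\BE_1}(I))\simeq \underset{I}\vee\, S^1$ and showing affineness is stable under the relevant tensor pushouts), and then reduce everything to an algebra-colimit statement. The paper isolates the second step as the standalone \lemref{l:alg colimit}---for a sifted diagram $a\mapsto R_a$ in $\on{AssocAlg}(\CC_{a_0})$ with colimit $R$, the natural functor $\underset{a}{\on{colim}}\, R_a\mod(\CC_{a_0})\to R\mod(\CC_{a_0})$ is an equivalence---which in one stroke gives $\CA^{\otimes Y}\simeq (\underset{H}{\on{colim}}\,A_H)\mod(\CA)$ and hence affineness of $\on{ins}_{H_0}$ together with the identification of its algebra; this is precisely the ``stability of affineness under sifted colimits'' you flag as the principal obstacle, and having it as a lemma avoids the apparent circularity in your step-4 reduction (which tacitly uses the projection formula for $\on{ins}_{H_0}$ before that affineness is in hand).
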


A consequence of this theorem that will play a role for us in the sequel is the following:

\begin{cor} \label{c:endo}
Let $\CA$ have an affine diagonal, and assume that $\one_\CA\in \CA$ is compact.
Then for a connected $Y\in \Spc$ with a marked point $y\in Y$,
we have the following expression for $\CEnd_{\CA^{\otimes Y}}(\one_{\CA^{\otimes Y}})$:
$$\CEnd_{\CA^{\otimes Y}}(\one_{\CA^{\otimes Y}})\simeq
\underset{H\in \on{FFM}_{/\Omega(Y,y)}}{\on{colim}}\, \CEnd_{\CA^{\otimes B(H)}}(\one_{\CA^{\otimes B(H)}}).$$
\end{cor}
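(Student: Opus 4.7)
The plan is to derive \corref{c:endo} as an immediate specialization of \thmref{t:endo}, applied to the diagram of \corref{c:express connected}. All of the real content has already been packaged into \thmref{t:endo}; the corollary amounts to identifying the correct inputs to that theorem.

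First I would identify the initial object of the indexing category $\on{FFM}_{/\Omega(Y,y)}$. The category $\on{FFM}$ itself contains the free monoid on the empty set, namely the trivial one-element monoid $\free_{\BE_1}(\emptyset)$, and this monoid admits a unique morphism to $\Omega(Y,y)$ (sending the unit to the unit). This pair is thus an initial object $a_0$ of $\on{FFM}_{/\Omega(Y,y)}$. At $a_0$, we have $B(\free_{\BE_1}(\emptyset))\simeq \{*\}$, so the corresponding DG category is $\CA^{\otimes \{*\}}\simeq \CA$.

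Next I would invoke \thmref{t:endo} with the choices $\bc'_{a_0} := \bc''_{a_0} := \one_\CA$. The hypothesis that $\one_\CA$ is compact provides precisely the compactness of $\bc'_{a_0}$ demanded by the theorem. Since every transition functor $F_{a_0,H}:\CA\to \CA^{\otimes B(H)}$ in the diagram of \corref{c:express connected} is symmetric monoidal, hence in particular unital, we have $F_{a_0,H}(\one_\CA)\simeq \one_{\CA^{\otimes B(H)}}$, and likewise $\on{ins}_{a_0}(\one_\CA)\simeq \one_{\CA^{\otimes Y}}$. Substituting these identifications into the map \eqref{e:colim Hom}, the conclusion of \thmref{t:endo} becomes exactly the asserted isomorphism
$$\underset{H\in \on{FFM}_{/\Omega(Y,y)}}{\on{colim}}\, \CEnd_{\CA^{\otimes B(H)}}(\one_{\CA^{\otimes B(H)}})\iso \CEnd_{\CA^{\otimes Y}}(\one_{\CA^{\otimes Y}}).$$

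Since the genuine difficulty — namely, showing that Hom spaces computed at the colimit agree with the colimit of Hom spaces, even though the index category $\on{FFM}_{/\Omega(Y,y)}$ is sifted rather than filtered — is entirely absorbed by \thmref{t:endo} (where the affine diagonal hypothesis on $\CA$ is what makes the argument go through), there is no remaining obstacle in the corollary itself. The proof is a direct unwinding, and I do not expect any step of this specialization to be non-trivial.
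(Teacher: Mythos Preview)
Your proposal is correct and matches the paper's approach exactly: the paper does not even write out a separate proof of \corref{c:endo}, but simply states it as an immediate consequence of \thmref{t:endo}. Your explicit identification of the initial object of $\on{FFM}_{/\Omega(Y,y)}$ as the trivial monoid $\free_{\BE_1}(\emptyset)$ (with $B(\free_{\BE_1}(\emptyset))\simeq\{*\}$), together with the choice $\bc'_{a_0}=\bc''_{a_0}=\one_\CA$ and the observation that the symmetric monoidal transition functors preserve units, is precisely the specialization that the paper leaves implicit.
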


In \secref{ss:endo term} we will give an explicit description of the algebras $\CEnd_{\CA^{\otimes B(H)}}(\one_{\CA^{\otimes B(H)}})$
for $H\in \on{FFM}$, i.e., for $H$ of the form $\free_{\BE_1}(I)$ for $I\in\on{fSet}$.

\medskip

The next two subsections are devoted to the proof of \thmref{t:endo}.

\sssec{}

We will prove \thmref{t:endo} in the following general context:

\begin{prop} \label{p:endo affine}
Assume that in the situation of \eqref{e:colim Hom} the functor
$$a\mapsto \CC_a$$
comes from a functor $A\to \DGCat^{\on{Mon}}$. Assume that $A$ is sifted and all the functors $\CC_{a_0}\to \CC_a$
are affine. Then the map \eqref{e:colim Hom} is an isomorphism.
\end{prop}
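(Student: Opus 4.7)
The plan is to use affineness to trade monoidal-functor geometry for algebra inside the single category $\CC_{a_0}$. By the definition of affineness, each $F_{a_0,a}:\CC_{a_0}\to\CC_a$ gives rise to an equivalence
$$\CC_a\simeq A_a\on{-mod}(\CC_{a_0}),\quad A_a:=F_{a_0,a}^R(\one_{\CC_a})\in\on{AssocAlg}(\CC_{a_0}),$$
under which $F_{a_0,a}$ becomes the free-module functor $x\mapsto A_a\otimes x$. Naturality promotes $a\mapsto A_a$ to a functor $A\to\on{AssocAlg}(\CC_{a_0})$ in which a morphism $a_1\to a_2$ is sent to the algebra map $A_{a_1}\to A_{a_2}$ obtained by applying $F_{a_0,a_1}^R$ to the right-lax structure map $\one_{\CC_{a_1}}\to F_{a_1,a_2}^R(\one_{\CC_{a_2}})$.

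Next I would identify $\CC$ with the category of modules over a single algebra in $\CC_{a_0}$. Since $A$ is sifted and the forgetful functors $\DGCat^{\on{Mon}}\to\CC_{a_0}\on{-mod}(\DGCat)\to\DGCat$ preserve sifted colimits (Remark \ref{r:forget mon} and its $\CC_{a_0}$-linear variant), the colimit $\CC$ may be computed in $\CC_{a_0}\on{-mod}(\DGCat)$. Invoking the associative-monoidal analog of the proposition following \corref{c:quasi-passable pullback} --- that $A\mapsto A\on{-mod}(\CC_{a_0})$ from $\on{AssocAlg}(\CC_{a_0})$ to $\CC_{a_0}\on{-mod}(\DGCat)$ preserves (at least sifted) colimits, cf.\ \corref{c:categ of modules} --- yields
$$\CC\simeq B\on{-mod}(\CC_{a_0}),\quad B:=\underset{a\in A}{\on{colim}}\,A_a\in\on{AssocAlg}(\CC_{a_0}),$$
and, by siftedness once more, the underlying object of $B$ in $\CC_{a_0}$ is the same $\underset{a}{\on{colim}}\,A_a$.

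The remainder is bookkeeping. Under these identifications $\bc'=B\otimes\bc'_{a_0}$ and $\bc''=B\otimes\bc''_{a_0}$ are free $B$-modules, so the free-forgetful adjunction gives $\CHom_{\CC}(\bc',\bc'')\simeq \CHom_{\CC_{a_0}}(\bc'_{a_0},B\otimes\bc''_{a_0})$ and similarly $\CHom_{\CC_a}(F_{a_0,a}(\bc'_{a_0}),F_{a_0,a}(\bc''_{a_0}))\simeq \CHom_{\CC_{a_0}}(\bc'_{a_0},A_a\otimes\bc''_{a_0})$. Because tensor product preserves colimits separately, $B\otimes\bc''_{a_0}\simeq\underset{a}{\on{colim}}(A_a\otimes\bc''_{a_0})$. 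Finally, compactness of $\bc'_{a_0}$ ensures that $\CHom_{\CC_{a_0}}(\bc'_{a_0},-)$ preserves filtered colimits, and the stability of $\CC_{a_0}$ upgrades this to preservation of all sifted colimits (geometric realizations in a stable DG category are filtered colimits of finite colimits, and Hom out of any object is exact). Matching the two expressions identifies \eqref{e:colim Hom} as an isomorphism.

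I expect the main obstacle to be the colimit identification in the second paragraph: it is precisely the affineness hypothesis that makes the assignment $a\mapsto\CC_a$ factor through $\on{AssocAlg}(\CC_{a_0})$, so that $\underset{a}{\on{colim}}\,\CC_a$ becomes a category of modules over a single algebra in $\CC_{a_0}$ rather than an opaque sifted colimit of $\CC_{a_0}$-module categories. Without affineness, one would not be able to reduce Hom-computations to Hom in $\CC_{a_0}$, and the compactness of $\bc'_{a_0}$ would have no mechanism to commute past the colimit.
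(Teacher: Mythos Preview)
Your proposal is correct and follows essentially the same route as the paper: factor $a\mapsto\CC_a$ through $\on{AssocAlg}(\CC_{a_0})$ via affineness, identify $\CC$ with modules over the colimit algebra using (the content of) \lemref{l:alg colimit}/\corref{c:categ of modules}, then reduce both Hom computations to $\CHom_{\CC_{a_0}}(\bc'_{a_0},-)$ via the free--forgetful adjunction and invoke compactness of $\bc'_{a_0}$ together with siftedness. Your remark that stability upgrades ``preserves filtered colimits'' to ``preserves sifted colimits'' is the one point you make more explicit than the paper does.
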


\ssec{Proof of \propref{p:endo affine}}

\sssec{}

The assumption that the functors $\CC_{a_0}\to \CC_a$ are affine implies that the assignment
$$a\mapsto \CC_a,$$
viewed as a functor
$$A\to \on{DGCat}$$
canonically factors as
$$A\mapsto \on{AssocAlg}(\CC_{a_0}) \to  \on{DGCat},$$
where:

\medskip

\noindent--the first arrow is the functor $a\mapsto F_{a_0,a}^R(\one_{\CC_a})$;

\medskip

\noindent--the second arrow is the functor
$$R\mapsto R\mod(\CC_0), \quad (R_1\to R_2)\rightsquigarrow \left(M\mapsto R_2\underset{R_1}\otimes M\right).$$

\medskip

In particular, we obtain an $A$-diagram
$$a\mapsto R_a\in \on{AssocAlg}(\CC_{a_0}).$$

Set
$$R:=\underset{a}{\on{colim}}\, R_a,$$
where the colimit is taken in $\on{AssocAlg}(\CC_{a_0})$.

\medskip

We have a naturally defined functor
\begin{equation} \label{e:alg colimit}
\underset{a\in A}{\on{colim}}\, R_a\mod(\CC_{a_0})\to R\mod(\CC_{a_0}).
\end{equation}

\begin{lem} \label{l:alg colimit}
The functor \eqref{e:alg colimit} is an equivalence.
\end{lem}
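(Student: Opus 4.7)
The plan is to identify the left-hand side of \eqref{e:alg colimit} with $R\mod(\CC_{a_0})$ by rewriting the colimit as a limit of module categories under restriction-of-scalars and then recognizing that limit via the universal property of $R = \underset{a}{\on{colim}}\, R_a$. This is essentially the statement that the functor $R \mapsto R\mod(\CC_{a_0})$ from $\on{AssocAlg}(\CC_{a_0})$ to $\DGCat$ preserves sifted colimits, in the spirit of \corref{c:categ of modules} cited earlier in the section.

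To execute this, I would first observe that each extension-of-scalars functor $R_a\mod(\CC_{a_0}) \to R_b\mod(\CC_{a_0})$ admits restriction of scalars as a continuous right adjoint---continuity is immediate because restriction does not alter the underlying object in $\CC_{a_0}$, while colimits in both module categories are created by their forgetful functors. Hence \lemref{l:lim colim}(b) identifies $\underset{a\in A}{\on{colim}}\, R_a\mod(\CC_{a_0})$ with $\underset{a\in A^{\on{op}}}{\on{lim}}\, R_a\mod(\CC_{a_0})$, taken along restriction functors. An object of this limit amounts to a single object $M \in \CC_{a_0}$ equipped with an $A$-compatible family of $R_a$-module structures. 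Since $A$ is sifted, Remark \ref{r:forget mon} applied to the associative operad ensures that the underlying object of $R = \underset{a}{\on{colim}}\, R_a$ in $\CC_{a_0}$ coincides with the corresponding colimit taken in $\CC_{a_0}$; by the universal property of colimits in $\on{AssocAlg}(\CC_{a_0})$, a compatible system of $R_a$-module structures on $M$ is then precisely an $R$-module structure, and the resulting equivalence is by construction the map \eqref{e:alg colimit}.

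The principal subtlety lies in making rigorous the identification ``compatible system of module structures equals module over the colimit algebra'' at the $\infty$-categorical level, where it involves a tower of higher coherences rather than a discrete check. The cleanest resolution is to invoke \cite[Theorem 4.8.4.6]{Lu2}, or the analog of \corref{c:categ of modules} already used in the preceding proposition, in order to assert the sifted-colimit preservation of $R \mapsto R\mod(\CC_{a_0})$ directly at the $\infty$-categorical level; the lemma then follows by evaluating this functor on the sifted colimit $R = \underset{a}{\on{colim}}\, R_a$.
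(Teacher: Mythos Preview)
Your proposal is correct and essentially recapitulates both of the paper's proofs. The paper gives two arguments: the first does exactly your steps (1)--(3), passing to the limit via \lemref{l:lim colim}(b) and then identifying a compatible system of $R_a$-module structures on $M$ with an $R$-module structure (the paper makes this precise by rewriting a module structure as a map $R_a \to \ul\End(M)$ in $\on{AssocAlg}(\CC_{a_0})$ and using the universal property of the colimit there, together with a co-Bar/siftedness check for full faithfulness on Hom spaces); the second proof is exactly your ``cleanest resolution,'' deducing the lemma from the colimit-preservation statement that becomes \corref{c:categ of modules}. So you have identified both routes, though your informal justification of step (3) would need the $\ul\End(M)$ argument (or the direct appeal to \corref{c:categ of modules}) to be airtight at the $\infty$-categorical level, as you yourself note.
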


The proof of the lemma is given below.

\sssec{}

Thus, returning to the proof of \propref{p:endo affine}, we need to show that for $M'_0,M''_0\in \CC_{a_0}$ with $M'_0$ compact, the map
$$\underset{a\in A}{\on{colim}}\, \Maps_{R_a\mod(\CC_{a_0})}(R_a\otimes M'_0,R_a\otimes M''_0)\to
\Maps_{R\mod(\CC_{a_0})}(R\otimes M'_0,R\otimes M''_0)$$
is an isomorphism. We rewrite the LHS as
$$\underset{a\in A}{\on{colim}}\, \Maps_{\CC_{a_0}}(M'_0,\oblv_{\on{Assoc}}(R_a)\otimes M''_0),$$
and the RHS as
$$\Maps_{\CC_{a_0}}(M'_0,\oblv_{\on{Assoc}}(R)\otimes M''_0).$$

Since, $M'_0$ was assumed compact, we rewrite the LHS further as
$$\Maps_{\CC_{a_0}}\left(M'_0,\underset{a\in A}{\on{colim}}\, \oblv_{\on{Assoc}}(R_a)\otimes M''_0 \right)\simeq
\Maps_{\CC_{a_0}}\left(M'_0,(\underset{a\in A}{\on{colim}}\, \oblv_{\on{Assoc}}(R_a)) \otimes M''_0 \right).$$

Now, the assumption that $A$ is sifted implies that
$$\underset{a\in A}{\on{colim}}\, \oblv_{\on{Assoc}}(R_a)\simeq
\oblv_{\on{Assoc}}(\underset{a\in A}{\on{colim}}\, R_a)=\oblv_{\on{Assoc}}(R),$$
and the assertion follows.

\qed[\propref{p:endo affine}]

\sssec{First proof of \lemref{l:alg colimit}}

Recall the equivalence of \lemref{l:lim colim}. The corresponding category $\wt\CC$ is the limit
$$\underset{a\in A}{\on{lim}}\, R_a\mod(\CC_{a_0}),$$
where for $a_1\to a_2$, the corresponding functor
$$R_{a_2}\mod(\CC_{a_0})\to R_{a_1}\mod(\CC_{a_0})$$
is given by restriction along $R_{a_1}\to R_{a_2}$.

\medskip

In terms of the equivalence
$$\CC\simeq \wt\CC,$$
the functor right adjoint to \eqref{e:alg colimit}, viewed as a functor
$$R\mod(\CC_{a_0})\to \underset{a\in A}{\on{lim}}\, R_a\mod(\CC_{a_0}),$$
is given by restrictions along $R_a\to R$.

\medskip

Hence, \lemref{l:alg colimit} is equivalent to the following statement:

\begin{lem} \label{l:alg limit}
Let $\CC$ be a monoidal category, in which the monoidal operation is compatible with colimits
in each variable. Then for a sifted colimit diagram of algebras in $\CC$,
$$\underset{a}{\on{colim}}\, R_a\simeq R,$$
the functor
\begin{equation} \label{e:alg limit}
R\mod(\CC)\to \underset{a}{\on{lim}}\, R_a\mod(\CC)
\end{equation}
is an equivalence.
\end{lem}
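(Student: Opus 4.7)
The plan is to apply the Barr--Beck--Lurie monadicity theorem to both sides of the comparison, taking $\CC$ as the common target of a forgetful functor, and then to check that the two induced monads on $\CC$ coincide. Since for each arrow $a\to a'$ in $A$ the restriction functor $R_{a'}\mod(\CC)\to R_a\mod(\CC)$ is the identity on underlying objects of $\CC$, the individual forgetful functors $U_a:R_a\mod(\CC)\to \CC$ assemble into a single forgetful functor $U:\underset{a}{\on{lim}}\, R_a\mod(\CC)\to\CC$, and the natural comparison functor $\Psi:R\mod(\CC)\to \underset{a}{\on{lim}}\, R_a\mod(\CC)$ commutes with the forgetful maps to $\CC$. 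Conservativity of $U$ will follow from conservativity of $U_{a_0}$ for any $a_0\in A$ (which exists because $A$ is sifted, hence nonempty), together with conservativity of the projection to the $a_0$-component.

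Next, I would construct a left adjoint $F$ to $U$. The candidate is $F(X)=(R\otimes X)_{a\in A}$, where each component is $R\otimes X$ regarded as an $R_a$-module via the canonical algebra map $R_a\to R$. This system is automatically compatible under all the restriction transitions because the various $R_a$-actions all factor through the single $R$-action on $R\otimes X$. Verifying the adjunction amounts to showing that a compatible family of $R_a$-module structures on a fixed object coincides with a single $R$-module structure. This is where the hypothesis that the monoidal operation of $\CC$ preserves colimits in each variable enters: it guarantees that the sifted colimit $R=\underset{a}{\on{colim}}\, R_a$ in $\on{AssocAlg}(\CC)$ has underlying object $\underset{a}{\on{colim}}\, \oblv(R_a)$ in $\CC$, so that a map out of $R\otimes X$ reduces to a compatible family of maps out of each $R_a\otimes X$.

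With $U$ conservative and equipped with a left adjoint, I would then verify the remaining Barr--Beck--Lurie condition -- preservation of $U$-split simplicial colimits -- by reducing it componentwise to the analogous statement for each $U_a$, which is standard. The monad $T'$ on $\CC$ induced by $U$ is then $T'(X)=U\circ F(X)=R\otimes X$, which coincides with the monad $T$ coming from $R\mod(\CC)$. Since $\Psi$ commutes with both forgetfuls and the associated free functors by construction, it is an equivalence of $T$-modules, hence of the original categories.

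The main obstacle I anticipate is the rigorous identification, at the $\infty$-categorical level, of compatible families of $R_a$-actions with a single $R$-action; equivalently, the verification that the left adjoint $F$ described above really is left adjoint to $U$. This hinges critically on the sifted hypothesis on $A$ and the colimit-compatibility of the monoidal structure on $\CC$: without them, $\underset{a}{\on{colim}}\, \oblv(R_a)$ in $\CC$ would not compute the underlying object of the algebra colimit $R$, and the identification of the two monads would fail.
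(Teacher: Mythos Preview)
Your Barr--Beck strategy is a reasonable alternative route, but the step where you verify that $F(X)=(R\otimes X)_{a\in A}$ is left adjoint to $U$ is circular as written. The adjunction identity you need is
\[
\underset{a}{\on{lim}}\,\Maps_{R_a\mod(\CC)}\bigl((R\otimes X)|_{R_a},\,N_a\bigr)\;\simeq\;\Maps_\CC(X,UN),
\]
and the left-hand side involves $R_a$-\emph{module} maps out of $R\otimes X$, which is not free over $R_a$; your observation that maps in $\CC$ out of $R\otimes X\simeq\on{colim}_a(R_a\otimes X)$ decompose accordingly does not address this. In fact, for $N$ in the essential image of $\Psi$ the identity above is exactly fully-faithfulness of $\Psi$ on the pair $(R\otimes X,N)$, and for general $N$ it also encodes essential surjectivity---so establishing the adjunction is equivalent to proving the lemma itself. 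The monadicity packaging does not shortcut the work here.

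The paper argues directly, in two separate steps. First, it proves the equivalence on \emph{fibers} over each $M\in\CC$ using the internal endomorphism algebra $\ul\End(M)$: by \cite[Cor. 4.7.1.40]{Lu2} an $R'$-module structure on $M$ is the same as an algebra map $R'\to\ul\End(M)$, so a compatible system of $R_a$-structures is an algebra map $\on{colim}_a R_a=R\to\ul\End(M)$, i.e.\ an $R$-module structure. This is precisely the clean $\infty$-categorical handling of the obstacle you anticipated, and it shows the comparison functor is an equivalence on underlying groupoids. Second, and independently, the paper shows fully-faithfulness by computing $\CHom_{R'\mod(\CC)}(M_1,M_2)$ as the totalization of the co-Bar complex with terms $\CHom_\CC(\oblv(R')^{\otimes n}\otimes M_1,M_2)$ and using siftedness of $A$ to pass $\underset{a}{\on{lim}}$ through the totalization termwise (via $\on{colim}_a\,\oblv(R_a)^{\otimes n}\simeq\oblv(R)^{\otimes n}$). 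Your outline could be completed by importing both of these ingredients, but neither is avoided by the Barr--Beck framework.
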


\begin{proof}[Proof of \lemref{l:alg limit}]

Note that functor \eqref{e:alg limit} is compatible with the natural forgetful functors
of both sides to $\CC$. We first show that \eqref{e:alg limit} induces an equivalence
between the fiber of both categories over a given object of $\CC$.

\medskip

For a pair of objects $M_1,M_2\in \CC$, let $\ul\Hom(M_1,M_2)\in \CC$ be their
internal $\Hom$, i.e.,
$$\Maps_{\CC}(\bc,\ul\Hom(M_1,M_2))\simeq \Maps(\bc\otimes M_1,M_2)$$
(this object exists by the Brown representability theorem, \cite[Prop. 5.5.2.2]{Lu1}).

\medskip

For $M_1=M_2=M$, the object $\ul\End(M):=\ul\Hom(M,M)$ has a structure of associative algebra.
For $R'\in  \on{AssocAlg}(\CC)$, the datum of action of $R'$ on $M$ is equivalent to a map
$$R'\to \ul\End(M)$$ in $\on{AssocAlg}(\CC)$ (see \cite[Cor. 4.7.1.40]{Lu2}).

\medskip

Hence, the datum of structure of $R$-module on $M$ is equivalent to that of a homomorphism $$R\to \ul\End(M),$$
and a compatible system of data of $R_a$-modules on $M$ is equivalent to that of a point in
$$\underset{\on{a}}{\on{lim}}\, \Maps_{\on{AssocAlg}(\CC)}(R_a,\ul\End(M)).$$

We rewrite the latter as
$$\Maps_{\on{AssocAlg}(\CC)}\left(\underset{\on{a}}{\on{colim}}\, R_a,\ul\End(M)\right),$$
which is by definition
$$\Maps_{\on{AssocAlg}(\CC)}(R,\ul\End(M)),$$
implying the assertion.

\medskip

The equivalence of fibers of the two sides of \eqref{e:alg limit} over a given $M$ implies that \eqref{e:alg limit}
induces an equivalence of the underlying groupoids. To show that \eqref{e:alg limit} is an equivalence of categories,
it remains to show that for $M_1,M_2\in R\mod$, the map
\begin{equation} \label{e:ff in limit}
\CHom_{R\mod(\CC)}(M_1,M_2)\to \underset{a\in A}{\on{lim}}\,\CHom_{R_a\mod(\CC)}(M_1,M_2)
\end{equation}
is an isomorphism.

\medskip

Note that for a given $R'\in  \on{AssocAlg}(\CC)$, and $M'_1,M'_2\in R'\mod(\CC)$, the object
$$\CHom_{R'\mod(\CC)}(M'_1,M'_2)$$ is calculated as the limit (a.k.a. totalization) of the co-Bar complex, whose terms
are
$$\CHom_\CC(\oblv_{\on{Assoc}}(R')^{\otimes n}\otimes M'_1,M'_2).$$

Note that, since $A$ is sifted, for each $n$, the map
$$\CHom_\CC(\oblv_{\on{Assoc}}(R)^{\otimes n}\otimes M'_1,M'_2) \to \underset{a\in A}{\on{lim}}\,\CHom_\CC(\oblv_{\on{Assoc}}(R_a)^{\otimes n}\otimes M_1,M_2)$$
is an isomorphism.

\medskip

Interchanging the order of $\underset{a\in A}{\on{lim}}$ and the totalization we obtain that \eqref{e:ff in limit}
is an isomorphism.

\end{proof}

\sssec{Second proof of \lemref{l:alg colimit}}

In fact, \lemref{l:alg colimit} is a corollary of the following more general assertion (see \corref{c:categ of modules} below):

\begin{lem}
Let $\CC$ be a monoidal DG category. Then the functor
$$\on{AssocAlg}(\CC)\to (\CC\mod^r(\DGCat))_{\CC/},\quad R\mapsto R\mod(\CC)$$
is fully faithful and commutes with colimits\footnote{In the above formula, the superscript ``$r$" stands for "right modules".}.
\end{lem}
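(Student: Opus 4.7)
The plan is to exhibit the functor
$$\Phi: \on{AssocAlg}(\CC) \to (\CC\mod^r(\DGCat))_{\CC/}, \quad R \mapsto (R\mod(\CC),R)$$
as a left adjoint, which makes commutation with colimits automatic, and then prove fully faithfulness by computing the unit. The right adjoint $\Psi$ should send a pair $(\CalD, d)$, where $\CalD$ is a right $\CC$-module category equipped with a distinguished object $d \in \CalD$ (equivalently, with a $\CC$-linear functor $\CC\to \CalD$ sending $\one_\CC$ to $d$), to the internal endomorphism algebra $\underline{\on{End}}_\CalD(d) \in \CC$. This object is characterized by the universal property
$$\Maps_\CC(c, \underline{\on{End}}_\CalD(d)) \simeq \Maps_\CalD(d\cdot c, d), \quad c \in \CC,$$
and exists because $\CalD$ is presentable and $c\mapsto d\cdot c$ is cocontinuous (Brown representability). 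It acquires an associative algebra structure from composition of endomorphisms, as in \cite[Sect. 4.7.1]{Lu2}.

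To verify that $\Psi$ is right adjoint to $\Phi$, one shows that an algebra map $R \to \underline{\on{End}}_\CalD(d)$ is precisely the datum of an $R$-action on $d$ inside the $\CC$-module category $\CalD$, and that such an action provides the universal $\CC$-linear colimit-preserving functor $R\mod(\CC) \to \CalD$ carrying the free $R$-module $R$ to $d$. The cleanest way to extract this is via the Barr-Beck-Lurie theorem applied to the monadic adjunction $\CC \rightleftarrows R\mod(\CC)$ associated to $R$. Given the adjunction, fully faithfulness amounts to checking that the unit map $R \to \underline{\on{End}}_{R\mod(\CC)}(R)$ is an isomorphism of algebras; by Yoneda this reduces to the computation
$$\Maps_\CC(c, \underline{\on{End}}_{R\mod(\CC)}(R)) \simeq \Maps_{R\mod(\CC)}(R\cdot c, R) \simeq \Maps_\CC(c, R),$$
where the last equivalence uses that $R\cdot c$ is the free $R$-module on $c$ and that the underlying object of $R \in R\mod(\CC)$ is $R \in \CC$ itself.

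Since $\Phi$ has been exhibited as a left adjoint, it preserves all colimits, yielding the second claim. The main obstacle is making the construction of $\Psi$ rigorous at the $\infty$-categorical level, namely ensuring that the assignment $(\CalD,d) \mapsto \underline{\on{End}}_\CalD(d)$ is functorial in morphisms of pointed $\CC$-modules and that the resulting adjunction really corresponds to $\Phi$. This is precisely the content of Lurie's endomorphism-algebra formalism (\cite[Sect. 4.7]{Lu2}), which is the same package that underlies \cite[Thm. 4.8.4.6]{Lu2} already invoked elsewhere in this paper.
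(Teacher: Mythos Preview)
Your proof is correct and follows exactly the same strategy as the paper: exhibit a right adjoint given by the internal endomorphism algebra $\ul\End_\CalD(d)$ of the distinguished object, and check that the unit $R\to\ul\End_{R\mod(\CC)}(R)$ is an isomorphism. The paper's version is terser (it simply asserts the right adjoint and the unit computation), while you spell out the Yoneda argument and point to the relevant parts of \cite{Lu2} for $\infty$-categorical functoriality, but the content is identical.
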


\begin{proof}

Same as the proof of \propref{p:over C}.

\end{proof}

\begin{cor} \label{c:categ of modules}
Let $\CC$ be a monoidal DG category. Then the functor
$$\on{AssocAlg}(\CC)\to \DGCat,\quad R\mapsto R\mod(\CC)$$
commutes with colimits over contractible index categories.
\end{cor}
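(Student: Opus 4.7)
The plan is to deduce this corollary directly from the preceding lemma by post-composing with the forgetful functor
$$U\colon (\CC\mmod^r(\DGCat))_{\CC/}\longrightarrow\DGCat, \quad (\CC\to\cD)\rightsquigarrow\cD.$$
The lemma already provides fully faithfulness and preservation of all colimits for the refined functor $R\mapsto (R\mod(\CC),\,\CC\to R\mod(\CC))$, so the only content of the corollary is to check how $U$ interacts with these two properties.

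For the colimit statement, the main point is the standard fact that the forgetful functor from a slice category $\bD_{X/}\to\bD$ preserves colimits indexed by any non-empty connected diagram $I$: given $F\colon I\to\bD_{X/}$, its colimit in $\bD_{X/}$ is computed as the colimit of $U\circ F$ in $\bD$, equipped with the canonical structural map from $X$ assembled from the cocone data (non-emptiness supplies at least one such map, connectedness forces all of them to coincide up to contractible ambiguity). Since any contractible category is automatically non-empty and connected, $U$ preserves colimits over contractible index categories; composing with the colimit preservation from the lemma yields the desired property of $R\mapsto R\mod(\CC)$.

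For the fully faithfulness claim, the lemma identifies
$$\Maps_{\on{AssocAlg}(\CC)}(R_1,R_2)\;\simeq\;\Maps_{(\CC\mmod^r(\DGCat))_{\CC/}}\bigl((R_1\mod(\CC),\CC\to R_1\mod(\CC)),\;(R_2\mod(\CC),\CC\to R_2\mod(\CC))\bigr),$$
and the key observation is that the extra data recorded by an object of $(\CC\mmod^r(\DGCat))_{\CC/}$, namely the $\CC$-action and the unit map $c\mapsto R_i\otimes c$, is canonically determined by the underlying DG category $R_i\mod(\CC)$ together with the distinguished compact generator $R_i$ (it arises from the algebra structure of $R_i$ acting on itself). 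In other words, the passage through the slice-over-$\CC$ category does not lose morphism information relative to the relevant notion of functor between $R_1\mod(\CC)$ and $R_2\mod(\CC)$, so fully faithfulness is inherited from the lemma.

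The main obstacle, and the step requiring the most care, is precisely this last point: one must be scrupulous about which mapping spaces of functors between the $R_i\mod(\CC)$ are being computed, i.e., that the natural ones inherit the $\CC$-linear compatibility structure so that the identification with $\Maps_{\on{AssocAlg}(\CC)}(R_1,R_2)$ provided by the lemma genuinely computes them. Everything else is a formal consequence of the lemma and the slice-category observation above.
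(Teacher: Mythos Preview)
Your treatment of the colimit assertion is correct and matches the paper's approach: the paper's one-line proof simply records that the forgetful functor $\CC\mod^r(\DGCat)\to\DGCat$ commutes with colimits, and you unpack the additional point that the slice forgetful functor $(\CC\mod^r(\DGCat))_{\CC/}\to\CC\mod^r(\DGCat)$ preserves colimits over contractible (hence non-empty connected) index categories.

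Your argument for full faithfulness, however, does not work and cannot be repaired: the functor $R\mapsto R\mod(\CC)$ to plain $\DGCat$ is \emph{not} fully faithful. Already for $\CC=\Vect$ and $R_1=R_2=\sfe$ one has $\Maps_{\on{AssocAlg}(\Vect)}(\sfe,\sfe)\simeq *$, whereas $\Maps_{\DGCat}(\Vect,\Vect)$ is the underlying space of $\Vect$, which is far from contractible. Your sketch implicitly retains the $\CC$-module structure and the distinguished map $\CC\to R_i\mod(\CC)$---precisely the data the preceding lemma keeps track of---but morphisms in $\DGCat$ do not remember this, so the identification you propose between the two mapping spaces fails. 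You yourself flag this step as the delicate one, and indeed it is the place where the argument breaks. Note that the paper's own proof of the corollary addresses only the colimit statement; the ``fully faithful'' clause appears to be a slip carried over from the lemma and is not invoked anywhere else in the paper.
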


\begin{proof}

Follows from the fact that the forgetful functor
$$\CC\mod^r(\DGCat)\to \DGCat$$
commutes with colimits.

\end{proof}

\ssec{Applying the paradigm}

\sssec{}

In order to prove \thmref{t:endo}, it suffices to show that it fits into the paradigm of \propref{p:endo affine}.
Hence, it suffices to prove the following:

\medskip

\begin{prop} \label{p:bouquet}
Let $\CA$ have an affine diagonal. Then for a finite set $I$, the functor
$$\CA=\CA^{\otimes \{*\}}\simeq \CA^{\otimes B(\free_{\BE_1}(\emptyset))}\to
\CA^{\otimes B(\free_{\BE_1}(I))}$$
is affine.
\end{prop}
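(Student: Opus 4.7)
The plan is to realize the target as a pushout in $\DGCat^{\on{SymMon}}$ and reduce the statement to two general facts: (i) the multiplication functor $\on{mult}_n : \CA^{\otimes n} \to \CA$ is affine for every $n \geq 1$; (ii) affine symmetric monoidal functors are stable under pushout in $\DGCat^{\on{SymMon}}$.

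First, as in the proof of \lemref{l:quasi-affine 1}, we identify $B(\free_{\BE_1}(I))$ with the wedge $\underset{I}\vee\, S^1$, which is canonically the pushout $\{*\} \underset{I \sqcup \{*\}}{\sqcup} \{*\}$ in $\Spc$ (both structure maps being the terminal projections). Since the functor $Y \mapsto \CA^{\otimes Y}$ preserves colimits and pushouts in $\DGCat^{\on{SymMon}}$ are given by relative tensor products, we obtain
$$\CA^{\otimes B(\free_{\BE_1}(I))} \simeq \CA \underset{\CA^{\otimes (I \sqcup \{*\})}}{\otimes} \CA,$$
with both structure maps given by $\on{mult} : \CA^{\otimes (I \sqcup \{*\})} \to \CA$. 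Under this identification, the functor of \propref{p:bouquet} becomes the canonical morphism $\CA \to \CA \underset{\CA^{\otimes (I \sqcup \{*\})}}{\otimes} \CA$, i.e., the pushout of $\on{mult}$ along itself.

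To prove (i) we argue by induction on $n$: for $n = 1$ it is the identity; for $n = 2$ it is precisely the hypothesis that $\CA$ has affine diagonal; for $n \geq 3$ we factor
$$\on{mult}_n : \CA^{\otimes n} \overset{\on{mult}_2 \otimes \on{id}}{\longrightarrow} \CA^{\otimes (n-1)} \overset{\on{mult}_{n-1}}{\longrightarrow} \CA.$$
The first arrow is a pushout in $\DGCat^{\on{SymMon}}$ of $\on{mult}_2$ along the unit map $\Vect \to \CA^{\otimes (n-2)}$, hence affine by (ii); the second is affine by induction; and affineness is closed under composition. For (ii) we use the characterization of an affine functor $F : \CC_0 \to \CC$: setting $R := F^R(\one_\CC) \in \on{ComAlg}(\CC_0)$, the enhanced right adjoint is a symmetric monoidal equivalence $\CC \simeq R\mod(\CC_0)$. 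Given any $G : \CC_0 \to \CC_1$, the base-change identification
$$\CC_1 \underset{\CC_0}{\otimes} R\mod(\CC_0) \simeq G(R)\mod(\CC_1)$$
exhibits the induced morphism $\CC_1 \to \CC \underset{\CC_0}{\otimes} \CC_1$ as affine.

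The main obstacle is the careful justification of (ii), i.e.\ the base-change identification just displayed. While it is the expected analog of the fact that affine morphisms of schemes are stable under pullback, in the present symmetric monoidal setting it must be verified directly. A natural approach is to apply the criterion of \lemref{l:crit aff} to the right adjoint of the structural map $\CC_1 \to \CC \underset{\CC_0}{\otimes} \CC_1$, reducing its continuity, conservativity, and the projection-formula compatibility to the corresponding properties of $F^R$ via the universal property of the pushout; this reduction is routine but requires keeping careful track of the symmetric monoidal compatibilities.
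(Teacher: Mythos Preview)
Your proof is correct and rests on the same core lemma as the paper: affine symmetric monoidal functors are stable under pushout (equivalently, $\CC_1 \underset{\CC_0}{\otimes} R\mod(\CC_0) \simeq G(R)\mod(\CC_1)$). The paper proves exactly this as a separate lemma, via Barr--Beck--Lurie, which is the clean way to discharge the ``obstacle'' you flag at the end; your sketch via \lemref{l:crit aff} would also work.

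The only genuine difference is in the decomposition. You write $B(\free_{\BE_1}(I))$ as a single pushout $\{*\}\underset{I_+}\sqcup\{*\}$ and are therefore forced to show that $\on{mult}_n$ is affine for all $n$, which you do by induction (factoring $\on{mult}_n$ and invoking the base-change lemma at each step). The paper instead decomposes $B(\free_{\BE_1}(I))$ as an $I$-fold wedge of circles, so that
$$\CA^{\otimes B(\free_{\BE_1}(I))}\simeq \underset{I\text{-fold}}{\underbrace{\CA^{\otimes S^1}\underset{\CA}\otimes\cdots\underset{\CA}\otimes \CA^{\otimes S^1}}},$$
reducing to the single circle, where only $\on{mult}_2$ is needed directly; the iteration is then handled by the elementary fact that $A_1\mod(\CA)\underset{\CA}\otimes A_2\mod(\CA)\simeq (A_1\otimes A_2)\mod(\CA)$. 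Both routes amount to the same bookkeeping and neither is materially shorter; your version has the mild advantage of producing the statement ``$\on{mult}_n$ is affine'' as a reusable byproduct.
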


In the process of proving this proposition, we will describe what the category $\CA^{\otimes B(\free_{\BE_1}(I))}$
looks like.

\begin{rem}
Using the argument in the proof of \thmref{t:integral to Maps connected}, one can strengthen the assertion of \propref{p:bouquet} as follows:
for a map of spaces $Y_1\to Y_2$ which is surjective on $\pi_0$, the resulting map
$$\CA^{Y_1}\to \CA^{Y_2}$$
is affine.
\end{rem}

\sssec{}

Since both functors $B(-)$ and $\free_{\BE_1}$ are left adjoints, we have:
$$B(\free_{\BE_1}(I))\simeq \underset{I}{\vee}\, B(\free_{\BE_1}(\{*\})),$$
where $\vee$ means push-out with respect to the common base point. Since pushouts in $\on{DGCat}^{\on{SymMon}}$ are tensor products, we obtain:
$$\CA^{\otimes B(\free_{\BE_1}(I))}\simeq
\underset{I\text{-fold product}}{\underbrace{\CA^{\otimes B(\free_{\BE_1}(\{*\}))}\underset{\CA}\otimes...\underset{\CA}\otimes \CA^{\otimes B(\free_{\BE_1}(\{*\}))}}}.$$

\medskip

Hence, it suffices to show that $\CA^{\otimes B(\free_{\BE_1}(\{*\}))}$ is affine over $\CA$: indeed, for a pair of commutative algebras $A_1$ and $A_2$
in $\CA$, we have $$A_1\mod(\CA)\underset{\CA}\otimes A_2\mod(\CA)\simeq (A_1\otimes A_2)\mod(\CA).$$

\sssec{}

Note that $B(\free_{\BE_1}(\{*\}))\simeq S^1$. Indeed, by adjunction
\begin{multline} \label{e:B free}
\Maps_{\Spc_{\on{Ptd}}}(B(\free_{\BE_1}(\{*\})),Y)\simeq \Maps_{\BE_1(\Spc)}(\free_{\BE_1}(\{*\}),\Omega(Y,y))\simeq \\
\simeq \Maps_{\Spc}(\{*\},\Omega(Y,y))\simeq \Omega(Y,y)\simeq \Maps_{\Spc_{\on{Ptd}}}(S^1,Y).
\end{multline}

\sssec{}

Write $S^1$ as a push-out
$$\{*\}\underset{\{*\}\sqcup \{*\}}\sqcup\, \{*\}.$$

Hence,
$$\CA^{\otimes B(\free_{\BE_1}(\{*\}))}\simeq \CA^{\otimes S^1}\simeq \CA\underset{\CA\otimes \CA}\otimes \CA.$$

\sssec{}

Hence, it remains to show that if $\CA$ has an affine diagonal, then the functor
$$\CA\to \CA\underset{\CA\otimes \CA}\otimes \CA$$
is affine.

\medskip

Note that the above functor is obtained by base change $\CA\underset{\CA\otimes \CA}\otimes -$
from the functor $\CA\otimes \CA\to \CA$. Hence, it suffices to prove the following assertion:

\begin{lem}
Let $\CA_0\to \CA$ be an affine functor between symmetric monoidal categories.
Then for any diagram of symmetric monoidal categories
$$\CB' \leftarrow \CB\to \CA_0,$$
the resulting functor
$$\CB'\underset{\CB}\otimes \CA_0\to \CB'\underset{\CB}\otimes \CA$$
is also affine.
\end{lem}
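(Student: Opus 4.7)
The plan is to use the monadic description of affineness to reduce the Lemma to a base change formula for module categories over commutative algebras. Since $F\colon\CA_0\to\CA$ is affine, setting $A:=F^R(\one_\CA)\in \on{ComAlg}(\CA_0)$, there is an equivalence $\CA\simeq A\mod(\CA_0)$ of symmetric monoidal $\CA_0$-algebras under which $F$ becomes the free functor $M\mapsto A\otimes M$. Let $\CD:=\CB'\underset{\CB}\otimes\CA_0$, and let $A_\CD$ denote the image of $A$ in $\on{ComAlg}(\CD)$ under the canonical symmetric monoidal functor $\CA_0\to\CD$. It suffices to exhibit an equivalence
$$\CB'\underset{\CB}\otimes \CA \;\simeq\; A_\CD\mod(\CD)$$
of symmetric monoidal $\CD$-algebras under which the functor in question corresponds to the free functor $\CD\to A_\CD\mod(\CD)$; this will force the latter to be affine by the very definition.

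The required equivalence is an instance of the base change formula
$$\CE'\underset{\CE}\otimes A\mod(\CE)\;\simeq\; A_{\CE'}\mod(\CE'),$$
valid for any morphism $\CE\to\CE'$ in $\DGCat^{\on{SymMon}}$ and any $A\in\on{ComAlg}(\CE)$. Applied with $\CE=\CA_0$ and $\CE'=\CD$, and combined with the tautological associativity identification $\CB'\underset{\CB}\otimes\CA\simeq \CD\underset{\CA_0}\otimes A\mod(\CA_0)$, this yields the desired equivalence.

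To establish the base change formula I would compare universal properties. The category $A\mod(\CE)$ is characterized, as an object of $\DGCat^{\on{SymMon}}_{\CE/}$, as the initial symmetric monoidal $\CE$-algebra $\CC$ equipped with an algebra map $A\to \one_\CC$; indeed, the compact/dualizability characterization of affineness in \lemref{l:crit aff}, together with the fully faithfulness statement in the proposition following \propref{p:colimit of rigid cats}, gives exactly this universal property. Since this universal property is manifestly preserved by an arbitrary symmetric monoidal base change along $\CE\to\CE'$ (a symmetric monoidal functor $A_{\CE'}\mod(\CE')\to\CF$ under $\CE'$ is the same data as a symmetric monoidal functor $\CD\to\CF$ together with an algebra map $A_\CF\to\one_\CF$), the two sides of the base change formula corepresent the same functor on $\DGCat^{\on{SymMon}}_{\CE'/}$, and hence are canonically equivalent.

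The main obstacle is verifying the universal property of $A\mod(\CE)$ in the symmetric monoidal (as opposed to plain $\CE$-module) setting — everything else is formal manipulation. This is standard but deserves mention because it is the only step requiring a coherent $\infty$-categorical input; once it is in hand, the lemma follows immediately.
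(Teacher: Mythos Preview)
Your overall strategy is the same as the paper's: reduce to the base change formula
$$\CB'\underset{\CB}\otimes R\mod(\CA_0)\;\simeq\; R'\mod(\CB'\underset{\CB}\otimes \CA_0),$$
where $R=F^R(\one_\CA)$ and $R'$ is its image in $\CD=\CB'\underset{\CB}\otimes\CA_0$. The difference is in how this formula is established. The paper dispatches it in one line by applying Barr--Beck--Lurie to the adjunction
$$(\on{id}\otimes\ind_R):\CD\rightleftarrows \CB'\underset{\CB}\otimes R\mod(\CA_0):(\on{id}\otimes\oblv_R),$$
observing that the monad is $R'\otimes -$. You instead invoke the universal property of $A\mod(\CE)$ in $\DGCat^{\on{SymMon}}_{\CE/}$ and argue that it is preserved under pushout. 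Your route is correct and arguably more conceptual, but it is longer and depends on the commutativity of $A$ (so that $A\mod$ is symmetric monoidal), whereas the paper's monadicity argument works verbatim for associative $R$. One small point: your reference to \lemref{l:crit aff} is a red herring here---the universal property you need comes entirely from the adjunction in the proposition on $\on{ComAlg}(\CC)\to\DGCat^{\on{SymMon}}_{\CC/}$, together with the translation $\Maps_{\on{ComAlg}(\CE)}(A,\ul\Hom_\CD(\one_\CD,\one_\CD))\simeq\Maps_{\on{ComAlg}(\CD)}(A_\CD,\one_\CD)$, which is the lax-monoidal adjunction for $\phi:\CE\to\CD$.
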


\begin{proof}
Follows from the fact that for $R\in \on{AssocAlg}(\CA_0)$, we have
$$\CB'\underset{\CB}\otimes R\mod(\CA_0)\simeq R'\mod(\CB'\underset{\CB}\otimes \CA_0),$$
where $R'$ denotes the image of $R$ under
$$\CA_0\to \CB'\underset{\CB}\otimes \CA_0.$$

Indeed, the adjunction
$$(\on{id}\otimes \ind_R):\CB'\underset{\CB}\otimes \CA_0\rightleftarrows \CB'\underset{\CB}\otimes R\mod(\CA_0):
(\on{id}\otimes \oblv_R)$$
satisfies the assumption of the Barr-Beck-Lurie theorem (see \cite[Theorem 4.7.3.5]{Lu2}) and hence is monadic, with the monad given by $R'\otimes -$.
\end{proof}

\ssec{Endomorphisms of the unit, term-wise}  \label{ss:endo term}

\sssec{}  \label{sss:I+}

For a finite set $I$, let $I_+$ denote the pointed finite set
$$I_+=I\sqcup \{*\}.$$

Note that we have a canonical identification of pointed spaces
$$B(\free_{\BE_1}(I))\simeq \{*\}\underset{I_+}\sqcup \{*\}=:\Sigma(I_+).$$

\medskip

Indeed, by adjunction, for a pointed space $(Y,y)$, the following five pieces of data are equivalent:

\medskip

\noindent(i) A map of pointed spaces $\Sigma(I_+)\to (Y,y)$;

\medskip

\noindent(ii) A map of pointed spaces $(I_+,*)\to \Omega(Y,y)$;

\medskip

\noindent(iii) A map of spaces $I\to \Omega(Y,y)$;

\medskip

\noindent(iv) A map of $\BE_1$-objects $\free_{\BE_1}(I)\to \Omega(Y,y)$;

\medskip

\noindent(v) A map of pointed spaces $B(\free_{\BE_1}(I))\to (Y,y)$.

\sssec{}  \label{sss:map gamma I+}

In what follows, for $\gamma^I\in \Maps(I,\Omega(Y,y))$, we let $\gamma^{I_+}$
denote the corresponding pointed map $I_+\to \Omega(Y,y)$, i.e.,
$$\gamma^{I_+}=(\gamma^I,\gamma_{\on{triv}}).$$

\medskip

Thus, for each $\gamma^I$ we obtain a map $\Sigma(I_+)\to Y$
and the corresponding map of algebras, to be denoted
$$\gamma^{I_+}:\End_{\CA^{\otimes \Sigma(I_+)}}(\one_{\CA^{\otimes \Sigma(I_+)}})\to
\End_{\CA^{\otimes Y}}(\one_{\CA^{\otimes Y}}).$$

According to \corref{c:endo}, the resulting map from the colimit of the terms
\begin{equation} \label{e:terms iso}
\End_{\CA^{\otimes \Sigma(I_+)}}(\one_{\CA^{\otimes \Sigma(I_+)}})
\end{equation}
to $\End_{\CA^{\otimes Y}}(\one_{\CA^{\otimes Y}})$, is an isomorphism, provided that
$\CA$ has an affine diagonal.

\medskip

In this subsection we will describe the terms \eqref{e:terms iso} more explicitly, assuming that $\CA$ is \emph{rigid}
(note that according to Remark \ref{r:rigid vs affine diagonal}, a rigid symmetric monoidal
category automatically has an affine diagonal).

\sssec{}

For a (not necessarily pointed) finite set $J$, consider commutative diagram
\begin{equation} \label{e:bouquet diag}
\CD
\CA^{\otimes J}  @>{\on{mult}_{J}}>> \CA \\
@V{\on{mult}_{J}}VV   @VV{\iota_t}V  \\
\CA  @>{\iota_s}>>  \CA^{\otimes \Sigma(J)},
\endCD
\end{equation}
where the two arrows $\iota_s,\iota_t:\CA\to \CA^{\otimes \Sigma(J)}$ correspond to the two maps
$$\{*\}\rightrightarrows \Sigma(J).$$

Note that since the functor \eqref{e:integral functor} preserves colimits,
\eqref{e:bouquet diag} is in fact a push-out square in $\DGCat^{\on{SymMon}}$.  Base change defines a natural transformation
\begin{equation} \label{e:base change}
\on{BC}_{J}:\on{mult}_{J}\circ (\on{mult}_{J})^R\to \iota_s^R\circ \iota_t.
\end{equation}

\begin{lem} \label{l:base change}
Assume that $\CA$ is rigid. Then the natural transformation $\on{BC}_{J}$ is an isomorphism.
\end{lem}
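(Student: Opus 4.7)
The key idea is that when the base of a pushout of symmetric monoidal DG categories is rigid, the pushout computes as a relative tensor product in $\DGCat$, and base change for such tensor products is automatic. The plan is to reduce $\on{BC}_J$ to this standard base change isomorphism.

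First, I would observe that by \propref{p:colimit of rigid cats}, the rigidity of $\CA$ propagates to $\CA^{\otimes J}$. In particular, $\on{mult}_J:\CA^{\otimes J}\to \CA$ is a morphism of rigid symmetric monoidal DG categories, so its right adjoint $\on{mult}_J^R$ is continuous, and by \cite[Chapter 1, Sect. 9]{GR1} is automatically $\CA^{\otimes J}$-linear (with respect to the action of $\CA^{\otimes J}$ on $\CA$ via $\on{mult}_J$).

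Next, I would invoke the general fact that for a rigid symmetric monoidal base, the pushout in $\DGCat^{\on{SymMon}}$ coincides with the relative tensor product in $\DGCat$. Applied to \eqref{e:bouquet diag}, this identifies $\CA^{\otimes\Sigma(J)}\simeq \CA\underset{\CA^{\otimes J}}{\otimes}\CA$ as a plain DG category, and under this identification the functors $\iota_s,\iota_t$ become the canonical extension-of-scalars maps
$$\iota_s = \on{id}_\CA\underset{\CA^{\otimes J}}{\otimes}\on{mult}_J,\qquad \iota_t = \on{mult}_J\underset{\CA^{\otimes J}}{\otimes}\on{id}_\CA,$$
where we use the tautological identification $\CA^{\otimes J}\underset{\CA^{\otimes J}}{\otimes}\CA \simeq \CA$. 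Once again by the compatibility of right adjoints with relative tensor products over a rigid base (from the same reference in \cite{GR1}), we then get
$$\iota_s^R = \on{id}_\CA\underset{\CA^{\otimes J}}{\otimes}\on{mult}_J^R.$$

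Composing for $a\in\CA$ yields
$$\iota_s^R(\iota_t(a)) \simeq \iota_s^R(\one_\CA\otimes a) \simeq \one_\CA\otimes \on{mult}_J^R(a),$$
which under the identification $\CA\underset{\CA^{\otimes J}}{\otimes}\CA^{\otimes J}\simeq \CA$ corresponds exactly to $\on{mult}_J(\on{mult}_J^R(a))$. A final formal check, tracing through the construction of $\on{BC}_J$ via the unit of $\iota_s\dashv\iota_s^R$ and the counit of $\on{mult}_J\dashv\on{mult}_J^R$, identifies this canonical isomorphism with $\on{BC}_J$ itself. The main obstacle, and where I expect to spend the most care, is in cleanly invoking the base change and right-adjoint compatibility results for rigid symmetric monoidal categories from \cite{GR1} and verifying that the tautological 2-cell arising from the pushout presentation agrees with the unit/counit recipe defining $\on{BC}_J$; once that matching is carried out, the assertion is essentially automatic.
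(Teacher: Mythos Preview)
Your proposal is correct and rests on the same core principle as the paper's proof: right adjoints of functors between module categories over a rigid base are strictly (not just lax) linear, which is \cite[Chapter 1, Lemma 9.3.6]{GR1}.

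There is a mild organizational difference worth noting. You work directly over the base $\CA^{\otimes J}$, observe it is rigid, and compute $\iota_s^R \simeq \on{id}_\CA\underset{\CA^{\otimes J}}{\otimes}\on{mult}_J^R$ as a relative tensor product. The paper instead proves a slightly more general statement, valid for any pushout square
\[
\begin{CD}
\CA @>>> \CA_2 \\
@VVV @VVV \\
\CA_1 @>>> \CA_1\underset{\CA}\otimes \CA_2
\end{CD}
\]
of rigid symmetric monoidal categories. It rewrites each corner as $(-)\underset{\CA\otimes\CA}\otimes\CA$, observes that the outer square (before tensoring down) has the obvious base-change property, and then uses the same lemma from \cite{GR1} with $\CA\otimes\CA$ as the rigid base to pass right adjoints through $-\underset{\CA\otimes\CA}\otimes\CA$. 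Your route is a bit more direct for the case at hand; theirs yields the general pushout statement for free. Either way, the substance is the same, and your final paragraph correctly identifies the only place requiring care: matching the canonical tensor-product identification with the unit/counit definition of $\on{BC}_J$. (As a minor remark, invoking \propref{p:colimit of rigid cats} to get rigidity of $\CA^{\otimes J}$ for finite $J$ is overkill; a finite tensor product of rigid symmetric monoidal categories is trivially rigid.)
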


\begin{proof}

We will prove the assertion more generally for a push-out diagram of rigid symmetric monoidal categories
$$
\CD
\CA  @>>> \CA_2 \\
@VVV  @VVV  \\
\CA_1  @>>> \CA_1\underset{\CA}\otimes \CA_2.
\endCD
$$

Indeed, we rewrite the latter diagram as
$$
\CD
(\CA\otimes \CA)\underset{\CA\otimes \CA}\otimes \CA  @>>> (\CA\otimes \CA_2)\underset{\CA\otimes \CA}\otimes \CA \\
@VVV  @VVV  \\
(\CA_1\otimes \CA)\underset{\CA\otimes \CA}\otimes \CA @>>> (\CA_1\otimes \CA_2)\underset{\CA\otimes \CA}\otimes \CA.
\endCD
$$

Now, the assertion follows using \cite[Chapter 1, Lemma 9.3.6]{GR1} from the base change property of the diagram
$$
\CD
\CA\otimes \CA @>>> \CA\otimes \CA_2 \\
@VVV  @VVV  \\
\CA_1\otimes \CA @>>> \CA_1\otimes \CA_2.
\endCD
$$
Namely, the assertion of {\it loc.cit.} says that the right adjoint of a functor between module categories over a rigid
monoidal category, which is a priori only right-lax compatible with the action, is actually strictly compatible. Hence,
the resulting (commutative) diagram
$$
\CD
\CA\otimes \CA @<<< \CA\otimes \CA_2 \\
@VVV  @VVV  \\
\CA_1\otimes \CA @<<< \CA_1\otimes \CA_2
\endCD
$$
takes place in $(\CA\otimes \CA)\mmod$, and hence stays commutative after applying $-\underset{\CA\otimes \CA}\otimes \CA$.

\end{proof}

\sssec{}

Note that
$$\iota_s(\one_\CA)\simeq \one_{\CA^{\otimes \Sigma(J)}}\simeq \iota_t(\one_\CA).$$
Hence,
$$\CHom_\CA(\one_\CA,\iota_s^R\circ \iota_t(\one_\CA))\simeq
\CEnd_{\CA^{\otimes \Sigma(J)}}(\one_{\CA^{\otimes \Sigma(J)}}).$$

Therefore, as a corollary of \lemref{l:base change}, we obtain:

\begin{cor}  \label{c:endo term}
For $\CA$ rigid, we have a canonical isomorphism (in $\Vect$)
$$\CHom_{\CA}(\one_\CA,\on{mult}_{J}\circ \on{mult}_{J}^R(\one_\CA))\to
\CEnd_{\CA^{\otimes \Sigma(J)}}(\one_{\CA^{\otimes \Sigma(J)}}).$$
\end{cor}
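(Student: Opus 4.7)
The plan is to derive this corollary essentially as an immediate consequence of \lemref{l:base change}, together with the symmetric monoidal nature of the functors $\iota_s$ and $\iota_t$. The strategy is to rewrite $\CEnd_{\CA^{\otimes \Sigma(J)}}(\one_{\CA^{\otimes \Sigma(J)}})$ using the adjunction $(\iota_s, \iota_s^R)$, then apply the base change isomorphism of \lemref{l:base change} to the unit object.

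First, I would observe that since $\iota_s, \iota_t : \CA \to \CA^{\otimes \Sigma(J)}$ are symmetric monoidal functors, they both send $\one_\CA$ canonically to $\one_{\CA^{\otimes \Sigma(J)}}$. In particular,
\[
\CEnd_{\CA^{\otimes \Sigma(J)}}(\one_{\CA^{\otimes \Sigma(J)}}) \simeq \CHom_{\CA^{\otimes \Sigma(J)}}(\iota_s(\one_\CA), \iota_t(\one_\CA)).
\]
By the $(\iota_s, \iota_s^R)$-adjunction (which exists because the square \eqref{e:bouquet diag} is a pushout of rigid symmetric monoidal categories, so all functors involved admit continuous right adjoints), this rewrites as
\[
\CHom_\CA(\one_\CA, \iota_s^R \circ \iota_t(\one_\CA)).
\]

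Second, I would invoke \lemref{l:base change}, which asserts that the base change natural transformation
\[
\on{BC}_J : \on{mult}_J \circ (\on{mult}_J)^R \to \iota_s^R \circ \iota_t
\]
is an isomorphism of endofunctors of $\CA$ under the rigidity hypothesis. Evaluating at $\one_\CA$ and applying $\CHom_\CA(\one_\CA, -)$ yields the desired isomorphism
\[
\CHom_\CA(\one_\CA, \on{mult}_J \circ \on{mult}_J^R(\one_\CA)) \iso \CHom_\CA(\one_\CA, \iota_s^R \circ \iota_t(\one_\CA)) \simeq \CEnd_{\CA^{\otimes \Sigma(J)}}(\one_{\CA^{\otimes \Sigma(J)}}).
\]

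There is essentially no obstacle here: all the serious content has already been extracted in \lemref{l:base change}, and what remains is bookkeeping about the unit objects and the adjunction. The only point worth being careful about is confirming that the isomorphism we produce is canonical and natural (in particular, compatible with the algebra structure on both sides, which will be relevant when this corollary is plugged into \corref{c:endo}); but this naturality is automatic since every identification used (symmetric monoidal preservation of the unit, the adjunction isomorphism, and $\on{BC}_J$) is canonical.
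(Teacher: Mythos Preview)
Your proposal is correct and follows essentially the same route as the paper: identify $\one_{\CA^{\otimes \Sigma(J)}}$ with $\iota_s(\one_\CA)\simeq \iota_t(\one_\CA)$, use the $(\iota_s,\iota_s^R)$-adjunction to rewrite the endomorphism algebra as $\CHom_\CA(\one_\CA,\iota_s^R\circ\iota_t(\one_\CA))$, and then apply the base change isomorphism of \lemref{l:base change}. The paper records exactly these two observations in the paragraph immediately preceding the corollary, and your remark about compatibility with the algebra structures is handled separately in \lemref{l:endo term alg}.
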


\sssec{}

Note that the functor $\on{mult}_{J}$ is symmetric monoidal. Hence, the functor $\on{mult}^R_{J}$
has a natural right-lax symmetric monoidal structure. In particular, the object
$$(\on{mult}_{J})^R(\one_\CA)$$
has a natural structure of commutative algebra in $\CA^{\otimes J}$, and hence
$$ \on{mult}_{J}\circ (\on{mult}_{J})^R(\one_\CA)$$
has a natural structure of commutative algebra in $\CA$.

\medskip

Hence,
$$\Maps_{\CA}(\one_\CA, \on{mult}_{J}\circ (\on{mult}_{J})^R(\one_\CA))$$
acquires a structure of commutative algebra (in $\Vect$).

\sssec{}  \label{sss:xi ast}

For a pair of elements $\xi_1,\xi_2\in \Maps_{\CA}(\one_\CA, \on{mult}_{J}\circ \on{mult}_J^R(\one_\CA))$,
let us denote by $\xi_1\ast \xi_2$ their product in $\Maps_{\CA}(\one_\CA, \on{mult}_{J}\circ \on{mult}_J^R(\one_\CA))$.

\medskip

Explicitly, $\xi_1\ast\xi_2$ is given by the composition
\begin{multline*}
\one_\CA\simeq \one_\CA\otimes \one_\CA \overset{\xi_1\otimes \xi_2}\longrightarrow
\on{mult}_{J}\circ \on{mult}_J^R(\one_\CA)\otimes \on{mult}_{J}\circ \on{mult}_J^R(\one_\CA)\simeq \\
\simeq \on{mult}_J\left(\on{mult}_J^R(\one_\CA)\otimes \on{mult}_J^R(\one_\CA)\right)
\to \on{mult}_J\circ \on{mult}_J^R(\one_\CA\otimes \one_\CA)\simeq  \on{mult}_J\circ \on{mult}_J^R(\one_\CA).
\end{multline*}

\sssec{}

We now claim:

\begin{lem} \label{l:endo term alg}
The isomorphism of \corref{c:endo term} respects the algebra structures on the two sides.
\end{lem}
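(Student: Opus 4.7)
The plan is to show that the base change natural transformation $\on{BC}_J$ of~\eqref{e:base change} underlies a morphism of right-lax symmetric monoidal endofunctors $\CA\to\CA$; evaluating at $\one_\CA$ then gives a morphism of commutative algebras in $\CA$, and applying the symmetric monoidal functor $\CHom_\CA(\one_\CA,-):\CA\to\Vect$ (which preserves commutative algebras since $\one_\CA$ is the monoidal unit) yields the desired identification of commutative algebras in $\Vect$.

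The first step is to match up the two algebra structures via their commutative algebra origins in $\CA$. The multiplication $\xi_1\ast\xi_2$ of~\secref{sss:xi ast} is, by construction, the image under $\CHom_\CA(\one_\CA,-)$ of the commutative algebra structure on $\on{mult}_J\circ\on{mult}_J^R(\one_\CA)\in\CA$ induced by the right-lax symmetric monoidal structure on $\on{mult}_J^R$ (dual to the strict symmetric monoidal structure on $\on{mult}_J$). For the right-hand side, under $\iota_s(\one_\CA)\simeq \one_{\CA^{\otimes\Sigma(J)}}\simeq \iota_t(\one_\CA)$ and the $(\iota_s,\iota_s^R)$-adjunction, the usual endomorphism (composition) algebra on $\CEnd_{\CA^{\otimes\Sigma(J)}}(\one_{\CA^{\otimes\Sigma(J)}})$ coincides, by the Eckmann--Hilton argument applied to the symmetric monoidal unit, with the algebra obtained by applying $\CHom_\CA(\one_\CA,-)$ to the commutative algebra $\iota_s^R\circ\iota_t(\one_\CA)\in\CA$ built from the right-lax symmetric monoidal structure on $\iota_s^R$ together with the strict symmetric monoidal structure on $\iota_t$.

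The main point is therefore to verify that $\on{BC}_J$ is a morphism of right-lax symmetric monoidal endofunctors of $\CA$. This is an instance of a general principle: for a commutative square of symmetric monoidal DG categories whose relevant arrows admit right adjoints, the Beck--Chevalley transformation lives in the $(\infty,2)$-category of symmetric monoidal categories and right-lax symmetric monoidal functors as a genuine 2-morphism. Concretely, $\on{BC}_J$ is assembled from three building blocks: (i)~the adjunction unit $\on{id}\to \on{mult}_J^R\circ \on{mult}_J$; (ii)~the strict commutativity $\iota_t\circ \on{mult}_J\simeq \iota_s\circ \on{mult}_J$ recorded by the pushout square~\eqref{e:bouquet diag}; and (iii)~the adjunction counit $\iota_s\circ\iota_s^R\to\on{id}$. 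Each is itself a morphism of right-lax symmetric monoidal functors, so the composite $\on{BC}_J$ inherits the same structure.

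The main obstacle is making rigorous that these building blocks are coherently lax symmetric monoidal. The cleanest formulation is via the symmetric monoidal $(\infty,2)$-enhancement of $\DGCat^{\on{SymMon}}$ whose 1-morphisms are right-lax symmetric monoidal functors: passage to right adjoints is a functor landing in this enhancement, adjunction units and counits are 2-morphisms there, and, since~\eqref{e:bouquet diag} is a pushout in $\DGCat^{\on{SymMon}}$, it commutes as a diagram of symmetric monoidal functors. Once this formal framework is in place, evaluating $\on{BC}_J$ at $\one_\CA$ produces an isomorphism of commutative algebras in $\CA$ whose image under $\CHom_\CA(\one_\CA,-)$ is precisely the isomorphism of~\corref{c:endo term}, now promoted to an algebra isomorphism as claimed.
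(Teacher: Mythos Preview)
Your proposal is correct and follows essentially the same approach as the paper's proof: both argue that $\on{BC}_J$ respects the right-lax symmetric monoidal structures (the paper simply asserts this from the fact that \eqref{e:bouquet diag} is a diagram of symmetric monoidal categories, while you spell out the decomposition into unit, commutativity, and counit), deduce that the map on $\CHom_\CA(\one_\CA,-)$ is one of commutative algebras, and then invoke Eckmann--Hilton to match this with the composition algebra structure on $\CEnd_{\CA^{\otimes\Sigma(J)}}(\one_{\CA^{\otimes\Sigma(J)}})$.
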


\begin{proof}

Since \eqref{e:bouquet diag} is a commutative diagram of symmetric monoidal DG categories, the base change
morphism \eqref{e:base change} respects the right-lax symmetric monoidal structures on the two sides. Hence,
the induced morphism
$$\CHom_{\CA}(\one_\CA,\on{mult}_J\circ \on{mult}_J^R(\one_\CA))\to
\CHom_\CA(\one_\CA,\iota_s^R \circ \iota_r(\one_\CA))\simeq
\CEnd_{\CA^{\otimes \Sigma(J)}}(\one_{\CA^{\otimes \Sigma(J)}})$$
respects the commutative algebra structures.

\medskip

Finally, the resulting algebra structure on $\CEnd_{\CA^{\otimes \Sigma(J)}}(\one_{\CA^{\otimes \Sigma(J)}})$
equals one given by the composition, by the Eckmann-Hilton argument.

\end{proof}

\sssec{}

In what follows for
\begin{equation} \label{e:xi elt}
\xi\in \Maps_{\CA}(\one_\CA, \on{mult}_{J}\circ \on{mult}_J^R(\one_\CA)),
\end{equation}
we let
\begin{equation} \label{e:E elt}
E_\xi\in \CEnd_{\CA^{\otimes \Sigma(J)}}(\one_{\CA^{\otimes \Sigma(J)}})
\end{equation}
denote the corresponding element.

\medskip

Let us describe explicitly the product operation on the elements $E_\xi$ of \eqref{e:E elt}. By \lemref{l:endo term alg},
we have
$$E_{\xi_1}\cdot E_{\xi_2}=E_{\xi_1\ast\xi_2},$$
where $\xi_1\ast\xi_2$
is the product of $\xi_1$ and $\xi_2$ in $\Maps_{\CA}(\one_\CA, \on{mult}_{J}\circ \on{mult}_J^R(\one_\CA))$,
which is in turn described explicitly in \secref{sss:xi ast}.

\ssec{Action on a module via excursions}  \label{ss:exc}

\sssec{}

Let us be given a functor of DG categories
$$\CS_Y:\CA^{\otimes Y}\to \CC.$$

In particular, the algebra $\CEnd_{\CA^{\otimes Y}}(\one_{\CA^{\otimes Y}})$  acts on the object
$\CS_Y(\one_{\CA^{\otimes Y}})$.

\medskip

Recall (see \secref{sss:map gamma I+}) that given a map
$$\gamma^I:I\to \Omega(Y,y),$$
we have a map
$$\gamma^{I_+}:\End_{\CA^{\otimes \Sigma(I_+)}}(\one_{\CA^{\otimes \Sigma(I_+)}})\to
\End_{\CA^{\otimes Y}}(\one_{\CA^{\otimes Y}}).$$

Recall that to an element
$$\xi\in \Maps_{\CA}(\one_\CA, \on{mult}_{J_+}\circ (\on{mult}_{I_+})^R(\one_\CA))$$
we associated an element
$$E_\xi\in \CEnd_{\CA^{\otimes \Sigma(I_+)}}(\one_{\CA^{\otimes \Sigma(J_+)}}).$$

In this subsection we will give an explicit formula for the action of the element
$$\gamma^{I_+}(E_\xi)\in \CEnd_{\CA^{\otimes Y}}(\one_{\CA^{\otimes Y}})$$
on $\CS_Y(\one_{\CA^{\otimes Y}})$.

\medskip

We will do so in a slightly more general context: instead of the pointed finite set $I_+$ we will
consider a non-pointed one.

\sssec{}

For an object $Y'\in \Spc$ and a point $y'\in Y'$ let $\on{ev}_{y'}$ denote the restriction functor
$$\on{LS}(Y')\to \Vect$$
corresponding to
$$\{*\} \overset{y'}\hookrightarrow Y'.$$

\medskip

Given two points $y'_1$ and $y'_2$ and a path $\gamma'$ between them, we have a natural transformation
\begin{equation} \label{e:parallel trans}
\on{mon}_{\gamma'}:\on{ev}_{y'_1}\to \on{ev}_{y_2'},
\end{equation}
given by ``parallel transport'' along $\gamma$. Here is the formal construction:

\medskip

By definition, a path $\gamma'$ is a point
of $\{*\}\underset{Y'}\times \{*\}$, where the two maps
$$\{*\}\rightrightarrows Y'$$
are given by $y'_1$ and $y'_2$, respectively.

\medskip

Restriction on $\LS(-)$ along the maps in the commutative square
$$
\CD
\{*\}\underset{Y'}\times \{*\} @>>> \{*\} \\
@VVV  @VV{y'_2}V  \\
\{*\}  @>{y'_1}>> Y',
\endCD
$$
followed by restriction along
$$\{*\} \overset{\gamma'}\to \{*\}\underset{Y'}\times \{*\},$$
defines the desired isomorphism of functors \eqref{e:parallel trans}.

\sssec{}  \label{sss:excurs}

Ley $y_1$ and $y_2$ be two points of $Y$. Let $J$ be a finite set, and let us be given a $J$-tuple
$\gamma^J$ of paths from $y_1$ to $y_2$. We can consider the points $y_1^J,y_2^J\in Y^J$
and regard $\gamma^J$ as a path from $y_1^J$ to $y_2^J$.

\medskip


Fix a point
$$\xi\in \Maps_{\CA}(\one_\CA, \on{mult}_J\circ \on{mult}_J^R(\one_\CA)).$$

\medskip

Recall that according to \propref{p:describe functors}, the datum of a functor $\CS_Y$ is equivalent to the
datum of a collection of functors
$$\CS_I:\CA^{\otimes I}\to \CC\otimes \LS(Y^I),$$
which depends functorially on $I\in \on{fSet}$.

\medskip

Define the \emph{excursion operator} $\on{Exc}_{\CS_Y}(\gamma^J,\xi)$ to be the following endomorphism
of $\CS_Y(\one_{\CA^{\otimes Y}})$
$$
\CD
\CS_Y(\one_{\CA^{\otimes Y}}) @>>{\sim}> \on{ev}_{y_1}\left(\CS_{\{*\}}(\one_\CA)\right) @>{\xi}>>
\on{ev}_{y_1}\left(\CS_{\{*\}}(\on{mult}_J\circ \on{mult}_J^R(\one_\CA))\right)  \\
& & & & @VV{\sim}V   \\
& & & & \on{ev}_{y^J_1}\left(\CS_J(\on{mult}_J^R(\one_\CA))\right)  \\
& & & & @VV{\on{mon}_{\gamma^J}}V  \\
& & & & \on{ev}_{y^J_2}\left(\CS_J(\on{mult}_J^R(\one_\CA))\right) \\
& & & & @VV{\sim}V   \\
\CS_Y(\one_{\CA^{\otimes Y}})  @<{\sim}<< \on{ev}_{y_2}\left(\CS_{\{*\}}(\one_\CA)\right) @<{\on{counit}}<<
\on{ev}_{y_2}\left(\CS_{\{*\}}(\on{mult}_J\circ \on{mult}_J^R(\one_\CA))\right),
\endCD
$$
where:

\begin{itemize}

\item the first and last isomorphisms are obtained by identifying $\one_{\CA^{\otimes Y}}$ with the image
of $\one_\CA$ under
$$\CA\simeq \CA^{\{*\}}\to \CA^Y,$$
where $\{*\}\to Y$ is $y_i$, $i=1,2$;

\medskip

\item the third and the fifth isomorphisms are obtained by functoriality with respect to the map $J\to \{*\}$
in $\on{fSet}$;



\end{itemize}

\sssec{}  \label{sss:univ exc}

In the particular case when $\CC=\CA^{\otimes Y}$ and $\CS_Y$ is the identity functor, we will denote the corresponding
$\on{Exc}_{\CS_Y}(\gamma^J,\xi)$ by
$$\on{Exc}_{\on{univ}}(\gamma^J,\xi)\in \CEnd_{\CA^{\otimes Y}}(\one_{\CA^{\otimes Y}}).$$

Explicitly, $\on{Exc}_{\on{univ}}(\gamma^J,\xi)$ is given by the composition
\begin{multline} \label{e:exc univ}
\one_{\CA^{\otimes Y}} \simeq \iota_{y_1}(\one_\CA) \overset{\xi}\to \iota_{y_1}\circ \on{mult}_J\circ \on{mult}^R_J (\one_\CA)
\simeq \iota_{y^J_1}\circ \on{mult}^R_J (\one_\CA) \overset{\on{mon}_{\gamma^J}}\longrightarrow \\
\to \iota_{y^J_2}\circ \on{mult}^R_J (\one_\CA) \simeq \iota_{y_2}\circ \on{mult}_J\circ \on{mult}^R_J (\one_\CA)
\overset{\on{counit}}\longrightarrow \iota_{y_2}(\one_\CA) \simeq \one_{\CA^{\otimes Y}},
\end{multline}
where $y_i$ and $y_i^J$ denote the functors
$$\CA\to \CA^Y \text{ and } \CA^J\to \CA^Y,$$
corresponding to
$$\{*\}\overset{y_i}\to Y \text{ and } J\overset{y^J_i}\to Y,$$
respectively.

\medskip

By functoriality, for a general $\CC$, the map
$\on{Exc}_{\CS_Y}(\gamma^J,\xi)$, is the image of $\on{Exc}_{\on{univ}}(\gamma^J,\xi)$
under $\CS_Y$.

\sssec{Example}

Let $\sfe$ be a field of characteristic $0$.
Take $\CA=\Rep(\sG)$, and assume that $Y$ has finitely many connected components, so that we have
an equivalence
$$\Rep(\sG)^{\otimes Y}\simeq \QCoh(\LocSys_\sG(Y)),$$
see \thmref{t:integral to LocSys}.

\medskip

Then for $(\gamma^J,\xi)$
as above, we can think of $\on{Exc}_{\on{univ}}(\gamma^J,\xi)$ as an element of
$$\Gamma(\LocSys_\sG(Y),\CO_{\LocSys_\sG(Y)}).$$

Let us describe this element explicitly for a particular (in fact, a generating family) of choices of $\xi$. Let $V_J$ be a representation
of $G^J$. Fix an invariant vector and an invariant covector in $\Res^{G^J}_G(V_J)$, i.e.,
$$v:\sfe\to  \Res^{G^J}_G(V_J) \text{ and } v^*:\Res^{G^J}_G(V_J)\to \sfe.$$

The datum of $v^*$ defines by adjunction a map
$$V_J\to \on{coInd}^{G^J}_G(\sfe)=\on{mult}_J^R(\sfe).$$
Let $\xi_{v,v^*}$ denote the composite
$$\sfe \overset{v}\to  \Res^{G^J}_G(V_J)=\on{mult}_J(V_J)
\overset{v^*}\to \on{mult}_J\circ \on{mult}_J^R(\sfe).$$

\medskip

Let us describe explicitly the element
$$\on{Exc}_{\on{univ}}(\gamma^J,\xi_{v,v^*})\in \Gamma(\LocSys_\sG(Y),\CO_{\LocSys_\sG(Y)})$$
as a function on $\LocSys_\sG(Y)$.

\medskip

Namely, the value of this function at a point $\sigma$ of $\LocSys_\sG(Y)$ is the composite
$$
\CD
\sfe  @>{v}>> \on{ev}_{y_1}((\Res^{G^J}_G(V_J))_\sigma) @>{\sim}>>  \on{ev}_{y_1^J}((V_J)_\sigma)  \\
& & & & @VV{\on{mon}_{\gamma^J}}V  \\
\sfe  @<{v^*}<< \on{ev}_{y_2}((\Res^{G^J}_G(V_J))_\sigma) @<{\sim}<< \on{ev}_{y_2^J}((V_J)_\sigma),
\endCD
$$
where for $W_{J'}\in \Rep(G^{J'})$ we denote by $(W_{J'})_\sigma$ the corresponding object
of $\on{LS}(Y^{J'})$.

\sssec{}

We are now ready to state the main result of this subsection:

\begin{thm} \label{t:excurs}
For $(y_1,y_2,\gamma^J)$ as above, consider the corresponding map
$$\gamma^J:\Sigma(J)\to Y.$$
Then the excursion operator
$$\on{Exc}_{\CS_Y}(\gamma^J,\xi)\in \End(\CS_Y(\one_{\CA^{\otimes Y}}))$$
of \secref{sss:excurs}
equals the action of the element of $\End_{\CA^{\otimes Y}}(\one_{\CA^{\otimes Y}})$ obtained as the image of $E_\xi$
(see \eqref{e:E elt}) under the map
$$\End_{\CA^{\otimes \Sigma(J)}}(\one_{\CA^{\otimes \Sigma(J)}}) \overset{\gamma^J}\to
\End_{\CA^{\otimes Y}}(\one_{\CA^{\otimes Y}}).$$
\end{thm}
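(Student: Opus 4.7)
The plan is a two-step reduction followed by a tautological identification. First, by the construction in \secref{sss:univ exc}, the excursion operator $\on{Exc}_{\CS_Y}(\gamma^J,\xi)$ for general $\CC$ and $\CS_Y$ is obtained by applying $\CS_Y$ to the universal excursion $\on{Exc}_{\on{univ}}(\gamma^J,\xi) \in \CEnd_{\CA^{\otimes Y}}(\one_{\CA^{\otimes Y}})$. The action of $\gamma^{I_+}(E_\xi)$ on $\CS_Y(\one_{\CA^{\otimes Y}})$ is likewise, by definition, the image under $\CS_Y$ of $\gamma^{I_+}(E_\xi)$ viewed as an endomorphism of $\one_{\CA^{\otimes Y}}$. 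Hence it suffices to prove the equality inside $\CEnd_{\CA^{\otimes Y}}(\one_{\CA^{\otimes Y}})$.

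Next, the data $(y_1,y_2,\gamma^J)$ is equivalent to the choice of a map of spaces $\phi\colon \Sigma(J)\to Y$ sending the two cone points to $y_1,y_2$ and the $J$ arcs to $\gamma^J$. The recipe for $\on{Exc}_{\on{univ}}(\gamma^J,\xi)$ in \eqref{e:exc univ} is natural with respect to $\phi$: each of $\iota_{y_i}$, $\iota_{y_i^J}$ and the monodromy $\on{mon}_{\gamma^J}$ pulls back from the universal situation on $\Sigma(J)$. Similarly $\gamma^{I_+}(E_\xi)$ is by definition the image of $E_\xi$ along $\Sigma(J)\to Y$. It therefore suffices to treat the tautological case $Y=\Sigma(J)$, $\phi=\on{id}$, with $y_1,y_2$ the cone points and $\gamma^J$ the tautological $J$-tuple of meridians.

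In this tautological case both endomorphisms are visibly of the same shape: applying $\iota_s$ to $\xi$, then an identification $\iota_s\circ \on{mult}_J\circ \on{mult}_J^R \simeq \iota_t\circ \on{mult}_J\circ \on{mult}_J^R$, then the counit of $\on{mult}_J \dashv \on{mult}_J^R$. For $E_\xi$, the middle identification is the one coming from the commutativity of the pushout square \eqref{e:bouquet diag} — indeed, unwinding Corollary~\ref{c:endo term} and Lemma~\ref{l:base change}, $E_\xi$ is precisely the endomorphism of $\one_{\CA^{\otimes \Sigma(J)}}$ obtained by adjunction from $\xi$ composed with the base-change map $\on{mult}_J\circ \on{mult}_J^R \to \iota_s^R\circ \iota_t$. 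For $\on{Exc}_{\on{univ}}$, the middle identification is the parallel transport $\on{mon}_{\gamma^J_{\on{taut}}}$ along the tautological arcs, defined as in \eqref{e:parallel trans} via restriction on $\LS(-)$. The theorem then reduces to the assertion that these two natural transformations coincide.

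The main (and essentially the only non-formal) step is this last identification. It rests on the observation that the canonical homotopy in $\Sigma(J)$ between the two compositions $J \to \{*\} \rightrightarrows \Sigma(J)$ is itself the tautological $J$-tuple of arcs; applying the colimit-preserving symmetric monoidal functor $Y\mapsto \CA^{\otimes Y}$ turns this homotopy into the commutativity datum of \eqref{e:bouquet diag}, while the construction of $\on{mon}_{\gamma}$ via restriction along $\{*\}\to\{*\}\underset{\Sigma(J)}\times\{*\}$ extracts this same datum. The bookkeeping to make this precise — in particular, checking that the adjunction used to form the base change natural transformation is compatible with the pullback procedure that defines parallel transport — is the technical heart of the proof; everything else is formal manipulation of adjunctions and of the universal property of $\CA^{\otimes Y}$.
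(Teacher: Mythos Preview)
Your proposal is correct and follows essentially the same approach as the paper: reduce to the universal case $\CS_Y=\on{Id}$, then reduce to $Y=\Sigma(J)$ with the tautological paths, and finally identify the two natural transformations $\iota_s\circ\on{mult}_J\circ\on{mult}_J^R\rightrightarrows\iota_t$ (one coming from $\on{BC}_J$, the other from $\on{mon}_{\gamma^J_{\on{taut}}}$). The paper makes the last step slightly more explicit by noting that both transformations factor as $\iota_s\circ\on{mult}_J\circ\on{mult}_J^R\simeq\sfq\circ\on{mult}_J^R\simeq\iota_t\circ\on{mult}_J\circ\on{mult}_J^R\to\iota_t$, where $\sfq$ is the common composite in the pushout square \eqref{e:bouquet diag}; this is precisely your observation that the tautological homotopy in $\Sigma(J)$ is the commutativity datum of that square.
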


\begin{rem}
We emphasize that the assertion of \thmref{t:excurs} holds for any $\CA$: we do not need either $\CA$ to have an
affine diagonal. We only use the \emph{existence} of the base change map
$$\CHom_{\CA}(\one_\CA,\on{mult}_{J}\circ \on{mult}_{J}^R(\one_\CA))\to
\CEnd_{\CA^{\otimes \Sigma(J)}}(\one_{\CA^{\otimes \Sigma(J)}}),$$
but we do not need this map to be an isomorphism.
\end{rem}

\ssec{Proof of \thmref{t:excurs}}

\sssec{}

First off, since the assertion is functorial in $\CC$, it suffices to consider the universal case, namely,
$\CC=\CA^{\otimes Y}$ and $\CS_Y$ is the identity functor. Second, since the statement is functorial in $Y$,
we can assume that
$$Y=\Sigma(J),$$
and $\gamma^J$ is the tautological $J$-tuple $\gamma^J_{\on{taut}}$ of paths
$$*_s \to *_t,$$
where $*_s,*_t$ are the two points of $\Sigma(J)$.

\medskip

Thus, we need to show that
\begin{equation} \label{e:identify Exc}
\on{Exc}_{\on{univ}}(\gamma^J_{\on{taut}},\xi)=E_\xi
\end{equation}
as objects in $\End_{\CA^{\otimes \Sigma(J)}}(\one_{\CA^{\otimes \Sigma(J)}})$.

\sssec{}

Let $\sfq$ denote the map
$$\CA^{\otimes J}\to \CA^{\otimes \Sigma(J)},$$
corresponding to either circuit in \eqref{e:bouquet diag}.

\medskip

The path $\gamma^J_{\on{can} }$ defines a $J$-tuple of isomorphisms of functors
$$\on{mon}^j_{\on{can} }:\iota_s\to \iota_t, \quad j\in J$$
so that the composite
$$\sfq\simeq \iota_s\circ \on{mult}_J\overset{\on{mon}^j_{\on{can}}}\longrightarrow \iota_t\circ \on{mult}_J\simeq \sfq$$
is the identity map for all $j\in J$.

\sssec{}

Unwinding the definitions, we obtain that the LHS of \eqref{e:identify Exc} is the endomorphism of $\one_{\CA^{\otimes \Sigma(J)}}$ given by
$$
\CD
\one_{\CA^{\otimes \Sigma(J)}} @>{\sim}>>
\iota_s(\one_\CA) @>{\xi}>> \iota_s\circ \on{mult}_{J}\circ (\on{mult}_{J})^R(\one_\CA)  @>{\sim}>>
\on{mult}_J\circ \iota_s^{\otimes J}\circ (\on{mult}_{J})^R(\one_\CA) \\
& & & & & & @VV{\underset{j\in J}\bigotimes\, \on{mon}^j_{\on{can} }}V \\
\one_{\CA^{\otimes \Sigma(J)}} @<{\sim}<<
\iota_t(\one_\CA) @<{\on{counit}}<<  \iota_t\circ \on{mult}_{J}\circ (\on{mult}_{J})^R(\one_\CA)  @<{\sim}<<
\on{mult}_J\circ \iota_t^{\otimes J}\circ (\on{mult}_{J})^R(\one_\CA),
\endCD
$$

\medskip

and the RHS is
$$
\one_{\CA^{\otimes \Sigma(J)}}
\simeq \iota_s(\one_\CA) \overset{\xi}\to  \iota_s\circ \on{mult}_{J}\circ (\on{mult}_{J})^R(\one_\CA)  \overset{\on{BC}_{J}}\longrightarrow
\iota_s\circ \iota_s^R\circ \iota_t(\one_\CA) \overset{\on{counit}}\longrightarrow \iota_t(\one_\CA)\simeq \one_{\CA^{\otimes \Sigma(J)}},
$$
where $\on{BC}_J$ is an is \eqref{e:base change}.

\sssec{}

We obtain that it suffices to show that the following two natural transformations
$$\iota_s\circ \on{mult}_{J}\circ (\on{mult}_{J})^R\rightrightarrows \iota_t$$
coincide:

\medskip

One is the composite
\begin{multline} \label{e:comp1'}
\iota_s\circ \on{mult}_{J}\circ (\on{mult}_{J})^R  \simeq \on{mult}_{J}\circ \iota_s^{\otimes J} \circ (\on{mult}_{J})^R
\overset{\underset{j\in J}\bigotimes \, \on{mon}^j_{\on{can}}}\simeq \on{mult}_{J}\circ \iota^{\otimes J}_t\circ (\on{mult}_{J})^R \simeq \\
\simeq \iota_t\circ \on{mult}_{J} \circ (\on{mult}_{J})^R \to \iota_t.
\end{multline}
and the other is the composite
\begin{equation} \label{e:comp2'}
\iota_s\circ \on{mult}_{J}\circ (\on{mult}_{J})^R  \overset{\on{BC}_{J}}\longrightarrow
\iota_s\circ \iota_s^R\circ \iota_t \to \iota_t.
\end{equation}

\sssec{}

However, unwinding the definition of $\on{BC}_J$, we obtain that both  \eqref{e:comp1'} and \eqref{e:comp2'} identify with
$$\iota_s\circ \on{mult}_{J}\circ (\on{mult}_{J})^R \simeq \sfq\circ (\on{mult}_{J})^R \simeq
\iota_t\circ \on{mult}_{J}\circ (\on{mult}_{J})^R \to \iota_t.$$

\qed[\thmref{t:excurs}]

\section{Taking the trace} \label{s:Frob}

In this section we will approach the central theme of this paper: the operation of taking the trace. The usual trace construction
assigns to an endomorphism $F$ of a dualizable object $\bo$ in a symmetric monoidal category $\bO$ its trace $\Tr(F,\bo)$, which
is an endomorphism of the unit object $\one_\bO$, see \secref{sss:basic trace}.

\medskip

However, our primary interest will be the notion of
\emph{higher trace}, when $\bO$ is actually a symmetric monoidal 2-category. In this case, the trace construction has an
additional functoriality, see \secref{sss:map of traces}. We will apply this formalism in the following two contexts: $\bO=\DGCat$,
in which our traces will be vector spaces, and $\bO=\tDGCat$, in which our traces will be DG-categories.

\medskip

The main result of this section is \thmref{t:two notions of trace}, which describes the
interaction between the trace operations at different categorical levels.

\ssec{The usual trace}

\sssec{}  \label{sss:basic trace}

Let $\bO$ be a symmetric monoidal category. Given a dualizable object $\bo\in \bO$ and a point
$F\in \End_\bO(\bo)$, we define its trace
$$\Tr(F,\bo)\in \End_\bO(\one_\bO)$$ to be the composite
$$\one_\bO \overset{\on{unit}}\longrightarrow \bo\otimes \bo^\vee \overset{F\otimes \on{id}_{\bo^\vee}}\longrightarrow
\bo\otimes \bo^\vee  \overset{\on{counit}}\longrightarrow \one_\bO.$$

\sssec{}  \label{sss:tr sym mon}

The assignment
$$(\bo,F)\mapsto \Tr(F,\bo)$$ is symmetric monoidal, i.e., we have a canonical isomorphism
$$\Tr(F_1\otimes F_2,\bo_1\otimes \bo_2)\simeq \Tr(F_1,\bo_1)\cdot \Tr(F_2,\bo_2),$$
where $\cdot$ denotes the structure of commutative monoid on $\End(\one_\bO)$ induced by
the symmetric monoidal structure on $\bO$, see \cite[Sect. 2.5]{TV}, along with higher compatibilities.

\medskip

In particular
\begin{equation} \label{e:Tr unit}
\Tr(\on{id}_{\one_\bO},\one_\bO)=\on{id}_{\one_\bO}=\one_{\End_\bO(\bo)}.
\end{equation}

\sssec{}

For a morphism $F:\bo_1\to \bo_2$ between dualizable objects let $F^\vee$ denote the dual morphism $\bo_2^\vee\to \bo_1^\vee$.

\medskip

Let $q_F$ be the point in $\Maps(\one_\bO,\bo_2\otimes \bo_1^\vee)$ that represents $F$, i.e., the composite
$$\one_\bO \overset{\on{unit}}\longrightarrow \bo_1\otimes \bo_1^\vee \overset{F\otimes \on{id}_{\bo_1^\vee}}\longrightarrow
\bo_2\otimes \bo_1^\vee.$$

We have
$$q_F=q_{F^\vee}.$$

From here it follows that for every $\bo$ and $F$ as in \secref{sss:basic trace}, we have
$$\Tr(F,\bo)=\Tr(F^\vee,\bo^\vee).$$

\sssec{} \label{sss:cyclicity}

Similarly, the trace map has the following cyclicity property: we claim that for morphisms
$$F_{1,2}:\bo_1\to \bo_2 \text{ and } F_{2,1}:\bo_2\to \bo_1,$$
there is a canonical isomorphism
\begin{equation} \label{e:cyclicity}
\Tr(F_{1,2}\circ F_{2,1},\bo_2) \simeq \Tr(F_{2,1}\circ F_{1,2},\bo_1).
\end{equation}

\medskip

Indeed, let $q_{i,j}\in \Maps(\one_\bO,\bo_j\otimes \bo_i^\vee)$ be the point that represents $F_{i,j}$.
Then $F_{i,j}\circ F_{j,i}$ is represented by the map
$$\one_\bO\simeq \one_\bO\otimes \one_\bO \overset{q_{i,j}\otimes q_{j,i}}\longrightarrow
\bo_j\otimes \bo_i^\vee \otimes \bo_i\otimes \bo_j^\vee\overset{\on{id}_{\bo_j}\otimes \on{counit}\otimes \on{id}_{\bo_j^\vee}}\longrightarrow
\bo_j\otimes \one_\bO \otimes \bo^\vee_j\simeq \bo_j\otimes \bo^\vee_j.$$

Hence,
$$\Tr(F_{i,j}\circ F_{j,i},\bo_j)$$ is the composite
$$
\one_\bO\simeq \one_\bO\otimes \one_\bO \overset{q_{i,j}\otimes q_{j,i}}\longrightarrow
\bo_j\otimes \bo_i^\vee \otimes \bo_i\otimes \bo_j^\vee\overset{\on{id}_{\bo_j}\otimes \on{counit}\otimes \on{id}_{\bo_j^\vee}}\longrightarrow
\bo_j\otimes \one_\bO \otimes \bo^\vee_j\simeq \bo_j\otimes \bo^\vee_j \overset{\on{counit}}\longrightarrow \one_\bO,$$
and the latter expression is manifestly symmetric in $i$ and $j$.

\ssec{Trace in a 2-category}  \label{ss:categ trace}

\sssec{}  \label{sss:map of traces}

Let now $\bO$ be a symmetric monoidal 2-category (we will be assuming the formalism of
$(\infty,2)$-categories from \cite[Chapter 10]{GR1}).

\medskip

Let $\bo_1$ and $\bo_2$ be a pair of dualizable objects, each endowed
with an endomorphism $F_i$. Let $t:\bo_1\to \bo_2$ be a 1-morphism that \emph{admits a right adjoint}. This means that there exists
a 1-morphism $t^R:\bo_2\to \bo_1$ and 2-morphisms
$$\on{id}_{\bo_1}\to t^R\circ t \text{ and } t\circ t^R\to \on{id}_{\bo_2}$$
that satisfy the usual axioms, see \cite[Chapter 12, Sect. 1]{GR1}.

\medskip

Let $F_i$ be a 1-endomorphism of $\bo_i$. In addition, let us be given a 2-morphism
$$\alpha:t\circ F_1\to F_2\circ t$$
\begin{equation} \label{e:2-morph}
\xy
(0,0)*+{\bo_1}="A";
(20,0)*+{\bo_1}="B";
(0,-20)*+{\bo_2}="C";
(20,-20)*+{\bo_2}="D";
{\ar@{->}^{F_1} "A";"B"};
{\ar@{->}^{t} "A";"C"};
{\ar@{->}^{t} "B";"D"};
{\ar@{->}_{F_2} "C";"D"};
{\ar@{=>}^\alpha "B";"C"};
\endxy
\end{equation}

In this case, following \cite[Definition 2.24]{BN1} or \cite[Example 1.2.5]{KP1}, we define the 2-morphism
$$\Tr(t,\alpha):\Tr(F_1,\bo_1)\to \Tr(F_2,\bo_2)$$
to be the composite
$$
\xy
(0,0)*+{\one_\bO}="A";
(20,0)*+{\bo_1\otimes \bo_1^\vee}="B";
(40,0)*+{\bo_1\otimes \bo_1^\vee}="C";
(60,0)*+{\one_\bO}="D";
(0,-20)*+{\one_\bO}="A'";
(20,-20)*+{\bo_2\otimes \bo_2^\vee}="B'";
(40,-20)*+{\bo_2\otimes \bo_2^\vee}="C'";
(60,-20)*+{\one_\bO,}="D'";
{\ar@{->}^{\on{unit}} "A";"B"};
{\ar@{->}_{\on{unit}} "A'";"B'"};
{\ar@{->}^{\on{counit}} "C";"D"};
{\ar@{->}_{\on{counit}} "C'";"D'"};
{\ar@{->}^{F_1\otimes \on{id}} "B";"C"};
{\ar@{->}_{F_2\otimes \on{id}} "B'";"C'"};
{\ar@{->}^{\on{id}} "A";"A'"};
{\ar@{->}^{\on{id}} "D";"D'"};
{\ar@{->}^{t\otimes t^{\on{op}}} "B";"B'"};
{\ar@{->}^{t\otimes t^{\on{op}}} "C";"C'"};
{\ar@{=>} "B";"A'"};
{\ar@{=>} "C";"B'"};
{\ar@{=>} "D";"C'"};
\endxy
$$
where:

\medskip

\noindent--$t^{\on{op}}$ denotes the 1-morphism $\bo_1^\vee\to \bo_2^\vee$ equal to $(t^R)^\vee$;

\medskip

\noindent--the 2-morphism in the left square is given by the $(t,t^R)$-adjunction;

\medskip

\noindent--the 2-morphism in the middle square is given by $\alpha$;

\medskip

\noindent--the 2-morphism in the right square is given by the $(t,t^R)$-adjunction.

\begin{rem}
The above construction is equivalent to formula \eqref{e:map between Tr} given earlier.
\end{rem}

\sssec{} \label{sss:funct trace}

The construction of \secref{sss:map of traces} is compatible with compositions:

\medskip

For a composition
$$\bo_1\overset{t_{1,2}}\to \bo_2 \overset{t_{2,3}}\to \bo_3,$$
the 2-morphism
$$\Tr(F_1,\bo_1) \overset{\Tr(t_{1,2},\alpha_{1,2})}\longrightarrow \Tr(F_2,\bo_2)
\overset{\Tr(t_{2,3},\alpha_{2,3})}\longrightarrow \Tr(F_3,\bo_3)$$
identifies with $\Tr(t_{1,3},\alpha_{1,3})$, where
$$t_{1,3}=t_{2,3}\circ t_{1,2},$$
and $\alpha_{1,3}$ is obtained by composing $\alpha_{1,2}$ and $\alpha_{2,3}$.

\sssec{} \label{sss:cyclicity morphisms}

The construction of \secref{sss:map of traces} has the following cyclicity property:

\medskip

Given a pair of diagrams of 2-morphisms

\begin{equation} \label{e:two 2-morph}
\xy
(0,0)*+{\bo_1}="A";
(20,0)*+{\bo_2}="B";
(0,-20)*+{\bo'_1}="C";
(20,-20)*+{\bo'_2}="D";
{\ar@{->}^{F_{1,2}} "A";"B"};
{\ar@{->}^{t_1} "A";"C"};
{\ar@{->}^{t_2} "B";"D"};
{\ar@{->}_{F'_{1,2}} "C";"D"};
{\ar@{=>}^{\alpha} "B";"C"};
(40,0)*+{\bo_2}="A'";
(60,0)*+{\bo_1}="B'";
(40,-20)*+{\bo'_2}="C'";
(60,-20)*+{\bo'_1,}="D'";
{\ar@{->}^{F_{2,1}} "A'";"B'"};
{\ar@{->}^{t_2} "A'";"C'"};
{\ar@{->}^{t_1} "B'";"D'"};
{\ar@{->}_{F'_{2,1}} "C'";"D'"};
{\ar@{=>}^{\beta} "B'";"C'"};
\endxy
\end{equation}
we can compose them horizontally in two ways, thus getting diagrams

\begin{equation*}
\xy
(0,0)*+{\bo_1}="A";
(20,0)*+{\bo_1}="B";
(0,-20)*+{\bo'_1}="C";
(20,-20)*+{\bo'_1}="D";
{\ar@{->}^{F_{2,1}\circ F_{1,2}} "A";"B"};
{\ar@{->}^{t_1} "A";"C"};
{\ar@{->}^{t_1} "B";"D"};
{\ar@{->}_{F'_{2,1}\circ F'_{1,2}} "C";"D"};
{\ar@{=>}^{\alpha\circ\beta} "B";"C"};
(40,0)*+{\bo_2}="A'";
(60,0)*+{\bo_2}="B'";
(40,-20)*+{\bo'_2}="C'";
(60,-20)*+{\bo'_2.}="D'";
{\ar@{->}^{F_{1,2}\circ F_{2,1}} "A'";"B'"};
{\ar@{->}^{t_2} "A'";"C'"};
{\ar@{->}^{t_2} "B'";"D'"};
{\ar@{->}_{F'_{1,2}\circ F'_{2,1}} "C'";"D'"};
{\ar@{=>}^{\beta\circ \alpha} "B'";"C'"};
\endxy
\end{equation*}

Assume now that objects $\bo_1,\bo_2,\bo'_1$ and $\bo'_2$ are dualizable, while
1-morphisms $t_1$ and $t_2$ admit right adjoints. Then we claim that the trace maps
\[\Tr(t_1,\alpha\circ\beta):\Tr(F_{2,1}\circ F_{1,2},\bo_1)\to \Tr(F'_{2,1}\circ F'_{1,2},\bo'_1)\]
and
\[\Tr(t_2,\beta\circ\alpha):\Tr(F_{1,2}\circ F_{2,1},\bo_2)\to \Tr(F'_{1,2}\circ F'_{2,1},\bo'_2)\]
match up under identifications
\[
\Tr(F_{1,2}\circ F_{2,1},\bo_2) \simeq \Tr(F_{2,1}\circ F_{1,2},\bo_1) \text{ and } \Tr(F'_{1,2}\circ F'_{2,1},\bo'_2) \simeq \Tr(F'_{2,1}\circ F'_{1,2},\bo'_1)
\]
of \eqref{e:cyclicity}.
\medskip

More precisely, we claim that there is a canonical isomorphism
\begin{equation} \label{e:cycl isomorp}
\Tr(t_1,\alpha\circ\beta)\simeq \Tr(t_2,\beta\circ\alpha)
\end{equation}
in
\[
\on{Funct}(\Tr(F_{2,1}\circ F_{1,2},\bo_1),\Tr(F'_{2,1}\circ F'_{1,2},\bo'_1))\simeq \on{Funct}(\Tr(F_{1,2}\circ F_{2,1},\bo_2),\Tr(F'_{1,2}\circ F'_{2,1},\bo'_2)).
\]

Indeed, let $q_{i,j}\in \Maps(\one_\bO,\bo_j\otimes \bo_i^\vee)$ and $q'_{i,j}\in \Maps(\one_\bO,\bo'_j\otimes \bo'{}_i^\vee)$ be the points that  represent $F_{i,j}$ and $F'_{i,j}$, respectively. Then arguing as in \secref{sss:cyclicity} one sees that both maps
\[
\Tr(t_1,\alpha\circ\beta)\text{ and }\Tr(t_2,\beta\circ\alpha)
\]
are canonically isomorphic to the composite
$$
\xy
(0,0)*+{\one_\bO}="A";
(40,0)*+{\bo_1\otimes \bo_2^\vee\otimes\bo_2\otimes \bo_1^\vee}="B";
(80,0)*+{\one_\bO}="C";
(0,-20)*+{\one_\bO}="A'";
(40,-20)*+{\bo_1\otimes \bo_2^\vee\otimes\bo_2\otimes \bo_1^\vee}="B'";
(80,-20)*+{\one_\bO,}="C'";
{\ar@{->}^{q_{2,1}\otimes q_{1,2}} "A";"B"};
{\ar@{->}_{q_{2,1}\otimes q_{1,2}} "A'";"B'"};
{\ar@{->}^{\on{counit}} "B";"C"};
{\ar@{->}_{\on{counit}} "B'";"C'"};
{\ar@{->}^{\on{id}} "A";"A'"};
{\ar@{->}^{\on{id}} "C";"C'"};
{\ar@{->}^{t_1\otimes t_2^{\on{op}}\otimes t_2\otimes t_1^{\on{op}}} "B";"B'"};
{\ar@{=>} "B";"A'"};
{\ar@{=>} "C";"B'"};
\endxy
$$
where:

\medskip

\noindent--$t_i^{\on{op}}$ denotes the 1-morphism $\bo_i^\vee\to \bo'{}_i^\vee$ equal to $(t_i^R)^\vee$;

\medskip

\noindent--the 2-morphism in the left square is given by the $(t_i,t_j^R)$-adjunctions and 2-morphisms $\alpha$ and $\beta$;

\medskip

\noindent--the 2-morphism in the right square is given by the $(t_i,t_i^R)$-adjunction.

\sssec{} \label{sss:cyclicity composition}

By functoriality, isomorphisms \eqref{e:cycl isomorp} from  \secref{sss:cyclicity morphisms} are compatible with {\em vertical} compositions:

\medskip

Consider a pair of diagrams of 2-morphisms

\begin{equation} \label{e:two composable 2-morph}
\xy
(0,0)*+{\bo_1}="A";
(20,0)*+{\bo_2}="B";
(0,-20)*+{\bo'_1}="C";
(20,-20)*+{\bo'_2}="D";
(0,-40)*+{\bo''_1}="E";
(20,-40)*+{\bo''_2}="F";
{\ar@{->}^{F_{1,2}} "A";"B"};
{\ar@{->}^{t_1} "A";"C"};
{\ar@{->}^{t_2} "B";"D"};
{\ar@{->}^{F'_{1,2}} "C";"D"};
{\ar@{=>}^{\alpha} "B";"C"};
{\ar@{->}^{t'_1} "C";"E"};
{\ar@{->}^{t'_2} "D";"F"};
{\ar@{->}_{F''_{1,2}} "E";"F"};
{\ar@{=>}^{\alpha'} "D";"E"};
(40,0)*+{\bo_2}="A'";
(60,0)*+{\bo_1}="B'";
(40,-20)*+{\bo'_2}="C'";
(60,-20)*+{\bo'_1}="D'";
(40,-40)*+{\bo''_2}="E'";
(60,-40)*+{\bo''_1,}="F'";
{\ar@{->}^{F_{2,1}} "A'";"B'"};
{\ar@{->}^{t_2} "A'";"C'"};
{\ar@{->}^{t_1} "B'";"D'"};
{\ar@{->}^{F'_{2,1}} "C'";"D'"};
{\ar@{=>}^{\beta} "B'";"C'"};
{\ar@{->}^{t'_2} "C'";"E'"};
{\ar@{->}^{t'_1} "D'";"F'"};
{\ar@{->}_{F''_{2,1}} "E'";"F'"};
{\ar@{=>}^{\beta'} "D'";"E'"};
\endxy
\end{equation}
in which all objects are dualizable and all vertical 1-morphisms admit right adjoints.

\medskip

Then isomorphisms \eqref{e:cycl isomorp}, corresponding to the top part, the bottom part and the vertical composition of the diagrams
\eqref{e:two composable 2-morph}, respectively, are
\begin{equation} \label{e:cycl isomorp1}
\Tr(t_1,\alpha\circ\beta)\simeq \Tr(t_2,\beta\circ\alpha), \hskip 8truept \Tr(t'_1,\alpha'\circ\beta')\simeq \Tr(t'_2,\beta'\circ\alpha')
\end{equation}
and
\begin{equation} \label{e:cycl isomorp2}
\Tr(t'_1\circ t_1,(\alpha'\circ \alpha)\circ(\beta'\circ \beta))\simeq \Tr(t'_2\circ t_2,(\beta'\circ \beta)\circ(\alpha'\circ \alpha)).
\end{equation}

\medskip

Moreover, by the fuctoriality of \eqref{e:cycl isomorp} the following diagram is commutative:

\begin{equation} \label{e:cycl isomorp3}
\begin{CD}
\Tr(t'_1\circ t_1,(\alpha'\circ \alpha)\circ(\beta'\circ \beta)) @>{\text{\secref{sss:funct trace}}}>\sim>  \Tr(t'_1,\alpha'\circ\beta')\circ \Tr(t_1,\alpha\circ\beta)  \\
@V\eqref{e:cycl isomorp2}V{\sim}V     @V\eqref{e:cycl isomorp1}V{\sim}V \\
\Tr(t'_2\circ t_2,(\beta'\circ \beta)\circ(\alpha'\circ \alpha)) @>{\text{\secref{sss:funct trace}}}>\sim> \Tr(t'_2,\beta'\circ\alpha')\circ \Tr(t_2,\beta\circ\alpha).
\end{CD}
\end{equation}

\ssec{Properties of the 2-categorical trace}

In this subsection we will explore further functoriality properties of the construction
of \secref{sss:map of traces}. We recommend the reader to skip this subsection and return
to it when necessary.

\sssec{}  \label{sss:Lo}

The functoriality mentioned in \secref{sss:funct trace} can be promoted to a symmetric monoidal functor
between symmetric monoidal $(\infty,1)$-categories:

\medskip

Consider the category, to be denoted $L(\bO)$,
whose objects are pairs $(\bo,F)$, where $\bo\in \bO$ and $F\in \End_\bO(\bo)$, and whose morphisms are given by diagrams \eqref{e:2-morph},
see \cite[Sect. 1.2]{KP1}\footnote{To see the full construction of $L(\bO)$ as an $\infty$-category see
\cite[Chapter 10, Sect. 4.1]{GR1}.}.

\medskip

The symmetric monoidial structure on $\bO$ induces one on $L(\bO)$.
Let $L(\bO)_{\on{rgd}}\subset L(\bO)$ be the 1-full subcategory, where we allow as objects those $(\bo,F)$
for which $\bo$ is dualizable as an object of $\bO$, and where we restrict 1-morphisms to those pairs
$(t,\alpha)$, for which $t$ admits a right adjoint.

\medskip

Then the assignment
$$(\bo,F)\mapsto \Tr(F,\bo)$$
is a \emph{symmetric monoidal} functor
\begin{equation} \label{e:2-tr as functor}
\Tr:L(\bO)_{\on{rgd}}\to \End_\bO(\one_\bO).
\end{equation}

The construction of the functor \eqref{e:2-tr as functor} can be either performed directly using the definition
of $\infty$-categorical symmetric monoidal structures as in \cite[Chapter 1, Sect. 3.3]{GR1}, or using the device of \cite[Theorem 1.7]{HSS}.

\sssec{}  \label{sss:2-categ sym mon}

Let $\ba\in \bO$ be an associative/commutative algebra object in $\bO$, so that the multiplication map
$$\ba\otimes \ba\to \ba$$
admits a right adjoint. Assume also that $\ba$ is dualizable as an object of $\bO$.

\medskip

Let $F_\ba$ be a right-lax monoidal/symmetric monoidal endomorphism of $\ba$ (see, e.g., \secref{sss:trace on sym mon dg categ}).
Then $(\ba,F_\ba)$ acquires a structure of associative/commutative algebra object in $L(\bO)_{\on{rgd}}$.

\medskip

Hence, we
obtain that $\Tr(F_\ba,\ba)$ acquires a structure of associative/commutative algebra object in $\End_\bO(\one_\bO)$.

\sssec{}  \label{sss:2-categ sym mon mod}

Let $\ba$ be as above, and let $\bm\in \bO$ be an $\ba$-module object. Assume that the action map
$$\ba\otimes \bm\to \ba$$
admits a right adjoint. Assume also that $\bm$ is dualizable as an object of $\bO$.

\medskip

Let $F_\bm$ be an endomorphism of $\bm$ that is right-lax compatible with $F_\ba$ (see, e.g., \secref{sss:trace on sym mon dg categ}).
Then $(\bm,F_\bm)$ acquires a structure of module over $(\ba,F_\ba)$ in $L(\bO)_{\on{rgd}}$.

\medskip

Hence, we obtain that
$\Tr(F_\bm,\bm)\in \End_\bO(\one_\bO)$ acquires a structure of module over $\Tr(F_\ba,\ba)$.

\sssec{}  \label{sss:2-categ sym mon hom}

Let $\ba$ and $\ba'$ be a pair of associative/commutative algebra objects in $\bO$ as in \secref{sss:2-categ sym mon},
each endowed with a right-lax monoidal/symmetric monoidal endomorphism. Let $\varphi$ be a right-lax monoidal/symmetric monoidal map
$\ba\to \ba'$, equipped with a 2-morphism in the diagram
$$
\xy
(0,0)*+{\ba}="A";
(20,0)*+{\ba}="B";
(0,-20)*+{\ba'}="C";
(20,-20)*+{\ba',}="D";
{\ar@{->}^{F_{\ba}} "A";"B"};
{\ar@{->}^{\varphi} "A";"C"};
{\ar@{->}^{\varphi} "B";"D"};
{\ar@{->}_{F_{\ba'}} "C";"D"};
{\ar@{=>}^\alpha "B";"C"};
\endxy
$$
compatible with the right-lax monoidal/symmetric monoidal structures on the edges. Assume that $\varphi$ admits a right adjoint.

\medskip

We can view the data of $(\varphi,\alpha)$ as a morphism of associative/commutative algebra objects $(\ba,F_\ba)\to (\ba',F_{\ba'})$
in $L(\bO)_{\on{rgd}}$. Hence, the above data induces a map
$$\Tr(F_\ba,\ba)\to \Tr(F_{\ba'},\ba'),$$
which is a map of associative/commutative algebras, as follows from the functoriality of the trace construction,
see \secref{sss:funct trace}.

\sssec{}  \label{sss:2-categ sym mon unit}

Note that $(\one_\bO,\on{id})$ is the unit in $L(\bO)_{\on{rgd}}$ (see \secref{sss:Lo}). Since $\Tr$ is a symmetric monoidal
functor, we can identify
$$\Tr(\on{id},\one_\bO)=\one_{\End_\bO(\one_\bO)}$$
as objects of $\End_\bO(\one_\bO)$.

\medskip

Let now $\ba$ be as in \secref{sss:2-categ sym mon}. Consider the unit map
\begin{equation} \label{e:unit a}
\one_\bO\to \ba,
\end{equation}
equipped with the 2-morphism
\begin{equation} \label{e:2-categ sym mon hom}
\xy
(0,0)*+{\one_\bO}="A";
(20,0)*+{\one_\bO}="B";
(0,-20)*+{\ba}="C";
(20,-20)*+{\ba,}="D";
{\ar@{->}^{\on{id}} "A";"B"};
{\ar@{->} "A";"C"};
{\ar@{->} "B";"D"};
{\ar@{->}_{F_{\ba}} "C";"D"};
{\ar@{=>} "B";"C"};
\endxy
\end{equation}
provided by the right-lax monoidal structure on $F_\ba$. Assume that \eqref{e:unit a} admits a right adjoint.

\medskip

It follows from \secref{sss:2-categ sym mon hom} that the resulting map
$$\one_{\End_\bO(\one_\bO)}\simeq \Tr(\on{id},\one_\bO)\to \Tr(F_\ba,\ba)$$
is the unit of $\Tr(F_\ba,\ba)$ as an associative/commutative algebra object in
$\End_\bO(\one_\bO)$.

\ssec{Trace on DG categories}

\sssec{}

The example of primary interest for us is $\bO=\DGCat$, with its natural symmetric monoidal structure.

\medskip

Note that $\one_{\DGCat}=\Vect$, so
\begin{equation} \label{e:end Vect}
\End(\one_{\DGCat})\simeq \Vect,
\end{equation}
as a category, equipped with a symmetric monoidal structure.

\medskip

Hence, for a dualizable DG category $\CC$ equipped with an endofunctor $F$, we obtain an object
$$\Tr(F,\CC)\in \Vect.$$

Furthermore, if $T:\CC_1\to \CC_2$ is a morphism in $\DGCat$ (i.e., a colimit-preserving $\sfe$-linear exact functor) between dualizable DG categories that admits a \emph{continuous} right adjoint, and given a natural transformation
$$\alpha:T\circ F_1\to F_2\circ T,$$
we obtain a map in $\Vect$
$$\Tr(T,\alpha):\Tr(F_1,\CC_1)\to Tr(F_2,\CC_2).$$

\sssec{}

Take $\CC=\Vect$ and $F=\on{Id}$. Then by \eqref{e:Tr unit}, we have
$$\Tr(\on{Id},\Vect)\simeq \sfe,$$
as an object in \eqref{e:end Vect}.

\medskip

More generally, for $V\in \Vect$, the trace of the endofunctor of $\Vect$, given by
$-\otimes V$, identifies with $V$ as an object in \eqref{e:end Vect}.

\sssec{}  \label{sss:class}

Take $\CC_2=\CC$ and $\CC_1=\Vect$ with the functor $T$ corresponding to an object $c\in \CC$,
i.e., the (unique) colimit preserving functor satisfying
\begin{equation} \label{e:functor corr to object}
\sfe\mapsto c.
\end{equation}

Note that the condition that $T$ admit a continuous right adjoint is equivalent to the condition that
$c$ be compact.

\medskip

Let $F_2=F$ be some endofunctor of $\CC$ and take $F_1=\on{Id}$. Then the datum of $\alpha$ amounts to a morphism
$$\alpha:c\to F(c).$$

The resulting map
$$\sfe\simeq \on{Tr}(\on{Id},\Vect)\overset{\Tr(T,\alpha)}\longrightarrow \Tr(F,\CC)$$
corresponds to a point in $\Tr(F,\CC)$, which we will denote by $\on{cl}(c,\alpha)$.

\sssec{} \label{sss:trace on sym mon dg categ}

Let $\CR$ be a monoidal/symmetric monoidal DG category, and $\CM$ an $\CR$-module category. Assume that the functors
$$\CR\otimes \CR\to \CR \text{ and } \CR\otimes \CM\to \CM,$$
viewed as functors of plain DG categories admit right adjoints
(if $\CR$ and $\CM$ are compactly generated, the condition of admitting a right adjoint is equivalent to preserving compactness).

\medskip

Let $\CR$ be endowed with a right-lax monoidal/symmetric monoidal endofunctor $F_\CR$, i.e., we have a natural transformation
$$F_\CR(a_1)\otimes F_\CR(a_2)\to F_\CR(a_1\otimes a_2), \quad a_1,a_2\in \CR$$
equipped with higher compatibilities.

\medskip

Assume also that $\CM$ is dualizable and is endowed with an endofunctor $\phi_\CM$ that is right-lax compatible with the $\CR$-action.
I.e., we have a natural transformation
$$F_\CR(a)\otimes F_\CM(m)\to F_\CM(a\otimes m), \quad a\in \CR, m\in \CM,$$
equipped with higher compatibilities.

\medskip

In this case, applying the construction of Sects. \ref{sss:2-categ sym mon} and \ref{sss:2-categ sym mon mod},  we obtain that $\Tr(F_\CR,\CR)$
acquires a structure of associative/commutative algebra, and $\Tr(F_\CM,\CM)$ acquires a structure of $\Tr(F_\CR,\CR)$-module.

\sssec{} \label{sss:Hecke a}

%

Let $\CR$ and $\CM$ be as above. Let $r\in \CR$ be a compact object equipped with a map $\alpha:r\to F_\CR(r)$.
On the one hand, we can consider
$$\on{cl}(r,\alpha)\in \Tr(F_\CR,\CR).$$

On the other hand, let $H_r$ denote the endofunctor of $\CM$ given by the action of $r$ (here ``$H$" should be evocative
of ``Hecke"). The right-lax compatibility of $F_\CM$ with the action defines a natural transformation
\begin{equation} \label{e:Hecke a}
H_r\circ F_\CM\overset{\alpha}\to H_{F_\CR(r)}\circ F_\CM \to F_\CM\circ H_r
\end{equation}
which we denote by $\alpha_{r,\CM}$.

\medskip

By \secref{sss:map of traces}, to the pair $(H_r,\alpha_{r,\CM})$ we can assign the map
$$\Tr(H_r,\alpha_{r,\CM}):\Tr(F_\CM,\CM)\to \Tr(F_\CM,\CM).$$

\medskip

We claim:

\begin{prop}  \label{p:S=T, one}
The action of $\on{cl}(r,\alpha)$ on $\Tr(F_\CM,\CM)$ equals $\Tr(H_r,\alpha_{r,\CM})$.
\end{prop}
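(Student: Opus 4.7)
The plan is to deduce the proposition from the functoriality of the 2-categorical trace. Namely, by \secref{sss:2-categ sym mon mod}, the module structure of $\Tr(F_\CM,\CM)$ over $\Tr(F_\CR,\CR)$ is obtained by applying the symmetric monoidal functor \eqref{e:2-tr as functor}
\[
\Tr\colon L(\DGCat)_{\on{rgd}} \to \Vect
\]
to the module object $(\CM,F_\CM)$ over the algebra object $(\CR,F_\CR)$ in $L(\DGCat)_{\on{rgd}}$. In particular, the action of a given element of $\Tr(F_\CR,\CR)$ on $\Tr(F_\CM,\CM)$ is computed by applying $\Tr$ to the corresponding morphism $(\Vect,\on{Id})\otimes(\CM,F_\CM)\to (\CM,F_\CM)$ in $L(\DGCat)_{\on{rgd}}$.

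First I will identify $\on{cl}(r,\alpha)$ intrinsically as the value of $\Tr$ on a morphism in $L(\DGCat)_{\on{rgd}}$. By \secref{sss:class}, $\on{cl}(r,\alpha)$ is by construction the image under $\Tr$ of the 1-morphism $T\colon\Vect\to\CR$ determined by $\sfe\mapsto r$, equipped with the 2-morphism $\alpha\colon T\circ\on{Id}\to F_\CR\circ T$; this is a 1-morphism $(T,\alpha)\colon (\Vect,\on{Id})\to(\CR,F_\CR)$ in $L(\DGCat)_{\on{rgd}}$ (it admits a right adjoint precisely because $r$ is compact). Hence the action of $\on{cl}(r,\alpha)$ on $\Tr(F_\CM,\CM)$ equals $\Tr$ of the composite 1-morphism
\[
(\CM,F_\CM)\;\simeq\;(\Vect,\on{Id})\otimes(\CM,F_\CM)\xrightarrow{(T,\alpha)\otimes\on{id}}(\CR,F_\CR)\otimes(\CM,F_\CM)\xrightarrow{\on{act}}(\CM,F_\CM).
\]

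Next I will unwind this composite in $L(\DGCat)_{\on{rgd}}$. On underlying functors it is $m\mapsto r\otimes m=H_r(m)$, so the underlying 1-morphism is $H_r$. The accompanying 2-morphism $H_r\circ F_\CM\to F_\CM\circ H_r$ is by definition the horizontal composite of $\alpha\colon r\to F_\CR(r)$ with the 2-morphism encoding the right-lax compatibility of $F_\CM$ with the $\CR$-action; i.e.\ it is precisely
\[
H_r\circ F_\CM\xrightarrow{\alpha}H_{F_\CR(r)}\circ F_\CM\to F_\CM\circ H_r,
\]
which is the definition of $\alpha_{r,\CM}$ from \eqref{e:Hecke a}. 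Thus the composite 1-morphism in $L(\DGCat)_{\on{rgd}}$ is exactly $(H_r,\alpha_{r,\CM})\colon(\CM,F_\CM)\to(\CM,F_\CM)$.

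Finally, applying $\Tr$ and using its functoriality under composition (\secref{sss:funct trace}), the action of $\on{cl}(r,\alpha)$ on $\Tr(F_\CM,\CM)$ equals $\Tr(H_r,\alpha_{r,\CM})$, as desired. The only real content lies in the second step, where one must verify that the 2-morphism attached to the composite in $L(\DGCat)_{\on{rgd}}$ matches the construction \eqref{e:Hecke a} of $\alpha_{r,\CM}$; this is essentially bookkeeping with the right-lax compatibility data, and is the main thing to check carefully.
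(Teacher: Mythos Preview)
Your proof is correct and takes essentially the same approach as the paper. The paper's proof is a single sentence invoking the functoriality of the trace construction (\secref{sss:funct trace}) for the composition $\CM\simeq\Vect\otimes\CM\xrightarrow{r\otimes\on{Id}}\CR\otimes\CM\xrightarrow{\on{act}}\CM$; you have unpacked exactly this argument in the language of the symmetric monoidal functor $\Tr\colon L(\DGCat)_{\on{rgd}}\to\Vect$ and made explicit the identification of the resulting $2$-morphism with $\alpha_{r,\CM}$.
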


\begin{proof}

Follows from the functoriality of the trace construction (see \secref{sss:funct trace}) corresponding to the composition
$$\CM\simeq \Vect\otimes \CM \overset{r\otimes \on{Id}}\longrightarrow \CR\otimes \CM \overset{\on{act}}\longrightarrow \CM.$$

\end{proof}

\ssec{Examples}

The results from this subsection will not be used in the rest of the paper.  However, they are meant to provide an intuition
for the behavior of the categorical trace construction.

\sssec{}  \label{sss:HC}

Consider the example $\CC=R\mod$, where $R\in \on{AssocAlg}(\Vect)$. We have
$$R_1\mod\otimes R_2\mod\simeq (R_1\otimes R_2)\mod,$$
see \cite[Theorem 4.8.5.16]{Lu2}.

\medskip

In particular, $(R\mod)^\vee$ identifies with $R^{\on{rev}}\mod$, where $R^{\on{rev}}$ is obtained from $R$ by reversing
the multiplication. The unit and counit map are given by the functors
$$\Vect \to (R\otimes R^{\on{rev}})\mod, \quad \sfe\mapsto R,$$
and
$$(R\otimes R^{\on{rev}})\mod \to \Vect, \quad Q\mapsto R\underset{R\otimes R^{\on{rev}}}\otimes Q,$$
respectively.

\medskip

Identifying
$$\End(R\mod)\simeq R\mod\otimes (R\mod)^\vee \simeq (R\otimes R^{\on{rev}})\mod,$$
we obtain that every continuous endofunctor of $R\mod$ is of the form
$$M\mapsto F_Q(M):=Q\underset{R}\otimes M$$
for $Q\in (R\otimes R^{\on{rev}})\mod$.

\medskip

The trace of such an endofunctor is given by
$$\on{HH}_\bullet(R,Q):=R \underset{R\otimes R^{\on{rev}}}\otimes Q.$$

\medskip

The identity endofunctor of $R\mod$ corresponds to $Q=R$. In this case we use a simplified notation
$$\on{HH}_\bullet(R):=\on{HH}_\bullet(R,Q).$$

This is the vector space of Hochschild chains on $R$.

\sssec{}  \label{sss:prestack}

Let $\CY$ be a prestack (see \cite[Chapter 2, Sect. 1]{GR1}), such that:

\medskip

\noindent--the category $\QCoh(\CY)$ is dualizable;

\medskip

\noindent--the object $\CO_\CY\in \QCoh(\CY)$ is compact;

\medskip

\noindent--the diagonal morphism $\Delta:\CY\to \CY\times \CY$ is schematic and qsqc (quasi-separated and quasi-compact).

\medskip

For example, these conditions are satisfied for a quasi-compact algebraic stack with affine diagonal,
of finite type over a field of characteristic $0$ (see \cite[Theorem 1.4.2]{DrGa1}).

\medskip

The condition that $\CO_\CY\in \QCoh(\CY)$ is compact is equivalent to one saying that the functor of global sections
$$\Gamma(\CY,-):\QCoh(\CY)\to \Vect$$
is continuous. The condition that $\Delta$ is schematic and qsqc implies that the direct image functor
$$\Delta_*: \QCoh(\CY)\to \QCoh(\CY\times \CY)$$
is continuous and satisfies base change.

\medskip

In this case, the functors
$$\Vect \overset{\sfe\mapsto \CO_\CY}\to \QCoh(\CY) \overset{\Delta_*}\longrightarrow  \QCoh(\CY\times \CY)\simeq \QCoh(\CY)\otimes \QCoh(\CY)$$
and
$$\QCoh(\CY)\otimes \QCoh(\CY)\simeq \QCoh(\CY\times \CY)\overset{\Delta^*}\longrightarrow  \QCoh(\CY)\overset{\Gamma(\CY,-)}\to \Vect$$
define an identification
$$\QCoh(\CY)^\vee \simeq \QCoh(\CY)$$
(the proof is a diagram chase using base change).

\sssec{}  \label{sss:Tr QCoh}

Let $\phi$ be an endomorphism of $\CY$ and consider the endofunctor of $\QCoh(\CY)$ given by $\phi^*$.

\medskip

We claim that $\on{Tr}(\phi^*,\QCoh(\CY))$ identifies canonically with
$$\Gamma(\CY^\phi,\CO_{\CY^\phi}),$$
where
$$\CY^\phi:=\CY\underset{\on{Graph}_\phi,\CY\times \CY,\Delta}\times \CY$$
is the fixed point locus of $\phi$.

\medskip

Indeed, let $\iota$ denote the forgetful map $\CY^\phi\to \CY$, so that we have
$$\iota\circ \phi\simeq \iota.$$

\medskip

We calculate $\on{Tr}(\phi^*,\QCoh(\CY))$ as the composite
$$
\CD
& &  \QCoh(\CY) @>{(p_\CY)_*}>> \QCoh(\on{pt})  \\
& & @AA{\Delta^*}A \\
& &  \QCoh(\CY\times \CY)  \\
& & @AA{(\phi\times \on{id})^*}A  \\
\QCoh(\CY) @>{\Delta_*}>> \QCoh(\CY\times \CY)  \\
@A{p_\CY^*}AA  \\
\QCoh(\on{pt}),
\endCD
$$
where $p_\CY:\CY\to \on{pt}$ is the projection map.

\medskip

By base change, we rewrite this functor as
$$
\CD
\QCoh(\CY^\phi)  @>{\iota_*}>> \QCoh(\CY) @>{(p_\CY)_*}>> \QCoh(\on{pt}) \\
@A{\iota^*}AA \\
\QCoh(\CY)  \\
@A{p_\CY^*}AA  \\
\QCoh(\on{pt}),
\endCD
$$
which sends $\sfe\in \Vect=\QCoh(\on{pt})$ to $\Gamma(\CY^\phi,\CO_{\CY^\phi})$, as desired.

\sssec{}

Let now $\CF\in \QCoh(\CY)$ be a compact object, equipped with a map
$$\alpha:\CF\to \phi^*(\CF).$$

Consider the corresponding object
$$\on{cl}(\CF,\alpha)\in \on{Tr}(\phi^*,\QCoh(\CY)),$$
see \secref{sss:class}.

\medskip

We will now describe explicitly the image of $\on{cl}(\CF,\alpha)$ under the identification
$$\on{Tr}(\phi^*,\QCoh(\CY))\simeq \Gamma(\CY^\phi,\CO_{\CY^\phi}).$$

\medskip

First, we claim:

\begin{lem}
Every compact object $\CF\in \QCoh(\CY)$ is dualizable in the sense of the symmetric monoidal structure
on $\QCoh(\CY)$.
\end{lem}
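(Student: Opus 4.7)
The plan is to reduce the assertion to the fact that, under the standing hypotheses on $\CY$ from \secref{sss:prestack}, the symmetric monoidal DG category $\QCoh(\CY)$ is rigid in the sense of \cite[Chapter~1, Sect.~9.1.1]{GR1}, in which setting compact objects are automatically dualizable.

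First I would verify rigidity of $\QCoh(\CY)$ by applying \remref{r:rigid vs affine diagonal}: the unit $\CO_\CY$ is compact by hypothesis, so it remains to check that $\QCoh(\CY)$ has affine diagonal, meaning the multiplication functor $\mu:\QCoh(\CY)\otimes\QCoh(\CY)\to\QCoh(\CY)$ is affine in the sense of \lemref{l:crit aff}. Using that $\QCoh(\CY)$ is dualizable, one has $\QCoh(\CY)\otimes\QCoh(\CY)\simeq \QCoh(\CY\times\CY)$, and $\mu$ becomes $\Delta^*$. The criteria of \lemref{l:crit aff} applied to $\Delta^*$ require $\Delta_*$ to be continuous, conservative, and strictly $\QCoh(\CY\times\CY)$-linear; continuity and strict linearity (the projection formula) are standard consequences of $\Delta$ being schematic and qsqc, while conservativity is the Barr--Beck--Lurie monadicity of the adjunction $\Delta^*\dashv\Delta_*$, under which $\QCoh(\CY)$ is identified with modules over $\Delta_*\CO_\CY \in \QCoh(\CY\times\CY)$.

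With rigidity established, the implication compact $\Rightarrow$ dualizable follows from the formalism of \cite[Chapter~1, Sect.~9.1]{GR1}: for a compact $\CF$, the $\QCoh(\CY)$-linear continuous endofunctor $\CF\otimes(-)$ has a right adjoint which is strictly $\QCoh(\CY)$-linear by \cite[Chapter~1, Lemma~9.3.6]{GR1} (the key rigidity consequence, and precisely the fact invoked in the proof of \lemref{l:base change}). Under the canonical identification of $\QCoh(\CY)$ with the category of continuous $\QCoh(\CY)$-linear endofunctors of $\QCoh(\CY)$ via $\CG\mapsto \CG\otimes(-)$, this right adjoint is then realized as $\CF^\vee\otimes(-)$ for $\CF^\vee:=\underline{\Hom}(\CF,\CO_\CY)$; the unit and counit of the $(\CF\otimes(-),\CF^\vee\otimes(-))$ adjunction supply the duality data, establishing dualizability.

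The main subtle point is the conservativity of $\Delta_*$ at the level of generality of \secref{sss:prestack}: in the principal example of a quasi-compact algebraic stack with affine diagonal, $\Delta$ itself is affine and conservativity of $\Delta_*$ is automatic, whereas for more general prestacks satisfying only a schematic qsqc diagonal it requires a more careful monadicity argument.
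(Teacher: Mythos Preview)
Your approach via rigidity is genuinely different from the paper's. The paper argues directly and locally: since the diagonal of $\CY$ is schematic and qsqc, any map $f:S\to\CY$ from an affine scheme is itself schematic and qsqc, so $f_*$ is continuous and hence $f^*$ preserves compactness. Thus $f^*\CF$ is compact, therefore perfect, therefore dualizable in $\QCoh(S)$ for every affine $S$ over $\CY$. Since $\QCoh(\CY)\simeq\underset{S\to\CY}{\on{lim}}\,\QCoh(S)$ as symmetric monoidal categories, dualizability of $\CF$ follows termwise. This is the argument of \cite[Proposition~3.6]{BFN}.

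Your route has a gap exactly at the point you flag. The hypotheses of \secref{sss:prestack} require only that $\Delta$ be schematic and qsqc, not affine, and you have not established conservativity of $\Delta_*$ under those hypotheses; invoking ``a more careful monadicity argument'' does not help, since Barr--Beck--Lurie takes conservativity as an \emph{input} rather than supplying it, and pushforward along schematic qsqc morphisms need not be conservative in general. The paper itself confirms that rigidity is not automatic in this generality: in \secref{sss:2-categ of QCoh} it explicitly adds ``with affine diagonal'' as an extra hypothesis on top of those in \secref{sss:prestack} when it wants to use \corref{c:quasi-passable pullback}. Your argument is correct once the diagonal is affine (and this covers the main examples), but the paper's reduction-to-affines proof works in the stated generality and is more elementary, sidestepping rigidity altogether.
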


\begin{proof}

The proof runs parallel to \cite[Proposition 3.6]{BFN}:

\medskip

Since the diagonal morphism of $\CY$ is schematic and qsqc, any map
$$f:S\to \CY,$$
where $S$ an affine scheme, is itself schematic and qsqc. In particular, the functor
$f_*$, right adjoint to $f^*$, is continuous. Hence, the functor $f^*$ preserves compactness.

\medskip

This implies that the pullback of every compact object $\CF\in \QCoh(\CY)$ to
every affine scheme $S$ is perfect, and hence dualizable as an object of the symmetric monoidal category
$\QCoh(S)$.

\medskip

Since
$$\QCoh(\CY)\simeq \underset{S\to \CY}{\on{lim}}\,\QCoh(S),$$
we obtain that $\CF$ is dualizable in $\QCoh(\CY)$.

\end{proof}

\sssec{}

%

Consider the pullback of $\alpha$ along the map
$$\iota:\CY^\phi\to \CY.$$

Using the fact that $\iota=\phi\circ \iota$, we obtain a map
$$\iota^*(\CF)\overset{\alpha}\to\iota^*\circ \phi^*(\CF)=
 (\phi\circ \iota)^*(\CF)\simeq \iota^*(\CF).$$

This is an endomorphism of $\iota^*(\CF)$, which we denote $\alpha^\phi$. We claim:

\begin{prop}  \label{p:class QCoh}
The element
$$\on{cl}(\CF,\alpha)\in \on{Tr}(\phi^*,\QCoh(\CY))\simeq \Gamma(\CY^\phi,\CO_{\CY^\phi})$$
identifies with
$$\on{Tr}(\alpha^\phi,\iota^*(\CF)),$$
where the latter trace is taken in the symmetric monoidal category $\QCoh(\CY^\phi)$, and where we
identify
$$\End_{\QCoh(\CY^\phi)}(\one_{\QCoh(\CY^\phi)})=\End_{\QCoh(\CY^\phi)}(\CO_{\CY^\phi})\simeq \Gamma(\CY^\phi,\CO_{\CY^\phi}).$$
\end{prop}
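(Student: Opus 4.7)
The strategy is to unwind the definition of $\on{cl}(\CF,\alpha) = \Tr(T,\alpha)$ via formula \eqref{e:map between Tr} applied to the functor $T\colon \Vect\to \QCoh(\CY)$ classifying $\CF$, and to track the result through the identification $\Tr(\phi^*,\QCoh(\CY))\simeq \Gamma(\CY^\phi,\CO_{\CY^\phi})$ of \secref{sss:Tr QCoh}, which itself is given by base change along the Cartesian square defining $\CY^\phi$. Since $\CF$ is compact and (by the preceding lemma) dualizable, $T$ admits a continuous right adjoint $T^R = \Gamma(\CY,\CF^\vee\otimes(-))$, and formula \eqref{e:map between Tr} expresses $\on{cl}(\CF,\alpha)$ as the composite
\[
\sfe \to \Tr(T^RT,\Vect)\simeq \Tr(TT^R,\QCoh(\CY)) \to \Tr(\phi^*TT^R,\QCoh(\CY)) \to \Tr(\phi^*,\QCoh(\CY)),
\]
where the first arrow is the unit $\on{id}_\Vect\to T^RT$, the isomorphism is cyclicity, the third arrow uses $\alpha\colon T\to \phi^*T$, and the last arrow is induced by the counit $TT^R\to \on{id}$.

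The middle three terms, computed using the self-duality $\QCoh(\CY)^\vee\simeq \QCoh(\CY)$ of \secref{sss:prestack} together with the kernel description (the kernel of $TT^R$ is $\CF^\vee\boxtimes \CF$, and that of $\phi^* TT^R$ is $\CF^\vee\boxtimes \phi^*\CF$), identify with $\Gamma(\CY,\CF^\vee\otimes\CF)$ and $\Gamma(\CY,\CF^\vee\otimes\phi^*\CF)$ respectively. Under these identifications the first arrow picks out $\on{id}_\CF$, and the $\alpha$-arrow is $\Gamma(\CY,\on{id}\otimes\alpha)$. For the final (counit) arrow, I invoke base change for the pullback square
\[
\begin{CD}
\CY^\phi @>\iota>> \CY \\
@V\iota VV @VV\Delta V \\
\CY @>>(\phi,\on{id})> \CY\times\CY,
\end{CD}
\]
which gives $(\phi,\on{id})^*\Delta_*\simeq \iota_*\iota^*$, hence $\Tr(\phi^*,\QCoh(\CY))\simeq \Gamma(\CY^\phi,\CO_{\CY^\phi})$. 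Combining this with the canonical isomorphism $\iota^*\phi^*\simeq \iota^*$ (from $\phi\circ\iota\simeq\iota$, which is part of the data of the pullback square), the entire composite descends to the global sections of a composite in $\QCoh(\CY^\phi)$: the coevaluation and evaluation for $\CF$ pull back via the symmetric monoidal functor $\iota^*$ to the coevaluation/evaluation for $\iota^*\CF$, and the map $\alpha$ becomes $\iota^*\alpha$, which upon using $\iota^*\phi^*\simeq \iota^*$ becomes $\alpha^\phi$. The resulting composite
\[
\CO_{\CY^\phi}\to \iota^*\CF\otimes \iota^*\CF^\vee \xrightarrow{\alpha^\phi\otimes\on{id}} \iota^*\CF\otimes \iota^*\CF^\vee \to \CO_{\CY^\phi}
\]
is by \secref{sss:basic trace} precisely $\Tr(\alpha^\phi,\iota^*\CF)$, as desired.

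The main technical obstacle is the careful bookkeeping of base change isomorphisms and ensuring compatibility of the 2-categorical trace formula with the sheaf-theoretic side: specifically, verifying that the isomorphism $\iota^*\phi^*\simeq \iota^*$ coming from the pullback square defining $\CY^\phi$ agrees with the one induced by pulling back $\alpha$'s target $\phi^*\CF$ to $\CY^\phi$, and that the counit $TT^R\to \on{id}$ descends under base change precisely to the evaluation of $\iota^*\CF$ in $\QCoh(\CY^\phi)$. This is routine once all identifications are made explicit, but requires attention to signs and orientations of the relevant pullback diagrams.
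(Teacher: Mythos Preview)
Your proposal is correct and follows essentially the same strategy as the paper's proof, which is deferred to \secref{ss:KPgen} (the proof of the more general \propref{p:class R}): unwind the definition of $\on{cl}(\CF,\alpha)$ as a composite involving the unit and counit of the duality for $\CF$, and track it through base change along the Cartesian square defining $\CY^\phi$. The only organizational difference is that the paper carries out the computation for an arbitrary rigid symmetric monoidal category $\CR$ with endofunctor $F_\CR$ (with $\on{HH}_\bullet(\CR,F_\CR)$ and the tautological functor $\iota$ playing the roles of $\QCoh(\CY^\phi)$ and $\iota^*$), and then observes that \propref{p:class QCoh} is the specialization $\CR=\QCoh(\CY)$, $F_\CR=\phi^*$; your direct geometric argument is exactly what that abstract proof becomes in this special case.
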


This is proved in \cite[Prop. 2.2.3]{KP1}; see \secref{ss:KPgen} below for a proof in a more general context.

\sssec{}

Let now $\CY$ be a quasi-compact algebraic stack with an affine diagonal over  a ground field of characteristic $0$.
Consider the category $\Dmod(\CY)$.

\medskip

In this case, the functors
$$\Vect \overset{\sfe\mapsto \omega_\CY}\longrightarrow \Dmod(\CY)\overset{\Delta_{\on{dR},*}}\longrightarrow
\Dmod(\CY\times \CY) \simeq \Dmod(\CY)\otimes \Dmod(\CY)$$
and
$$\Dmod(\CY)\otimes \Dmod(\CY)\simeq \Dmod(\CY\times \CY) \overset{\Delta^!}\longrightarrow
\Dmod(\CY) \overset{\Gamma_{\on{dR}}(\CY,-)}\longrightarrow \Vect$$
define an identification
$$\Dmod(\CY)^\vee\simeq \Dmod(\CY).$$

\medskip

Let $\phi$ be an endomorphism of $\CF$. Consider the endofunctor of $\Dmod(\CY)$, given by $\phi_{\on{dR},*}$. Then as in \secref{sss:Tr QCoh}
one shows that there exists a canonical identification
$$\on{Tr}(\phi_{\on{dR},*},\Dmod(\CY))\simeq \Gamma_{\on{dR}}(\CY^\phi,\omega_{\CY^\phi}),$$
where the latter object is usually called the Borel-Moore homology of $\CY^\phi$.

\sssec{}

Let $\CF$ be a compact object of $\Dmod(\CY)$ equipped with a map
$$\alpha:\CF\to \phi_{\on{dR},*}(\CF).$$

To this object there corresponds an element
$$\on{cl}(\CF,\alpha)\in \on{Tr}(\phi_{\on{dR},*},\Dmod(\CY))\simeq \Gamma_{\on{dR}}(\CY^\phi,\omega_{\CY^\phi}).$$

However, we do not at the moment know how to give an explicit formula for this element, which would
be reminiscent of that of \propref{p:class QCoh}.

\medskip

However, according to \cite{Va}, the following particular case is known:

\begin{thm}
Assume that $\CY$ is a scheme, and let $y\in \CY^\phi$ be an isolated fixed point.
Assume, moreover, that $d\phi|_{T_y(Y)}$ does not have eigenvalue $1$ (i.e., the derived fixed point locus
$\CY^\phi$ is smooth at $y$). Assume that $\CF$ is holonomic. Then the image of $\on{cl}(\CF,\alpha)$ under the projection
on the direct summand corresponding to $y$
$$\Gamma_{\on{dR}}(\CY^\phi,\omega_{\CY^\phi})\to \Vect$$
equals
$$\on{Tr}(\alpha^\phi,\iota_y^{\on{dR},*}(\CF)),$$
where $\iota_y$ denotes the embedding $\on{pt}\overset{y}\to \CY$, and $\alpha^\phi$ denotes the induced endomorphism
of $\iota^{\on{dR},*}_y(\CF)$ equal to
$$\iota_y^{\on{dR},*}(\CF)\simeq (\phi\circ \iota_y)^{\on{dR},*}(\CF)=\iota_y^{\on{dR},*}\circ \phi^{\on{dR},*}(\CF)
\overset{\iota_y^{\on{dR},*}(\alpha')}\longrightarrow \iota_y^{\on{dR},*}(\CF),$$
and where
$$\alpha':\phi^{\on{dR},*}(\CF)\to \CF$$ is obtained from $\alpha$ by the $(\phi^{\on{dR},*},\phi_{\on{dR},*})$-adjunction.
\end{thm}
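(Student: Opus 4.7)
The plan is to reduce the global statement to a purely local computation in a formal neighborhood of the fixed point $y$, and then to identify this local computation with the classical trace on the stalk via a contraction argument. First, since $\CY^\phi$ is smooth at $y$ (as a derived scheme) and $y$ is an isolated fixed point, the assumption on $d\phi|_{T_y(\CY)}$ forces $y$ to be a connected component of $\CY^\phi$ at the derived level, so the projection $\Gamma_{\on{dR}}(\CY^\phi,\omega_{\CY^\phi})\to \Vect$ corresponds to restriction along the clopen embedding $\{y\}\hookrightarrow \CY^\phi$. I would show that the formation of $\on{cl}(\CF,\alpha)$ is local on $\CY^\phi$ (this follows from base change applied to the description in \secref{sss:Tr QCoh}, together with the fact that $\Delta_{\on{dR},*}$ and $\Delta^!$ satisfy base change for $\phi$), thereby replacing $\CY$ by an étale or formal neighborhood of $y$.

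Second, I would exploit the contraction property: the assumption that $d\phi|_{T_y(\CY)}$ has no eigenvalue $1$ means that after passing to a formal neighborhood, iterates of $\phi$ contract everything towards $y$. Concretely, one can choose étale coordinates so that $\phi$ is conjugate (at the formal level) to a linear contraction of $\AA^n$ around $0$. Using this, I would argue that the only contribution to the local term comes from the stalk at $y$, via a nearby-cycles / specialization argument which uses holonomicity to ensure that $\iota_y^{\on{dR},*}(\CF)$ is a finite-dimensional complex of vector spaces.

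Third, having localized to a formal disc and to the stalk, the computation reduces essentially to the QCoh statement of \propref{p:class QCoh}: for a perfect $\CF$ with $\alpha:\CF\to \phi^*\CF$, the class in $\Gamma(\CY^\phi,\CO)$ localizes at an isolated smooth point $y$ to $\on{Tr}(\alpha^\phi, \iota_y^*\CF)$. One then transports this to the D-module setting using the $(\phi^{\on{dR},*},\phi_{\on{dR},*})$-adjunction that produces $\alpha'$ from $\alpha$, together with the identification of the induced endomorphism $\alpha^\phi$ of $\iota_y^{\on{dR},*}(\CF)$ via the equality $\iota_y = \phi \circ \iota_y$. At this step the functoriality of the trace construction (\secref{sss:funct trace}) applied to the functor $\iota_y^{\on{dR},*}:\Dmod(\CY)\to \Vect$, equipped with the natural transformation coming from $\alpha'$, gives the desired identification.

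The main technical obstacle is the passage between the $*$-pushforward description of the trace (which is what \secref{sss:Tr QCoh} provides) and the $*$-pullback description on the stalk (where $\alpha^\phi$ naturally lives): the adjunction-induced $\alpha'$ and the compatibility $\iota_y^{\on{dR},*}(\alpha') \simeq \alpha^\phi$ must be tracked through the dualizability pairing on $\Dmod(\CY)$, which is given by $!$-pullback and $\on{dR}$-pushforward rather than by $*$-pullback. Controlling this requires the smoothness of $\CY^\phi$ at $y$ so that $!$ and $*$-restriction to $y$ differ by a shift and twist that cancels against the trace of $d\phi|_{T_y(\CY)}^{-1}$, and this cancellation (which is where the \emph{determinant} that appears in classical Atiyah--Bott / Lefschetz is absorbed) is what makes the statement come out clean.
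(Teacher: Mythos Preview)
The paper does not prove this theorem. It is stated as a result quoted from \cite{Va} (Varshavsky, \emph{Local terms for transversal intersections}); the paper explicitly says just before the statement that it does \emph{not} know how to give an explicit formula for $\on{cl}(\CF,\alpha)$ in the D-module setting in general, and that only this particular case is known, by citing \cite{Va}. So there is no in-paper proof to compare your proposal against.

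Your outline is in the right spirit for how such local-term results are proved (and is roughly the shape of the argument in \cite{Va}): localize on $\CY^\phi$, use the transversality/no-eigenvalue-$1$ assumption to identify the local contribution, and keep track of the $!$/$*$ discrepancy. But as written it is a sketch rather than a proof: the step where you ``contract to the stalk'' via iterates of $\phi$ is where the real work lies, and your appeal to \propref{p:class QCoh} is not directly applicable since that proposition is about $\QCoh$ with its $\otimes$-dualizability, whereas here the pairing on $\Dmod(\CY)$ is the Verdier one. The cancellation you mention in the last paragraph is exactly the content of the cited paper, not something that follows formally from what is in the present paper.
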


\sssec{A prototype for the moduli of shtukas}\label{sss:geometric shtukas}
Here we generalize some aspects of the above example and relate it to the construction of the moduli space of shtukas
(in the setting of D-modules in characteristic 0).  Let $\CY$ be a quasi-compact algebraic stack with affine diagonal over a field of characteristic 0.
Suppose we have a ``Frobenius'' endomorphism
$$ \phi: \CY \to \CY .$$
Moreover, suppose that we have a correspondence
$$ \xymatrix{
\CZ \ar[r]^p\ar[d]_q & \CY \\
\CY
}$$
and a sheaf $K \in \Dmod(\CZ)$.  Let $H_K$ denote the functor
$$ H_K = q_*(p^!(-) \otimes K): \Dmod(\CY) \to \Dmod(\CY) .$$
By the same considerations as above, we obtain
$$ \on{Tr}(H_K \circ \phi_{*, \dr}, \Dmod(\CY)) \simeq \Gamma_{\dr}(\CZ^{\phi}, i^!(K)), $$
where $\CZ^{\phi}$ is the pullback
$$ \xymatrix{
\CZ^{\phi} \ar[r]^{i}\ar[d] & \CZ \ar[d]^{(p,q)}\\
\CY \ar[r]^-{(\phi, \on{id})} & \CY \times \CY
}
$$

\medskip

Analogously, consider $\CY = \on{Bun}_G(X)$, where $X$ is an algebraic curve over a finite field $\mathbb{F}_q$,
$\phi$ is the morphism induced by the Frobenius on $X$, $\CZ$ is the Hecke stack (of modifications at some legs $x^I \in X^I$)
and $K$ is the Satake image of a representation of $G^{\vee}$.  In this case, $\CZ^{\phi}$ is the moduli stack of shtukas and
$\Gamma(\CZ^{\phi}, i^!(K))$ are the corresponding cohomologies of interest on the moduli of shtukas.

\ssec{Trace on DG 2-categories} \label{ss:2-categ trace}

\sssec{}  \label{sss:2-DGCat}

We will now study a different instance of the abstract formalism of \secref{sss:map of traces}. Namely, we take
$$\bO:=\tDGCat.$$

\medskip

By definition, objects of this category are parameterized by $\CR\in \DGCat^{\on{Mon}}$. We will denote the corresponding object of
$\tDGCat$ by $\ul{\CR\mmod}$.

\medskip

For two objects $\ul{\CR_1\mmod},\ul{\CR_2\mmod}\in \tDGCat$, the $(\infty,1)$-category of 1-morphisms
\begin{equation} \label{e:maps of mods}
\bMaps_{\tDGCat}(\ul{\CR_1\mmod},\ul{\CR_2\mmod})
\end{equation}
is by definition
$$\CR_2\otimes \CR_1^{\on{rev}}\mmod.$$

For an object $\CQ\in \CR_2\otimes \CR_1^{\on{rev}}\mmod$, we will denote the corresponding object of
\eqref{e:maps of mods} by $\fT_\CQ$.
The identity 1-morphism is given by $\fT_\CR$, where we view $\CR$ as an $\CR\otimes \CR^{\on{rev}}$-module category.

\medskip

The composition of two 1-morphisms
$$\ul{\CR_1\mmod} \overset{\fT_{\CQ_{1,2}}}\longrightarrow \ul{\CR_2\mmod} \overset{\fT_{\CQ_{2,3}}}\longrightarrow \ul{\CR_3\mmod}$$
is set to be $\fT_{\CQ_{1,3}}$, where
$$\fT_{\CQ_{1,3}}:=\fT_{\CQ_{2,3}}\underset{\CR_2}\otimes \fT_{\CQ_{1,2}}.$$

\sssec{} \label{sss:evaluate}

Note that a 1-morphism
$$\fT:\ul{\CR_1\mmod}\to \ul{\CR_2\mmod}$$
defines a functor of 2-categories
\begin{equation} \label{e:realize morphism}
\CR_1\mmod\to \CR_2\mmod,
\end{equation}
which, by a slight abuse of notation, we will denote by the same character $\fT$.

\medskip

Namely, for $\fT=\fT_\CQ$, the functor \eqref{e:realize morphism} is given by
$$\CM\mapsto \CQ\underset{\CR_1}\otimes \CM.$$

Sometimes, we will refer to this construction as ``evaluation of $\fT$ on $\CM$".

\begin{rem}
We use $\tDGCat$ as defined above as a substitute for the $(\infty,2)$-category of
``2-DG categories", hence the title of this subsection.

\medskip

Whatever the latter is, it contains $\tDGCat$ as a full subcategory, which consists
of uni-generated 2-DG categories.

\medskip

By way of analogy, $\on{Morita}(\Vect)$ is a full subcategory of $\DGCat$ that consists
of DG categories that can be generated by a single compact object.

\end{rem}

%
%

\sssec{}

The symmetric monoidal structure on $\tDGCat$ is given by
$$\ul{\CR_1\mmod}\otimes \ul{\CR_2\mmod}:=\ul{(\CR_1\otimes \CR_2)\mmod}.$$

\medskip

Note that the unit object of $\tDGCat$ is
$$\ul{\Vect\mmod}=:\ul{\DGCat}.$$

\sssec{}  \label{sss:dual 2categ}

Every object of $\tDGCat$ is dualizable. We have
$$\ul{\CR\mmod}^\vee \simeq \ul{\CR^{\on{rev}}\mmod},$$
where the unit and counit map are both given by
$$\CR\in (\CR\otimes \CR^{\on{rev}})\mmod.$$

\medskip

Under this identification, for a 1-morphism $\ul{\CR_1\mmod}\to \ul{\CR_2\mmod}$ corresponding to
$$\CM\in (\CR_1^{\on{rev}}\otimes \CR_2)\mmod,$$
the dual 1-morphism $\ul{\CR^{\on{rev}}_2\mmod}\to \ul{\CR^{\on{rev}}_1\mmod}$ is given by the same $\CM$.

\sssec{}

We have
$$\End_{\tDGCat}(\one_{\tDGCat})\simeq \DGCat,$$
as a symmetric monoidal category.

\medskip

Further,
$$\bMaps_{\tDGCat}(\one_{\tDGCat},\ul{\CR\mmod})\simeq \CR\mmod.$$

\sssec{}  \label{sss:funct trace 2categ}

Thus, we obtain that for every object $\fC\in \tDGCat$ and its endomorphism $\fF$, we can attach
$$\Tr(\fF,\fC)\in \DGCat.$$

\medskip

Furthermore, if
$$\fT:\fC_1\to \fC_2$$
is a 1-morphism that admits a right adjoint, and given
$$\alpha:\fT\circ \fF_1\to \fF_2\circ \fT,$$
we obtain a functor
$$\Tr(\fT,\alpha):\Tr(\fF_1,\fC_1)\to \Tr(\fF_2,\fC_2).$$

\sssec{}  \label{sss:right-dual}

Note that for $\CQ\in \CR_2\otimes \CR_1^{\on{rev}}\mmod$ the condition that $\fF_\CQ$ admit a right adjoint means that
$\CQ$ is \emph{right-dualizable} as a bimodule category.

\medskip

By definition, this means that there exists an object
$$\CQ^R\in \CR_1\otimes \CR_2^{\on{rev}}\mmod$$
equipped the map
$$\CR_1\to \CQ^R\underset{\CR_2}\otimes \CQ$$
in $(\CR_1\otimes \CR_1^{\on{rev}})\mmod$ and a map
$$\CQ \underset{\CR_1}\otimes \CQ^R \to \CR_2$$
in $(\CR_2\otimes \CR_2^{\on{rev}})\mmod$ that satisfy the usual axioms.

%
%
%

\sssec{} \label{sss:functor from homom}

The case of particular interest for us is when the 1-morphism
$$\ul{\CR_2\mmod}\to \ul{\CR_1\mmod}$$
is given by a monoidal functor
$$\Psi:\CR_1\to \CR_2,$$
i.e., it is given by the $(\CR_1,\CR_2)$-bimodule $\CQ_\Psi$, which is isomorphic to $\CR_2$ as a DG category,
on which $\CR_2$ acts by right multiplication, and $\CR_1$ acts by left multiplication via $\Psi$.

\medskip

Denote this 1-morphism by $\Res_\Psi$. The corresponding functor
$$\Res_\Psi:\CR_2\mmod\to \CR_1\mmod$$
is given by restriction via $\Psi$.

\medskip

The 1-morphism $\Res_\Psi$ tautologically admits a left adjoint, denoted $\Ind_\Psi$. It is given by
the $(\CR_2,\CR_1)$-bimodule $\CR_2$, on which $\CR_2$ acts by left multiplication,
and $\CR_1$ acts by right multiplication via $\Psi$. The unit map for the adjunction is given by
$$\CR_1\underset{\Psi}\to \CR_2\simeq \CR_2\underset{\CR_2}\otimes \CR_2,$$
and the counit of the adjunction is given by the multiplication map
$$\CR_2 \underset{\CR_1}\otimes \CR_2\to \CR_2.$$

The corresponding functor
$$\Ind_\Psi:\CR_1\mmod\to \CR_2\mmod$$
is given by
$$\CM\mapsto \CR_2\underset{\CR_1}\otimes \CM.$$

\sssec{}  \label{sss:right adj rigid}

Assume now that $\CR_1$ and $\CR_2$ are rigid (see \cite[Chapter 1, Sect. 9.1]{GR1}).
In this case, the 1-morphism $\Res_\Psi$ admits also a \emph{right} adjoint,
denoted $\coInd_\Psi$.

\medskip

The corresponding $(\CR_2,\CR_1)$-bimodule is again $\CR_2$.
The unit map for the adjunction.
$$\CR_2 \to \CR_2\underset{\CR_1}\otimes \CR_2$$
is the \emph{right adjoint} to the multiplication functor
$$\CR_2\underset{\CR_1}\otimes \CR_2\to \CR_2$$
(the right adjoint is a functor of $\CR_2$-bimodule categories due to rigidity).
The counit of the adjunction is the functor
$$\CR_2\underset{\CR_2}\otimes \CR_2\simeq \CR_2\to \CR_1,$$
right adjoint to $\Psi$ (again, this right adjoint is a functor of $\CR_1$-bimodule categories due to rigidity).

\begin{rem} \label{r:Ind and coInd}
Note that we obtain that in the situation when $\CR_1$ and $\CR_2$ are rigid, the left and right adjoints of
$\Res_\Psi$ are canonically isomorphic.
\end{rem}

\ssec{The 2-categorical trace and (categorical) Hochschild chains}

\sssec{}  \label{sss:HC 2categ}

For $\CR\in \DGCat^{\on{Mon}}$ and $\CQ\in (\CR\otimes \CR^{\on{rev}})\mmod$, denote
$$\on{HH}_\bullet(\CR,\CQ):=\CR\underset{\CR\otimes \CR^{\on{rev}}}\otimes \CQ\in \DGCat;$$
$$\on{HH}_\bullet(\CR):=\CR\underset{\CR\otimes \CR^{\on{rev}}}\otimes \CR.$$

As in \secref{sss:HC} we obtain formally that
\begin{equation} \label{e:Tr as HH}
\Tr(\fT_\CQ,\ul{\CR\mod})\simeq \on{HH}_\bullet(\CR,\CQ).
\end{equation}

\sssec{}  \label{sss:mon endo}

Let $\CQ$ be given by a monoidal endofunctor $F_\CR$ of $\CR$, see \secref{sss:functor from homom}, i.e.,
$\CQ=\CQ_{F_\CR}$. In this case, by a slight abuse of notation, we will write
$$\on{HH}_\bullet(\CR,F_\CR)$$
instead of $\on{HH}_\bullet(\CR,\CQ_{F_\CR})$.

\sssec{Example}  \label{sss:2-categ of QCoh}

Let $\CR=\QCoh(\CY)$ for $\CY$ as in \secref{sss:prestack} and with affine diagonal\footnote{One can show that the extra
hypothesis of having an affine diagonal is not necessary here.}, and let $F_\CR$ be given by $\phi^*$ for an endomorphism $\phi$ of $\CY$.

\medskip

By \corref{c:quasi-passable pullback},
 we have:
$$\on{HH}_\bullet(\QCoh(\CY),\phi^*)\simeq \QCoh(\CY^\phi).$$

\sssec{}  \label{sss:Y/phi}

Let us consider another example:

\medskip

Let $\CA$ be a symmetric monoidal category, and for a space $Y$ consider $\CA^{\otimes Y}$. Let
$\phi$ be an endomorphism of $Y$. By functoriality, it induces a symmetric monoidal functor
$$\CA^{\otimes \phi}:\CA^{\otimes Y}\to \CA^{\otimes Y}.$$


\medskip

Note that since the monoidal structures involved are symmetric, the category
$$\on{HH}_\bullet(\CA^{\otimes Y},\CA^{\otimes \phi})$$
also acquires a symmetric monoidal structure.

\medskip

Let $Y/\phi$ denote the quotient of $Y$ by $\phi$, i.e.,
$$Y/\phi:=Y\underset{\on{id}\sqcup \on{id},Y\sqcup Y,\phi\sqcup \on{id}}\sqcup\, Y,$$
where the subscripts indicate the morphisms with respect to which we form the pushout.

\medskip

Note that when $\phi$ is an automorphism, $Y/\phi$ is isomorphic to the quotient $Y/\BZ$, i.e., to
the geometric realization of the bar simplicial space
\begin{equation} \label{e:bar phi}
...\BZ\times Y\rightrightarrows Y.
\end{equation}

\medskip

We claim:

\begin{prop} \label{p:HC twist}
There exists a canonical equivalence
$$\on{HH}_\bullet(\CA^{\otimes Y},\CA^{\otimes \phi}) \simeq \CA^{\otimes Y/\phi}.$$
\end{prop}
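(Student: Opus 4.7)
The plan is to exhibit both $\on{HH}_\bullet(\CA^{\otimes Y},\CA^{\otimes \phi})$ and $\CA^{\otimes Y/\phi}$ as the pushout in $\DGCat^{\on{SymMon}}$ of one and the same diagram, namely
$$
\CA^{\otimes Y} \xleftarrow{\on{mult}} \CA^{\otimes Y} \otimes \CA^{\otimes Y} \xrightarrow{\on{mult} \circ (\on{id} \otimes \CA^{\otimes \phi})} \CA^{\otimes Y}.
$$

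For the right-hand side $\CA^{\otimes Y/\phi}$, I would invoke the fact that the functor $\CA^{\otimes(-)} : \Spc \to \DGCat^{\on{SymMon}}$ preserves colimits by construction (cf.~\secref{ss:A power Y}) and sends disjoint unions of spaces to tensor products of symmetric monoidal categories (since coproducts in $\DGCat^{\on{SymMon}}$ are given by the tensor product). Applying this functor to the pushout square defining $Y/\phi$---whose top-left corner is $Y \sqcup Y$ and whose two legs $Y\sqcup Y \rightrightarrows Y$ are $\on{id} \sqcup \on{id}$ and $\phi \sqcup \on{id}$---immediately yields $\CA^{\otimes Y/\phi}$ as the pushout above, once one uses the swap symmetry of the tensor product to replace $\CA^{\otimes \phi} \otimes \on{id}$ by $\on{id} \otimes \CA^{\otimes \phi}$.

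For the Hochschild side, write $\CR := \CA^{\otimes Y}$ and $F := \CA^{\otimes \phi}$. Since $\CR$ is symmetric monoidal and $F$ is a symmetric monoidal endofunctor, one has a canonical identification $\CR^{\on{rev}} \simeq \CR$, and the bimodule $\CQ_F$ underlying the definition of $\on{HH}_\bullet(\CR, F)$, viewed as a module over $\CR \otimes \CR \simeq \CR \otimes \CR^{\on{rev}}$ via the symmetric monoidal functor $\on{mult} \circ (\on{id} \otimes F)$, upgrades to a commutative algebra in $(\CR \otimes \CR)\mmod$. The key general lemma to invoke is that for a commutative algebra $R$ in a presentable symmetric monoidal $\infty$-category, the forgetful functor $\on{ComAlg}_R \to R\mmod$ preserves sifted colimits and, more precisely, the relative tensor product of commutative $R$-algebras as $R$-modules agrees with their coproduct in $\on{ComAlg}_R$ (see \cite[Prop.~3.2.4.7]{Lu2}). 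Applied to $R = \CR \otimes \CR$ together with the two commutative algebras $\CR$ (structure map $\on{mult}$) and $\CQ_F$ (structure map $\on{mult} \circ (\on{id} \otimes F)$), this identifies the module-theoretic tensor product $\CR \otimes_{\CR \otimes \CR^{\on{rev}}} \CQ_F$ with the pushout in $\DGCat^{\on{SymMon}}$ of the same diagram displayed above.

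The main obstacle is the second step---translating the definition of Hochschild chains (a relative tensor product of bimodule categories over $\CR \otimes \CR^{\on{rev}}$) into a pushout of symmetric monoidal categories, and verifying that the two structure maps on the resulting commutative algebras match $\on{mult}$ and $\on{mult} \circ (\on{id} \otimes F)$ respectively. Once this translation is in hand, the two pushout diagrams for the two sides are manifestly the same, and hence their pushouts agree, yielding the desired equivalence
$$\on{HH}_\bullet(\CA^{\otimes Y}, \CA^{\otimes \phi}) \simeq \CA^{\otimes Y/\phi}.$$
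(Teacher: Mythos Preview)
Your proposal is correct and follows essentially the same approach as the paper. The paper's proof is a single line (``Follows from the commutation of the functor \eqref{e:integral functor} with colimits in $Y$''), and you have simply unpacked what that sentence means: apply the colimit-preserving functor $\CA^{\otimes(-)}$ to the defining pushout for $Y/\phi$, use that coproducts in $\DGCat^{\on{SymMon}}$ are tensor products to identify $\CA^{\otimes(Y\sqcup Y)}\simeq \CA^{\otimes Y}\otimes \CA^{\otimes Y}$, and then recognize the resulting pushout---i.e., relative tensor product---as the formula \eqref{e:HH as ten} defining $\on{HH}_\bullet(\CA^{\otimes Y},\CA^{\otimes\phi})$; the only extra care you take is in justifying that the module-theoretic relative tensor product underlying $\on{HH}_\bullet$ agrees with the pushout in $\DGCat^{\on{SymMon}}$, which the paper implicitly uses throughout (and which also follows from the fact that the forgetful functor $\DGCat^{\on{SymMon}}\to\DGCat$ preserves sifted colimits, as in \secref{sss:operad}).
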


\begin{proof}
Follows from the commutation of the functor \eqref{e:integral functor} with colimits in $Y$.
\end{proof}

\sssec{}  \label{sss:Frob LocSys}

Let us specialize further the example considered in \secref{sss:Y/phi} above.

\medskip

Let $\sfe$ be a field of characteristic $0$, and take $\CA=\Rep(\sG)$. Assume that $Y$ has finitely many connected components;
then the same is true for $Y/\phi$.

\medskip

Recall the identifications
$$\Rep(\sG)^{\otimes Y}\simeq \QCoh(\LocSys_\sG(Y)) \text{ and } \Rep(\sG)^{\otimes Y/\phi}\simeq \QCoh(\LocSys_\sG(Y/\phi))$$
of \thmref{t:integral to LocSys}.

\medskip

With respect to the above identifications, the functor $\Rep(\sG)^{\otimes \phi}$ corresponds to the functor
$$\LocSys(\phi)^*:\QCoh(\LocSys_\sG(Y))\to \QCoh(\LocSys_\sG(Y)),$$
where
$$\LocSys(\phi):\LocSys_\sG(Y)\to \LocSys_\sG(Y)$$
is the map induced by $\phi$.

\medskip

By \secref{sss:2-categ of QCoh}, we have
$$\on{HH}_\bullet(\QCoh(\LocSys_\sG(Y)),\LocSys(\phi)^*)\simeq \QCoh(\LocSys_\sG(Y)^{\LocSys(\phi)}).$$

Now,
$$\LocSys_\sG(Y)^{\LocSys(\phi)}\simeq \LocSys_\sG(Y/\phi).$$

To summarize, we obtain
\begin{multline*}
\on{HH}_\bullet(\Rep(\sG)^{\otimes Y},\Rep(\sG)^{\otimes \phi}) \simeq
\on{HH}_\bullet(\QCoh(\LocSys_\sG(Y)),\LocSys(\phi)^*)\simeq \\
\simeq \QCoh(\LocSys_\sG(Y)^{\LocSys(\phi)})
\simeq \QCoh(\LocSys_\sG(Y/\phi))\simeq \Rep(\sG)^{\otimes Y/\phi},
\end{multline*}
which is what \propref{p:HC twist} says in this case.

\ssec{The 2-categorical class map}

\sssec{}  \label{sss:2-categ class}

Let $\CR$ be a monoidal category, and let $\CM$ be an $\CR$-module. Assume that $\CM$ is \emph{right-dualizable}
as an $\CR$-module (see \secref{sss:right-dual}).

\medskip

Then the corresponding functor
$$\fT_\CM:\ul{\DGCat}\to \ul{\CR\mmod}$$
admits a right adjoint.

\medskip

Let $\CQ$ be an object of $(\CR\otimes \CR^{\on{rev}})\mmod$, and let us be given a map
$$\alpha:\CM\to \CQ\underset{\CR}\otimes \CM$$
in $\CR\mmod$.

\medskip

Applying the functoriality of 2-categorical trace from \ref{sss:funct trace 2categ} and repeating the construction of \secref{sss:class},
to the above datum we can assign an object
$$\on{cl}(\CM,\alpha)\in \Tr(\fT_\CQ,\ul{\CR\mod})\simeq \on{HH}_\bullet(\CR,\CQ).$$

%
%
%
%
%

\sssec{}  \label{sss:enhanced trace}

Note that for $\CM\in \CR\mmod$ and $\CQ=\CQ_{F_\CR}$ from \secref{sss:mon endo}, the datum of $\alpha$
as in \secref{sss:2-categ class} amounts to an endofunctor $F_\CM$, which is compatible with the
action of $\CR$:
$$F_\CM(a\otimes m) \simeq F_\CR(a)\otimes F_\CM(m).$$
We denote this correspondence by
$$F_\CM \rightsquigarrow \alpha_{F_\CM}.$$

\medskip

We will denote the corresponding object $\on{cl}(\CM,\alpha_{F_\CM})\in \on{HH}_\bullet(\CR,F_\CR)$ also by
$$\Tr^{\on{enh}}_\CR(F_\CM,\CM).$$

The reason for the notation $\Tr_\CR^{\on{enh}}$ will be explained in Remark \ref{r:enh notation} below.

\medskip

Consider the particular case $\CM=\CR$ and $F_\CM=F_\CR$. We will denote
$$\one_{\on{HH}_\bullet(\CR,F_\CR)}:=\Tr^{\on{enh}}_\CR(F_\CR,\CR)\in \on{HH}_\bullet(\CR,F_\CR).$$

\begin{rem}  \label{r:unit in Hochschild}
The reason for the notation $\one_{\on{HH}_\bullet(\CR,F_\CR)}$ is the following:

\medskip

Assume for a moment that $\CR$ is \emph{symmetric monoidal}. The symmetric monoidal structure on $\CR$ endows
$\ul{\CR\mmod}$ with a structure of commutative algebra object of $\tDGCat$.

\medskip

Assume that
$F_\CR$ is a symmetric monoidal endofunctor of $\CR$. Consider the corresponding
endomorphism $F_\CR$ of $\ul{\CR\mmod}$ (see \secref{sss:mon endo}). This endomorphism
is right-lax symmetric monoidal for the above commutative algebra structure on $\ul{\CR\mmod}$.
Hence, by \secref{sss:2-categ sym mon}, the category $\on{HH}_\bullet(\CR,F_\CR)$ acquires a symmetric monoidal structure.

\medskip

Now, it follows from \eqref{e:2-categ sym mon hom} that the object that we have denoted
$$\one_{\on{HH}_\bullet(\CR,F_\CR)}\in \on{HH}_\bullet(\CR,F_\CR)$$
is the monoidal unit.

\end{rem}

\sssec{}

We claim:

\begin{thm} \label{t:two notions of trace}
Assume that $\CR$ is rigid. Then there is a canonical isomorphism of associative algebras
\begin{equation} \label{e:trace endo}
\Tr(F_\CR,\CR) \simeq \CEnd_{\on{HH}_\bullet(\CR,F_\CR)}(\one_{\on{HH}_\bullet(\CR,F_\CR)}),
\end{equation}
and of modules over these algebras
\begin{equation} \label{e:trace module}
\Tr(F_\CM,\CM) \simeq \CHom_{\on{HH}_\bullet(\CR,F_\CR)}(\one_{\on{HH}_\bullet(\CR,F_\CR)},\Tr^{\on{enh}}_\CR(F_\CM,\CM)).
\end{equation}
\end{thm}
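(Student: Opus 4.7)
The plan is to realize both sides as values of the 2-categorical trace functor (\secref{sss:funct trace 2categ}) applied to suitable morphisms in $L(\tDGCat)_{\on{rgd}}$, with rigidity of $\CR$ supplying the requisite right adjoints. By construction (\secref{sss:2-categ class}), the objects $\one_{\on{HH}_\bullet(\CR,F_\CR)}=\Tr^{\on{enh}}_\CR(F_\CR,\CR)$ and $\Tr^{\on{enh}}_\CR(F_\CM,\CM)$ are the values at $\sfe\in\Vect$ of the functors
$$\on{cl}_\CR:=\Tr(\fT_\CR,\alpha_{F_\CR})\quad\text{and}\quad\on{cl}_\CM:=\Tr(\fT_\CM,\alpha_{F_\CM}),$$
where $\fT_\CR,\fT_\CM:\ul{\DGCat}\to\ul{\CR\mmod}$ are the 1-morphisms corresponding to $\CR,\CM\in\CR\mmod$ and $\fF_\CR$ is the endomorphism of $\ul{\CR\mmod}$ induced by $F_\CR$.

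Since $\CR$ is rigid, by \secref{sss:right adj rigid} the 1-morphism $\fT_\CR$ admits a right adjoint $\fT_\CR^R:\ul{\CR\mmod}\to\ul{\DGCat}$ in $\tDGCat$, and from $\alpha_{F_\CR}$ one extracts by adjunction a canonical 2-morphism $\alpha^R:\fT_\CR^R\circ\fF_\CR\to\fT_\CR^R$, so that $(\fT_\CR^R,\alpha^R):(\ul{\CR\mmod},\fF_\CR)\to(\ul{\DGCat},\on{id})$ is a morphism in $L(\tDGCat)_{\on{rgd}}$. Applying trace functoriality yields a functor $\Tr(\fT_\CR^R,\alpha^R):\on{HH}_\bullet(\CR,F_\CR)\to\Vect$. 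A diagram chase using the explicit formula \eqref{e:map between Tr} together with the triangle identities for $\fT_\CR\dashv\fT_\CR^R$ identifies this functor as the right adjoint of $\on{cl}_\CR$, so
$$\CHom_{\on{HH}_\bullet(\CR,F_\CR)}(\one_{\on{HH}_\bullet(\CR,F_\CR)},X)\simeq\Tr(\fT_\CR^R,\alpha^R)(X),\quad X\in\on{HH}_\bullet(\CR,F_\CR).$$

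Taking $X=\on{cl}_\CM(\sfe)$ and invoking the compatibility of the 2-categorical trace with composition (\secref{sss:funct trace}), one obtains that $\Tr(\fT_\CR^R,\alpha^R)\circ\on{cl}_\CM$ equals $\Tr(\fT_\CR^R\circ\fT_\CM,\beta)$ for a composed 2-morphism $\beta$. Now $\fT_\CR^R\circ\fT_\CM:\ul{\DGCat}\to\ul{\DGCat}$ corresponds, via the bimodule calculus of \secref{sss:2-DGCat} and rigidity, to the DG category $\CR\otimes_\CR\CM\simeq\CM$, and under this identification $\beta$ unwinds to the endofunctor $F_\CM$. Finally, the 2-categorical trace of a 1-endomorphism of the unit $\one_{\tDGCat}=\ul{\DGCat}$ equipped with a 2-endomorphism coincides (as a functor $\Vect\to\Vect$) with the classical 1-categorical trace of the associated endofunctor of a DG category, so that $\Tr(\fT_\CR^R\circ\fT_\CM,\beta)(\sfe)\simeq\Tr(F_\CM,\CM)$. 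This establishes \eqref{e:trace module}. The algebra isomorphism \eqref{e:trace endo} is the special case $(\CM,F_\CM)=(\CR,F_\CR)$; compatibility with the algebra and module structures on both sides follows from the monoidal properties of $\Tr$ (\secref{sss:Lo}) applied to $(\CR,F_\CR)$ and $(\CM,F_\CM)$ as an algebra and module object in $L(\tDGCat)_{\on{rgd}}$ (\secref{sss:2-categ sym mon}, \secref{sss:2-categ sym mon mod}).

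The principal technical obstacles are the two unfoldings above: (i) verifying that $\Tr(\fT_\CR^R,\alpha^R)$ is the right adjoint of $\on{cl}_\CR$, and (ii) identifying the composed 2-morphism $\beta$ on $\CM\simeq\fT_\CR^R\circ\fT_\CM$ as precisely $F_\CM$. Both rest on careful diagram chases with \eqref{e:map between Tr} and the rigidity-supplied adjunction data of \secref{sss:right adj rigid}.
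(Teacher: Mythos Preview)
Your approach is essentially the same as the paper's main proof (Sections~3.9--3.10): realize $\one_{\on{HH}_\bullet(\CR,F_\CR)}$ and $\Tr^{\on{enh}}_\CR(F_\CM,\CM)$ via the 1-morphisms $\fT_\CR=\Ind_\Psi$ and $\fT_\CM$ in $L(\tDGCat)_{\on{rgd}}$ (for $\Psi:\Vect\to\CR$ the unit), use the right adjoint $\fT_\CR^R=\Res_\Psi$, and identify the composite $\Res_\Psi\circ\fT_\CM$ with $(\CM,F_\CM)$ viewed in $\End_{L(\tDGCat)_{\on{rgd}}}(\ul{\DGCat},\on{Id})\simeq L(\DGCat)_{\on{rgd}}$.

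The one point that deserves more care is your step (i): the claim that $\Tr(\fT_\CR^R,\alpha^R)$ is right adjoint to $\on{cl}_\CR=\Tr(\fT_\CR,\alpha_{F_\CR})$ is \emph{not} a routine diagram chase with \eqref{e:map between Tr}. What is needed is that the functor $\Tr:L(\tDGCat)_{\on{rgd}}\to\DGCat$ upgrades to a functor of $(\infty,2)$-categories, so that it carries adjunctions to adjunctions. The paper develops this in \secref{ss:L tDGCat}: the $(\infty,2)$-structure on $L(\tDGCat)_{\on{rgd}}$ comes from the $(\infty,3)$-structure on $\tDGCat$, and one must check that the unit and counit of $(\Ind_\Psi,\Res_\Psi)$---which are the functors $\Psi:\Vect\to\CR$ and $\on{mult}:\CR\underset{\Vect}\otimes\CR\to\CR$---admit right adjoints compatible with the bimodule structures. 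This is precisely where rigidity enters (your citation of \secref{sss:right adj rigid} is apt, but the point is subtler than just existence of $\fT_\CR^R$). The paper also offers, in \secref{ss:two notions of trace expl}, a more elementary alternative that avoids the $(\infty,3)$-machinery by computing the functor $\Tr(\oblv,\on{taut}):\on{HH}_\bullet(\CR,F_\CR)\to\Vect$ directly from the defining diagram \eqref{e:main diagram} and identifying it with $\CHom_{\on{HH}_\bullet(\CR,F_\CR)}(\one,-)$; you might find this route cleaner for filling in your obstacle (i).
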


\begin{rem}  \label{r:enh notation}
The reason for the notation $\Tr^{\on{enh}}_\CR(F_\CM,\CM)$ is explained by \thmref{t:two notions of trace}: this theorem says that
the object
$$\Tr(F_\CM,\CM)\in \Vect$$
upgrades to an object of $\on{HH}_\bullet(\CR,F_\CR)$ (namely, $\Tr^{\on{enh}}_\CR(F_\CM,\CM)$),
where ``upgrades" means that the former is the image of the latter under the  functor
$$\CHom_{\on{HH}_\bullet(\CR,F_\CR)}(\one_{\on{HH}_\bullet(\CR,F_\CR)},-):
\on{HH}_\bullet(\CR,F_\CR)\to \Vect,$$
which we can think of as a kind of forgetful functor.
\end{rem}

\begin{rem}
It will follow from the proof that when $\CR$ is \emph{symmetric} monoidal, the isomorphism \eqref{e:trace endo}
that we will construct respects the \emph{commutative} algebra structure on both sides.
\end{rem}

\sssec{Example}

Let $\CY$ be as in \secref{sss:prestack}, and take $\CR=\QCoh(\CY)$. Note the conditions on $\CY$ imply
that $\QCoh(\CY)$ is rigid, see \cite[Chapter 3, Proposition 3.5.3]{GR1}. We let $F_\CR$ be given by $\phi^*$.

\medskip

Let $\CM$ be an $\QCoh(\CY)$-module category, which is dualizable as a plain DG category. Let
$F_\CM$ be an endofunctor of $\CM$ that is compatible with $\phi^*$, i.e., it makes the diagram
$$
\CD
\QCoh(\CY) \otimes \CM  @>>>  \CM  \\
@V{\phi^*\otimes F_\CM}VV   @VV{F_\CM}V  \\
\QCoh(\CY) \otimes \CM  @>>>  \CM
\endCD
$$
commute.

\medskip

Then the construction of \secref{sss:2-categ class} produces an object
$$\Tr^{\on{enh}}_{\QCoh(\CY)}(F_\CM,\CM)\in \QCoh(\CY^\phi)$$
and \thmref{t:two notions of trace} says that
$$\Gamma(\CY^\phi,\Tr^{\on{enh}}_{\QCoh(\CY)}(F_\CM,\CM))\simeq \Tr(F_\CM,\CM).$$

\ssec{A framework for the proof of  \thmref{t:two notions of trace}}  \label{ss:L tDGCat}

\sssec{}

In order to prove \thmref{t:two notions of trace} we will use the formalism of \secref{sss:Lo}, applied to
$$\bO:=\tDGCat,\,\, \End_{\bO}(\one_\bO)=\DGCat.$$ However, we note that both sides in
\begin{equation} \label{e:L-Tr}
\Tr:L(\tDGCat)_{\on{rgd}} \to \DGCat
\end{equation}
are naturally $(\infty,2)$-categories, and functor \eqref{e:L-Tr} upgrades to a functor between $(\infty,2)$-categories.

\sssec{}

The structure of $(\infty,2)$-category on $L(\tDGCat)_{\on{rgd}}$ comes from the structure of
$(\infty,3)$-category on $\tDGCat$.

\medskip

Indeed, note that for a pair of objects $\ul{\CR_i\mmod}\in \tDGCat$,
the $(\infty,1)$-category
$$\bMaps_{\tDGCat}(\ul{\CR_1\mmod},\ul{\CR_2\mmod}):=(\CR_2\otimes \CR_1^{\on{op}})\mmod$$
naturally upgrades to an $(\infty,2)$-category. Namely, for
$$\CQ',\CQ''\in (\CR_2\otimes \CR_1^{\on{op}})\mmod,$$
we can consider the $(\infty,1)$-category
$$\bMaps_{(\CR_2\otimes \CR_1^{\on{op}})\mmod}(\CQ',\CQ'').$$

\sssec{} \label{sss:3-categ}

Let us specify what are the 2-morphisms
in $L(\tDGCat)_{\on{rgd}}$ .

\medskip

By definition, 1-morphisms in $L(\tDGCat)_{\on{rgd}}$ are diagrams
$$
\xy
(0,0)*+{\bo_1}="A";
(20,0)*+{\bo_1}="B";
(0,-20)*+{\bo_2}="C";
(20,-20)*+{\bo_2,}="D";
{\ar@{->}^{f_1} "A";"B"};
{\ar@{->}^{t} "A";"C"};
{\ar@{->}^{t} "B";"D"};
{\ar@{->}_{f_2} "C";"D"};
{\ar@{=>}^{\alpha} "B";"C"};
\endxy
$$
where $\bo_i\in \tDGCat$, and the edges in the above diagram are 1-morphisms in $\tDGCat$,
where we require the 1-morphism $t$ to admit a right adjoint.

\medskip

Given another 1-morphism represented by
\begin{equation} \label{e:2-morph'}
\xy
(0,0)*+{\bo_1}="A";
(20,0)*+{\bo_1}="B";
(0,-20)*+{\bo_2}="C";
(20,-20)*+{\bo_2,}="D";
{\ar@{->}^{f_1} "A";"B"};
{\ar@{->}^{t'} "A";"C"};
{\ar@{->}^{t'} "B";"D"};
{\ar@{->}_{f_2} "C";"D"};
{\ar@{=>}^{\alpha'} "B";"C"};
\endxy
\end{equation}
a 2-morphism between them is a diagram
$$
\xy
(0,0)*+{\bo_1}="A";
(15,15)*+{\bo_1}="B";
(0,-25)*+{\bo_2}="C";
(15,-10)*+{\bo_2,}="D";
{\ar@{->}^{\on{id}} "A";"B"};
{\ar@{->}^{t} "A";"C"};
{\ar@{->}^{t'} "B";"D"};
{\ar@{->}_{\on{id}} "C";"D"};
{\ar@{=>}_\beta "C";"B"};
\endxy
$$
where $\beta:t\to t'$ is a 2-morphism in $\tDGCat$ that \emph{admits a right adjoint}\footnote{This is a notion that exists in an
$(\infty,3)$-category.}, and which is equipped with a 3-morphism $\gamma$ for the cube:
$$
\xy
(0,0)*+{\bo_1}="A";
(15,15)*+{\bo_1}="B";
(0,-25)*+{\bo_2}="C";
(15,-10)*+{\bo_2}="D";
(55,0)*+{\bo_1}="A'";
(70,15)*+{\bo_1}="B'";
(55,-25)*+{\bo_2.}="C'";
(70,-10)*+{\bo_2}="D'";
{\ar@{->}^{\on{id}} "A";"B"};
{\ar@{->}^{t} "A";"C"};
{\ar@{->}^{t'} "B";"D"};
{\ar@{->}_{\on{id}} "C";"D"};
{\ar@{=>}^\beta "C";"B"};
{\ar@{->}^{\on{id}} "A'";"B'"};
{\ar@{->}^{t} "A'";"C'"};
{\ar@{->}^{t'} "B'";"D'"};
{\ar@{->}_{\on{id}} "C'";"D'"};
{\ar@{=>}^\beta "C'";"B'"};
{\ar@{->}_{f_1} "A";"A'"};
{\ar@{->}_{f_1} "B";"B'"};
{\ar@{->}_{f_2} "C";"C'"};
{\ar@{->}_{f_2} "D";"D'"};
{\ar@{=>}_{\alpha'} "A'";"C"};
{\ar@{=>}_{\alpha} "B'";"D"};
\endxy
$$

I.e., $\gamma$ is a 3-morphism
\begin{equation} \label{e:gamma morph}
\xy
(0,0)*+{t\circ f_1}="A";
(20,0)*+{f_2\circ t}="B";
(0,-20)*+{t'\circ f_1}="C";
(20,-20)*+{f_2\circ t'.}="D";
{\ar@{->}^{\alpha} "A";"B"};
{\ar@{->}^{\beta} "A";"C"};
{\ar@{->}^{\beta} "B";"D"};
{\ar@{->}_{\alpha'} "C";"D"};
{\ar@{=>}^{\gamma} "B";"C"};
\endxy
\end{equation}

\sssec{}

Let us show how the datum of $(\beta,\gamma)$ as above gives rise to a 2-morphism
$$\Tr(t,\alpha)\to \Tr(t',\alpha')$$
in
$$\End_{\tDGCat}(\ul{\DGCat})\simeq \DGCat,$$
which is a 3-morphism in $\ul{\DGCat}$.

\medskip

It is obtained from the diagram

$$
\xy
(0,0)*+{\Tr(f_1,\bo_1)}="A";
(25,0)*+{\Tr(t^R\circ t\circ f_1,\bo_1)}="B";
(60,0)*+{\Tr(t^R\circ f_2\circ t,\bo_1)}="C";
(100,0)*+{\Tr(f_2\circ t\circ t^R,\bo_2)}="D";
(125,0)*+{\Tr(f_2,\bo_2)}="E";
(0,-20)*+{\Tr(f_1,\bo_2)}="A'";
(25,-20)*+{\Tr(t'{}^R\circ t'\circ f_1,\bo_1)}="B'";
(60,-20)*+{\Tr(t'{}^R\circ f_2\circ t',\bo_1)}="C'";
(100,-20)*+{\Tr(f_2\circ t'\circ t'{}^R,\bo_2)}="D'";
(125,-20)*+{\Tr(f_2,\bo_1),}="E'";
{\ar@{->} "A";"B"};
{\ar@{->} "A'";"B'"};
{\ar@{->}^{\on{cycl}} "C";"D"};
{\ar@{->}_{\on{cycl}} "C'";"D'"};
{\ar@{->} "D";"E"};
{\ar@{->} "D'";"E'"};
{\ar@{->}^{\alpha} "B";"C"};
{\ar@{->}_{\alpha'} "B'";"C'"};
{\ar@{->}^{\on{Id}} "A";"A'"};
{\ar@{->} "D";"D'"};
{\ar@{->}^{\on{Id}} "E";"E'"};
{\ar@{->} "B";"B'"};
{\ar@{->} "C";"C'"};
{\ar@{=>} "B";"A'"};
{\ar@{=>} "E";"D'"};
{\ar@{=>}_{\gamma} "C";"B'"};
{\ar@{=>} "D";"C'"};
\endxy
$$
where the middle vertical arrows are induced by the 2-morphisms:

\begin{itemize}

\item $\beta:t\to t'$;

\item The 2-morphism $t^R\to t'{}^R$, obtained by passing to right adjoints at the level of 1-morphisms
from $\beta^R:t'\to t$, i.e., we consider the partially defined functor
``passing to the right adjoint"
$$\bMaps(\bo_2,\bo_1)\to \bMaps(\bo_1,\bo_2), \quad t\mapsto t^R$$
and apply it to
$$\beta^R\in \Maps_{\bMaps(\bo_2,\bo_1)}(t',t)$$ to obtain an object of
$$\Maps_{\bMaps(\bo_1,\bo_2)}(t^R,t'{}^R).$$

\end{itemize}

\ssec{Proof of \thmref{t:two notions of trace}: isomorphism of the underlying objects of $\Vect$}

\sssec{}

As a first step, in the setting of \thmref{t:two notions of trace}, we will show that we have a canonical isomorphism
\begin{equation} \label{e:trace module again}
\Tr(F_\CM,\CM) \simeq \CHom_{\on{HH}_\bullet(\CR,F_\CR)}(\one_{\on{HH}_\bullet(\CR,F_\CR)},\Tr^{\on{enh}}_\CR(F_\CM,\CM)).
\end{equation}
as objects of $\Vect$.

\medskip

We will do it in the following general framework.

\sssec{}

Let $\CR_1$ and $\CR_2$ be a pair of monoidal DG categories, each equipped with a monoidal endofunctor,
denoted $F_{\CR_1}$ and $F_{\CR_2}$, respectively. Let $\Psi:\CR_1\to \CR_2$ be a monoidal functor,
equipped with an isomorphism
\begin{equation} \label{e:psi intertw}
F_{\CR_2}\circ \Psi\simeq \Psi\circ F_{\CR_1}.
\end{equation}

Restriction and induction along $\Psi$ define an adjoint pair of 1-morphisms in $\tDGCat$
\begin{equation} \label{e:Ind Res adj}
\Ind_\Psi:\ul{\CR_1\mmod}\rightleftarrows \ul{\CR_2\mmod}:\Res_\Psi,
\end{equation}
see \secref{sss:functor from homom}.

\medskip

The isomorphism \eqref{e:psi intertw} give rise to a \emph{commutative square}
\begin{equation} \label{e:restr diag}
\CD
\ul{\CR_2\mmod}   @>{\Res_{F_{\CR_2}}}>>  \ul{\CR_2\mmod}    \\
@V{\Res_\Psi}VV   @VV{\Res_\Psi}V  \\
\ul{\CR_1\mmod}   @>{\Res_{F_{\CR_1}}}>>  \ul{\CR_1\mmod}.
\endCD
\end{equation}

\medskip

By passing to left adjoints along the vertical arrows in \eqref{e:restr diag}, we obtain a lax-commutative square:
(i.e., a square that commutes up to a 2-morphism)
\begin{equation} \label{e:ind diag}
\xy
(0,0)*+{\ul{\CR_1\mmod}}="A";
(30,0)*+{\ul{\CR_1\mmod}}="B";
(0,-20)*+{\ul{\CR_2\mmod}}="C";
(30,-20)*+{\ul{\CR_2\mmod}.}="D";
{\ar@{->}^{\Res_{F_{\CR_1}}} "A";"B"};
{\ar@{->}_{\Ind_\Psi} "A";"C"};
{\ar@{->}^{\Ind_\Psi} "B";"D"};
{\ar@{->}_{\Res_{F_{\CR_2}}} "C";"D"};
{\ar@{=>} "B";"C"};
\endxy
\end{equation}

\sssec{}

Assume now that $\CR_1$ and $\CR_2$ are rigid. In this case, by \secref{sss:right adj rigid}, the 1-morphism $\Res_\Psi$ admits
also a right adjoint.

\medskip

Furthermore, in this case the unit and the counit of the adjunction $(\Ind_\Psi,\Res_\Psi)$, which are 2-morphisms
$$\on{Id} \to \Res_\Psi\circ \Ind_\Psi \text{ and } \Ind_\Psi \circ \Res_\Psi\to \on{Id}$$
in $\tDGCat$, admit \emph{right} adjoints.

\medskip

Indeed, the above 2-morphisms are given by the functors
$$\CR_1\overset{\Psi}\to \CR_2 \text{ and } \CR_2\underset{\CR_1}\otimes \CR_2 \overset{\on{mult}}\to \CR_2$$
(as $(\CR_1,\CR_1)$- and $(\CR_2,\CR_2)$-bimodule categories-bimodule categories, respectively) and the rigidity
assumption implies that these functors admit right adjoints that respect the bimodule structures.

\medskip

From here it follows that the adjunction \eqref{e:Ind Res adj} and the diagrams \eqref{e:restr diag} and \eqref{e:ind diag}
give rise to an adjunction
\begin{equation} \label{e:Ind Res adj End}
\Ind_\Psi:(\ul{\CR_1\mmod},\Res_{F_{\CR_1}})\rightleftarrows (\ul{\CR_2\mmod},\Res_{F_{\CR_2}}):\Res_\Psi
\end{equation}
in the $(\infty,2)$-category $L(\tDGCat)_{\on{rgd}}$. In the unit and counit 2-morphisms
that encode the adjunction \eqref{e:Ind Res adj End}, the corresponding 3-morphisms
(denoted $\gamma$ in \secref{sss:3-categ}, see \eqref{e:gamma morph}) are actually isomorphisms.

\sssec{}

Hence, by \secref{ss:L tDGCat}, from the adjunction \eqref{e:Ind Res adj End}, by applying the functor $\Tr$ of
\eqref{e:L-Tr}, we obtain an adjunction
\begin{equation} \label{e:Ind Res adj L}
\Tr(\Ind_\Psi):\on{HH}_\bullet(\CR_1,F_{\CR_1}) \rightleftarrows \on{HH}_\bullet(\CR_2,F_{\CR_2}):\Tr(\Res_\Psi).
\end{equation}


\sssec{}  \label{sss:setting for gen two trace}

Let $\CM$ be an object of $\CR_2\mmod$; assume that it is dualizable as a DG category. Since $\CR_2$
was assumed rigid, we obtain that $\CM$ is dualizable as an object of $\CR_2\mmod$.

\medskip

Let $F_\CM$ be its endofunctor as in \secref{sss:enhanced trace}.
Then to it there corresponds an object
$$\Tr^{\on{enh}}_{\CR_2}(F_\CM,\CM)\in \on{HH}_\bullet(\CR_2,F_{\CR_2}).$$

\medskip

Consider now the object $\Res_\Psi(\CM)\in \CR_1\mmod$. Since $\CR_1$ is rigid and $\CM$ is dualizable as a DG category,
we obtain that $\Res_\Psi(\CM)$ is dualizable as an object of $\CR_1\mmod$.

\medskip

The datum of $F_\CM$ defines the corresponding datum for
$\Res_\Psi(\CM)\in \CR_1\mmod$.

\medskip

We will deduce \thmref{t:two notions of trace} (at the level of the underlying objects of $\Vect$) from the following more general relative statement:

\begin{thm}\label{t:rel two notions of trace}
In the situation above, there is a canonical isomorphism
$$ \Tr(\Ind_\Psi)^R(\Tr^{\on{enh}}_{\CR_2}(F_\CM,\CM))\simeq
\Tr^{\on{enh}}_{\CR_1}(F_\CM,\Res_\Psi(\CM)) .$$
\end{thm}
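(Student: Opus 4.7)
The plan is to interpret both sides of the desired isomorphism as images of a single object $\sfe \in \Vect$ under appropriate $(\infty,2)$-functorial trace maps, then invoke composition-functoriality of the 2-categorical trace together with the adjunction $(\Tr(\Ind_\Psi), \Tr(\Res_\Psi))$ from \eqref{e:Ind Res adj L}.

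First, I would unpack the definition of $\Tr^{\on{enh}}_{\CR}(F_\CM, \CM)$ from \secref{sss:enhanced trace}: it is precisely the image of $\sfe$ under the functor
$$\Tr(\fT_\CM, \alpha_{F_\CM}) : \Vect \simeq \Tr(\on{id}_{\ul{\DGCat}}, \ul{\DGCat}) \to \Tr(\Res_{F_\CR}, \ul{\CR\mmod}) \simeq \on{HH}_\bullet(\CR, F_\CR),$$
obtained by applying the functor \eqref{e:L-Tr} of Sect.~\ref{ss:L tDGCat} to the 1-morphism $\fT_\CM: \ul{\DGCat}\to \ul{\CR\mmod}$ in $L(\tDGCat)_{\on{rgd}}$ equipped with its built-in compatibility structure $\alpha_{F_\CM}$. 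Rigidity of $\CR$ enters to ensure $\fT_\CM$ actually lies in $L(\tDGCat)_{\on{rgd}}$, i.e.\ admits a right adjoint in $\tDGCat$.

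Next, I would verify that under the evaluation convention of \secref{sss:evaluate}, composition in $\tDGCat$ gives a canonical identification
$$\Res_\Psi \circ \fT_\CM \;\simeq\; \fT_{\Res_\Psi(\CM)}$$
of 1-morphisms $\ul{\DGCat} \to \ul{\CR_1\mmod}$, and moreover that this upgrades to an equivalence of 1-morphisms
$$(\ul{\DGCat}, \on{id}) \to (\ul{\CR_1\mmod}, \Res_{F_{\CR_1}})$$
in $L(\tDGCat)_{\on{rgd}}$, where the compatibility data on the composite side is assembled from $\alpha_{F_\CM}$ and from the commutative square \eqref{e:restr diag} built out of the intertwining isomorphism \eqref{e:psi intertw}. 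Applying the composition property of \secref{sss:funct trace} in the $(\infty,2)$-categorical promotion of \eqref{e:L-Tr} described in Sect.~\ref{ss:L tDGCat} then yields an isomorphism of functors
$$\Tr(\fT_{\Res_\Psi(\CM)}, \alpha_{F_\CM}) \;\simeq\; \Tr(\Res_\Psi) \circ \Tr(\fT_\CM, \alpha_{F_\CM}) : \Vect \to \on{HH}_\bullet(\CR_1, F_{\CR_1}).$$
Evaluating both sides at $\sfe$ gives
$$\Tr^{\on{enh}}_{\CR_1}(F_\CM, \Res_\Psi(\CM)) \;\simeq\; \Tr(\Res_\Psi)\bigl(\Tr^{\on{enh}}_{\CR_2}(F_\CM, \CM)\bigr),$$
and the identification $\Tr(\Ind_\Psi)^R \simeq \Tr(\Res_\Psi)$ provided by \eqref{e:Ind Res adj L} concludes the proof.

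The main obstacle, as usual for statements of this flavour, will be producing the equivalence $\Res_\Psi \circ \fT_\CM \simeq \fT_{\Res_\Psi(\CM)}$ as a morphism in $L(\tDGCat)_{\on{rgd}}$ \emph{together with} the full tower of higher coherences—so that composition-functoriality of \eqref{e:L-Tr} at the $(\infty,2)$-level of \secref{ss:L tDGCat} applies directly. Concretely, one must keep track simultaneously of the $\alpha_{F_\CM}$-datum on $\fT_\CM$ and the 2-morphism component of the square \eqref{e:restr diag} for $\Res_\Psi$, and check that their assembled composite is precisely the canonical $\alpha_{F_\CM}$ associated to $\Res_\Psi(\CM)$ regarded as an $\CR_1$-module. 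Once this packaging is carried out, the remainder of the argument is formal.
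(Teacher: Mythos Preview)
Your proposal is correct and matches the paper's own proof essentially line for line: view $(\CM,F_\CM)$ as a 1-morphism in $L(\tDGCat)_{\on{rgd}}$, identify $(\Res_\Psi(\CM),F_\CM)$ with its composite with $\Res_\Psi$, apply functoriality of $\Tr$ under composition to get $\Tr(\Res_\Psi)(\Tr^{\on{enh}}_{\CR_2}(F_\CM,\CM))\simeq \Tr^{\on{enh}}_{\CR_1}(F_\CM,\Res_\Psi(\CM))$, and then invoke $\Tr(\Res_\Psi)\simeq \Tr(\Ind_\Psi)^R$ from \eqref{e:Ind Res adj L}. The paper treats the coherence issue you flag as absorbed into the formalism of \secref{ss:L tDGCat}.
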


\sssec{Proof of \thmref{t:rel two notions of trace}}

We can view the pair $(\CM,F_\CM)$ as a 1-morphism
\begin{equation} \label{e:cl M pre}
(\ul{\DGCat},\on{Id}) \to (\ul{\CR_2\mmod},\Res_{F_{\CR_2}})
\end{equation}
in $L(\tDGCat)_{\on{rgd}}$.

\medskip

By construction,
$$\Tr^{\on{enh}}_{\CR_2}(F_\CM,\CM)\in \Tr(\Res_{F_{\CR_2}},\ul{\CR_2\mmod}):=\on{HH}_\bullet(\CR_2,F_{\CR_2})$$
is obtained by applying the functor $\Tr$ of \eqref{e:L-Tr} to \eqref{e:cl M pre}.

\medskip

The pair $(\Res_\Psi(\CM),F_\CM)$ is obtained from \eqref{e:cl M pre} as the composition
\begin{equation} \label{e:Res M}
(\ul{\DGCat},\on{Id}) \overset{(\CM,F_\CM)}\longrightarrow (\ul{\CR_2\mmod},\Res_{F_{\CR_2}})
\overset{\Res_\Psi}\longrightarrow  (\ul{\CR_1\mmod},\Res_{F_{\CR_1}}).
\end{equation}

\medskip

Hence,
$$\Tr^{\on{enh}}_{\CR_1}(F_\CM,\Res_\Psi(\CM))\in \Tr(\Res_{F_{\CR_1}},\ul{\CR_1\mmod}):=\on{HH}_\bullet(\CR_1,F_{\CR_1})$$
is obtained by applying the functor $\Tr$ of \eqref{e:L-Tr} to \eqref{e:Res M}.

\sssec{}

From here we obtain a canonical identification
\begin{equation} \label{e:trace module gen}
\Tr(\Res_\Psi)(\Tr^{\on{enh}}_{\CR_2}(F_\CM,\CM))\simeq
\Tr^{\on{enh}}_{\CR_1}(F_\CM,\Res_\Psi(\CM)).
\end{equation}

Note that by that by \eqref{e:Ind Res adj L}, we have:
$$\Tr(\Res_\Psi)\simeq \Tr(\Ind_\Psi)^R.$$

Hence, we can rewrite \eqref{e:trace module gen} as:
\begin{equation} \label{e:trace module gen R}
\Tr(\Ind_\Psi)^R(\Tr^{\on{enh}}_{\CR_2}(F_\CM,\CM))\simeq
\Tr^{\on{enh}}_{\CR_1}(F_\CM,\Res_\Psi(\CM)).
\end{equation}

\qed[\thmref{t:rel two notions of trace}]

\begin{rem}
Note that for the construction of the equivalence \eqref{e:trace module gen R} we use less than the full force of the assumption
that both $\CR_1$ and $\CR_2$ be rigid. What we actually use is that the symmetric monoidal functor $\Psi:\CR_1\to \CR_2$
is rigid (we leave it to the reader to work out what this means).

\medskip

For example, when $\CR_1$ is symmetric monoidal and $\Psi$
makes $\CR_2$ into a $\CR_1$-algebra object in $\DGCat$, the assumption we need is that $\CR_2$ be rigid \emph{over} $\CR_1$.
For $\CR_1=\Vect$ and $\CR_2=\CR$, this just means that $\CR$ is rigid.
\end{rem}

\sssec{} \label{sss:trace module again}

Let us now deduce the isomorphism \eqref{e:trace module again}. Take
$$(\CR_2,F_{\CR_2})=(\CR,F_{\CR}) \text{ and } (\CR_1,F_{\CR_1})=(\Vect,\on{Id})$$
with $\Psi$ being the unit functor $\Vect\to \CR$.

\medskip

Note that by construction, the functor
$$\Tr(\Ind_\Psi):\Vect\simeq \Tr(\on{Id},\ul{\DGCat})\to
\Tr(\Res_{F_\CR},\ul{\CR\mmod}):=\on{HH}_\bullet(\CR,F_\CR)$$
sends
$$\sfe \mapsto \one_{\on{HH}_\bullet(\CR,F_\CR)},$$
see Remark \ref{r:unit in Hochschild}.

\medskip

Hence, $\Tr(\Ind_\Psi)^R$ is given by
$$\CHom_{\on{HH}_\bullet(\CR,F_\CR)}(\one_{\on{HH}_\bullet(\CR,F_\CR)},-).$$

Finally, apply \eqref{e:trace module gen R}.


%
%
%
%

\ssec{Proof of \thmref{t:two notions of trace}: algebra and module structure}

\sssec{}

The functor $\Tr$ of \eqref{e:L-Tr} induces a functor
\begin{equation} \label{e:L-Tr unit}
\Tr:\End_{L(\tDGCat)_{\on{rgd}}}(\ul{\DGCat},\on{Id})\to \End_{\DGCat}(\Vect)\simeq \Vect.
\end{equation}

We note that we have a canonical identification of symmetric monoidal categories
\begin{equation} \label{e:L unit}
L(\DGCat)_{\on{rgd}}\simeq \End_{L(\tDGCat)_{\on{rgd}}}(\ul{\DGCat},\on{Id}),
\end{equation}
and the resulting functor
$$L(\DGCat)_{\on{rgd}}\to \Vect$$
is the functor \eqref{e:2-tr as functor} for $\bO=\DGCat$.

\sssec{}

Let $\Psi$ be the unit functor
$$\DGCat\to \CR.$$

\medskip

The adjunction
\begin{equation} \label{e:L unit Psi}
\Ind_\Psi:(\ul{\DGCat},\on{Id}) \leftrightarrows (\ul{\CR\mmod},\Res_{F_\CR}):\Res_\Psi
\end{equation}
defines a monad on $(\ul{\DGCat},\on{Id})\in L(\tDGCat)_{\on{rgd}}$, i.e., an associative algebra
in the (symmetric) monoidal category
$$\End_{L(\tDGCat)_{\on{rgd}}}(\ul{\DGCat},\on{Id}).$$

Under the identification \eqref{e:L unit} this algebra is given by
$$(\CR,F_\CR)\in L(\DGCat)_{\on{rgd}}.$$

Applying the functor $\Tr$ of \eqref{e:L-Tr unit} (which, by the above, is the same as the functor $\Tr$ of
\eqref{e:2-tr as functor} for $\bO=\DGCat$) we recover
$$\Tr(F_\CR,\CR)\in \Vect$$
with its associative algebra structure.

\sssec{}

Now, by the functoriality of \eqref{e:L-Tr}, this associative algebra, regarded as a monad on $\Vect$, identifies
with the monad corresponding to the adjunction
\begin{equation} \label{e:L unit Psi Tr}
\Tr(\Ind_\Psi):\Tr((\ul{\DGCat},\on{Id})) \leftrightarrows \Tr((\ul{\CR\mmod},\Res_{F_\CR})):\Tr(\Res_\Psi),
\end{equation}
i.e., the adjunction obtained from \eqref{e:L unit Psi} by applying the functor $\Tr$.

\medskip

We identify
$$\Tr((\ul{\DGCat},\on{Id})) \simeq \Vect \text{ and } \Tr((\ul{\CR\mmod},\Res_{F_\CR}))\simeq \on{HH}_\bullet(\CR,F_{\CR}),$$
where the 1-morphism $\Tr(\Ind_\Psi)$ identifies with $\sfe\mapsto \one_{\on{HH}_\bullet(\CR,F_\CR)}$.

\medskip

Hence, we obtain that the associative algebra $\Tr(F_\CR,\CR)\in \Vect$ corresponds to the adjunction
$$\Vect\leftrightarrows \on{HH}_\bullet(\CR,F_{\CR}), \quad \sfe\mapsto \one_{\on{HH}_\bullet(\CR,F_\CR)}.$$

This establishes the isomorphism \eqref{e:trace endo} as associative algebras.

\sssec{}

Similarly, a pair $(\CM,F_\CM)$ can be viewed as a 1-morphism
$$(\ul{\DGCat},\on{Id}) \to (\ul{\CR\mmod},\Res_{F_\CR}).$$

Composing with $\Res_\Psi$ we obtain an object in $\End_{L(\tDGCat)_{\on{rgd}}}(\ul{\DGCat},\on{Id})$,
which is a module over the monad $\Res_\Psi\circ \Ind_\Psi$, and
which under the identification \eqref{e:L unit} corresponds to
$$(\CM,F_\CM)\in (\CR,F_\CR)\mod(L(\DGCat)_{\on{rgd}}).$$

Applying the functor $\Tr$ of \eqref{e:2-tr as functor} for $\bO=\DGCat$, we recover
$\Tr(F_\CM,\CM)$ as a module over $\Tr(F_\CR,\CR)$.

\medskip

Now, by the functoriality of \eqref{e:L-Tr}, the pair
$$\Tr(F_\CR,\CR), \,\,\Tr(F_\CM,\CM)\in \Tr(F_\CR,\CR)\mod$$
is the same as one obtained from
$$(\CR,F_\CR), \,\, (\CM,F_\CM)\in (\CR,F_\CR)\mod(L(\DGCat)_{\on{rgd}})$$
by applying the functor $\Tr$ of \eqref{e:L-Tr}.

\medskip

This establishes the isomorphism \eqref{e:trace module} as modules over the two sides of
\eqref{e:trace endo}.

\qed[\thmref{t:two notions of trace}]

\ssec{A more elementary proof of \thmref{t:two notions of trace}} \label{ss:two notions of trace expl}

For the convenience of the reader and future reference, in this subsection we will
outline a more elementary proof (of a particular case) of \thmref{t:two notions of trace}, which does not use the machinery of
$(\infty,3)$-categories.

\medskip

We first establish the stated isomorphism for the underlying objects of $\Vect$.

\sssec{}

Consider the following composition of 1-morphisms in $\tDGCat$
$$\ul{\DGCat} \overset{\CM}\longrightarrow \ul{\CR\mmod} \overset{\on{oblv}}\longrightarrow \ul{\DGCat},$$
where the second arrow is the forgetful map, i.e., given by $\fT_\CR$. The composite is
the map
$$\ul{\DGCat}\to \ul{\DGCat}$$
corresponding to $\on{oblv}(\CM)\in \DGCat$, i.e., $\CM$, viewed as a plain DG category.

\medskip

We have the following diagram of 2-morphisms
$$
\xy
(0,0)*+{\ul{\DGCat}}="A";
(30,0)*+{\ul{\DGCat}}="B";
(0,-20)*+{\ul{\CR\mmod}}="C";
(30,-20)*+{\ul{\CR\mmod}}="D";
(0,-40)*+{\ul{\DGCat}}="E";
(30,-40)*+{\ul{\DGCat}.}="F";
{\ar@{->}^{\on{Id}} "A";"B"};
{\ar@{->}_{\CM} "A";"C"};
{\ar@{->}^{\CM} "B";"D"};
{\ar@{->}_{F_\CR} "C";"D"};
{\ar@{=>}^{\alpha_{F_\CM}} "B";"C"};
{\ar@{->}^{\on{Id}} "E";"F"};
{\ar@{->}_{\on{oblv}} "C";"E"};
{\ar@{->}^{\on{oblv}} "D";"F"};
{\ar@{=>}^{\on{taut}} "D";"E"};
\endxy
$$

The composite 2-morphism identifies with
$$
\xy
(0,0)*+{\ul{\DGCat}}="A";
(20,0)*+{\ul{\DGCat}}="B";
(0,-20)*+{\ul{\DGCat}}="C";
(20,-20)*+{\ul{\DGCat}.}="D";
{\ar@{->}^{\on{Id}} "A";"B"};
{\ar@{->}_{\CM} "A";"C"};
{\ar@{->}^{\CM} "B";"D"};
{\ar@{->}_{\on{Id}} "C";"D"};
{\ar@{=>}^{F_\CM} "B";"C"};
\endxy
$$

By unwinding the definitions, it is easy to see that the resulting map
$$\Tr(\on{Id},\ul{\DGCat})\to\Tr(\on{Id},\ul{\DGCat}),$$
viewed as a functor
$$\DGCat\to \DGCat,$$
is given by $\Tr(F_\CM,\CM)\in \DGCat$.

\sssec{}  \label{sss:categ RR}

Hence, to prove the isomorphism \eqref{e:trace module} as vector spaces, it suffices to show that the map
$$\Tr(F_\CR,\ul{\CR\mmod}) \to \Tr(\on{Id},\ul{\DGCat}),$$
corresponding to the diagram
$$
\xy
(0,-20)*+{\ul{\CR\mmod}}="C";
(30,-20)*+{\ul{\CR\mmod}}="D";
(0,-40)*+{\ul{\DGCat}}="E";
(30,-40)*+{\ul{\DGCat},}="F";
{\ar@{->}_{F_\CR} "C";"D"};
{\ar@{->}^{\on{Id}} "E";"F"};
{\ar@{->}_{\on{oblv}} "C";"E"};
{\ar@{->}^{\on{oblv}} "D";"F"};
{\ar@{=>}^{\on{taut}} "D";"E"};
\endxy$$
viewed as a functor
\begin{equation} \label{e:calc LHS}
\on{HH}_\bullet(\CR,F_\CR)\to \Vect,
\end{equation}
is given by
\begin{equation} \label{e:calc RHS}
\CHom_{\on{HH}_\bullet(\CR,F_\CR)}(\one_{\on{HH}_\bullet(\CR,F_\CR)},-).
\end{equation}

\sssec{}

Consider the corresponding diagram
\begin{equation} \label{e:main diagram}
\xy
(0,0)*+{\ul{\DGCat}}="A";
(40,0)*+{\ul{\CR^{\otimes 2}\mmod}}="B";
(80,0)*+{\ul{\CR^{\otimes 2}\mmod}}="C";
(120,0)*+{\ul{\DGCat}}="D";
(0,-20)*+{\ul{\DGCat}}="A'";
(40,-20)*+{\ul{\DGCat}}="B'";
(80,-20)*+{\ul{\DGCat}}="C'";
(120,-20)*+{\ul{\DGCat}.}="D'";
{\ar@{->}^{\on{unit}} "A";"B"};
{\ar@{->}_{\on{unit}} "A'";"B'"};
{\ar@{->}^{\on{counit}} "C";"D"};
{\ar@{->}_{\on{counit}} "C'";"D'"};
{\ar@{->}^{F_\CR\otimes \on{Id}} "B";"C"};
{\ar@{->}_{\on{Id}} "B'";"C'"};
{\ar@{->}^{\on{Id}} "A";"A'"};
{\ar@{->}^{\on{Id}} "D";"D'"};
{\ar@{->} "B";"B'"};
{\ar@{->} "C";"C'"};
{\ar@{=>} "B";"A'"};
{\ar@{=>}_{\on{taut}} "C";"B'"};
{\ar@{=>} "D";"C'"};
\endxy
\end{equation}

We need to calculate the 2-morphism \emph{from the clockwise circuit}, which corresponds to
$$\on{HH}_\bullet(\CR,F_\CR)\in \DGCat\simeq \End(\ul{\DGCat}),$$
\emph{to the counterclockwise circuit}, which corresponds to
$$\Vect\in  \DGCat\simeq \End(\ul{\DGCat}).$$

\sssec{}  \label{sss:coind categ}

We now recall that for a rigid symmetric monoidal category $\CR'$, the \emph{right} adjoint to
\begin{equation} \label{e:oblv act}
\ul{\CR'\mmod} \overset{\on{oblv}}\longrightarrow \ul{\DGCat}
\end{equation}
identifies with
\begin{equation} \label{e:oblv act R}
\ul{\DGCat} \overset{\CR'}\longrightarrow \ul{\CR'\mmod},
\end{equation}
see \secref{sss:right adj rigid}.

%
%
%
%
%
%
%
\medskip

Thus, we obtain that the two middle vertical arrows in diagram \eqref{e:main diagram} are given by the forgetful map
$$\ul{(\CR\otimes \CR)\mmod} \overset{\on{oblv}\otimes \on{oblv}}\longrightarrow \ul{\DGCat}.$$

Furthermore, we obtain that the 2-morphism in the left square, viewed as a functor
$$\CR\to \Vect,$$
is given by
$$\CHom_\CR(\one_\CR,-).$$

The 2-morphism in the right square, evaluated on $\CQ\in (\CR\otimes \CR)\mmod$ (see \secref{sss:evaluate} for
what we mean by ``evaluate"), is the map
$$\on{HH}_\bullet(\CR,\CQ)=\CR\underset{\CR\otimes \CR}\otimes \CQ \overset{\on{mult}^R\otimes \on{Id}_\CQ }\longrightarrow
(\CR\otimes \CR)\underset{\CR\otimes \CR}\otimes \CQ\simeq \CQ,$$
which is the right adjoint to the map
$$\CQ\simeq (\CR\otimes \CR)\underset{\CR\otimes \CR}\otimes \CQ  \overset{\on{mult} \otimes \on{Id}_\CQ}\longrightarrow
\CR\underset{\CR\otimes \CR}\otimes \CQ=\on{HH}_\bullet(\CR,\CQ).$$

\sssec{}  \label{sss:categ RR bis}

We obtain that the functor in \eqref{e:calc LHS} equals the composite
$$\on{HH}_\bullet(\CR,F_\CR)=\CR\underset{\on{mult},\CR\otimes \CR,\on{mult}\circ (F_\CR\otimes \on{Id})}\otimes
\CR\overset{(\on{mult})^R\otimes \on{Id}}\longrightarrow   (\CR\otimes \CR) \underset{\CR\otimes \CR,\on{mult}\circ (F_\CR\otimes \on{Id})}\otimes \CR \simeq
\CR \overset{\CHom_\CR(\one_\CR,-)}\longrightarrow \Vect.$$

By adjunction, this composite is the same as
$$\on{HH}_\bullet(\CR,F_\CR) \overset{\CHom_{\on{HH}_\bullet(\CR,F_\CR)}((\on{mult}\otimes \on{Id})(\one_\CR),-)}\longrightarrow \Vect,$$
i.e., \eqref{e:calc RHS}.

\begin{rem}
Let us contrast the computation of the map \eqref{e:calc LHS} with the computation of the map
\begin{equation} \label{e:Todd}
\Tr(\on{Id},\QCoh(\CY))\to \sfe
\end{equation}
corresponding to the functor
$$\Gamma(\CY,-):\QCoh(\CY)\to \Vect,$$
where $\CY$ is a smooth proper scheme.

\medskip

We identity
$$\Tr(\on{Id},\QCoh(\CY))\simeq \underset{i}\oplus \,\Gamma(\CY,\Omega^i(\CY))[i],$$
see \secref{sss:Chern}.

\medskip

The computation performed in \cite{KP2} amounts to saying that the resulting map
$$\underset{i}\oplus \,\Gamma(\CY,\Omega^i(\CY))[i]\to \sfe$$
is the projection
$$\underset{i}\oplus \,\Gamma(\CY,\Omega^i(\CY))[i] \to
\Gamma(\CY,\Omega^{\on{top}}(\CY))[\on{top}]\overset{\text{Serre duality}}\longrightarrow \sfe,$$
\emph{precomposed} with the operation of multiplication by the Todd class. So, it is highly non-trivial.

\medskip

By contrast, in the setting of \secref{sss:categ RR}, the map \eqref{e:calc LHS} is something very
simple, namely, the map \eqref{e:calc RHS}.

\medskip

This my be viewed as an incarnation of the fact that for a rigid symmetric monoidal category, the 2-category
$\CR\mmod$
is 0-Calabi-Yau, in the sense that the left and right adjoints to the functor
$$\ul{\DGCat}\to \ul{\CR\mmod}, \quad \CC\mapsto \CR\otimes \CC$$
are canonically isomorphic, see Remark \ref{r:Ind and coInd}.

\end{rem}

\sssec{}  \label{sss:trace endo alg}

Thus, we have established the isomorphism between the two sides of \eqref{e:trace module} as objects of $\Vect$.
In particular, we obtain an isomorphism between the two sides of \eqref{e:trace endo}, also as objects of $\Vect$.

\medskip

We will now assume that $\CR$ is \emph{symmetric monoidal}, and upgrade these isomorphisms to isomorphisms
of algebras (resp., modules over them). This will be achieved by an Eckmann-Hilton argument.

\medskip

Let $\bQ$ denote the category, whose objects are quadruples
$$(\CR,\CM,F_\CR,F_\CM),$$
where $\CR$ is a rigid symmetric monoidal DG category, and $\CM$ is an $\CR$-module, dualizable as a plain DG category.

\medskip

For a pair of objects $(\CR,\CM,F_\CR,F_\CM)$ and $(\CR',\CM',F_{\CR'},F_{\CM'})$, the space of morphisms between them
consists of a symmetric monoidal functor $\varphi_\CR:\CR\to \CR'$, intertwining $F_\CR$ with $F_{\CR'}$, and a functor
of $\CR$-module categories $\varphi_\CM:\CM\to \CM'$, intertwining $F_\CM$ with $F_{\CM'}$, \emph{such that} the induced
functor
$$\CR'\underset{\CR}\otimes \CM\to \CM'$$
is an equivalence.

\medskip

The assignments
\begin{equation} \label{e:quad functor1}
(\CR,\CM,F_\CR,F_\CM)\mapsto \Tr(F_\CM,\CM)
\end{equation}
 and
\begin{equation} \label{e:quad functor2}
(\CR,\CM,F_\CR,F_\CM)\mapsto \CHom_{\on{HC}_\bullet(\CR,F_\CR)}(\one_{\on{HC}_\bullet(\CR,F_\CR)},\Tr^{\on{enh}}_\CR(F_\CM,\CM))
\end{equation}
are both functors $\bQ\to \Vect$.

\medskip

Moreover, the category $\bQ$ carries a naturally defined symmetric monoidal structure:
$$(\CR^1,\CM^1,F_{\CR^1},F_{\CM^1}) \otimes (\CR^2,\CM^2,F_{\CR^2},F_{\CM^2}):=
(\CR^1\otimes \CR^2,\CM^1\otimes \CM^2,F_{\CR^1}\otimes F_{\CR^2},F_{\CM^1}\otimes F_{\CM^2}),$$
and  functors \eqref{e:quad functor1} and \eqref{e:quad functor2} are symmetric monoidal.

\medskip

Furthermore, by the construction of the isomorphism of \eqref{e:trace module}, it
upgrades to an isomorphism of symmetric monoidal functors \eqref{e:quad functor1} and \eqref{e:quad functor2},
as symmetric monoidal functors.

\medskip

Note now that
$$(\CR,\CR,F_\CR,F_\CR)$$
is naturally a commutative algebra in $\bQ$. Hence,
both sides of \eqref{e:trace endo} have a structure
of associative algebras, and \eqref{e:trace endo} respects these structures.

\medskip

By construction, the above commutative algebra structure on $\Tr(F_\CR,\CR)$ is the same one as given
by the construction of \secref{sss:trace on sym mon dg categ}. Furthermore, by the Eckmann-Hilton argument,
the above commutative algebra structure on
$$\CEnd_{\on{HH}_\bullet(\CR,F_\CR)}(\one_{\on{HH}_\bullet(\CR,F_\CR)})$$
goes over under the forgetful functor
$$\on{ComAlg}(\Vect)\to \on{AssocAlg}(\Vect),$$
to the structure of associative algebra on $\CEnd$.

\medskip

This implies the assertion that \eqref{e:trace endo} is an algebra isomorphism.

\sssec{}  \label{sss:trace endo alg mod}

Finally, for $(\CM,F_\CM)$ as in \secref{sss:enhanced trace}, the object
$$(\CR,\CM,F_\CR,F_\CM)\in \bQ$$ is a module over the algebra object $(\CR,\CR,F_\CR,F_\CR)$.
The construction in \secref{sss:trace endo alg} gives each side of \eqref{e:trace module} a structure of module over the corresponding side of
\eqref{e:trace endo}, and \eqref{e:trace module} respects these structures.

\medskip

The resulting action of $\Tr(F_\CR,\CR)$ on $\Tr(F_\CM,\CM)$ is the same one as given
by the construction of \secref{sss:trace on sym mon dg categ}. Again, by the Eckmann-Hilton argument,
the resulting action of
$$\CEnd_{\on{HH}_\bullet(\CR,F_\CR)}(\one_{\on{HH}_\bullet(\CR,F_\CR)})$$
on
$$\CHom_{\on{HH}_\bullet(\CR,F_\CR)}(\one_{\on{HH}_\bullet(\CR,F_\CR)},\Tr^{\on{enh}}_\CR(F_\CM,\CM))$$
coincides with one coming from the action of $\CEnd$ on $\CHom$.

\medskip

This implies that \eqref{e:trace module} is an isomorphism of modules, as desired.

\section{A few mind-twisters}  \label{s:wild}

In this section we will study some particular cases and generalizations of \thmref{t:two notions of trace}.
We recommend the reader to skip this section on the first pass, because the assertions contained therein may
appear abstract and un-motivated, and return to it when necessary.

\medskip

That said, the results discussed in this section will all acquire a transparent meaning in the context
of shtukas, which will be introduced in \secref{s:sht}.

\ssec{The class of a class}

\sssec{}

Let $\CR$ be a rigid symmetric monoidal category, and let $F_\CR:\CR\to \CR$ be a symmetric monoidal endofunctor.

\medskip

Consider the category
\begin{equation} \label{e:HH as ten}
\on{HH}_\bullet(\CR,F_\CR):= \CR\underset{\on{mult},\CR\otimes \CR,\on{mult}\circ (F_\CR\otimes \on{Id})}\otimes \CR.
\end{equation}

Since the tensor product in the right-hand side of \eqref{e:HH as ten} involves symmetric monoidal categories and functors,
we obtain that $\on{HH}_\bullet(\CR,F_\CR)$ acquires a symmetric monoidal structure.

\medskip

On the other hand, by \eqref{e:Tr as HH}, we have
$$\on{HH}_\bullet(\CR,F_\CR)\simeq \Tr(\Res_{F_\CR},\ul{\CR\mmod}),$$
and hence it acquires a symmetric monoidal structure by \secref{sss:2-categ sym mon}.

\medskip

However, it is easy to see that these two ways of defining a symmetric monoidal structure on $\on{HH}_\bullet(\CR,F_\CR)$
are equivalent.

\sssec{}

Denote by $\iota$ the functor
$$\CR\to \CR\underset{\on{mult},\CR\otimes \CR,\on{mult}\circ (F_\CR\otimes \on{Id})}\otimes \CR \simeq \on{HH}_\bullet(\CR,F_\CR),$$
corresponding to the left copy of $\CR$. By \eqref{e:HH as ten}, the functor $\iota$ is symmetric monoidal.

\medskip

Note that by construction
\begin{equation} \label{e:iota inv}
\iota\circ F_\CR\simeq \iota.
\end{equation}

\sssec{}

Let $r\in \CR$ be a compact object equipped with a map
$$\alpha_r:r\to F_\CR(r).$$

\medskip

On the one hand, to the pair $(r,\alpha_r)$ we attach its class
$$\on{cl}(r,\alpha_r)\in \Tr(F_\CR,\CR).$$

\sssec{}  \label{sss:iota F}


\medskip

The data of $\alpha_r$ gives rise to a map
$$\iota(r) \overset{\alpha_r}\longrightarrow \iota\circ F_\CR (r)\overset{\text{\eqref{e:iota inv}}}\simeq \iota(r);$$
denote this map by $a_r^{F_\CR}$.

\medskip

Since $r\in \CR$ is compact and $\CR$ is rigid, we obtain that $r$ is dualizable
as an object of $\CR$ as a monoidal category. Since $\iota$ is symmetric monoidal, we obtain that $\iota(r)$ is dualizable as an object of
$\on{HH}_\bullet(\CR,F_\CR)$.

\medskip

So, on the other hand, we can consider the element
$$\Tr(a_r^{F_\CR},\iota(r))\in \CEnd_{\on{HH}_\bullet(\CR,F_\CR)}(\one_{\on{HH}_\bullet(\CR,F_\CR)}).$$

\sssec{}

We claim:

\begin{prop} \label{p:class R}
The elements $\on{cl}(r,\alpha_r)$ and $\Tr(a_r^{F_\CR},\iota(r))$ coincide under the identification
\begin{equation} \label{e:two notions of trace again}
\Tr(F_\CR,\CR)\simeq \CEnd_{\on{HH}_\bullet(\CR,F_\CR)}(\one_{\on{HH}_\bullet(\CR,F_\CR)})
\end{equation}
of \thmref{t:two notions of trace}.
\end{prop}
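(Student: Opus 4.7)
The plan is to track the element $\on{cl}(r,\alpha_r)\in\Tr(F_\CR,\CR)$ through the isomorphism \eqref{e:two notions of trace again} via its action on the multiplicative unit of the algebra $\Tr(F_\CR,\CR)$.

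First I would apply \propref{p:S=T, one} to $\CM=\CR$ with $F_\CM=F_\CR$: the action of $\on{cl}(r,\alpha_r)$ on $\Tr(F_\CR,\CR)$, viewed as a module over itself (i.e., left multiplication in the algebra), equals $\Tr(H_r,\alpha_{r,\CR})$, where $H_r=r\otimes-$ and $\alpha_{r,\CR}$ is the 2-morphism $H_r\circ F_\CR\to F_\CR\circ H_r$ induced by $\alpha_r$. Evaluating at the unit $1\in \Tr(F_\CR,\CR)$ yields
\[
\on{cl}(r,\alpha_r)=\Tr(H_r,\alpha_{r,\CR})(1).
\]
By construction of the isomorphism \eqref{e:two notions of trace again} (as unwound in \secref{sss:trace module again}, or more explicitly in \secref{ss:two notions of trace expl}), the element $1\in\Tr(F_\CR,\CR)$ corresponds to $\on{id}_{\one_{\on{HH}_\bullet(\CR,F_\CR)}}\in\CEnd_{\on{HH}_\bullet(\CR,F_\CR)}(\one_{\on{HH}_\bullet(\CR,F_\CR)})$.

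It therefore remains to identify the transported operator $\Tr(H_r,\alpha_{r,\CR})$ as left multiplication by $\Tr(a_r^{F_\CR},\iota(r))$ on $\CEnd_{\on{HH}_\bullet(\CR,F_\CR)}(\one_{\on{HH}_\bullet(\CR,F_\CR)})$. This is the heart of the argument and the expected main obstacle. The key point is that, since $\CR$ is symmetric monoidal, $H_r$ is $\CR$-linear, so the pair $(H_r,\alpha_{r,\CR})$ lifts to a $1$-endomorphism of $(\ul{\CR\mmod},\Res_{F_\CR})$ in the 2-category $L(\tDGCat)_{\on{rgd}}$ of \secref{ss:L tDGCat}; explicitly, the underlying 1-morphism in $\tDGCat$ is $\fT_{r\otimes\CR}$. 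Passing this 1-endomorphism through the 2-categorical trace functor \eqref{e:L-Tr} and using symmetric monoidality of $\iota$, one checks it descends to the tensor-product functor $\iota(r)\otimes(-)$ on $\on{HH}_\bullet(\CR,F_\CR)$, with the intertwining 2-morphism going to the endomorphism $a_r^{F_\CR}$ of $\iota(r)$. The 2-functoriality of the trace construction (\secref{sss:funct trace 2categ}) then forces the induced endomorphism of $\CEnd_{\on{HH}_\bullet(\CR,F_\CR)}(\one_{\on{HH}_\bullet(\CR,F_\CR)})$ to be left multiplication by the (classical, symmetric monoidal) trace $\Tr(a_r^{F_\CR},\iota(r))$.

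Combining these steps, the multiplication operator by $\Tr(a_r^{F_\CR},\iota(r))$, applied to the unit $\on{id}_{\one_{\on{HH}_\bullet(\CR,F_\CR)}}$, returns $\Tr(a_r^{F_\CR},\iota(r))$ itself, which is therefore the image of $\on{cl}(r,\alpha_r)$ under \eqref{e:two notions of trace again}. This would complete the proof. The entire argument thus rests on the compatibility of the 2-categorical trace descent with the 1-categorical trace in the symmetric monoidal category $\on{HH}_\bullet(\CR,F_\CR)$; making that compatibility precise is exactly the step that requires the formalism of \secref{ss:L tDGCat}.
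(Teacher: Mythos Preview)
Your approach is genuinely different from the paper's. The paper (\secref{ss:KPgen}) proceeds by direct computation: it first makes the isomorphism \eqref{e:two notions of trace again} explicit by rewriting $\Tr(F_\CR,\CR)$ via the diagrams \eqref{e:calc HH}--\eqref{e:calc HH3}, then expresses $\on{cl}(r,\alpha_r)$ as the concrete composite \eqref{e:cl as composite}, and manipulates this composite through a chain of commutative diagrams until it visibly coincides with the defining formula for $\Tr(a_r^{F_\CR},\iota(r))$. No appeal to the $2$-categorical machinery of \secref{ss:L tDGCat} is made.

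Your $2$-categorical route is plausible in outline, but the central step is not correct as written and hides the real content. The assertion that $(H_r,\alpha_{r,\CR})$ lifts to a $1$-endomorphism of $(\ul{\CR\mmod},\Res_{F_\CR})$ with underlying $1$-morphism $\fT_{r\otimes\CR}$ is problematic: as an $(\CR,\CR)$-bimodule, $r\otimes\CR$ is just the regular bimodule $\CR$ (symmetry of $\CR$ makes the ``$r\otimes$'' invisible at the bimodule level), so you are really taking the identity $1$-morphism. The object $r$ then enters only through the $2$-morphism, exactly as in diagram \eqref{e:mult by r}; but that diagram encodes $H_r$ alone, and its $2$-categorical trace is the functor $\iota(r)\otimes(-)$ (this is \propref{p:Tr r calc})---the map $\alpha_r:r\to F_\CR(r)$ and the endomorphism $a_r^{F_\CR}$ are nowhere visible. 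To make your strategy work you would have to track $\alpha_r$ one categorical level higher (as a $2$-morphism in $L(\tDGCat)_{\on{rgd}}$, i.e.\ a $3$-morphism in $\tDGCat$, between the $r$-version and the $F_\CR(r)$-version of \eqref{e:mult by r}, composed with \eqref{e:iota inv}) and then prove that the induced endomorphism of $\one_{\on{HH}_\bullet(\CR,F_\CR)}$ is multiplication by $\Tr(a_r^{F_\CR},\iota(r))$. That last identification is precisely the content of the proposition, and you have placed it behind ``one checks''. A cleaner packaging is to view $(H_r,\alpha_{r,\CR})$ as an endomorphism of $(\CR,F_\CR)$ inside the mapping category \eqref{e:Tr enh source} and push it through the functor \eqref{e:Tr enh as a functor 2}; but identifying the image with $\Tr(a_r^{F_\CR},\iota(r))$ still requires a computation of the same nature as the paper's.
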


\begin{rem}
Note that in the particular case of $\CR=\QCoh(\CY)$ for a prestack $\CY$ as in \secref{sss:prestack},
and $F_\CR$ given by $\phi^*$ for an endomorphism $\phi$ of $\CY$, the assertion of \propref{p:class R}.
coincides with that of \propref{p:class QCoh}.
%
%
%
\end{rem}

\ssec{Proof of \propref{p:class R}}  \label{ss:KPgen}

The proof is a word-for-word repetition of the proof of \cite[Proposition 2.2.3]{KP1}. We include it for the sake of completeness.

\sssec{}

First, let us make the isomorphism \eqref{e:two notions of trace again} explicit (this will imitate the manipulation
in \secref{sss:Tr QCoh}).

\medskip

Recall that $\CR$ is self-dual as a DG category, with the duality datum given by
$$\Vect \overset{\one_\CR}\to \CR \overset{\on{mult}^R}\longrightarrow \CR\otimes \CR$$
and
$$\CR\otimes \CR \overset{\on{mult}}\longrightarrow \CR \overset{\CHom_\CR(\one_\CR,-)}\longrightarrow \Vect.$$

Hence, on the one hand, $\Tr(F_\CR,\CR)$ is the composition
\begin{equation} \label{e:calc HH}
\CD
& &  \CR @>{\CHom_\CR(\one_\CR,-)}>> \Vect  \\
& & @AA{\on{mult}}A \\
& &  \CR\otimes \CR  \\
& & @AA{F_\CR\otimes \on{Id}}A  \\
\CR @>{\on{mult}^R}>> \CR\otimes \CR  \\
@A{\one_\CR}AA  \\
\Vect.
\endCD
\end{equation}

Consider the commutative diagram
\begin{equation} \label{e:calc HH1}
\CD
\CR\underset{\on{mult},\CR\otimes \CR,\on{mult}\circ (F_\CR\otimes \on{Id})}\otimes \CR  @<{\iota}<< \CR  \\
@A{\iota}AA  @AA{\on{mult}\circ (F_\CR\otimes \on{Id})}A  \\
\CR  @<{\on{mult}}<<  \CR\otimes \CR.
\endCD
\end{equation}

By rigidity, the diagram obtained by passing to right adjoints along the horizontal arrows is also commutative:

\begin{equation} \label{e:calc HH2}
\CD
\CR\underset{\on{mult},\CR\otimes \CR,\on{mult}\circ (F_\CR\otimes \on{Id})}\otimes \CR  @>{\iota^R}>> \CR  \\
@A{\iota}AA  @AA{\on{mult}\circ (F_\CR\otimes \on{Id})}A  \\
\CR  @>{\on{mult}^R}>>  \CR\otimes \CR.
\endCD
\end{equation}

Hence, the composite in \eqref{e:calc HH} identifies with
\begin{equation} \label{e:calc HH3}
\CD
\CR\underset{\on{mult},\CR\otimes \CR,\on{mult}\circ (F_\CR\otimes \on{Id})}\otimes \CR @>{\iota^R}>> \CR @>{\CHom_\CR(\one_\CR,-)}>> \Vect \\
@A{\iota}AA \\
\CR  \\
@A{\one_\CR}AA  \\
\Vect.
\endCD
\end{equation}

In the latter diagram, the composite horizontal arrow is the right adjoint of the composite vertical arrow, and the latter is
$$\sfe\mapsto \one_{\on{HH}_\bullet(\CR,F_\CR)}\in \on{HH}_\bullet(\CR,F_\CR)\simeq
\CR\underset{\on{mult},\CR\otimes \CR,\on{mult}\circ (F_\CR\otimes \on{Id})}\otimes \CR.$$

Hence, the resulting functor $\Vect\to \Vect$ is given by $\CEnd_{\on{HH}_\bullet(\CR,F_\CR)}(\one_{\on{HH}_\bullet(\CR,F_\CR)})$.

\medskip

By unwinding the constructions (see \secref{ss:two notions of trace expl}), one shows that the identification
\begin{equation} \label{e:two notions of trace again again}
\Tr(F_\CR,\CR)\simeq \CEnd_{\on{HH}_\bullet(\CR,F_\CR)}(\one_{\on{HH}_\bullet(\CR,F_\CR)})
\end{equation}
just constructed is equivalent to one in \thmref{t:two notions of trace}.

\sssec{}

For a compact object $r\in \CR$, let $r^\vee\in \CR$ be its monoidal dual; this is also its formal dual with respect
to the identification $\CR^\vee\simeq \CR$.

\medskip

The class $\on{cl}(r,\alpha_r)\in \Tr(F_\CR,\CR)$ corresponds to the 2-morphism from the clockwise circuit to the
counter-clockwise circuit in the following diagram

$$
\xy
(0,0)*+{\Vect}="A";
(30,0)*+{\Vect}="B";
(60,0)*+{\Vect}="C";
(90,0)*+{\Vect}="D";
(0,-20)*+{\Vect}="A'";
(30,-20)*+{\CR\otimes \CR}="B'";
(60,-20)*+{\CR\otimes \CR}="C'";
(90,-20)*+{\Vect.}="D'";
{\ar@{->}^{\on{Id}} "A";"B"};
{\ar@{->}_{\on{unit}} "A'";"B'"};
{\ar@{->}^{\on{Id}} "C";"D"};
{\ar@{->}_{\on{counit}} "C'";"D'"};
{\ar@{->}^{\on{Id}} "B";"C"};
{\ar@{->}_{F_\CR\otimes \on{Id}} "B'";"C'"};
{\ar@{->}^{\on{Id}} "A";"A'"};
{\ar@{->}^{\on{Id}} "D";"D'"};
{\ar@{->}_{r\otimes r^\vee} "B";"B'"};
{\ar@{->}^{r\otimes r^\vee}  "C";"C'"};
{\ar@{=>} "B";"A'"};
{\ar@{=>}_{\on{taut}} "C";"B'"};
{\ar@{=>} "D";"C'"};
\endxy
$$

In this diagram, the 2-morphism in the left square is the map
$$r\boxtimes r^\vee \to \on{unit}(\sfe)=\on{mult}^R(\one_\CR)$$
equal to
$$r\boxtimes r^\vee \to \on{mult}^R\circ \on{mult}(r\boxtimes r^\vee)=
\on{mult}^R(r\otimes r^\vee) \to \on{mult}^R(\one_\CR).$$

\medskip

The 2-morphism in the middle square is obtained from the map
$$\alpha_r:r\to F_\CR(r).$$

\medskip

The 2-morphism in the right square is map
$$\sfe\to \on{counit}(r\boxtimes r^\vee)=\CHom_{\CR}(\one_\CR,\on{mult}(r\boxtimes r^\vee))=
\CHom_{\CR}(\one_\CR,r\otimes r^\vee),$$
corresponding to the canonical map
$$\one_\CR\to r\otimes r^\vee.$$

\sssec{}

Hence, $\on{cl}(r,\alpha_r)\in \Tr(F_\CR,\CR)$ is the composite
\begin{multline} \label{e:cl as composite}
\sfe \overset{\on{unit}}\to \CHom_{\CR}(\one_\CR,r\otimes r^\vee) \overset{\alpha_r\otimes \on{id}}\longrightarrow
\CHom_{\CR}(\one_\CR,F_\CR(r)\otimes r^\vee) \simeq \\
\simeq \CHom_{\CR}(\one_\CR, \on{mult} \circ (F_\CR\otimes \on{Id})(r\boxtimes r^\vee)) \to
\CHom_{\CR}(\one_\CR, \on{mult} \circ (F_\CR\otimes \on{Id})\circ \on{mult}^R(\one_\CR)).
\end{multline}

\medskip

Using the commutative diagram \eqref{e:calc HH2}, we identify
$$\on{mult} \circ (F_\CR\otimes \on{Id}) \circ \on{mult}^R \simeq \iota^R\circ \iota.$$

Hence, we can rewrite the composition in \eqref{e:cl as composite} as
\begin{multline} \label{e:cl as composite1}
\sfe \overset{\on{unit}}\to \CHom_{\CR}(\one_\CR,r\otimes r^\vee) \overset{\alpha_r\otimes \on{id}}\longrightarrow
\CHom_{\CR}(\one_\CR,F_\CR(r)\otimes r^\vee)  \simeq \\
\simeq \CHom_{\CR}(\one_\CR, \on{mult} \circ (F_\CR\otimes \on{Id})(r\boxtimes r^\vee))
\to \CHom_{\CR}(\one_\CR, \on{mult} \circ (F_\CR\otimes \on{Id})\circ \on{mult}^R(r\otimes r^\vee)) \simeq \\
\simeq \CHom_{\CR}(\one_\CR, \iota^R\circ \iota(r\otimes r^\vee)) \to
\CHom_{\CR}(\one_\CR, \iota^R\circ \iota(\one_\CR)).
\end{multline}

\sssec{}

We have a commutative diagram
$$
\CD
\CHom_{\CR}(\one_\CR,F_\CR(r)\otimes r^\vee)   @>{\sim}>> \CHom_{\CR}(\one_\CR, \on{mult} \circ (F_\CR\otimes \on{Id})(r\boxtimes r^\vee))   \\
@VVV   @VVV  \\
\CHom_{\on{HH}_\bullet(\CR,F_\CR)}(\one_{\on{HH}_\bullet(\CR,F_\CR)},\iota(F_\CR(r)\otimes r^\vee)) & &
\CHom_{\CR}(\one_\CR, \iota^R\circ \iota(r\otimes r^\vee))   \\
@V{\text{\eqref{e:iota inv}}}V{\sim}V  @VV{\sim}V    \\
\CHom_{\on{HH}_\bullet(\CR,F_\CR)}(\one_{\on{HH}_\bullet(\CR,F_\CR)},\iota(r\otimes r^\vee)) @>{\sim}>>
\CHom_{\on{HH}_\bullet(\CR,F_\CR)}(\one_{\on{HH}_\bullet(\CR,F_\CR)},\iota(r\otimes r^\vee)).
\endCD
$$

Hence, the composition in \eqref{e:cl as composite1} can be rewritten as
\begin{multline*}
\sfe \overset{\on{unit}}\to \CHom_{\CR}(\one_\CR,r\otimes r^\vee) \overset{\alpha_r\otimes \on{id}}\longrightarrow
\CHom_{\CR}(\one_\CR,F_\CR(r)\otimes r^\vee) \to  \\
\to \CHom_{\on{HH}_\bullet(\CR,F_\CR)}(\one_{\on{HH}_\bullet(\CR,F_\CR)},\iota(F_\CR(r)\otimes r^\vee)) \overset{\text{\eqref{e:iota inv}}}\simeq
\CHom_{\on{HH}_\bullet(\CR,F_\CR)}(\one_{\on{HH}_\bullet(\CR,F_\CR)},\iota(r\otimes r^\vee))  \overset{\on{counit}}\to \\
\to \CHom_{\on{HH}_\bullet(\CR,F_\CR)}(\one_{\on{HH}_\bullet(\CR,F_\CR)},\iota(\one_\CR))
\simeq \CHom_{\on{HH}_\bullet(\CR,F_\CR)}(\one_{\on{HH}_\bullet(\CR,F_\CR)},\one_{\on{HH}_\bullet(\CR,F_\CR)}),
\end{multline*}
and further as
\begin{multline*}
\sfe \overset{\on{unit}}\to
\CHom_{\on{HH}_\bullet(\CR,F_\CR)}(\one_{\on{HH}_\bullet(\CR,F_\CR)},\iota(r)\otimes \iota(r^\vee)) \overset{\alpha_r\otimes \on{id}}\longrightarrow
\CHom_{\on{HH}_\bullet(\CR,F_\CR)}(\one_{\on{HH}_\bullet(\CR,F_\CR)},\iota(F_\CR(r))\otimes \iota(r^\vee))
\overset{\text{\eqref{e:iota inv}}}\simeq  \\
\simeq \CHom_{\on{HH}_\bullet(\CR,F_\CR)}(\one_{\on{HH}_\bullet(\CR,F_\CR)},\iota(r)\otimes \iota(r^\vee))
\overset{\on{counit}}\to \CHom_{\on{HH}_\bullet(\CR,F_\CR)}(\one_{\on{HH}_\bullet(\CR,F_\CR)},\one_{\on{HH}_\bullet(\CR,F_\CR)}),
\end{multline*}
while the latter is the right-hand side is by definition $\Tr(a_r^{F_\CR},\iota(r))$.

\qed[\propref{p:class R}]

\ssec{The ``trivial" case and excursions}

We will now specialize further to the case when $F_\CR$ is the identity map.

\sssec{}

Let us take $\alpha$ to be the identity endomorphism of $r$. Consider the corresponding endomorphism
\begin{equation} \label{e:pre-chern}
\on{id}_r^{\on{Id}_\CR}\in \CEnd_{\on{HH}_\bullet(\CR)}(\iota(r)),
\end{equation}
see \secref{sss:iota F}.

\medskip

Denote
$$\on{ch}(r):=\Tr(\on{id}_r^{\on{Id}_\CR},\iota(r))\in \CEnd_{\on{HH}_\bullet(\CR)}(\one_{\on{HH}_\bullet(\CR)}).$$

Note that according to \propref{p:class R}, we have
\begin{equation} \label{e:chern}
\on{cl}(r,\on{id}_r)=\on{ch}(r)
\end{equation}
under the identification
$$\Tr(\on{Id}_\CR,\CR)\simeq \CEnd_{\on{HH}_\bullet(\CR)}(\one_{\on{HH}_\bullet(\CR)})$$
of \thmref{t:two notions of trace}.

\begin{rem}

We emphasize that despite the fact that we plugged in the identity map everywhere, the endomorphism
$\on{id}_r^{\on{Id}_\CR}$ of $\iota(r)$ is \emph{not} the identity map (for one thing, if it were the identity,
formula \eqref{e:chern} would fail).

\medskip

See also the last line in the proof of \propref{p:chern as excurs} for another interpretation of the element
$\on{id}_r^{\on{Id}_\CR}$.

\end{rem}

\sssec{An example} \label{sss:Chern}

Let $\CR=\QCoh(\CY)$, where $\CY$ is as in \secref{sss:prestack}. In \cite[Sect. 1.2]{KP2}
it is explained that for $r=\CF\in \QCoh(\CY)$, the map $\on{id}_r^{\on{Id}_\CR}$ can be interpreted as the action of
$$\CL(\CY):=\CY\underset{\CY\times \CY}\times \CY,$$
thought of as the group object over $\CY$, on $\CF$. Furthermore,
$$\on{ch}(r)\in \Gamma(\CL(\CY),\CO_{\CL(\CY)})$$
can be identified with the \emph{character} of this action.

\medskip

Let now $\CY$ be a smooth scheme over a field of characteristic zero. Consider the derivative of the above action of
$\CL(\CY)$ on $\CF$, which is an action of the Lie algebra
$$\on{Lie}(\CL(Y))\simeq T_\CY[-1]$$ on $\CF$. It is shown in \cite[Sect. 1.3]{KP2} that the resulting map
$$T_\CY[-1]\otimes \CF\to \CF$$
is the Atiyah class of $\CF$.

\medskip

Further, the Hochschild-Kostant-Rosenberg theorem identifies
$$\Gamma(\CL(\CY),\CO_{\CL(\CY)})\simeq \underset{i}\oplus \,\Gamma(\CY,\Omega^i(\CY))[i].$$

Under this identification, the element $\on{ch}(r)$ corresponds to the classical Chern character
$$\on{ch}(\CF)\in \underset{i}\oplus\, \Gamma(\CY,\Omega^i(\CY))[i].$$

\sssec{} \label{sss:taut loop}

We will now give one more interpretation of the above element
$$\on{ch}(r)\in \CEnd_{\on{HH}_\bullet(\CR)}(\one_{\on{HH}_\bullet(\CR)}),$$
in terms of excursion operators.

\medskip

Take
$$Y=S^1=\{*\}\underset{\{*\}\sqcup \{*\}}\sqcup \{*\}$$ so that
\begin{equation} \label{e:circle}
\CR^{\otimes S^1}\simeq \on{HH}_\bullet(\CR).
\end{equation}

Consider the tautological point $\gamma_{\on{taut}}\in \Omega(S^1,*)$. Let $\gamma^+_{\on{taut}}$
denote the pair $(\gamma_{\on{taut}},\gamma_{\on{triv}})$.

\medskip

Let $\xi_r$ be the map
$$\one_\CR \to \on{mult}\circ \on{mult}^R(\one_\CR)$$
equal to the composition
\begin{equation} \label{e:xi r}
\one_\CR\to r\otimes r^\vee\simeq \on{mult}(r\boxtimes r^\vee)\to \on{mult}\circ \on{mult}^R(\one_\CR),
\end{equation}
where:

\medskip

\noindent--the map $\one_\CR\to r\otimes r^\vee$ is the unit of the duality;

\medskip

\noindent--$r\boxtimes r^\vee$ denotes the corresponding object of $\CR\otimes \CR$;

\medskip

\noindent--the map $r\boxtimes r^\vee\to \on{mult}^R(\one_\CR)$ is obtained by adjunction from the map
$$\on{mult}(r\boxtimes r^\vee)=:r\otimes r^\vee\overset{\on{counit}}\longrightarrow \one_\CR.$$

\medskip

We claim:

\begin{prop} \label{p:chern as excurs}
The element
$$\on{ch}(r)\in  \CEnd_{\on{HH}_\bullet(\CR)}(\one_{\on{HH}_\bullet(\CR)})$$
corresponds to the universal excursion element (see \secref{sss:univ exc})
$$\on{Exc}_{\on{univ}}(\gamma^+_{\on{taut}},\xi_r)\in \CEnd_{\CR^{\otimes S^1}}(\one_{\CR^{\otimes S^1}})$$
under the identification \eqref{e:circle}.
\end{prop}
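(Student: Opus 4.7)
The plan is to match both sides via a common element of
$$\CHom_\CR(\one_\CR,\on{mult}\circ\on{mult}^R(\one_\CR)),$$
which, under the hypothesis that $\CR$ is rigid, is canonically identified both with $\CEnd_{\CR^{\otimes S^1}}(\one)$ (using \corref{c:endo term}) and with $\Tr(\on{Id}_\CR,\CR)$ (using the self-duality of \corref{c:rigidity}).

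First, I reduce the excursion side. Applying \thmref{t:excurs} with $I=\{*\}$, $Y=S^1$, and the tautological pair $\gamma^{I_+}_{\on{taut}}=(\gamma_{\on{taut}},\gamma_{\on{triv}})$, the induced map $\Sigma(I_+)=S^1\to S^1$ is the identity, so $\on{Exc}_{\on{univ}}(\gamma^+_{\on{taut}},\xi_r)$ is simply $E_{\xi_r}$. By construction in \secref{ss:endo term}, $E_{\xi_r}$ corresponds to $\xi_r$ under the isomorphism of \corref{c:endo term}, whose content is the base-change identification $\on{mult}\circ\on{mult}^R\simeq \iota_s^R\circ\iota_t$ of \lemref{l:base change}.

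Next, I reduce the $\on{ch}$ side. By \propref{p:class R} applied with $F_\CR=\on{Id}_\CR$ and $\alpha_r=\on{id}_r$, we have $\on{ch}(r)=\on{cl}(r,\on{id}_r)$ as elements of $\CEnd_{\on{HH}_\bullet(\CR)}(\one)$ under the isomorphism of \thmref{t:two notions of trace}. Inspection of the explicit construction given in \secref{ss:two notions of trace expl} shows that this latter isomorphism, expressed as
$$\Tr(\on{Id}_\CR,\CR)=\CHom_\CR(\one_\CR,\on{mult}\circ\on{mult}^R(\one_\CR))\xrightarrow{\sim}\CEnd_{\on{HH}_\bullet(\CR)}(\one),$$
is mediated by the same base-change $\on{mult}\circ\on{mult}^R\simeq \iota_s^R\circ\iota_t$ of \lemref{l:base change}. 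In particular, under the evident identification $\CR^{\otimes S^1}\simeq \on{HH}_\bullet(\CR)$ (\propref{p:HC twist} with $Y=\{*\}$, $\phi=\on{Id}$), the isomorphisms of \corref{c:endo term} and \thmref{t:two notions of trace} agree on $\CHom_\CR(\one_\CR,\on{mult}\circ\on{mult}^R(\one_\CR))$.

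The remaining (and only substantive) task is to verify that $\on{cl}(r,\on{id}_r)$, viewed as an element of $\Tr(\on{Id}_\CR,\CR)=\CHom_\CR(\one_\CR,\on{mult}\circ\on{mult}^R(\one_\CR))$, coincides with $\xi_r$. For this I unwind \eqref{e:map between Tr} for the 1-morphism $t:\Vect\to\CR$ representing $r$ (with $F_1=\on{Id}_\Vect$, $F_2=\on{Id}_\CR$ and $\alpha=\on{id}_r$): the class $\on{cl}(r,\on{id}_r)$ is the composition
$$\sfe\xrightarrow{\text{unit}}\Tr(t^Rt,\Vect)\xrightarrow{\text{cyc}}\Tr(tt^R,\CR)\xrightarrow{\text{counit}}\Tr(\on{Id}_\CR,\CR).$$
The first arrow sends $\sfe$ to $\on{id}_r\in \CHom_\CR(r,r)$; the cyclicity isomorphism \eqref{e:cyclicity} tracks this through the unit $\on{coev}_r:\one_\CR\to r\otimes r^\vee$ and the evaluation pairing, using the description of the self-duality of $\CR$ from \corref{c:rigidity}; finally the counit arrow is induced by $\on{ev}_r:r\otimes r^\vee\to\one_\CR$, which upon passing to the $(\on{mult},\on{mult}^R)$-adjoint becomes precisely the map $r\boxtimes r^\vee\to \on{mult}^R(\one_\CR)$ entering the definition of $\xi_r$. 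Collecting these, the composite realizes the defining formula \eqref{e:xi r} of $\xi_r$.

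The main obstacle is the last step: carefully tracking the cyclicity isomorphism and the self-duality structure of \corref{c:rigidity} through the adjunction between $\on{mult}$ and $\on{mult}^R$. This is a bookkeeping computation in the spirit of (and using the same base-change manipulations as) the proof of \thmref{t:excurs} and the explicit proof of \thmref{t:two notions of trace} in \secref{ss:two notions of trace expl}; all of it is entirely formal once the identifications above have been pinned down.
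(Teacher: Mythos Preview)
Your proof is correct but takes a substantially more circuitous route than the paper's. The paper's argument is a two-line unwinding: applying formula \eqref{e:exc univ} directly shows that $\on{Exc}_{\on{univ}}(\gamma^+_{\on{taut}},\xi_r)$ is the composite
$$\one_{\on{HH}_\bullet(\CR)}\xrightarrow{\on{unit}} \iota(r)\otimes\iota(r^\vee)\xrightarrow{\on{mon}_{\gamma_{\on{taut}}}\otimes\on{id}}\iota(r)\otimes\iota(r^\vee)\xrightarrow{\on{counit}}\one_{\on{HH}_\bullet(\CR)},$$
and then one observes that, by definition, the monodromy $\on{mon}_{\gamma_{\on{taut}}}$ evaluated on $r$ \emph{is} the endomorphism $\on{id}_r^{\on{Id}_\CR}$ of \secref{sss:iota F}. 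Since $\on{ch}(r):=\Tr(\on{id}_r^{\on{Id}_\CR},\iota(r))$, this is immediate.

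Your approach instead factors through the common vector space $\CHom_\CR(\one_\CR,\on{mult}\circ\on{mult}^R(\one_\CR))$: you reduce the excursion side to $\xi_r$ via \thmref{t:excurs} and \corref{c:endo term}, reduce the $\on{ch}$ side via \propref{p:class R} to $\on{cl}(r,\on{id}_r)$, and then must verify both that the identifications of \corref{c:endo term} and \thmref{t:two notions of trace} agree and that $\on{cl}(r,\on{id}_r)=\xi_r$. This is precisely the alternative route the paper sketches in the Remark following \corref{c:chern as excurs}, which explicitly flags those two verifications as the required ingredients. What the direct proof buys is that it bypasses both auxiliary checks entirely; what your approach buys is that it makes transparent the compatibility between the excursion formalism of \secref{s:excurs} and the trace formalism of \secref{s:Frob}, at the cost of invoking the heavier \propref{p:class R} and the unproved-but-asserted compatibility of identifications.
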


\begin{proof}

The proof is essentially an application of the definitions:

\medskip

By formula \eqref{e:exc univ}, the element $\on{Exc}_{\on{univ}}(\gamma^+_{\on{taut}},\xi_r)$
is the composition
$$\one_{\on{HH}_\bullet(\CR)}\overset{\on{unit}}\longrightarrow \iota(r)\otimes \iota(r^\vee)
\overset{\on{mon}_{\gamma_{\on{taut}}}\otimes \on{id}}\longrightarrow \iota(r)\otimes \iota(r^\vee)
\overset{\on{counit}}\longrightarrow \one_{\on{HH}_\bullet(\CR)},$$
where $\on{mon}_{\gamma_{\on{taut}}}$ denotes the automorphism of the functor $\iota$, i.e.,
$$\CR\simeq \CR^{\otimes \{*\}}\to \CR^{\otimes S^1},$$
corresponding to the loop $\gamma_{\on{taut}}$.

\medskip

However, by definition, the above automorphism $\on{mon}_{\gamma_{\on{taut}}}$ of $\iota$, when evaluated on $r\in \CR$,
identifies with $\on{id}^{\on{Id}_\CR}_r$.

\end{proof}

Combining Propositions \ref{p:class R} and \ref{p:chern as excurs}, we obtain:

\begin{cor} \label{c:chern as excurs}
Under the identification
$$\Tr(\on{Id},\CR)\simeq \CEnd_{\on{HH}_\bullet(\CR)}(\one_{\on{HH}_\bullet(\CR)})\simeq \CEnd_{\CR^{\otimes S^1}}(\one_{\CR^{\otimes S^1}})$$
of \thmref{t:two notions of trace}, the element
$$\on{cl}(r,\on{id})\in \Tr(\on{Id},\CR)$$
goes over to
$$\on{Exc}_{\on{univ}}(\gamma^+_{\on{taut}},\xi_r)\in \CEnd_{\CR^{\otimes S^1}}(\one_{\CR^{\otimes S^1}}).$$
\end{cor}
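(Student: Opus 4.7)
The plan is to obtain the corollary as a direct concatenation of the two immediately preceding results, applied in the specialization $F_\CR = \on{Id}_\CR$, $\alpha_r = \on{id}_r$. No new computation is needed; the task is simply to check that the identifications line up.

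First, I would invoke \propref{p:class R} in the case $F_\CR = \on{Id}_\CR$ and $\alpha = \on{id}_r$. Under the identification $\Tr(\on{Id}_\CR,\CR)\simeq \CEnd_{\on{HH}_\bullet(\CR)}(\one_{\on{HH}_\bullet(\CR)})$ coming from \thmref{t:two notions of trace}, this gives
\[
\on{cl}(r,\on{id}_r) \;\longleftrightarrow\; \Tr(\on{id}_r^{\on{Id}_\CR},\iota(r)) \;=\; \on{ch}(r),
\]
where the second equality is the definition of $\on{ch}(r)$ made just after \eqref{e:pre-chern}.

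Second, I would invoke \propref{p:chern as excurs}, which identifies $\on{ch}(r)$ with $\on{Exc}_{\on{univ}}(\gamma^+_{\on{taut}},\xi_r)$ under the equivalence $\on{HH}_\bullet(\CR)\simeq \CR^{\otimes S^1}$ of \eqref{e:circle} (a special case of \propref{p:HC twist} with $Y=\{*\}$ and $\phi=\on{id}$). Splicing this into the first step yields exactly the asserted identity in $\CEnd_{\CR^{\otimes S^1}}(\one_{\CR^{\otimes S^1}})$.

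The only point requiring care (and the closest thing to an obstacle) is bookkeeping: one must verify that the two identifications, namely $\Tr(\on{Id}_\CR,\CR)\simeq \CEnd_{\on{HH}_\bullet(\CR)}(\one)$ from \thmref{t:two notions of trace} and $\on{HH}_\bullet(\CR)\simeq \CR^{\otimes S^1}$ from \propref{p:HC twist}, are compatible with the explicit descriptions used in the proofs of Propositions \ref{p:class R} and \ref{p:chern as excurs}. This is essentially automatic from the construction of these identifications (cf.\ the explicit forms \eqref{e:calc HH3} and \eqref{e:exc univ}), since both are built from the same unit/counit data for the $\on{mult} \dashv \on{mult}^R$ adjunction together with the identification $\iota \simeq \iota\circ F_\CR$, which in the trivial case $F_\CR=\on{Id}$ collapses the clutching datum on $S^1$ to the tautological loop $\gamma_{\on{taut}}$. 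Thus no genuine new content appears beyond chaining the two propositions.
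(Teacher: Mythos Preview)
Your proposal is correct and matches the paper's approach exactly: the paper presents this corollary as an immediate consequence of ``Combining Propositions \ref{p:class R} and \ref{p:chern as excurs}'', with no additional argument. Your extra paragraph on compatibility of identifications is sound but goes beyond what the paper spells out (the paper relegates a related observation to the subsequent remark).
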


\begin{rem}
Note that on the one hand, $\Tr(\on{Id}_\CR,\CR)$ is explicitly given by the composition
$$\Vect \overset{\one_\CR}\to \CR \overset{\on{mult}^R}\longrightarrow \CR\otimes \CR \overset{\on{mult}}\longrightarrow \CR
\overset{\CHom_\CR(\one_\CR,-)}\to \Vect,$$
i.e., the resulting vector space is
$$\CHom_\CR(\one_\CR,\on{mult}\circ \on{mult}^R(\one_\CR)).$$

On the other hand, by \lemref{l:base change}, we have
\begin{multline*}
\CEnd_{\on{HH}_\bullet(\CR)}(\one_{\on{HH}_\bullet(\CR)}) \simeq
\CEnd_{\CR^{\otimes S^1}}(\one_{\CR^{\otimes S^1}})=\CHom_{\CR^{\otimes S^1}}(\iota(\one_\CR),\iota(\one_\CR))\simeq  \\
\simeq \CHom_{\CR}(\one_\CR,\iota^R\circ \iota(\one_\CR))\simeq
\CHom_{\CR}(\one_\CR,\on{mult}\circ \on{mult}^R(\one_\CR)).
\end{multline*}

Thus, we obtain an identification
$$\Tr(\on{Id}_\CR,\CR) \simeq \CHom_\CR(\one_\CR,\on{mult}\circ \on{mult}^R(\one_\CR)) \simeq
\CEnd_{\on{HH}_\bullet(\CR)}(\one_{\on{HH}_\bullet(\CR)}).$$

By unwinding the definitions, one can show that this is the same identification as one given by
\thmref{t:two notions of trace}.

\medskip

Assuming this, one can obtain the assertion of \corref{c:chern as excurs} by combining
\eqref{e:chern} and \thmref{t:excurs}. Indeed, this follows from the fact that the element $\on{cl}(r,\on{id}_r)$, thought of as an element in
$$\Tr(\on{Id}_\CR,\CR)\simeq \CHom_\CR(\one_\CR,\on{mult}\circ \on{mult}^R(\one_\CR)),$$
equals $\xi_r$.

\end{rem}

\ssec{Introducing observables}

In this subsection we will study a certain generalization of \thmref{t:two notions of trace}, where we modify both sides by
inserting an object $r\in \CR$.

\sssec{}

Let $\CR$ be a rigid symmetric monoidal category. Let $\CM$ be a dualizable $\CR$-module category.

\medskip

On the one hand, we consider the endofunctor of $\CM$, given by $H_r\circ F_\CM$,
where
$$H_r(m):=r\otimes m$$
denotes the action of the object $r$ on $\CM$ as an $\CR$-module category.

\sssec{}

On the other hand, consider
$$\iota(r)\in \on{HH}_\bullet(\CR,F_\CR),$$
where we recall that $\iota$ denotes the functor
$$\CR\to \CR\underset{\on{mult},\CR\otimes \CR,\on{mult}\circ (F_\CR\otimes \on{Id})}\otimes \CR=:\on{HH}_\bullet(\CR,F_\CR),$$
corresponding to the left copy of $\CR$ in the tensor product.

\medskip

Recall also that $\on{HH}_\bullet(\CR,F_\CR)$ is itself a symmetric monoidal category, so
for any $\CF\in \on{HH}_\bullet(\CR,F_\CR)$ it makes sense to consider
$$\iota(r)\otimes \CF \in  \on{HH}_\bullet(\CR,F_\CR).$$

\sssec{}

We claim:

\begin{thm} \label{t:two notions of trace obs}
There exists a canonical isomorphism
\begin{equation} \label{e:trace obs}
\Tr(H_r\circ F_\CM,\CM) \simeq
\CHom_{\on{HH}_\bullet(\CR,F_\CR)}(\one_{\on{HH}_\bullet(\CR,F_\CR)},\iota(r)\otimes \Tr^{\on{enh}}_\CR(F_\CM,\CM)),
\end{equation}
functorial in $r\in \CR$. This isomorphism is compatible with the actions of the two sides of \eqref{e:trace endo}.
\end{thm}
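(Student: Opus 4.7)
The plan is to reduce \thmref{t:two notions of trace obs} to \thmref{t:two notions of trace}, by showing that the twisted endofunctor $H_r\circ F_\CM$ carries a canonical $F_\CR$-compatibility structure in the sense of \secref{sss:enhanced trace}, whose enhanced trace is
\[
\Tr^{\on{enh}}_\CR(H_r\circ F_\CM,\CM)\simeq \iota(r)\otimes \Tr^{\on{enh}}_\CR(F_\CM,\CM)\quad\text{in } \on{HH}_\bullet(\CR,F_\CR).
\]
Granted this, \thmref{t:two notions of trace} applied to the pair $(\CM,H_r\circ F_\CM)$ gives
\[
\Tr(H_r\circ F_\CM,\CM)\simeq \CHom_{\on{HH}_\bullet(\CR,F_\CR)}(\one_{\on{HH}_\bullet(\CR,F_\CR)},\Tr^{\on{enh}}_\CR(H_r\circ F_\CM,\CM)),
\]
which combined with the displayed isomorphism yields \eqref{e:trace obs}.

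First I would establish the canonical $F_\CR$-compatibility of $H_r\circ F_\CM$. Since $\CR$ is \emph{symmetric} monoidal, the braiding provides a coherent isomorphism $r\otimes(a\otimes m)\simeq a\otimes(r\otimes m)$, so that $H_r$ is an $\CR$-linear endofunctor of $\CM$, and the composite $H_r\circ F_\CM$ inherits a compatibility datum $\alpha_{H_r\circ F_\CM}$ with $F_\CR$. More precisely, $\alpha_{H_r\circ F_\CM}$ factors as the composition of $\alpha_{F_\CM}$ with the 2-morphism of $\on{Id}_{\ul{\CR\mmod}}$ in $\tDGCat$ given by $r$; such a 2-morphism exists because, for $\CR$ symmetric monoidal, the center of $\CR$ identifies with $\CR$ itself, giving $r\in\on{End}(\on{Id}_{\ul{\CR\mmod}})$.

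The key non-formal step is then the multiplicativity statement above. By the functoriality of the 2-categorical trace (\secref{sss:funct trace 2categ}), applying $\Tr$ to the factorization of $\alpha_{H_r\circ F_\CM}$ yields $\Tr^{\on{enh}}_\CR(H_r\circ F_\CM,\CM)$ as the image of $\Tr^{\on{enh}}_\CR(F_\CM,\CM)$ under the endofunctor of $\on{HH}_\bullet(\CR,F_\CR)$ that arises from the ``multiplication by $r$'' endomorphism of $\on{Id}_{\ul{\CR\mmod}}$; one then identifies this endofunctor with tensoring by $\iota(r)$ in the symmetric monoidal category $\on{HH}_\bullet(\CR,F_\CR)$ (Remark \ref{r:unit in Hochschild}), since applying $\Tr$ to the identity 1-morphism of $(\ul{\CR\mmod},\Res_{F_\CR})$ twisted by $r$ produces precisely $\iota(r)$. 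Alternatively, this step admits a concrete verification in the style of \secref{ss:two notions of trace expl}: inserting an extra $r\otimes-$ factor into the clockwise circuit of the diagram \eqref{e:main diagram} and tracking it through the adjunction identification coming from \eqref{e:calc HH2} produces directly the tensor factor $\iota(r)$ on the Hochschild homology side.

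Finally, the compatibility with the $\Tr(F_\CR,\CR)$-action follows from the functoriality of the whole construction in $(\CR,\CM,F_\CR,F_\CM,r)$, as in \secref{sss:trace endo alg mod}: both sides of \eqref{e:trace obs} carry a $\CEnd_{\on{HH}_\bullet(\CR,F_\CR)}(\one_{\on{HH}_\bullet(\CR,F_\CR)})$-action via the corresponding Hom space, and the identification constructed above respects these actions. The main obstacle is precisely the multiplicativity statement of the second paragraph: one must carefully interpret the central action of $r$ on $\ul{\CR\mmod}$ in $\tDGCat$ as producing, after passing to traces, tensor multiplication by $\iota(r)$ in $\on{HH}_\bullet(\CR,F_\CR)$, and this requires the interplay between the rigidity of $\CR$ (which ensures $\iota$ is well-behaved under the adjunctions used in the construction) and the symmetric monoidal structure on $\on{HH}_\bullet(\CR,F_\CR)$.
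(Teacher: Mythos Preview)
Your proposal is correct and follows essentially the same route as the paper. You reduce to \thmref{t:two notions of trace} by establishing the multiplicativity statement $\Tr^{\on{enh}}_\CR(H_r\circ F_\CM,\CM)\simeq\iota(r)\otimes\Tr^{\on{enh}}_\CR(F_\CM,\CM)$; the paper isolates this as a separate theorem (\thmref{t:obs internal}) and proves it exactly as you outline, by factoring the defining 2-morphism through the central ``multiplication by $r$'' endomorphism of $\on{Id}_{\ul{\CR\mmod}}$ and then identifying the induced endofunctor of $\on{HH}_\bullet(\CR,F_\CR)$ with $\iota(r)\otimes-$ (this last identification is stated as \propref{p:Tr r calc} and proved by the explicit unwinding you sketch in your alternative approach).
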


The rest of this subsection is devoted to the proof of this theorem.

\sssec{}

For $\CM$ as in the theorem, denote by $F_{\CM,r}$ the composite $H_r\circ F_\CM$. Note that since $\CR$
is \emph{symmetric} monoidal, $F_{\CM,r}$
is also compatible with the action of $F_\CR$ on $\CR$.  By \thmref{t:two notions of trace}, we have
$$\Tr(H_r\circ F_\CM,\CM)\simeq
\CHom_{\on{HH}_\bullet(\CR,F_\CR)}(\one_{\on{HH}_\bullet(\CR,F_\CR)},\Tr^{\on{enh}}_\CR(F_{\CM,r},\CM)).$$

Hence, in order to prove \thmref{t:two notions of trace obs}, it suffices to establish the following:

\begin{thm} \label{t:obs internal}
There exists a canonical isomorphism
\begin{equation} \label{e:obs internal}
\iota(r)\otimes \Tr^{\on{enh}}_\CR(F_\CM,\CM)\simeq \Tr^{\on{enh}}_\CR(F_{\CM,r},\CM)
\end{equation}
as objects of $\on{HH}_\bullet(\CR,F_\CR)$.
\end{thm}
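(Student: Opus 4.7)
The strategy is to realize both sides of \eqref{e:obs internal} as values of the 2-categorical trace applied to a composition of 1-morphisms in $L(\tDGCat)_{\on{rgd}}$, and then to invoke the functoriality of $\Tr$ with respect to horizontal composition (\secref{sss:funct trace 2categ}). Recall that the pair $(\CM, F_\CM)$ encodes a 1-morphism
$$(\CM, F_\CM): (\ul{\DGCat}, \on{Id}) \to (\ul{\CR\mmod}, \Res_{F_\CR})$$
in $L(\tDGCat)_{\on{rgd}}$ whose trace is, by definition, $\Tr^{\on{enh}}_\CR(F_\CM, \CM)$.

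The first step is to construct, functorially in $r \in \CR$, a 1-endomorphism
$$\mathfrak{H}_r: (\ul{\CR\mmod}, \Res_{F_\CR}) \to (\ul{\CR\mmod}, \Res_{F_\CR})$$
of the target. As an endomorphism of $\ul{\CR\mmod}$ in $\tDGCat$---i.e., as an $\CR$-bimodule---$\mathfrak{H}_r$ will be the one corresponding to the endofunctor $H_r = r \otimes (-): \CR \to \CR$, where the $\CR$-bilinearity required to make this a bimodule is precisely the content of the symmetry of the monoidal structure on $\CR$. The 2-morphism datum $\mathfrak{H}_r \circ \Res_{F_\CR} \to \Res_{F_\CR} \circ \mathfrak{H}_r$ that upgrades $\mathfrak{H}_r$ to a 1-morphism in $L(\tDGCat)_{\on{rgd}}$ will be furnished by the composition of the symmetry isomorphism $r \otimes F_\CR(a) \simeq F_\CR(a) \otimes r$ with the symmetric monoidal structure on $F_\CR$. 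A direct unwinding of the composition in $L(\tDGCat)_{\on{rgd}}$ will then identify
$$\mathfrak{H}_r \circ (\CM, F_\CM) \simeq (\CM, H_r \circ F_\CM).$$

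Granting this, by \secref{sss:funct trace 2categ}, applying $\Tr$ produces
$$\Tr^{\on{enh}}_\CR(H_r \circ F_\CM, \CM) \simeq \Tr(\mathfrak{H}_r)\bigl(\Tr^{\on{enh}}_\CR(F_\CM, \CM)\bigr),$$
where $\Tr(\mathfrak{H}_r)$ is an endofunctor of $\on{HH}_\bullet(\CR, F_\CR)$. The theorem therefore reduces to the identification $\Tr(\mathfrak{H}_r) \simeq \iota(r) \otimes (-)$, which is the hard step. I expect to verify it by unwinding the explicit presentation $\on{HH}_\bullet(\CR, F_\CR) = \CR \otimes_{\CR \otimes \CR^{\on{rev}}} \CR_{F_\CR}$ together with the formula for the trace of a 1-morphism given by a bimodule (see \eqref{e:Tr as HH} and \secref{sss:HC 2categ}): $\Tr(\mathfrak{H}_r)$ will be shown to act on a representative $x_1 \otimes x_2$ by $(r \cdot x_1) \otimes x_2$, and since the symmetric monoidal structure on $\on{HH}_\bullet(\CR, F_\CR)$ acts componentwise with $\iota(r) = r \otimes \one_\CR$, this exactly matches multiplication by $\iota(r)$.

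The main obstacle will be the bookkeeping required in the last step: ensuring that the various coherence data (the symmetry of $\CR$, the symmetric monoidality of $F_\CR$, the 2-morphism datum in $\mathfrak{H}_r$, and the twisted bimodule structure on $\CR_{F_\CR}$) all assemble compatibly so as to exhibit $\Tr(\mathfrak{H}_r)$ as genuine multiplication by $\iota(r)$ and not some twisted variant. The functoriality of the resulting isomorphism in $r \in \CR$ will follow from the naturality of $\mathfrak{H}_r$ in $r$ combined with the functoriality of $\Tr$.
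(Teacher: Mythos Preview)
Your strategy matches the paper's exactly: factor $(\CM, F_{\CM,r})$ as a composite in $L(\tDGCat)_{\on{rgd}}$, invoke functoriality of trace with respect to composition, and then identify the trace of the remaining endomorphism with $\iota(r)\otimes(-)$; this last step is the paper's \propref{p:Tr r calc}.

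One point deserves sharpening. You locate the $r$-dependence in the underlying 1-morphism of $\mathfrak{H}_r$ in $\tDGCat$ (``the bimodule corresponding to $H_r$''), leaving the 2-morphism datum to be a symmetry-plus-monoidality isomorphism. But $H_r$ is an $\CR$-\emph{bilinear} endofunctor of $\CR$ (this is your own observation), hence a bimodule endomorphism of the regular bimodule---a 2-morphism in $\tDGCat$, not a 1-morphism; there is no non-identity bimodule naturally attached to it. The paper's diagram \eqref{e:mult by r} gets the bookkeeping right: the underlying 1-morphism is $\on{Id}_{\ul{\CR\mmod}}$ (the regular bimodule $\CR$), and the entire $r$-dependence sits in the 2-morphism $\alpha_r:\on{Id}\circ\Res_{F_\CR}\to\Res_{F_\CR}\circ\on{Id}$, which on each $\CM'\in\CR\mmod$ acts as $H_r:\CM'\to\CM'$. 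With this correction your composition identity $\mathfrak{H}_r\circ(\CM,F_\CM)\simeq(\CM,H_r\circ F_\CM)$ and the final computation $\Tr(\mathfrak{H}_r)\simeq\iota(r)\otimes(-)$ go through as you outlined, and coincide with the paper's argument.
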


\qed[\thmref{t:two notions of trace obs}]

%

\sssec{Proof of \thmref{t:obs internal}}

Consider the diagram
\begin{equation} \label{e:mult by r}
\xy
(0,0)*+{\ul{\CR\mmod}}="A";
(30,0)*+{\ul{\CR\mmod}}="B";
(0,-20)*+{\ul{\CR\mmod}}="C";
(30,-20)*+{\ul{\CR\mmod},}="D";
{\ar@{->}^{F_\CR} "A";"B"};
{\ar@{->}_{\on{Id}} "A";"C"};
{\ar@{->}^{\on{Id}} "B";"D"};
{\ar@{->}_{F_\CR} "C";"D"};
{\ar@{=>}^{\alpha_r} "B";"C"};
\endxy
\end{equation}
where $\alpha_r$ is the 2-morphism, which, when evaluated on $\CM'\in \CR\mmod$ (see \secref{sss:evaluate} for what evaluation
means), acts as
$$H_r:\CM'\to \CM',$$
viewed as a map of $\CR$-module categories.

\medskip

Concatenating with the diagram
$$
\xy
(0,0)*+{\ul{\DGCat}}="A";
(30,0)*+{\ul{\DGCat}}="B";
(0,-20)*+{\ul{\CR\mmod}}="C";
(30,-20)*+{\ul{\CR\mmod},}="D";
{\ar@{->}^{\on{Id}} "A";"B"};
{\ar@{->}_{\CM} "A";"C"};
{\ar@{->}^{\CM} "B";"D"};
{\ar@{->}_{F_\CR} "C";"D"};
{\ar@{=>}^{\alpha_{F_\CM}} "B";"C"};
\endxy
$$
which produces $\Tr^{\on{enh}}_\CR(F_\CM,\CM)$, we obtain the diagram
$$
\xy
(0,0)*+{\ul{\DGCat}}="A";
(30,0)*+{\ul{\DGCat}}="B";
(0,-20)*+{\ul{\CR\mmod}}="C";
(30,-20)*+{\ul{\CR\mmod},}="D";
{\ar@{->}^{\on{Id}} "A";"B"};
{\ar@{->}_{\CM} "A";"C"};
{\ar@{->}^{\CM} "B";"D"};
{\ar@{->}_{F_\CR} "C";"D"};
{\ar@{=>}^{\alpha_{F_{\CM,r}}} "B";"C"};
\endxy
$$
which produces $\Tr^{\on{enh}}_\CR(F_{\CM,r},\CM)$.

\medskip

Since the formation of trace is compatible with compositions, it suffices to prove the following:

\begin{prop} \label{p:Tr r calc}
The map
\begin{equation} \label{e:Tr r calc}
\on{HH}_\bullet(\CR,F_\CR)\to \on{HH}_\bullet(\CR,F_\CR),
\end{equation}
induced by \eqref{e:mult by r}, is given by $\iota(r)\otimes -$.
\end{prop}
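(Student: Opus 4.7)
The plan is to unwind the trace applied to diagram \eqref{e:mult by r}, using the explicit presentation
\[
\on{HH}_\bullet(\CR,F_\CR)\simeq \CR\underset{\CR\otimes \CR^{\on{rev}}}\otimes \CR_{F_\CR},
\]
where $\CR_{F_\CR}$ denotes the $(\CR,\CR)$-bimodule underlying the 1-morphism $\Res_{F_\CR}\in \End_{\tDGCat}(\ul{\CR\mmod})$. The first step is to pin down $\alpha_r$ concretely. Since both vertical edges of \eqref{e:mult by r} are the identity and both horizontal edges are $\Res_{F_\CR}$, the datum of $\alpha_r$ is an endomorphism of $\Res_{F_\CR}$ in $\End_{\tDGCat}(\ul{\CR\mmod})\simeq (\CR\otimes \CR^{\on{rev}})\mmod$. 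The stipulation that evaluation on $\CM'\in \CR\mmod$ (in the sense of \sssec{sss:evaluate}) recovers $H_r$ identifies this morphism as the bimodule map $\CR_{F_\CR}\to \CR_{F_\CR}$ given by multiplication by $r$; this is bimodule-linear because the symmetric monoidal structure on $\CR$ makes the action of $r$ central, hence compatible with the $F_\CR$-twist on the left.

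The second step is to compute $\Tr(\alpha_r)$. By \eqref{e:Tr as HH}, the source and target both equal $\CR\otimes_{\CR\otimes \CR^{\on{rev}}}\CR_{F_\CR}$, and the recipe of \sssec{sss:map of traces}, specialized to the case where the intertwining 1-morphism is the identity, reduces to simply tensoring the bimodule morphism across the coend. The cleanest way to verify this is to horizontally factor $\alpha_r$ as $\on{id}_{\Res_{F_\CR}}\ast \beta_r$, where $\beta_r$ is the endomorphism of $\on{Id}_{\ul{\CR\mmod}}$ that corresponds, under $\End_{\tDGCat}(\on{Id})\simeq \CR$, to the element $r$, and then invoke the compatibility of trace with composition. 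Either route yields that $\Tr(\alpha_r)$ is the endofunctor $\on{id}_\CR\otimes (r\cdot -)$ of $\on{HH}_\bullet(\CR,F_\CR)$.

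Finally, this endofunctor must be matched with $\iota(r)\otimes -$. Since $\iota$ is symmetric monoidal and is the insertion of the left $\CR$ factor, $\iota(r)\otimes -$ acts as multiplication by $r$ in the first $\CR$ slot of the coend. The coequalizer relation defining the tensor product over $\CR\otimes \CR^{\on{rev}}$, together with the centrality of $r$, identifies this with multiplication by $r$ in the bimodule slot $\CR_{F_\CR}$, producing the same functor as in step two. The main obstacle will be step two, namely making the informal phrase ``trace of a $2$-morphism over the identity is tensoring by the bimodule morphism'' into a rigorous assertion. The most transparent approach is likely the $(\infty,3)$-categorical viewpoint of \secref{ss:L tDGCat}: the data of $\alpha_r$ with trivial intertwining 1-morphism defines a 1-morphism in $L(\tDGCat)_{\on{rgd}}$ from $(\ul{\CR\mmod},\Res_{F_\CR})$ to itself, whose image under the trace functor is the sought-for endofunctor, and the identifications can be tracked naturally at each level.
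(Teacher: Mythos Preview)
Your proposal is correct and follows essentially the same line as the paper's proof: unwind the trace construction of \secref{sss:map of traces} applied to \eqref{e:mult by r} and identify the result with multiplication by $\iota(r)$. The paper's argument is a more compressed version of your Steps~1--3: it writes out the three-square trace diagram directly, observes that with $t=\on{Id}$ the outer squares carry the identity 2-morphism while the inner 2-morphism, evaluated on $\CQ\in(\CR\otimes\CR)\mmod$, acts as $H_r\otimes\on{Id}$, and then declares the identification with $\iota(r)\otimes-$ manifest.

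In particular, your anticipated ``main obstacle'' in Step~2 dissolves once you simply write out the diagram of \secref{sss:map of traces} for identity vertical 1-morphisms; the $(t,t^R)$-adjunction degenerates to the identity and only the middle square contributes. There is no need to invoke the $(\infty,3)$-categorical framework of \secref{ss:L tDGCat}. Your alternative factoring via $\beta_r$ is a harmless detour, though the identification $\End_{\tDGCat}(\on{Id}_{\ul{\CR\mmod}})\simeq\CR$ is not literally correct---this endomorphism category is that of $\CR$-bimodule endofunctors of $\CR$, which for symmetric monoidal $\CR$ receives a functor from $\CR$ sending $r$ to $H_r$, and that is all you actually use.
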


\qed[\thmref{t:obs internal}]

\begin{proof}[Proof of \propref{p:Tr r calc}]

Consider the corresponding diagram
$$
\xy
(0,0)*+{\ul{\DGCat}}="A";
(30,0)*+{\ul{\CR^{\otimes 2}\mmod}}="B";
(60,0)*+{\ul{\CR^{\otimes 2}\mmod}}="C";
(90,0)*+{\ul{\DGCat}}="D";
(0,-20)*+{\ul{\DGCat}}="A'";
(30,-20)*+{\ul{\CR^{\otimes 2}\mmod}}="B'";
(60,-20)*+{\ul{\CR^{\otimes 2}\mmod}}="C'";
(90,-20)*+{\ul{\DGCat},}="D'";
{\ar@{->}^{\on{unit}} "A";"B"};
{\ar@{->}^{\on{unit}} "A'";"B'"};
{\ar@{->}^{\on{counit}} "C";"D"};
{\ar@{->}^{\on{counit}} "C'";"D'"};
{\ar@{->}^{F_\CR\otimes \on{Id}} "B";"C"};
{\ar@{->}^{F_\CR\otimes \on{Id}} "B'";"C'"};
{\ar@{->}_{\on{Id}} "A";"A'"};
{\ar@{->}^{\on{Id}} "D";"D'"};
{\ar@{->}_{\on{Id}} "B";"B'"};
{\ar@{->}_{\on{Id}} "C";"C'"};
{\ar@{=>}_{\on{id}} "B";"A'"};
{\ar@{=>}_{\alpha_r} "C";"B'"};
{\ar@{=>}_{\on{id}} "D";"C'"};
\endxy
$$
which gives rise to \eqref{e:Tr r calc}.

\medskip

The 2-morphism in the inner square, when evaluated on $\CQ\in (\CR\otimes \CR)\mmod$, acts as
$$H_r\otimes \on{Id}.$$

\medskip

This makes the assertion concerning \eqref{e:Tr r calc} manifest.

\end{proof}

\begin{rem}
The proof of \thmref{t:obs internal} can be reformulated as the combination of the following two assertions.
Consider the $\CR$-module which is $\CR$ itself, equipped with the endofunctor $F_{\CR,r}:=H_r\circ F_\CR$.

\medskip

The first assertion is that there is a canonical isomorphism in $\on{HH}_\bullet(\CR,F_\CR)$
$$\Tr^{\on{enh}}_\CR(F_{\CR,r},\CR)\simeq \iota(r).$$
This is a particular case of \propref{p:Tr r calc}.

\medskip

For the second assertion, consider $(\ul{\CR\mmod},\Res_{F_\CR})$ as a commutative algebra object in the symmetric monoidal
category $L(\tDGCat)_{\on{rgd}}$, and the monoid
\begin{equation} \label{e:enh modules}
\Maps_{L(\tDGCat)_{\on{rgd}}}((\ul{\DGCat},\on{Id}),(\ul{\CR\mmod},\Res_{F_\CR})),
\end{equation}
where we recall that $(\ul{\DGCat},\on{Id})$ is the unit in $L(\tDGCat)_{\on{rgd}}$.

\medskip

The second assertion is that $\Tr^{\on{enh}}_\CR$ is a symmetric monoidal functor
\begin{equation} \label{e:Tr enh as a functor}
\Maps_{L(\tDGCat)_{\on{rgd}}}((\ul{\DGCat},\on{Id}),(\ul{\CR\mmod},\Res_{F_\CR}))\to \on{HH}_\bullet(\CR,F_\CR).
\end{equation}
This follows from the fact that the functor $\Tr$ of \eqref{e:2-tr as functor} is symmetric monoidal.

\medskip

To deduce \thmref{t:obs internal}, we note that for any $(\CM,F_\CM)$ we have
$$(\CM,F_{\CM,r})\simeq (\CR,F_{\CR,r})\otimes (\CM,F_\CM)$$
as objects in \eqref{e:enh modules}.

\medskip

Note also that for this manipulation, we regarded $L(\tDGCat)_{\on{rgd}}$ as an $(\infty,1)$-category (i.e.,
we did not need to consider non-invertible 3-morphisms, as in \secref{ss:L tDGCat}). More generally, $\Tr^{\on{enh}}_\CR$
can be considered as a functor of  \emph{symmetric monoidal categories}
\begin{equation} \label{e:Tr enh as a functor 2}
\bMaps_{L(\tDGCat)_{\on{rgd}}}((\ul{\DGCat},\on{Id}),(\ul{\CR\mmod},\Res_{F_\CR}))\to \on{HH}_\bullet(\CR,F_\CR),
\end{equation}
whose source is the \emph{symmetric monoidal category}
\begin{equation} \label{e:Tr enh source}
\bMaps_{L(\tDGCat)_{\on{rgd}}}((\ul{\DGCat},\on{Id}),(\ul{\CR\mmod},\Res_{F_\CR})),
\end{equation}
where $\bMaps(-,-)$ denotes the $(\infty,1)$-category of maps between objects in a given $(\infty,2)$-category.

\end{rem}

\ssec{Cyclicity and observables}

We return to the setting of \thmref{t:two notions of trace obs}. Let us now fix two objects $r_1,r_2\in \CR$.

\sssec{}

On the one hand, consider the following two modules over $\Tr(F_\CR,\CR)$:
$$\Tr(H_{r_1}\circ H_{r_2}\circ F_\CM,\CM) \text{ and }
\Tr(H_{F_\CR(r_1)}\circ H_{r_2}\circ F_\CM,\CM).$$

We claim that there exists a canonical isomorphism
\begin{equation} \label{e:cyclicity one}
\Tr(H_{r_1}\circ H_{r_2}\circ F_\CM,\CM)  \simeq \Tr(H_{F_\CR(r_1)}\circ H_{r_2}\circ F_\CM,\CM)
\end{equation}

Indeed, it is obtained as the composition
\begin{equation} \label{e:cycle snake}
\CD
\Tr(H_{r_1}\circ H_{r_2}\circ F_\CM,\CM)   @>{\text{cyclicity of trace}}>{\sim}>   \Tr(H_{r_2}\circ F_\CM \circ H_{r_1},\CM) \\
& & @VV{\sim}V  \\
\Tr(H_{F_\CR(r_1)}\circ H_{r_2}\circ F_\CM,\CM) @<{\sim}<< \Tr(H_{r_2}\circ H_{F_\CR(r_1)}\circ F_\CM,\CM),\\
\endCD
\end{equation}
where the second arrow uses the isomorphism
\begin{equation} \label{e:twisting}
F_\CM \circ H_{r_1} \simeq H_{F_\CR(r_1)}\circ F_\CM,
\end{equation}
and the last arrow uses the fact that the product on $\CR$ is commutative.

\sssec{}

On the other hand, consider the following two modules over $\CEnd_{\on{HH}_\bullet(\CR,F_\CR)}(\one_{\on{HH}_\bullet(\CR,F_\CR)})$:
$$\CHom_{\on{HH}_\bullet(\CR,F_\CR)}(\one_{\on{HH}_\bullet(\CR,F_\CR)},\iota(r_1\otimes r_2)\otimes \Tr^{\on{enh}}_\CR(F_\CM,\CM))$$
and
$$\CHom_{\on{HH}_\bullet(\CR,F_\CR)}(\one_{\on{HH}_\bullet(\CR,F_\CR)},\iota(F_\CR(r_1)\otimes r_2)\otimes \Tr^{\on{enh}}_\CR(F_\CM,\CM)).$$

We claim that there is a canonical isomorphism
\begin{equation} \label{e:cyclicity two}
\CD
\CHom_{\on{HH}_\bullet(\CR,F_\CR)}(\one_{\on{HH}_\bullet(\CR,F_\CR)},\iota(r_1\otimes r_2)\otimes \Tr^{\on{enh}}_\CR(F_\CM,\CM)) \\
@V{\sim}VV \\
\CHom_{\on{HH}_\bullet(\CR,F_\CR)}(\one_{\on{HH}_\bullet(\CR,F_\CR)},\iota(F_\CR(r_1)\otimes r_2)\otimes \Tr^{\on{enh}}_\CR(F_\CM,\CM)).
\endCD
\end{equation}

Indeed, this follows from the fact that
\begin{equation} \label{e:iota twisting}
\iota(r)\simeq \iota(F_\CR(r)).
\end{equation}

\sssec{}

We now claim:

\begin{lem} \label{l:cyclicity and observables}
The isomorphisms \eqref{e:cyclicity one} and \eqref{e:cyclicity two} match up under the isomorphisms
$$\Tr(H_{r_1}\circ H_{r_2}\circ F_\CM,\CM) \simeq
\CHom_{\on{HH}_\bullet(\CR,F_\CR)}(\one_{\on{HH}_\bullet(\CR,F_\CR)},\iota(r_1\otimes r_2)\otimes \Tr^{\on{enh}}_\CR(F_\CM,\CM))$$
and
$$\Tr(H_{F_\CR(r_1)}\circ H_{r_2}\circ F_\CM,\CM) \simeq
\CHom_{\on{HH}_\bullet(\CR,F_\CR)}(\one_{\on{HH}_\bullet(\CR,F_\CR)},\iota(F_\CR(r_1)\otimes r_2)\otimes \Tr^{\on{enh}}_\CR(F_\CM,\CM))$$
of \thmref{t:two notions of trace obs}.
\end{lem}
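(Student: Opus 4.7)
The plan is to realize both isomorphisms as the image, under the enhanced trace functor $\Tr^{\on{enh}}_\CR$, of a single 2-morphism in $L(\tDGCat)_{\on{rgd}}$, and then appeal to the functoriality and cyclicity properties of trace established in Sections \ref{sss:funct trace} and \ref{sss:cyclicity morphisms}. First I would apply \thmref{t:obs internal} (equivalently \propref{p:Tr r calc}) to identify
$$\iota(r_1 \otimes r_2) \otimes \Tr^{\on{enh}}_\CR(F_\CM,\CM) \simeq \Tr^{\on{enh}}_\CR(H_{r_1}\circ H_{r_2}\circ F_\CM,\CM),$$
and similarly with $F_\CR(r_1) \otimes r_2$ in place of $r_1 \otimes r_2$. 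Composing with \thmref{t:two notions of trace}, the targets of \eqref{e:cyclicity two} become those of \eqref{e:cyclicity one}. The problem is then to compare two endo-$2$-morphisms of $\Tr^{\on{enh}}_\CR$ applied to essentially the same data.

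Next I would interpret the structural isomorphism $\iota(r) \simeq \iota(F_\CR(r))$ of \eqref{e:iota inv} as the image under the $\Tr$-functor of Section \ref{ss:L tDGCat} of the tautological cyclicity 2-morphism associated to $F_\CR: \CR \to \CR$, viewed as a composable pair of 1-morphisms in $\ul{\CR\mmod}$ whose product appears symmetrically in the definition of $\on{HH}_\bullet(\CR, F_\CR)$. Dually, I would unpack the composite \eqref{e:cycle snake} at the 2-categorical level: its first arrow is the categorical cyclicity \eqref{e:cyclicity} applied to $H_{r_1}$ and $H_{r_2}\circ F_\CM$; its second arrow is precisely the structural 2-morphism $\alpha_{F_\CM}$ encoding the $F_\CR$-equivariance of $F_\CM$ in diagram \eqref{e:mult by r}; and its third arrow is the symmetry of the tensor product on $\CR$. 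These three ingredients assemble into a single 2-morphism $\beta$ in $L(\tDGCat)_{\on{rgd}}$ whose image under $\Tr^{\on{enh}}_\CR$ is \eqref{e:cyclicity one}, while the 2-morphism from Step 2 is the horizontal composition of $\beta$ with the canonical 1-morphism $(\ul{\DGCat},\on{Id})\to(\ul{\CR\mmod},\Res_{F_\CR})$ given by $(\CM, F_\CM)$.

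The main obstacle will be the final bookkeeping in Step 3: one must verify, using the compatibility of cyclicity with horizontal and vertical composition encoded in diagram \eqref{e:cycl isomorp3}, that these two assemblies yield the same 2-morphism. This is in principle a formal consequence of the fact that $\Tr^{\on{enh}}_\CR$ upgrades to a symmetric monoidal functor of $(\infty,2)$-categories (cf.\ \eqref{e:Tr enh as a functor 2}), but making the identification explicit requires carefully tracking how the twisting isomorphism \eqref{e:twisting} matches the cyclicity witness \eqref{e:iota twisting} inside the symmetric monoidal $\on{HH}_\bullet(\CR, F_\CR)$ (cf.\ Remark \ref{r:unit in Hochschild}). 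I expect that once the coherence data produced by rigidity of $\CR$ (in particular the $(\Ind_\Psi, \Res_\Psi)$-adjunction of \secref{sss:right adj rigid} and Remark \ref{r:Ind and coInd}) is pinned down, the agreement will reduce to the Eckmann–Hilton-type argument already used in \secref{sss:trace endo alg mod} for the algebra/module structure in \thmref{t:two notions of trace}.
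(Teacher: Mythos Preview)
Your proposal identifies the correct engine --- the cyclicity property for morphisms from \secref{sss:cyclicity morphisms} together with its compatibility with vertical composition encoded in diagram \eqref{e:cycl isomorp3} --- and this is exactly what the paper uses. The overall strategy of lifting both isomorphisms to the 2-categorical level and comparing them via functoriality of $\Tr$ is the same.

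The paper's execution is, however, more concrete and differs from your outline in a few respects. First, it immediately reduces to the case $r_2=\one_\CR$, $r_1=r$, which you do not mention; this simplification removes the symmetric-monoidal commutativity step from \eqref{e:cycle snake} and lets one work with a single observable. Second, rather than assembling everything into one 2-morphism $\beta$, the paper introduces an intermediate isomorphism $\Tr^{\on{enh}}_\CR(H_r\circ F_\CM,\CM)\simeq \Tr^{\on{enh}}_\CR(F_\CM\circ H_r,\CM)$ coming from cyclicity applied to the pair of diagrams \eqref{e:fm and hr0}, and then factors the desired compatibility into two separate commutative squares \eqref{e:cyclic enh1} and \eqref{e:cyclic enh2}. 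Each square is checked by a direct application of \eqref{e:cycl isomorp3} to an explicit pair of stacked diagrams (\eqref{e:fm and hr1} and \eqref{e:fm and hr3} respectively). Third, your invocations of Eckmann--Hilton and of the $(\Ind_\Psi,\Res_\Psi)$-adjunction from \secref{sss:right adj rigid} are red herrings: neither plays a role in the paper's argument for this lemma, which is purely a cyclicity computation. The ``bookkeeping'' you anticipate in Step 3 is handled not by coherence from rigidity but by choosing the right decomposition into vertical composites and reading off \eqref{e:cycl isomorp3} twice.
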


\begin{proof}
Note first that it suffices to show the assertion in the particular case $r_2=\one_{\CR}$ and $r_1=r$. (Namely, the general
case would follow from this particular case by replacing $F_\CM$ by $F_{\CM,r_2}$).

\medskip

Thus, we have to show that
under the isomorphisms
$$\Tr(H_{r}\circ F_\CM,\CM) \simeq
\CHom_{\on{HH}_\bullet(\CR,F_\CR)}(\one_{\on{HH}_\bullet(\CR,F_\CR)},\iota(r)\otimes \Tr^{\on{enh}}_\CR(F_\CM,\CM))$$
and
$$\Tr(H_{F_\CR(r)}\circ F_\CM,\CM) \simeq
\CHom_{\on{HH}_\bullet(\CR,F_\CR)}(\one_{\on{HH}_\bullet(\CR,F_\CR)},\iota(F_\CR(r))\otimes \Tr^{\on{enh}}_\CR(F_\CM,\CM))$$
of \thmref{t:two notions of trace obs}, the composition
\begin{equation}
\begin{CD}
\Tr(H_{r}\circ F_\CM,\CM)@>\text{cyclicity of trace}>\sim>\Tr(F_\CM\circ H_r,\CM) @>\eqref{e:twisting}>{\sim}> \Tr(H_{F_{\CR}(r)}\circ F_\CM,\CM),
\end{CD}
\end{equation}
of the left hand sides corresponds to the isomorphism induced by \eqref{e:iota twisting} of the right hand side.

\medskip

Note that isomorphism \eqref{e:cycl isomorp}, corresponding to the pair of diagrams
\begin{equation} \label{e:fm and hr0}
\xy
(0,0)*+{\ul{\DGCat}}="A";
(30,0)*+{\ul{\DGCat}}="B";
(0,-20)*+{\ul{\CR\mmod}}="C";
(30,-20)*+{\ul{\CR\mmod}}="D";
{\ar@{->}^{\on{Id}} "A";"B"};
{\ar@{->}_{\CM} "A";"C"};
{\ar@{->}^{\CM} "B";"D"};
{\ar@{->}_{F_\CR} "C";"D"};
{\ar@{=>}^{\alpha_{F_\CM}} "B";"C"};
(60,0)*+{\ul{\DGCat}}="A'";
(90,0)*+{\ul{\DGCat}}="B'";
(60,-20)*+{\ul{\CR\mmod}}="C'";
(90,-20)*+{\ul{\CR\mmod}}="D'",;
{\ar@{->}^{\on{Id}} "A'";"B'"};
{\ar@{->}_{\CM} "A'";"C'"};
{\ar@{->}^{\CM} "B'";"D'"};
{\ar@{->}_{\Id} "C'";"D'"};
{\ar@{=>}^{\alpha_{\on{id}_\CM,r}} "B'";"C'"};
\endxy
\end{equation}
induces an isomorphism
\begin{equation} \label{e:cyclic enhanced}
\Tr_{\CR}^{\on{enh}}(H_{r}\circ F_\CM,\CM) \simeq\Tr_{\CR}^{\on{enh}}(F_\CM\circ H_{r} ,\CM)
\end{equation}
of objects of $\on{HH}_\bullet(\CR,F_\CR)$.

\medskip

To finish the proof, it suffices to show that diagrams
\begin{equation} \label{e:cyclic enh1}
\begin{CD}
\Tr(H_{r}\circ F_\CM,\CM) @>\eqref{e:trace module}>\sim> \CHom_{\on{HH}_\bullet(\CR,F_\CR)}(\one_{\on{HH}_\bullet(\CR,F_\CR)},\Tr^{\on{enh}}_\CR(H_r\circ F_\CM,\CM)) \\
@V\sim V \text{cyclicity of trace}V    @V\sim V\eqref{e:cyclic enhanced}V\\
\Tr(F_\CM\circ H_r,\CM) @>\eqref{e:trace module}>\sim> \CHom_{\on{HH}_\bullet(\CR,F_\CR)}(\one_{\on{HH}_\bullet(\CR,F_\CR)},\Tr^{\on{enh}}_\CR(F_\CM\circ H_r,\CM))
\end{CD}
\end{equation}
and
\begin{equation} \label{e:cyclic enh2}
\begin{CD}
\Tr^{\on{enh}}_\CR(H_r\circ F_\CM,\CM) @>\eqref{e:obs internal}>\sim> \iota(r)\otimes \Tr^{\on{enh}}_\CR(F_\CM,\CM))\\
@V\eqref{e:twisting}\circ \eqref{e:cyclic enhanced}V\sim V @V\sim V\eqref{e:iota twisting} V\\
\Tr^{\on{enh}}_\CR(H_{F_{\CR}(r)}\circ F_\CM,\CM)  @>\sim>\eqref{e:obs internal}> \iota(F_{\CR}(r))\otimes \Tr^{\on{enh}}_\CR(F_\CM,\CM))
\end{CD}
\end{equation}
are commutative.

\medskip

We will deduce both assertions from the commutativity of the diagram \eqref{e:cycl isomorp3}.

\medskip

To show the commutativity of \eqref{e:cyclic enh1}, consider the following pair of diagrams:

\begin{equation} \label{e:fm and hr1}
\xy
(0,0)*+{\ul{\CR\mmod}}="A";
(30,0)*+{\ul{\CR\mmod}}="B";
(0,-20)*+{\ul{\DGCat}}="C";
(30,-20)*+{\ul{\DGCat}}="D";
{\ar@{->}^{F_\CR} "A";"B"};
{\ar@{->}_{\on{oblv}} "A";"C"};
{\ar@{->}^{\on{oblv}} "B";"D"};
{\ar@{->}_{\on{Id}} "C";"D"};
{\ar@{=>}^{\on{taut}} "B";"C"};
(60,0)*+{\ul{\CR\mmod}}="A'";
(90,0)*+{\ul{\CR\mmod}}="B'";
(60,-20)*+{\ul{\DGCat}}="C'";
(90,-20)*+{\ul{\DGCat}.}="D'";
{\ar@{->}^{\Id} "A'";"B'"};
{\ar@{->}_{\on{oblv}} "A'";"C'"};
{\ar@{->}^{\on{oblv}} "B'";"D'"};
{\ar@{->}_{\on{Id}} "C'";"D'"};
{\ar@{=>}^{\on{taut}} "B'";"C'"};
\endxy
\end{equation}

Arguing as in \secref{sss:trace module again} or \secref{sss:categ RR}, the isomorphism \eqref{e:cycl isomorp}
corresponding to the diagrams in \eqref{e:fm and hr1} can be identified with the identity endomorphism on
$\CHom_{\on{HH}_\bullet(\CR,F_\CR)}(\one_{\on{HH}_\bullet(\CR,F_\CR)},-)$.

\medskip

Since the vertical compositions of the diagrams appearing in \eqref{e:fm and hr0} and \eqref{e:fm and hr1} are isomorphic,
respectively, to
\begin{equation} \label{e:fm and hr2}
\xy
(0,0)*+{\ul{\DGCat}}="A";
(30,0)*+{\ul{\DGCat}}="B";
(0,-20)*+{\ul{\DGCat}}="C";
(30,-20)*+{\ul{\DGCat}}="D";
{\ar@{->}^{\on{Id}} "A";"B"};
{\ar@{->}_{\CM} "A";"C"};
{\ar@{->}^{\CM} "B";"D"};
{\ar@{->}_{\Id} "C";"D"};
{\ar@{=>}^{\alpha_{F_\CM}} "B";"C"};
(60,0)*+{\ul{\DGCat}}="A'";
(90,0)*+{\ul{\DGCat}}="B'";
(60,-20)*+{\ul{\DGCat}}="C'";
(90,-20)*+{\ul{\DGCat},}="D'",;
{\ar@{->}^{\on{Id}} "A'";"B'"};
{\ar@{->}_{\CM} "A'";"C'"};
{\ar@{->}^{\CM} "B'";"D'"};
{\ar@{->}_{\Id} "C'";"D'"};
{\ar@{=>}^{\alpha_{\on{id}_\CM,r}} "B'";"C'"};
\endxy
\end{equation}
the commutative diagram \eqref{e:cycl isomorp3}, corresponding to \eqref{e:fm and hr0} and \eqref{e:fm and hr1}, naturally identifies with  \eqref{e:cyclic enh1}.

\medskip

To show the commutativity of \eqref{e:cyclic enh2}, note that the diagrams in \eqref{e:fm and hr0} can be written as vertical compositions of
the diagrams

\begin{equation} \label{e:fm and hr3}
\xy
(0,0)*+{\ul{\DGCat}}="A";
(30,0)*+{\ul{\DGCat}}="B";
(0,-20)*+{\ul{\CR\mmod}}="C";
(30,-20)*+{\ul{\CR\mmod}}="D";
(0,-40)*+{\ul{\CR\mmod}}="E";
(30,-40)*+{\ul{\CR\mmod}}="F";
{\ar@{->}^{\on{Id}} "A";"B"};
{\ar@{->}_{\CM} "A";"C"};
{\ar@{->}^{\CM} "B";"D"};
{\ar@{->}_{F_\CR} "C";"D"};
{\ar@{=>}^{\alpha_{F_\CM}} "B";"C"};
{\ar@{->}_{\Id} "C";"E"};
{\ar@{->}^{\Id} "D";"F"};
{\ar@{->}_{F_\CR} "E";"F"};
{\ar@{=>}^{\Id} "D";"E"};
(60,0)*+{\ul{\DGCat}}="A'";
(90,0)*+{\ul{\DGCat}}="B'";
(60,-20)*+{\ul{\CR\mmod}}="C'";
(90,-20)*+{\ul{\CR\mmod}}="D'";
(60,-40)*+{\ul{\CR\mmod}}="E'";
(90,-40)*+{\ul{\CR\mmod}.}="F'";
{\ar@{->}^{\on{Id}} "A'";"B'"};
{\ar@{->}_{\CM} "A'";"C'"};
{\ar@{->}^{\CM} "B'";"D'"};
{\ar@{->}_{\Id} "C'";"D'"};
{\ar@{=>}^{\Id} "B'";"C'"};
{\ar@{->}_{\Id} "C'";"E'"};
{\ar@{->}^{\Id} "D'";"F'"};
{\ar@{->}_{\Id} "E'";"F'"};
{\ar@{=>}^{\alpha_{r}} "D'";"E'"};
\endxy
\end{equation}

Arguing as in the proof of \propref{p:Tr r calc}, the isomorphism \eqref{e:cycl isomorp} corresponding to the bottom part of
\eqref{e:fm and hr3} can be identified with the isomorphism of functors
$$\iota(r)\otimes -\simeq \iota(F_{\CR}(r))\otimes -,$$
induced by isomorphism \eqref{e:iota twisting}.

\medskip

Since the isomorphism \eqref{e:cycl isomorp} corresponding to the top part of \eqref{e:fm and hr3} can be identified with the identity endomorphism of
$\Tr^{\on{enh}}_\CR(F_\CM,\CM)$, the commutative diagram \eqref{e:cycl isomorp3}, corresponding to the diagrams in \eqref{e:fm and hr3},
naturally identifies with \eqref{e:cyclic enh2}.
\end{proof}

\section{The ``shtuka" construction}  \label{s:sht}

In this section we combine all the ingredients developed in the previous sections to obtain
our 
the toy model for the shtuka construction.

\ssec{The universal shtuka}

\sssec{}  \label{sss:introduce shtuka}

Let $\CA$ be a rigid symmetric monoidal category and $Y$ an object of $\Spc$. Consider the symmetric monoidal
category $\CA^{\otimes Y}$.

\medskip

Let $\phi$ be an endomorphism of $Y$, and let $\CA^{\otimes \phi}$
be the induced (symmetric monoidal) endofunctor of $\CA^{\otimes Y}$.

\medskip

Let $\CM$ be a dualizable DG category, equipped with an action of $\CA^{\otimes Y}$. Let $F_\CM$ be an endofunctor of $\CM$
compatible with the action of $\CA^{\otimes \phi}$ on $\CA^{\otimes Y}$.

\sssec{}

According to \secref{sss:enhanced trace}, to this data we can attach an object
$$\Tr^{\on{enh}}_{\CA^{\otimes Y}}(F_\CM,\CM)\in  \on{HH}_\bullet(\CA^{\otimes Y},\CA^{\otimes \phi}).$$

We identify
$$\on{HH}_\bullet(\CA^{\otimes Y},\CA^{\otimes \phi})\simeq \CA^{\otimes Y/\phi}$$
by \propref{p:HC twist}.

\medskip

We will use yet another notation for $\Tr^{\on{enh}}_{\CA^{\otimes Y}}(F_\CM,\CM)$, namely
$$\on{Sht}_{\CM,F_\CM,\on{univ}}\in \CA^{\otimes Y/\phi},$$
and call it ``the universal shtuka".

\sssec{}

According to \thmref{t:two notions of trace}, we have an identification
\begin{equation} \label{e:glob funct shtuka}
\CEnd_{\CA^{\otimes Y/\phi}}(\one_{\CA^{\otimes Y/\phi}})\simeq \Tr(\CA^{\otimes \phi},\CA^{\otimes Y})
\end{equation}
and
\begin{equation} \label{e:glob sect shtuka}
\CHom_{\CA^{\otimes Y/\phi}}(\one_{\CA^{\otimes Y/\phi}},\on{Sht}_{\CM,F_\CM,\on{univ}})\simeq
\Tr(F_\CM,\CM).
\end{equation}

\medskip

The isomorphisms \eqref{e:glob funct shtuka} and \eqref{e:glob sect shtuka} are compatible with the
action of
$\CEnd_{\CA^{\otimes Y/\phi}}(\one_{\CA^{\otimes Y/\phi}})$ on
$\CHom_{\CA^{\otimes Y/\phi}}(\one_{\CA^{\otimes Y/\phi}},\on{Sht}_{\CM,F_\CM,\on{univ}})$
and the action of $\Tr(\CA^{\otimes \phi},\CA^{\otimes Y})$ on $\Tr(F_\CM,\CM)$.

\medskip

In particular, $\Tr(F_\CM,\CM)$ carries an action of $\CEnd_{\CA^{\otimes Y/\phi}}(\one_{\CA^{\otimes Y/\phi}})$.

\sssec{Example}  \label{sss:Sht LocSys}

Let $\sfe$ be a field of characteristic zero. Let $\CA=\Rep(\sG)$ and assume that $Y$ has finitely many connected components.
Then according to \thmref{t:integral to LocSys},
$$\CA^{\otimes Y/\phi}\simeq \QCoh(\LocSys_\sG(Y/\phi)),$$
and hence
$$\CEnd_{\CA^{\otimes Y/\phi}}(\one_{\CA^{\otimes Y/\phi}})\simeq \Gamma(\LocSys_\sG(Y/\phi),\CO_{\LocSys_\sG(Y/\phi)}).$$

We obtain that in this case
$$\on{Sht}_{\CM,F_\CM,\on{univ}}\in \QCoh(\LocSys_\sG(Y/\phi)),$$
and the vector space
$$\Tr(F_\CM,\CM)\simeq \Gamma(\LocSys_\sG(Y/\phi),\on{Sht}_{\CM,F_\CM,\on{univ}})$$
carries an action of the algebra
$$\Gamma(\LocSys_\sG(Y/\phi),\CO_{\LocSys_\sG(Y/\phi)}).$$

\sssec{}

According to \secref{sss:objects in integral}, we can think of an object of $\CA^{\otimes Y/\phi}$ as a compatible family of functors
$$\CA^{\otimes I}\to \on{LS}((Y/\phi)^I), \quad I\in \on{fSet}.$$

Applying this to the object
$$\on{Sht}_{\CM,F_\CM,\on{univ}}\in \CA^{\otimes Y/\phi}$$
we obtain a family of functors denoted
$$\on{Sht}_{\CM,F_\CM,Y/\phi,I}:\CA^{\otimes I}\to \on{LS}((Y/\phi)^I).$$

We will denote by $\on{Sht}_{\CM,F_\CM,Y,I}$ the composite of $\on{Sht}_{\CM,F_\CM,Y/\phi,I}$ with the pullback functor
$$\on{LS}((Y/\phi)^I)\to \on{LS}(Y^I),$$
corresponding to the projection $Y\to Y/\phi$.

\medskip

By \secref{sss:objects in integral vac}, the object
$$\on{Sht}_{\CM,F_\CM,Y/\phi,\emptyset}\in \Vect$$
can be canonically identified with
$$\CHom_{\CA^{\otimes Y/\phi}}(\one_{\CA^{\otimes Y/\phi}},\on{Sht}_{\CM,F_\CM,\on{univ}})\simeq
\Tr(F_\CM,\CM).$$

\sssec{}

The goals of the present section are the following:

\medskip

\noindent--Describe the functors $\on{Sht}_{\CM,F_\CM,Y,I}$ explicitly via the usual (i.e., 1-categorical) trace construction;

\medskip

\noindent--Describe the descent of $\on{Sht}_{\CM,F_\CM,Y,I}$ to $\on{Sht}_{\CM,F_\CM,Y/\phi,I}$ via the action of ``partial Frobeniuses";

\medskip

\noindent--Describe the action of $\CEnd_{\CA^{\otimes Y/\phi}}(\one_{\CA^{\otimes Y/\phi}})$ on
$\Tr(F_\CM,\CM)$ in terms of the functors $\on{Sht}_{\CM,F_\CM,Y/\phi,I}$ via the ``excursion operators";

\medskip

\noindent--Prove the ``S=T" identity (see \secref{ss:S=T} for what this means).

\medskip

All of the above will amount to an application of the constructions of the previous subsections.

\sssec{}   \label{sss:Sht rel}

Before we proceed further, let us note the following functoriality property of the shtuka construction in $Y$:

\medskip

Let us be given another space $Y'$, equipped with an endomorphism $\phi'$, and a map of spaces
$\psi:Y'\to Y$ that intertwines $\phi'$ and $\phi$.  Let $\Res_\psi(\CM)$ denote the
$\CA^{\otimes Y'}$-module category, obtained from $\CM$ by restricting along $\psi$.

\medskip

Consider the resulting objects
$$\on{Sht}_{\CM,F_\CM,\on{univ}}\in \CA^{\otimes Y/\phi} \text{ and } \on{Sht}_{\Res_\psi(\CM),F_\CM,\on{univ}}\in \CA^{\otimes Y'/\phi'}.$$

From \eqref{e:trace module gen R} and we obtain:

\begin{cor} \label{c:rel shtuka}
The object $\on{Sht}_{\Res_\psi(\CM),F_\CM,\on{univ}}$ is obtained from $\on{Sht}_{\CM,F_\CM,\on{univ}}$ by applying
the right adjoint of the functor
$$\CA^{\otimes \psi}:\CA^{\otimes Y'/\phi'}\to \CA^{\otimes Y/\phi}.$$
%
%
%
%
\end{cor}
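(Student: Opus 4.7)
The plan is to deduce the corollary directly from equation \eqref{e:trace module gen R} (the content of \thmref{t:rel two notions of trace}). I take $\CR_1 := \CA^{\otimes Y'}$, $\CR_2 := \CA^{\otimes Y}$, with symmetric monoidal endofunctors $F_{\CR_1} := \CA^{\otimes \phi'}$ and $F_{\CR_2} := \CA^{\otimes \phi}$, and $\Psi := \CA^{\otimes \psi}: \CR_1 \to \CR_2$. The rigidity hypothesis on both $\CR_1$ and $\CR_2$ is supplied by \propref{p:rigidity}, applied to the rigid symmetric monoidal category $\CA$. The intertwining datum $F_{\CR_2}\circ\Psi \simeq \Psi\circ F_{\CR_1}$ is obtained by applying the (symmetric monoidal, colimit-preserving) functor $\CA^{\otimes -}$ to the given intertwining $\phi\circ\psi \simeq \psi\circ\phi'$ of maps in $\Spc$. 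With these inputs, equation \eqref{e:trace module gen R} applied to $(\CM, F_\CM)$, regarded as an object of $\CR_2\mmod$ with compatible endofunctor, specializes to
\[
\Tr(\Ind_{\Psi})^R(\on{Sht}_{\CM,F_\CM,\on{univ}})\simeq \on{Sht}_{\Res_\psi(\CM),F_\CM,\on{univ}},
\]
after unwinding the definition of the universal shtuka from \secref{sss:introduce shtuka}.

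It remains to identify the functor $\Tr(\Ind_{\Psi})$, which a priori is a functor from $\on{HH}_\bullet(\CA^{\otimes Y'},\CA^{\otimes \phi'})$ to $\on{HH}_\bullet(\CA^{\otimes Y},\CA^{\otimes \phi})$, with the functor $\CA^{\otimes \psi}: \CA^{\otimes Y'/\phi'} \to \CA^{\otimes Y/\phi}$ induced by the descent of $\psi$ to the quotients, under the equivalence of \propref{p:HC twist}. This comes down to verifying that that equivalence is functorial in the pair $(Y,\phi)$. I would establish this by re-examining the proof of \propref{p:HC twist}: both the pushout description of $\on{HH}_\bullet(\CA^{\otimes Y}, \CA^{\otimes \phi})$ and the pushout defining $Y/\phi$ are colimits to which the colimit-preserving symmetric monoidal functor $\CA^{\otimes -}$ is applied, and the equivalence arises from comparing these two descriptions. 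Functoriality in $(Y,\phi)$ is then automatic from the functoriality of the pushout diagrams.

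The main obstacle is precisely this naturality verification. Conceptually it is transparent, but some care is needed to match the $\tDGCat$-theoretic definition of $\Tr$ applied to the 1-morphism $\Ind_{\Psi}$ (which is the left adjoint of $\Res_{\Psi}$ computed internally to $\tDGCat$) with the pushforward description coming from covariance of $\CA^{\otimes -}$ in the quotient $Y/\phi$. Once the naturality is in place, the right adjoint $\Tr(\Ind_\Psi)^R$ is identified with the right adjoint of $\CA^{\otimes \psi}$, and the corollary follows.
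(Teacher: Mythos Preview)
Your proposal is correct and follows essentially the same approach as the paper: the paper simply states that the corollary follows from \eqref{e:trace module gen R}, and you have supplied the details of how to specialize that general statement to the present setting, including the identification of $\Tr(\Ind_\Psi)$ with $\CA^{\otimes \psi}$ via the naturality of \propref{p:HC twist}, which the paper leaves implicit.
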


\begin{rem} \label{r:dual of cor}
Note that the equivalences
$$\CA^{\otimes Y/\phi}\simeq (\CA^{\otimes Y/\phi})^{\vee}\text{ and }\CA^{\otimes Y'/\phi'}\simeq (\CA^{\otimes Y'/\phi'})^{\vee}$$
from \secref{sss:objects in integral} identify the right adjoint
$(\CA^{\otimes \psi})^R$ of $\CA^{\otimes \psi}$ with the dual $(\CA^{\otimes \psi})^\vee$
(see \cite[Section 1, Lemma 9.2.6]{GR1}).

\medskip

Therefore it follows from Corollary \ref{c:rel shtuka} that the functor
$$\CS_{Y'/\phi'}:\CA^{\otimes Y'/\phi'}\to \Vect,$$
corresponding to $\on{Sht}_{\Res_\psi(\CM),F_\CM,\on{univ}}\in \CA^{\otimes Y'/\phi'}$, is obtained from the functor
$$\CS_{Y/\phi}:\CA^{\otimes Y/\phi}\to \Vect,$$
corresponding to $\on{Sht}_{\CM,F_\CM,\on{univ}}\in \CA^{\otimes Y/\phi}$, by precomposition with $\CA^{\otimes\psi}$.
\end{rem}

\begin{rem}
Note that in the example of \secref{sss:Sht LocSys}, the map $\psi$ induces a map
$$\LocSys_\sG(\psi):\LocSys_\sG(Y/\phi)\to \LocSys_\sG(Y'/\phi'),$$
and the functors $\CA^{\otimes \psi}$ and $(\CA^{\otimes \psi})^R$ identify with $\LocSys_\sG(\psi)^*$
and $\LocSys_\sG(\psi)_*$, respectively.
\end{rem}

\ssec{Explicit description of $I$-legged shtukas}   \label{ss:I-legged}

We retain the setting of \secref{sss:introduce shtuka}.

\sssec{}

Fix a finite set $I$. The evaluation map
$$Y^I\times I\to Y$$
defines a map from $Y^I$ to the space of symmetric monoidal functors $\CA^{\otimes I}\to \CA^{\otimes Y}$,
and hence to the space of actions of $\CA^{\otimes I}$ on $\CM$.

\medskip

Define the functor
$$\on{Sht}'_{\CM,F_\CM,Y,I}: Y^I\times \CA^{\otimes I}\to \Vect$$
as follows:

\medskip

It sends
$$\ul{y}\in Y^I, r\in  \CA^{\otimes I} \mapsto \Tr(H_{r_{\ul{y}}}\circ F_\CM,\CM),$$
where $r_{\ul{y}} $ denotes the image of $r$ along the functor
$$\CA^{\otimes I}\overset{\ul{y}}\longrightarrow \CA^{\otimes Y}.$$
See \secref{sss:geometric shtukas} for an explanation of how this relates to the usual notion of shtukas.

\begin{rem}
We can tautologically rewrite the functor
\begin{equation} \label{e:Hecke differently}
Y^I\times \CA^{\otimes I}\to \End(\CM), \quad (\ul{y},r)\mapsto H_{r_{\ul{y}}},
\end{equation}
which appears in the definition of $\on{Sht}'_{\CM,F_\CM,Y,I}$, as follows:

\medskip

Recall that according to \secref{sss:monoidal actions}, a datum of action of $\CA^{\otimes Y}$ on $\CM$
gives rise to a map
$$\CA^{\otimes I}\to \End(\CM)\otimes \LS(Y^I),$$
or, equivalently by \propref{p:LocSys as right adj}, to a map
\begin{equation} \label{e:Hecke differently bis}
Y^I \to \Maps(\CA^{\otimes I},\End(\CM)).
\end{equation}

The map \eqref{e:Hecke differently} is the one corresponding to \eqref{e:Hecke differently bis}.

\end{rem}

\sssec{}

We claim:

\medskip

\begin{prop}  \label{p:identify I-legged}
The functor
$$\CA^{\otimes I}\to \on{LS}(Y^I),$$ corresponding to $\on{Sht}'_{\CM,F_\CM,Y,I}$, identifies canonically with
$\on{Sht}_{\CM,F_\CM,Y,I}$.
\end{prop}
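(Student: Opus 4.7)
The plan is to deduce \propref{p:identify I-legged} as a consequence of \thmref{t:two notions of trace obs} applied pointwise over $Y^I$, followed by a coherence step that promotes the pointwise isomorphisms to an equality of functors valued in $\LS(Y^I)$. By \propref{p:rigidity}, the category $\CR:=\CA^{\otimes Y}$ is rigid, so the hypotheses of \thmref{t:two notions of trace obs} are met with $F_\CR:=\CA^{\otimes \phi}$.

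First, I would unpack what $\on{Sht}_{\CM,F_\CM,Y,I}$ assigns to a pair $(\ul y, r)\in Y^I\times \CA^{\otimes I}$. By the description in \secref{sss:objects in integral} applied to the object $\on{Sht}_{\CM,F_\CM,\on{univ}}\in \CA^{\otimes Y/\phi}$, the functor $\on{Sht}_{\CM,F_\CM,Y/\phi,I}$ sends $r$ to the object of $\LS((Y/\phi)^I)$ whose fiber at $\bar{\ul y}$ is $\CHom_{\CA^{\otimes Y/\phi}}(\one,\, r_{\bar{\ul y}}\otimes \on{Sht}_{\CM,F_\CM,\on{univ}})$, where $r_{\bar{\ul y}}$ is the image of $r$ under $\CA^{\otimes I}\xrightarrow{\bar{\ul y}} \CA^{\otimes Y/\phi}$. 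Pulling back along $Y^I\to (Y/\phi)^I$ and using that $\bar{\ul y}$ factors as $I\xrightarrow{\ul y} Y\to Y/\phi$, so that $r_{\bar{\ul y}}=\iota(r_{\ul y})$ with $\iota:\CA^{\otimes Y}\to \CA^{\otimes Y/\phi}$ the canonical symmetric monoidal functor of \secref{sss:Y/phi}, this yields
\[
\on{Sht}_{\CM,F_\CM,Y,I}(\ul y, r)\ \simeq\ \CHom_{\CA^{\otimes Y/\phi}}\!\left(\one,\ \iota(r_{\ul y})\otimes \on{Sht}_{\CM,F_\CM,\on{univ}}\right).
\]

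Second, I would invoke \thmref{t:two notions of trace obs} with observable $r_{\ul y}\in \CR$. Using the identification $\on{HH}_\bullet(\CA^{\otimes Y},\CA^{\otimes \phi})\simeq \CA^{\otimes Y/\phi}$ from \propref{p:HC twist} and the definition $\Tr^{\on{enh}}_{\CA^{\otimes Y}}(F_\CM,\CM)=\on{Sht}_{\CM,F_\CM,\on{univ}}$, the theorem provides a canonical isomorphism
\[
\CHom_{\CA^{\otimes Y/\phi}}\!\left(\one,\ \iota(r_{\ul y})\otimes \on{Sht}_{\CM,F_\CM,\on{univ}}\right)\ \simeq\ \Tr(H_{r_{\ul y}}\circ F_\CM,\CM),
\]
the right-hand side being $\on{Sht}'_{\CM,F_\CM,Y,I}(\ul y, r)$ by definition. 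Combined with the first step, this gives the pointwise identification on $Y^I\times \CA^{\otimes I}$.

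The remaining, and main, step is to promote this family of isomorphisms to an equality of functors $\CA^{\otimes I}\to \LS(Y^I)$. Naturality in $r\in \CA^{\otimes I}$ is already part of \thmref{t:two notions of trace obs}; the subtler point, which I expect to be the main obstacle, is naturality in $\ul y\in Y^I$. I would address this by carrying out the entire argument relative to $\LS(Y^I)$: the $\CA^{\otimes Y}$-action on $\CM$ is encoded, via \secref{sss:monoidal actions}, by an $\LS(Y^I)$-linear monoidal functor $\CA^{\otimes I}\to \End(\CM)\otimes \LS(Y^I)$, which exhibits the $Y^I$-family of Hecke endofunctors $\ul y\mapsto H_{r_{\ul y}}$ in a coherent way. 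One then applies the symmetric monoidality of $\Tr^{\on{enh}}_\CR$ (cf.\ \eqref{e:Tr enh as a functor 2} in the proof of \thmref{t:obs internal}), together with \thmref{t:two notions of trace} to translate from $\Tr^{\on{enh}}$ back to $\Tr$. The resulting coherent family of isomorphisms is precisely the desired identification $\on{Sht}_{\CM,F_\CM,Y,I}\simeq \on{Sht}'_{\CM,F_\CM,Y,I}$.
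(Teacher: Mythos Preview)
Your proposal is correct and follows essentially the same route as the paper: unpack $\on{Sht}_{\CM,F_\CM,Y,I}$ via \secref{sss:objects in integral}, identify $r_{\bar{\ul y}}\simeq \iota(r_{\ul y})$, and apply \thmref{t:two notions of trace obs}. The only difference is emphasis: the paper treats the functoriality in $\ul y\in Y^I$ as immediate from the clause ``functorial in $r\in\CR$'' in \thmref{t:two notions of trace obs} (since $\ul y\mapsto r_{\ul y}$ is itself a functor $Y^I\to\CA^{\otimes Y}$), whereas you single this out as a separate coherence step and propose to rerun the argument $\LS(Y^I)$-linearly; this is not wrong, but it is more than is needed.
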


\begin{proof}

Taking into account \secref{sss:objects in integral}, we have to establish the isomorphism
$$\CHom_{\CA^{\otimes Y/\phi}}(\one_{\CA^{\otimes Y/\phi}}, r_{\ul{\bar{y}}}\otimes \on{Sht}_{\CM,F_\CM,\on{univ}})
\simeq \Tr(H_{r_{\ul{y}}}\circ F_\CM,\CM),$$
functorial in $\ul{y}\in Y^I$,
where $\ul{\bar{y}}$ is the image of $\ul{y}$ under the projection
$$Y\to Y/\phi.$$

Now the assertion follows from \thmref{t:two notions of trace obs} using the fact that
$$r_{\ul{\bar{y}}}\simeq \iota(r_{\ul{y}}).$$

\end{proof}

\ssec{Partial Frobeniuses}

\sssec{}

Since the object
$$\on{Sht}_{\CM,F_\CM,Y,I}\in \on{Funct}\left(\CA^{\otimes I}, \LS(Y^I)\right)$$ is the image of the object
$$\on{Sht}_{\CM,F_\CM,Y/\phi,I}\in  \on{Funct}\left(\CA^{\otimes I}, \LS((Y/\phi)^I)\right)$$
under the pullback functor
$$\LS((Y/\phi)^I)\to \LS(Y^I),$$
the former should carry the structure of equivariance with respect to the endomorphisms
that act as $\phi$ along \emph{each} of the $Y$ factors in $Y^I$.

\medskip

Fix an element $i^1\in I$ and write $I=\{i^1\}\sqcup I'$. We will now write down explicitly the structure
of equivariance corresponding to this endomorphism of $Y^I$, to be denoted $\phi_{i^1}$.

\medskip

Using the identification of \propref{p:identify I-legged}, this amounts to describing the isomorphism
\begin{equation} \label{e:partial Frob one}
\on{Sht}'_{\CM,F_\CM,Y,I}\circ (\phi_{i^1}\times \on{Id}_{\CA^{\otimes I}})\simeq \on{Sht}'_{\CM,F_\CM,Y,I}
\end{equation}
as functors
$$Y^I\times \CA^{\otimes I}\to \Vect.$$

\sssec{}

By definition, the datum of \eqref{e:partial Frob one} amounts to a system of isomorphisms
\begin{equation} \label{e:partial Frob two}
\Tr(H_{r_{\phi_{i^1}(\ul{y})}}\circ F_\CM,\CM)\simeq \Tr(H_{r_{\ul{y}}}\circ F_\CM,\CM), \quad r\in \CA^{\otimes I}.
\end{equation}

Write $\ul{y}$ as
$$\ul{y}=y^1\sqcup \ul{y'}, \quad y^1\in Y,\,\, \ul{y'}\in Y^{I'}.$$

It is enough to establish \eqref{e:partial Frob two} for $r$ of the form
$$r^1\otimes r', \quad r^1\in \CA,\,\, r'\in \CA^{\otimes I'}.$$

\begin{prop} \label{p:partial Frob}
The map \eqref{e:partial Frob two} is given by the
composition
\begin{equation} \label{e:partial Frob shtuka}
\CD
\Tr(H_{r^1_{y^1}}\circ H_{r'_{\ul{y'}}}\circ F_\CM,\CM)     @<{\sim}<<   \Tr(H_{r_{\ul{y}}}\circ F_\CM,\CM)  \\
@V{\sim}V{\text{\emph{cyclicity of trace}}}V \\
\Tr(H_{r'_{\ul{y'}}}\circ F_\CM\circ H_{r^1_{y^1}},\CM) @>{\sim}>> \Tr(H_{r'_{\ul{y'}}}\circ H_{r^1_{\phi(y^1)}}\circ F_\CM,\CM)\\
& & @VV{\sim}V \\
\Tr(H_{r_{\phi_{i^1}(\ul{y})}}\circ F_\CM,\CM) @<{\sim}<<  \Tr(H_{r^1_{\phi(y^1)}}\circ H_{r'_{\ul{y'}}}\circ F_\CM,\CM).
\endCD
\end{equation}
\end{prop}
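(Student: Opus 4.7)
My plan is to reduce Proposition~\ref{p:partial Frob} to Lemma~\ref{l:cyclicity and observables} by unwinding what the $\phi_{i^1}$-equivariance structure looks like after translating back and forth between the trace picture and the $\CHom$ picture in $\CA^{\otimes Y/\phi}$. First, by \propref{p:identify I-legged} combined with \thmref{t:two notions of trace obs}, the functor $\on{Sht}_{\CM,F_\CM,Y,I}$ is computed as
$$
\on{Sht}_{\CM,F_\CM,Y,I}(\ul{y},r) \;\simeq\; \Tr(H_{r_{\ul y}}\circ F_\CM,\CM) \;\simeq\; \CHom_{\CA^{\otimes Y/\phi}}\bigl(\one_{\CA^{\otimes Y/\phi}},\; \iota(r_{\ul y})\otimes \on{Sht}_{\CM,F_\CM,\on{univ}}\bigr),
$$
where $\iota:\CA^{\otimes Y}\to \CA^{\otimes Y/\phi}$ is the symmetric monoidal functor induced by the projection $p:Y\to Y/\phi$. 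The isomorphism \eqref{e:partial Frob two} is the one induced by the tautological identification of both sides of this composite, evaluated at $\ul{y}$ and at $\phi_{i^1}(\ul{y})$, with the same value at the common image $\ul{\bar{y}}\in (Y/\phi)^I$.

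Second, I would identify the $\phi_{i^1}$-equivariance isomorphism on the $\CHom$ side explicitly. Since $p\circ \phi = p$ as maps $Y\to Y/\phi$, the tautological descent identification \eqref{e:iota twisting} gives $\iota(a)\simeq \iota(\CA^{\otimes \phi}(a))$ for any $a\in \CA^{\otimes Y}$. For $r=r^1\otimes r'$ and $\ul{y}=y^1\sqcup \ul{y'}$, monoidality of $\iota$ gives
$$
\iota(r_{\ul y}) \;=\; \iota(r^1_{y^1})\otimes \iota(r'_{\ul{y'}}).
$$
Because $\phi_{i^1}$ acts as $\phi$ only on the $i^1$-th factor, the equivariance isomorphism comes from applying \eqref{e:iota twisting} to the first tensor factor only, tensored with the identity on $\iota(r'_{\ul{y'}})$:
$$
\iota(r^1_{y^1})\otimes \iota(r'_{\ul{y'}}) \;\simeq\; \iota\bigl(\CA^{\otimes \phi}(r^1_{y^1})\bigr)\otimes \iota(r'_{\ul{y'}}) \;=\; \iota(r^1_{\phi(y^1)})\otimes \iota(r'_{\ul{y'}}) \;=\; \iota(r_{\phi_{i^1}(\ul y)}),
$$
where I used naturality of $\CA^{\otimes (-)}$ applied to $\{\ast\}\xrightarrow{y^1}Y\xrightarrow{\phi}Y$ to identify $\CA^{\otimes \phi}(r^1_{y^1})$ with $r^1_{\phi(y^1)}$. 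Thus \eqref{e:partial Frob two} is, under the identification of \thmref{t:two notions of trace obs}, \emph{exactly} the isomorphism \eqref{e:cyclicity two} of Lemma~\ref{l:cyclicity and observables}, applied to $\CR=\CA^{\otimes Y}$, $F_\CR=\CA^{\otimes \phi}$, $r_1=r^1_{y^1}$, $r_2=r'_{\ul{y'}}$.

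The conclusion is now immediate: Lemma~\ref{l:cyclicity and observables} asserts that the isomorphism \eqref{e:cyclicity two}, under the identification of \thmref{t:two notions of trace obs}, matches the trace-side isomorphism \eqref{e:cyclicity one}, which is the snake composition \eqref{e:cycle snake}. Substituting $r_1=r^1_{y^1}$ and $r_2=r'_{\ul{y'}}$ into \eqref{e:cycle snake} and bookending by the tautological identifications $r_{\ul y}=r^1_{y^1}\otimes r'_{\ul{y'}}$ and $r_{\phi_{i^1}(\ul y)}=r^1_{\phi(y^1)}\otimes r'_{\ul{y'}}$ yields exactly diagram \eqref{e:partial Frob shtuka}.

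\textbf{Main obstacle.} The substantive step is the second paragraph: one must verify rigorously that the geometric descent datum along $p^I:Y^I\to (Y/\phi)^I$ restricts, at the $i^1$-th slot only, to the expected ``partial'' application of \eqref{e:iota twisting}. Concretely, this requires tracing through the construction of $\on{Sht}_{\CM,F_\CM,Y/\phi,I}$ from $\on{Sht}_{\CM,F_\CM,\on{univ}}$ via \secref{sss:functors out of integral}, the symmetric monoidal factorization of the evaluation map $\CA^{\otimes I}\xrightarrow{\ul y}\CA^{\otimes Y}\xrightarrow{\iota}\CA^{\otimes Y/\phi}$ through the tensor-factorized decomposition of $r_{\ul y}$, and the compatibility of the relation $p\circ\phi=p$ (applied only in the $i^1$-slot of $Y^I$) with the symmetric monoidal structure of the assignment $Y\mapsto \CA^{\otimes Y}$. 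Once this compatibility is in place, the proposition follows directly from Lemma~\ref{l:cyclicity and observables} with no further computation.
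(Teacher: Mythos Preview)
Your proposal is correct and follows essentially the same approach as the paper: the paper's proof is the single sentence ``The proof follows from \lemref{l:cyclicity and observables}.'' Your write-up simply unpacks what this citation entails---identifying the $\phi_{i^1}$-equivariance on the $\CHom$ side with \eqref{e:cyclicity two} for $r_1=r^1_{y^1}$, $r_2=r'_{\ul{y'}}$, and then invoking \lemref{l:cyclicity and observables} to match it with the cyclicity snake \eqref{e:cycle snake} on the trace side---so there is no substantive difference in strategy.
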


The proof follows from \lemref{l:cyclicity and observables}.

\ssec{Description of the action via excursions}

\sssec{}

Choose a pair of points $\bar{y}_1,\bar{y}_2$ in $Y/\phi$.
Fix finite set $J$ and a $J$-tuple $\gamma^J$ of paths $\gamma^i$ from $\bar{y}_1$ to $\bar{y}_2$.
Choose an element
$$\xi\in \CHom(\one_\CA,\on{mult}_J\circ \on{mult}_J^R(\one_\CA)).$$

Let $\on{Exc}_{\on{univ}}(\gamma^J,\xi)$ denote the resulting endomorphism of $\one_{\CA^{\otimes Y/\phi}}$, see \secref{sss:univ exc}.

\sssec{}

The next assertion follows from the definition of the excursion operators in \secref{sss:excurs}:

\begin{prop} \label{p:excursion shtukas}
The action of $\on{Exc}_{\on{univ}}(\gamma^J,\xi)$ on
$$\Tr(F_\CM,\CM)\simeq
\CHom_{\CA^{\otimes Y/\phi}}(\one_{\CA^{\otimes Y/\phi}},\on{Sht}_{\CM,F_\CM,\on{univ}})\simeq \on{Sht}_{\CM,F_\CM,Y/\phi,\emptyset}$$
is given by the excursion

\smallskip

\begin{equation} \label{e:exc shtuka}
\CD
\on{Sht}_{\CM,F_\CM,Y/\phi,\emptyset} \\
@V{\sim}VV \\
\on{ev}_{\bar{y}_1}\left(\on{Sht}_{\CM,F_\CM,Y/\phi,\{*\}}(\one_\CA)\right) @>{\xi}>>
\on{ev}_{\bar{y}_1}\left(\on{Sht}_{\CM,F_\CM,Y/\phi,\{*\}}(\on{mult}_J\circ \on{mult}_J^R(\one_\CA))\right)  \\
& & @VV{\sim}V   \\
& & \on{ev}_{\bar{y}_1^J}\left(\on{Sht}_{\CM,F_\CM,Y/\phi,J}(\on{mult}_J^R(\one_\CA))\right)  \\
& & @VV{\on{mon}_{\gamma^J}}V  \\
& & \on{ev}_{\bar{y}_2^J}\left(\on{Sht}_{\CM,F_\CM,Y/\phi,J}(\on{mult}_J^R(\one_\CA))\right)  \\
& & @VV{\sim}V   \\
\on{ev}_{\bar{y}_2}\left(\on{Sht}_{\CM,F_\CM,Y/\phi,\{*\}}(\one_\CA)\right) @<{\on{counit}}<<
\on{ev}_{\bar{y}_2}\left(\on{Sht}_{\CM,F_\CM,Y/\phi,\{*\}}(\on{mult}_J\circ \on{mult}_J^R(\one_\CA))\right)  \\
@V{\sim}VV \\
\on{Sht}_{\CM,F_\CM,Y/\phi,\emptyset}.
\endCD
\end{equation}
\end{prop}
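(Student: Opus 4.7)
The proof will be a direct application of \thmref{t:excurs} once we identify the appropriate functor out of $\CA^{\otimes Y/\phi}$. First, I would invoke the rigidity of $\CA^{\otimes Y/\phi}$ (\propref{p:rigidity}), which gives the identification of \secref{sss:objects in integral}: the object $\on{Sht}_{\CM,F_\CM,\on{univ}} \in \CA^{\otimes Y/\phi}$ corresponds under $\CA^{\otimes Y/\phi} \simeq (\CA^{\otimes Y/\phi})^\vee$ to a functor
$$\CS_{Y/\phi} : \CA^{\otimes Y/\phi} \to \Vect, \qquad r \mapsto \CHom_{\CA^{\otimes Y/\phi}}(\one_{\CA^{\otimes Y/\phi}}, r \otimes \on{Sht}_{\CM,F_\CM,\on{univ}}).$$
By construction (\secref{sss:objects in integral}), the components $\CS_I : \CA^{\otimes I} \to \on{LS}((Y/\phi)^I)$ of this functor in the sense of \propref{p:describe functors} are precisely the functors $\on{Sht}_{\CM,F_\CM,Y/\phi,I}$. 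Moreover, \secref{sss:objects in integral vac} gives the identification $\CS_{Y/\phi}(\one_{\CA^{\otimes Y/\phi}}) \simeq \on{Sht}_{\CM,F_\CM,Y/\phi,\emptyset}$.

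Next, I would verify that under this identification, the action of the algebra $\CEnd_{\CA^{\otimes Y/\phi}}(\one_{\CA^{\otimes Y/\phi}})$ on $\CS_{Y/\phi}(\one_{\CA^{\otimes Y/\phi}})$ induced by applying $\CS_{Y/\phi}$ to endomorphisms of $\one_{\CA^{\otimes Y/\phi}}$ coincides with the action coming from the isomorphism with $\CHom_{\CA^{\otimes Y/\phi}}(\one_{\CA^{\otimes Y/\phi}}, \on{Sht}_{\CM,F_\CM,\on{univ}})$. This is essentially tautological: for $\eta \in \CEnd(\one_{\CA^{\otimes Y/\phi}})$, the map $\CS_{Y/\phi}(\eta)$ sends $f \in \CHom(\one_{\CA^{\otimes Y/\phi}}, \one_{\CA^{\otimes Y/\phi}} \otimes \on{Sht}_{\CM,F_\CM,\on{univ}})$ to $(\eta \otimes \on{id}) \circ f$, which under the unit isomorphism $\one \otimes \on{Sht}_{\CM,F_\CM,\on{univ}} \simeq \on{Sht}_{\CM,F_\CM,\on{univ}}$ is precisely the action of $\eta$ via the $\CEnd_{\CA^{\otimes Y/\phi}}(\one)$-module structure on $\CHom(\one, \on{Sht}_{\CM,F_\CM,\on{univ}})$.

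Having established these identifications, I would simply invoke \thmref{t:excurs}, applied to the functor $\CS_{Y/\phi} : \CA^{\otimes Y/\phi} \to \Vect$ (with $\CC = \Vect$), together with the data $(\bar{y}_1, \bar{y}_2, \gamma^J, \xi)$. The theorem states that the action of $\on{Exc}_{\on{univ}}(\gamma^J, \xi)$ on $\CS_{Y/\phi}(\one_{\CA^{\otimes Y/\phi}})$ is given by the excursion operator $\on{Exc}_{\CS_{Y/\phi}}(\gamma^J, \xi)$ defined in \secref{sss:excurs}, which is exactly the composition \eqref{e:exc shtuka} once we substitute $\CS_{\{*\}} = \on{Sht}_{\CM,F_\CM,Y/\phi,\{*\}}$ and $\CS_J = \on{Sht}_{\CM,F_\CM,Y/\phi,J}$.

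There is no substantive obstacle here: the real content is packaged into \thmref{t:excurs} (whose proof used the base-change map \eqref{e:base change} for the pushout square \eqref{e:bouquet diag}, without needing any rigidity hypothesis on $\CA$). The only care required is to correctly match notations between the components $\CS_I$ and $\on{Sht}_{\CM,F_\CM,Y/\phi,I}$, and to confirm the compatibility of the two a priori different actions of $\CEnd_{\CA^{\otimes Y/\phi}}(\one)$ on the common target, as outlined above.
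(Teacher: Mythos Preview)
Your proposal is correct and takes essentially the same approach as the paper, which states only that the assertion ``follows from the definition of the excursion operators in \secref{sss:excurs}.'' You have simply unpacked this by identifying the functor $\CS_{Y/\phi}$ corresponding to $\on{Sht}_{\CM,F_\CM,\on{univ}}$ and invoking \thmref{t:excurs}, which is exactly what the paper intends.
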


%
%
%
%

\ssec{The ``S=T" identity, vacuum case}  \label{ss:S=T}

We are now coming to what is perhaps in the most interesting part in the entire story.

\sssec{} \label{sss:y_0}

Let $y_0\in Y$ be a $\phi$-fixed point. Fix a compact object $a\in \CA$. Note that the object
$$a_{y_0}\in \CA^{\otimes Y}$$
is equipped with a natural isomorphism
$$\alpha_{\on{taut}}:a_{y_0}\overset{\sim}\to \phi_*(a_{y_0}).$$

Consider the corresponding natural transformation (in fact, an isomorphism)
\begin{equation} \label{e:alpha taut}
\alpha_{a_{y_0},\CM,F_\CM,\on{taut}}:H_{a_{y_0}}\circ F_\CM\overset{\sim}\to F_\CM\circ H_{a_{y_0}},
\end{equation}
see \eqref{e:Hecke a}.

\medskip

Hence we can consider the endomorphism
$$\Tr(H_{a_{y_0}},\alpha_{a_{y_0},\CM,F_\CM,\on{taut}}): \Tr(F_\CM,\CM)\to \Tr(F_\CM,\CM).$$

\begin{rem}
We should think of $$\Tr(H_{a_{y_0}},\alpha_{a_{y_0},\CM,F_\CM,\on{taut}})$$ as an analogue of the Hecke operator acting on the space of automorphic functions,
corresponding to a finite-dimensional representation of $\cG$ (thought of as $a\in \Rep(\cG)$) applied at a rational point of the curve
(thought of as $y_0$). So, this is V.~Lafforgue's ``T" operator.
\end{rem}

\sssec{}

Let $\bar{y}_0$ be the projection of $y_0$ to $Y/\phi$. The fact that $y_0$ was $\phi$-invariant defines a tautological
point
$$\gamma_{\on{taut}}\in \Omega(Y/\phi,\bar{y}_0).$$

Let $\xi_a$ be the map
$$\one_\CA \to \on{mult}\circ \on{mult}^R(\one_\CA)$$
defined by $a$, see \eqref{e:xi r}.

\sssec{}

We claim:

\begin{thm} \label{t:S=T}
The endomorphism $\Tr(H_{a_{y_0}},\alpha_{a_{y_0},\CM,F_\CM,\on{taut}})$ of
$$\Tr(F_\CM,\CM)\simeq \on{Sht}_{\CM,F_\CM,Y/\phi,\emptyset}$$
equals the operator \eqref{e:exc shtuka} for $(\bar{y}_0,J=\{*\}\sqcup \{*\},\gamma^+_{\on{taut}},\xi_a)$, where
$$\gamma^+_{\on{taut}}=(\gamma_{\on{taut}},\gamma_{\on{triv}}).$$
\end{thm}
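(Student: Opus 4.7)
The plan is to reduce \thmref{t:S=T} to the identification of two specific elements of $\CEnd_{\CA^{\otimes Y/\phi}}(\one_{\CA^{\otimes Y/\phi}})$, and then to derive this identification by functoriality from the universal case $Y=\{*\}$ with $\phi=\on{id}$, where it is the content of \corref{c:chern as excurs}.

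First I would translate both sides into actions on $\Tr(F_\CM,\CM)$ of elements of $\CEnd_{\CA^{\otimes Y/\phi}}(\one_{\CA^{\otimes Y/\phi}})$. Applying \propref{p:S=T, one} to $\CR := \CA^{\otimes Y}$, $F_\CR := \CA^{\otimes \phi}$, and the pair $(r,\alpha):=(a_{y_0},\alpha_{\on{taut}})$, the ``$T$''-operator $\Tr(H_{a_{y_0}},\alpha_{a_{y_0},\CM,F_\CM,\on{taut}})$ becomes precisely the action of
$$\on{cl}(a_{y_0},\alpha_{\on{taut}})\in \Tr(\CA^{\otimes \phi},\CA^{\otimes Y})\simeq \CEnd_{\CA^{\otimes Y/\phi}}(\one_{\CA^{\otimes Y/\phi}})$$
(using \thmref{t:two notions of trace} and \propref{p:HC twist}). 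On the other hand, \propref{p:excursion shtukas} identifies the ``$S$''-operator -- the right-hand side of \thmref{t:S=T}, i.e.\ diagram \eqref{e:exc shtuka} -- with the action of $\on{Exc}_{\on{univ}}(\gamma^+_{\on{taut}},\xi_a) \in \CEnd_{\CA^{\otimes Y/\phi}}(\one_{\CA^{\otimes Y/\phi}})$. Thus it suffices to prove the equality
$$\on{cl}(a_{y_0},\alpha_{\on{taut}}) \;=\; \on{Exc}_{\on{univ}}(\gamma^+_{\on{taut}},\xi_a) \qquad \text{in } \CEnd_{\CA^{\otimes Y/\phi}}(\one_{\CA^{\otimes Y/\phi}}).$$

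Next I would invoke naturality with respect to the map of pairs determined by the $\phi$-fixed point $y_0$, namely the symmetric monoidal Frobenius-equivariant functor $\Phi:(\CA,\on{id}_\CA)\to (\CA^{\otimes Y},\CA^{\otimes\phi})$, $a\mapsto a_{y_0}$. Passing to Hochschild-chains categories and invoking \propref{p:HC twist}, $\Phi$ induces a symmetric monoidal functor $\bar\Phi:\CA^{\otimes S^1}\to \CA^{\otimes Y/\phi}$; equivalently, using the functoriality of $\CA^{\otimes -}:\Spc\to \DGCat^{\on{SymMon}}$, it is the functor induced by the loop $S^1\simeq \{*\}/\on{id}\to Y/\phi$ which, by construction of $Y/\phi$, is exactly $\gamma_{\on{taut}}$. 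By naturality of the class construction of \secref{sss:2-categ class} under symmetric monoidal Frobenius-equivariant functors (itself a consequence of the functoriality of the 2-categorical trace from \secref{ss:L tDGCat}), the induced algebra map $\bar\Phi:\CEnd_{\CA^{\otimes S^1}}(\one)\to \CEnd_{\CA^{\otimes Y/\phi}}(\one)$ sends $\on{cl}(a,\on{id}_a)\mapsto \on{cl}(a_{y_0},\alpha_{\on{taut}})$, while naturality of the universal excursion operator in its pointed-space argument (visible from \eqref{e:exc univ} and \thmref{t:excurs}) sends $\on{Exc}_{\on{univ}}(\gamma^+_{\on{taut},S^1},\xi_a)\mapsto \on{Exc}_{\on{univ}}(\gamma^+_{\on{taut}},\xi_a)$.

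Finally, \corref{c:chern as excurs} applied to $\CR=\CA$ and $r=a$ delivers the equality in the universal case,
$$\on{cl}(a,\on{id}_a) \;=\; \on{Exc}_{\on{univ}}(\gamma^+_{\on{taut},S^1},\xi_a) \qquad \text{in } \CEnd_{\CA^{\otimes S^1}}(\one_{\CA^{\otimes S^1}}),$$
and applying $\bar\Phi$ yields the required equality in $\CEnd_{\CA^{\otimes Y/\phi}}(\one_{\CA^{\otimes Y/\phi}})$, completing the proof. The main obstacle is essentially bookkeeping: one must check that the functor $\bar\Phi$ produced by Hochschild chains of $\Phi$ really agrees with the one induced by the loop $\gamma_{\on{taut}}$, and that the two naturality statements (for $\on{cl}$ and for $\on{Exc}_{\on{univ}}$) are compatible with the identifications of \thmref{t:two notions of trace} and \propref{p:HC twist}. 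All of these are formal consequences of symmetric-monoidal functoriality, but tracking them rigorously through the several layers of identifications is the substantive content.
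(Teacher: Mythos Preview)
Your proposal is correct and follows essentially the same strategy as the paper: reduce to the universal case $(Y,\phi)=(\{*\},\on{id})$, where $\{*\}/\on{id}\simeq S^1$, and invoke \corref{c:chern as excurs}; the key inputs \propref{p:S=T, one} and \propref{p:excursion shtukas} appear in both arguments in the same roles. The only organizational difference is that the paper performs the reduction to $Y=\{*\}$ \emph{first}, at the level of the endomorphisms of $\Tr(F_\CM,\CM)$ (replacing $\CM$ by its restriction along $y_0:\{*\}\to Y$ and invoking \corref{c:rel shtuka}/Remark~\ref{r:dual of cor} for the $S$-side), and only then identifies the two operators with $\on{cl}(a,\on{id})$ and $\on{Exc}_{\on{univ}}(\gamma^+_{\on{taut}},\xi_a)$; you instead identify the operators with elements of $\CEnd_{\CA^{\otimes Y/\phi}}(\one)$ first and then transport the identity from $S^1$ via the algebra map $\bar\Phi$ induced by $\gamma_{\on{taut}}$. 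Your route establishes the slightly stronger statement that the two elements themselves coincide in $\CEnd_{\CA^{\otimes Y/\phi}}(\one)$, at the cost of the bookkeeping you flag (compatibility of $\bar\Phi$ with the identifications of \thmref{t:two notions of trace}); the paper's route sidesteps that bookkeeping by working directly with the action on $\Tr(F_\CM,\CM)$.
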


\begin{rem} \label{r:partfrob}
We observe that by \propref{p:partial Frob}, the operator \eqref{e:exc shtuka} for $(\bar{y}_0,J=\{*\}\sqcup \{*\},\gamma^+_{\on{taut}},\xi_a)$,
appearing in \thmref{t:S=T}, is given explicitly by
\begin{equation} \label{e:exc shtuka expl}
\CD
\Tr(F_\CM,\CM) @>{\xi_a}>> \Tr(H_{(\on{mult}\circ \on{mult}^R(\one_\CA))_{y_0}}\circ F_\CM,\CM) \\
& & @VV{\sim}V   \\
& &  \Tr(H_{(\on{mult}^R(\one_\CA))_{y_0,y_0}}\circ F_\CM,\CM)   \\
& & @VV{\text{partial Frobenius, i.e., isomorphism \eqref{e:partial Frob shtuka}}}V  \\
& &  \Tr(H_{(\on{mult}^R(\one_\CA))_{y_0,y_0}}\circ F_\CM,\CM)  \\
& & @VV{\sim}V   \\
\Tr(F_\CM,\CM)  @<{\on{counit}}<< \Tr(H_{(\on{mult}\circ \on{mult}^R(\one_\CA))_{y_0}}\circ F_\CM,\CM).
\endCD
\end{equation}
\end{rem}

\begin{rem} \label{r:S=T}
Note that the operator \eqref{e:exc shtuka expl} is an analog of V.~Lafforgue's ``S" operator. For this reason, we view
\thmref{t:S=T} as the toy model for V.~Lafforgue's ``S=T" statement, i.e., \cite[Proposition 6.2]{Laf}.
\end{rem}

\begin{proof}[Proof of \thmref{t:S=T}]
First, we note that when considering the endomorphism $$\Tr(H_{a_{y_0}},\alpha_{a_{y_0},\CM,F_\CM,\on{taut}})$$ of $\Tr(F_\CM,\CM)$,
we can replace the original $Y$ by $\{*\}$ (with the necessarily trivial endomorphism $\phi$) via
$$\{*\} \overset{y_0}\longrightarrow Y.$$

\medskip

Next, we claim that when considering the operator \eqref{e:exc shtuka} for $$(\bar{y}_0,J=\{*\}\sqcup \{*\},\gamma^+_{\on{taut}},\xi_a),$$
we can also replace $Y$ by $\{*\}$. Indeed, this  immediately follows either from Remark \ref{r:dual of cor} or from
Remark \ref{r:partfrob}.



%

\medskip

Hence, in proving the theorem, we can assume that $Y=\{*\}$. Note that
$$\{*\}/\on{id}\simeq S^1.$$

\medskip

On the one hand, by \propref{p:S=T, one}, the endomorphism $\Tr(H_{a_{y_0}},\alpha_{a_{y_0},\CM,F_\CM,\on{taut}})$ of $\Tr(F_\CM,\CM)$
equals the action of the element
$$\on{cl}(a_{y_0}, \alpha_{a,\on{taut}})\in \Tr(\CA^{\otimes \phi},\CA^{\otimes Y}),$$
which in the case $Y=\{*\}$ amounts to the element
$$\on{cl}(a,\on{id})\in \Tr(\on{Id},\CA).$$

\medskip

On the other hand, by \propref{p:excursion shtukas}, the operator \eqref{e:exc shtuka} for $(\bar{y}_0,J=\{*\}\sqcup \{*\},\gamma^+_{\on{taut}},\xi_a)$
is given by the action of
$$\on{Exc}_{\on{univ}}(\gamma^+_{\on{taut}},\xi_a)\in \CEnd_{\CA^{\otimes Y/\phi}}(\one_{\CA^{\otimes Y/\phi}}),$$
which in our case is the element
$$\on{Exc}_{\on{univ}}(\gamma^+_{\on{taut}},\xi_a)\in \CEnd_{\CA^{\otimes S^1}}(\one_{\CA^{\otimes S^1}})$$
in the notations of \propref{p:chern as excurs}.

\medskip

Hence, we need to show to show that the above two elements match under the isomorphism
\begin{equation} \label{e:two notions of trace triv}
\Tr(\CA^{\otimes \phi},\CA^{\otimes Y})\simeq \CEnd_{\CA^{\otimes Y/\phi}}(\one_{\CA^{\otimes Y/\phi}})
\end{equation}
of \thmref{t:two notions of trace}, where we identify
$$\CA^{\otimes Y/\phi}\simeq \on{HH}_\bullet(\CA^{\otimes Y},\CA^{\otimes \phi}).$$

Thus, in our case, the identification \eqref{e:two notions of trace triv} amounts to
$$\Tr(\on{Id},\CA) \simeq \CEnd_{\CA^{\otimes S^1}}(\one_{\CA^{\otimes S^1}}).$$

\medskip

Now, the desired identity of elements follows from
\corref{c:chern as excurs}.

\end{proof}

\ssec{The ``S=T" identity, general case}   \label{ss:S=T gen}

As was explained in Remark \ref{r:S=T}, the assertion of \thmref{t:S=T} is an analog of V.~Lafforgue's $S=T$ identity as
operators acting on the space of automorphic functions, i.e., empty-legged shtukas. We will now discuss its generalization,
which extends the $S=T$ identity as operators on $I$-legged shtukas.

\sssec{}

Let $y_0$ and $a$ be as in \secref{sss:y_0} above. Fix a finite set $J$, a point $\ul{y}\in Y^J$ and an object $r\in \CA^{\otimes J}$.
Denote
$$J_+:=\{*\}\sqcup J,\,\,\ul{y}{}_+:=y_0\sqcup \ul{y},\,\, J_{++}:=\{*\}\sqcup \{*\}\sqcup J,\,\,\ul{y}{}_{++}:=y_0\sqcup y_0\sqcup \ul{y}.$$


\medskip

The natural transformation \eqref{e:alpha taut} induces a natural transformation
$$H_{a_{y_0}}\circ H_{r_{\ul{y}}}\circ F_\CM  \simeq H_{r_{\ul{y}}} \circ H_{a_{y_0}}\circ F_\CM  \to
H_{r_{\ul{y}}} \circ F_\CM \circ H_{a_{y_0}},$$
to be denoted $\alpha_{a,\CM,F_\CM,\on{taut},r_{\ul{y}}}$.

\medskip

Then we can consider the resulting endomorphism
\begin{equation} \label{e:T obs}
\Tr(H_{a_{y_0}},\alpha_{a,\CM,F_\CM,\on{taut},r_{\ul{y}}}):\Tr(H_{r_{\ul{y}}}\circ F_\CM,\CM)\to \Tr(H_{r_{\ul{y}}}\circ F_\CM,\CM).
\end{equation}

\begin{rem}
The map \eqref{e:T obs} is an analog of V.~Lafforgue's ``T" operator acting on the cohomology of shtukas.
\end{rem}

\sssec{}
Let $\bar{\ul{y}}$ be the projection of $\ul{y}$ to $(Y/\phi)^J$. By \propref{p:identify I-legged}, we have an identification
$$\Tr(H_{r_{\ul{y}}}\circ F_\CM,\CM)\simeq \on{ev}_{\ul{y}}\left(\on{Sht}'_{\CM,F_\CM,Y,J}(r)\right)\simeq
\on{ev}_{\bar{\ul{y}}}\left(\on{Sht}_{\CM,F_\CM,Y/\phi,J}(r)\right).$$

\sssec{}
Set $\bar{\ul{y}}_+:=\bar{y}_0\sqcup \bar{\ul{y}}$ and $\bar{\ul{y}}_{++}:=\bar{y}_0\sqcup \bar{y}_0\sqcup \bar{\ul{y}}$.
Then we can consider the endomorphism of $\on{ev}_{\bar{\ul{y}}}\left(\on{Sht}_{\CM,F_\CM,Y,J}(r)\right)$
equal to the composite
\begin{equation} \label{e:exc shtuka general}
\CD
\on{ev}_{\bar{\ul{y}}}\left(\on{Sht}_{\CM,F_\CM,Y/\phi,J}(r)\right) @>{\sim}>> \on{ev}_{\bar{\ul{y}}{}_+}\left(\on{Sht}_{\CM,F_\CM,Y/\phi,J_+}(r\otimes \one_\CA)\right)  \\
& & @V{\xi_a}VV  \\
& & \on{ev}_{\bar{\ul{y}}{}_+}\left(\on{Sht}_{\CM,F_\CM,Y/\phi,J_+}(r\otimes \on{mult}\circ \on{mult}^R(\one_\CA))\right)  \\
& & @VV{\sim}V   \\
& & \on{ev}_{\bar{\ul{y}}{}_{++}}\left(\on{Sht}_{\CM,F_\CM,Y/\phi,J_{++}}(r\otimes\on{mult}^R(\one_\CA))\right)  \\
& & @VV{\on{mon}_{\gamma_{\on{taut}},\gamma_{\on{triv}},\gamma^J_{\on{triv}}}}V  \\
& &  \on{ev}_{\bar{\ul{y}}{}_{++}}\left(\on{Sht}_{\CM,F_\CM,Y/\phi\,J_{++}}(r\otimes\on{mult}^R(\one_\CA))\right) \\
& & @VV{\sim}V   \\
& & \on{ev}_{\bar{\ul{y}}{}_+}\left(\on{Sht}_{\CM,F_\CM,Y/\phi,J_+}(r\otimes \on{mult}\circ \on{mult}^R(\one_\CA))\right)   \\
& & @V{\on{counit}}VV  \\
\on{ev}_{\bar{\ul{y}}}\left(\on{Sht}_{\CM,F_\CM,Y/\phi,J}(r)\right) @<{\sim}<<  \on{ev}_{\bar{\ul{y}}{}_+}\left(\on{Sht}_{\CM,F_\CM,Y/\phi,J_+}(r\otimes \one_\CA)\right)
\endCD
\end{equation}

\begin{rem}

Note that by Propositions \ref{p:identify I-legged} and \ref{p:partial Frob}, the operator \eqref{e:exc shtuka general} identifies with
\begin{equation} \label{e:exc shtuka general expl}
\CD
\Tr(H_{r_{\ul{y}}}\circ F_\CM,\CM)  @>{\xi_a}>> \Tr(H_{(\on{mult}\circ \on{mult}^R(\one_\CA))_{y_0}}\circ H_{r_{\ul{y}}}\circ F_\CM,\CM) \\
& & @VV{\sim}V \\
& & \Tr(H_{(\on{mult}^R(\one_\CA))_{y_0,y_0}}\circ H_{r_{\ul{y}}}\circ F_\CM,\CM)  \\
& & @VV{\text{partial Frobenius, i.e., isomorphism \eqref{e:partial Frob shtuka}}}V  \\
& & \Tr(H_{(\on{mult}^R(\one_\CA))_{y_0,y_0}}\circ H_{r_{\ul{y}}}\circ F_\CM,\CM)  \\
& & @VV{\sim}V \\
\Tr(H_{r_{\ul{y}}}\circ F_\CM,\CM)  @<{\on{counit}}<< \Tr(H_{(\on{mult}\circ \on{mult}^R(\one_\CA))_{y_0}}\circ H_{r_{\ul{y}}}\circ F_\CM,\CM).
\endCD
\end{equation}

So the operator \eqref{e:exc shtuka general} is indeed an analog of V.~Lafforgue's ``S" operator in the presence of other Hecke functors.

\end{rem}

\sssec{}

We claim:

\begin{thm} \label{t:S=T gen}
The endomorphism $\Tr(H_{a_{y_0}},\alpha_{a,\CM,F_\CM,\on{taut},r_{\ul{y}}})$ of
$$\Tr(H_{r_{\ul{y}}}\circ F_\CM,\CM)\simeq \on{ev}_{\bar{\ul{y}}}\left(\on{Sht}_{\CM,F_\CM,Y/\phi,J}(r)\right)$$
equals the operator \eqref{e:exc shtuka general}.
\end{thm}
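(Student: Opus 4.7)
The plan is to reduce \thmref{t:S=T gen} to the vacuum case \thmref{t:S=T} by absorbing the observable $r_{\ul{y}}$ into the endofunctor. Define the new endofunctor
\[F_{\CM, r_{\ul{y}}} := H_{r_{\ul{y}}} \circ F_\CM\]
of $\CM$. Since $\CA^{\otimes Y}$ is \emph{symmetric} monoidal and $F_\CM$ is compatible with $\CA^{\otimes \phi}$, the endofunctor $F_{\CM, r_{\ul{y}}}$ is likewise compatible with $\CA^{\otimes \phi}$: the required isomorphism $F_{\CM, r_{\ul{y}}} \circ H_a \simeq H_{\CA^{\otimes \phi}(a)} \circ F_{\CM, r_{\ul{y}}}$ is obtained by combining the compatibility of $F_\CM$ with the symmetry braiding that commutes $r_{\ul{y}}$ past $\CA^{\otimes \phi}(a)$.

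First, one checks that the left-hand side of \thmref{t:S=T gen} coincides with the left-hand side of \thmref{t:S=T} applied to the pair $(\CM, F_{\CM, r_{\ul{y}}})$. The underlying spaces agree since $\Tr(F_{\CM, r_{\ul{y}}}, \CM) = \Tr(H_{r_{\ul{y}}} \circ F_\CM, \CM)$. As for the natural transformation, both $\alpha_{a, \CM, F_\CM, \on{taut}, r_{\ul{y}}}$ and $\alpha_{a_{y_0}, \CM, F_{\CM, r_{\ul{y}}}, \on{taut}}$ yield the same map $H_{a_{y_0}} \circ H_{r_{\ul{y}}} \circ F_\CM \to H_{r_{\ul{y}}} \circ F_\CM \circ H_{a_{y_0}}$ built from (i) the fixed-point isomorphism $a_{y_0} \simeq \CA^{\otimes \phi}(a_{y_0})$, (ii) the compatibility of $F_\CM$ with $\CA^{\otimes \phi}$, and (iii) the symmetry braiding between $H_{a_{y_0}}$ and $H_{r_{\ul{y}}}$; that these two orderings agree is the naturality of the symmetry.

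The main step is to identify the right-hand side of \thmref{t:S=T gen}, i.e., the operator \eqref{e:exc shtuka general}, with the operator \eqref{e:exc shtuka} of the vacuum case applied to $(\CM, F_{\CM, r_{\ul{y}}})$ at the basepoint $\bar{y}_0$. By \thmref{t:obs internal},
\[\on{Sht}_{\CM, F_{\CM, r_{\ul{y}}}, \on{univ}} \simeq \iota(r_{\ul{y}}) \otimes \on{Sht}_{\CM, F_\CM, \on{univ}} = r_{\bar{\ul{y}}} \otimes \on{Sht}_{\CM, F_\CM, \on{univ}}\]
in $\CA^{\otimes Y/\phi}$. Via the description of objects of $\CA^{\otimes Y/\phi}$ as compatible families of functors (\secref{sss:functors out of integral}), this translates into: for any finite set $J'$, any $r' \in \CA^{\otimes J'}$ and any $\bar{\ul{y}}{}' \in (Y/\phi)^{J'}$,
\[\on{ev}_{\bar{\ul{y}}{}'} \bigl( \on{Sht}_{\CM, F_{\CM, r_{\ul{y}}}, Y/\phi, J'}(r') \bigr) \simeq \on{ev}_{\bar{\ul{y}}{}' \sqcup \bar{\ul{y}}} \bigl( \on{Sht}_{\CM, F_\CM, Y/\phi, J' \sqcup J}(r' \otimes r) \bigr).\]
Substituting this identification into the vacuum S-operator \eqref{e:exc shtuka} for the pair $(\CM, F_{\CM, r_{\ul{y}}})$ at $\bar{y}_0$ (with $J = \{*\} \sqcup \{*\}$, path $\gamma^+_{\on{taut}}$, and $\xi = \xi_a$) yields precisely \eqref{e:exc shtuka general}.

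Applying the vacuum case \thmref{t:S=T} to $(\CM, F_{\CM, r_{\ul{y}}})$ at the fixed point $y_0$ with compact object $a$ then yields the desired equality. The main obstacle is the third step: one needs to verify that, under the isomorphism of \thmref{t:obs internal}, the partial Frobenius appearing in the vacuum S-operator for $(\CM, F_{\CM, r_{\ul{y}}})$ matches the partial Frobenius in \eqref{e:exc shtuka general}, which acts trivially on the factors indexed by $J$. This compatibility is essentially the content of \lemref{l:cyclicity and observables}, which relates the cyclicity-of-trace that produces the partial Frobenius in \eqref{e:partial Frob shtuka} to the twist \eqref{e:iota twisting} on the enhanced-trace side.
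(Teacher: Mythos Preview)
Your proposal is correct and follows essentially the same route as the paper: absorb the observable into the endofunctor via $F_{\CM,r_{\ul y}}:=H_{r_{\ul y}}\circ F_\CM$, invoke \thmref{t:obs internal} to identify $\on{Sht}_{\CM,F_{\CM,r_{\ul y}},\on{univ}}\simeq \iota(r_{\ul y})\otimes \on{Sht}_{\CM,F_\CM,\on{univ}}$, translate this to the level of the functors $\on{Sht}_{-,Y/\phi,J'}$, and then apply the vacuum case \thmref{t:S=T}. One small remark: your closing worry about matching the partial Frobeniuses is slightly over-cautious---because the identification of universal shtukas lives in $\CA^{\otimes Y/\phi}$, the induced identifications of the evaluated shtukas are automatically functorial in $(J',r',\bar{\ul y}{}')$ and hence intertwine the monodromy $\on{mon}_{\gamma_{\on{taut}},\gamma_{\on{triv}},\gamma^J_{\on{triv}}}$, so a separate appeal to \lemref{l:cyclicity and observables} is not needed at this point.
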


\begin{proof}

We claim that \thmref{t:S=T gen} is in fact a formal corollary of \thmref{t:S=T}. Namely, for our given $\CM$ set
$$F_{\CM,r_{\ul{y}}}:=H_{r_{\ul{y}}}\circ F_\CM.$$
We claim that the identity stated in \thmref{t:S=T gen} specializes to the identity of \thmref{t:S=T} for the endomorphism
$F_{\CM,r_{\ul{y}}}$.

\medskip

Indeed, we have an isomorphism
\begin{equation*} 
\Tr(H_{r_{\ul{y}}}\circ F_\CM,\CM)\simeq \Tr(F_{\CM,r_{\ul{y}}},\CM),
\end{equation*}
and under this identification, the endomorphism $\Tr(H_{a_{y_0}},\alpha_{a,\CM,F_\CM,\on{taut},r_{\ul{y}}})$ of the LHS corresponds to the
endomorphism $\Tr(H_{a_{y_0}},\alpha_{a,\CM,F_{\CM,r_{\ul{y}}},\on{taut}})$ of the RHS (see \secref{sss:y_0} for the notation).

\medskip
It therefore suffices to show that there exists an isomorphism
\begin{equation} \label{e:insert r bis}
\on{ev}_{\bar{\ul{y}}}\left(\on{Sht}_{\CM,F_\CM,Y/\phi,J}(r)\right)\simeq\on{Sht}_{\CM,F_{\CM,r_{\ul{y}}},Y/\phi,\emptyset}
\end{equation}
such that the endomorphism \eqref{e:exc shtuka general} of the LHS corresponds to the endomorphism \eqref{e:exc shtuka} with
$$(\bar{y}_0,J=\{*\}\sqcup \{*\},\gamma^+_{\on{taut}},\xi_a)$$ of the RHS.

\medskip

It follows from \thmref{t:obs internal} that there exists a canonical isomorphism
$$\iota(r_{\ul{y}})\otimes \on{Sht}_{\CM,F_{\CM},\on{univ}}\simeq \on{Sht}_{\CM,F_{\CM,r_{\ul{y}}},\on{univ}}$$
of objects of $\CA^{\otimes Y/\phi}$. Therefore using explicit formulas of \secref{sss:objects in integral}, for every finite set $I$, a point
$\bar{\ul{y}}'\in Y^I$ and an object $r'\in \CA^{\otimes I}$, we have an isomorphism
\[
\on{ev}_{\bar{\ul{y}}\sqcup\bar{\ul{y}}'}\left(\on{Sht}_{\CM,F_\CM,Y/\phi,J\sqcup I}(r\otimes r')\right)\simeq\on{ev}_{\bar{\ul{y}}'}\left(\on{Sht}_{\CM,F_{\CM,r_{\ul{y}}},Y/\phi,I}(r')\right),
\]
functorial in $\bar{\ul{y}}'$ and $r'$.

\medskip

Applying this isomorphism in the particular case $I=\emptyset$ and $r'=\one_{\Vect}$, we
get the isomorphism  \eqref{e:insert r bis} we need.

\end{proof}

\appendix

\section{Sheaves and singular support}  \label{s:sheaves}

This appendix is included for the sake of completeness. We will review the notion of singular support in
different sheaf-theoretic contexts.

\medskip

The main result of this section is \thmref{t:vert sing supp}, which says the following: In the product situation $\CY\times X$,
where $\CY$ is an algebraic stack and $X$ a proper scheme, the category of sheaves on $\CY\times X$ whose singular support is of the form
$$\CN':=\CN\times \{\text{zero section}\}\subset T^*(\CY)\times T^*(X)=T^*(\CY\times X),$$
is equivalent to the tensor product category
$$\Shv_{\CN}(\CY)\otimes \Shv_{\on{lisse}}(X).$$

\ssec{Sheaf-theoretic contexts}   \label{ss:sheaves}

\sssec{}  \label{sss:sheaves}

In this section and the next, we will take $\Shv(-)$ to be any of the following sheaf-theoretic contexts:

\medskip

\noindent(a) $\Shv_{\on{cl}}(S)$, the category of all sheaves in the classical topology with coefficients in $\sfe$,
for $S$ an affine scheme over $\BC$;

\medskip

\noindent(a') $\Shv(S)=\on{Ind}(\Shv_{\on{cl,constr}}(S))$, where $\Shv_{\on{cl,constr}}(S)\subset \Shv_{\on{cl}}(S)$ is
the (small) subcategory consisting of constructible sheaves;

\medskip

\noindent(b) $\Shv(S)=\Dmod(S)$, for $S$ an affine scheme over a ground field $k$ of characteristic $0$;

\medskip

\noindent(b') $\Shv(S)=\on{Ind}(\Dmod_{\on{hol}}(S))$, where $\Dmod_{\on{hol}}(S)\subset \Dmod(S)$ is
the (small) subcategory consisting of holonomic D-modules;

\medskip

\noindent(b'') $\Shv(S)=\on{Ind}(\Dmod_{\on{hol,RS}}(S))$, where $\Dmod_{\on{hol,RS}}(S)\subset \Dmod(S)$ is
the (small) subcategory consisting of holonomic D-modules with regular singularities;

\medskip

\noindent(c) $\Shv(S):=\Shv_{\BZ/\ell^n,\on{et}}(S)$, the category of all \'etale sheaves on $S$ with coefficients
in $\BZ/\ell^n\BZ$, for $S$ an affine scheme over any ground field of characteristic prime to $\ell$. Note that
we have
$$\Shv_{\BZ/\ell^n,\on{et}}(S)\simeq \on{Ind}(\Shv_{\BZ/\ell^n,\on{et},\on{constr}}(S)),$$
where $\Shv_{\BZ/\ell^n,\on{et},\on{constr}}(S)\subset \Shv_{\BZ/\ell^n,\on{et}}(S)$ is the
full subcategory of constructible sheaves, see \cite[Proposition 2.2.6.2]{GaLu};

\medskip

\noindent(d) $\Shv(S):=\on{Ind}(\Shv_{\BZ_\ell,\on{et},\on{constr}}(S))$, where $\Shv_{\BZ_\ell,\on{et},\on{constr}}(S)$ is
the category of constructible $\ell$-adic sheaves on $S$
(see \cite[Defn. 2.3.2.1]{GaLu}), which is equivalent to
$$\underset{n}{\on{lim}}\, \Shv_{\BZ/\ell^n,\on{et},\on{constr}}(S);$$

\medskip

\noindent(d') $\Shv(S):=\on{Ind}(\Shv_{\BQ_\ell,\on{et},\on{constr}}(S))$, where
$\Shv_{\BQ_\ell,\on{et},\on{constr}}(S)$ is obtained from
$\Shv_{\BZ_\ell,\on{et},\on{constr}}(S)$ by inverting $\ell$. Note that
$$\on{Ind}(\Shv_{\BQ_\ell,\on{et},\on{constr}}(S))\simeq
\on{Ind}(\Shv_{\BZ_\ell,\on{et},\on{constr}}(S))\underset{\BZ_\ell}\otimes \BQ_\ell.$$

\sssec{}

In what follows, we will refer to the cases (a') and (b')-(d') as ``ind-constructible".

\medskip

Note that, by definition,
in these cases, the category $\Shv(S)$ is compactly generated. In particular, it is dualizable.

\sssec{}  \label{sss:on prestacks}

Let $\CY$ be a prestack. In all cases apart from (a), we define $\Shv(\CY)$ as
\begin{equation} \label{e:on prestack upper!}
\underset{S\in (\affSch_{/\CY})^{\on{op}}}{\on{lim}}\, \Shv(S),
\end{equation}
where for $S_1\overset{f}\to S_2$, the corresponding functor $\Shv(S_2)\to \Shv(S_1)$
is $f^!$. In the above formula, $\affSch_{/\CY}$ is the category of affine schemes over $\CY$.

\medskip

Note that in all cases apart from (b), the functor $f^!$ admits a left adjoint, namely $f_!$. Hence, from \lemref{l:lim colim}(b),
we obtain that in all of these cases, we can write $\Shv(\CY)$ also as
\begin{equation} \label{e:Shv as colim}
\underset{S\in \affSch_{/\CY}}{\on{colim}}\, \Shv(S),
\end{equation}
where for $S_1\overset{f}\to S_2$,
the corresponding functor
$\Shv(S_1)\to \Shv(S_2)$
is $f_!$.

\medskip

Note also that in the ind-constructible contexts (i.e., cases (a') and (b')-(d')), we obtain from \eqref{e:Shv as colim} that the category
$\Shv(\CY)$ is compactly generated (since each $\Shv(S)$ is). In particular, $\Shv(\CY)$ is dualizable.

\sssec{}

Let us now consider case (a). (The slight glitch here is that in this case the functors $f^!$ are no longer continuous.) We define
$\Shv(\CY)$ by formula \eqref{e:Shv as colim}, where the colimit is taken $\DGCat$.

\medskip

Let $\DGCat^{\on{discont}}$ denote the category whose objects are not necessarily cocomplete DG categories,
and we allow all (exact) functors between such categories.  Specifically, $\DGCat^{\on{discont}}$ is the category of stable
idempotent-complete $\infty$-categories tensored over $\on{Vect}^{\on{f.d.}}$ and exact $\on{Vect}^{\on{f.d.}}$-linear functors. We have a tautological functor
\begin{equation} \label{e:forget discont}
\DGCat\to \DGCat^{\on{discont}}
\end{equation}

We can also form the limit \eqref{e:on prestack upper!}, taking place in $\DGCat^{\on{discont}}$. Now, according to
\cite[Corollary 5.3.4(b)]{GR1} (which is a generalization of \lemref{l:lim colim} to the case when the right adjoints are not
necessarily continuous), the image of $\Shv(\CY)$, which is defined by formula \eqref{e:Shv as colim}, under
the forgetful functor \eqref{e:forget discont} identifies with the limit \eqref{e:on prestack upper!}.

\medskip

In other words, we have a canonical isomorphism
$$\Shv_{\on{cl}}(\CY)\simeq \underset{S\in \affSch_{/\CY}}{\on{lim}}\, \Shv_{\on{cl}}(S)$$
as objects of $\DGCat^{\on{discont}}$.

\sssec{}  \label{sss:Artin}

Let now $\CY$ be an algebraic stack. In this case, we can consider the category $\affSch_{/\CY,\on{sm}}$,
consisting of affine schemes equipped with a \emph{smooth} map to $\CY$, and whose morphisms are \emph{smooth}
maps between affine schemes over $\CY$.

\medskip

A smooth descent argument shows that the restriction functor
$$\underset{S\in \affSch_{/\CY}}{\on{lim}}\,\Shv(S) \to \underset{S\in \affSch_{/\CY,\on{sm}}}{\on{lim}}\, \Shv(S)$$
is an equivalence.

\medskip

Hence, for an algebraic stack $\CY$, we have
\begin{equation} \label{e:on prestack upper! via smooth}
\Shv(\CY)\simeq \underset{S\in \affSch_{/\CY,\on{sm}}}{\on{lim}}\, \Shv(S)
\end{equation}
where this isomorphism takes place in $\DGCat^{\on{discont}}$ in context (a) and in  $\DGCat$ in other contexts.

\begin{rem}  \label{r:a for Artin}
The presentation of $\Shv(\CY)$ given by \eqref{e:on prestack upper! via smooth} as an object of $\DGCat$ is valid
also in case (a): indeed, since smooth pullbacks $f^!$ are continuous also in case (a), the assertion follows from
\eqref{e:on prestack upper! via smooth} and the fact that the tautological functor $\DGCat\to \DGCat^{\on{discont}}$ preserves limits,
and for $\CC_1,\CC_2\in \DGCat$, the map induced by forgetful functor
$$\Maps_{\DGCat}(\CC_1,\CC_2)\to \Maps_{\DGCat^{\on{discont}}}(\CC_1,\CC_2)$$
induces an isomorphism on the connected components corresponding to equivalences.


\end{rem}

\sssec{}  \label{sss:corr}

Let $\on{Corr}(\on{PreStk})_{\on{ind-sch,all}}$ be the category of correspondences as in \cite[Chapter 3, Sect. 5.4]{GR2},
whose objects are prestacks $\CY$, and where morphisms from $\CY_1$ to $\CY_2$ are diagrams
\begin{equation} \label{e:corr}
\CD
\CY_{1,2}  @>{g}>> \CY_1 \\
@V{f}VV  \\
\CY_2,
\endCD
\end{equation}
where $g$ any map, and $f$ is required to be ind-schematic. The composition of \eqref{e:corr} and
$$
\CD
\CY_{2,3}  @>>> \CY_2 \\
@VVV  \\
\CY_3
\endCD
$$
is given by
$$
\CD
\CY_{2,3}\underset{\CY_2}\times \CY_{1,2}  @>>> \CY_1 \\
@VVV  \\
\CY_3.
\endCD
$$

\medskip

Let us first exclude the context (a). Then the construction of {\it loc.cit.} applies, and we can extend $\Shv(-)$ to
a functor
$$\Shv_{\on{Corr}}:\on{Corr}(\on{PreStk})_{\on{ind-sch,all}}\to \DGCat.$$

At the level of objects this functor sends $\CY\mapsto \Shv(\CY)$. At the level of morphisms, this functor sends
a morphism \eqref{e:corr} to the functor
$$f_*\circ g^!:\Shv(\CY_1)\to \Shv(\CY_2).$$

Compatibility with compositions is insured by base change.

\medskip

Furthermore, the functor $\Shv_{\on{Corr}}$ possesses a natural right-lax symmetric monoidal structure, see \cite[Chapter 3, Sect. 6.1]{GR2},
where $\on{Corr}(\on{PreStk})_{\on{ind-sch,all}}$ is a symmetric monoidal category with respect to the level-wise product.

\sssec{}  \label{sss:corr a}

Let us now consider the context (a). In this case we will consider $\Shv_{\on{Corr}}$ as taking values in $\DGCat^{\on{discont}}$.
We will regard $\Shv_{\on{Corr}}$ as equipped with the right-lax symmetric monoidal structure, with respect to the following
symmetric monoidal structure on $\DGCat^{\on{discont}}$:

\medskip

For $\CC\in \DGCat^{\on{discont}}$, we let
$$\on{Funct}_{\DGCat^{\on{discont}}}(\CC_1\overset{\on{discont}}\otimes \CC_2,\CC)$$
consist of all bi-exact bi-$\sfe$-linear (but not necessarily bi-continuous) functors
$$\CC_1\times \CC_2\to \CC.$$

More formally, we apply the construction of \cite[Chapter 1, Sect. 6.1.1]{GR1}, but dropping the continuity
condition.

\begin{rem}  \label{r:recover cont}
Note that given an object $\CC\in \DGCat$, the space of structures of associative (resp., commutative)
algebras on it within $\DGCat$ embeds fully faithfully into the space of such structures within $\DGCat^{\on{discont}}$.

\medskip

The same applies to actions of a given monoidal DG category on another DG category.

\end{rem}

\ssec{Sheaves on a product}

\sssec{}

Note that for a pair of affine schemes, we have a naturally defined functor, given by \emph{external tensor product}
\begin{equation} \label{e:external product}
\Shv(S_1)\otimes \Shv(S_2)\to \Shv(S_1\times S_2), \quad \CF_1,\CF_2\mapsto \CF_1\boxtimes \CF_2.
\end{equation}

This functor is an equivalence in case (b): for a pair of associative algebras $A_1$ and $A_2$, the functor
$$A_1\mod\otimes A_2\mod\to (A_1\otimes A_2)\mod, \quad M_1,M_2\mapsto M_1\otimes M_2$$
is an equivalence (see \cite[Theorem 4.8.5.16]{Lu2}).

\sssec{}

The following is known (the assertion is valid for any pair of locally compact Hausdorff topological spaces, see \cite[Theorem 7.3.3.9, Prop. 7.3.1.11]{Lu1} and \cite[Prop. 4.8.1.17]{Lu2}):

\begin{thm} \label{t:external prod top}
The functor \eqref{e:external product} is an equivalence in case (a).
\end{thm}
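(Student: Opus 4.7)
The plan is to reduce the assertion to the known Künneth-type equivalence for sheaves of spaces on locally compact Hausdorff topological spaces, following Lurie's approach in the cited references. First, I would observe that for an affine scheme $S$ of finite type over $\BC$, the underlying set equipped with the classical (analytic) topology is a locally compact Hausdorff space $|S|$, and one has a natural homeomorphism $|S_1 \times S_2| \cong |S_1| \times |S_2|$. The DG category $\Shv_{\on{cl}}(S)$ is by construction equivalent to the $\infty$-category $\Shv(|S|; \Vect)$ of $\Vect$-valued sheaves on $|S|$, where $\Vect$ is the DG category of chain complexes of $\sfe$-modules.

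Next, I would invoke the general principle (a consequence of \cite[Prop. 4.8.1.17]{Lu2}) that for an $\infty$-topos $\CX$ and a presentable $\infty$-category $\CC$, one has a canonical equivalence
\[
\Shv(\CX; \CC) \simeq \Shv(\CX; \Spc) \underset{\Spc}{\otimes} \CC,
\]
where the tensor product is taken in the $\infty$-category of presentable $\infty$-categories with colimit-preserving functors. Combined with \cite[Theorem 7.3.3.9]{Lu1} (and \cite[Prop. 7.3.1.11]{Lu1}), which states that for locally compact Hausdorff spaces $X_1, X_2$ the external tensor product induces an equivalence
\[
\Shv(X_1; \Spc) \underset{\Spc}{\otimes} \Shv(X_2; \Spc) \iso \Shv(X_1 \times X_2; \Spc),
\]
one obtains a chain of equivalences
\[
\Shv_{\on{cl}}(S_1) \underset{\Vect}{\otimes} \Shv_{\on{cl}}(S_2) \simeq \Shv(|S_1| \times |S_2|; \Spc) \underset{\Spc}{\otimes} \Vect \simeq \Shv_{\on{cl}}(S_1 \times S_2),
\]
using that $\Vect \underset{\Spc}{\otimes} \Vect \simeq \Vect$ since $\Vect$ is idempotent as a commutative algebra object in the $\infty$-category of presentable $\infty$-categories, together with the associativity of the Lurie tensor product. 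A routine check, which I would spell out, shows that the composite equivalence agrees with the external tensor product map \eqref{e:external product}.

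The main obstacle is ensuring compatibility between the various notions of tensor product that appear: the tensor product over $\Spc$ used in the topos-theoretic Künneth formula, the tensor product of $\Vect$-linear presentable categories, and the tensor product in $\DGCat$ used in the statement of the theorem. This compatibility is built into the definition of $\DGCat$ as a symmetric monoidal $\infty$-category under the Lurie tensor product over $\Vect$, and follows from the fact that the functor $-\underset{\Spc}{\otimes}\Vect$ from presentable $\infty$-categories to $\DGCat$ is symmetric monoidal. A secondary technical point is that \cite[Theorem 7.3.3.9]{Lu1} is formulated in terms of hypercomplete sheaves; however, since $|S|$ is a locally compact Hausdorff space of finite covering dimension (being the analytification of a finite-type $\BC$-scheme), the hypercomplete and non-hypercomplete topos structures on $|S|$ coincide, so no subtlety arises on this front.
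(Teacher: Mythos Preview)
Your proposal is correct and follows exactly the approach indicated by the paper: the paper does not give its own proof but simply states that the result is known and cites \cite[Theorem 7.3.3.9, Prop.~7.3.1.11]{Lu1} and \cite[Prop.~4.8.1.17]{Lu2}, which are precisely the references you unpack. Your expansion of how these references combine (reducing to sheaves of spaces via the tensor formula $\Shv(X;\CC)\simeq \Shv(X;\Spc)\otimes_{\Spc}\CC$, applying the K\"unneth equivalence for locally compact Hausdorff spaces, and noting the finite-covering-dimension point to handle hypercompleteness) is an accurate and useful elaboration of what the paper leaves implicit.
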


\begin{rem}

It follows from \thmref{t:external prod top} that for a locally compact topological space $M$, the functors
$$\Vect\overset{\sfe_M}\to \Shv(M) \overset{\Delta_!}\to \Shv(M\times M)\simeq \Shv(M)\otimes \Shv(M)$$
and
$$\Shv(M)\otimes \Shv(M)\to \Shv(M\times M)\overset{\Delta^*}\to \Shv(M) \overset{\on{C}_c^\bullet(M,-)}\longrightarrow \Vect$$
define an identification
$$\Shv(M)^\vee \simeq \Shv(M).$$

Note, however, that as was shown by A.~Neeman (see \cite{Ne1}), for a topological manifold $M$,
the category $\Shv(M)$ is \emph{not} compactly generated, unless $M$ is discrete. So, $\Shv(M)$ is an example
of a dualizable but not compactly generated category.
\end{rem}

\sssec{}

In the ind-constructible contexts, the functor \eqref{e:external product} fails to be an equivalence.
However, we have:

\begin{lem} \label{l:external prod ff}
In the constructible contexts the functor \eqref{e:external product} is fully faithful.
\end{lem}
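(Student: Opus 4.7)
The plan is to exploit the fact that in the ind-constructible contexts, $\Shv(S)$ is by construction the ind-completion of a small stable subcategory $\Shv^c(S)$ of compact objects (constructible sheaves, or holonomic D-modules, etc., depending on the context). Since the Lurie tensor product commutes with ind-completion of compactly generated categories, we have
$$\Shv(S_1) \otimes \Shv(S_2) \simeq \on{Ind}(\Shv^c(S_1) \otimes \Shv^c(S_2)),$$
where the inner tensor product is that of small stable idempotent-complete $\sfe$-linear categories. Because the external product of two constructible objects is again constructible, the functor \eqref{e:external product} sends compact objects to compact objects, so its ind-extension is itself, and full faithfulness on all of $\Shv(S_1) \otimes \Shv(S_2)$ will follow from full faithfulness on the compact generators $\CF_1 \boxtimes \CF_2$.

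Thus the statement reduces to showing that for $\CF_i, \CG_i \in \Shv^c(S_i)$, the natural K\"unneth map
$$\CHom_{\Shv(S_1)}(\CF_1,\CG_1) \otimes_\sfe \CHom_{\Shv(S_2)}(\CF_2,\CG_2) \to \CHom_{\Shv(S_1 \times S_2)}(\CF_1 \boxtimes \CF_2, \CG_1 \boxtimes \CG_2)$$
is an equivalence in $\Vect$. I would rewrite both sides using the internal Hom sheaf: on each side $\CHom(\CF_i,\CG_i) \simeq \Gamma(S_i, \underline{\on{RHom}}(\CF_i,\CG_i))$, and then invoke the formal identity
$$\underline{\on{RHom}}(\CF_1 \boxtimes \CF_2, \CG_1 \boxtimes \CG_2) \simeq \underline{\on{RHom}}(\CF_1,\CG_1) \boxtimes \underline{\on{RHom}}(\CF_2,\CG_2),$$
which holds for constructible objects and is provided by the 6-functor formalism in each of the listed contexts. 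After these manipulations, the claim becomes the classical K\"unneth formula for cohomology of an external product of constructible sheaves on a product of affine schemes.

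The remaining K\"unneth formula is well known in each of the contexts (a'), (b'), (b''), (c), (d), (d'); the cleanest way to obtain it is to stratify the $S_i$ and use d\'evissage to reduce to lisse sheaves on smooth pieces, where it follows from the K\"unneth formula for cohomology with constant coefficients. Alternatively, passing to Verdier duals, it follows from proper base change for $f_!$ combined with the projection formula. The one genuine technical point I expect to need care is the $\ell$-adic contexts (d), (d'), where $\Shv_{\BZ_\ell,\on{constr},\on{et}}(S)$ is defined as $\underset{n}{\on{lim}}\,\Shv_{\BZ/\ell^n,\on{constr},\on{et}}(S)$: one must check that the K\"unneth map commutes with this cofiltered limit, which in turn rests on the finiteness of Homs between constructible sheaves. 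Once this bookkeeping across the $\BZ/\ell^n$-tower is handled, the proof becomes formal and uniform across all the listed sheaf-theoretic contexts.
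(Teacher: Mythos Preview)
Your proposal is correct and follows essentially the same route as the paper: reduce full faithfulness to checking Hom-spaces between compact generators $\CF_1\boxtimes\CF_2$, identify $\CHom_{\Shv(S_1)\otimes\Shv(S_2)}(\CF_1\otimes\CF_2,\CG_1\otimes\CG_2)$ with the tensor product of the individual Homs, and then invoke the K\"unneth formula for the target category $\Shv(S_1\times S_2)$. The paper handles the first two steps by citing a general fact about tensor products of compactly generated categories (from \cite[Chapter 1, Proposition 7.4.2]{GR1}) rather than phrasing it via $\on{Ind}$-completion as you do, and it simply appeals to ``K\"unneth's formula'' without the internal-Hom/d\'evissage discussion you sketch; but the structure of the argument is the same.
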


\begin{proof}

For a pair of DG categories $\CC_1,\CC_2$ and
$$c'_i\in \CC_i^c,\,\, c''_i\in \CC_i,\,\, i=1,2,$$
the map
$$\CHom_{\CC_1}(c'_1,c''_1)\otimes \CHom_{\CC_2}(c'_2,c''_2)\to
\CHom_{\CC_1\otimes \CC_2}(c'_1\otimes c'_2,c''_1\otimes c''_2)$$
is an isomorphism (see \cite[Chapter 1, Proposition 7.4.2]{GR1}).

\medskip

Since the tensor product of compactly generated categories is compactly generated by tensor products of compact objects (see \cite[Chapter 1, Proposition 7.4.2]{GR1}), in order to prove the lemma, it suffices to show that for
$$\CF'_i,\CF''_i\in \Shv(S_i)^c, \quad i=1,2$$
the map
$$\CHom_{\Shv(S_1)}(\CF'_1,\CF''_1)\otimes \CHom_{\Shv(S_2)}(\CF'_2,\CF''_2)\to
\CHom_{\Shv(S_1\times S_2)}(\CF'_1\boxtimes \CF_2,\CF''_1\boxtimes \CF''_2)$$
is an isomorphism. However, this follows from Kunneth's formula.
\end{proof}

\sssec{}

We now consider the case of prestacks. Again, external tensor product gives rise to a functor
\begin{equation} \label{e:external product stacks}
\Shv(\CY_1)\otimes \Shv(\CY_2)\to \Shv(\CY_1\times \CY_2), \quad \CF_1,\CF_2\mapsto \CF_1\boxtimes \CF_2.
\end{equation}

The argument in \cite[Chapter 3, Theorem 3.1.7]{GR1} shows:

\begin{lem}
In case (b), if one of the categories $\Shv(\CY_i)$ is dualizable, then the functor \eqref{e:external product stacks}
is an equivalence.
\end{lem}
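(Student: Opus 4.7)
The plan is to mimic the proof of \cite[Chapter 3, Theorem 3.1.7]{GR1}, presenting both sides as limits of categories of sheaves on affine schemes and commuting the tensor product past these limits by means of dualizability. Without loss of generality, assume $\Shv(\CY_1)$ is the dualizable factor.

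First, I would recall the limit presentation
$$\Shv(\CY_i) \simeq \underset{S_i \in (\affSch_{/\CY_i})^{\on{op}}}{\on{lim}}\, \Shv(S_i),$$
with transition functors given by $f^!$. Since $\Shv(\CY_1)$ is dualizable in $\DGCat$, the functor $\Shv(\CY_1) \otimes -$ is simultaneously a left and a right adjoint (to $\Shv(\CY_1)^\vee \otimes -$), hence preserves limits. Applied to the presentation of $\Shv(\CY_2)$, this yields
$$\Shv(\CY_1) \otimes \Shv(\CY_2) \simeq \underset{S_2}{\on{lim}}\, \Shv(\CY_1) \otimes \Shv(S_2).$$
Next, each $\Shv(S_2) = \Dmod(S_2)$ is compactly generated, hence dualizable, so tensoring with it also commutes with the limit presentation of $\Shv(\CY_1)$:
$$\Shv(\CY_1) \otimes \Shv(S_2) \simeq \underset{S_1}{\on{lim}}\, \Shv(S_1) \otimes \Shv(S_2).$$
In context (b) the affine case has already been noted, namely $\Dmod(S_1) \otimes \Dmod(S_2) \simeq \Dmod(S_1 \times S_2)$, via the Morita-theoretic equivalence $A_1\mod \otimes A_2\mod \simeq (A_1 \otimes A_2)\mod$. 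Combining these steps,
$$\Shv(\CY_1) \otimes \Shv(\CY_2) \simeq \underset{(S_1,S_2) \in (\affSch_{/\CY_1})^{\on{op}} \times (\affSch_{/\CY_2})^{\on{op}}}{\on{lim}}\, \Shv(S_1 \times S_2).$$

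To identify the right-hand side with $\Shv(\CY_1 \times \CY_2) = \underset{T \to \CY_1 \times \CY_2}{\on{lim}}\, \Shv(T)$, I would invoke the cofinality of the functor $(S_1,S_2) \mapsto S_1 \times S_2$ from $(\affSch_{/\CY_1})^{\on{op}} \times (\affSch_{/\CY_2})^{\on{op}}$ to $(\affSch_{/\CY_1 \times \CY_2})^{\on{op}}$. Indeed, for any $T \to \CY_1 \times \CY_2$, the category of factorizations $T \to S_1 \times S_2 \to \CY_1 \times \CY_2$ decomposes as a product of the slice categories $(\affSch_{/\CY_i})_{T/}$, each of which has an initial object (namely $S_i = T$ with its structure map). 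Hence this comma category is contractible, cofinality holds, and the two limits are canonically equivalent.

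The main obstacle will be the last bookkeeping step: verifying that the resulting equivalence is precisely the external tensor product functor \eqref{e:external product stacks}, rather than some twist of it. This amounts to checking that the chain of equivalences above is compatible with the structure maps $\boxtimes$ on the affine pieces via base change for the $!$-pullback along morphisms in $\affSch_{/\CY_1 \times \CY_2}$ of the form $S_1 \times S_2 \to S'_1 \times S'_2$. This compatibility is straightforward but tedious, and is really the technical core of the argument.
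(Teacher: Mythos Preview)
Your proposal is correct and follows precisely the route the paper takes: the paper simply cites \cite[Chapter 3, Theorem 3.1.7]{GR1}, and what you have written is a faithful unpacking of that argument (including the cofinality step, which in the paper's later arguments is cited as \cite[Chapter 3, Equation (3.4)]{GR1}). Your identification of the comma categories $(\affSch_{/\CY_i})_{T/}$ and their initial objects is exactly the verification needed, and the compatibility with $\boxtimes$ is indeed routine bookkeeping.
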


In addition, from \thmref{t:external prod top} and  \cite[Chapter 3, Equation (3.4)]{GR1}, we obtain:

\begin{cor}
The functor \eqref{e:external product stacks} is an equivalence in case (a).
\end{cor}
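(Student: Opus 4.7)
The plan is to reduce the statement for prestacks to the already-established case of affine schemes (Theorem \ref{t:external prod top}) by using the limit presentation of $\Shv$ on a prestack, and then commuting this limit past the tensor product. The subtle point is that in case (a) everything must take place in $\DGCat^{\on{discont}}$, because the categories $\Shv(S)$ need not be compactly generated (or even dualizable as objects of $\DGCat$ for general $S$).

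Concretely, first I would write $\CY_i \simeq \underset{S_i\in \affSch_{/\CY_i}}{\on{colim}}\, S_i$ (in prestacks), which by \secref{sss:on prestacks} gives
\[
\Shv(\CY_i)\simeq \underset{S_i\in \affSch_{/\CY_i}}{\on{lim}}\,\Shv(S_i)
\]
as objects of $\DGCat^{\on{discont}}$, while for the product
\[
\CY_1\times \CY_2\simeq \underset{(S_1,S_2)\in \affSch_{/\CY_1}\times \affSch_{/\CY_2}}{\on{colim}}\,S_1\times S_2,
\]
the cofinality of this indexing category in $\affSch_{/\CY_1\times \CY_2}$ produces
\[
\Shv(\CY_1\times \CY_2)\simeq \underset{(S_1,S_2)}{\on{lim}}\,\Shv(S_1\times S_2).
\]
Next, I would invoke \thmref{t:external prod top} to identify $\Shv(S_1\times S_2)\simeq \Shv(S_1)\overset{\on{discont}}\otimes \Shv(S_2)$ functorially in $(S_1,S_2)$.

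The remaining step is to commute the limit with the tensor product, i.e., to produce a canonical equivalence
\[
\underset{(S_1,S_2)}{\on{lim}}\,\bigl(\Shv(S_1)\overset{\on{discont}}\otimes \Shv(S_2)\bigr)\ \simeq\ \Bigl(\underset{S_1}{\on{lim}}\,\Shv(S_1)\Bigr)\overset{\on{discont}}\otimes \Bigl(\underset{S_2}{\on{lim}}\,\Shv(S_2)\Bigr).
\]
This is precisely the content of \cite[Chapter 3, Equation (3.4)]{GR1}: the tensor product in $\DGCat^{\on{discont}}$, being characterized by the universal property with respect to bi-exact (rather than bi-continuous) bilinear functors, interacts well with limits of the type arising from prestack presentations, so the two constructions match on both sides. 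Concatenating these identifications, and checking that the resulting equivalence is implemented by the external tensor product functor, yields the corollary.

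The main obstacle is the commutation of $\otimes$ with limits in the last step. In the standard $\DGCat$ this would fail without a dualizability hypothesis on one factor (as in the preceding lemma for case (b)), and indeed the reason the corollary is stated without such a hypothesis is that we are working in $\DGCat^{\on{discont}}$, where the more permissive notion of bilinear functor (not requiring continuity) makes the tensor product behave essentially like an internal Hom-adjoint and thus commute with all limits. Verifying this — which amounts to invoking the formalism of \cite[Chapter 3]{GR1} rather than reproving it — is the only non-formal input beyond \thmref{t:external prod top}.
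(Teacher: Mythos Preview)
Your approach has a genuine gap in the key step. You work with the \emph{limit} presentation of $\Shv(\CY_i)$ in $\DGCat^{\on{discont}}$ and then need to commute $\overset{\on{discont}}\otimes$ past these limits. Your justification---that the bi-exact universal property makes this tensor product ``behave essentially like an internal Hom-adjoint and thus commute with all limits''---is not correct reasoning, and \cite[Chapter 3, Equation (3.4)]{GR1} does not say this. As the paper's own use of that reference (in the proof of \propref{l:external prod ff stacks}) shows, Equation (3.4) is the cofinality statement identifying $\Shv(\CY_1\times\CY_2)$ with the limit (or colimit) indexed by pairs $(S_1,S_2)$; it says nothing about tensor products commuting with limits.

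The intended argument goes through \emph{colimits} in $\DGCat$, not limits in $\DGCat^{\on{discont}}$. In case (a), $\Shv(\CY)$ is \emph{defined} by formula \eqref{e:Shv as colim} as a colimit in $\DGCat$ along the $f_!$ functors; there is no need to pass to $\DGCat^{\on{discont}}$ at all (your worry that $\Shv(S)$ might not be dualizable is irrelevant---dualizability was only needed in case (b) precisely because there one lacks the colimit description). The Lurie tensor product on $\DGCat$ commutes with colimits in each variable since $\CC\otimes -$ is left adjoint to $\on{Funct}_{\on{cont}}(\CC,-)$. Hence
\[
\Shv(\CY_1)\otimes\Shv(\CY_2)\simeq \underset{(S_1,S_2)}{\on{colim}}\,\Shv(S_1)\otimes\Shv(S_2)\simeq \underset{(S_1,S_2)}{\on{colim}}\,\Shv(S_1\times S_2)\simeq \Shv(\CY_1\times\CY_2),
\]
where the second equivalence is \thmref{t:external prod top} and the third is the cofinality of \cite[Chapter 3, Equation (3.4)]{GR1}. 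This is the argument the paper has in mind.
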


Finally, we claim:

\begin{prop} \label{l:external prod ff stacks}
The functor \eqref{e:external product stacks} is fully faithful in the ind-constructible contexts.
\end{prop}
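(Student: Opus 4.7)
The plan is to leverage compact generation on both sides, reducing the statement to the affine-scheme case already proved in \lemref{l:external prod ff}.

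First, I would check that in each ind-constructible context and for any prestack $\CY$, the category $\Shv(\CY)$ is compactly generated, with compact generators of the form $f_!(\CF)$ where $f:S\to \CY$ is an object of $\affSch_{/\CY}$ and $\CF\in \Shv(S)^c$. This follows from the colimit presentation \eqref{e:Shv as colim}, since each $\Shv(S)$ is compactly generated by constructible (resp.\ holonomic, resp.\ $\ell$-adic constructible) objects, and the transition functors $f_!$ preserve constructibility, hence compactness. Consequently $\Shv(\CY_1)\otimes \Shv(\CY_2)$ is compactly generated by tensor products $f_{1,!}(\CF_1)\otimes f_{2,!}(\CF_2)$, and $\Shv(\CY_1\times \CY_2)$ contains the images of such objects as compacts.

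Second, because the external tensor product functor preserves colimits in each variable and both sides are compactly generated, fully faithfulness reduces to verifying that
\[
\CHom_{\Shv(\CY_1)}(f_{1,!}\CF_1,g_{1,!}\CG_1)\otimes \CHom_{\Shv(\CY_2)}(f_{2,!}\CF_2,g_{2,!}\CG_2)
\longrightarrow \CHom_{\Shv(\CY_1\times \CY_2)}\bigl((f_1\times f_2)_!(\CF_1\boxtimes \CF_2),(g_1\times g_2)_!(\CG_1\boxtimes \CG_2)\bigr)
\]
is an isomorphism. Applying the $(f_!,f^!)$-adjunction on each side and then base change along the Cartesian squares $S_i\times_{\CY_i}T_i\to T_i$, the left-hand side becomes a tensor product of Hom-spaces computed over the affine schemes $S_i\times_{\CY_i}T_i$. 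On the right-hand side one uses the identification
\[
(S_1\times S_2)\underset{\CY_1\times \CY_2}\times (T_1\times T_2)\simeq (S_1\underset{\CY_1}\times T_1)\times (S_2\underset{\CY_2}\times T_2),
\]
together with the compatibility of $!$-pullback with external tensor products, to rewrite the Hom as one computed over this product of two affine schemes.

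At this point the comparison between the two sides is exactly the content of \lemref{l:external prod ff} applied to the two affine schemes $S_1\times_{\CY_1}T_1$ and $S_2\times_{\CY_2}T_2$. The main obstacle is justifying the base-change and K\"unneth manipulations cleanly across all the ind-constructible contexts: one needs that $(f_1\times f_2)^!(g_1\times g_2)_!\simeq (p_1\times p_2)_!(q_1\times q_2)^!$ and that $(q_1\times q_2)^!(\CG_1\boxtimes \CG_2)\simeq q_1^!\CG_1\boxtimes q_2^!\CG_2$, both of which are incarnations of the right-lax symmetric monoidal structure on $\Shv_{\on{Corr}}$ from \secref{sss:corr} (together with the compatibility of $!$-pullback with exterior products, which in the ind-constructible contexts follows from the corresponding statement for constructible objects). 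Once these two compatibilities are in place, the argument goes through uniformly.
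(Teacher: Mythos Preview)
Your strategy---reduce to compact generators and invoke the affine case---is sound, but the execution has a gap. You write that base change reduces the Hom computation to one over the ``affine schemes $S_i\times_{\CY_i}T_i$'', but for a general prestack $\CY_i$ these fiber products need not be schemes at all, let alone affine. So you cannot invoke \lemref{l:external prod ff} for them.

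The fix is to skip base change entirely. After the $(f_!,f^!)$-adjunction you have $\CHom_{\Shv(\CY_i)}(f_{i,!}\CF_i,g_{i,!}\CG_i)\simeq \CHom_{\Shv(S_i)}(\CF_i,f_i^!g_{i,!}\CG_i)$, with no need to resolve $f_i^!g_{i,!}$ further. On the target side, the compatibility of $\boxtimes$ with both $(-)_!$ and $(-)^!$ (the latter is immediate from the limit description \eqref{e:on prestack upper!}) gives
\[
(f_1\times f_2)^!\bigl((g_1\times g_2)_!(\CG_1\boxtimes\CG_2)\bigr)\simeq f_1^!g_{1,!}\CG_1\boxtimes f_2^!g_{2,!}\CG_2
\]
in $\Shv(S_1\times S_2)$. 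Now \lemref{l:external prod ff} applied to $S_1,S_2$ (with $\CF'_i=\CF_i$ compact and $\CF''_i=f_i^!g_{i,!}\CG_i$ arbitrary) finishes the argument.

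The paper takes a different route: it uses dualizability of $\Shv(\CY_i)$ to commute the tensor product past the limit, writing $\Shv(\CY_1)\otimes\Shv(\CY_2)\simeq\underset{S_1,S_2}{\on{lim}}\,\Shv(S_1)\otimes\Shv(S_2)$, and similarly $\Shv(\CY_1\times\CY_2)\simeq\underset{S_1,S_2}{\on{lim}}\,\Shv(S_1\times S_2)$; then \lemref{l:external prod ff} gives a termwise fully faithful map of diagrams, and limits of fully faithful functors are fully faithful. This avoids any explicit Hom manipulation. Your (corrected) approach is more hands-on and makes the role of compact generation explicit, while the paper's is shorter and relies on the abstract fact that dualizable objects commute with limits under $\otimes$.
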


\begin{proof}

Since $\Shv(\CY_i)$ are dualizable (see \secref{sss:on prestacks}), the argument in \cite[Chapter 3, Theorem 3.1.7]{GR1} shows
that we can write
$$\Shv(\CY_1)\otimes \Shv(\CY_2)$$ as
$$\underset{S_1\to \CY_1,S_2\to \CY_2}{\on{lim}}\, \Shv(S_1)\otimes \Shv(S_2),$$
and by \cite[Chapter 3, Equation (3.4)]{GR1}, we have
$$\Shv(\CY_1\times \CY_2)\simeq \underset{S_1\to \CY_1,S_2\to \CY_2}{\on{lim}}\, \Shv(S_1\times S_2).$$

Hence, the assertion follows from \lemref{l:external prod ff}.

\end{proof}

\ssec{Singular support}

\sssec{}

Let $S$ be an affine scheme. First, we assume that $S$ is smooth. Let $\CN\subset T^*(S)$ be a conical
Zariski-closed subset. In each of our sheaf-theoretic
contexts we can single out a full subcategory
$$\Shv_{\CN}(S)\subset \Shv(S),$$
consisting of objects with singular support contained in $\CN$:

\medskip

\noindent--In case (a), we require that each cohomology sheaf belongs to $\Shv_{\on{cl},\CN}(S)$, where the latter
is defined in \cite[Sect. 8]{KS};

\medskip

\noindent--In case (a'), we inherit the definition from case (a) for $\Shv_{\on{cl,constr}}(S)$, and then ind-extend
(alternatively, transfer the definition via the Riemann-Hilbert from case (b'')).

\medskip

\noindent--In case (b), we take the ind-completion of the category $\Dmod_\CN(S)^{\on{f.g}}\subset \Dmod(S)^{\on{f.g}}$,
obtained by requiring that each cohomology belong to $\Dmod_{\CN}(S)^{\on{f.g.,\heartsuit}}$, where the latter is
the standard D-module notion;

\medskip

\noindent--In case (b') (resp., (b'')), we inherit the definition for $\Dmod_{\on{hol}}(S)$
(resp., $\Dmod_{\on{hol,RS}}(S)$), then ind-extend.

\medskip

\noindent--In cases (c), we give the definition for $\Shv_{\BZ/\ell^n,\on{et},\on{constr}}(S)$ following \cite{Be},
then ind-extend.

\medskip

\noindent--In case (d) we will say that an object of $\Shv_{\BZ_\ell,\on{et},\on{constr}}(S)$
has singular support in $\CN$ if its projection mod $\ell$ does (alternatively, we apply the definition of
\cite{Be} directly to $\Shv_{\BZ_\ell,\on{et},\on{constr}}(S)$), then ind-extend;

\medskip

\noindent--In case (d') we apply the definition of \cite{Be} to $\Shv_{\BQ_\ell,\on{et},\on{constr}}(S))$,
then ind-extend.

\begin{rem}

Note that in all cases apart from (a), there exists another (a priori, larger) full subcategory of $\Shv(S)$, to be denoted
$\Shv^\wedge_{\CN}(S)$, that one could call ``sheaves with singular support in $\CN$". Namely, an
object belongs to $\Shv^\wedge_{\CN}(S)$ if its cohomology sheaves (with respect to the perverse t-structure)
belong to $\Shv_{\CN}(S)$.

\medskip

One can show that $\Shv^\wedge_{\CN}(S)$ identifies with the left completion of $\Shv_{\CN}(S)$ with
respect to its t-structure. What is not clear, however, is whether the category $\Shv^\wedge_{\CN}(S)$ is compactly
generated.

\medskip

Note that the embedding $\Shv_{\CN}(S)\hookrightarrow \Shv^\wedge_{\CN}(S)$ is \emph{not} always an equivalence
(equivalently, $\Shv^\wedge_{\CN}(S)$ is \emph{not} always generated by objects that are compact in $\Shv(S)$).
For example, this occurs in the example of $S=\BP^1$ and $\CN$ being the zero-section.

\end{rem}


\sssec{}

We will now show how to extend the above definition to the case when $S$ is not necessarily smooth.
Let $\CF$ be a coherent sheaf on $S$ (in cohomological degree $0$). Represent $\CF$ as
\begin{equation} \label{e:coker}
\on{coker}(\CE_1\to \CE_0),
\end{equation}
where $\CE_1$ and $\CE_0$ are locally free. Consider the total spaces of $\CE_i$ as group-schemes over $S$,
\begin{equation} \label{e:Tot}
\on{Tot}(\CE_i):=\Spec_S(\on{Sym}_{\CO_S}(\CE^\vee_i)).
\end{equation}

Consider the algebraic stack
$$\on{Tot}(\CF):=\on{Tot}(\CE_0)/\on{Tot}(\CE_1).$$

The object $\on{Tot}(\CF)$, viewed as an algebraic stack, depends on the presentation \eqref{e:coker}. But it is well-defined
in the localization of the category of algebraic stacks, where we invert morphisms
that are smooth, surjective and whose fibers are of the form $\on{pt}/H$ where $H$ is a vector group.

\medskip

We have a well-defined notion of a Zariski-closed subset of $\on{Tot}(\CF)$. For a choice of a presentation \eqref{e:coker},
they bijectively correspond to Zariski-closed subsets of $\on{Tot}(\CE_0)$ that are invariant under the action of $\on{Tot}(\CE_1)$.
This notion \emph{does not} depend on the presentation \eqref{e:coker}.

\sssec{}

Taking $\CF:=\Omega^1(S)$, we thus obtain an object $T^*(S):=\on{Tot}(\Omega^1(S))$ in the above localization of the category of algebraic stacks.
For a closed embedding
$$f:S\hookrightarrow S'$$
with $S'$ smooth, the codifferential map
\begin{equation} \label{e:codiff S'}
S\underset{S'}\times T^*(S') \to T^*(S)
\end{equation}
realizes $T^*(S)$ as a quotient of $S\underset{S'}\times T^*(S')$ by a vector group.

\sssec{}

Thus, we have a \emph{well-defined} notion of (conical) Zariski-closed subset of $T^*(S)$. We emphasize
that although $T^*(S)$ as an algebraic stack depends on some choices, the set of its Zariski-closed subsets
does not.

\medskip

Note also that the \emph{cotangent fibers}, i.e., the sets $T^*_s(S)$ for $s\in S(k)$, underlying the fibers of 
$T^*(S)$, are the classical
cotangent spaces, and as such do not depend on any choices.

\medskip

For $\CN\subset T^*(S)$ let
$$\CN'\subset S\underset{S'}\times T^*(S')\subset T^*(S')$$
be the preimage of $\CN$ under the map \eqref{e:codiff S'}. A subset $\CN$ as above is completely determined
by its fibers
$$\CN_s:=\CN\cap T^*_s(S)\subset T^*_s(S), \quad s\in S(k).$$

\medskip

We say that an object $\CF\in \Shv(S)$ has singular support in $\CN$ if $f_*(\CF)$ has singular support
in $\CN'$. It is not difficult to verify that this definition does not depend in the choice of the
embedding $f:S\to S'$.

\medskip

We shall say that $\CN\subset T^*(S)$ is half-dimensional if $\CN'\subset T^*(S')$
is such for some/any smooth $S'$.

\sssec{}

Let $\CY$ be an algebraic stack. For a coherent sheaf $\CF$ on $\CY$ we can talk about Zariski-closed subsets
of $\on{Tot}(\CF)$. Namely, they correspond bijectively to compatible families of Zariski-closed subsets of
$\on{Tot}(\CF|_S)$ for $S\in \on{Sch}^{\on{aff}}_{/\CY}$ (or, equivalently, $S\in \on{Sch}^{\on{aff}}_{/\CY,\on{sm}}$).

\medskip

Taking $\CF:=\Omega^1(\CY)$, we thus obtain a well-defined notion of (conical) Zariski-closed subset of
$T^*(\CY)$ (note that we are not even trying to define $T^*(\CY)$ itself; that said, as in the
case of schemes, the cotangent fibers $T^*_y(\CY)$, $y\in \CY(k)$, are the classical cotangent spaces,
and thus are well-defined).

\medskip

Thus, we can talk about Zariski-closed subsets $\CN\subset T^*(\CY)$. As in
the case of schemes, such $\CN$ is completely determined by the subsets
$$\CN_y:=\CN\cap T^*_y(\CY)\subset T^*_y(\CY), \quad y\in \CY(k).$$

\medskip

To a conical Zariski-closed subset $\CN\subset T^*(\CY)$ we associate a full subcategory
$$\Shv_\CN(\CY)\subset \Shv(\CY).$$

Namely, an object belongs to $\Shv_\CN(\CY)$ if for any smooth map $S\to \CY$
(for $S$ an affine scheme), its pullback to $S$ belongs to
$$\Shv_{\CN_S}(S)\subset \Shv(S),$$
where $\CN_S$ is the image of $\CN\underset{\CY}\times S$ under the co-differential
\begin{equation} \label{e:codiff Y}
T^*(\CY)\underset{\CY}\times S\to T^*(S).
\end{equation}

Here we are using the presentation of $\Shv(\CY)$ as in \secref{sss:Artin} (see also Remark \ref{r:a for Artin} in case (a)).

\medskip

In what follows we shall say that $\CN\subset T^*(\CY)$ is half-dimensional if its image is such under
\eqref{e:codiff Y} for some/any smooth cover $S\to \CY$.

\sssec{} \label{sss:defn lisse A}

Let $\CY$ be a smooth. In this case we define a full subcategory
$$\Shv_{\on{lisse}}(\CY)\subset \Shv(\CY).$$

\medskip

We first give the definition for affine schemes; for stacks, lisse would mean that the
pullback to affine schemes under smooth maps is lisse.

\medskip

For an affine scheme, we set:

\medskip

\noindent--In case (a), lisse means that each cohomology sheaf is locally constant;

\medskip

\noindent--In case (a'), lisse means a colimit of constructible locally constant objects;

\medskip

\noindent--In cases (b), (b') and (b''), lisse means a colimit of $\CO$-coherent objects;

\medskip

\noindent--In case (c), lisse means a colimit of constructible locally constant objects;

\medskip

\noindent--In case (d), lisse means a colimit of objects that are
constructible and locally constant (i.e., ones whose reduction mod $\ell$ is constructible and locally constant);

\medskip

\noindent--In case (d'), lisse means a colimit of objects that are
constructible and locally constant.


\medskip

We note that in all of the above case, being lisse is equivalent to belonging to $\Shv_{\CN}(\CY)$, where
$\CN$ is the zero-section.

\sssec{}

We are going to prove:

\begin{thm}  \label{t:vert sing supp}
Let $\CN\subset T^*(\CY)$ be half-dimensional. Let $X$ be a smooth scheme, assumed proper in all
cases apart from (a), (a') and (b''). Set
$$\CN':=\CN\times \{\text{zero-section}\}\subset T^*(\CY\times X).$$
Then the functor
$$\Shv_{\CN}(\CY)\otimes \Shv_{\on{lisse}}(X)\to \Shv_{\CN'}(\CY\times X)$$
is an equivalence.
\end{thm}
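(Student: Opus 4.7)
The plan is to analyze the external tensor product functor
$$\Phi: \Shv_{\CN}(\CY) \otimes \Shv_{\on{lisse}}(X) \to \Shv_{\CN'}(\CY \times X)$$
in three steps: well-definedness, fully faithfulness, and essential surjectivity. For well-definedness, I would verify the multiplicativity $\on{SS}(\CF \boxtimes \CG) \subseteq \on{SS}(\CF) \times \on{SS}(\CG)$, checking this smooth-locally on $\CY$ and reducing to the standard fact for schemes (classically \cite{KS} in case (a) and its analogs in other contexts). Fully faithfulness of $\Phi$ is then inherited from fully faithfulness of the unrestricted external product functor, which is \propref{l:external prod ff stacks} (an equivalence in cases (a), (a'), (b)), since all three of $\Shv_\CN(\CY)$, $\Shv_{\on{lisse}}(X)$, $\Shv_{\CN'}(\CY\times X)$ are full subcategories.

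For essential surjectivity, the goal is to show $\Shv_{\CN'}(\CY \times X)$ is generated by the essential image of $\Phi$; combined with fully faithfulness and compact generation of both sides (in ind-constructible contexts; in case (a) the argument is carried out in $\DGCat^{\on{discont}}$ using \thmref{t:external prod top}), this will upgrade $\Phi$ to an equivalence. First I would reduce to the case $\CY = S$ is a smooth affine scheme, using the presentation from \eqref{e:on prestack upper! via smooth}: both sides of $\Phi$ satisfy smooth descent along $\CY$ (the target because singular support is smooth-local; the source because the tensor product with the fixed dualizable category $\Shv_{\on{lisse}}(X)$ commutes with the relevant limits), so equivalence can be checked after restriction along a smooth affine cover.

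Once in the affine case, I would argue by comparing compact generators. Half-dimensionality of $\CN$ ensures $\Shv_{\CN}(S)$ is compactly generated (via holonomicity in the D-module contexts and Riemann–Hilbert / microlocal arguments elsewhere), and $\Shv_{\on{lisse}}(X)$ is compactly generated by local systems. External products of such compact generators give compact objects of $\Shv_{\CN'}(S \times X)$; the claim is that they generate. Concretely, given $\CH \in \Shv_{\CN'}(S \times X)^c$ annihilating all $\Maps(\CF \boxtimes \CG, -)$, I would use properness and smoothness of $X$ to apply proper base change and the projection formula along $\pi_X : S \times X \to S$, rewriting the Hom as
$$\Maps_{\Shv(S)}\bigl(\CF,\, \pi_{S,*}(\cHom(\pi_X^*\CG, \CH))\bigr),$$
so that vanishing for all compact $\CF \in \Shv_\CN(S)$ and all compact $\CG \in \Shv_{\on{lisse}}(X)$ translates into vanishing of $\pi_{S,*}(\cHom(\pi_X^*\CG, \CH))$ in $\Shv_\CN(S)$ for all such $\CG$.

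The main obstacle is this final step: turning the microlocal condition $\on{SS}(\CH) \subseteq \CN \times \{0_X\}$ into a concrete reconstruction of $\CH$ from its $\pi_X$-twists by local systems, so that the collective vanishing above forces $\CH = 0$. In the Betti context this is a relative form of the equivalence between lisse sheaves and $\pi_1$-representations; in the $\ell$-adic and D-module contexts one needs the corresponding incarnations, and properness of $X$ is precisely what makes the adjunction $\pi_X^* \dashv \pi_{X,*}$ behave well (compatibility with base change, preservation of compactness, absence of boundary contributions to singular support). The delicate feature is the asymmetry between $\CY$ and $X$: $\CH$ is only required to have zero singular support along $X$ while $\CN$ is arbitrary half-dimensional along $\CY$, so the microlocal analysis must isolate the $X$-direction carefully, which is where the half-dimensionality hypothesis enters to control the interaction of $\CN$ with the zero-section of $T^*X$.
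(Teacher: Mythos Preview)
Your reductions match the paper: the smooth-descent reduction to affine $\CY$, and full faithfulness via \propref{l:external prod ff stacks} (resp.\ \thmref{t:external prod top} in case (a)), are exactly what the paper does. The divergence is in essential surjectivity, and your proposed route has a genuine gap.

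Your orthogonality argument reduces to: if $\CH\in\Shv_{\CN'}(S\times X)$ satisfies $(\pi_S)_*\CHom(\pi_X^*\CG,\CH)=0$ for all compact lisse $\CG$, then $\CH=0$. But this is essentially a restatement of what you are trying to prove (that boxes generate), not a reduction of it. You acknowledge this as ``the main obstacle'' and gesture at ``a relative form of the equivalence between lisse sheaves and $\pi_1$-representations,'' but in the $\ell$-adic and holonomic D-module contexts there is no such description available, and even in case (a) turning the microlocal condition $\on{SS}(\CH)\subset\CN\times\{0\}$ into a reconstruction statement is exactly the content of the theorem. The paper does not attempt this route. In case (a) it chooses a $\mu$-stratification adapted to $\CN$, proves the stratified version $\Shv_{\on{str}}(\CY)\otimes\Shv_{\on{lisse}}(X)\simeq\Shv_{\on{str}'}(\CY\times X)$ stratum-by-stratum via \lemref{l:LS on top}, and then uses evaluation at a single $x\in X$ to detect membership in the $\CN$-subcategory. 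In the ind-constructible cases it argues by Noetherian induction on $\CY$: reduce to a single perverse $j'_{!*}(\CF_\CU)$, and then explicitly reconstruct it as $\pi_\CY^*(\CF_\CY)\underset{\CEnd(\CF_X)}\otimes\pi_X^*(\CF_X)$ over an open $\oCY\times X$, using properness of $X$ to ensure $\CF_\CY:=(\pi_\CY)_*(\CF\otimes\pi_X^*(\CF_X^\vee))$ lands in $\Shv_\CN(\CY)$.

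You also mislocate the role of half-dimensionality. It is not used ``to control the interaction of $\CN$ with the zero-section of $T^*X$.'' In the paper it enters at a single concrete point (\secref{sss:red open 2}): to show that the ramification divisor of the perverse sheaf has product form $D_\alpha\times X$. The argument is a dimension count---if an irreducible component $\CN_\alpha\subset\CN$ dominated $\CY$, half-dimensionality would force its fibers over $\CY$ to be zero-dimensional, contradicting the positive-dimensional fibers forced by ramification. Without this step one cannot arrange the open $\CU$ to be of the form $\oCY\times X$, and the explicit reconstruction does not get off the ground.
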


\begin{rem}
Note that if our sheaf-theoretic context is (b), the assumption that $\CN\subset T^*(\CY)$ is half-dimensional
implies that objects from $\Dmod_\CN(\CY)$ are automatically holonomic, i.e., this puts us into context (b').
\end{rem}

\ssec{Proof of \thmref{t:vert sing supp} in case (a)}

\sssec{}

The initial observation is the following:

\medskip

Let
$$Z\mapsto Z^{\on{top}}, \quad \on{Sch}\to \on{Top}$$
denote the functor that associates to a scheme over $\BC$ the topological space
underlying the corresponding analytic space.

\medskip

Let
$${\mathsf Y}\to {\mathsf Y}^{\on{sing}}, \quad \on{Top}\to \Spc$$
denote the functor of singular chains.

\medskip

We will denote the composite functor $\on{Sch}\to \Spc$ by
$$Z\mapsto Z^{\on{top,sing}}.$$

\medskip

Taking fibers at the points of $Z^{\on{top}}$ defines a functor
$$Z^{\on{top,sing}}\times \Shv_{\on{lisse}}(Z)\to \Vect,$$
i.e., a functor
\begin{equation} \label{e:LS on top}
{\mathsf r}:\Shv_{\on{lisse}}(Z)\to \LS(Z^{\on{top,sing}}).
\end{equation}

We have:

\begin{lem} \label{l:LS on top}
The functor \eqref{e:LS on top} is an equivalence.
\end{lem}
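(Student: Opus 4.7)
The plan is to reduce the statement to a general theorem about sheaves on nice topological spaces, which in turn is a known infinity-categorical upgrade of the classical equivalence between local systems and representations of the fundamental groupoid. Specifically, in context (a) the $\infty$-category $\Shv(Z)$ is by definition the $\infty$-category of (hypercomplete) sheaves of $\sfe$-complexes on the analytic space $Z^{\on{top}}$, and under this identification $\Shv_{\on{lisse}}(Z) \subset \Shv(Z)$ corresponds to the full subcategory of objects each of whose cohomology sheaves is locally constant. Moreover, the functor $\mathsf{r}$ becomes the one sending such an object to the $Z^{\on{top,sing}}$-indexed diagram of its stalks.

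Next, I would invoke the following general fact: for a topological space $\mathsf{Y}$ which is paracompact, locally contractible, and of finite covering dimension, the stalk functor defines a canonical equivalence
\[
\Shv_{\on{lisse}}(\mathsf{Y}) \;\iso\; \LS(\mathsf{Y}^{\on{sing}}) = \on{Funct}(\mathsf{Y}^{\on{sing}},\Vect).
\]
This is the infinity-categorical enhancement of the classical equivalence between local systems of sets (or abelian groups) and $\pi_1$-representations; in the present generality it can be extracted from \cite[Theorem A.4.19 and Sect. A.1]{Lu3}, or alternatively from \cite[Sect. 7.1]{Lu1}. The key inputs are: both sides satisfy hyperdescent for open covers; both send a contractible open to $\Vect$; and finite homotopical dimension guarantees that no completion issues obstruct passing from locally defined equivalences to a global one.

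To apply this general fact, it remains to check that $Z^{\on{top}}$ satisfies the required hypotheses. Since $Z$ is a scheme of finite type over $\BC$, the analytic space $Z^{\on{top}}$ admits a semi-algebraic triangulation (by Lojasiewicz--Hironaka); in particular it is a paracompact CW complex of finite dimension, and hence locally contractible and of finite homotopical dimension. This verifies the hypotheses and yields the desired equivalence.

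The main technical obstacle is the general topological equivalence cited above: the rest of the proof is essentially unwinding of definitions (how $\Shv(Z)$ is defined in context (a)) and a standard verification of point-set topological regularity for $Z^{\on{top}}$. The derived/infinity-categorical version of ``local systems $=$ representations of $\pi_1$" is nontrivial precisely because one must track higher homotopy coherently; fortunately this is by now a well-established piece of the theory, so the role of the present lemma is simply to package it in the form needed for the sequel.
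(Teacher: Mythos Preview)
Your proposal is correct, but it takes a different route from the paper's own argument. The paper proceeds via t-structures: it observes that both $\Shv_{\on{lisse}}(Z)$ and $\LS(Z^{\on{top,sing}})$ carry t-structures in which they are left- and right-complete and that $\mathsf{r}$ is t-exact; this reduces the question to full faithfulness on bounded objects and essential surjectivity on the hearts. The former is then identified with the classical fact that sheaf cohomology of a complex with locally constant cohomology can be computed via singular cochains, and the latter with the classical equivalence between (abelian) local systems and representations of the fundamental groupoid.

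Your approach, by contrast, appeals directly to an $\infty$-categorical theorem identifying locally constant (hyper)sheaves with functors out of the singular homotopy type, and then verifies that $Z^{\on{top}}$ is nice enough for that theorem to apply. This is efficient and situates the lemma as a special case of a general principle already recorded in the literature; the paper's argument is more elementary in that it only uses $1$-categorical inputs together with a completeness bootstrap. One small point you glide over: the paper defines $\Shv_{\on{lisse}}$ by the condition that each \emph{cohomology sheaf} be locally constant, whereas the theorems you cite are typically stated for objects that are locally constant in the $\infty$-categorical sense. These notions agree on a locally contractible space of finite homotopical dimension, but that identification is part of what needs to be said; it is exactly here that the paper's t-structure argument is doing honest work which your citation absorbs into the black box.
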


\begin{proof}

Both categories are equipped with t-structures, in which they are left and right complete, and
the functor ${\mathsf r}$ is t-exact. Hence, it is enough to show that the functor \eqref{e:LS on top} is
fully faithful on the bounded subcategories and essentially surjective on the hearts.

\medskip

The former is the expression of the fact that sheaf cohomology
of a complex with locally constant cohomology sheaves can be computed via singular cochains.

\medskip

The latter follows from the fact that both abelian categories in question identify with modules over
the fundamental groupoid of $Z^{\on{top}}$.

\end{proof}

\sssec{}  \label{sss:reduce sing to aff}

We proceed with the proof of \thmref{t:vert sing supp}.

\medskip

Since $\Shv_{\on{lisse}}(X)$ is dualizable (e.g., by \lemref{l:LS on top} and \propref{p:LS}(b)), by the argument of \cite[Chapter 3, Theorem 3.1.7]{GR1},
we have:
$$\Shv_{\CN}(\CY)\otimes \Shv_{\on{lisse}}(X)\simeq \underset{S\in (\affSch_{/\CY,\on{sm}})^{\on{op}}}{\on{lim}}\,
\Shv_{\CN_S}(S)\otimes \Shv_{\on{lisse}}(X).$$

\medskip

Similarly, a smooth descent argument shows that the functor
$$\Shv_{\CN'}(\CY\times X)\to \underset{S\in (\affSch_{/\CY,\on{sm}})^{\on{op}}}{\on{lim}}\,
\Shv_{\CN'_S}(S\times X)$$
is an equivalence.

\medskip

Hence, the assertion of the theorem reduces to the case when $\CY$ is an (affine) scheme.

\sssec{} \label{sss:ff ten prod}

Note that if $\CC_1$ is dualizable, and $\CC'_2\hookrightarrow \CC_2$ is a fully faithful embedding,
then
$$\CC_1\otimes \CC'_2\to \CC_1\otimes \CC_2$$
is also fully faithful (indeed, interpret $\CC_1\otimes -$ as $\on{Funct}_{\on{cont}}(\CC^\vee_1,-)$).

\medskip

Hence, the functors
$$\Shv_{\CN}(\CY)\otimes \Shv_{\on{lisse}}(X)\to
\Shv(\CY)\otimes \Shv_{\on{lisse}}(X)\to \Shv(\CY)\otimes \Shv(X)$$
are both fully faithful. Combined with \thmref{t:external prod top}, we obtain that the functor
\begin{equation} \label{e:ten prod top}
\Shv_{\CN}(\CY)\otimes \Shv_{\on{lisse}}(X)\to \Shv_{\CN'}(\CY\times X)
\end{equation}
is also fully faithful. Hence, to prove \thmref{t:vert sing supp}, it remains to prove that \eqref{e:ten prod top} is
essentially surjective.

%
%
%
%
%
%
%
%

\sssec{}

From this point, the proof is essentially borrowed from \cite[Page 20]{NY}.

%

\medskip

According to \cite[Corollary 8.3.22]{KS}, we can choose a
$\mu$-stratification of $\CY=\underset{\alpha}\cup\, \CY_\alpha$, such that $\CN$ is contained in the union of the
conormals to the strata. Consider the corresponding stratification $\CY_\alpha\times X$ of $\CY\times X$. This is still
a $\mu$-stratification.

\medskip

Let
$$\Shv_{\on{str}}(\CY)\subset \Shv(\CY) \text{ and } \Shv_{\on{str}'}(\CY\times X)\subset \Shv(\CY\times X)$$
denote the full subcategories consisting of objects locally constant along the strata. By \cite[Proposition 8.4.1]{KS},
we have
$$\Shv_\CN(\CY)\subset \Shv_{\on{str}}(\CY) \text{ and }
\Shv_{\CN'}(\CY\times X)\subset \Shv_{\on{str}}(\CY\times X).$$

\sssec{}

First, we claim that the functor
\begin{equation} \label{e:str ten prod}
\Shv_{\on{str}}(\CY)\otimes \Shv_{\on{lisse}}(X) \to \Shv_{\on{str}'}(\CY\times X)
\end{equation}
is an equivalence.

\medskip

By \secref{sss:ff ten prod}, this functor is fully faithful. To prove that it is essentially surjective,
by a Cousin argument, it reduces to the assertion that for each $\alpha$, the functor
$$\Shv_{\on{lisse}}(\CY_\alpha)\otimes \Shv_{\on{lisse}}(X)\to \Shv_{\on{lisse}}(\CY_\alpha\times X)$$
is an equivalence.

\medskip

However, this follows from \lemref{l:LS on top}, and the fact that for any two $Y_1,Y_2\in \Spc$, the functor
\begin{equation} \label{e:loc sys prod}
\LS(Y_1)\otimes \LS(Y_2)\to \LS(Y_1\times Y_2)
\end{equation}
is an equivalence (say, by \propref{p:LocSys as right adj}).

\sssec{}

Thus, it remains to show that if $\CF\in \Shv_{\on{str}}(\CY)\otimes \Shv_{\on{lisse}}(X)$ is an object whose image along
\eqref{e:str ten prod} belongs to $\Shv_{\CN'}(\CY\times X)$, then $\CF$ itself belongs to
$\Shv_\CN(\CY)\otimes \Shv_{\on{lisse}}(X)$.

\medskip

With no restriction of generality, we can assume that $X$ is connected, and let us choose a base point $x\in X$.
Interpreting $\Shv_{\on{lisse}}(X)$ via \lemref{l:LS on top}, we obtain that if
$$\CC'\hookrightarrow \CC$$
is a fully faithful map in $\DGCat$, then
$$\CC'\otimes \Shv_{\on{lisse}}(X)\to \CC\otimes \Shv_{\on{lisse}}(X)$$
is also fully faithful, and an object $c_X\in \CC\otimes \Shv_{\on{lisse}}(X)$ belongs to $\CC'\otimes \Shv_{\on{lisse}}(X)$
if and only if its essential image under the evaluation functor
$$\CC\otimes \Shv_{\on{lisse}}(X) \overset{\on{Id}\otimes \on{ev}_x}\longrightarrow \CC$$
belongs to $\CC'$.

\medskip

Consider the commutative diagram
$$
\CD
\Shv_\CN(\CY)\otimes \Shv_{\on{lisse}}(X) @>>> \Shv_{\on{str}}(\CY)\otimes \Shv_{\on{lisse}}(X) @>>>  \Shv_{\on{str}'}(\CY\times X)  \\
@V{\on{Id}\otimes \on{ev}_x}VV @VV{\on{Id}\otimes \on{ev}_x}V  @VVV  \\
\Shv_\CN(\CY) @>>> \Shv_{\on{str}}(\CY) @>{\on{Id}}>>  \Shv_{\on{str}}(\CY),
\endCD
$$
where the right vertical arrow is given by *-restriction along $\{x\}\hookrightarrow X$.

\medskip

We obtain that it suffices to show that the functor
$$
\CD
\Shv_{\on{str}'}(\CY\times X)  @<<< \Shv_{\CN'}(\CY\times X) \\
@VVV \\
\Shv_{\on{str}}(\CY)
\endCD
$$
takes values in $\Shv_\CN(\CY)$. This is a standard fact, but let us prove it for completeness.

\sssec{}

Let $\cD_x$ be an open disc around $x$. We have a commutative diagram
$$
\CD
\Shv_{\on{str}'}(\CY\times X)  @<<< \Shv_{\CN'}(\CY\times X) \\
@VVV @VVV  \\
\Shv_{\on{str}'}(\CY\times \cD_x)  @<<< \Shv_{\CN'}(\CY\times \cD_x),
\endCD
$$
so it suffices to show that the functor
$$
\CD
\Shv_{\on{str}'}(\CY\times \cD_x)  @<<< \Shv_{\CN'}(\CY\times \cD_x) \\
@VVV \\
\Shv_{\on{str}}(\CY)
\endCD
$$
takes values in $\Shv_\CN(\CY)$.

\medskip

However, by \eqref{e:str ten prod}, the functor
$\Shv_{\on{str}'}(\CY\times \cD_x) \to \Shv_{\on{str}}(\CY)$ is an equivalence, whose inverse is given by pullback.
In particular, this inverse functor preserves singular support and hence defines an equivalence
$$\Shv_\CN(\CY)\to \Shv_{\CN'}(\CY\times \cD_x),$$
as desired.

%
%
%

\ssec{Proof of \thmref{t:vert sing supp} in the ind-constructible contexts}

\sssec{}

The proof of \thmref{t:vert sing supp} in case (a') follows verbatim the argument in case (a).
Case (b'') follows from case (a') by Lefschetz principle and Riemann-Hilbert.

\medskip

We will now prove \thmref{t:vert sing supp} in cases (a') and (b')-(d') assuming that $X$ is
proper. 

\medskip

With no restriction of generality, we can assume that $X$ is connected.

\sssec{}  \label{sss:set up ind}

%

As in \secref{sss:reduce sing to aff}, we reduce the assertion to the case when $\CY$ is an affine
scheme.

\medskip

By \propref{l:external prod ff stacks} (e.g., using the argument in \secref{sss:ff ten prod}),
we know that the functor in question is fully faithful. Hence,
it remains to show that it is essentially surjective.

\medskip

Let $\CF$ be an object in $\Shv_{\CN'}(\CY\times X)$. Since our functor preserves colimits,
we can assume that $\CF$ is constructible.

\medskip

We will argue by Noetherian induction on $\CY$,
so we will assume that the support of $\CF$ is dominant over $\CY$
(otherwise, replace $\CY$ by the closure of the image of the support of $\CF$).

\sssec{}

We claim that in the constructible case, $\CF$ belongs to $\Shv_{\CN'}(\CY\times X)$ if and only if this holds
for all of its \emph{perverse} cohomology sheaves. Indeed:

\medskip

\noindent--In case (a') this follows from the fact that $\on{SingSupp}$ is measured by the functor
of vanishing cycles, which is t-exact for the perverse t-structure.

\medskip

\noindent--In cases (b'), (b'') this follows from the definition.

\medskip

\noindent--In cases (c), (d) and (d'), this follows from the definition of $\on{SingSupp}$ in \cite{Be} and the corresponding
fact for the ULA property, see \cite{Ga6}.

\medskip

For the same reason, for a short exact sequence of perverse sheaves
$$0\to \CF_1\to \CF\to \CF_2\to 0,$$
we have
$$\on{SingSupp}(\CF)=\on{SingSupp}(\CF_1)\cup \on{SingSupp}(\CF_2).$$

\sssec{} \label{sss:red open 0}

Hence, we can assume that $\CF$ is of the form
$$j'_{!*}(\CF_\CU),$$
for a smooth locally closed
$$\CU\overset{j'}\hookrightarrow \CY\times X$$
and $\CF_\CU\in \Shv_{\on{lisse}}(\CU)$ is \emph{perverse and lisse}.
With no restriction of generality we can assume that $\CU$ is connected.

\medskip

By the assumption in \secref{sss:set up ind}, our $\CU$ is dominant over $\CY$. We claim that we can assume that $\CU$ is of the form
$\oCY\times X$, where $\oCY\overset{j}\hookrightarrow \CY$ is an open subset. This reduction will be carried
out in Sects. \ref{sss:red open 1}-\ref{sss:red open 2}.

\sssec{}  \label{sss:red open 1}

By the transitivity property of $j_{!*}$, we can replace $\CU$ by any of its non-empty open subsets.
Let $\CY_1\subset \CY$ be a non-empty smooth open subset contained in the image of $\CU$.
Let $\CU_1$ be the preimage of $\CY_1$ in $\CU$.

\medskip

We claim that $\CU_1$ is dense in $\CY_1\times X$.
Indeed, let $\ol\CU_1$ denote the closure of $\CU_1$ in $\CY_1\times X$.
If $\CU_1$ were not dense in $\CY_1\times X$, the object $\CF|_{\CY_1\times X}$ would be the direct image under the closed embedding
$$\ol\CU_1\hookrightarrow \CY_1\times X,$$
and hence $\on{SingSupp}(\CF)$ would contain the conormals to $\ol\CU_1$ at each of its generic points. However, since
$\CU_1\to \CY_1$ is surjective, these
conormals are \emph{not} contained in
$$T^*(\CY_1)\times \{\text{zero-section}\},$$
contradicting the assumption on $\on{SingSupp}(\CF)$.

\sssec{} \label{sss:red open 2}

Let $\oCU$ be an open subset
$$\CU_1\subseteq \oCU\subseteq \CY_1\times X,$$
maximal with respect to the property that the restriction to it of $\CF|_{\CY_1\times X}$ is lisse.
We claim that $\oCU_2$ is all of $\CY_1\times X$. Once we prove this, we will be able to take $\oCY:=\CY_1$, and thus achieve the reduction claimed in
\secref{sss:red open 0}.

\medskip

To prove the desired form of $\CU_2$ we argue as follows. By purity,
$$D':=\CY_1\times X-\oCU$$
is a divisor. We want to show that each irreducible component $D'_\alpha$ of $D'$
is the preimage of a divisor in $\CY_1$. Since $\oCU\to \CY_1$ is surjective, this would
imply that $D'$ is empty.

\medskip

Since $\CF|_{\CY_1\times X}$ is ramified around $D'$, its singular support is \emph{not}
contained in the zero-section of $T^*(\CY_1\times X)$ near the generic point of
each irreducible component $D'_\alpha$ of $D'$. Hence, for every such
$D'_\alpha$, there exists an irreducible component
of $\CN'_\alpha\subset \on{SingSupp}(\CF)$ such that the projection
$$\CN'_\alpha\to T^*(\CY_1\times X)\to \CY_1\times X$$
maps to $D'_\alpha$ with positive-dimensional fibers.

\medskip

We now use the assumption that
$$\on{SingSupp}(\CF)\subset \CN\times \{\text{zero-section}\}.$$

We obtain that for each $\alpha$ there exists an irreducible component $\CN_\alpha$ of $\CN$ such that
$$\CN'_\alpha\subset \CN_\alpha\times X.$$

Let $D_\alpha$ be the (closure of the) image of $\CN_\alpha$ along the map
$$\CN_\alpha\hookrightarrow T^*(\CY_1)\to \CY_1.$$

We obtain a commutative diagram
$$
\CD
\CN'_\alpha @>>>  \CN_\alpha\times X \\
@VVV  @VVV  \\
D'_\alpha @>>> D_\alpha\times X.
\endCD
$$

In particular, we have an inclusion
\begin{equation} \label{e:divisors}
D'_\alpha \subseteq D_\alpha\times X.
\end{equation}

Thus, it suffices to show that $D_\alpha$ is a divisor.

\medskip

Suppose not. Then $D_\alpha$ would be all of $\CY_1$. Since
$\CN$ was assumed half-dimensional, we would obtain that $\CN_\alpha$ is the zero-section.
However, this would imply that $\CN'_\alpha$ is contained in the zero-section of $\CY_1\times X$.
However, this contradicts
the fact that the fibers of $\CN'_\alpha \to D'_\alpha$ are positive-dimensional.

\sssec{}

We will now explicitly exhibit $\oCF:=\CF|_{\oCY\times X}$ as lying in the essential image of
the functor \begin{equation} \label{e:lisse ten prod}
\Shv_{\on{lisse}}(\oCY)\otimes \Shv_{\on{lisse}}(X)\to \Shv_{\on{lisse}}(\oCY\times X).
\end{equation}

\medskip

Pick a point $y\in \oCY$ and let $\CF_X$ be the !-restriction of
$\oCF$ to $y\times X\subset \CY\times X$.
Let $\pi_\CY$ and $\pi_X$ denote the projections from $\CY\times X$ to $\CY$ and $X$;
let $\opi_\CY$ and $\opi_X$ denote their respective restrictions to $\oCY\times X$.

\medskip

Consider the object
$$\oCF_\CY:=(\opi_\CY)_*(\oCF\sotimes \opi{}^!_X(\BD(\CF_X)))\in \Shv(\oCY).$$

Since $X$ is proper, $\oCF_\CY$ is also lisse. 
The object $\oCF_\CY$ is acted on the left by the associative algebra
$$\CEnd(\CF^\vee_X)\simeq \CEnd(\CF_X)^{\on{rev}}.$$

By adjunction, we have a map 
$$\opi{}_\CY^!(\oCF_\CY)\otimes \opi{}_X^!(\CF_X)\simeq \oCF_Y\boxtimes \CF_X\simeq
\opi{}_\CY^*(\oCF_\CY)\otimes \opi{}_X^*(\CF_X)\to \oCF.$$

Moreover, this map factors via a map
\begin{equation} \label{e:map from ten prod}
\oCF_Y\underset{\CEnd(\CF_X)}\boxtimes \CF_X:=\opi{}_\CY^!(\oCF_\CY) \underset{\CEnd(\CF_X)}\otimes \opi{}_X^!(\CF_X)\to \oCF.
\end{equation}

\medskip

We claim that \eqref{e:map from ten prod} is an isomorphism. Indeed,
the !-fiber the map \eqref{e:map from ten prod} over $y$ identifies with
$$\on{C}^\bullet(X,\CF_X\sotimes \BD(\CF_X)) \underset{\CEnd(\CF_X)}\otimes  \CF_X\simeq
\CEnd(\CF_X) \underset{\CEnd(\CF_X)}\otimes  \CF_X\to \CF_X,$$
and hence is an isomorphism (in the above formula $\on{C}^\bullet(X,-)$ denotes the functor
of global sections, i.e., sheaf cohomology at the cochain level).

\medskip

Since both sides in \eqref{e:map from ten prod} are lisse sheaves and $\oCY\times X$ is connected, we obtain that
\eqref{e:map from ten prod} is an isomorphism.

\sssec{}

Set
$$\CF_\CY:=(\pi_\CY)_*(\CF\sotimes \pi^!_X(\BD(\CF_X)))\in \Shv(\CY).$$

Since singular support is preserved under direct images along proper maps, we have:
$$\CF_\CY\in \Shv_\CN(\CY).$$

Consider the object
$$\CF':=\CF_\CY \underset{\CEnd(\CF_X)}\boxtimes \CF_X\in \Shv_{\CN'}(\CY\times X):=
\pi_\CY^*(\CF_\CY) \underset{\CEnd(\CF_X)}\otimes \pi_X^*(\CF_X)\in \Shv_{\CN'}(\CY\times X).$$

By construction, it belongs to the essential image of the functor
$$\Shv_{\CN}(\CY)\otimes \Shv_{\on{lisse}}(X)\to \Shv_{\CN'}(\CY\times X).$$

Moreover, by adjunction, we have a map
$$\CF'\to \CF,$$
which becomes an isomorphism when restricted to $\oCY\times X$.
Passing to the fiber of this map we accomplish the induction step (in our Noetherian induction on $\CY$).

%
%
%
%
%
%
%
%

%

\section{Spectral action in the context of Geometric Langlands (after \cite{NY})} \label{s:NY}

In this section we will reprove a result from \cite{NY} that says that the subcategory of $\Shv(\Bun_G)$
consisting of objects whose singular support belongs to the nilpotent cone carries a canonical action of
$\Rep(\cG)^{\otimes X^{\on{top,sing}}}$, where $X^{\on{top,sing}}$ is the object of $\Spc$ corresponding to $X$.

\medskip

The proof we present will apply to any sheaf-theoretic context (see \thmref{t:NY}).

\ssec{The players}

\sssec{}

For the duration of this section we let $X$ be a smooth projective curve over $k$. Let $G$ be a reductive
group (over $k$).

\medskip

Let $\Bun_G$ denote the moduli stack of $G$-bundles on $X$.

\sssec{}

Recall that $T^*(\Bun_G)$ is the moduli space of pairs $(\CP,\xi)$, where $\CP$ is a $G$-bundle on $X$,
and $\xi$ is an element of $\Gamma(X,\fg^*_\CP\otimes \omega_X)$ where $\fg^*_\CP$ is the vector bundle
on $X$ associated to $\CP$ and the co-adjoint representation $\fg^*$ of $G$. 

\medskip 

Let
$$\on{Nilp}\subset T^*(\Bun_G)$$
be the nilpotent cone, defined to be the locus of pairs $(\CP,\xi)$, where $\xi$ is \emph{nilpotent}, i.e., maps to zero under the Chevalley map
$$\fg^*\to \fg^*/\!/\on{Ad}(G)\simeq \ft^*/\!/W.$$

\sssec{}

Let $\Shv(-)$ be any of the sheaf-theoretic contexts from \secref{sss:sheaves} in which the ring
$\sfe$ of coefficients is a field of characteristic $0$\footnote{The results of this section can be applied
to \emph{any} sheaf theoretic from \secref{sss:sheaves}, but one will need to modify the category $\Rep(\cG)$
appropriately.}. Our interest in this section is the category $\Shv(\Bun_G)$ and its full subcategory
$$\Shv_{\on{Nilp}}(\Bun_G)\subset \Shv(\Bun_G).$$

\medskip

Let $\cG$ be the Langlands dual of $G$, thought of as an algebraic group over $\sfe$. We let
$\Rep(\cG)$ denote the symmetric monoidal category of representations of $\cG$.
%
%
%

\ssec{The Hecke action}   \label{ss:Hecke}

In this subsection we will discuss the general formalism of Hecke action.

\medskip

\noindent {\it Convention}: for the duration of this subsection, when working in the sheaf-theoretic context (a), when we write
$\DGCat$ we actually mean $\DGCat^{\on{discont}}$, and when we write $\otimes$, we actually mean
$\overset{\on{discont}}\otimes$.

\sssec{}  \label{sss:abs action}

Let $A$ be an index category. Let
$$a\mapsto \CM_a \text{ and } a\mapsto \CC_a$$
be functors
$$\CM_A:A\to \DGCat \text{ and } \CC_A:A\to \DGCat^{\on{Mon}},$$
respectively.

\medskip

Then we can talk about an action of $\CC_A$ on $\CM_A$. Indeed, we can view $\CC_A$
as an associative algebra object in the category $\on{Funct}(A,\DGCat)$, equipped with the level-wise
(symmetric) monoidal structure.

\medskip

Let $\on{Act}(\CC_A,\CM_A)$
denote the space of such actions. In other words,
$$\on{Act}(\CC_A,\CM_A)=\Maps_{\BE_1(\on{Funct}(A,\DGCat))}(\CC_A,\ul\End(\CM_A)).$$

\medskip

Suppose now that we are given yet another functor
$$\CC'_A:A\to \DGCat^{\on{Mon}}, \quad a\mapsto \CC'_a,$$
equipped with a natural transformation
$$\CC'_A\to \CC_A.$$

Given an action of $\CC'_A$ on $\CM_A$ we can talk about an extension of this action to an action of $\CC_A$.
The space of such extensions is by definition
$$\on{Act}(\CC_A,\CM_A)\underset{\on{Act}(\CC'_A,\CM_A)}\times \{*\},$$
where $\{*\}\to \on{Act}(\CC'_A,\CM_A)$ is the initial action.

\sssec{}

We take $A:=\on{fSet}$. Take $\CC'_A$ to be the functor
\begin{equation} \label{e:Shv I functor}
I\mapsto \Shv(X^I),
\end{equation}
where $\Shv(X^I)$ is viewed as a (symmetric) monoidal category with respect to the $\sotimes$ operation.

\medskip

Take $\CC_A$ to be the functor
\begin{equation} \label{e:Rep functor}
I\mapsto \Rep(\cG)^{\otimes I}\otimes \Shv(X^I).
\end{equation}

\medskip

Take $\CM_A$ to be the functor
\begin{equation} \label{e:Shv BunG functor}
I\mapsto \Shv(\Bun_G\times X^I),
\end{equation}
equipped with a natural action of $\CC'_A$ (see \secref{sss:geom actions}).

\medskip

We will prove the following:

\begin{propconstr}  \label{p:Hecke}
There exists a canonical extension of the action of \eqref{e:Shv I functor} on \eqref{e:Shv BunG functor}
to an action of \eqref{e:Rep functor}.
\end{propconstr}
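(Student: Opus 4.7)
The plan is to construct the action via the Beilinson--Drinfeld Hecke correspondence together with factorizable geometric Satake. For each $I\in\on{fSet}$, I would introduce the Hecke stack $\on{Hecke}_{G,X^I}$ parametrizing data $(\CP_1,\CP_2,\ux,\alpha)$ with $\CP_i\in\Bun_G$, $\ux\in X^I$, and $\alpha:\CP_1|_{X\setminus\ux}\iso \CP_2|_{X\setminus\ux}$, equipped with the two projections $\hl,\hr:\on{Hecke}_{G,X^I}\to\Bun_G\times X^I$ and the projection $\pi_{X^I}$ to $X^I$. The fibers of $(\hl,\pi_{X^I})$ are twisted forms of the Beilinson--Drinfeld Grassmannian $\Gr_{G,X^I}$. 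Then I would invoke the factorizable version of geometric Satake to obtain, for each $I$, a symmetric monoidal functor
\[
\on{Sat}_I:\Rep(\cG)^{\otimes I}\otimes \Shv(X^I)\longrightarrow \Shv(\on{Hecke}_{G,X^I})^{\on{sph}}
\]
into the spherical ($L^+_{X^I}G$-equivariant) category over $X^I$, which carries the convolution monoidal structure.

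Next, I would define the candidate action of $r\in \Rep(\cG)^{\otimes I}\otimes \Shv(X^I)$ on $\CF\in\Shv(\Bun_G\times X^I)$ by
\[
r\star\CF:=(\hr)_!\bigl(\hl^!(\CF)\sotimes \on{Sat}_I(r)\bigr),
\]
up to the usual cohomological normalization. For fixed $I$, monoidality of $r\mapsto r\star(-)$ is the standard Hecke convolution argument: use the associativity diagram for the two-step Hecke stack $\on{Hecke}_{G,X^I}\underset{\Bun_G\times X^I}\times \on{Hecke}_{G,X^I}$, monoidality of $\on{Sat}_I$ with respect to convolution, and proper base change. The compatibility with the given $\Shv(X^I)$-action (which is exactly the part of the data coming from $\CC'_A$) is automatic because $\hl^!$, $\hr_!$, and $\on{Sat}_I$ are all linear over $\Shv(X^I)$ via $\pi_{X^I}$.

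The nontrivial content is functoriality in $I\in\on{fSet}$: for every $\phi:I\to J$ the induced diagonal $\Delta_\phi:X^J\to X^I$ together with the associated embedding of Hecke stacks must intertwine $r\star(-)$ with $\phi^*(r)\star(-)$, with all coherent higher compatibilities. This reduces to the fusion (factorization) property of geometric Satake, which identifies the $\Delta_\phi$-restriction of $\on{Sat}_I(r)$ with $\on{Sat}_J(\phi^*r)$, combined with base change for $\hl^!$ and $\hr_!$ along the relevant diagonals in $X^I$.

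I expect the main obstacle to be packaging all of this at the level of $\infty$-categorical coherences, i.e.\ producing an honest natural transformation of functors $\on{fSet}\to \DGCat^{\on{Mon}}$ rather than a list of compatibilities verified separately. The cleanest way is to first construct a single module-category structure on $\Shv(\Bun_G)$ over the factorization algebra $\on{Fact}(\Rep(\cG))_{\Ran(X)}$ in the spirit of \cite[Sect.~2.5]{Ga3}, and then obtain the $\on{fSet}$-diagram by restricting along $X^I\to \Ran(X)$; functoriality in $I$ becomes tautological in this framework, and the required space of actions is computed by a single universal property. A secondary technical point, relevant in every sheaf-theoretic context and in particular in context (a), is verifying that $\on{Sat}_I(r)$ is ULA over $X^I$ so that $(\hr)_!$ is continuous and lands inside $\Shv(\Bun_G\times X^I)$; this is exactly the input needed to make the convolution formula work as a morphism in $\DGCat$ (or $\DGCat^{\on{discont}}$) rather than only on the level of individual objects.
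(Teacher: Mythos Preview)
Your proposal is correct and follows essentially the same geometric route as the paper: Hecke correspondences plus factorizable geometric Satake, with convolution giving the action. The main difference lies in how the $\infty$-categorical coherences are packaged.

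The paper handles this by introducing an auxiliary index category $\on{Grpds}/\on{PreStk}/\on{Sch}$ of groupoids $\CH$ acting on a prestack $\CY$ over a base scheme $Z$, and observing that the assignment $(\CH,\CY,Z)\mapsto \Shv(\CH)$ (with convolution) acting on $(\CH,\CY,Z)\mapsto \Shv(\CY)$ is already a functor of $(\infty,2)$-categories, obtained by applying $\Shv_{\on{Corr}}$ to a functor valued in $\on{Corr}(\on{PreStk})_{\on{ind-sch,all}}$. The functor $I\mapsto (\on{Hecke}_I,\Bun_G\times X^I,X^I)$ lives entirely in the $(2,1)$-categorical world, so no higher coherences arise there. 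This reduces the problem to producing a natural transformation $\Rep(\cG)^{\otimes I}\to \Shv(\on{Hecke}^{\on{loc}}_I)$ of functors $\on{fSet}\to\DGCat^{\on{Mon}}$; the paper then factors through the \emph{local} Hecke stack and builds this natural transformation from the abelian-category Satake of \cite{MV} via the universal property of bounded derived categories and t-exactness. Your alternative suggestion---to construct a single module structure over $\on{Fact}(\Rep(\cG))_{\Ran(X)}$ and then restrict along $X^I\to\Ran(X)$---is a legitimate and closely related route; the paper's formalism has the advantage of avoiding the Ran space entirely at this stage and keeping the discussion uniform across all sheaf-theoretic contexts (including context (a), where one works in $\DGCat^{\on{discont}}$).

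Two minor remarks: the paper uses $(\hr)_*$ rather than $(\hr)_!$, consistent with its $f_*\circ g^!$ convention for $\Shv_{\on{Corr}}$ (these agree here since $\hr$ is ind-proper); and the ULA property of Satake sheaves is not invoked for the construction of the action itself---ind-schematicity of the groupoid projections is what makes pushforward well-defined---though it becomes relevant later when discussing the nilpotent singular support subcategory.
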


\begin{rem}
It would follow from the construction, that in the sheaf-theoretic context (a), the restriction of the action in \propref{p:Hecke}
along
$$\Rep(\cG)^{\otimes I}\otimes \Shv_{\on{lisse}}(X^I)\hookrightarrow \Rep(\cG)^{\otimes I}\otimes \Shv(X^I)$$
is given by \emph{continuous} functors, see Remark \ref{r:recover cont}.
\end{rem}

%
%
%
%
%
%
%
%
%
%
%

The rest of this subsection is devoted to the proof of \propref{p:Hecke}. It will be carried out using a certain
formalism explained below.

\sssec{}  \label{sss:geom actions}

Let $\on{PreStk}/\on{Sch}$ be the category of pairs
$$(\CY,Z),$$
where $Z$ is a scheme, $\CY$ is prestack over $Z$.

\medskip

Take $A=\on{PreStk}^{\on{op}}/\on{Sch}^{\on{op}}$. Take $\CM_A$ to be the functor
\begin{equation} \label{e:M geom}
(\CY,Z)\mapsto \Shv(\CY).
\end{equation}

Take $\CC'_A$ to be the functor
\begin{equation} \label{e:C' geom}
(\CY,Z)\mapsto \Shv(Z).
\end{equation}

Then we have a natural action of $\CC'_A$ on $\CM_A$, given by !-pullback along $\CY\to Z$.

\sssec{}  \label{sss:geom actions grpds}

Let $\on{Grpds}/\on{PreStk}/\on{Sch}$ be the category of triples
$$(\CH,\CY,Z),$$
where $(\CY,Z)$ are as above, and where $\CH\to \CY\underset{Z}\times \CY$ is a groupoid
acting on $\CY$ over $Z$, such that the projections $\CH\rightrightarrows \CY$ are ind-schematic.

\medskip
Note that $\Shv(\CH)$ is a monoidal category with respect to a convolution, defined to be the ind-extension 
of the $*$-convolution from compact objects.

\medskip

Take $A=\on{Grpds}^{\on{op}}/\on{PreStk}^{\on{op}}/\on{Sch}^{\on{op}}$.  We take $\CM_A$ to be the composition of the forgetful functor
\begin{equation} \label{e:forget groupoid}
\on{Grpds}/\on{PreStk}/\on{Sch}\to \on{PreStk}/\on{Sch}, \quad (\CH,\CY,Z)\mapsto (\CY,Z)
\end{equation}
and the functor \eqref{e:M geom}

\medskip

We take $\CC'_A$ to be the composition of the forgetful functor \eqref{e:forget groupoid} with \eqref{e:C' geom}.

\medskip

We take $\CC_A$ to be the functor that sends
$$(\CH,\CY,Z)\mapsto \Shv(\CH).$$

\medskip

Note that we have a canonical action of $\CC_A$ on $\CM_A$. We also have a canonical natural transformation
$$\CC'_A\to \CC_A,$$
given, level-wise, by the unit section of $\CH$.

\medskip

The induced action of $\CC'_A$ on $\CM_A$ is one from \secref{sss:geom actions}.

\begin{rem}
More formally, the above constructions should be spelled out as follows: we have the functors
$$(\CH,\CY,Z) \mapsto \CY,\,\, (\CH,\CY,Z) \mapsto Z,\,\, (\CH,\CY,Z) \mapsto \CH$$
that map $\on{Grpds}/\on{PreStk}/\on{Sch}$ to
$$\on{Corr}(\on{PreStk})_{\on{ind-sch,all}}, \,\, \on{ComAlg}(\on{Corr}(\on{PreStk})_{\on{ind-sch,all}}),\,\,
\on{AssocAlg}(\on{Corr}(\on{PreStk})_{\on{ind-sch,all}}),$$
respectively, and
$$(\CM_A,\CC'_A,\CC_A)$$
are obtained by composing these functors with the functor $\Shv_{\on{Corr}}$ of \secref{sss:corr}.
\end{rem}

\sssec{}

We have a canonically defined functor
\begin{equation} \label{e:Hecke groupoid}
\on{fSet}\to \on{Grpds}/\on{PreStk}/\on{Sch}, \quad I \mapsto \on{Hecke}_I/\Bun_G\times X^I/X^I,
\end{equation}
where $\on{Hecke}_I$ is the $I$-legged Hecke stack. (Note that $\on{Hecke}_I$ and $\Bun_G$ are \emph{ordinary}
prestacks, so the construction of the functor \eqref{e:Hecke groupoid} takes place in $(2,1)$-categories, i.e., involves
finitely many pieces of data.)

\medskip

Hence, in order to perform the construction in \propref{p:Hecke}, it suffices to construct a map (in the category $\on{Funct}(\on{fSet},\DGCat^{\on{Mon}})$)
from \eqref{e:Rep functor} to \begin{equation} \label{e:Hecke category}
I\mapsto \Shv(\on{Hecke}_I),
\end{equation}
extending the map from \eqref{e:Shv I functor}.

\sssec{}

Note, however, that in the context of \secref{sss:geom actions grpds}, the natural transformation
$$(\CH,\CY,Z)\mapsto  (\Shv(Z)\to \Shv(\CH))$$
as functors $\on{Grpds}/\on{PreStk}/\on{Sch}\to \DGCat^{\on{Mon}}$,
extends naturally to a natural transformation
$$(\CH,\CY,Z)\mapsto (\Shv(\CH)\otimes \Shv(Z)\to \Shv(\CH)),$$
expressing the fact that $\Shv(Z)$ maps to the center of $\Shv(\CH)$.

\medskip

Hence, it suffices to construct a map from the functor
\begin{equation} \label{e:Rep functor bis}
I\mapsto \Rep(\cG)^{\otimes I}.
\end{equation}
to \eqref{e:Hecke category}.

\sssec{}

The required natural transformation
\begin{equation} \label{e:geom Satake}
\Rep(\cG)^{\otimes I}\to \Shv(\on{Hecke}_I), \quad I\in \on{fSet}
\end{equation}
is given by what is known as the \emph{naive} geometric Satake functor. For completeness, we
will recall its construction.

\ssec{Digression: naive geometric Satake}

\sssec{}

Recall that $S$-points of the Hecke stack $\on{Hecke}_I$ are quadruples
$$(x^I,\CP',\CP'',\alpha),$$
where

\medskip

\noindent--$x^I=\{x^i,i\in I\}$ is an $I$-tuple of $S$-points of $X$;

\medskip

\noindent--$\CP'$ and $\CP''$ are $G$-bundles on $S\times X$;

\medskip

\noindent--$\alpha$ is an identification
between $\CP'$ and $\CP''$ on $S\times X-\underset{i}\cup\, \on{Graph}_{x^i}$.

\sssec{}  \label{sss:local Hecke}

We introduce the \emph{local} Hecke stack $\on{Hecke}^{\on{loc}}_I$ as follows. Its $S$-points are
quadruples
$$(x^I,\CP',\CP'',\alpha),$$
where:

\medskip

\noindent--$x^I$ is an $I$-tuple of $S$-points of $X$;

\medskip

\noindent--$\CP'$ and $\CP''$ are $G$-bundles on $\cD_{x^I}$--the parameterized formal disc
around $x^I$ (i.e., the completion of the graph of $S\times X$ along $\underset{i}\cup\, \on{Graph}_{x^i}$);

\medskip

\noindent--$\alpha$ is an identification
between $\CP'$ and $\CP''$ on $\ocD_{x^I}$--the parameterized formal punctured disc around $x^I$ (see \cite[Sect. 6.4.3]{Ga4}).

\sssec{}

Convolution defines on $\Shv(\on{Hecke}^{\on{loc}}_I)$ a structure of monoidal category, and the assignment
\begin{equation} \label{e:Hecke category loc}
I \mapsto \Shv(\on{Hecke}^{\on{loc}}_I)
\end{equation}
is a functor $\on{fSet}\to \DGCat^{\on{Mon}}$.

\medskip

Restriction along $\cD_{x^I}\to S\times X$ defines a map
$$\fr_I:\on{Hecke}_I\to \on{Hecke}^{\on{loc}}_I.$$

The functors $\fr_I^!$ give rise to a natural transformation from \eqref{e:Hecke category loc} to \eqref{e:Hecke category}.
Thus, it is sufficient to construct a natural transformation
\begin{equation} \label{e:geom Satake loc}
\Rep(\cG)^{\otimes I}\to \Shv(\on{Hecke}^{\on{loc}}_I), \quad I\in \on{fSet}.
\end{equation}

\begin{rem}
For the reader unwilling to consider the category $\Shv(\on{Hecke}^{\on{loc}}_I)$ ``as-is"
(for reasons that the prestack $\on{Hecke}^{\on{loc}}_I$ is not locally of finite type), it can
be equivalently defined as
$$\Shv(\Gr_{G,I})^{\fL^+(G)_I},$$
where

\medskip

\noindent--$\Gr_{G,I}$ is the $I$-legged version of the affine Grassmannian;

\medskip

\noindent--$\fL^+(G)_I$ is the group-scheme over $X^I$ of arcs into $G$.

\end{rem}

\sssec{}

Each of the categories $\Rep(\cG)^{\otimes I}$ is endowed with a t-structure, for which the monoidal structure is t-exact.
For a map $I_1\to I_2$ in $\on{fSet}$, the corresponding functor $\Rep(\cG)^{\otimes I_1}\to \Rep(\cG)^{\otimes I_2}$ is t-exact.

\medskip

Each of the categories $\Shv(\on{Hecke}^{\on{loc}}_I)$ is also endowed with a t-structure: this is the perverse t-structure shifted
so that the dualizaing sheaf on the unit section
$$X^I\to \on{Hecke}^{\on{loc}}_I$$
lies in the heart.

\medskip

The monoidal operation on $\Shv(\on{Hecke}^{\on{loc}}_I)$ is right t-exact, making $(\Shv(\on{Hecke}^{\on{loc}}_I))^\heartsuit$
into a monoidal abelian category. For a map $I_1\to I_2$ in $\on{fSet}$, the corresponding functor
$$\Shv(\on{Hecke}^{\on{loc}}_{I_1})\to \Shv(\on{Hecke}^{\on{loc}}_{I_2}),$$
which is given by !-pullback, is right t-exact.

\sssec{}

The starting point for the construction of the natural transformation \eqref{e:geom Satake loc}
is a natural transformation
\begin{equation} \label{e:I Satake}
(\Rep(\cG)^{\otimes I})^\heartsuit \simeq \Rep(\cG^I)^\heartsuit \to (\Shv(\on{Hecke}^{\on{loc}}_I))^\heartsuit,
\end{equation}
as functors $\on{fSet}\to \on{AbCat}^{\on{Mon}}$.

\medskip

The natural transformation \eqref{e:I Satake} is given by the geometric Satake
functor of \cite{MV}.

\begin{rem}
Although \cite{MV} was stated in the context of (a'), it applies in any (ind)-constructible context.
The construction in cases (a) and (b) follows from that in cases (a') and (b'), respectively.
\end{rem}

\sssec{}

Let us now explain how to use \eqref{e:I Satake} to obtain the desired natural transformation \eqref{e:geom Satake}.

\medskip

The first key property of \eqref{e:I Satake} is that for each $I\in \on{fSet}$, the corresponding functor is t-exact. Hence,
\eqref{e:I Satake} gives rise to a system of t-exact functors
\begin{equation} \label{e:I Satake D}
(\Rep(\cG)^{\otimes I})^b\simeq
D((\Rep(\cG)^{\otimes I})^\heartsuit)^b \to \Shv(\on{Hecke}^{\on{loc}}_I),
\end{equation}
equipped with a right-lax monoidal structure (here $D(-)^b$ stands for the \emph{bounded derived category of a given abelian category},
equipped with its natural DG structure, see \cite[Sect. 1.3.3]{Lu2}; we are using its universal property, which is a variant
of \cite[Theorem 1.3.3.2]{Lu2} for the bounded derived category).

\medskip

The second key property of \eqref{e:I Satake} is that the right-lax monoidal structure on \eqref{e:I Satake D} is strict.
Hence, precomposing with
$$(\Rep(\cG)^{\otimes I})^c\hookrightarrow (\Rep(\cG)^{\otimes I})^b$$
and ind-extending, we obtain the desired natural transformation \eqref{e:geom Satake loc}.

\ssec{Hecke action on the subcategory with nilpotent singular support}



\sssec{}

Let $\on{Nilp}\subset T^*(\Bun_G)$ be the nilpotent cone. Consider the full subcategory
$$\Shv_{\on{Nilp}}(\Bun_G)\subset \Shv(\Bun_G).$$

Our goal is to prove the following:

\begin{thmconstr}  \label{t:NY}
There exists a natural transformation between the following two functors
$$\on{fSet} \to \DGCat^{\on{Mon}}:$$
from the functor
$$I\mapsto \Rep(\cG)^{\otimes I}$$
to the functor
$$I\mapsto  \End(\Shv_{\on{Nilp}}(\Bun_G))\otimes \on{Shv}_{\on{lisse}}(X^I).$$
\end{thmconstr}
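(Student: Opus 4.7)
The plan is to combine \propref{p:Hecke} with the singular-support estimate of \cite[Theorem 6.1.1]{NY} and \thmref{t:vert sing supp}. First I would invoke \propref{p:Hecke} to obtain, as a natural transformation in $\on{Funct}(\on{fSet},\DGCat^{\on{Mon}})$, an action of $I\mapsto \Rep(\cG)^{\otimes I}\otimes \Shv(X^I)$ on $I\mapsto \Shv(\Bun_G\times X^I)$. Pre-composing with the natural map
$$\Rep(\cG)^{\otimes I}\to \Rep(\cG)^{\otimes I}\otimes \Shv(X^I),$$
induced by the unit $\sfe_{X^I}\in \Shv(X^I)$, gives a compatible family of monoidal functors $\Rep(\cG)^{\otimes I}\to \on{End}(\Shv(\Bun_G\times X^I))$.

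Next, the crucial geometric input is that this action preserves the full subcategory
$$\Shv_{\on{Nilp}\times\{0\}}(\Bun_G\times X^I)\subset \Shv(\Bun_G\times X^I),$$
where $\{0\}$ denotes the zero-section of $T^*(X^I)$. This is the singular-support statement of \cite[Theorem 6.1.1]{NY}: for any compact $V\in \Rep(\cG)^{\otimes I}$ and any $\CF\in \Shv_{\on{Nilp}}(\Bun_G)$, the object $H_V(\CF)\in \Shv(\Bun_G\times X^I)$ has singular support in $\on{Nilp}\times\{0\}$. Granting this, \thmref{t:vert sing supp} (applicable because $X^I$ is smooth proper and $\on{Nilp}\subset T^*(\Bun_G)$ is half-dimensional) supplies the equivalence
$$\Shv_{\on{Nilp}\times\{0\}}(\Bun_G\times X^I)\simeq \Shv_{\on{Nilp}}(\Bun_G)\otimes \Shv_{\on{lisse}}(X^I),$$
so we obtain a compatible family of monoidal functors
$$\Rep(\cG)^{\otimes I}\to \on{End}\bigl(\Shv_{\on{Nilp}}(\Bun_G)\otimes \Shv_{\on{lisse}}(X^I)\bigr).$$

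Finally, since the original action was by the \emph{tensor product} monoidal category $\Rep(\cG)^{\otimes I}\otimes \Shv(X^I)$, the $\Rep(\cG)^{\otimes I}$-action commutes with the $\Shv(X^I)$-action; restricting to the subcategory $\Shv_{\on{lisse}}(X^I)\subset \Shv(X^I)$ (which also preserves the nilpotent subcategory by \thmref{t:vert sing supp}), the induced endofunctors are $\Shv_{\on{lisse}}(X^I)$-linear. Using rigidity of $\Shv_{\on{lisse}}(X^I)$ (via \lemref{l:LS on top} and \propref{p:LS} this is a category of local systems, hence of the form $\Vect^Y$, which is manifestly rigid), one gets
$$\on{End}_{\Shv_{\on{lisse}}(X^I)}\bigl(\Shv_{\on{Nilp}}(\Bun_G)\otimes \Shv_{\on{lisse}}(X^I)\bigr)\simeq \on{End}(\Shv_{\on{Nilp}}(\Bun_G))\otimes \Shv_{\on{lisse}}(X^I),$$
which yields the desired natural transformation.

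The main obstacle, and the source of the geometric content, is the singular-support estimate invoked in the second step: one must analyze the conormal geometry of the Hecke correspondence $\on{Hecke}_I\to \Bun_G\times \Bun_G\times X^I$ and show that the singular support of $\on{Sat}(V)$ along the fibers, combined with $\on{Nilp}$ on the $\Bun_G$-side, pushes forward to something with vanishing cotangent component in the $X^I$-direction. This was carried out in \cite{NY} using the local model provided by the Beilinson--Drinfeld affine Grassmannian and the $\fL^+G$-equivariance of Satake sheaves; everything else in the argument is formal manipulation inside the Hecke action already established in \propref{p:Hecke}.
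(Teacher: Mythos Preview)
Your proposal is correct and follows the same route as the paper: invoke \propref{p:Hecke}, the singular-support estimate (stated in the paper as \thmref{t:preserve nilp} and proved in \secref{ss:preserve nilp}), then \thmref{t:vert sing supp}, and finally extract the natural transformation via $\Shv_{\on{lisse}}(X^I)$-linearity (which the paper packages abstractly in \secref{sss:act decomp}). One small point: your justification of rigidity of $\Shv_{\on{lisse}}(X^I)$ via \lemref{l:LS on top} is specific to sheaf-theoretic context (a), whereas \thmref{t:NY} is stated for all contexts---the paper's formulation in \secref{sss:act decomp} sidesteps this, and in any case only dualizability of $\Shv_{\on{lisse}}(X^I)$ is needed for the identification $\End_{\Shv_{\on{lisse}}(X^I)}(\Shv_{\on{Nilp}}(\Bun_G)\otimes \Shv_{\on{lisse}}(X^I))\simeq \End(\Shv_{\on{Nilp}}(\Bun_G))\otimes \Shv_{\on{lisse}}(X^I)$.
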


\sssec{}

Before we prove \thmref{t:NY}, let us explain how it recovers the result of \cite{NY}.

\medskip

Let us specialize to the sheaf-theoretic context (a) from \secref{sss:sheaves}.

\medskip

Recall (see \lemref{l:LS on top}) that in this case, we have a functorial identification
$$\on{Shv}_{\on{lisse}}(X^I)\simeq \LS((X^{\on{top,sing}})^I).$$

\medskip

Hence, combining with \secref{sss:monoidal actions}, we obtain:

\begin{cor}  \label{c:NY1}
We have a canonically defined action of $\Rep(\cG)^{\otimes X^{\on{top,sing}}}$ on $\Shv_{\on{Nilp}}(\Bun_G)$.
\end{cor}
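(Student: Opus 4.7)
The plan is a direct application of the framework developed in Section \ref{sss:monoidal actions} combined with \thmref{t:NY} and \lemref{l:LS on top}. First, I recall that by \secref{sss:monoidal actions}, an action of $\Rep(\cG)^{\otimes Y}$ on a DG category $\CM$ (as a monoidal category) is, by \propref{p:describe functors} applied to $\CC = \End(\CM)$, the same datum as a compatible family of monoidal functors
\[
\Rep(\cG)^{\otimes I}\to \End(\CM)\otimes \LS(Y^I), \quad I\in \on{fSet},
\]
functorial in $I$, together with a system of higher compatibilities encoded by the commutative squares \eqref{e:compat I J}.

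Taking $\CM := \Shv_{\on{Nilp}}(\Bun_G)$ and $Y := X^{\on{top,sing}}$, I would match such a datum to the natural transformation supplied by \thmref{t:NY}. For this, one needs an identification of functors $\on{fSet} \to \DGCat^{\on{Mon}}$:
\[
\bigl(I\mapsto \End(\Shv_{\on{Nilp}}(\Bun_G))\otimes \on{Shv}_{\on{lisse}}(X^I)\bigr)\;\simeq\;\bigl(I\mapsto \End(\Shv_{\on{Nilp}}(\Bun_G))\otimes \LS((X^{\on{top,sing}})^I)\bigr).
\]
The pointwise identification is the content of \lemref{l:LS on top} (recalling that in context (a) we have $\on{Shv}_{\on{lisse}}(X^I) = \Shv_{\CN_0}(X^I)$ for $\CN_0$ the zero section of $T^*(X^I)$, and noting that $(X^I)^{\on{top,sing}} \simeq (X^{\on{top,sing}})^I$).

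The main obstacle, which is purely bookkeeping, is verifying that the equivalence of \lemref{l:LS on top} is genuinely functorial in $I$, i.e., compatible with the $!$-pullback maps along the structure maps $X^I \to X^J$ arising from maps $J \to I$ in $\on{fSet}$. This is however automatic from the construction of the functor ${\mathsf r}$ in \eqref{e:LS on top}: the assignment $Z \mapsto Z^{\on{top,sing}}$ is a functor of $(\infty,1)$-categories, and ${\mathsf r}$ is defined as the fiber functor over points, which is evidently compatible with pullbacks. Granting this, tensoring with $\End(\Shv_{\on{Nilp}}(\Bun_G))$ (and using that $\LS(-)^{\otimes\cdot}$ acquires the tensor-product symmetric monoidal structure from \propref{p:LS}) promotes the natural transformation of \thmref{t:NY} to the datum of a compatible family of monoidal functors
\[
\Rep(\cG)^{\otimes I}\to \End(\Shv_{\on{Nilp}}(\Bun_G))\otimes \LS\bigl((X^{\on{top,sing}})^I\bigr), \quad I\in \on{fSet}.
\]
By \propref{p:describe functors} this is precisely the datum of a monoidal action of $\Rep(\cG)^{\otimes X^{\on{top,sing}}}$ on $\Shv_{\on{Nilp}}(\Bun_G)$, yielding \corref{c:NY1}.
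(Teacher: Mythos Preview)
Your proposal is correct and follows essentially the same approach as the paper's own proof: the paper simply notes that in context (a), \lemref{l:LS on top} gives a functorial identification $\Shv_{\on{lisse}}(X^I)\simeq \LS((X^{\on{top,sing}})^I)$, and then invokes \secref{sss:monoidal actions} to conclude. You have spelled out the same logic with more detail on the functoriality bookkeeping (a minor notational slip: the transition maps for the covariant functor $I\mapsto \Shv_{\on{lisse}}(X^I)$ come from $!$-pullback along $X^J\to X^I$ for a map $I\to J$ in $\on{fSet}$, not the other way around, but this does not affect the argument).
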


Finally, combining with \thmref{t:integral to LocSys}, we obtain:

\begin{cor}  \label{c:NY2}
There is a canonically defined action of the (symmetric) monoidal category
$\QCoh(\LocSys_\cG(X^{\on{top,sing}}))$ on $\Shv_{\on{Nilp}}(\Bun_G)$.
\end{cor}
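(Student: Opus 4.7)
The plan is to deduce the theorem from \propref{p:Hecke} by cutting the Hecke action down to the nilpotent subcategory and then transporting the $\Shv(X^I)$-factor into $\Shv_{\on{lisse}}(X^I)$ using \thmref{t:vert sing supp}. First, applying \propref{p:Hecke} and restricting the action along the obvious monoidal inclusion, for each $I\in \on{fSet}$ we obtain a monoidal functor
\begin{equation*}
\Rep(\cG)^{\otimes I}\to \Rep(\cG)^{\otimes I}\otimes \Shv(X^I)\to \End(\Shv(\Bun_G\times X^I)),
\end{equation*}
and these assemble into a natural transformation of functors $\on{fSet}\to \DGCat^{\on{Mon}}$. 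Equivalently, this packages as a compatible family of monoidal functors
\begin{equation*}
\Rep(\cG)^{\otimes I}\to \End(\Shv(\Bun_G))\otimes \Shv(X^I),
\end{equation*}
using the identification $\Shv(\Bun_G\times X^I)\simeq \Shv(\Bun_G)\otimes \Shv(X^I)$ coming from \lemref{l:external prod ff stacks} (and \thmref{t:external prod top} in the topological case)---with the caveat that in the ind-constructible contexts we only have a fully faithful embedding, but the external tensor product is what we need to identify the endomorphism object.

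The main obstacle, and the true content of the result, is a singular support estimate: for any compact $V\in \Rep(\cG)^{\otimes I}$, the image of the Hecke kernel $\on{Sat}(V)\in \Shv(\on{Hecke}^{\on{loc}}_I)$ under the natural transformation of the previous paragraph sends $\Shv_{\on{Nilp}}(\Bun_G)\otimes \Shv(X^I)$ to sheaves on $\Bun_G\times X^I$ whose singular support is contained in
\begin{equation*}
\on{Nilp}\times\{\text{zero-section}\}\subset T^*(\Bun_G)\times T^*(X^I).
\end{equation*}
This is the key estimate of \cite[Theorem 6.1.1]{NY}, whose proof analyzes the singular support of the Hecke correspondence using global nilpotence on $\Bun_G$ (preserved since the modification is an isomorphism away from the legs) together with the fact that in the horizontal $X^I$-direction, the singular support is forced to vanish because varying the legs moves them in only one ``half'' of the cotangent fiber---the other half is killed by taking the !-pullback to the closed point over the leg (this is the crucial half-dimensionality input).

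Granting this estimate, \thmref{t:vert sing supp} applies to $\CY=\Bun_G$, $\CN=\on{Nilp}$ (which is half-dimensional by a theorem of Laumon/Faltings), and $\CN'=\on{Nilp}\times \{\text{zero-section}\}\subset T^*(\Bun_G\times X^I)$, yielding
\begin{equation*}
\Shv_{\on{Nilp}}(\Bun_G)\otimes \Shv_{\on{lisse}}(X^I)\iso \Shv_{\CN'}(\Bun_G\times X^I);
\end{equation*}
note that $X^I$ is smooth and proper, so the hypotheses of the theorem are satisfied in every sheaf-theoretic context. Hence, post-composing the Hecke action with the projection to the $\on{Nilp}$-subcategory and using this equivalence, we obtain for each $I$ a monoidal functor
\begin{equation*}
\Rep(\cG)^{\otimes I}\to \End(\Shv_{\on{Nilp}}(\Bun_G))\otimes \Shv_{\on{lisse}}(X^I).
\end{equation*}

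For the naturality in $I$, the remaining check is that these functors are compatible with the structure maps of both sides. On the source, the structure maps are built from the monoidal structure on $\Rep(\cG)$ and diagonals $X^{I_2}\to X^{I_1}$. On the target, the structure maps on $\End(\Shv_{\on{Nilp}}(\Bun_G))\otimes \Shv_{\on{lisse}}(X^I)$ are induced from !-pullback along the diagonals; the compatibility follows from the corresponding compatibility for the family \eqref{e:Hecke category} of Hecke stacks, which is built into the construction of \propref{p:Hecke} and of the naive Satake functor \eqref{e:geom Satake loc}, combined with the functoriality of the equivalence of \thmref{t:vert sing supp} in smooth pullbacks in the $X^I$-direction. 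Thus the family assembles into the required natural transformation in $\on{Funct}(\on{fSet},\DGCat^{\on{Mon}})$.
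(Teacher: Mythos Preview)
Your argument has a genuine gap: it stops at the wrong target. What you construct by the end is the natural transformation
\[
\Rep(\cG)^{\otimes I}\to \End(\Shv_{\on{Nilp}}(\Bun_G))\otimes \Shv_{\on{lisse}}(X^I),\quad I\in\on{fSet},
\]
which is the content of \thmref{t:NY}, not of \corref{c:NY2}. You never pass from this to an action of $\QCoh(\LocSys_\cG(X^{\on{top,sing}}))$. The missing steps are: (a) identify $\Shv_{\on{lisse}}(X^I)\simeq\LS((X^{\on{top,sing}})^I)$ via \lemref{l:LS on top} (this is where the Betti context~(a) is used); (b) use \secref{sss:monoidal actions} (equivalently \propref{p:describe functors}) to recognize such a compatible family as an action of $\Rep(\cG)^{\otimes X^{\on{top,sing}}}$ on $\Shv_{\on{Nilp}}(\Bun_G)$; and (c) invoke \thmref{t:integral to LocSys} to identify $\Rep(\cG)^{\otimes X^{\on{top,sing}}}\simeq\QCoh(\LocSys_\cG(X^{\on{top,sing}}))$. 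Step~(c) is precisely the content of \corref{c:NY2} over and above \corref{c:NY1}, and it appears nowhere in your write-up.

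There is also a smaller issue in your first paragraph: the claim that the Hecke action ``equivalently packages as'' a monoidal functor $\Rep(\cG)^{\otimes I}\to \End(\Shv(\Bun_G))\otimes \Shv(X^I)$ is not correct for the full $\Shv(\Bun_G)$. The action is $\Shv(X^I)$-linear on $\Shv(\Bun_G\times X^I)$, but $\Shv(X^I)$ under $\sotimes$ is not rigid, so $\Shv(X^I)$-linear endofunctors are not $\End(\Shv(\Bun_G))\otimes\Shv(X^I)$; concretely, $\on{H}(V,\CF)$ does not in general split as an external product. The paper keeps the action on $\Shv(\Bun_G\times X^I)$, uses \thmref{t:preserve nilp} to show it preserves $\Shv_{\on{Nilp}'}(\Bun_G\times X^I)$, and only \emph{then} applies \thmref{t:vert sing supp} to split this as $\Shv_{\on{Nilp}}(\Bun_G)\otimes\Shv_{\on{lisse}}(X^I)$; the formal trick of \secref{sss:act decomp} then extracts the natural transformation. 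Your ingredients are right but the order of operations is off.
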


\sssec{}

Note that by \lemref{l:LS on top}, the stack $\LocSys_\cG(X^{\on{top,sing}})$ that appears in \corref{c:NY2} identifies canonically with the Betti
version of the stack of $\cG$-local systems on $X$.

\ssec{Proof of \thmref{t:NY}}

\sssec{}

Consider the full subcategory
$$\Shv_{\on{Nilp}'}(\Bun_G\times X^I)\subset \Shv(\Bun_G\times X^I),$$
where
$$\on{Nilp}':=\on{Nilp}\times \{\text{zero-section}\}\subset T^*(\Bun_G\times X^I).$$

\medskip


The crucial ingredient for the proof is the following
geometric assertion:

\begin{thm}[\cite{NY}] \label{t:preserve nilp}
For every $I\in \on{fSet}$, the action of $\Rep(\cG)^{\otimes I}$ on $\Shv(\Bun_G\times X^I)$ preserves
the full subcategory
$$\Shv_{\on{Nilp}'}(\Bun_G\times X^I)\subset \Shv(\Bun_G\times X^I).$$
\end{thm}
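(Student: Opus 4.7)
The plan is to realize the Hecke action through the correspondence $\Bun_G\times X^I \xleftarrow{\hr} \on{Hecke}_I \xrightarrow{\hl} \Bun_G\times X^I$ constructed in \secref{ss:Hecke}, so that for $V\in \Rep(\cG)^{\otimes I}$ with Satake image $\mathrm{Sat}(V)\in \Shv(\on{Hecke}_I)$, the action on $\CF\in \Shv(\Bun_G\times X^I)$ is given by $\hl_*(\hr^!(\CF)\sotimes \mathrm{Sat}(V))$. The statement to prove is then a singular-support estimate for this correspondence: if $\on{SingSupp}(\CF)\subset\on{Nilp}'$, then $\on{SingSupp}(\hl_*(\hr^!(\CF)\sotimes \mathrm{Sat}(V)))\subset\on{Nilp}'$ as well. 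This decomposes into two geometric inputs.

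First, I would control the singular support of $\mathrm{Sat}(V)$ itself on $\on{Hecke}_I$. The naive Satake sheaf is pulled back from the local Hecke stack $\on{Hecke}^{\on{loc}}_I$ of \secref{sss:local Hecke}, where it is $\fL^+(G)_I$-equivariant and ULA over $X^I$. The ULA property means that its singular support lies in the zero section along $X^I$, while along the $\Gr_{G,I}$ direction it is contained in the union of conormals to the Schubert strata. Combined with the projection from $\on{Hecke}_I$ to $\on{Hecke}^{\on{loc}}_I$, one obtains an explicit half-dimensional conical subset $\CL_I\subset T^*(\on{Hecke}_I)$ containing $\on{SingSupp}(\mathrm{Sat}(V))$, whose projection to the $X^I$-direction is zero.

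Second, I would apply the standard estimates: $\on{SingSupp}(\hr^!\CF\sotimes \mathrm{Sat}(V))$ is bounded by $d\hr^{-1}(\on{Nilp}')+\CL_I$, and $\on{SingSupp}(\hl_*(-))$ is bounded by the image under $d\hl$ of what sits over the proper support of $\hl$. The key computation is then tangent/cotangent-theoretic: at a point $(x^I,\CP',\CP'',\alpha)$ of $\on{Hecke}_I$, the difference of the codifferentials $d\hl-d\hr$ into $T^*(\Bun_G)$ lands in Higgs fields with poles controlled by $x^I$, and a Kostant-section/Mirković--Vilonen-cycle argument shows that nilpotency of both input and ``vertical'' directions forces the output Higgs field to remain nilpotent. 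The vanishing in the $X^I$ direction follows from the ULA property together with the fact that $\hl$ and $\hr$ agree on the $X^I$ factor.

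The hard part will be the nilpotency step itself, namely verifying that the sum-of-sets estimate $d\hl(d\hr^{-1}(\on{Nilp}') + \CL_I)$ is actually contained in $\on{Nilp}'$. This is where the specific structure of the nilpotent cone is used: one reduces to a pointwise statement about the residue pairing between a Higgs field in $\Gamma(X,\fg^*_{\CP'}\otimes\omega_X)$ that is nilpotent away from $x^I$ and a local modification in the affine Grassmannian, and shows that such a pairing produces a new Higgs field still of nilpotent type. I would organize this as a local calculation at each leg, reducing by factorization of the Hecke stack along $X^I$ to the $I=\{*\}$ case treated in \cite{NY}, where the analysis is carried out using the Beilinson-Drinfeld Grassmannian and its stratification; the compatibility across different $I$ needed to apply this reduction follows from the factorization structure built into \eqref{e:Hecke groupoid}.
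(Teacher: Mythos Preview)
Your overall strategy---realize the Hecke action as pull--tensor--push through the Hecke correspondence and bound singular support via the standard smooth-pullback and proper-pushforward estimates---matches the paper. The reduction to $I=\{*\}$ is also fine (the paper uses associativity of the action rather than factorization, but either works). However, you have the relative difficulty of the two key verifications inverted, and the step you treat as easy has a genuine gap.

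Nilpotency of the output Higgs field $\xi''$ is in fact a one-liner: the modification $\alpha$ identifies $\xi'|_{\ocD_x}$ with $\xi''|_{\ocD_x}$, so nilpotence transfers directly---no Kostant-section or MV-cycle argument is needed. The real content is showing that the $X$-component $\xi_X$ vanishes, and ``ULA plus $\hl,\hr$ agree on the $X^I$ factor'' does not accomplish this. The point is that there is no canonical projection $T^*_{(x,\CP',\CP'',\alpha)}(\on{Hecke}_X)\to T^*_x(X)$; one must choose a splitting of the relevant short exact sequence. The crystal structure on $\on{Hecke}_X$ over $X$ provides one splitting, and for \emph{this} splitting $(d\hl)^*$ and $(d\hr)^*$ both project to zero, reducing the question to: the image of $\xi_H\in\on{SingSupp}(\CS_V)$ under the crystal splitting $T^*(\on{Hecke}^{\on{loc}}_X)\to T^*_x(X)$ must vanish. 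But ULA (equivalently, that $\CS_V$ is pulled back from the fiber under $\on{Hecke}^{\on{loc}}_{\BA^1}\simeq\on{Hecke}^{\on{loc}}_0\times\BA^1$) only gives vanishing for the \emph{product} splitting, which is different. The discrepancy at a point represented by $g=u^\lambda$ is the residue pairing $\langle\lambda,\xi_0\rangle$ with the constant term $\xi_0$ of $\xi_{H_x}$. To show this vanishes one must use that $\xi_{H_x}$ is nilpotent (inherited from $\xi'$ via the fiberwise cotangent equation) together with the triangular decomposition relative to $\lambda$: the integrality condition $\xi\in\on{Ad}_{u^\lambda}(\fg^*\qqart\,du)$ kills the negative part of $\xi_0$, forcing $\xi_0^0\in\fm_\lambda$ to be nilpotent, and a nilpotent element of $\fm_\lambda$ pairs to zero with the central element $\lambda$. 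This residue computation is the actual substance of the proof; your ULA remark places the residue pairing in the wrong step and does not account for the two distinct splittings.
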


We will prove \thmref{t:preserve nilp} in \secref{ss:preserve nilp} below.

\medskip

Let us show how \thmref{t:preserve nilp} leads to the construction of the natural transformation in \thmref{t:NY}.

\sssec{}  \label{sss:abs action restricted}

Let us return to the situation of \secref{sss:abs action}. Let us be given an action of $\CC_A$ on $\CM_A$, and let
$$\CM'_A:A\to \DGCat, \quad a\mapsto \CM'_a$$
be another functor. Let $\CM'_A$ be equipped with a natural transformation to $\CM_A$, such that for every
$a\in A$, the corresponding functor
$$\CM'_a\to \CM_a$$
is fully faithful.

\medskip

Assume being given an action of $\CC_A$ on $\CM_A$ so that for every $a\in A$, the action of $\CC_a$ on $\CM_a$
preserves the subcategory $\CM'_a$. In this case we obtain an action of $\CC_A$ on $\CM'_A$.

\sssec{}

Let us take $A=\on{fSet}$, and $\CC_A$ be the functor
\begin{equation} \label{e:Rep functor bis bis}
I\mapsto \Rep(\cG)^{\otimes I}\otimes \Shv_{\on{lisse}}(X^I).
\end{equation}

We take $\CM_A$ to be the functor \eqref{e:Shv BunG functor}. We let $\CC_A$ act on
$\CM_A$ by precomposing the action of \propref{p:Hecke} with the natural transformation
$$\Rep(\cG)^{\otimes I}\otimes \Shv_{\on{lisse}}(X^I)\to \Rep(\cG)^{\otimes I}\otimes \Shv(X^I), \quad I\in \on{fSet}.$$

\medskip

We take $\CM'_A$ to be the family of full subcategories
$$\Shv_{\on{Nilp}'}(\Bun_G\times X^I)\subset \Shv(\Bun_G\times X^I),\quad I\in \on{fSet}.$$

Thus, by \secref{sss:abs action restricted} and \thmref{t:preserve nilp} we obtain an action of
\eqref{e:Rep functor bis bis} on
\begin{equation} \label{e:Shv BunG functor nilp}
I\mapsto \Shv_{\on{Nilp}'}(\Bun_G\times X^I).
\end{equation}

\sssec{}  \label{sss:act decomp}

Let us be again in the situation of \secref{sss:abs action}, and let us be given an action of $\CC_A$ on $\CM_A$.
Assume now that $\CC_A$ arises as a tensor product
$$\CC^1_A\otimes \CC^2_A,$$
Assume also that $\CM_A$ arises as as
$$\CM^1\otimes \CC^2_A,$$
for some fixed DG category $\CM^1$.

\medskip

Consider the action of $\CC^2_A$ on $\CM$ given by right multiplication along the second factor.
Then the datum of extension of this action of $\CC^2_A$ to an action of $\CC_A$
is equivalent to that of a natural transformation
$$\CC^1_A\to \End(\CM^1)\otimes (\CC^2_A)^{\on{rev}}$$
of functors $A\to \DGCat^{\on{Mon}}$.

\sssec{}

We apply this to $\CC^1_A$ being the functor \eqref{e:Rep functor bis} and $\CC^2_A$ being the functor
$$I\mapsto \Shv_{\on{lisse}}(X^I).$$

Take $\CM^1=\Shv_{\on{Nilp}}(\Bun_G)$. Now, by \thmref{t:vert sing supp}, the natural transformation
$$\Shv_{\on{Nilp}}(\Bun_G)\otimes \Shv_{\on{lisse}}(X^I)\to \Shv_{\on{Nilp}'}(\Bun_G\times X^I)$$
is an isomorphism.

\medskip

Hence, by \secref{sss:act decomp}, the action of \eqref{e:Rep functor bis bis} on \eqref{e:Shv BunG functor nilp}
gives rise to a sought-for natural transformation
$$\Rep(\cG)^{\otimes I}\to  \End(\Shv_{\on{Nilp}}(\Bun_G))\otimes \on{Shv}_{\on{lisse}}(X^I), \quad I\in \on{fSet}.$$

\qed[\thmref{t:NY}]

\ssec{Preservation of nilpotence of singular support} \label{ss:preserve nilp}

The goal of this subsection is to prove \thmref{t:preserve nilp}. The proof is essentially a paraphrase of \cite{NY}.

\sssec{}

By \thmref{t:vert sing supp}, we have to show that for $\CF\in \Shv_{\on{Nilp}}(\Bun_G)$ and any
$V\in \Rep(\cG)^{\otimes I}$, we have
\begin{equation} \label{e:sing supp belongs}
\on{H}(V,\CF)\in 
\Shv_{\on{Nilp}'}(\Bun_G\times X^I)\subset \Shv(\Bun_G\times X^I),
\end{equation}
where $H(-,-)$ denotes the Hecke action
$$\Rep(\cG)^{\otimes I}\otimes \Shv(\Bun_G)\to \Shv(\Bun_G\times X^I).$$

\medskip

By the associativity of the Hecke action, we can assume that $I=\{*\}$.

\sssec{}


Let $\on{Hecke}_X$ denote the 1-legged \emph{global} Hecke stack.
Let $\on{Hecke}^{\on{loc}}_X$ denote the 1-legged \emph{local} Hecke stack, defined in \secref{sss:local Hecke}.
Let $\fr$ denote the restriction map
$$\on{Hecke}_X\to \on{Hecke}^{\on{loc}}_X.$$

\medskip

We have the diagram
\begin{equation} \label{e:Hecke diagram}
\CD
& & X \\
& & @AA{\pi}A  \\
& & \on{Hecke}^{\on{loc}}_X \\
& & @AA{\fr}A  \\
\Bun_G  @<{\hl}<<  \on{Hecke}_X  @>{\hr}>>  \Bun_G.
\endCD
\end{equation}

\medskip

Denote also
$$\fs:=\pi\circ \fr:\on{Hecke}_X\to X.$$

\medskip

With these notations, we have:
$$\on{H}(V,\CF)=(\hr\times \fs)_* \circ (\hl\times \fr)^!(\CF\boxtimes \CS_V),$$
where $\CS_V\in \Shv(\on{Hecke}^{\on{loc}}_X)$ corresponds to $V\in \Rep(\cG)$
by geometric Satake.

\sssec{}

Note that the map
$$\hl\times \fs:  \on{Hecke}_X\to \Bun_G\times  \on{Hecke}^{\on{loc}}_X$$ is (pro)-smooth,
while the map
$$\hr\times \fs: \on{Hecke}_X\to \Bun_G\times X$$
is (ind)-proper.

\medskip

For a given point $(x,\CP',\CP'',\alpha)\in \on{Hecke}_X$ consider the corresponding diagram of cotangent spaces
\begin{equation} \label{e:Hecke diagram cotan}
\CD
& & T^*_x(X) \\
& & @VV{(d\pi)^*}V \\
& & T^*_{(x,\CP',\CP'',\alpha)|_{\cD_x}}(\on{Hecke}^{\on{loc}}_X)  \\
& & @VV{(d\fr)^*}V  \\
T^*_{\CP'}(\Bun_G)  @>{(d\hl)^*}>>  T^*_{(x,\CP',\CP'',\alpha)}(\on{Hecke}_X)  @<{(d\hr)^*}<< T^*_{\CP''}(\Bun_G).
\endCD
\end{equation}

Hence, in order to prove \eqref{e:sing supp belongs}, it suffices to show the following.
Consider a quadruple of elements
$$\xi'\in T^*_{\CP'}(\Bun_G),\,\, \xi''\in T^*_{\CP''}(\Bun_G),\,\, \xi_H\in T^*_{(x,\CP',\CP'',\alpha)|_{\cD_x}}(\on{Hecke}^{\on{loc}}_X),\,\, \xi_X\in T^*_x(X),$$
so that

\begin{itemize}

\item $\xi'$ is nilpotent;

\medskip

\item $\xi_H\in \on{SingSupp}(\CS_V)$;

\medskip

\item $(d\hl)^*(\xi')+(d\fr)^*(\xi_H)=(d\hr)^*(\xi'')+(d\fs)^*(\xi_X)$.

\end{itemize}

\medskip

We need to prove that in this case:

\begin{itemize}

\item $\xi''$ is also nilpotent;

\medskip

\item $\xi_X=0$.

\end{itemize}

Indeed, once we show this, \eqref{e:sing supp belongs} would follow by combining the following two assertions:

\medskip

\noindent--For a smooth map $f:\CY_1\to \CY_2$ and $\CF\in \Shv(\CY_2)$, the subset
$\on{SingSupp}(f^!(\CF))\subset T^*(\CY_1)$ equals the image of
$$\on{SingSupp}(\CF)\underset{\CY_2}\times \CY_1\subset T^*(\CY_2)\underset{\CY_2}\times \CY_1$$
along the codifferential map
\begin{equation} \label{e:codiff glob}
T^*(\CY_2)\underset{\CY_2}\times \CY_1\to T^*(\CY_1).
\end{equation}

\medskip

\noindent--For a proper map $f:\CY_1\to \CY_2$ and $\CF\in \Shv(\CY_1)$, the subset
$\on{SingSupp}(f_*(\CF))\subset T^*(\CY_2)$ is contained in the image along the projection
$$T^*(\CY_2)\underset{\CY_2}\times \CY_1\to T^*(\CY_2)$$
of the preimage of $\on{SingSupp}(\CF)$ along \eqref{e:codiff glob}.

\sssec{}

Let $\on{Hecke}_x$ (resp., $\on{Hecke}^{\on{loc}}_x$) be the fiber of $\on{Hecke}_X$ (resp., $\on{Hecke}^{\on{loc}}_X$) over $x$.
Along with the diagrams \eqref{e:Hecke diagram} and \eqref{e:Hecke diagram cotan} consider the diagrams
$$
\CD
& & \on{Hecke}^{\on{loc}}_x \\
& & @AA{\fr_x}A  \\
\Bun_G  @<{\hl_x}<<  \on{Hecke}_x  @>{\hr_x}>>  \Bun_G
\endCD
$$
and
$$
\CD
& & T^*_{(\CP',\CP'',\alpha)|_{\cD_x}}(\on{Hecke}^{\on{loc}}_x)  \\
& & @VV{(d\fr_x)^*}V  \\
T^*_{\CP'}(\Bun_G)  @>{(d\hl_x)^*}>>  T^*_{(\CP',\CP'',\alpha)}(\on{Hecke}_x)  @<{(d\hr_x)^*}<< T^*_{\CP''}(\Bun_G).
\endCD
$$

\medskip

Let $\xi_{H_x}$ denote the image of $\xi_H$ under the restriction map
$$T^*_{(x,\CP',\CP'',\alpha)}(\on{Hecke}^{\on{loc}}_X)\to T^*_{(\CP',\CP'',\alpha)}(\on{Hecke}^{\on{loc}}_x).$$

\medskip

We have a commutative diagram
$$
\CD
T^*_{(\CP',\CP'',\alpha)|_{\cD_x}}(\on{Hecke}^{\on{loc}}_x) @>{(d\fr_x)^*}>> T^*_{(\CP',\CP'',\alpha)}(\on{Hecke}_x)  \\
@AAA   @AAA  \\
T^*_{(x,\CP',\CP'',\alpha)|_{\cD_x}}(\on{Hecke}^{\on{loc}}_X)   @>{(d\fr)^*}>>  T^*_{(x,\CP',\CP'',\alpha)}(\on{Hecke}_X) .
\endCD
$$

\medskip

Hence, the assumption on $(\xi',\xi'',\xi_H)$ implies
\begin{equation} \label{e:rel cot x}
(d\hl_x)^*(\xi')+(d\fr_x)^*(\xi_{H_x})=(d\hr_x)^*(\xi'').
\end{equation}

\sssec{}  \label{sss:identify cotan Hecke}

We identify
$$T^*_{\CP'}(\Bun_G) :=\Gamma(X,\fg^*_{\CP'}\otimes \omega_X) \text{ and } T^*_{\CP''}(\Bun_G) :=\Gamma(X,\fg^*_{\CP''}\otimes \omega_X).$$

\medskip

Further, we identify $T^*_{(\CP',\CP'',\alpha)|_{\cD_x}}(\on{Hecke}_x)$ with the dual of $R^1\Gamma(X,\sK)$
where $\sK$ is the complex
\begin{equation} \label{e:sK}
\sK:=\on{Cone}\left(\fg_{\CP'} \oplus \fg_{\CP''} \to j_*(j^*(\fg_{\CP'})\overset{\alpha}\simeq j^*(\fg_{\CP''}))\right),
\end{equation}
where $j$ denotes the embedding $X-x\hookrightarrow X$.

\medskip

Finally, we identify $T^*_{(\CP',\CP'',\alpha)|_{\cD_x}}(\on{Hecke}^{\on{loc}}_x)$ with the dual of $R^1\Gamma(\cD_x,\sK)$
where $\cD_x$ (resp., $\ocD_x$) is the formal (resp., formal punctured disc) around $x$ (see \secref{sss:local Hecke}).

\medskip

Since $\cD_x$ is affine, the above $R^1\Gamma(\cD_x,\sK)$ identifies with
$$\Gamma(\cD_x,H^1(\sK)),$$
which is the same as $\Gamma(X,H^1(\sK))$, since $H^1(\sK)$ is set-theoretically supported at $x\in X$.

\medskip

From here we obtain that $T^*_{(\CP',\CP'',\alpha)|_{\cD_x}}(\on{Hecke}^{\on{loc}}_x)$ identifies with the
set of pairs
\begin{equation} \label{e:identify cotan Hecke}
\left(\xi'_{\on{loc}}\in \Gamma(\cD_x,\fg^*_{\CP'}\otimes \omega_X),\,\,\xi''_{\on{loc}}\in \Gamma(\cD_x,\fg^*_{\CP''}\otimes \omega_X)\right),
\end{equation}
such that
$$\alpha(\xi'_{\on{loc}})=\xi''_{\on{loc}}$$ as elements of
$\Gamma(\ocD_x,\fg^*_{\CP''}\otimes \omega_X)$.

\sssec{}

Equation \eqref{e:rel cot x} translates into
$$\alpha(\xi'|_{\ocD_x})=\xi''|_{\ocD_x}.$$

In particular, we obtain:

\medskip

\hskip1cm $\xi'$ is nilpotent $\Rightarrow$ $\xi'|_{\ocD_x}$ is nilpotent $\Rightarrow$ $\xi''|_{\ocD_x}$ is nilpotent $\Rightarrow$ $\xi''$ is nilpotent.

\medskip

Thus, it remains to show that $\xi_X=0$.

\sssec{}

Note that the short exact sequences
\begin{equation} \label{e:SES cotan glob}
0\to T^*_x(X) \to T^*_{(x,\CP',\CP'',\alpha)}(\on{Hecke}_X)  \to T^*_{(\CP',\CP'',\alpha)}(\on{Hecke}) \to 0
\end{equation}
\begin{equation} \label{e:SES cotan}
0\to T^*_x(X) \to T^*_{(x,\CP',\CP'',\alpha)|_{\cD_x}}(\on{Hecke}^{\on{loc}}_X)  \to T^*_{(\CP',\CP'',\alpha)|_{\cD_x}}(\on{Hecke}^{\on{loc}}_x) \to 0
\end{equation}
admit a \emph{canonical} splittings (compatible with the map between these short exact sequences induced by $\fr$):

\medskip

These splittings are a consequence of the fact that the prestacks
$$\on{Hecke}_X \text{ and } \on{Hecke}^{\on{loc}}_X$$
each carries a canonical structure of \emph{crystal} along $X$. Indeed, for a test-scheme $S$ and two
points
$$x_1,x_2:S\rightrightarrows X$$
such that that
\begin{equation} \label{e:points inf}
x_1|_{S_{\on{red}}}=x_2|_{S_{\on{red}}},
\end{equation}
the data of lifting of these points to points of
$\on{Hecke}_X$ (resp.,  $\on{Hecke}^{\on{loc}}_X$) coincide. This is because \eqref{e:points inf} implies
$$S\times X-\on{Graph}_{x_1}=S\times X-\on{Graph}_{x_2} \text{ and } \cD_{x_1}=\cD_{x_2},\,\, \ocD_{x_1}=\ocD_{x_2}.$$

\sssec{}

Consider the resulting maps
\begin{equation} \label{e:split cotan glob X}
T^*_{(x,\CP',\CP'',\alpha)}(\on{Hecke}_X)  \to T^*_x(X)
\end{equation}
and
\begin{equation} \label{e:split cotan X}
T^*_{(x,\CP',\CP'',\alpha)|_{\cD_x}}(\on{Hecke}^{\on{loc}}_X)  \to T^*_x(X).
\end{equation}

It follows from the definition of the above crystal structure that the composition of \eqref{e:split cotan glob X} with the maps
$$T^*_{\CP'}(\Bun_G) \overset{(d\hl)^*}\longrightarrow T^*_{(x,\CP',\CP'',\alpha)}(\on{Hecke}_X)
\overset{(d\hr)^*}\longleftarrow T^*_{\CP''}(\Bun_G)$$
vanishes.

\medskip

Therefore, to prove that $\xi_X=0$, it suffices to show the following.
Let $\xi_H\in T^*_{(x,\CP',\CP'',\alpha)|_{\cD_x}}(\on{Hecke}^{\on{loc}}_X)$
be a vector that belongs to $\on{SingSupp}(\CS_V)$. Suppose that
$$\xi_{H_x}\in T^*_{(\CP',\CP'',\alpha)|_{\cD_x}}(\on{Hecke}^{\on{loc}}_x)$$
is \emph{nilpotent}, i.e., the corresponding pair $(\xi'_{\on{loc}},\xi''_{\on{loc}})$ in \eqref{e:identify cotan Hecke} consists of nilpotent elements.

\medskip

Then we need to prove that the image of $\xi_H$ along \eqref{e:split cotan X} is zero.

\sssec{}

To prove the latter assertion, we can assume that $X=\BA^1$ and $x=0$.  In this case, we identify
\begin{equation} \label{e:Hecke prod}
\on{Hecke}^{\on{loc}}_{\BA^1}\simeq \on{Hecke}^{\on{loc}}_0\times \BA^1.
\end{equation}

With respect to \eqref{e:Hecke prod}, the
object $\CS_V$ is the pullback along the projection
$$\on{Hecke}^{\on{loc}}_{\BA^1}\to \on{Hecke}^{\on{loc}}_0.$$

\medskip

In addition, from \eqref{e:Hecke prod}, we obtain a \emph{different} splitting of \eqref{e:SES cotan}.
The discrepancy between the two different splittings of \eqref{e:SES cotan} is a map
\begin{equation} \label{e:discr}
T^*_{(\CP',\CP'',\alpha)}(\on{Hecke}^{\on{loc}}_0) \to T^*_0(\BA^1)\simeq k.
\end{equation}
It suffices to show that this map vanishes on nilpotent elements (we refer to the description of
$T^*_{(\CP',\CP'',\alpha)}(\on{Hecke}^{\on{loc}}_x)$ given by \eqref{e:identify cotan Hecke}; when we say ``nilpotent",
we mean that the corresponding elements $\xi'_{\on{loc}}$ and $\xi''_{\on{loc}}$ are nilpotent).

\sssec{}

Let us calculate the map \eqref{e:discr}. Let $\fL(G)_X$ be the version of the loop group spread over $X$,
and let $\fL(G)_x$ be its fiber at $x\in X$. The group ind-scheme $\fL(G)_X$ also has a crystal structure along $X$,
which gives rise to a splitting of the short exact sequence
\begin{equation} \label{e:SES tan loops}
0\to T_g(\fL(G)_x) \to T_g(\fL(G)_X) \to T_x(X)\to 0, \quad g\in \fL(G)_x.
\end{equation}

For $X=\BA^1$, we have an identification
$$\fL(G)_{\BA^1}\simeq \fL(G)_0\times \BA^1,$$
and hence a \emph{different} splitting of the short exact sequence \eqref{e:SES tan loops}. The discrepancy between these two splittings
is a map
\begin{equation} \label{e:SES tan loops 1}
k\simeq T_0(\BA^1)\to T_g(\fL(G)_0).
\end{equation}

\medskip

Let $u$ denote the coordinate on $\BA^1$. We will identify the tangent space $T_g(\fL(G)_0)$
with $\fg\ppart$ by means of the left translation by $g$. Under this identification the map \eqref{e:SES tan loops 1},
thought of as an element of $\fg\ppart$ is
\begin{equation} \label{e:loga}
g^{-1}\cdot \frac{dg}{du},
\end{equation}
i.e., the (left) logarithmic derivative of $g$ with respect to the coordinate $u$.

\sssec{}

Let $(\CP',\CP'',\alpha)$ be a point of
$$\on{Hecke}^{\on{loc}}_0\simeq \fL(G)_0\backslash \fL(G)/\fL(G)_0,$$
represented by an element $g\in \fL(G)_0$. By the Cartan decomposition, we can assume that
$g=u^\lambda$ for a dominant coweight $\lambda$.

\medskip

Note that the corresponding element \eqref{e:loga} equals
$$\lambda\cdot u^{-1}\subset \ft\ppart\subset \fg\ppart.$$

\medskip

We need to show that for a \emph{nilpotent} element
$$\xi\in T^*_{(x,\CP',\CP'',\alpha)}(\on{Hecke}^{\on{loc}}_0)\simeq \fg^*\qqart du \cap \on{Ad}_{u^\lambda}(\fg^*\qqart du)\subset
\fg^*\ppart du,$$
the residue pairing
$$\fg\ppart \times \fg^*\ppart du\to k$$
against $\lambda\cdot u^{-1}$ evaluates to $0$.

\medskip

Set
$$\xi_0:=\xi\, \on{mod}\, u\in \fg^*du\simeq \fg^*.$$
This is a nilpotent element of $\fg^*$. We need to show that its pairing with $\lambda\in \ft\subset \fg$ gives zero.

\sssec{}

Let
$$\fg=\fn^+_\lambda \oplus \fm_\lambda \oplus \fn^-_\lambda$$
be the triangular decomposition of $\fg$ corresponding to $\lambda$, i.e., $\fm_\lambda$ is the centralizer of $\lambda$ and
$\fn^+_\lambda$ (resp., $\fn^-_\lambda$) corresponds to positive (resp., negative) eigenvalues of $\lambda$ for the adjoint
action.

\medskip

Consider the corresponding decomposition
$$\xi_0=\xi_0^++\xi_0^0+\xi_0^-$$
of $\xi_0$.

\medskip

Note, however, that the condition that $\xi\in \on{Ad}_{u^\lambda}(\fg^*\qqart du)$ implies that $\xi_0^-$ is zero.
Knowing that, the fact that $\xi_0$ is nilpotent implies that $\xi^0_0$ is a nilpotent element of $\fm_\lambda$.

\medskip

We have
$$\langle \lambda,\xi_0\rangle=\langle \lambda,\xi^+_0\rangle+\langle \lambda,\xi^0_0\rangle=
\langle \lambda,\xi^0_0\rangle.$$

However, the latter is zero, being the pairing of a central element (in the reductive Lie algebra $\fm_\lambda$) with a nilpotent one.

\qed

\section{Integrated actions in the context of D-modules}  \label{s:Dmod}

In \propref{p:describe functors} we described what it takes to have an action of
$\CA^{\otimes Y}$ on a DG category $\CM$ for $Y\in \Spc$. In this section we will
use the RHS of \propref{p:describe functors} to give a definition of \emph{integrated action}
in the context of D-modules.

\medskip

We will then study how our definition plays out in the context of geometric Langlands.

\ssec{Definition of integrated action}

\sssec{}

Let us place ourselves in the sheaf-theoretic context (b) from \secref{sss:sheaves}. In this case our ring of
coefficients $\sfe$ is the same as the ground field $k$. Thus, we will work with $k$-linear DG categories.

\sssec{}

Let $X$ be a scheme over $k$ (in the applications to geometric Langlands, $X$ will
be a smooth projective curve).

\medskip

For a symmetric monoidal DG category $\CA$ and a DG category $\CM$, an action of
$\CA^{\otimes X}$ on $\CM$ is by definition a natural transformation between the following two functors
$\on{fSet}\to \DGCat^{\on{mon}}$:

\medskip

From the functor
\begin{equation} \label{e:functor one Dmod}
I\mapsto \CA^{\otimes I},
\end{equation}
to the functor
\begin{equation} \label{e:functor two Dmod}
I\mapsto \End(\CM) \otimes \Dmod(X^I).
\end{equation}

The totality of categories equipped with an action of $\CA^{\otimes X}$ forms a 2-category, which we will
denote
$$\CA^{\otimes X}\mmod.$$

\begin{rem}
It is easy to see that the datum of action of $\CA^{\otimes X}$ on $\CM$ is equivalent to that of action on $\CM$
of the (non-unital) symmetric monoidal category $\on{Fact}(\CA)_{\Ran(X)}$, defined as in \cite[Sect. 2.5]{Ga3}.
\end{rem}

\sssec{Example}  \label{sss:geom Langlands}

Let $X$ be a curve and let $G$ be a reductive group.

\medskip

Take $\CA=\Rep(\cG)$ and $\CM=\Dmod(\Bun_G)$.
Then \propref{p:Hecke} says that we have an action of $\Rep(\cG)^{\otimes X}$ on $\Dmod(\Bun_G)$.

\sssec{}

Let us be given an action of $\CA^{\otimes X}$ on $\CM$.
For a finite set $I$ and $r\in \CA^{\otimes I}$,
let $\CS_I(r)$ denote the resulting functor
$$\CM\to \CM\otimes \Dmod(X^I).$$

\begin{lem} \label{l:Hecke prese comp}
Let $r$ be dualizable. Then the functor $\CS_I(r)$ admits a continuous right adjoint.
\end{lem}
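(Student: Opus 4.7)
My plan is to transfer dualizability from $\CA^{\otimes I}$ along the monoidal functor encoding the action. By definition, an action of $\CA^{\otimes X}$ on $\CM$ packages into a family of monoidal functors
\[
F_I : \CA^{\otimes I} \longrightarrow \End(\CM)\otimes \Dmod(X^I), \quad I\in \on{fSet},
\]
and the functor $\CS_I(r)$ is, by unpacking the definition, the image of $F_I(r)$ under the natural map
\[
\End(\CM)\otimes \Dmod(X^I) \longrightarrow \on{Funct}(\CM,\CM\otimes \Dmod(X^I)).
\]

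The first step is to observe that, since $F_I$ is monoidal, it preserves dualizability together with its duality data: from the dualizability of $r$ in $\CA^{\otimes I}$, with dual $r^\vee$, unit $\eta : \one \to r\otimes r^\vee$, and counit $\epsilon : r^\vee\otimes r \to \one$, one concludes that $F_I(r)$ is dualizable in $\End(\CM)\otimes\Dmod(X^I)$, with dual $F_I(r^\vee)$ and structure maps $F_I(\eta)$ and $F_I(\epsilon)$.

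The second step is to extract a continuous right adjoint to $\CS_I(r)$ from the dualizability of $F_I(r)$. Here I would invoke the general principle that in any monoidal DG category $\CR$ acting on a DG category $\CN$, a left-dualizable object $\phi\in \CR$ acts on $\CN$ by a continuous endofunctor that admits a continuous right adjoint, given by the action of the left dual of $\phi$; continuity is automatic since the action is bi-continuous by virtue of being a morphism in $\DGCat$. Specialising this to $\CR=\End(\CM)\otimes\Dmod(X^I)$ acting on $\CM\otimes\Dmod(X^I)$, and then reading the result off at the level of the associated functors via the map above, produces the desired continuous right adjoint $\CS_I(r)^R$, with unit and counit coming from $F_I(\eta)$ and $F_I(\epsilon)$.

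The main obstacle is the bookkeeping of tensor–hom identifications: one needs to carefully translate the duality data for $F_I(r)$, which lives natively in the monoidal category $\End(\CM)\otimes\Dmod(X^I)$, into the unit and counit of an adjunction of continuous functors between $\CM$ and $\CM\otimes \Dmod(X^I)$. Once those identifications are nailed down, continuity of the right adjoint is formal, since every building block in its construction (the action, the evaluations of the duality, and the tensor structure maps in $\End(\CM)$, $\CA^{\otimes I}$, and $\Dmod(X^I)$) is a morphism in $\DGCat$ and therefore colimit-preserving.
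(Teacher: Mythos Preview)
Your strategy is sound and closely parallels the paper's --- both hinge on the dual $r^\vee$ --- but there is a real gap at the step you call ``reading the result off at the level of the associated functors via the map above''. The adjunction you extract from dualizability of $F_I(r)$ is between two \emph{endofunctors} of $\CM\otimes\Dmod(X^I)$ (the actions of $F_I(r)$ and of $F_I(r^\vee)$), whereas you need an adjunction between $\CS_I(r):\CM\to\CM\otimes\Dmod(X^I)$ and a functor going the other way. The map $\End(\CM)\otimes\Dmod(X^I)\to\on{Funct}(\CM,\CM\otimes\Dmod(X^I))$ is not monoidal in any sense that would transport this; indeed the image of $F_I(r^\vee)$ under it is $\CS_I(r^\vee)$, another functor $\CM\to\CM\otimes\Dmod(X^I)$, not a candidate right adjoint, and $F_I(\eta)$, $F_I(\epsilon)$ do not land where the unit and counit of the desired adjunction would have to live.

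The missing ingredient is to factor $\CS_I(r)$ as
\[
\CM \xrightarrow{\;-\,\boxtimes\,\omega_{X^I}\;} \CM\otimes\Dmod(X^I) \xrightarrow{\;F_I(r)\,\cdot\;} \CM\otimes\Dmod(X^I).
\]
Your argument gives a continuous right adjoint to the second arrow (action of $F_I(r^\vee)$); the first arrow has continuous right adjoint $\on{Id}_\CM\otimes\Gamma_{\on{dR}}(X^I,-)$, since de Rham pushforward is continuous. Composing, and rewriting the action of $F_I(r^\vee)$ as $(\on{Id}_\CM\otimes\Delta^!)\circ(\CS_I(r^\vee)\otimes\on{Id})$, one lands exactly on the paper's explicit formula for the right adjoint. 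So this is not just bookkeeping: the unit insertion and its right adjoint $\Gamma_{\on{dR}}$ are the substantive piece your outline omits.
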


\begin{proof}
The adjoint in question is explicitly given by
\begin{multline*}
\CM\otimes \Dmod(X^I) \overset{\CS_I(r^\vee)\otimes \on{Id}_{\Dmod(X^I)}}\longrightarrow
\CM\otimes \Dmod(X^I)\otimes \Dmod(X^I) \overset{\on{Id}_\CM\otimes \Delta^!}\longrightarrow  \\
\to \CM\otimes \Dmod(X^I) \overset{\on{Id}_\CM\otimes \Gamma_{\on{dR}}(X^I,-)}\longrightarrow \CM.
\end{multline*}
\end{proof}

\sssec{The ULA property}

Let us assume that $\CA$ is compactly generated and rigid, and
that $\CM$ is compactly generated.

\medskip

For a pair of \emph{compact} objects $r\in \CA^{\otimes I}$ and $m\in \CM$ consider again the object
$$\CS_I(r)(m)\in \CM\otimes \Dmod(X^I)$$

Note that by \lemref{l:Hecke prese comp}, this object is automatically compact. We claim:

\begin{prop}
The object $\CS_I(r)(m)$ is ULA over $X^I$.
\end{prop}

We refer the reader to \secref{s:ULA}, where the notion of ULA is reviewed.

\begin{proof}

Note that the dual of $\CM\otimes \Dmod(X^I)$ as a $\Dmod(X^I)$-module category
identifies with $$\CM^\vee\otimes \Dmod(X^I).$$

\medskip

Let $m^\vee\in \CM^\vee$ be the abstract Verdier dual of $m\in \CM$. Let $r^\vee$
be the monoidal dual of $r$. Then the
object
$$\CS_I(r^\vee)(m^\vee)\in \CM^\vee\otimes \Dmod(X^I)$$
satisfies the requirements from \secref{sss:abs ULA}.

\end{proof}

\ssec{Lisse actions}


\sssec{}  \label{sss:lisse rem}

Note that for a smooth scheme $Y$, the functor
\begin{equation} \label{e:embed lisse}
\Dmod_{\on{lisse}}(Y)\to \Dmod(Y)
\end{equation}
is a fully faithful embedding that admits a continuous right adjoint.

\medskip

Hence, for another DG category $\CM$, the functor
$$\CM\otimes \Dmod_{\on{lisse}}(Y)\to \CM\otimes \Dmod(Y)$$
is also fully faithful and admits a continuous right adjoint given by tensoring the right adjoint to
\eqref{e:embed lisse} with $\on{Id}_\CM$.

\medskip

Further, if $\CM\to \CM'$ is conservative, an object of $\CM\otimes \Dmod(Y)$
belongs to $\CM\otimes \Dmod_{\on{lisse}}(Y)$ if and only if its image in
$\CM'\otimes \Dmod(Y)$ belongs to $\CM'\otimes \Dmod_{\on{lisse}}(Y)$.

\sssec{}

For a pair of objects $r\in \CA^{\otimes I}$ and $m\in \CM$ consider the object
\begin{equation} \label{e:act Dmod}
\CS_I(r)(m)\in \CM\otimes \Dmod(X^I).
\end{equation}

\medskip

We shall say that the action of $\CA^{\otimes X}$ on $\CM$ is \emph{lisse} if for every $r$ and $m$ as above, the object
\eqref{e:act Dmod} belongs to the full subcategory.
$$\CM\otimes \Dmod_{\on{lisse}}(X^I)\subset \CM\otimes \Dmod(X^I).$$

Note also that by associativity,
it is enough to check this condition for $I=\{*\}$.

\medskip

Let
$$\CA^{\otimes X}\mmod^{\on{lisse}}\subset \CA^{\otimes X}\mmod$$
denote the corresponding full subcategory.

\sssec{Example}

The statement of \thmref{t:preserve nilp} is equivalent to the assertion that the action of
$\Rep(\cG)^{\otimes X}$ on $\Dmod_{\on{Nilp}}(\Bun_G)$ is lisse.

\sssec{}  \label{sss:defn lisse}

Given an action of $\CA^{\otimes X}$ on $\CM$, let
$$\CM^{\on{lisse}}\subset \CM$$
be the full subcategory consisting of objects $m$, for which
\eqref{e:act Dmod} with $I=\{*\}$ belongs to the subcategory
$\CM\otimes \Dmod_{\on{lisse}}(X)$ for all 
$r$.

\begin{prop}  \label{p:lisse pres}
For an object $m\in \CM^{\on{lisse}}$, the object \eqref{e:act Dmod} with $I=\{*\}$ belongs to
$$\CM^{\on{lisse}}\otimes \Dmod_{\on{lisse}}(X)\subset \CM\otimes \Dmod_{\on{lisse}}(X).$$
\end{prop}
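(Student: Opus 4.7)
The plan is to reduce the proposition, via the monoidality of the action, to a two-leg compatibility statement. Both inclusions $\Dmod_{\on{lisse}}(X) \hookrightarrow \Dmod(X)$ (see \secref{sss:lisse rem}) and $\CM^{\on{lisse}} \hookrightarrow \CM$ are fully faithful and admit continuous right adjoints: indeed, $\CM^{\on{lisse}}$ is by construction the intersection of the kernels of the colimit-preserving functors
$$\CM \overset{\CS_{\{*\}}(r)}{\longrightarrow} \CM\otimes \Dmod(X) \longrightarrow \CM \otimes \big(\Dmod(X)/\Dmod_{\on{lisse}}(X)\big), \quad r \in \CA.$$
A standard observation about coreflective subcategories $\CC_i \hookrightarrow \CD_i$ (apply the counits $\iota_i \circ L_i \to \on{id}_{\CD_i}$ in succession) shows that an object of $\CD_1 \otimes \CD_2$ lies in $\CC_1 \otimes \CC_2$ if and only if it lies in both $\CC_1 \otimes \CD_2$ and $\CD_1 \otimes \CC_2$. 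Hence, setting $n := \CS_{\{*\}}(r)(m)$, the proposition reduces to verifying the two containments $n \in \CM \otimes \Dmod_{\on{lisse}}(X)$ (immediate from $m \in \CM^{\on{lisse}}$) and $n \in \CM^{\on{lisse}} \otimes \Dmod(X)$.

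For the second, unwinding the kernel characterization of $\CM^{\on{lisse}} \otimes \Dmod(X)$ (which relies on $\Dmod(X)$ being dualizable, so that $-\otimes \Dmod(X)$ preserves kernels), it suffices to show that for every $r' \in \CA$,
$$(\CS_{\{*\}}(r') \otimes \on{Id}_{\Dmod(X)})(n) \in \CM \otimes \Dmod_{\on{lisse}}(X) \otimes \Dmod(X) \subset \CM \otimes \Dmod(X^2).$$
The crucial identification, extracted from the monoidality of the natural transformation $\CS_\bullet$ applied to $I=\{1,2\}$ together with its compatibility squares for the two inclusions $\{i\}\hookrightarrow \{1,2\}$, reads
$$(\CS_{\{*\}}(r') \otimes \on{Id}_{\Dmod(X)})(\CS_{\{*\}}(r)(m)) \simeq \CS_{\{1,2\}}(r' \boxtimes r)(m) \simeq (\on{Id}_{\Dmod(X)} \otimes \CS_{\{*\}}(r))(\CS_{\{*\}}(r')(m)),$$
using the equivalence $\Dmod(X^2) \simeq \Dmod(X) \otimes \Dmod(X)$ valid in context (b). Since $m \in \CM^{\on{lisse}}$, both $\CS_{\{*\}}(r)(m)$ and $\CS_{\{*\}}(r')(m)$ already lie in $\CM \otimes \Dmod_{\on{lisse}}(X)$, and the functors $\CS_{\{*\}}(-)\otimes \on{Id}$ and $\on{Id}\otimes \CS_{\{*\}}(-)$ each leave one of the two $\Dmod(X)$-factors untouched. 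Therefore $\CS_{\{1,2\}}(r' \boxtimes r)(m)$ lies simultaneously in $\CM \otimes \Dmod(X) \otimes \Dmod_{\on{lisse}}(X)$ and in $\CM \otimes \Dmod_{\on{lisse}}(X) \otimes \Dmod(X)$; by the intersection principle of the previous paragraph, it lies in $\CM \otimes \Dmod_{\on{lisse}}(X)^{\otimes 2}$, and in particular in $\CM \otimes \Dmod_{\on{lisse}}(X) \otimes \Dmod(X)$, as needed.

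The principal obstacle is the crucial identification above. It amounts to unpacking what a natural transformation of functors $\on{fSet} \to \DGCat^{\on{Mon}}$ entails: the compatibility square attached to each inclusion $\{i\}\hookrightarrow \{1,2\}$ translates on the $\Dmod$-side to $!$-pullback along the $i$-th projection $X^2 \to X$, which is monoidal with respect to $\sotimes$ and carries $\omega_X$ to $\omega_{X^2}$. Combining these two pullbacks via the monoidal identity $r' \boxtimes r = (r'\boxtimes 1_\CA)\cdot(1_\CA \boxtimes r)$ in $\CA\otimes \CA$, and using monoidality of $\CS_{\{1,2\}}$, one obtains the identification by direct computation. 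With this bookkeeping done, the remainder of the argument is purely formal.
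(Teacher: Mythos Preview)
Your proof is correct and follows essentially the same route as the paper's. Both arguments reduce to showing $\CS_{\{*\}}(r)(m)\in\CM^{\on{lisse}}\otimes\Dmod(X)$, then unwind this as a condition on $(\CS_{\{*\}}(r')\otimes\on{Id})\circ\CS_{\{*\}}(r)(m)$ for all $r'$, and finally use the monoidal compatibility to swap the order of the two legs. The only cosmetic differences are: (i) for the first reduction you invoke an ``intersection principle'' for coreflective subcategories, whereas the paper appeals to the conservative-functor remark in \secref{sss:lisse rem}; (ii) for the swap, you factor $\CS_{\{1,2\}}(r'\boxtimes r)$ through both inclusions $\{1\},\{2\}\hookrightarrow\{1,2\}$ directly, whereas the paper uses one inclusion together with the transposition $\sigma$ on $X\times X$. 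Your final application of the intersection principle (to land in $\CM\otimes\Dmod_{\on{lisse}}(X)^{\otimes 2}$) is a slight overkill---the right-hand factorization already gives $\CM\otimes\Dmod_{\on{lisse}}(X)\otimes\Dmod(X)$ directly---but this does no harm.
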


\begin{proof}

By the last remark in \secref{sss:lisse rem}, it suffices to show that
the functors $\CS_{\{*\}}(r)$ send $\CM^{\on{lisse}}$ to $\CM^{\on{lisse}}\otimes \Dmod(X)$.

\medskip

For any DG category $\CC$, consider the full subcategory
$$(\CM\otimes \CC)^{\on{lisse}}\subset \CM\otimes \CC.$$
I.e., we apply the definition of $(-)^{\on{lisse}}$ to $\CM\otimes \CC$ on which $\CA^{\otimes X}$ acts
via the first factor.

\medskip

Explicitly, this subcategory consists of objects $m'$ for which
$$(\CS_{\{*\}}(r')\otimes \on{Id}_\CC)(m')\in \CM\otimes \Dmod_{\on{lisse}}(X)\otimes \CC\subset
\CM\otimes \Dmod(X)\otimes \CC \text{ for all } r'\in \CA.$$

The functor
$$\CM^{\on{lisse}}\otimes \CC\to \CM\otimes \CC$$ factors via a functor
\begin{equation} \label{e:incl lisse}
\CM^{\on{lisse}}\otimes \CC\to (\CM\otimes \CC)^{\on{lisse}}.
\end{equation}

We claim that if $\CC$ is dualizable, then \eqref{e:incl lisse} is an equivalence. Indeed, this follows
by interpreting $\CM\otimes \CC$ as
$$\on{Funct}_{\on{cont}}(\CC^\vee,\CM).$$

Hence, applying this to $\CC=\Dmod(X)$, it suffices to show that for any $r$ and $r'$, we have
$$(\CS_{\{*\}}(r')\otimes \on{Id}_\CC)\circ \CS_{\{*\}}(r)(m)\in \CM\otimes \Dmod_{\on{lisse}}(X)\otimes \Dmod(X).$$

We have
$$(\CS_{\{*\}}(r')\otimes \on{Id}_\CC)\circ \CS_{\{*\}}(r)(m)=
\CS_{\{*\}\sqcup \{*\}}(r'\otimes r)(m)\simeq \sigma(\CS_{\{*\}\sqcup \{*\}}(r\otimes r')(m)),$$
where $\sigma$ denotes the transposition acting on $X\times X$.

\medskip

Hence, it suffices to show that
$$\CS_{\{*\}\sqcup \{*\}}(r\otimes r')(m)\subset \CM\otimes \Dmod(X)\otimes \Dmod_{\on{lisse}}(X).$$

However,
$$\CS_{\{*\}\sqcup \{*\}}(r\otimes r')(m)\simeq
(\CS_{\{*\}}(r)\otimes \on{Id}_\CC)\circ \CS_{\{*\}}(r')(m),$$
and the assertion follows from the fact that
$$\CS_{\{*\}}(r')(m)\subset \CM\otimes  \Dmod_{\on{lisse}}(X),$$
by the assumption on $m$.

\end{proof}

\begin{cor}
The subcategory
$\CM^{\on{lisse}}$ carries an action of $\CA^{\otimes X}$, and this action is lisse.
\end{cor}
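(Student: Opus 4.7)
The plan is to deduce the corollary directly from \propref{p:lisse pres}. The lisseness of the resulting action is the easy part: by definition of $\CM^{\on{lisse}}$ (see \secref{sss:defn lisse}), and by the fact that testing lisseness only requires checking the $I=\{*\}$ case, the statement that \propref{p:lisse pres} produces a functor $\CS_{\{*\}}(r)\colon \CM^{\on{lisse}}\to \CM^{\on{lisse}}\otimes \Dmod_{\on{lisse}}(X)$ is already the lisseness assertion. So the substantive point is the first half: producing a restricted action of $\CA^{\otimes X}$ on $\CM^{\on{lisse}}$.

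To produce the restricted action, the goal is to show that for every $I\in \on{fSet}$ and every $r\in \CA^{\otimes I}$, the functor $\CS_I(r)\colon \CM\to \CM\otimes \Dmod(X^I)$ sends $\CM^{\on{lisse}}$ into the full subcategory $\CM^{\on{lisse}}\otimes \Dmod(X^I)$. Granting this, the naturality in $I$ and $r$ of the original action, combined with the full faithfulness and continuous right adjointability of the embedding $\CM^{\on{lisse}}\otimes \Dmod(X^I)\hookrightarrow \CM\otimes \Dmod(X^I)$ (see \secref{sss:lisse rem}), will allow us to factor the natural transformation \eqref{e:functor one Dmod}$\to$\eqref{e:functor two Dmod} through $\End(\CM^{\on{lisse}})\otimes \Dmod(X^I)$, giving the restricted action.

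The main obstacle is therefore verifying this preservation for general $I$, since \propref{p:lisse pres} only addresses $I=\{*\}$. The idea is to imitate the argument of that proposition, taking $\CC=\Dmod(X^I)$, which is dualizable. Exactly as in the proof of \propref{p:lisse pres}, for any dualizable $\CC$ the embedding $\CM^{\on{lisse}}\otimes \CC\to (\CM\otimes \CC)^{\on{lisse}}$ is an equivalence, where the target is formed with respect to the $\CA^{\otimes X}$-action on $\CM\otimes \CC$ via the first factor. Consequently it suffices to show, for $m\in \CM^{\on{lisse}}$ and for every $r'\in \CA$, that $(\CS_{\{*\}}(r')\otimes \on{Id}_{\Dmod(X^I)})\circ \CS_I(r)(m)$ lies in $\CM\otimes \Dmod_{\on{lisse}}(X)\otimes \Dmod(X^I)$.

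The remaining verification mirrors the end of the proof of \propref{p:lisse pres}: using the (symmetric) monoidality of the natural transformation defining the action, the object $(\CS_{\{*\}}(r')\otimes \on{Id}_{\Dmod(X^I)})\circ \CS_I(r)(m)$ is canonically identified, up to the transposition swapping the $\{*\}$ and $I$ factors of $\{*\}\sqcup I$, with $(\CS_I(r)\otimes \on{Id}_{\Dmod(X)})\circ \CS_{\{*\}}(r')(m)$. Since $m\in \CM^{\on{lisse}}$, the inner object $\CS_{\{*\}}(r')(m)$ already belongs to $\CM\otimes \Dmod_{\on{lisse}}(X)$, and then applying $\CS_I(r)\otimes \on{Id}_{\Dmod(X)}$ keeps us inside $\CM\otimes \Dmod(X^I)\otimes \Dmod_{\on{lisse}}(X)$, because the functor is the identity on the $\Dmod_{\on{lisse}}(X)$-tensor-factor. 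The transposition then places the lisse factor back into the $X$-position in $X\times X^I$, completing the check and hence the proof of the corollary.
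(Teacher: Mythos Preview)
Your proof is correct and follows the approach the paper implicitly intends: the corollary is left without proof, to be deduced from \propref{p:lisse pres} by precisely the extension you carry out (replacing $\Dmod(X)$ by $\Dmod(X^I)$ in the dualizable-$\CC$ argument of that proposition, and using the same swap trick with $\CS_{\{*\}\sqcup I}(r'\otimes r)\simeq \sigma\bigl(\CS_{I\sqcup\{*\}}(r\otimes r')\bigr)$). One minor remark: your appeal to \secref{sss:lisse rem} for the full faithfulness of $\CM^{\on{lisse}}\otimes\Dmod(X^I)\hookrightarrow \CM\otimes\Dmod(X^I)$ is a slight misattribution, since that section concerns $\Dmod_{\on{lisse}}(Y)\hookrightarrow\Dmod(Y)$; what you actually need is that $\CM^{\on{lisse}}\hookrightarrow\CM$ is a full subcategory closed under colimits (immediate from the definition and the colimit-closure of $\CM\otimes\Dmod_{\on{lisse}}(X)$), together with the dualizability of $\Dmod(X^I)$ (cf.\ \secref{sss:ff ten prod}).
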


\sssec{}

Thus, \thmref{t:preserve nilp} says that we have an inclusion:

\begin{equation}  \label{e:lisse nilp}
\Dmod_{\on{Nilp}}(\Bun_G)\subset (\Dmod(\Bun_G))^{\on{lisse}}.
\end{equation}

We propose the following:

\begin{conj} \label{c:lisse nilp}
The inclusion \eqref{e:lisse nilp} is an equality.
\end{conj}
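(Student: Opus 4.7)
The forward inclusion is Theorem \ref{t:preserve nilp}, so the task is the reverse containment $(\Dmod(\Bun_G))^{\on{lisse}} \subset \Dmod_{\on{Nilp}}(\Bun_G)$. My plan is to proceed by the contrapositive: given $m \in \Dmod(\Bun_G)$ with $\on{SingSupp}(m) \not\subset \on{Nilp}$, I will exhibit $V \in \Rep(\cG)$ such that $\CS_{\{*\}}(V)(m) \notin \Dmod(\Bun_G) \otimes \Dmod_{\on{lisse}}(X)$. By Theorem \ref{t:vert sing supp}, it suffices to show that $\on{SingSupp}\bigl(\CS_{\{*\}}(V)(m)\bigr)$ has a non-zero $T^*(X)$-component over some point.

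The strategy is to reverse the local calculation in the proof of Theorem \ref{t:preserve nilp}: there, starting from a nilpotent $\xi' \in \on{SingSupp}(m)$ and $\xi_H \in \on{SingSupp}(\CS_V)$ matched by the Hecke correspondence, the resulting $T^*(X)$-component $\xi_X$ was shown to vanish because the discrepancy pairing $\langle \lambda, \xi_0\rangle$ vanishes on the nilpotent cone of $\fm_\lambda$. First, I would localize: pick $x \in X$ at which $\xi'$ is non-nilpotent, trivialize the bundle and fix a coordinate $u$ at $x$ to view $\xi'|_{\cD_x}$ as an element of $\fg^*\qqart du$, so that $\xi'_0 = \xi'|_{\cD_x} \bmod u$ has non-zero semisimple part; after conjugation this part lies in $\ft^*$. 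Next, I would pick a dominant coweight $\lambda \in \Lambda^+$ pairing non-trivially with it, and take $V := V^\lambda$. Using the cotangent space description \eqref{e:identify cotan Hecke}, I would construct a pair $(\xi'_{\on{loc}}, \xi''_{\on{loc}})$ with $\alpha(\xi'_{\on{loc}}) = \xi''_{\on{loc}}$ over the $\lambda$-stratum of $\on{Hecke}^{\on{loc}}_x$, realizing a vector $\xi_H \in \on{SingSupp}(\CS_V)$ with $\xi_{H_x}$ matching $\xi'$. The discrepancy formula \eqref{e:discr} applied in these coordinates evaluates to $\langle \lambda, \xi'_0\rangle \neq 0$, yielding a non-zero $\xi_X$. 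By the standard singular-support estimates for the smooth pullback $(\hl \times \fr)^!$ and the proper pushforward $(\hr \times \fs)_*$ used to define $H(V,m)$, this $\xi_X$ appears in $\on{SingSupp}(H(V,m))$, contradicting the lisseness assumption.

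The main obstacle will be Step 3, namely exhibiting an actual element of $\on{SingSupp}(\CS_{V^\lambda})$ realizing the matching condition: while the analysis in \secref{ss:preserve nilp} provides a clean \emph{necessary} condition and the formula needed, making this sufficient requires a microlocal description of the Satake sheaves on the closed orbit through $u^\lambda$, together with a genericity/density argument to ensure one can choose $\xi'$ in a Zariski-open locus of the half-dimensional variety $\on{SingSupp}(m) \setminus \on{Nilp}$ over which a matching $(\CP'', \alpha)$ with the desired $\lambda$-relative position exists. A related technical point is to verify that one may choose $\lambda$ uniformly along an irreducible component of $\on{SingSupp}(m) \setminus \on{Nilp}$; here the Hitchin map structure and the fact that $\on{Nilp}$ is the null-cone of a flat family should be invoked to reduce the problem to a Kostant-style calculation in $\fg^*$. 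A potential alternative, bypassing much of this microlocal work, would be to build on Conjecture \ref{c:ULA} and Conjecture \ref{c:lisse as supp} to translate the question into one about set-theoretic support on $\LocSys_\cG(X)$ and then invoke the Beilinson--Drinfeld identification of the Hitchin base with the formal neighborhood of opers, but this route remains conditional on those conjectures.
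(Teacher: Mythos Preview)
The paper does not prove this statement: it is explicitly presented as a conjecture (``We propose the following:'' followed by \conjref{c:lisse nilp}), with no argument supplied. So there is no proof in the paper to compare against; you are attempting to settle an open question.

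Your proposal has a genuine gap at the step where you claim that the non-zero $\xi_X$ ``appears in $\on{SingSupp}(H(V,m))$'' by the standard singular-support estimates. The functorial estimates used in \secref{ss:preserve nilp} are asymmetric: for the smooth pullback $(\hl\times\fr)^!$ one has an \emph{equality} of singular supports, but for the proper pushforward $(\hr\times\fs)_*$ one only has a \emph{containment}---the singular support of the pushforward is contained in the image of the relevant correspondence, not equal to it. Thus exhibiting a covector $(\xi'',\xi_X)$ satisfying the matching equation only shows it lies in the \emph{upper bound} for $\on{SingSupp}(H(V,m))$; it does not rule out cancellation under $(\hr\times\fs)_*$. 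Producing a lower bound on singular support after proper pushforward is a genuinely different and harder problem (it typically requires a stationary-phase or characteristic-cycle argument showing the contribution at the chosen point is the unique one, or at least does not cancel), and nothing in your outline addresses it.

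The obstacle you flag in Step~3---realizing a specific $\xi_H\in\on{SingSupp}(\CS_{V^\lambda})$ over the $\lambda$-stratum---is real but secondary to the above. Even granting a complete microlocal description of the Satake sheaves, without a non-cancellation mechanism for the pushforward the argument does not close. Your alternative route via \conjref{c:ULA} and \conjref{c:lisse as supp} is, as you note, conditional on further conjectures and therefore not a proof either.
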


\sssec{Example}

Let us call an object $\CF\in \Dmod(\Bun_G)$ a \emph{loose} Hecke eigensheaf, if for every $V\in \Rep(\cG)$ we have
\emph{an} isomorphism
$$H(V,\CF)\simeq \CF'\otimes E_V,$$
where $E_V$ is some object of $\Dmod_{\on{lisse}}(X)$ and $\CF'\in \Dmod(\Bun_G)$.

\medskip

Clearly, a loose Hecke eigensheaf belongs to $(\Dmod(\Bun_G))^{\on{lisse}}$. Hence, \conjref{c:lisse nilp} contains as a special
case the following conjecture, first proposed by G.~Laumon (for actual Hecke eigensheaves, rather than loose ones):

\begin{conj}
A loose Hecke eigensheaf has a nilpotent singular support.
\end{conj}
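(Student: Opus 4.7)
The plan is to derive this statement from \conjref{c:lisse nilp} via the observation, indicated in the text, that every loose Hecke eigensheaf lies in $(\Dmod(\Bun_G))^{\on{lisse}}$. Once this inclusion is in hand, the conclusion that $\CF$ has nilpotent singular support is immediate from the equality $(\Dmod(\Bun_G))^{\on{lisse}} = \Dmod_{\on{Nilp}}(\Bun_G)$ postulated by \conjref{c:lisse nilp}; note that only the reverse inclusion of that conjecture is needed, the forward one being the content of \thmref{t:preserve nilp}.

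To verify that loose Hecke eigensheaves lie in $(\Dmod(\Bun_G))^{\on{lisse}}$, I would unpack the definition from \secref{sss:defn lisse}: $\CF \in \Dmod(\Bun_G)$ lies in $(\Dmod(\Bun_G))^{\on{lisse}}$ exactly when $\CS_{\{*\}}(V)(\CF) \in \Dmod(\Bun_G)\otimes \Dmod_{\on{lisse}}(X)$ for every $V\in \Rep(\cG)$. Since $\Dmod(\Bun_G)$ is dualizable, external product gives an equivalence $\Dmod(\Bun_G)\otimes \Dmod(X) \simeq \Dmod(\Bun_G \times X)$, under which $\CS_{\{*\}}(V)(\CF)$ corresponds to the Hecke functor $H(V,\CF)$, and the subcategory $\Dmod(\Bun_G)\otimes \Dmod_{\on{lisse}}(X)$ corresponds (via \thmref{t:vert sing supp}) to objects of $\Dmod(\Bun_G \times X)$ whose singular support is trivial along the $X$-direction. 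For a loose Hecke eigensheaf we are handed an isomorphism $H(V,\CF) \simeq \CF' \boxtimes E_V$ with $E_V$ lisse, which manifestly exhibits $\CS_{\{*\}}(V)(\CF)$ as an object of $\Dmod(\Bun_G)\otimes \Dmod_{\on{lisse}}(X)$. The verification is thus essentially tautological; combined with \conjref{c:lisse nilp}, it yields the conjecture.

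The genuine obstruction is of course \conjref{c:lisse nilp} itself, which is left open in the paper. Its resolution would presumably require a finer analysis of the singular support of the Hecke kernels $\CS_V$ together with a reverse-direction propagation argument through the Hecke correspondence, complementary to the argument of \secref{ss:preserve nilp} after \cite{NY}: starting from local constancy along $X^I$ of all $H(V,\CF)$, one must conclude that the non-nilpotent components of $\on{SingSupp}(\CF)$ are empty. The diagram-chase in \secref{ss:preserve nilp} shows that nilpotence of $\xi'$ together with the geometric Satake constraint on $\xi_H$ forces nilpotence of $\xi''$; the reverse implication — that vanishing of $\xi_X$ for all $V$, $\xi_H \in \on{SingSupp}(\CS_V)$ and all representatives $(\CP',\CP'',\alpha)$ constrains $\xi'$ into the nilpotent cone — is of a different nature and appears substantially harder. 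I would not attempt this step here, but would present the reduction above as the formal corollary stated.
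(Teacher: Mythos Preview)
Your proposal is correct and matches the paper's treatment exactly: the paper does not prove this statement (it is stated as a conjecture), and derives it precisely as you do, by noting that a loose Hecke eigensheaf tautologically lies in $(\Dmod(\Bun_G))^{\on{lisse}}$ and then invoking \conjref{c:lisse nilp} as the open input. Your honest acknowledgment that \conjref{c:lisse nilp} is the genuine obstruction, and that its resolution would require a reverse-propagation analysis not supplied in the paper, is accurate and goes slightly beyond what the paper itself says.
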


\ssec{A spectral characterization of lisse actions}

In this subsection we specialize to the case when $\CA=\Rep(\sG)$ for an algebraic group $\sG$, and $X$ is a smooth
and proper curve.

\sssec{}

Let $\LocSys_\sG(X)$ be the stack of de Rham $\sG$-local systems on $X$, defined as in \cite[Sect. 10.1]{AG}.

\medskip

It is easy to see that one can write $\LocSys_\sG(X)$ as a quotient of a quasi-compact (derived) scheme by an
action of the algebraic group. From this it follows that $\QCoh(\LocSys_\sG(X))$ is compactly generated
and rigid.

\medskip

In particular, the object $\CO_{\LocSys_\sG(X)}\in \QCoh(\LocSys_\sG(X))$ is compact.

\sssec{}

Recall (see \cite[Sect. 4.3]{Ga1}) that there exists a canonically defined symmetric monoidal functor
\begin{equation} \label{e:Loc}
\on{Fact}(\Rep(\sG))_{\Ran(X)}\to \QCoh(\LocSys_\sG(X)),
\end{equation}
which admits a continuous and fully faithful right adjoint.

\medskip

The functor \eqref{e:Loc} defines a fully faithful embedding
\begin{equation} \label{e:Loc mod}
\QCoh(\LocSys_\sG(X))\mmod \to \CA^{\otimes X}\mmod.
\end{equation}

\sssec{}

Let $\CM$ be a compactly generated category equipped with an action of $\QCoh(\LocSys_\sG(X))$.

\medskip

To a compact object $m\in \CM$ one can attach its set-theoretic support
$$\on{supp}(m)\subset \LocSys_\sG(X).$$
This is the smallest among closed subsets $Y\subset \LocSys_\sG(X)$ such that the image of $m$ under the functor
$$\CM\simeq \QCoh(\LocSys_\sG(X))\underset{\QCoh(\LocSys_\sG(X))}\otimes \CM \to \QCoh(\LocSys_\sG(X)-Y)\underset{\QCoh(\LocSys_\sG(X))}\otimes \CM$$
is zero.

\sssec{}

Let $\sP$ be a parabolic in $\sG$ with Levi quotient $\sM$, and consider the diagram
$$\LocSys_\sG(X) \overset{\sfp_\sP}\longleftarrow \LocSys_\sP(X)  \overset{\sfq_\sP}\longrightarrow \LocSys_\sM(X).$$

Let $\sigma$ be an irreducible $\sM$-local system on $X$. Note that we have a closed embedding
$$i_\sigma:\on{pt}/\on{Aut}(\sigma)\hookrightarrow \LocSys_\sM(X).$$

In the above formula, $\on{Aut}(\sigma)$ is the group of automorphisms of $\sigma$. It contains the $Z(\sM)$
(the center of $\sM$), and  by the irreducibility assumption, $\on{Aut}(\sigma)/\sM$ is finite.

\medskip

Denote
$$\LocSys_{\sP,\sigma}(X):=\LocSys_\sP(X)\underset{\LocSys_\sM(X)}\times \on{pt}/\on{Aut}(\sigma).$$

Let $\sfp_{\sP,\sigma}$ denote the projection
$$\LocSys_{\sP,\sigma}(X)\to \LocSys_\sG(X).$$

Let
$$Y_{\sP,\sigma}\subset \LocSys_\sG(X)$$
be a closed subset equal to the image of $\sfp_{\sP,\sigma}$.

\begin{prop}
Let $m\in \CM$ be a compact object such that $\on{supp}(m)$ is contained in some $Y_{\sP,\sigma}$.
Then $m\in \CM^{\on{lisse}}$.
\end{prop}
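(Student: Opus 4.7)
The plan is as follows. Under the hypothesis that the $\Rep(\sG)^{\otimes X}$-action on $\CM$ factors through $\QCoh(\LocSys_\sG(X))$, I would first reinterpret the Hecke functor $\CS_{\{*\}}(V):\CM\to \CM\otimes \Dmod(X)$ for $V\in \Rep(\sG)$ as $m\mapsto V_{\on{univ}}\otimes_{\QCoh(\LocSys_\sG)} m$, where $V_{\on{univ}}\in \QCoh(\LocSys_\sG)\otimes \Dmod(X)$ is obtained from the universal $\sG$-local system on $\LocSys_\sG\times X$ by applying the associated-bundle construction for $V$. Lisseness of $\CS_{\{*\}}(V)(m)$ then becomes a statement about the ``tensor factor'' of $V_{\on{univ}}$ that is seen by $m$.

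I would then use the support hypothesis to localize. Since $\on{supp}(m)\subset Y_{\sP,\sigma}$, the object $m$ lies in the full subcategory $\CM_{\hat Y_{\sP,\sigma}}:=\QCoh(\hat Y_{\sP,\sigma})\otimes_{\QCoh(\LocSys_\sG)} \CM$, where $\hat Y_{\sP,\sigma}$ denotes the formal completion of $\LocSys_\sG$ along $Y_{\sP,\sigma}$. Consequently it suffices to show that the restriction of $V_{\on{univ}}$ to $\hat Y_{\sP,\sigma}\times X$ belongs to $\QCoh(\hat Y_{\sP,\sigma})\otimes \Dmod_{\on{lisse}}(X)$, since this property is preserved under tensoring over $\QCoh(\hat Y_{\sP,\sigma})$ with any object of $\CM_{\hat Y_{\sP,\sigma}}$.

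The key geometric input then comes from parabolic induction. Over $\LocSys_{\sP,\sigma}$ there is a canonical $\sP$-reduction of the universal $\sG$-local system whose $\sM$-quotient is (up to $\on{Aut}(\sigma)$) the local system $\sigma$. Pulling $V_{\on{univ}}$ back along $\sfp_{\sP,\sigma}:\LocSys_{\sP,\sigma}\to \LocSys_\sG$ gives an object of $\QCoh(\LocSys_{\sP,\sigma})\otimes \Dmod(X)$ which inherits the filtration of $V|_\sP$ by the unipotent radical of $\sP$; its associated graded pieces are (exterior-)pullbacks of the lisse D-modules $W_\sigma$ on $X$, where $W$ runs over the $\sM$-subquotients of $V|_\sP$. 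Each such piece sits in $\QCoh(\LocSys_{\sP,\sigma})\otimes \Dmod_{\on{lisse}}(X)$, and since $\Dmod_{\on{lisse}}(X)\subset \Dmod(X)$ is closed under extensions the whole filtered object lies in $\QCoh(\LocSys_{\sP,\sigma})\otimes \Dmod_{\on{lisse}}(X)$.

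The main obstacle, and what I expect to be the technical heart of the argument, is then the descent step: passing from the lisseness of the pullback on $\LocSys_{\sP,\sigma}\times X$ to that on $\hat Y_{\sP,\sigma}\times X$. The map $\sfp_{\sP,\sigma}:\LocSys_{\sP,\sigma}\to Y_{\sP,\sigma}$ is surjective on closed points but need not be proper, smooth, or faithfully flat in a derived sense, so purely formal descent is not available. My approach would be to work order-by-order on the pro-nilpotent thickenings defining $\hat Y_{\sP,\sigma}$, showing inductively that each infinitesimal neighborhood $Y^{(n)}$ of $Y_{\sP,\sigma}$ in $\LocSys_\sG$ admits a corresponding thickening of $\LocSys_{\sP,\sigma}$ surjecting onto it to which the $\sP$-reduction extends; then using the fact that $\Dmod_{\on{lisse}}(X)$ is closed under colimits in $\Dmod(X)$ (by its very definition) to pass to the limit along $\hat Y_{\sP,\sigma}=\underset{n}{\on{colim}}\, Y^{(n)}$.
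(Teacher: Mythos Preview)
Your reduction to the universal module $\QCoh(\LocSys_\sG)_{Y_{\sP,\sigma}}$ and your use of the $\sP$-filtration of $V_{\on{univ}}$ over $\LocSys_{\sP,\sigma}$ (with subquotients the fixed lisse objects $W_\sigma$) are exactly right, and this is also what the paper does. The divergence is in how you get from $\LocSys_{\sP,\sigma}$ back down.

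The descent step you propose has a genuine gap. Your claim that each infinitesimal neighborhood $Y^{(n)}$ of $Y_{\sP,\sigma}$ inside $\LocSys_\sG$ is hit by a thickening of $\LocSys_{\sP,\sigma}$ to which the $\sP$-reduction extends is not correct: the thickenings $Y^{(n)}$ include directions \emph{normal} to $Y_{\sP,\sigma}$ in $\LocSys_\sG$, i.e.\ first-order deformations of the $\sG$-local system that destroy any $\sP$-reduction with Levi $\sigma$. So there is no such surjection in general, and the order-by-order extension of the $\sP$-structure is not available.

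The paper sidesteps descent entirely. The key point you are missing (recorded just before the definition of $\CM^{\on{lisse}}$) is that because $\Dmod_{\on{lisse}}(X)\hookrightarrow\Dmod(X)$ admits a continuous right adjoint, membership in $\CM\otimes\Dmod_{\on{lisse}}(X)$ can be tested after applying \emph{any} conservative functor $\CM\to\CM'$. Thus one never has to descend: one only has to exhibit a chain of conservative $\Rep(-)^{\otimes X}$-linear functors ending in a category where the action is manifestly lisse. The paper's chain is
\[
\QCoh(\LocSys_\sG)_{Y_{\sP,\sigma}}\ \overset{\sfp_{\sP,\sigma}^*}{\longrightarrow}\ \QCoh(\LocSys_{\sP,\sigma})\ \overset{(\sfq_{\sP,\sigma})_*}{\longrightarrow}\ \QCoh(\on{pt}/\on{Aut}(\sigma))\ \longrightarrow\ \Vect,
\]
combined with the reduction of the acting category from $\Rep(\sG)$ to $\Rep(\sP)$ to $\Rep(\sM)$ (using that $\Rep(\sM)$ generates $\Rep(\sP)$). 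The direct image $(\sfq_{\sP,\sigma})_*$ is conservative because the fiber of $\sfq_{\sP,\sigma}$ is a derived affine scheme modulo an additive group, and it is $\Rep(\sM)^{\otimes X}$-linear because the Levi quotient of the universal $\sP$-local system is pulled back from the base. Over $\Vect$ the action is simply $V\mapsto V_\sigma$, which is lisse on the nose. Replacing your descent step by this conservativity argument gives a complete proof along the lines you started.
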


\begin{proof}

It is enough to prove the proposition for $$\CM:=\QCoh(\LocSys_\sG(X))_{Y_{\sP,\sigma}},$$
where the latter is the full subcategory of $\QCoh(\LocSys_\sG(X))$ consisting of objects with set-theoretic support on $Y_{\sP,\sigma}$.

\medskip

Note that the functor
$$\sfp_{\sP,\sigma}^*:\QCoh(\LocSys_\sG(X))_{Y_{\sP,\sigma}}\to \QCoh(\LocSys_{\sP,\sigma}(X))$$
is conservative. Hence, by the last remark in \secref{sss:lisse rem}, we can further replace the category
$\QCoh(\LocSys_\sG(X))_{\sP,Y_\sigma}$ by $\QCoh(\LocSys_{\sP,\sigma}(X))$.

\medskip

The action of $\Rep(\sG)^{\otimes X}$ on $\QCoh(\LocSys_{\sP,\sigma}(X))$ factors through the restriction
$$\Rep(\sG)^{\otimes X}\to \Rep(\sP)^{\otimes X}.$$
Hence, it is enough to show that the action of $\Rep(\sP)^{\otimes X}$ on $\QCoh(\LocSys_{\sP,\sigma}(X))$ is lisse.

\medskip

Next, we note that the essential image of the restriction functor
$$\Rep(\sM)\to \Rep(\sP)$$
generates $\Rep(\sP)$. Hence, it is enough to show that the action of $\Rep(\sM)^{\otimes X}$ on
$\QCoh(\LocSys_{\sP,\sigma}(X))$ is lisse.

\medskip

Next, we note that the map
$$\sfq_{\sP,\sigma}:\LocSys_{\sP,\sigma}(X)\to \on{pt}/\on{Aut}(\sigma)$$
has the property that its base change with respect to $\on{pt}\to \on{pt}/\on{Aut}(\sigma)$ yields an algebraic stack of the form
$$\text{derived affine scheme}/\BA^n$$
for some $n$. Hence, the direct image functor
$$\QCoh(\LocSys_{\sP,\sigma}(X))\to \QCoh(\on{pt}/\on{Aut}(\sigma))$$
is conservative. Hence, it is enough to show that the action of $\Rep(\sM)^{\otimes X}$ on $\QCoh(\on{pt}/\on{Aut}(\sigma))$ is lisse.

\medskip

Finally, applying the (conservative) pullback functor along $\on{pt}\to \on{pt}/\on{Aut}(\sigma)$, we obtain that it is enough
to show that the action of $\Rep(\sM)^{\otimes X}$ on $\Vect$, corresponding to the $\sM$-local system $\sigma$, is
lisse.

\medskip

However, the latter is evident: for $V\in \Rep(\sM)^c$ and $k\in \Vect$, the corresponding object
$$\CS_{\{*\}}(V)(k)\in \Vect\otimes \Dmod(X)\simeq \Dmod(X)$$
is just $V_\sigma$, the twist of $V$ by $\sigma$.

\end{proof}

\sssec{}

The next conjecture proposes to describe the subcategory $\CM^{\on{lisse}}\subset \CM$ in terms of set-theoretic support:

\begin{conj}  \label{c:lisse as supp}
A compact object $m\in \CM$ belongs to $\CM^{\on{lisse}}$ if and only if it can be written as a colimit of objects $m_\alpha$, for each of
which $\on{supp}(m_\alpha)$ is contained in a finite union of closed subsets of
the form $Y_{\sP,\sigma}$.
\end{conj}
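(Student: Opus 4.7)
The \emph{if} direction is essentially formal: by the proposition just above the conjecture, each $m_\alpha$ with $\on{supp}(m_\alpha)\subseteq Y_{\sP_1,\sigma_1}\cup\cdots\cup Y_{\sP_n,\sigma_n}$ lies in $\CM^{\on{lisse}}$ (the statement for a finite union follows from the case of a single $Y_{\sP,\sigma}$ by a standard semi-orthogonal decomposition argument on $\CM$ according to support). Since the defining condition of $\CM^{\on{lisse}}$ (that $\CS_{\{*\}}(r)(m)\in \CM\otimes \Dmod_{\on{lisse}}(X)$ for all compact $r$) is stable under colimits in $m$, we conclude $m=\on{colim}\, m_\alpha\in \CM^{\on{lisse}}$.

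For the \emph{only if} direction, I plan to reduce to a universal situation and then transfer the analysis into one about singular support on $\LocSys_\sG(X)$. First, using the fully faithfulness of \eqref{e:Loc mod} and the observation that any compact object of a $\QCoh(\LocSys_\sG(X))$-module category lies in the image of some compactly generated submodule, one reduces to the case $\CM=\QCoh(\LocSys_\sG(X))$ (or, more flexibly, to a category of the form $\QCoh$ of a quasi-compact closed substack). Next, one exploits the Jordan--H\"older-type decomposition
\[
\LocSys_\sG(X) \;=\;\bigcup_{\sP,[\sigma]}\, Y_{\sP,\sigma},
\]
where the union is over standard parabolics $\sP\subseteq \sG$ and conjugacy classes of irreducible $\sM$-local systems $\sigma$; each $\sG$-local system has a well-defined semisimplification which lies on a unique such stratum.

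The heart of the argument is to identify the lisseness condition with a condition on the \emph{singular support} of $m$ as an object of $\QCoh(\LocSys_\sG(X))$, in the sense of Arinkin--Gaitsgory. Concretely, let $\on{Nilp}(\LocSys_\sG)\subseteq \on{Sing}(\LocSys_\sG)$ denote the global nilpotent cone (pairs $(\sigma,\xi)$ with $\xi\in H^1(X,\fg^*_\sigma)$ nilpotent). I expect the following key assertion: a compact $m$ belongs to $\CM^{\on{lisse}}$ if and only if its singular support is contained in $\on{Nilp}(\LocSys_\sG)$. The forward implication comes from an infinitesimal Hecke computation exactly parallel to \secref{ss:preserve nilp}: the codifferential of the Hecke correspondence on the spectral side equals the symbol map from $T^*X$ into the fiber of $\on{Sing}(\LocSys_\sG)$, so lisseness in the $X$-direction forces nilpotence. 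The converse uses that $\QCoh_{\on{Nilp}}(\LocSys_\sG(X))$ is generated by objects supported on individual $Y_{\sP,\sigma}$ (this is the ``Eisenstein generation'' statement). Granting this, the conjecture follows because Noetherianness of $\on{Sing}$ forces the nilpotent singular support to be contained in a finite union of irreducible components of $\on{Nilp}$, each component being the conormal of some $Y_{\sP,\sigma}$.

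The main obstacle will be the equivalence between lisseness of the $\Rep(\sG)^{\otimes X}$-action and nilpotent singular support on the spectral side. The forward direction is a symbol calculation, but the reverse direction is essentially the ``spectral Eisenstein series'' statement that $\QCoh_{\on{Nilp}}(\LocSys_\sG(X))$ is generated under colimits by objects obtained by parabolic induction from irreducible $\sM$-local systems, which is currently known only in limited generality (e.g.\ for $\sG=\GL_n$, and at the categorical level via Arinkin's theorem on singular support). Thus, strictly speaking, the conjecture is contingent on this structural result about singular support on $\LocSys_\sG(X)$; what one can unconditionally prove by the above strategy is the implication from ``nilpotent singular support'' to ``support contained in a finite union of $Y_{\sP,\sigma}$.''
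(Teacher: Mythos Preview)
The statement you are addressing is labelled \emph{Conjecture} in the paper; there is no proof to compare against. The paper establishes only the preceding proposition (support in a single $Y_{\sP,\sigma}$ implies lisse), and explicitly leaves the full statement open. So your task reduces to assessing whether the proposal constitutes a genuine advance.

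For the \emph{if} direction, your argument is essentially the one already in the paper, with the small extension to finite unions. Note, however, that the $Y_{\sP,\sigma}$ are not disjoint and need not fit into a semi-orthogonal decomposition of the support category; a cleaner argument is to observe directly from the proof of the proposition that the action of $\Rep(\sG)^{\otimes X}$ on $\QCoh(\LocSys_\sG(X))_{Y_{\sP,\sigma}}$ is lisse, and that $\CM^{\on{lisse}}$ is closed under colimits and extensions.

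For the \emph{only if} direction, your strategy has a genuine conceptual gap. The Arinkin--Gaitsgory singular support is an invariant of objects of $\IndCoh$, and for a compact object of $\QCoh(\LocSys_\sG(X))$ (i.e.\ a perfect complex) it is automatically the zero section. Thus your proposed ``key assertion'' ($m\in\CM^{\on{lisse}}$ iff $\on{SingSupp}(m)\subset\on{Nilp}$) is vacuously satisfied by every compact $m$ and cannot distinguish lisse from non-lisse objects. Relatedly, your description of $\on{Sing}(\LocSys_\sG)$ is off: its fiber at $\sigma$ is $H^0(X_{\on{dR}},\fg^*_\sigma)$ (horizontal sections, the Arthur parameter), not $H^1(X,\fg^*_\sigma)$ (which is the cotangent space). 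The analogy with \secref{ss:preserve nilp} is suggestive but does not transfer: that computation concerns singular support of D-modules on $\Bun_G$, whereas here the relevant invariant is the \emph{set-theoretic} support on $\LocSys_\sG(X)$, a different notion entirely. Finally, your reduction to $\CM=\QCoh(\LocSys_\sG(X))$ is unjustified: a compact object of an arbitrary $\QCoh(\LocSys_\sG(X))$-module category $\CM$ need not arise from the regular module, and there is no evident functor transporting the lisseness condition back.

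In short, the \emph{if} direction is fine (and is what the paper already does); the \emph{only if} direction remains open, and your proposed route through AG singular support does not apply to the $\QCoh$-setting in which the conjecture is formulated.
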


\section{The notion of universal local acyclicity (ULA)} \label{s:ULA}

In this section we review the notion of universal local acyclicity, for a general sheaf-theoretic context
with the exclusion of (a). In the particular case of D-modules we will express it via the forgetful
functor to quasi-coherent sheaves.

\ssec{The abstract ULA property}   \label{ss:ULA abstract}

\sssec{}

In this subsection $\CC$ will be an abstract symmetric monoidal DG category. The example to keep in mind is
$\CC=\Dmod(Y)$ for a scheme $Y$ with respect to $\sotimes$, so $\CC$ is very far from being rigid.

\sssec{}

Let $\CM$ be a $\CC$-module category, which is \emph{dualizable as a $\CC$-module}. This means that there exists
another $\CC$-module category $\CM^{\vee,\CC}$ and functors
$$\CC \overset{\on{unit}_\CM^\CC}\longrightarrow \CM^{\vee,\CC} \underset{\CC}\otimes \CM
\text{ and } \CM \underset{\CC}\otimes \CM^{\vee,\CC} \overset{\on{counit}_\CM^\CC}\longrightarrow \CC$$
that satisfy the usual duality axioms.

\medskip

Note in this case we have a natural identification
\begin{equation} \label{e:M dual}
\CM^{\vee,\CC}\underset{\CC}\otimes \CN \simeq \on{Funct}_{\CC\mmod}(\CM,\CN), \quad \CN\in \CC\mmod.
\end{equation}

\sssec{Example}  \label{sss:induced module}

Take $\CM=\CC\otimes \CM_0$ where $\CM_0$ is a plain DG category. If $\CM_0$ is dualizable as a DG category,
then $\CM$ is dualizable as a $\CC$-module and
$$\CM^{\vee,\CC}\simeq \CC\otimes \CM_0^\vee.$$

\sssec{} \label{sss:abs ULA}

Let $m\in \CM$ be an object. We shall say that $m$ is ULA over $\CC$ if there exists an object $m^{\vee,\CC}\in \CM^{\vee,\CC}$
equipped with a functorial identification
$$\Maps_{\CM}(c\otimes m,m')\simeq \Maps_\CC(c,\on{counit}_\CM^\CC(m'\underset{\CC}\otimes m^{\vee,\CC})), \quad c\in \CC,\,\, m'\in \CM.$$

\medskip

The datum of such an identification is equivalent to that of a pair of maps
$$\one_\CC\overset{\mu}\to \on{counit}_\CM^\CC(m \underset{\CC}\otimes m^{\vee,\CC}) \text{ and }
m^{\vee,\CA}\underset{\CC}\otimes m \overset{\epsilon}\to \on{unit}_\CM^\CC(\one_\CC)$$
such that the composite
\begin{multline}  \label{e:axiom ULA}
m \overset{\mu}\to \on{counit}_\CM^\CC(m \underset{\CC}\otimes m^{\vee,\CC})  \underset{\CC}\otimes m \simeq
(\on{counit}_\CM^\CC \underset{\CC}\otimes \on{Id})(m\underset{\CC}\otimes m^{\vee,\CC} \underset{\CC}\otimes m) \overset{\epsilon}\to \\
\to (\on{counit}_\CM^\CC \underset{\CC}\otimes \on{Id})(m  \underset{\CC}\otimes \on{unit}_\CM^\CC(\one_\CC)) \simeq m
\end{multline}
is the identity map, and similarly for $m^{\vee,\CC}$.

\medskip

Note that if an object $m\in \CM$ is ULA, then so is the corresponding $m^{\vee,\CC}\in \CM^{\vee,\CC}$ with
$$(m^{\vee,\CC})^{\vee,\CC}\simeq m.$$

\sssec{}  \label{sss:inner Hom comp}

In what follows we will assume that $\CC$ and $\CM$ are compactly generated (as DG categories). We will also assume
that the unit object $\one_\CC$ is compact.
(But we do \emph{not} assume that the tensor product operation on $\CC$ or the action of $\CC$ on $\CM$ preserve compactness.)

\medskip

Note that for a pair of objects $m,m'\in M$, we have a well-defined $\ul\Hom(m,m')\in \CC$, satisfying
$$\Maps_\CC(c,\ul\Hom(m,m')):=\Maps_{\CM}(c\otimes m,m').$$

For a fixed $m$, the functor $m'\mapsto \ul\Hom(m,m')$ preserves colimits if and only if
$c\otimes m$ are compact for all compact $c\in \CC$; in particular in this case $m$ itself is compact.

\medskip

For $c\in \CC$, we have a tautological map
\begin{equation} \label{e:inner Hom}
c\otimes \ul\Hom(m,m')\to \ul\Hom(m,c\otimes m').
\end{equation}

We have:

\begin{lem}  \label{l:ULA inner Hom}
An object $m\in \CM$ is ULA if and only if the functor
$$m'\mapsto \ul\Hom(m,m'), \quad \CM\to \CC$$
preserves colimits and \eqref{e:inner Hom} is an isomorphism for all $c\in \CC$.
\end{lem}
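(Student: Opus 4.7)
The plan is to recast the ULA property as the statement that the $\CC$-linear functor
$$L_m: \CC \to \CM, \quad c \mapsto c\otimes m$$
admits a right adjoint in the 2-category $\CC\mmod$ --- equivalently, a continuous functor $R_m:\CM\to \CC$ that is $\CC$-linear. Under the identification \eqref{e:M dual} with $\CN=\CC$, i.e.\ $\CM^{\vee,\CC} \simeq \on{Funct}_{\CC\mmod}(\CM,\CC)$, such an $R_m$ corresponds to an object of $\CM^{\vee,\CC}$ which will play the role of $m^{\vee,\CC}$.

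For the forward direction, suppose $m$ is ULA with dual $m^{\vee,\CC}$ and structure maps $\mu, \epsilon$. The defining isomorphism
$$\Maps_\CM(c\otimes m, m')\simeq \Maps_\CC(c, \on{counit}_\CM^\CC(m'\underset{\CC}\otimes m^{\vee,\CC}))$$
and the compactness of $\one_\CC$ (needed only to know $\ul\Hom(m,m')$ is characterized by such a Yoneda-style property) give the canonical identification
$$\ul\Hom(m,m') \simeq \on{counit}_\CM^\CC(m'\underset{\CC}\otimes m^{\vee,\CC}).$$
The right-hand side is a composite of $\CC$-linear colimit-preserving functors in the variable $m'$, so $\ul\Hom(m,-)$ preserves colimits and the map \eqref{e:inner Hom} is an isomorphism (being the $\CC$-linearity datum of the composite $\on{counit}_\CM^\CC\circ(-\underset{\CC}\otimes m^{\vee,\CC})$).

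For the converse, the hypotheses say precisely that $R_m := \ul\Hom(m,-):\CM\to\CC$ is a 1-morphism in $\CC\mmod$. By \eqref{e:M dual} it corresponds to an object $m^{\vee,\CC}\in \CM^{\vee,\CC}$ with
$$R_m(m')\simeq \on{counit}_\CM^\CC(m'\underset{\CC}\otimes m^{\vee,\CC}).$$
The adjunction $L_m\dashv R_m$ (which holds on the nose by the definition of $\ul\Hom$) produces a unit $\mathrm{id}_\CC\to R_m\circ L_m$ and counit $L_m\circ R_m\to \mathrm{id}_\CM$; evaluating the unit on $\one_\CC$ yields the desired
$$\mu:\one_\CC\to \ul\Hom(m,m) \simeq \on{counit}_\CM^\CC(m\underset{\CC}\otimes m^{\vee,\CC}).$$
To obtain $\epsilon: m^{\vee,\CC}\underset{\CC}\otimes m\to \on{unit}_\CM^\CC(\one_\CC)$ one dualizes: the $L_m\dashv R_m$ adjunction translates, via the duality $\CM\leftrightarrow \CM^{\vee,\CC}$ over $\CC$, into a $R_m^{\vee,\CC}\dashv L_m^{\vee,\CC}$ adjunction, whose counit at $\one_\CC$ is $\epsilon$. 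The snake identity \eqref{e:axiom ULA} and its companion then follow from the two snake identities for the adjunction $L_m\dashv R_m$ by transport through \eqref{e:M dual}.

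The main obstacle is bookkeeping in the last paragraph: one must check that the $\epsilon$ extracted from the dual adjunction, when composed back with $\mu$ as in \eqref{e:axiom ULA}, reproduces exactly the snake identity for $L_m\dashv R_m$. The cleanest way is to package the whole argument as follows: ULA pairs $(m,m^{\vee,\CC})$ correspond bijectively to adjoint pairs $L\dashv R$ in $\CC\mmod$ with $L:\CC\to \CM$ and $L(\one_\CC)=m$, where $R$ is a 1-morphism in $\CC\mmod$ (i.e.\ continuous and $\CC$-linear). This is a formal consequence of the dualizability of $\CM$ over $\CC$ together with the fact that right adjoints in a 2-category of dualizable objects correspond to the dual data under the $(-)^{\vee,\CC}$ operation, and the lemma is then a tautology.
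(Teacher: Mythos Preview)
Your proof is correct and follows the same route as the paper: the forward direction is identical (identify $\ul\Hom(m,m')$ with $\on{counit}_\CM^\CC(m'\underset{\CC}\otimes m^{\vee,\CC})$), and the converse is exactly the paper's one-line appeal to \eqref{e:M dual} with $\CN=\CC$. One remark: your detour through the $(\mu,\epsilon)$ data and the snake identities is unnecessary, since the paper's \emph{primary} definition of ULA is the functorial identification $\Maps_\CM(c\otimes m,m')\simeq \Maps_\CC(c,\on{counit}_\CM^\CC(m'\underset{\CC}\otimes m^{\vee,\CC}))$, and you have already produced this the moment you extract $m^{\vee,\CC}$ from $\ul\Hom(m,-)\in\on{Funct}_{\CC\mmod}(\CM,\CC)\simeq \CM^{\vee,\CC}$.
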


\begin{proof}

If $m$ is ULA we have
$$\ul\Hom(m,m')\simeq \on{counit}_\CM^\CC(m' \underset{\CC}\otimes m^{\vee,\CC}).$$

The other direction follows from \eqref{e:M dual} for $\CN=\CC$.

\end{proof}

\begin{cor} \label{c:rigid ULA}
If $\CC$ is rigid, any compact $m\in \CM$ is ULA.
\end{cor}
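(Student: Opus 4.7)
The plan is to verify the two conditions in \lemref{l:ULA inner Hom}: that the functor $m'\mapsto \ul\Hom(m,m')$ preserves colimits, and that the comparison map \eqref{e:inner Hom} is an isomorphism for every $c\in\CC$.

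First I would handle colimit preservation. By the observation just before \lemref{l:ULA inner Hom}, this is equivalent to showing that $c\otimes m$ is compact in $\CM$ for every compact object $c\in\CC$. Here I would use the hypothesis that $\CC$ is rigid: according to the paper's convention on rigidity (see \secref{sss:rigid}), compactness and dualizability of objects of $\CC$ coincide. For a dualizable object $c\in\CC$ with monoidal dual $c^\vee$, the action endofunctor $c\otimes -\colon \CM\to \CM$ admits $c^\vee\otimes -$ as a continuous right adjoint, and therefore preserves compact objects. Applied to the compact object $m\in\CM$, this gives $c\otimes m\in\CM^{c}$ for every compact $c\in\CC$, as required.

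Next I would verify that \eqref{e:inner Hom} is an isomorphism for every $c\in\CC$. Since $\CC$ is compactly generated by dualizable objects, and since both sides of \eqref{e:inner Hom} are functors of $c\in\CC$ that preserve colimits (the left side tautologically, and the right side by the colimit-preservation of $\ul\Hom(m,-)$ established in the previous step, combined with the continuity of the $\CC$-action on $\CM$), it suffices to check the assertion for $c$ dualizable. For such $c$, the identification
\[
\Maps_\CC(c',c\otimes \ul\Hom(m,m'))\simeq \Maps_\CC(c'\otimes c^\vee,\ul\Hom(m,m'))\simeq \Maps_\CM(c'\otimes c^\vee\otimes m,m')
\]
and
\[
\Maps_\CC(c',\ul\Hom(m,c\otimes m'))\simeq \Maps_\CM(c'\otimes m,c\otimes m')\simeq \Maps_\CM(c'\otimes c^\vee\otimes m,m')
\]
(the last step using dualizability of $c$) agree under the map in question by the Yoneda lemma.

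I do not anticipate a genuine obstacle: the argument is a formal combination of the rigidity identification ``compact $=$ dualizable'' in $\CC$ with the adjunction $(c\otimes -, c^\vee\otimes -)$. The only mild care needed is to ensure that the reduction to dualizable $c$ in the second step legitimately uses colimit-preservation in $c$, which is exactly what the first step provides.
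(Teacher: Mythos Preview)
Your proposal is correct and follows essentially the same approach as the paper's proof: both invoke \lemref{l:ULA inner Hom}, reduce the two conditions to the case of dualizable $c$ via the rigidity identification ``compact $=$ dualizable'' in $\CC$, and observe that for dualizable $c$ both conditions are automatic. The paper's proof is a two-line sketch of exactly the argument you have spelled out.
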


\begin{proof}

It is enough to check that the objects $c\otimes m$ are compact and the maps \eqref{e:inner Hom}
are isomorphisms for $c$ that are dualizable in $\CC$. In the rigid case this is automatic.

\end{proof}

\ssec{Adding self-duality}

Let $\CC$ be as above (a compactly generated symmetric monoidal DG category with a compact unit).

\sssec{}  \label{sss:abs Verdier}

Assume that $\CC$ contains a compact object, to be denoted
$\wt\one_\CC$, so that the pairing
$$\CC\otimes \CC\mapsto \Vect, \quad c_1,c_2\mapsto \CHom_\CC(\wt\one_\CC,c_1\otimes c_2)$$
defines a self-duality
\begin{equation}  \label{e:Verd abs}
\CC^\vee \simeq \CC.
\end{equation}

Let $\BD_\CC$ denote the corresponding contravariant self-equivalence on $\CC^c$, i.e.,
$$\CHom_\CC(\BD_\CC(c_1),c_2))\simeq \CHom_\CC(\wt\one_\CC,c_1\otimes c_2), \quad c_1\in \CC^c.$$

\medskip

By construction
$$\wt\one_\CC\simeq \BD_\CC(\one_\CC).$$

\sssec{Example}

Let $\CC=\Dmod(Y)$, where $Y$ is a scheme of finite type over a ground field $k$ of characteristic $0$.
We regard $\CC$ as a symmetric monoidal category via the $\sotimes$ tensor product.

\medskip

Then we can take $\wt\one_\CC$ to be the ``constant sheaf" $k_Y\in \Dmod(Y)$, i.e.,
the Verdier dual of the dualizing sheaf $\omega_\CY$. The functor $\BD_\CC$ is
the Verdier duality self-equivalence of $\Dmod(Y)^c$, see \secref{sss:module geometry}.

\sssec{}

Let $\CM$ be a $\CC$-module category.  Note that for a pair of objects $m,m'\in \CM$ and $c\in \CC^c$
we have a canonically defined map
\begin{equation} \label{e:ULA map}
\CHom(c\otimes m,m')\to \CHom_\CM(\BD_\CC(c)\otimes c\otimes m,\BD_\CC(c)\otimes m')\to
\CHom_\CM(\wt\one_\CC\otimes m,\BD_\CC(c)\otimes m'),
\end{equation}
where the last arrow comes from the canonical map
$$\wt\one_\CC\to \BD_\CC(c)\otimes c.$$

\sssec{}

Assume that $\CM$ is compactly generated as a DG category.  Let $\BD_\CM$ denote the canonical contravariant equivalence
$$\CM^c \to (\CM^\vee)^c.$$
Let $\langle-,-\rangle$ denote the tautological pairing
$$\CM\times \CM^\vee\to \Vect.$$

\medskip

Consider
$\CM^\vee$ as a $\CC$-module category, where the action of $c\in \CC$ on $\CM^\vee$ is the functor dual to that
of the action of $c$ on $\CM$.

\medskip

Define a functor
\begin{multline} \label{e:internal pairing}
\langle-,-\rangle_\CC:\CM\times \CM^\vee\to \CC, \\
\CHom_\CC(c,\langle m,m'\rangle_\CC):=\langle \BD_\CC(c)\otimes m,m' \rangle \simeq \langle m,\BD_\CC(c)\otimes m' \rangle, \quad
m\in \CM, m'\in \CM^\vee.
\end{multline}

\medskip

By construction, \eqref{e:internal pairing} factors via a functor
\begin{equation} \label{e:internal pairing bis}
\CM \underset{\CC}\otimes \CM^\vee  \to \CC.
\end{equation}

\sssec{} \label{sss:internal pairing bis}

Assume that \eqref{e:internal pairing bis} is the counit of a duality, so we find ourselves in the context of \secref{ss:ULA abstract}.
In particular, we can identify
$$\CM^{\vee,\CC}\simeq \CM^\vee.$$

\sssec{}

We claim:

\begin{prop} \label{p:ULA via morphism}
For $m\in \CM^c$ the following conditions are equivalent:

\medskip

\noindent{\em(i)} The object $m$ is ULA over $\CC$;

\medskip

\noindent{\em(ii)} The object $m$ is ULA, with
$$m^{\vee,\CC}=\BD_\CM(\wt\one_\CC\otimes m)$$
and $\mu$ being the canonical map
$$\one_\CC\to \langle m,\BD_\CM(\wt\one_\CC\otimes m)\rangle_\CC.$$

\medskip

\noindent{\em(iii)} The object $\wt\one_\CC\otimes m$ is compact, and
the map \eqref{e:ULA map} is an isomorphism for every $c\in \CC^c$ and $m'\in \CM$.

\end{prop}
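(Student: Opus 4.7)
The strategy is to run the implications in cyclic order (iii) $\Rightarrow$ (ii) $\Rightarrow$ (i) $\Rightarrow$ (iii). The step (ii) $\Rightarrow$ (i) is immediate, since (ii) simply exhibits a specific choice of data realizing the abstract ULA condition.

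For (iii) $\Rightarrow$ (ii), the compactness of $\wt\one_\CC \otimes m$ in (iii) allows one to define
$$m^{\vee,\CC} := \BD_\CM(\wt\one_\CC \otimes m) \in \CM^\vee \simeq \CM^{\vee,\CC},$$
where the identification on the right is that of \secref{sss:internal pairing bis}. Unwinding the defining formula \eqref{e:internal pairing} for $\langle -,-\rangle_\CC$ together with the defining property of $\BD_\CM$ produces, for every compact $c \in \CC^c$ and every $m' \in \CM$, a natural identification
$$\CHom_\CC(c, \on{counit}_\CM^\CC(m' \underset{\CC}\otimes m^{\vee,\CC})) \simeq \CHom_\CM(\wt\one_\CC \otimes m, \BD_\CC(c) \otimes m').$$
The hypothesis that \eqref{e:ULA map} is an isomorphism then identifies the right-hand side with $\CHom_\CM(c \otimes m, m')$. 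Since both sides are contravariant in $c$ and send colimits to limits, and since $\CC$ is compactly generated, this functorial isomorphism propagates uniquely to all $c \in \CC$, exhibiting $m$ as ULA. The structure map $\mu$ is the one corresponding at $c = \one_\CC$ and $m' = m$ to the identity of $\wt\one_\CC \otimes m$, agreeing with the description in (ii).

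For (i) $\Rightarrow$ (iii), invoke \lemref{l:ULA inner Hom}: the ULA assumption means that the functor $\ul\Hom(m,-)$ preserves colimits and that the natural map \eqref{e:inner Hom} is an isomorphism for every $c$. The former ensures that $c \otimes m$ is compact for every $c \in \CC^c$, so in particular $\wt\one_\CC \otimes m$ is compact. Applying \eqref{e:inner Hom} with $\BD_\CC(c)$ in place of the generic input and assembling the chain
\begin{multline*}
\CHom_\CM(c \otimes m, m') \simeq \CHom_\CC(c, \ul\Hom(m, m')) \simeq \CHom_\CC(\wt\one_\CC, \BD_\CC(c) \otimes \ul\Hom(m, m')) \\
\simeq \CHom_\CC(\wt\one_\CC, \ul\Hom(m, \BD_\CC(c) \otimes m')) \simeq \CHom_\CM(\wt\one_\CC \otimes m, \BD_\CC(c) \otimes m'),
\end{multline*}
where the second identification is the abstract Verdier self-duality \eqref{e:Verd abs} applied to the compact object $c$, one obtains an isomorphism between the source and target of \eqref{e:ULA map}. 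Inspecting the construction of \eqref{e:ULA map} shows that this composite equals \eqref{e:ULA map} itself, which is therefore an isomorphism.

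The main point requiring care is bookkeeping: one must verify that the chain of identifications built from the inner Hom and the Verdier duality on $\CC$, composed with the $(\BD_\CM, \langle -,-\rangle)$ machinery on $\CM$, reproduces the \emph{specific} natural transformation \eqref{e:ULA map} (rather than some a priori different map between the same source and target), and that the unit $\mu$ produced in (iii) $\Rightarrow$ (ii) is indeed the one singled out in the statement. Once one traces how the map $\wt\one_\CC \to \BD_\CC(c) \otimes c$ that enters the definition of \eqref{e:ULA map} is transported through these adjunctions, the rest of the argument is a mechanical application of the universal properties of $\ul\Hom$, $\BD_\CC$ and $\BD_\CM$.
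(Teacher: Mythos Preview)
Your proof is correct and follows the same cyclic scheme (iii) $\Rightarrow$ (ii) $\Rightarrow$ (i) $\Rightarrow$ (iii) as the paper, with identical arguments for the first two implications. For (i) $\Rightarrow$ (iii) there is a minor stylistic difference: the paper works directly with the abstract dual object $m^{\vee,\CC}$, showing via the pairing $\langle -,-\rangle_\CC$ that $m^{\vee,\CC}$ is compact with $\BD_\CM(m^{\vee,\CC})\simeq \wt\one_\CC\otimes m$, and then identifies \eqref{e:ULA map} with the resulting chain of isomorphisms; you instead invoke \lemref{l:ULA inner Hom} and route the computation through the inner Hom and the isomorphism \eqref{e:inner Hom}. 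The two arguments are equivalent repackagings of the same adjunctions, and your caveat about tracing the map $\wt\one_\CC\to \BD_\CC(c)\otimes c$ through the identifications is exactly the check the paper performs implicitly.
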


\begin{proof}

The implication (ii) $\Rightarrow$ (i) is tautological. To prove (iii) $\Rightarrow$ (ii) we note that if \eqref{e:ULA map} is an isomorphism,
then the object $\BD_\CM(\wt\one_\CC\otimes m)$ satisfies the requirements of $m^{\vee,\CC}$. It remains prove (i) $\Rightarrow$ (iii).

\medskip

Let $m$ be ULA over $\CC$, and let $m^{\vee,\CC}$ be the corresponding object of $\CM^{\vee,\CC}$,
which we now identify with $\CM^\vee$. We have:
$$\CHom_\CM(c\otimes m,m')\simeq \CMaps_\CC(c, \langle m',m^{\vee,\CC}\rangle_\CC)\simeq
\langle \BD_\CC(c)\otimes m', m^{\vee,\CC} \rangle, \quad c\in \CC^c,m'\in \CM.$$

In particular, taking $c=\wt\one_\CC$, we obtain
$$\CHom_\CM(\wt\one_\CC\otimes m,m')\simeq \langle m', m^{\vee,\CC} \rangle, \quad m'\in \CM.$$
Hence, we obtain that $m^{\vee,\CC}$ is compact and
$$\BD_\CM(m^{\vee,\CC})\simeq \wt\one_\CC\otimes m;$$
in particular $\wt\one_\CC\otimes m$ is compact.

\medskip

With respect to the last identification, the map \eqref{e:ULA map} becomes the map
\begin{multline*}
\CHom_\CM(c\otimes m,m')\simeq \CMaps_\CC(c, \langle m',m^{\vee,\CC}\rangle_\CC)\simeq
\langle \BD_\CC(c)\otimes m', m^{\vee,\CC} \rangle \simeq \\
\simeq \CHom_{\CM}(\BD_\CM(m^{\vee,\CC}),\BD_\CC(c)\otimes m')\simeq
\CHom_{\CM}(\wt\one_\CC\otimes m,\BD_\CC(c)\otimes m'),
\end{multline*}
and hence is an isomorphism.

\end{proof}

\sssec{Example}

Let $\CM$ be of the form $\CC\otimes \CM_0$ for a compactly generated DG category $\CM_0$. Then the pairing
\eqref{e:internal pairing bis} does indeed define the counit of a duality. The composite identification
$$\CC^\vee \otimes \CM_0^\vee \simeq (\CC\otimes \CM_0)^\vee \simeq (\CC\otimes \CM_0)^{\vee,\CC}
\simeq \CC\otimes \CM_0^\vee$$
is given by the self-duality \eqref{e:Verd abs}.

\ssec{The ULA condition in the geometric situation}

In this subsection we will take $\CC$ to be $\Dmod(Y)$, where $Y$ is a scheme of finite type.
Let $f:Z\to Y$ be a scheme over $Y$; take $\CM:=\Dmod(Z)$, which is acted on by $\Dmod(Y)$ via
$$\CF_Y,\CF_Z\mapsto f^!(\CF_Y)\sotimes \CF_Z.$$

We will bring the concept of ULA developed above in contact with the more familiar geometric
geometric notion.

\sssec{} \label{sss:module geometry}

First, we note that $\Dmod(Y)$ fits into the paradigm of \secref{sss:abs Verdier} with
$$\BD_{\Dmod(Y)}:=\BD^{\on{Verdier}}_Y.$$

The corresponding object $\wt\one_{\Dmod(Y)}$ is $\sfe_Y$, the ``constant sheaf", i.e.,
$$\CHom(\sfe_Y,\CF)=\Gamma_{\on{dR}}(Y,\CF).$$

\medskip

Similarly, we use Verdier duality on $Z$ to identify $\Dmod(Z)^\vee$ with $\Dmod(Z)$.

\medskip

In this case, the pairing \eqref{e:internal pairing} identifies with
$$\langle \CF_1,\CF_2\rangle_{\Dmod(Y)}=f_*(\CF_1\sotimes \CF_2).$$

\sssec{}

We claim that the resulting pairing \eqref{e:internal pairing bis} provides the counit
of the adjunction.

\medskip

Indeed, the unit map
$$\Dmod(Y)\to \Dmod(Z)\underset{\Dmod(Y)}\otimes \Dmod(Z)$$
is determined by the condition that the corresponding object
$$\on{unit}_{\Dmod(Z)}^{\Dmod(Y)}(\one_{\Dmod(Y)})\in \Dmod(Z)\underset{\Dmod(Y)}\otimes \Dmod(Z)\simeq
\Dmod(Z\underset{Y}\times Z)$$
equals
$$(\Delta_{Z/Y})_*(\omega_Z),$$
where
$$\Delta_{Z/Y}:Z\to Z\underset{Y}\times Z$$
is the relative diagonal map.

\sssec{}

Thus, from \propref{p:ULA via morphism} we obtain:

\begin{cor} \label{c:ULA geom}
For a compact object $\CF\in \Dmod(Z)$ the
following conditions are equivalent:

\medskip

\noindent{\em(i)} There exists an object $\CF^\vee\in \Dmod(Z)^c$ equipped with maps
$$\mu:\omega_Y\to f_*(\CF\sotimes \CF^\vee) \text{ and }
\epsilon: \CF^\vee\underset{Y}\boxtimes \CF \to (\Delta_{Z/Y})_*(\omega_Z)$$
(here $-\underset{Y}\boxtimes -$ stands for the !-pullback of $-\boxtimes -$ along $Z\underset{Y}\times Z\to Z\times Z$)
such that the composite
\begin{multline} \label{e:ULA axiom geom}
\CF \simeq f^!(\omega_Y)\sotimes \CF \overset{\mu}\to f^!(f_*(\CF\sotimes \CF^\vee))\sotimes \CF
\simeq (p_2)_* \circ (\Delta_{Z/Y}\underset{Y}\times \on{id}_Z)^!
(\CF\underset{Y}\boxtimes \CF^\vee \underset{Y}\boxtimes \CF) \overset{\epsilon}\to \\
\to (p_2)_*\circ (\Delta_{Z/Y}\underset{Y}\times \on{id}_Z)^!(\CF\underset{Y}\boxtimes (\Delta_{Z/Y})_*(\omega_Z)) \simeq \CF
\end{multline}
(here $p_2$ denotes the second projection $Z\underset{Y}\times Z\to Z$)
is the identity map, and similarly for $\CF^\vee$.

\medskip

\noindent{\em(ii)} Same as (i), but for $\CF^\vee=\BD^{\on{Verdier}}_Z(f^!(\sfe_Y)\sotimes \CF)$ (in particular, $f^!(\sfe_Y)\sotimes \CF$
is compact), with $\mu$ being the map
$$\omega_Y\to f_*(\CF\sotimes \BD^{\on{Verdier}}_Z(f^!(\sfe_Y)\sotimes \CF)),$$
corresponding to the identity element under
\begin{multline*}
\CHom_{\Dmod(Y)}\left(\omega_Y, f_*(\CF\sotimes \BD^{\on{Verdier}}_Z(f^!(\sfe_Y)\sotimes \CF))\right)\simeq
\Gamma_{\on{dR}}\left(Y,\sfe_Y\sotimes f_*(\CF\sotimes \BD^{\on{Verdier}}_Z(f^!(\sfe_Y)\sotimes \CF))\right)\simeq \\
\overset{\text{projection formula}}\simeq \Gamma_{\on{dR}}\left(Z,f^!(\sfe_Y)\sotimes \CF\sotimes \BD^{\on{Verdier}}_Z(f^!(\sfe_Y)\sotimes \CF)\right)\simeq
\CHom_{\Dmod(Z)}(f^!(\sfe_Y)\sotimes \CF,f^!(\sfe_Y)\sotimes \CF)).
\end{multline*}

\medskip

\noindent{\em(iii)} For any $\CF_Y\in \Dmod(Y)^c$ and $\CF'\in \Dmod(Z)$, the map
\begin{multline} \label{e:ULA map geom}
\CHom_{\Dmod(Z)}(f^!(\CF_Y)\sotimes \CF,\CF')\to \\
\to \CHom_{\Dmod(Z)}(f^!(\BD^{\on{Verdier}}_Y(\CF_Y))\sotimes f^!(\CF_Y)\sotimes \CF,f^!(\BD^{\on{Verdier}}_Y(\CF_Y))\sotimes\CF')\to \\
\to \CHom_{\Dmod(Z)}(f^!(\sfe_Y)\sotimes \CF,f^!(\BD^{\on{Verdier}}_Y(\CF_Y))\sotimes\CF')
\end{multline}
is an isomorphism.

\end{cor}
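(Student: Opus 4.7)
\smallskip

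The plan is to derive Corollary~\ref{c:ULA geom} as a direct specialization of \propref{p:ULA via morphism}, applied to the geometric data $\CC=\Dmod(Y)$, $\CM=\Dmod(Z)$, with the self-dualities recorded in \secref{sss:module geometry}. The substantive work is not creative --- everything amounts to a dictionary between the abstract structures on $\CC,\CM,\CM^{\vee,\CC}$ and the standard sheaf-theoretic operations $f_*,f^!,\sotimes,\underset{Y}\boxtimes,\Delta_{Z/Y},\BD^{\on{Verdier}}$. The only genuinely non-formal input required is the identification of the counit/unit of the $\Dmod(Y)$-linear duality of $\Dmod(Z)$ with itself.

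\smallskip

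First, I would verify that $\CC=\Dmod(Y)$ fits the framework of \secref{ss:ULA abstract} and \secref{sss:abs Verdier}: it is compactly generated, the unit $\omega_Y$ is compact, and the object $\wt\one_\CC:=\sfe_Y=\BD^{\on{Verdier}}_Y(\omega_Y)$ is compact and induces the Verdier self-duality $\Dmod(Y)^\vee\simeq\Dmod(Y)$, with $\BD_\CC=\BD^{\on{Verdier}}_Y$. An analogous identification holds for $\CM=\Dmod(Z)$. A short computation using the projection formula and Verdier duality then identifies the abstract pairing \eqref{e:internal pairing} with $(\CF_1,\CF_2)\mapsto f_*(\CF_1\sotimes\CF_2)$.

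\smallskip

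Next, I would check that this pairing is the counit of a $\Dmod(Y)$-linear duality $\Dmod(Z)\underset{\Dmod(Y)}\otimes \Dmod(Z)\to \Dmod(Y)$, so that we are indeed in the setup of \secref{sss:internal pairing bis}, allowing us to identify $\CM^{\vee,\CC}\simeq\CM^\vee\simeq\Dmod(Z)$. For this I would exhibit the unit as the functor $\Dmod(Y)\to\Dmod(Z\underset{Y}\times Z)$ corresponding to $(\Delta_{Z/Y})_*(\omega_Z)$ --- i.e., $\omega_Y\mapsto (\Delta_{Z/Y})_*(\omega_Z)$ --- and verify the zigzag identities by base change along the square $Z\leftarrow Z\to Z\underset{Y}\times Z\rightrightarrows Z$, using that $(\Delta_{Z/Y})^!\circ(\Delta_{Z/Y})_*(\omega_Z)\simeq\omega_Z$. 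This is the main obstacle, in the sense that all the geometric identifications to follow are controlled by the correctness of these unit/counit formulas; but it is a standard fact about the $\Dmod$-theory of schemes over a base, recorded in essentially this form in the correspondence-category setup of \secref{sss:corr}.

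\smallskip

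With these identifications in place, conditions (i)--(iii) of Corollary~\ref{c:ULA geom} are literal translations of conditions (i)--(iii) of \propref{p:ULA via morphism}. Concretely: the object $m^{\vee,\CC}$ becomes $\CF^\vee\in\Dmod(Z)$; the map $\mu:\one_\CC\to\on{counit}_\CM^\CC(m\underset{\CC}\otimes m^{\vee,\CC})$ becomes $\omega_Y\to f_*(\CF\sotimes\CF^\vee)$; the map $\epsilon:m^{\vee,\CC}\underset{\CC}\otimes m\to\on{unit}_\CM^\CC(\one_\CC)$ becomes $\CF^\vee\underset{Y}\boxtimes\CF\to(\Delta_{Z/Y})_*(\omega_Z)$; the triangle identity \eqref{e:axiom ULA} becomes \eqref{e:ULA axiom geom}, which is seen by rewriting the middle term using base change along $Z\underset{Y}\times Z\to Z\underset{Y}\times Z\underset{Y}\times Z$ and the projection formula; and the isomorphism \eqref{e:ULA map} becomes \eqref{e:ULA map geom} under the identification $\BD_\CC(c)\otimes m'\leftrightarrow f^!(\BD^{\on{Verdier}}_Y(\CF_Y))\sotimes\CF'$. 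Finally, the description of $\CF^\vee$ in (ii) as $\BD^{\on{Verdier}}_Z(f^!(\sfe_Y)\sotimes\CF)$ is obtained by tracing through the formula $m^{\vee,\CC}=\BD_\CM(\wt\one_\CC\otimes m)$ of \propref{p:ULA via morphism}(ii), where here $\wt\one_\CC\otimes m=f^!(\sfe_Y)\sotimes\CF$; the unit map $\mu$ is then pinned down by the identity element in the displayed sequence of adjunctions, exactly as spelled out in the statement.
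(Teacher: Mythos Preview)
Your proposal is correct and follows essentially the same approach as the paper: the paper also derives \corref{c:ULA geom} by first checking (in \secref{sss:module geometry} and the paragraph after) that $\Dmod(Y)$ with $\wt\one=\sfe_Y$ and $\BD=\BD^{\on{Verdier}}$ fits the framework of \secref{sss:abs Verdier}, identifying the pairing \eqref{e:internal pairing} with $f_*(-\sotimes -)$ and the unit with $(\Delta_{Z/Y})_*(\omega_Z)$, and then invoking \propref{p:ULA via morphism} directly. Your write-up spells out the dictionary in slightly more detail, but the structure and content are the same.
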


\sssec{}

We will call objects of $\Dmod(Z)$ satisfying the equivalent conditions of \corref{c:ULA geom} ULA over $Y$ (or ULA with respect to $f$).

\begin{rem}
Note that condition (iii) in \corref{c:ULA geom} can be rephrased as follows:

\medskip

For $\CF_Y\in \Dmod(Y)$, the object
$$f^*(\CF_Y)\overset{*}\otimes (f^!(\sfe_Y)\sotimes \CF)\in \Dmod(Z)$$
is defined and the canonical map
$$f^*(\CF_Y)\overset{*}\otimes (f^!(\sfe_Y)\sotimes \CF) \to f^!(\CF_Y)\sotimes \CF$$
is an isomorphism.
\end{rem}

\sssec{}

When we view this definition from the point of view of condition (i), we obtain the following assertion that reads that
``local acyclicity implies universal local acyclicity":

\begin{cor} \label{c:no need universal}
Let $g:Y'\to Y$ be a morphism of schemes, and let set $Z':=Y'\underset{Y}\times Z$. If $\CF$ is ULA over $Y$,
then its !-pullback to $Z'$ is ULA over $Y'$.
\end{cor}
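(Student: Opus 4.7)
The plan is to verify the ULA property for $g_Z^!(\CF)$ using the explicit characterization in condition (i) of \corref{c:ULA geom}. Write $Z' = Y' \underset{Y}\times Z$ with projections $f': Z' \to Y'$ and $g_Z: Z' \to Z$, and let $\tilde g: Z' \underset{Y'}\times Z' \to Z \underset{Y}\times Z$ be the induced map. Given ULA data $(\CF^\vee, \mu, \epsilon)$ for $\CF$ over $Y$, I will produce ULA data for $g_Z^!(\CF)$ over $Y'$ by a systematic use of $!$-pullback.

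First I would set $(g_Z^!(\CF))^\vee := g_Z^!(\CF^\vee)$. To construct $\mu'$, apply $g^!$ to $\mu$, invoking the identifications $g^!(\omega_Y) \simeq \omega_{Y'}$, the base change isomorphism $g^! \circ f_* \simeq f'_* \circ g_Z^!$, and the fact that $g_Z^!$ is symmetric monoidal with respect to $\sotimes$ (since $\CF_1 \sotimes \CF_2 = \Delta^!(\CF_1 \boxtimes \CF_2)$ and $!$-pullback commutes with external product). The result is a morphism
\[
\mu': \omega_{Y'} \to f'_*\bigl(g_Z^!(\CF) \sotimes g_Z^!(\CF^\vee)\bigr).
\]
For $\epsilon'$, apply $\tilde g^!$ to $\epsilon$, using $\tilde g^!(\CF^\vee \underset{Y}\boxtimes \CF) \simeq g_Z^!(\CF^\vee) \underset{Y'}\boxtimes g_Z^!(\CF)$ together with the base change $\tilde g^! \circ (\Delta_{Z/Y})_* \simeq (\Delta_{Z'/Y'})_* \circ g_Z^!$ along the Cartesian square of diagonals, and $g_Z^!(\omega_Z) \simeq \omega_{Z'}$. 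This yields
\[
\epsilon': g_Z^!(\CF^\vee) \underset{Y'}\boxtimes g_Z^!(\CF) \to (\Delta_{Z'/Y'})_*(\omega_{Z'}).
\]

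It then remains to verify the axiom \eqref{e:ULA axiom geom} for $(g_Z^!(\CF), g_Z^!(\CF^\vee), \mu', \epsilon')$, and its symmetric counterpart for $g_Z^!(\CF^\vee)$. The strategy is to observe that, by construction, each arrow in the six-term composition of \eqref{e:ULA axiom geom} for the base-changed data is literally obtained by $!$-pulling back the corresponding arrow for the original data along the appropriate map, modulo the standard natural isomorphisms. Hence the whole composition for the base-changed data is the $!$-pullback of the composition for $(\CF, \CF^\vee, \mu, \epsilon)$, which by hypothesis equals the identity; functoriality of $!$-pullback then gives the identity on $g_Z^!(\CF)$.

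The main (and really only) obstacle is bookkeeping: one must check that the various base-change, projection-formula, and monoidality isomorphisms assemble coherently, so that the composition \eqref{e:ULA axiom geom} for $(\mu', \epsilon')$ is genuinely the $!$-pullback of the composition for $(\mu, \epsilon)$ rather than a variant differing by a spurious natural transformation. In the D-module context each of these compatibilities is standard (base change along all four sides of the relevant Cartesian squares holds, and $\sotimes$ commutes with $!$-pullback for the reason noted above), so the verification reduces to a naturality diagram chase which poses no conceptual difficulty.
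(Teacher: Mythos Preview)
Your proposal is correct and is exactly the approach the paper has in mind: the paper does not give a proof but simply remarks that the corollary is immediate ``when we view this definition from the point of view of condition (i)'' of \corref{c:ULA geom}, and your argument is precisely the spelled-out version of that remark---pull back the data $(\CF^\vee,\mu,\epsilon)$ along $!$-pullback using base change and the monoidality of $(-)^!$ for $\sotimes$.
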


In the context of $\ell$-adic sheaves, an assertion parallel to \corref{c:no need universal}
had been proved by O.~Gabber, see \cite[Corollary 6.6]{LZ}.

\ssec{An aside: the notion of ULA in other sheaf-theoretic contexts}

We will now place ourselves in a general sheaf-theoretic context of \secref{sss:sheaves}, excluding (a) (the exclusion is because we want our
categories to be compactly generated).

\sssec{}

Let $f:Z\to Y$ be a map of schemes. We consider $\Shv(Z)$ as a module category over $\Shv(Y)$ via
the operation
$$f^!(-)\sotimes -.$$

However, the notion of ULA developed from \secref{ss:ULA abstract} is not directly applicable in this context, since $\Shv(Z)$
is not in general dualizable as a $\Shv(Y)$-module category.  However, we claim that the following
analog of \corref{c:ULA geom} holds:

\medskip

\begin{prop} \label{p:ULA geom}
For a compact object $\CF\in \Shv(Z)$ the
following conditions are equivalent:

\medskip

\noindent{\em(i)} There exists an object $\CF^\vee\in \Shv(Z)^c$ equipped with maps
$$\mu:\omega_Y\to f_*(\CF\sotimes \CF^\vee) \text{ and }
\epsilon: \CF^\vee\underset{Y}\boxtimes \CF \to (\Delta_{Z/Y})_*(\omega_Z)$$
such that the composite
\begin{multline} \label{e:ULA axiom geom arb}
\CF \simeq f^!(\omega_Y)\sotimes \CF \overset{\mu}\to f^!(f_*(\CF\sotimes \CF^\vee))\sotimes \CF
\simeq (p_2)_* \circ (\Delta_{Z/Y}\underset{Y}\times \on{id}_Z)^!
(\CF\underset{Y}\boxtimes \CF^\vee \underset{Y}\boxtimes \CF) \overset{\epsilon}\to \\
\to (p_2)_*\circ (\Delta_{Z/Y}\underset{Y}\times \on{id}_Z)^!(\CF\underset{Y}\boxtimes (\Delta_{Z/Y})_*(\omega_Z)) \simeq \CF
\end{multline}
is the identity map, and similarly for $\CF^\vee$.

\medskip

\noindent{\em(ii)} Same as (i), but for $\CF^\vee=\BD^{\on{Verdier}}_Z(f^!(\sfe_Y)\sotimes \CF)$ with $\mu$ being the canonical map
$$\omega_Y\to f_*(\CF\sotimes \BD^{\on{Verdier}}_Z(f^!(\sfe_Y)\sotimes \CF)).$$

\medskip

\noindent{\em(iii)} For any base change
\begin{equation} \label{e:base change ULA}
\CD
\wt{Z} @>{g_Z}>>  Z \\
@V{\wt{f}}VV  @VV{f}V \\
\wt{Y}  @>{g_Y}>>  Y,
\endCD
\end{equation}
and $\wt\CF:=g_Z^!(\CF)$, for any $\CF_{\wt{Y}}\in \Shv(\wt{Y})^c$ and $\wt\CF'\in \Shv(\wt{Z})$, the map
\begin{equation} \label{e:ULA map geom arb}
\CHom_{\Shv(\wt{Z})}(\wt{f}^!(\CF_{\wt{Y}})\sotimes \wt\CF,\wt\CF')
\to \CHom_{\Shv(\wt{Z})}(\wt{f}^!(\sfe_{\wt{Y}})\sotimes \wt\CF,\wt{f}^!(\BD^{\on{Verdier}}_{\wt{Y}}(\CF_{\wt{Y}}))\sotimes \wt\CF')
\end{equation}
is an isomorphism.

\end{prop}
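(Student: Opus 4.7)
The plan is to establish the cyclic chain of implications (ii) $\Rightarrow$ (i) $\Rightarrow$ (iii) $\Rightarrow$ (ii). The first of these is immediate, since condition (ii) simply specializes (i) to a designated choice of $\CF^\vee$.

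For (i) $\Rightarrow$ (iii), I would first argue that the duality data $(\CF^\vee, \mu, \epsilon)$ of (i) pulls back along any Cartesian square \eqref{e:base change ULA} to analogous data $(\wt\CF^\vee, \wt\mu, \wt\epsilon)$ for $\wt\CF$ over $\wt f$, where $\wt\CF^\vee := g_Z^!(\CF^\vee)$. The required compatibilities are standard: base change $g_Y^! \circ f_* \simeq \wt f_* \circ g_Z^!$ applied to $\mu$; the analogous base change for $(\Delta_{Z/Y})_*$ applied to $\epsilon$; and the general identity $g^!(\CF_1 \sotimes \CF_2) \simeq g^!(\CF_1) \sotimes g^!(\CF_2)$, which follows because $\sotimes$ is defined as $\Delta^!(-\boxtimes -)$ and $!$-pullback commutes with $\boxtimes$. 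With the base-changed data in hand, I would construct the inverse to \eqref{e:ULA map geom arb} by adapting formula \eqref{e:ULA axiom geom arb}: given a morphism $\phi: \wt f^!(\sfe_{\wt Y}) \sotimes \wt\CF \to \wt f^!(\BD^{\on{Verdier}}_{\wt Y}(\CF_{\wt Y})) \sotimes \wt\CF'$, one concatenates $\wt \mu$ (tensored with $\CF_{\wt Y}$), the map $\phi$, and $\wt\epsilon$ to yield a morphism $\wt f^!(\CF_{\wt Y}) \sotimes \wt\CF \to \wt\CF'$. The triangle identity for the base-changed data is precisely what guarantees that this is a two-sided inverse.

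For (iii) $\Rightarrow$ (ii), I would set $\CF^\vee := \BD^{\on{Verdier}}_Z(f^!(\sfe_Y) \sotimes \CF)$ and extract $\mu$ as the morphism $\omega_Y \to f_*(\CF \sotimes \CF^\vee)$ corresponding to the identity endomorphism of $f^!(\sfe_Y) \sotimes \CF$ under the chain of isomorphisms provided by (iii) (applied without base change, with $\CF_{\wt Y} = \sfe_Y$), the adjunction $(f^!, f_*)$, and Verdier self-duality on $\Shv(Y)$. To produce $\epsilon$, I would apply (iii) to the base change along $f$ itself (so $\wt Y = Z$, $\wt Z = Z \underset{Y}{\times} Z$), extracting from the resulting Hom-isomorphism the canonical morphism $\CF^\vee \underset{Y}{\boxtimes} \CF \to (\Delta_{Z/Y})_*(\omega_Z)$.

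The main obstacle will be the verification of the triangle identity \eqref{e:ULA axiom geom arb} for the $(\mu, \epsilon)$ so constructed. In the D-module setting of \corref{c:ULA geom} one can invoke the abstract machinery of \secref{ss:ULA abstract}, where the dualizability of $\Dmod(Z)$ as a $\Dmod(Y)$-module category automatically packages the triangle identities; in general sheaf-theoretic contexts, $\Shv(Z)$ need not be dualizable over $\Shv(Y)$ and the identity must be checked directly. This verification should reduce to the compatibility between the two instances of (iii)---the one without base change used to define $\mu$, and the one along $f$ used to define $\epsilon$---which in turn is encoded in the naturality of \eqref{e:ULA map geom arb} in the base change datum $g_Y$, combined with the behavior of $\sotimes$, $\boxtimes$, and Verdier duality under $!$-pullback.
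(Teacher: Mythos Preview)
Your approach is essentially the same as the paper's: the same cyclic chain (ii) $\Rightarrow$ (i) $\Rightarrow$ (iii) $\Rightarrow$ (ii), the same base-change argument for (i) $\Rightarrow$ (iii), and the same specialization $\wt Y = Z$, $\wt\CF' = (\Delta_{Z/Y})_*(\omega_Z)$ to extract $\epsilon$ in (iii) $\Rightarrow$ (ii). One small correction: you refer to ``the adjunction $(f^!,f_*)$,'' but $f^!$ is not left adjoint to $f_*$; the canonical $\mu$ in (ii) is built from the projection formula and Verdier self-duality on $Y$ (as in the paper's \corref{c:ULA geom}(ii)), not from such an adjunction. On the triangle identity you flag as ``the main obstacle'': the paper's proof also leaves this verification implicit, so you are not missing a trick---your outlined reduction to naturality of \eqref{e:ULA map geom arb} in the base-change datum is the right way to close it.
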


\begin{proof}

%

Clearly, (ii) implies (i). Let us show that (iii) implies (ii).

\medskip

First, note that the RHS in \eqref{e:ULA map geom} a priori identifies with
\begin{equation} \label{e:ULA map geom arb bis}
\CHom_{\Shv(\wt{Y})}\left(\CF_{\wt{Y}},\wt{f}_*(\wt\CF'\sotimes \BD^{\on{Verdier}}_{\wt{Z}}(\wt{f}^!(\sfe_{\wt{Y}})\sotimes \wt\CF))\right).
\end{equation}

\medskip

Take $\wt{Y}:=Z$; so that the square \eqref{e:base change ULA} becomes
$$
\CD
Z\underset{Y}\times Z @>{p_2}>> Z  \\
@V{p_1}VV  @VV{f}V  \\
Z @>{f}>> Y.
\endCD
$$
Take $\wt\CF':=(\Delta_{Z/Y})_*(\omega_Z)$ and
and $\CF_{\wt{Y}}:=\BD^{\on{Verdier}}_Z(f^!(\sfe_Y)\sotimes \CF)$. The expression in \eqref{e:ULA map geom arb bis} identifies with
\begin{equation} \label{e:ULA map geom arb spec}
\CHom_{\Shv(Z)}\left(\BD^{\on{Verdier}}_Z(f^!(\sfe_Y)\sotimes \CF),\BD^{\on{Verdier}}_Z(f^!(\sfe_Y)\sotimes \CF))\right).
\end{equation}

\medskip

The required map $\epsilon$ is obtained via \eqref{e:ULA map geom} from the identity element in
\eqref{e:ULA map geom arb spec}.

\medskip

Finally, let us assume (i) and deduce (iii). 
If we have a datum as in (i) for $f:Z\to Y$, its !-pullback defines a similar data for
$\wt{f}:\wt{Z}\to \wt{Y}$. So we can assume that $g=\on{id}$.

\medskip

The datum of $\mu$ defines a map
\begin{multline}  \label{e:ULA map geom arb abs}
\CHom_{\Shv(Z)}(f^!(\CF_Y)\sotimes \CF,\CF')\to
\CHom_{\Shv(Z)}(f^!(\CF_Y)\sotimes \CF\sotimes \CF^\vee,\CF'\sotimes \CF^\vee)\to \\
\to \CHom_{\Shv(Y)}\left(f_*(f^!(\CF_Y)\sotimes \CF\sotimes \CF^\vee),f_*(\CF'\sotimes \CF^\vee)\right)\simeq \\
\simeq \CHom_{\Shv(Y)}\left(\CF_Y\sotimes f_*( \CF\sotimes \CF^\vee),f_*(\CF'\sotimes \CF^\vee)\right)
\overset{\mu}\to
\CHom_{\Shv(Y)}(\CF_Y,f_*(\CF'\sotimes \CF^\vee)).
\end{multline}

The datum of $\epsilon$ defines an inverse map
\begin{multline*}
\CHom_{\Shv(Y)}(\CF_Y,f_*(\CF'\sotimes \CF^\vee)) \to
\CHom_{\Shv(Z)}(f^!(\CF_Y),f^!(f_*(\CF'\sotimes \CF^\vee))) \simeq \\
\simeq \CHom_{\Shv(Z)}\left(f^!(\CF_Y), p_2{}_*(p_1^!(\CF'\sotimes \CF^\vee))\right)
\to \CHom_{\Shv(Z)}\left(f^!(\CF_Y)\sotimes \CF, p_2{}_*(p_1^!(\CF'\sotimes \CF^\vee))\sotimes \CF\right) \simeq \\
\simeq \CHom_{\Shv(Z)}\left(f^!(\CF_Y)\sotimes \CF,p_2{}_*(p_1^!(\CF'\sotimes \CF^\vee)\sotimes p_2^!(\CF))\right)\simeq \\
\simeq \CHom_{\Shv(Z)}\left(f^!(\CF_Y)\sotimes \CF,p_2{}_*(p_1^!(\CF^\vee)\sotimes p_2^!(\CF))\sotimes p_1^!(\CF'))\right) \overset{\epsilon}\to \\
\to \CHom_{\Shv(Z)}\left(f^!(\CF_Y)\sotimes \CF,p_2{}_*((\Delta_{Z/Y})_*(\omega_Z)\sotimes p_1^!(\CF'))\right) \simeq
\CHom_{\Shv(Z)}(f^!(\CF_Y)\sotimes \CF,\CF').
\end{multline*}

In particular, taking $\CF_Y=\sfe_Y$, we obtain an identification between
$$\CHom_{\Shv(Z)}(\BD_Z^{\on{Verdier}}(\CF),\CF')\simeq
\Gamma_{\on{dR}}(Z,\CF'\sotimes \CF^\vee)\simeq \Gamma_{\on{dR}}(Y,f_*(\CF'\sotimes \CF^\vee))
\simeq \CHom_{\Shv(Y)}(\sfe_Y,f_*(\CF'\sotimes \CF^\vee))$$
and
$$\CHom_{\Shv(Z)}(f^!(\sfe_Y)\sotimes \CF,\CF'),$$
functorial in $\CF'$. Hence,
$$\CF^\vee\simeq \BD_Z^{\on{Verdier}}(f^!(\sfe_Y)\sotimes \CF).$$

Under this identification, the map \eqref{e:ULA map geom arb} goes over to the composition of \eqref{e:ULA map geom arb abs} and
$$\CHom_{\Shv(Y)}(\CF_Y,f_*(\CF'\sotimes \CF^\vee)) \simeq \CHom_{\Shv(Z)}(\BD^{\on{Verdier}}_Z(\CF^\vee),f^!(\BD^{\on{Verdier}}_Y(\CF_Y))\sotimes \CF'),$$
hence it is an isomorphism.

\end{proof}

\sssec{}

We will call objects of $\Dmod(Z)$ satisfying the equivalent conditions of \propref{p:ULA geom} ULA over $Y$ (or ULA with respect to $f$).

\begin{rem}
Let us compare the notion of ULA in the three contexts (b), (b') and (b''). Note that we have fully faithful embeddings
of the corresponding categories (b'') $\to$ (b') $\to$ (b).

\medskip

We note that if $\CF$ is an object of a smaller category, then, if we view the from the point of view of condition (ii)
of \propref{c:ULA geom}, it
is ULA in the smaller category if and only if it is such from the point of view of a bigger category.

\medskip

Note that this property is \emph{non-obvious} from the point of view of condition (iii) of \propref{c:ULA geom}, since for the larger category
we are testing the isomorphism on a larger collection of objects (one that are denoted $\wt\CF'$).

\end{rem}

\begin{rem}

Note that the property of being ULA is \emph{by design} stable under base change: this is obvious from the point of view of each
of the conditions (i), (ii), (iii) in \propref{c:ULA geom}.

\medskip

That said, the proof of \propref{c:ULA geom} shows that it is enough to require that the map \eqref{e:ULA map geom arb} be an
isomorphism for $(\wt{Y},g_Y)$ being the pair $(Z,f)$.

\end{rem}

\ssec{The ULA property for D-modules}

In this subsection we will come back to the ULA property in the context of D-modules.

\medskip

We will be working over a ground field $k$ of characteristic $0$. Let $Y$ be a \emph{smooth} scheme of finite type over
$k$. Consider the symmetric monoidal category $\CC:=\Dmod(Y)$. In this subsection we will give a hands-on criterion for objects
(in some class of $\Dmod(Y)$-module categories) to be ULA.

\sssec{}

We will regard the category $\Dmod(Y)$ as $\QCoh(Y_{\on{dR}})$, where
$Y_{\on{dR}}$ is the de Rham prestack of $Y$, i.e.,
$$\Maps(S,Y_{\on{dR}})=\Maps(S_{\on{red}},Y).$$

A key observation observation is that $Y_{\on{dR}}$ is \emph{1-affine} (see \cite[Definition 1.3.7]{Ga5} for
what this means, and \cite[Theorem 2.6.3]{Ga5} for the statement of the result).

\medskip

In other words, a datum of a $\QCoh(Y_{\on{dR}})$-module category $\CM$ is equivalent to a datum of
a \emph{sheaf of categories over $Y_{\on{dR}}$}, i.e., an assignment
$$(S\overset{y}\to Y_{\on{dR}}) \rightsquigarrow \CM_{S,y}\in \QCoh(S)\mmod,\quad S\in \on{Sch}^{\on{aff}},$$
$$(S'\overset{f}\to S) \rightsquigarrow \CM_{S',y\circ f}\simeq \QCoh(S')\underset{\QCoh(S)}\otimes \CM_{S,y},$$
equipped with a data of homotopy-compatibility for compositions.

\medskip

Explicitly, each $\CM_{S,y}$ is recovered as
$$\CM_{S,y}\simeq \QCoh(S)\underset{\QCoh(Y_{\on{dR}})}\otimes \CM.$$

Vice versa, $\CM$ can be identified with
$$\underset{S\overset{y}\to Y_{\on{dR}}}{\on{lim}}\, \CM_{S,y},$$
equipped with an action of
$$\underset{S\overset{y}\to Y_{\on{dR}}}{\on{lim}}\, \QCoh(S)=\QCoh(Y_{\on{dR}}).$$

\medskip

We will denote by $\CM_Y$ the value of this sheaf categories on $Y$, where we regard $Y$ as equipped with the canonical projection
to $Y_{\on{dR}}$. Explicitly,
$$\CM_Y\simeq \QCoh(Y)\underset{\QCoh(Y_{\on{dR}})}\otimes \CM.$$

\sssec{Examples}

If $\CM=\CM_0\otimes \Dmod(Y)$, then $\CM_Y=\CM_0\otimes \QCoh(Y)$.

\medskip

Let now $\CM=\QCoh(Z_{\on{dR}})$. Then
$$\CM_Y=\QCoh(Z_{\on{dR}}\underset{Y_{\on{dR}}}\times Y).$$

If $Z\to Y$ is a smooth map, one can identify $\QCoh(Z_{\on{dR}}\underset{Y_{\on{dR}}}\times Y)$ with
the derived category of (quasi-coherent sheaves of) modules over the ring $\on{D}_{Z/Y}$ of
\emph{vertical differential operators}; this is a subring of $\on{D}_Z$, generated by functions and vertical vector fields,
i.e., vector fields along the fibers of the map $Z\to Y$.

\sssec{}

The adjoint pair
$$\ind:\QCoh(Y)\rightleftarrows \QCoh(Y_{\on{dR}}):\oblv$$
induces an adjoint pair
\begin{equation} \label{e:ind module}
\ind:\CM_Y \rightleftarrows \CM:\oblv.
\end{equation}

The functor $\oblv$ is conservative (indeed, since $Y$ is smooth, any map $S\to Y_{\on{dR}}$ can be lifted to $Y$).

\medskip

In what follows we will assume that $\CM_Y$ is compactly generated as a DG category. The conservativity of
$\oblv$ implies that in this case $\CM$ is also compactly generated.

\sssec{}

Since $\CM_Y$ is compactly generated, and hence dualizable, all the categories $\CM_{S,y}$ are dualizable
(as DG categories, or equivalently, as $\QCoh(S)$-module categories). The assignment
$$(S,y)\mapsto \CM_{S,y}^\vee$$
is also a sheaf of categories over $Y_{\on{dR}}$.

\medskip

Set
$$\CM^{\vee,Y_{\on{dR}}}:=\underset{S\overset{y}\to Y_{\on{dR}}}{\on{lim}}\, \CM_{S,y}^\vee.$$

The 1-affineness of $Y_{\on{dR}}$ implies that $\CM^{\vee,Y_{\on{dR}}}$ is the dual of $\CM$ as a
$\QCoh(Y_{\on{dR}})$-module category. Denote by $\langle-,-\rangle_{Y_{\on{dR}}}$ the resulting pairing
$$\CM\times \CM^{\vee,Y_{\on{dR}}}\to \QCoh(Y_{\on{dR}}).$$

\medskip

Define the pairing
$$\langle-,-\rangle: \CM\times \CM^{\vee,Y_{\on{dR}}}\to \Vect$$
by
$$\langle-,-\rangle:=\Gamma_{\on{dr}}(Y,-)\circ \langle-,-\rangle_{Y_{\on{dR}}},$$
where $\Gamma_{\on{dr}}(Y,-)$ is the functor of de Rham cohomology
$$\QCoh(Y_{\on{dR}})\simeq \Dmod(Y)\to \Vect.$$

\begin{lem}  \label{l:duality Dmod mod}
The functor $\langle-,-\rangle$ defines an equivalence $\CM^{\vee,Y_{\on{dR}}}\simeq \CM^\vee$.
\end{lem}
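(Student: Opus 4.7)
The plan is to deduce the lemma from the absolute self-duality of $\Dmod(Y)$ as a plain DG category, combined with the already-given $\Dmod(Y)$-module duality between $\CM$ and $\CM^{\vee,Y_{\on{dR}}}$. The underlying mechanism is general: whenever a symmetric monoidal DG category $\CA$ carries a trace $\pi\colon \CA\to \Vect$ such that the pairing $(a_1,a_2)\mapsto \pi(a_1\otimes a_2)$ realizes a self-duality $\CA^\vee\simeq \CA$ (a ``Frobenius structure''), any $\CA$-module duality $(\CN,\CN^{\vee,\CA})$ with $\CA$-linear counit $\langle -,-\rangle_{\CA}\colon \CN\underset{\CA}\otimes\CN^{\vee,\CA}\to \CA$ automatically becomes the counit of an absolute duality in $\DGCat$ upon composition with $\pi$. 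In our case $\CA=\Dmod(Y)$ and $\pi=\Gamma_{\on{dR}}(Y,-)$, and the content of the lemma is precisely that this Frobenius-gluing construction applies.

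The first step is therefore to establish the self-duality of $\Dmod(Y)$ in the required form. Under the identification $\Dmod(Y)\otimes\Dmod(Y)\simeq\Dmod(Y\times Y)$, the counit is $\Dmod(Y\times Y)\xrightarrow{\Delta^!}\Dmod(Y)\xrightarrow{\Gamma_{\on{dR}}(Y,-)}\Vect$ for $\Delta\colon Y\to Y\times Y$ the diagonal, while the unit sends $\sfe\mapsto \Delta_*(\omega_Y)$. The triangle identities reduce to base change around $\Delta$. Crucially, the counit factors as $\Dmod(Y)\otimes\Dmod(Y)\xrightarrow{\sotimes}\Dmod(Y)\xrightarrow{\Gamma_{\on{dR}}(Y,-)}\Vect$, which is the Frobenius compatibility needed for the general mechanism. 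All this is well-established (a categorification of Verdier duality; continuity of $\Gamma_{\on{dR}}(Y,-)$ is automatic as $Y$ is qcqs).

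With this in hand, I assemble the absolute duality for $(\CM,\CM^{\vee,Y_{\on{dR}}})$. The absolute counit is the given $\langle -,-\rangle$, which by construction factors as
$$\CM\otimes\CM^{\vee,Y_{\on{dR}}}\to \CM\underset{\Dmod(Y)}\otimes\CM^{\vee,Y_{\on{dR}}}\xrightarrow{\langle -,-\rangle_{Y_{\on{dR}}}}\Dmod(Y)\xrightarrow{\Gamma_{\on{dR}}(Y,-)}\Vect.$$
The absolute unit is the composition
$$\Vect\xrightarrow{\sfe\mapsto \Delta_*(\omega_Y)}\Dmod(Y)\otimes\Dmod(Y)\xrightarrow{\on{id}\otimes \on{unit}_{Y_{\on{dR}}}}\Dmod(Y)\otimes\bigl(\CM^{\vee,Y_{\on{dR}}}\underset{\Dmod(Y)}\otimes\CM\bigr)\simeq \CM^{\vee,Y_{\on{dR}}}\otimes\CM,$$
where the final equivalence contracts the first $\Dmod(Y)$ against the left $\Dmod(Y)$-action on $\CM^{\vee,Y_{\on{dR}}}$. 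The two snake identities are then a formal diagram chase: they telescope into the snake identity for the $\Dmod(Y)$-module duality applied in the middle (reducing the composite to the action of $\Delta^!\Delta_*(\omega_Y)$ on $\CM$), followed by the snake identity for the self-duality of $\Dmod(Y)$ (which identifies this action with the identity on $\CM$). The principal obstacle is thus not the assembly, which is formal, but the input self-duality of $\Dmod(Y)$ together with its Frobenius compatibility, whose rigorous derivation within the $\infty$-categorical framework requires careful base-change bookkeeping around $\Delta$.
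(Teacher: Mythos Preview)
Your proof is correct and follows essentially the same approach as the paper's. Both constructions produce the absolute unit by pushing the relative unit $\on{unit}^{Y_{\on{dR}}}_\CM$ along the left adjoint induced by $(\Delta_Y)_{\on{dR},*}$ (your description via $\Delta_*(\omega_Y)$ and then contracting is exactly this), and both defer the verification of the triangle identities to a diagram chase; the only difference is that you package the argument as an instance of a general ``Frobenius structure'' principle, while the paper works directly with the $(\Delta_*,\Delta^!)$ adjunction without naming the abstraction.
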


\begin{proof}

The adjunction
$$(\Delta_Y)_{\on{dR},*}:\Dmod(Y)\rightleftarrows \Dmod(Y)\otimes \Dmod(Y):\Delta^!_Y$$
(here we identify $\Dmod(Y)\otimes \Dmod(Y)\simeq \Dmod(Y\times Y)$)
defines an adjunction
\begin{multline*}
\CM^{\vee,Y_{\on{dR}}}\underset{\Dmod(Y)}\otimes \CM
\simeq (\CM^{\vee,Y_{\on{dR}}}\otimes \CM)\underset{\Dmod(Y)\otimes \Dmod(Y)}\otimes \Dmod(Y)
\rightleftarrows  \\
\rightleftarrows  (\CM^{\vee,Y_{\on{dR}}}\otimes \CM)\underset{\Dmod(Y)\otimes \Dmod(Y)}\otimes (\Dmod(Y)\otimes \Dmod(Y))
\simeq  \CM^{\vee,Y_{\on{dR}}}\otimes \CM
\end{multline*}

We define the unit object
$$\on{unit}_\CM\in \CM^{\vee,Y_{\on{dR}}}\otimes \CM$$
to be the image of the unit
$$\on{unit}^{Y_{\on{dR}}}_\CM\in \CM^{\vee,Y_{\on{dR}}}\underset{\Dmod(Y)}\otimes \CM$$
under the above left adjoint
$$\CM^{\vee,Y_{\on{dR}}}\underset{\Dmod(Y)}\otimes \CM \to  \CM^{\vee,Y_{\on{dR}}}\otimes \CM.$$

The fact that $\on{unit}_\CM$ and $\langle-,-\rangle$ satisfy the duality axioms follows by diagram chase.

\end{proof}

\sssec{}

Let
$$\wt\one_{\QCoh(Y_{\on{dR}})}\in \QCoh(Y_{\on{dR}})$$
be the object corresponding to $k_Y\in \Dmod(Y)$. Note that it satisfies the assumption of \secref{sss:abs Verdier}, and the
corresponding self-duality of $\Dmod(Y)$ is $\BD_Y^{\on{Verdier}}$.

\medskip

It follows from \lemref{l:duality Dmod mod} that any $\CM$ as above satisfies the assumption of
\secref{sss:internal pairing bis}.

\sssec{}

We now claim:

\begin{thm}  \label{t:ULA Dmod}
For $m\in \CM^c$ the following conditions are equivalent:

\medskip

\noindent{\em(i)} The object $m$ is ULA over $\Dmod(Y)$;

\medskip

\noindent{\em(ii)} For every $\CF\in \Dmod(Y)^c$, the object $\CF\otimes m\in \CM$ is compact;

\medskip

\noindent{\em(iii)} The object $\oblv(m)\in \CM_Y$ is compact.

\end{thm}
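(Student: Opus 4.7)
The plan proceeds by establishing the cycle (i)$\Rightarrow$(ii)$\Rightarrow$(iii)$\Rightarrow$(i). The implication (i)$\Rightarrow$(ii) is immediate from \lemref{l:ULA inner Hom}: being ULA forces $\ul\Hom_\CM(m,-)\colon \CM\to\Dmod(Y)$ to preserve colimits, so its left adjoint $(-)\otimes m\colon \Dmod(Y)\to\CM$ preserves compact objects.

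The central technical input for the remaining implications is the projection formula
\[
\oblv\bigl(\ul\Hom_\CM(m,m')\bigr)\simeq \ul\Hom_{\CM_Y}\bigl(\oblv(m),\oblv(m')\bigr)
\]
in $\QCoh(Y)$, for $m,m'\in\CM$. I plan to derive this by testing against $\CE\in\QCoh(Y)$: both sides pair with $\CE$ to give $\Maps_\CM\bigl(\ind(\CE)\otimes_\CM m,\,m'\bigr)$ via the adjunction $\ind\dashv\oblv$ and the base-change identity $\ind(\CE)\otimes_\CM m\simeq \ind\bigl(\CE\otimes_{\CM_Y}\oblv(m)\bigr)$. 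The latter reflects that $\ind\colon\CM_Y\to\CM$ is a $\Dmod(Y)$-linear functor, where $\CM_Y$ acquires its $\Dmod(Y)$-module structure by restriction through the symmetric monoidal $\oblv\colon\Dmod(Y)\to\QCoh(Y)$. The implication (iii)$\Rightarrow$(i) then proceeds as follows. Since $Y$ has affine diagonal (being of finite type), $\QCoh(Y)$ is rigid with compact unit $\CO_Y$, so by \corref{c:rigid ULA} and its converse compactness of $\oblv(m)$ in $\CM_Y$ is equivalent to $\oblv(m)$ being ULA over $\QCoh(Y)$, i.e., via \lemref{l:ULA inner Hom}, to continuity and the $\QCoh(Y)$-projection formula for $\ul\Hom_{\CM_Y}(\oblv(m),-)$. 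Using the displayed identity together with the conservativity, continuity, and symmetric monoidality of $\oblv$, one transfers these properties to $\ul\Hom_\CM(m,-)$, since isomorphisms of objects in $\Dmod(Y)$ can be checked after applying the conservative $\oblv$; this yields (i) again via \lemref{l:ULA inner Hom}.

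The remaining implication (ii)$\Rightarrow$(iii) is the main obstacle. A naive approach -- specializing (ii) to $\CF=\ind(\CE)$ for $\CE\in\QCoh(Y)^c$ and hoping to transport the resulting compactness of $\ind(\CE)\otimes_\CM m\simeq \ind\bigl(\CE\otimes_{\CM_Y}\oblv(m)\bigr)$ in $\CM$ through $\oblv$ to conclude compactness of $\CE\otimes_{\CM_Y}\oblv(m)$ in $\CM_Y$ -- fails, because $\oblv\colon\Dmod(Y)\to\QCoh(Y)$ does not preserve compact objects (for example, $\D_Y$ is compact in $\Dmod(Y)$ while $\oblv(\D_Y)$ has infinite rank over $\CO_Y$). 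The strategy is to exploit the full force of (ii): the assumption that $\CF\otimes m$ is compact for \emph{every} compact $\CF\in\Dmod(Y)$ is equivalent to the continuity of $\ul\Hom_\CM(m,-)$, which via the projection formula yields continuity of the composite $\ul\Hom_{\CM_Y}(\oblv(m),\oblv(-))\colon\CM\to\QCoh(Y)$. To then propagate continuity to $\ul\Hom_{\CM_Y}(\oblv(m),-)$ on all of $\CM_Y$, and hence compactness of $\oblv(m)$ via rigidity, one must invoke a descent argument along the comonadic functor $\oblv\colon\CM\to\CM_Y$ -- or, equivalently, show that the left adjoint $(-)\otimes_{\CM_Y}\oblv(m)\colon\QCoh(Y)\to\CM_Y$ preserves compact objects by testing against a sufficiently rich generating family. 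Verifying this descent step rigorously, on the basis of the abstract hypothesis (ii) alone, will be the most delicate part of the proof.
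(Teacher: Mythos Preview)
Your implications (i)$\Rightarrow$(ii) and (iii)$\Rightarrow$(i) are correct and essentially match the paper: both rely on \lemref{l:ULA inner Hom}, the projection formula $\oblv(\ul\Hom_\CM(m,m'))\simeq\ul\Hom_{\CM_Y}(\oblv(m),\oblv(m'))$, conservativity of $\oblv$, and rigidity of $\QCoh(Y)$ via \corref{c:rigid ULA}.

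The gap is (ii)$\Rightarrow$(iii), which you correctly flag as the crux but do not prove. Your proposed descent along $\oblv$ is too vague to be an argument, and the framing points in the wrong direction: any presentation of a general object of $\CM_Y$ in terms of the essential image of $\oblv$ would involve \emph{limits} (totalizations of cobar resolutions), and continuity of $\ul\Hom_{\CM_Y}(\oblv(m),-)$ does not propagate through those. The paper's route is different and more concrete. One applies (ii) only to the single object $\CF=\on{D}_Y$, compact in $\Dmod(Y)$, to conclude that $\ind(\oblv(m))\simeq\on{D}_Y\otimes m$ is compact in $\CM$. The implication then reduces to the general assertion (\propref{p:ind crit comp}): for any $n\in\CM_Y$, if $\ind(n)\in\CM$ is compact then $n$ is compact. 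The proof of this is where smoothness of $Y$ enters essentially. Working over affine $Y$, one uses the formal neighborhood $Y^\wedge$ of the diagonal in $Y\times Y$ to exhibit a natural $\CO_{Y\times Y}$-action on $\oblv\circ\ind(n')$ together with a functorial identification $n'\simeq\CHom_{\CO_{Y\times Y}}(\CO_Y,\oblv\circ\ind(n'))$. Hence
\[
\CHom_{\CM_Y}(n,n')\simeq\CHom_{\CO_{Y\times Y}}\bigl(\CO_Y,\,\CHom_{\CM_Y}(n,\oblv\circ\ind(n'))\bigr).
\]
The inner term preserves colimits in $n'$ since $\CHom_{\CM_Y}(n,\oblv\circ\ind(-))\simeq\CHom_\CM(\ind(n),\ind(-))$ and $\ind(n)$ is compact; the outer $\CHom_{\CO_{Y\times Y}}(\CO_Y,-)$ preserves colimits because $\CO_Y$ is a compact $\CO_{Y\times Y}$-module, which is exactly the smoothness of $Y$. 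That geometric input is the missing idea in your outline.
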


The rest of this subsection is devoted to the proof of this theorem.

\sssec{}

The implication (i) $\Rightarrow$ (ii)
follows from \secref{sss:inner Hom comp} and \lemref{l:ULA inner Hom}. To prove (iii) $\Rightarrow$ (ii),
it suffices to show that for $\CF_0\in \QCoh(Y)^c$, we have
$$\ind(\CF_0)\otimes m\in \CM^c.$$
However,
$$\ind(\CF_0)\otimes m\simeq \ind(\CF_0\otimes \oblv(m)),$$
and the assertion follows from the fact that the functor $\ind$ preserves compactness.

\begin{proof}[Proof of {\em(iii)} $\Rightarrow$ {\em(i)}]

This is in fact a general assertion. Let $\phi:\CC\to \CC_0$ be a symmetric monoidal functor
that admits a $\CC$-linear left adjoint. Let $\CM$ be a dualizable $\CC$-module category and set
$$\CM_0:=\CC_0\underset{\CC}\otimes \CM.$$

Consider the corresponding functor
$$\phi_\CM:=(\phi\otimes \on{Id}_\CM):\CM\to \CM_0;$$
it admits a left adjoint, given by $(\phi^L\otimes \on{Id}_\CM)$.

\medskip

Consider the inner Hom object $\ul\Hom(m,m')\in\CC$. Note that we have
\begin{equation} \label{e:inner Hom compat}
\phi(\ul\Hom(m,m')) \simeq \ul\Hom_0(\phi_\CM(m),\phi_\CM(m')),
\end{equation}
where $\ul\Hom_0(-,-)$ denotes inner Hom in $\CC_0$.

\medskip

Assume now that $\phi$ is conservative. It follows from \eqref{e:inner Hom compat} and \lemref{l:ULA inner Hom} that
if $\phi_\CM(m)\in \CM_0$ is ULA over $\CC_0$, then $m$ is ULA over over $\CC$.

\medskip

We apply this to $\phi$ being
$$\oblv:\Dmod(Y)\to \QCoh(Y).$$

Assumption (iii) says that $\oblv_Y(m)$ is compact. It is then automatically ULA over $\QCoh(Y)$
by \corref{c:rigid ULA}.

\end{proof}

The implication (ii) $\Rightarrow$ (iii) follows by
$$\ind \circ \oblv(m)\simeq \on{D}_Y\otimes m$$
from the following more general assertion:

\begin{prop} \label{p:ind crit comp}
Let $m\in \CM_Y$ be such that $\ind(m)\in \CM$ is compact. Then $m$ is compact.
\end{prop}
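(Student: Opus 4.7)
The plan is to cast the adjunction $\ind \dashv \oblv$ in monadic form and exploit a canonical retraction of the unit of the associated monad coming from evaluation on $1 \in \CO_Y$.

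First, I would set up the monadic interpretation. Since $Y$ is smooth, $\oblv$ admits a right adjoint (pushforward along $Y \to Y_{\on{dR}}$), and so it preserves colimits; being also conservative, it is monadic by Barr--Beck--Lurie. This identifies $\CM$ with $T\mmod(\CM_Y)$, where $T := \oblv \circ \ind$, and realizes $\ind(m)$ as the free $T$-module $T(m)$. Since both $\ind$ and $\oblv$ preserve filtered colimits, so does $T$; hence the forgetful functor $T\mmod(\CM_Y)\to \CM_Y$ creates filtered colimits. Combining this with the free-forgetful adjunction, the hypothesis that $T(m)$ is compact translates into the statement that the functor $n \mapsto \CHom_{\CM_Y}(m, T(n)) \colon \CM_Y \to \Vect$ preserves filtered colimits.

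The main step is then to upgrade this to the statement that $\CHom_{\CM_Y}(m, -)$ itself preserves filtered colimits. For this, I would make the monad explicit: $T(n) \simeq n \otimes_{\CO_Y} A$ where $A := \oblv(\CD_Y) \in \QCoh(Y)$, the tensor being taken with respect to the $\QCoh(Y)$-module structure on $\CM_Y$; the unit $\eta\colon \on{Id}_{\CM_Y} \to T$ is induced by the inclusion $\CO_Y \hookrightarrow A$ of order-zero operators. The key observation is that this inclusion admits a canonical $\CO_Y$-linear retraction $\mu\colon A \to \CO_Y$, namely the augmentation $D \mapsto D \cdot 1$, where we evaluate the differential operator $D$ on the canonical global section $1 \in \CO_Y$; that $\mu$ splits $\CO_Y \hookrightarrow A$ is immediate from $f \cdot 1 = f$ for $f \in \CO_Y$. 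This induces a natural transformation $T = - \otimes_{\CO_Y} A \to - \otimes_{\CO_Y} \CO_Y = \on{Id}_{\CM_Y}$ which is a retraction of $\eta$ as endofunctors of $\CM_Y$.

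Applying $\CHom_{\CM_Y}(m, -)$ to the retraction $\eta \colon \on{Id} \rightleftarrows T \colon \mu$ realizes $\CHom_{\CM_Y}(m, -)$ as a retract of $\CHom_{\CM_Y}(m, T(-))$ in the category of functors $\CM_Y \to \Vect$. Since a retract of a functor preserving filtered colimits itself preserves filtered colimits, the conclusion that $m$ is compact in $\CM_Y$ follows. The main potential obstacle is checking that the augmentation $A \to \CO_Y$ genuinely induces a retraction of the monad unit for a general $\QCoh(Y)$-module $\CM_Y$ (rather than just for $\CM_Y = \QCoh(Y)$); this amounts to unraveling that the monad structure on $T$ is indeed given by tensoring with the $\CO_Y$-algebra $A$ acting through the $\QCoh(Y)$-module structure, which is the content of the functorial description of $\ind$ and $\oblv$ arising from $\oblv \colon \QCoh(Y_{\on{dR}}) \to \QCoh(Y)$.
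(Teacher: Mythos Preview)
Your argument has a genuine gap at the identification $T(n)\simeq n\otimes_{\CO_Y} A$. The monad $T=\oblv\circ\ind$ is \emph{not} $\QCoh(Y)$-linear, so it cannot be the action of an object $A\in\QCoh(Y)$: concretely, on $\CM_Y=\QCoh(Y)$ one has $T(\CF)=\CD_Y\otimes_{\CO_Y}\CF$, where the tensor uses the \emph{right} $\CO_Y$-action on $\CD_Y$ while the resulting $\CO_Y$-structure is the \emph{left} one. Thus $T$ is tensoring with $\CD_Y$ as an $(\CO_Y,\CO_Y)$-\emph{bimodule}, and a natural retraction $T\to\on{Id}$ of $\eta$ would amount to a bimodule splitting of $\CO_Y\hookrightarrow\CD_Y$. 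Your map $D\mapsto D\cdot 1$ is only left-$\CO_Y$-linear: applying it to the commutator $[\partial,f]=\partial(f)$ on $\BA^1$ yields $\partial(f)$ on one side and $0$ on the other. The same commutator computation shows no bimodule splitting exists once $\dim Y>0$, so the natural retraction you need simply does not exist.

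The paper's proof gets around this by remembering exactly the extra structure your description discards: it records the $\CO_{Y\times Y}$-action on $T(m')$ coming from the two $\CO_Y$-actions on $\CD_Y$, and recovers $m'$ as $\CHom_{\CO_{Y\times Y}}(\CO_Y,T(m'))$. Smoothness of $Y$ enters to guarantee that $\CO_Y$ is perfect over $\CO_{Y\times Y}$, so this recovery commutes with colimits; your proposed shortcut would, if it worked, prove the statement without using smoothness, which is a warning sign. The ``obstacle'' you flag at the end is thus not about passing from $\QCoh(Y)$ to general $\CM_Y$: the identification already fails for $\CM_Y=\QCoh(Y)$.
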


\begin{proof}

We need to show that the functor
$$m'\mapsto \CHom_{\CM_Y}(m,m')$$
preserves colimits. We will do so by expressing it via the functor
$$m'\mapsto \CHom_{\CM}(\ind(m),\ind(m'))\simeq \CHom_{\CM_Y}(m,\oblv\circ \ind(m')),$$
while the latter preserves colimits by assumption.

\medskip

With no restriction of generality, we can assume that $Y$ is affine.

\medskip

Consider the formal completion of $Y$ in $Y\times Y$
$$Y \overset{p_1}\leftarrow Y^\wedge \overset{p_2}\to Y.$$

Let $\CM_{Y^\wedge}$ be the value of $\CM$ over $Y^\wedge$. Set
$$\CM'_{Y^\wedge}:=\IndCoh(Y^\wedge)\underset{\QCoh(Y^\wedge)}\otimes \CM_{Y^\wedge}$$

The maps $p_1,p_2$ define the functors
$$p_i^*:\CM_Y \to \CM_{Y^\wedge}$$
and
$$(p_i^{\IndCoh})_*:\CM'_{Y^\wedge}\to \CM_Y.$$

\medskip

Note that the functor
$$m'\mapsto \oblv\circ \ind(m')$$
identifies with
$$m'\mapsto (p_1^{\IndCoh})_*(\on{D}_Y\otimes p_2^*(m')),$$
where we regard $\on{D}_Y$ as an object of $\IndCoh(Y^\wedge)$.

\medskip

From here it follows $\oblv\circ \ind(m')$ carries an action of the algebra $\CO_{Y\times Y}$ of functions on $Y\times Y$
(via its action on $\on{D}_Y$), and we have a functorial identification
$$m'\simeq \CHom_{\CO_{Y\times Y}}(\CO_Y,\oblv\circ \ind(m')).$$

From here, we obtain that $\CHom_{\CM_Y}(m,\oblv\circ \ind(m'))$ carries an action of $\CO_{Y\times Y}$ and
$$\CHom_{\CM_Y}(m,m')\simeq \CHom_{\CO_{Y\times Y}}\left(\CO_Y,\CHom_{\CM_Y}(m,\oblv\circ \ind(m'))\right).$$

\medskip

Now, the required assertion follows from the fact that the functor
$$\CHom_{\CO_{Y\times Y}}(\CO_Y,-):\CO_{Y\times Y}\mod\to \Vect$$
commutes with colimits, since $\CO_Y$ is a compact object of $\CO_{Y\times Y}$, the latter because $Y$ is smooth.

\end{proof}

This completes the proof of \thmref{t:ULA Dmod}.

\newpage

\end{document}